\titlespacing*{\subsection} {1.5em}{3.25ex plus 1ex minus .2ex}{1.5ex plus .2ex}
\titlespacing*{\subsubsection} {1.5em}{3.25ex plus 1ex minus .2ex}{1.5ex plus .2ex}
\titleformat{\subsubsection}[runin]
 {\normalfont\bfseries}{\thesubsubsection}{em}{}
\def\thechapter{\Roman{chapter}}
\definecolor{gray75}{gray}{0.75}
\newcommand{\hsp}{\hspace{20pt}}
\titleformat{\chapter}[hang]{\Huge\bfseries}{\thechapter\hsp\textcolor{black}{|}\hsp}{0pt}{\Huge\bfseries}
\begin{document}

\setcounter{tocdepth}{4}
\tableofcontents

\def\vphi{\varphi}
\def\rphi{\vphi\in[0,2\pi)}
\def\vtheta{\vartheta}
\def\SCPM{{\SCPZ\over\SCPN}}
\def\SCPZ{{\xi^2+\eta^2}}
\def\SCPN{{\xi^2\eta^2}}

\def\ralpha{\alpha\in(\i K',\i K'+2K)}
\def\rPalpha{\alpha\in(\i K',\i K'+K)}
\def\rbeta{\beta\in[0,4K')}
\def\rPbeta{\beta\in(0,K')}
\def\rmu{\mu\in(\i K',\i K'+2K)}
\def\rPmu{\mu\in(\i K',\i K'+K)}
\def\reta{\eta\in[0,4K')}
\def\rPeta{\eta\in(0,K')}

\newdimen\theight
\newcommand{\x}{{\ensuremath{\times}}}
\newcommand{\bb}[1]{\makebox[16pt]{{\bf#1}}}

\newtheorem{theorem}{Theorem}[section]
\newtheorem{definition}{Definition}
\newtheorem{proposition}[theorem]{Proposition}
\newtheorem{axiom}{Axiom}
\newtheorem*{corollary}{Corollary}
\newtheorem{example}{Example}[section]
\newtheorem{examples}{Examples}[section]

\newtheorem*{cat}{Definition}
\newtheorem*{AOC}{Axiom of Choice}
\newtheorem*{lemma1}{Lemma}
\newtheorem*{lemma2.1}{Lemma I}
\newtheorem*{lemma2.2}{Lemma II}
\newtheorem*{lemma3.1}{Lemma I}
\newtheorem*{lemma3.2}{Lemma II}
\newtheorem*{lemma3.3}{Lemma III}
\newtheorem*{lemma4.1}{Lemma}
\newtheorem*{Zlemma}{Zorn's Lemma}
\newtheorem*{lemma f1}{Lemma I}
\newtheorem*{lemma f2}{Lemma II}
\newtheorem*{lemma iso}{Lemma}
\newtheorem*{lemma func par}{Lemma}
\newtheorem*{lemma op func 1}{Lemma I}
\newtheorem*{lemma op func 2}{Lemma II}
\newtheorem*{lemma dns1}{Lemma}
\newtheorem*{lemma order funct1}{Lemma I}
\newtheorem*{lemma order funct2}{Lemma II}
\newtheorem*{lemma commute funct}{Lemma}
\newtheorem*{lemma duality n}{Lemma}
\newtheorem*{lemma intlaw}{Lemma}
\newtheorem*{Y Lemma}{Yoneda Lemma}
\newtheorem*{lemma nat t1}{Lemma I}
\newtheorem*{lemma nat t2}{Lemma II}

\newcommand{\arcsinh}{\mbox{ArcSinh}}
\newcommand{\arccosh}{\mbox{ArcCosh}}
\newcommand{\sign}{\mbox{sign}}
\newcommand{\diag}{\mathop{\mathrm{diag}}}

\def \Column{%
             \vadjust{\setbox0=\hbox{\sevenrm\quad\quad tcol}%
             \theight=\ht0
             \advance\theight by \dp0    \advance\theight by \lineskip
             \kern -\theight \vbox to \theight{\rightline{\rlap{\box0}}%
             \vss}%
             }}%

\catcode`\@=11
\def\qed{\ifhmode\unskip\nobreak\fi\ifmmode\ifinner\else\hskip5\p@\fi\fi
 \hbox{\hskip5\p@\vrule width4\p@ height6\p@ depth1.5\p@\hskip\p@}}
\catcode`@=12 

\def\cents{\hbox{\rm\rlap/c}}
\def\miss{\hbox{\vrule height2pt width 2pt depth0pt}}

\def\vvert{\Vert}                

\def\tcol#1{{\baselineskip=6pt \vcenter{#1}} \Column}

\def\dB{\hbox{{}}}                 
\def\mB#1{\hbox{$#1$}}             
\def\nB#1{\hbox{#1}}               

\newpage

\chapter*{Introduction}

The attempt is to give a formal concpet of \textit{system}, and with this provide a definition of category, that will also satisfy the definition of a system. An axiomatic base is given, for constructing the group of integers. In the process, we define a group of automorphisms; we are defining an ordered group of functors with a natural transformation between any two. We give an isomorphism from the group of integers into the group of automorphisms, as guaranteed by Cayley's Theorem. The ultimate aim is to use these definitions and concepts, of system and category, to give a general description of mathematics.

When studying a system we will identify two kinds of components, as enough to define a system. We shall call the one kind \textit{objects} and the other kind \textit{relations}. In essence, it is being argued that anything can be thought as being completely described by 1) the things it is composed of, and  2) the characteristics about it and the things that compose it.

There are three properties that will be regarded as of absolute importance in the study of systems. These are three characteristics that appear throughtout systems of all sorts. We speak of Structure, Symmetry, and Inheritance. This last takes the form of order, as well. The inheritance principal is viewed as a principal of conservation. In considering inheritance, we wish to establish formal observations to the following questions, regarding formation of  new systems from old ones: 

\begin{itemize}\item[1)]What basic properties are passed on from one system to another? \item[2)]If a new system has been formed, what systems could it come from (who are the possible parents)? \item[3)]If two systems are formed from one, what properties will these two new systems have in common? 
\end{itemize}

It was necessary to first describe a system, in order to later describe relations between systems. The relations, in turn, were used for the description of inheritance principles. There is a parallelism with categories, functors and natural transformations. The system takes the place of category, while the functor takes the place of relations between systems. Finally, the natural transformation describes an inheritance principle. In the development of category theory, the same line of thought was followed, as was pointed out by [I]:\\

\textit{\indent``...`category' has been defined in order to define `functor' and `functor' has been defined \indent\indent in order to define `natural transformation'."}\\

Inheritance can be studied in terms of \textit{society}; a system that satisifes a property, that is also satisifed by all the objects in the system. So we may say a human society is a society, since the interactions of different societies are more or less like human interactions. One can say a human (or any living organism, for example) is a society of cells. Cells themselves are born, interacting, reproducing in one form or another, and dying. Do cells inherit these properties from particles?

In mathematics this situation is encountered. One can say certain collections of groups are also groups. We can consider the collection of collections as a society. We would like to verify if filter, topology, vector space, and many other concepts can be seen as a society. Can we define some system, whose objects are vector spaces, so that this new system is itself in one way or another like a vector space? Notice we are not asking that the system be a vector space, just that it has some property that vector spaces have. We can also consider the collection of filters, on a set, and see how inheritance plays out in this context. 

There is a difference between the definition of category given here, and the definition given in [I]:

\begin{cat}A metagraph consists of objects a,b,c,..., arrows f,g,h,..., and two operations as follows:

\begin{itemize}\item[]\textbf{Domain}, which assigns to each arrow f an object a=dom f

\item[]\textbf{Codomain}, which assigns to each arrow f an object b=cod f\end{itemize}

These operations on f are best indicated by displaying f as an actual arrow starting at its domain (or ``source'') and ending at its codomain (or ``target''):$f:a\rightarrow b$...A metacategory is a metagraph with two additional operations:

\begin{itemize}\item[]\textbf{Identity}, which assigns to each object a an arrow $id_a=1_a:a\rightarrow a$

\item[]\textbf{Composition}, which assigns to each pair $\langle g,f\rangle$ of arrows with $dom~g= cod~f$ an arrow $g\circ f$, called their composite, with $g\circ f:dom~f\rightarrow cod~g$... These operations in a metacategory are subject to the two following axioms:\end{itemize}

\begin{itemize}\item[]\textbf{Associativity.} For given objects and arrows in the configuration $a\stackrel{f}\rightarrow b\stackrel{g}\rightarrow c\stackrel{k}\rightarrow d$ one always has the equality $k\circ(g\circ f)=(k\circ g)\circ f$. This axiom asserts that the associative law holds for the operation of composition whenever it makes sense (i.e., whenever the composites on either side of (1) are defined).

\item[]\textbf{Unit law.}For all arrows $f:a\rightarrow b$ and $g:b\rightarrow c$ composition with the identity arrow $1_b$ gives $1_b\circ f=f$ and $g\circ 1_b=g$. This axiom asserte that the identity arrow $1_b$ of each object b acts as identity for the operation of composition, whenever this makes sense.\end{itemize}\end{cat}

Contrary to this definition, the definition of category we will provide, in terms of systems, consists of two kinds of objects. We call the ordinary objects, c-objects. Arrows will be the other kind of object, of the system. This process of considering arrows as object is a key concept that is present throughout. This is how generalizations and complexity arises: by taking relations, and making relations about those. For example, relations of the form $a\rightarrow c\Rightarrow b\rightarrow d$ can be turned into objects of arrows $a\rightarrow c\Rightarrow b\rightarrow d\Longrightarrow e\rightarrow g\Rightarrow f\rightarrow h$. Arrows are one particular way of seeing relations, in mathematics. The definition of category will be a system with relations regarding the arrows and objects. Arrows can be $\circlearrowleft$ (reflexive) or $\leftrightarrow$ (symmetric). The following diagram is very important and we will try to see the underlying properties, uses and consequences of it.  $$_{_a\stackrel{\nearrow^b\searrow}\longrightarrow_c}$$For example, in topology the diagram expresses concepts of separability; the diagram will be used to give a description of supremum, and density. Curiously, in the scope of these definitions, the supremum is a particular case of density. Of course, this is also the diagram that expresses transitivity. We can also think of this diagram when we are developing products of functors, on product categories. It describes three main forms that inheritance will take (each one of these forms for inheritance can be seen reflected in one of our three questions regarding inheritance):

\begin{itemize}

\item[1)] Generational Inheritance

\[_{\circ}\stackrel{\nearrow^
{\circ}\searrow}\Longrightarrow_{\circ}\]

\item[2)] Common Descendency

\[_{\circ}\stackrel{\Nearrow^
{\circ}\searrow}\longrightarrow_{\circ}\]

\item[3)] Common Origin

\[_{\circ}\stackrel{\nearrow^
{\circ}\Searrow}\longrightarrow_{\circ}\]

\end{itemize}

We first speak of systems and immediately after, introduce relations, arrows,  and equivalence. Orders are given in three main types; preorder, partial order, natural order. After making such differentiations, we see basic definitions and observations. We end this subsection with the construction (rather, the assumption of existence) of a natural order that is not trivial, the order of integers. We move on to functions, on collections, and provide the basic types and definitions of functions. We include a definition of two-part functions, which are crucial in the definition of functor. Here we introduce the concept of selection function to describe the Axiom of Choice, for the first time. Also, we define order preserving functions and duality is brought into play for the first time, in the form of dual orders. At the end we see an important description of how two functions can be asked to behave well, in joint. We first study Galois Connections, and then we study the concept \textit{natural pair of functions}. This last, will be the main idea behind the concpet of functor; we will give the defnition of functor, in terms of this concept. A notation is defined and presented in the context of arrows, functions, operations, and generalizations of these. This will enable us to give mechanical proofs in many circumstances.

There is a special kind of function, that we define as operation. That is, next we define an operation as a function that sends objects into functions. We define the concept of left and right operations, for a certain type of operations. When we operate two objects, we can take the view that either of the two objects, was acting on the other; a function is associated to one object and this applied to the other object is the result of the operation. Consider an object in the collection, for which a closed operation is defined, call the object $x$. We can say, on the one hand, \textit{object x acts, on other objects of the collection in such manner...} To verify this, we would provide a list of pairs of objects; the first of each pair is the object operated with $x$. The second of each pair is the corresponding result of the operation, between $x$, and the first of the pair. One can also say, \textit{the objects of $\mathcal O$ act on $x$, according to...} To verify this, we would give another list of pairs. Here the first of each pair is the object that acts on $x$ and the second is the result of the operation. The first list is the right operation of $x$ while the second is the left operation.

In chapter two, we introduce categories and the main properties of functor, along with a definition of natural transformation. Here, we start by giving a basic classification of arrows; isomorphism, left/right cancellable, left/right invertible. After this, we show that a partal order is category and we also define algebraic categories, motivated by considering a category whose arrows are functors from one category to itself. This is the concept of a monoid, as given in [I]. We also give the definition of a group and prove very basic properties regarding unit and inverse. The functor is next, and we begin by giving a definiton of functor that will provide easy to relate to the concept of natural pair of functions. After giving a brief classification of some functor types, we define opposite category and functor. We prove that there is an opposite functor on the category of all categories. This serves as bases for a discussion on contravariant functor. We see a general description of a simple type of functor, the algebraic functor. The next subsection provides a construction of product category and we try to see what functors look like in this tyoe of category. We then define bifunctors with one component contravariant, the other covariant. This is followed by a theorem that allows decomposition of some bifunctors, into two simpler bifunctors. After studying the functor, we go into natural transformation and we see how this relates to the constructions given for the product of two functors, with common domain. Throughout, we will apply the concept of using arrows as objcets, so as to generalize the concept of arrows between arrows. The natural transofrmation is great example of how this and for that we show that natural transformations can compose in two manners. We show the interchange law for the vertical and horizontal compositions of natural transformations, and after this we show that there are two categories to be considered. These are 1) the category of functors $\mathcal Cat(\mathcal C,\mathcal D)$, and 2) the 2-category of all categories which we will denote $\vec{\mathcal C}at$.

The section of integers is central, it begins retaking the discrete number system. A discrete number system is one whose objects and arrows are arranged in the form $\cdots\rightarrow a\rightarrow b\rightarrow c\rightarrow\cdots\rightarrow x\rightarrow y\rightarrow z\rightarrow\cdots$. We show that this system generates a group of automorphisms, on the partial order which corresponds to the discrete number system. This group of automorphisms is generated using one functor, the functor determined by the arrows of the discrete number system. Using this group of automorphisms we build the group of integers with addition. We study the operation as functor, and as a natural transfomation. In short, a discrete number system contains all the information needed to construct a group on itself. Next, a second operation is defined for integers. This is of course the product, another functor. Rational systems are introduced through a construction that involves dual orders. We see that a rational system is an involution. Then, the operations are defined for the rationals and we give an embedding of the integers into the rationals, as order and operation.

In chapter III, we start with an axiomatic base for sets that is quite similar to the one provided in [I]; in fact only minor changes have been made. After the basic properties and operations of sets, we study sets as categories. The first view is to see a set as collection category, then we see it as a partial order under inclusion. Here, we will also introduce concrete category and give a proof of the Yoneda lemma. We see how this relates to the construction provided for the integers through Cayley's theorem.

 We continue to set functions and we introduce here the concept of image and inverse image for a family of sets. The general results that relate image, inclusion and set operations are presented. After this, we present the concept of fiber and we use it to give a decomposition for set functions. The next subsection for set functions, is sequence. In the paragraph Sequence of Objects for an Operation, we introduce the concept of series by defining a sequence of functions that can be applied to a sequence, and that results in the sequence of partial sums. There is a paragraph on compositions of a sequence of functions and here we introduce, among other things, the concept of invariant objects and sets. From this we derive the concept of once-effective functions. Once-effective functions are important in the study of the closure functor which is later used as a starting point in topology. We end the discussion on sequences with a description of sequence in terms of cartesian product. The concept of sequence is generalized in the next subsection. Here we start by defining bounds and supremum. After this we start discussion on directed sets and end with the definition of net. This is used for defining matrices.

The next section is on families of sets, starting with the power set and functor before defining direct image and direct inverse image for families. We then define the forward and backward image of sets $f^\rightarrow A$, $f^\leftarrow B$. After this, we establish a characterization for direct and direct inverse image in terms of $f^\rightarrow A$ and $f^\leftarrow B$. We also note the duality between image and fiber, using the forward and backward images. We see how to express the power set as another kind of functor. Some properties and relations for special types of families are given and then we start with the study of filters. We give a brief introduction to ultrafilters. Ultrafilters will be taken up again in the next chapter; when we study lattices; after we have proven Zorn's Lemma. In fact, The proof of Zorn's Lemma is what follows next.

In Chapter IV, we study lattices. First, we revisit the supremum, then define lattice in terms of supremum and order. The concept of semilattice is defined algebraically in terms of an operation. We show that sublattices and lattices are closely connected and then we show that a lattice can also be viewed as an algebraic structure (with two operations) and a sublattice can also be viewed as an order. After introducing lattices we see properties of completeness. Filters, in the general sense, and boolean algebras are to be included here, but do not yet make appearence in this document.

After Lattices, we devote a chapter to group theory. The last chapter is a brief description of topological systems. We describe them as functors on algebraic categories.

Let us take this oppostunnity to understand the notation that is used through to the end. Firstly, the notation is presented for a general case of arrows. That is, if we have an arrow $a\rightarrow c\longrightarrow b\rightarrow d$ we express this by $a,c;b,d$. Next, we say that $a;fx,f$ is equivalent to the statement \textit{the function f applied to the object x, results in fx}. Finally, we use the notation to express operations by saying $a,c;b,d$ is equivalent to the statement $a*d=b*c$. 
This way, if we define $a,g;b,f$ as the statement saying $fa=gb$ we can express $a,*c;b,*d$ or $a,c;b,d$ as saying the same thing. For example, when defining the integers we give the operation in terms of comparability of arrows. That is, we give  a system of arrows between arrows and the operation is defined so that it coincides with the notation. We define an order on top of an existing order and this gives rise to strong arrows.

\chapter*{List of symbols (Incomplete)}

$\mathcal Sys$~~~~~~~~~~~~~~~~~~~~~~~~~~~~~~~~~~~~~~~~~~~~~~~Universal system\newline
$S,S',S'',S''',...$~~~~~~~~~~~~~~~~~~~~~~~~~~~System\newline
$\emptyset$~~~~~~~~~~~~~~~~~~~~~~~~~~~~~~~~~~~~~~~~~~~~~~~~~~Empty system\newline
$a,b,c,...,x,y,z,...$~~~~~~~~~~~~~~~~~~~~~~~~Object\newline
$\mathcal O$, $\mathcal Q$,...~~~~~~~~~~~~~~~~~~~~~~~~~~~~~~~~~~~~~~~~~Collection of objects\newline
$SX$~~~~~~~~~~~~~~~~~~~~~~~~~~~~~~~~~~~~~~~~~~~~~~~~Remove all objects, except $X$, and all relations that do not include $X$\newline
$f:a\rightarrow b$~~~~~~~~~~~~~~~~~~~~~~~~~~~~~~~~~~~~~Arrow\newline
$\{a\rightarrow b\}$~~~~~~~~~~~~~~~~~~~~~~~~~~~~~~~~~~~~~~~~Collection of arrows from $a$ into $b$\newline
$\{a\rightarrow\}$~~~~~~~~~~~~~~~~~~~~~~~~~~~~~~~~~~~~~~~~~~~Collection of arrows with $a$ as source\newline
$\{\rightarrow b\}$~~~~~~~~~~~~~~~~~~~~~~~~~~~~~~~~~~~~~~~~~~~Collection of arrows with $b$ as target\newline
$\mathcal O_1\rightarrow_\times\mathcal O_2$ or $\mathcal O_1\times\mathcal O_2$~~~~~~~~~~~~~~~~~Cartesian product\newline
$a\rightarrow_\times b$~~~~~~~~~~~~~~~~~~~~~~~~~~~~~~~~~~~~~~~~~Object of cartesian product\newline
$x\leq y$ or $y\geq x$~~~~~~~~~~~~~~~~~~~~~~~~~~~~~~~Comparability in partial order $\mathcal P$\newline
$x=y$~~~~~~~~~~~~~~~~~~~~~~~~~~~~~~~~~~~~~~~~~~~~~$x\leq y$ and $y\leq x$\newline
$x< y$ or $y>x$~~~~~~~~~~~~~~~~~~~~~~~~~~~~~~~Irreflexive comparability in partial order\newline
$a,c;b,d$~~~~~~~~~~~~~~~~~~~~~~~~~~~~~~~~~~~~~~~~Comparability between arrows $a\rightarrow c$ and $b\rightarrow d$\newline
$f:\mathcal O_1\rightarrow\mathcal O_2$~~~~~~~~~~~~~~~~~~~~~~~~~~~~~~~~~Function with $\mathcal O_1$ as domain and range is $\mathcal O_2$\newline
$Dom~f$~~~~~~~~~~~~~~~~~~~~~~~~~~~~~~~~~~~~~~~~~~Domain of the function $f$\newline
$Range~f$~~~~~~~~~~~~~~~~~~~~~~~~~~~~~~~~~~~~~~~Range of the function $f$\newline
$Im~f$~~~~~~~~~~~~~~~~~~~~~~~~~~~~~~~~~~~~~~~~~~~~~Image of the function\newline
$fa,~af,~f(a),~(a)f$~~~~~~~~~~~~~~~~~~~~~~~Image of object $a$, under $f$\newline
$a;fa,f$~~~~~~~~~~~~~~~~~~~~~~~~~~~~~~~~~~~~~~~~~Image of object $a$, under $f$, is $fa$\newline
$a,g;b,f$~~~~~~~~~~~~~~~~~~~~~~~~~~~~~~~~~~~~~~~~Image of $a$, under $f$, is the same object as image of $b$, under $g$\newline
$f\mathcal Q_1$ or $f[\mathcal Q_1]$~~~~~~~~~~~~~~~~~~~~~~~~~~~~~~~~Image of subcollection	$\mathcal Q_1$\newline
$f^{-1}\mathcal Q_2$ or $f^{-1}[\mathcal Q_2]$~~~~~~~~~~~~~~~~~~~~~~~~~Preimage of subcollection $\mathcal Q_2$\newline
$g\circ f$~~~~~~~~~~~~~~~~~~~~~~~~~~~~~~~~~~~~~~~~~~~~~Composition of functions\newline
$I:\mathcal O\rightarrow\mathcal O$~~~~~~~~~~~~~~~~~~~~~~~~~~~~~~~~~~~~Identity function\newline
$f^{-1}$~~~~~~~~~~~~~~~~~~~~~~~~~~~~~~~~~~~~~~~~~~~~~~~Inverse function\newline
$f|_{\mathcal Q_1}$~~~~~~~~~~~~~~~~~~~~~~~~~~~~~~~~~~~~~~~~~~~~~~~Function with domain restricted to $\mathcal Q_1$\newline
$\iota$~~~~~~~~~~~~~~~~~~~~~~~~~~~~~~~~~~~~~~~~~~~~~~~~~~~~Inmersion\newline
\textbf{Sel}~~~~~~~~~~~~~~~~~~~~~~~~~~~~~~~~~~~~~~~~~~~~~~~Selection function\newline
$\mathcal P^{op}$~~~~~~~~~~~~~~~~~~~~~~~~~~~~~~~~~~~~~~~~~~~~~~~Opposite partial  order\newline
$x\leq_{op}y$~~~~~~~~~~~~~~~~~~~~~~~~~~~~~~~~~~~~~~~~~~Comparability in opposite order\newline
$\mathcal P,\mathcal Q;\mathcal P^{op},\mathcal Q^{op}$~~~~~~~~~~~~~~~~~~~~~~~~~~~~~~~$\mathcal P$, $\mathcal Q$ are order bijective\newline
$\mathcal P,\mathcal Q^{op};\mathcal P^{op},\mathcal Q$~~~~~~~~~~~~~~~~~~~~~~~~~~~~~~~$\mathcal P$, $\mathcal Q$ are dual\newline
$\mathcal P,\mathcal P^{op};\mathcal P^{op},\mathcal P$~~~~~~~~~~~~~~~~~~~~~~~~~~~~~~~$\mathcal P$ is self dual\newline
$\mathcal P,\mathcal Q;\mathcal Q,\mathcal P$~~~~~~~~~~~~~~~~~~~~~~~~~~~~~~~~~~~~~~~~Galois connection for two partial orders $\mathcal P$, $\mathcal Q$\newline
$\{\mathcal O_1f\mathcal O_2\}$~~~~~~~~~~~~~~~~~~~~~~~~~~~~~~~~~~~~~~~~~~~All functions $\mathcal O_1\rightarrow\mathcal O_2$\newline
$\mathcal O_1f\mathcal O_2$~~~~~~~~~~~~~~~~~~~~~~~~~~~~~~~~~~~~~~~~~~~~~All functions $\mathcal Q_a\rightarrow\mathcal Q_b$, for all subcollections $\mathcal Q_a$, $\mathcal Q_b$ of $\mathcal O_a$, $\mathcal O_b$\newline
$*,\oplus,\cdot,\circ,...$~~~~~~~~~~~~~~~~~~~~~~~~~~~~~~~~~~~~~~~Operation\newline
$*x$~~~~~~~~~~~~~~~~~~~~~~~~~~~~~~~~~~~~~~~~~~~~~~~~~~~~Right operation\newline
$x*$~~~~~~~~~~~~~~~~~~~~~~~~~~~~~~~~~~~~~~~~~~~~~~~~~~~~Left operation\newline
$a;a*x,x$~~~~~~~~~~~~~~~~~~~~~~~~~~~~~~~~~~~~~~~~~The action of $x$ on $a$ results in $a*x$\newline
$a,y;b,x$~~~~~~~~~~~~~~~~~~~~~~~~~~~~~~~~~~~~~~~~~~~The action of $x$ on $a$ is the same object as the action of $y$ on $b$\newline
$x;x,e$~~~~~~~~~~~~~~~~~~~~~~~~~~~~~~~~~~~~~~~~~~~~~~~The object $e$ is a unit of the operation\newline
$x;e,e$~~~~~~~~~~~~~~~~~~~~~~~~~~~~~~~~~~~~~~~~~~~~~~~The object $e$ absorbs with the operation\newline
$x;e,x^{-1}$~~~~~~~~~~~~~~~~~~~~~~~~~~~~~~~~~~~~~~~~~~~~The objects $x,x^{-1}$ are dual to the unit\newline
$x,x;y,y$~~~~~~~~~~~~~~~~~~~~~~~~~~~~~~~~~~~~~~~~~~~~The operation commutes\newline
$x*,x*;*y,*y$ or $a,c;a*b,b*c$~~~~~~~The operation is associative\newline
$f,f;*fx,*x$~~~~~~~~~~~~~~~~~~~~~~~~~~~~~~~~~~~~~Function and operation commute\newline
$a\oplus b;(a*x)\oplus(b*x),x$~~~~~~~~~~~~~~~~~~Two operations commute\newline
$\mathcal C$, $\mathcal D,...$
~~~~~~~~~~~~~~~~~~~~~~~~~~~~~~~~~~~~~~~~~~~~Category\newline
$\mathcal A$~~~~~~~~~~~~~~~~~~~~~~~~~~~~~~~~~~~~~~~~~~~~~~~~~~~~~~Collection of arrows\newline
$\mathcal{O|C}$~~~~~~~~~~~~~~~~~~~~~~~~~~~~~~~~~~~~~~~~~~~~~~~~~~~Collection of objects of the category\newline
$\mathcal{A|C}$~~~~~~~~~~~~~~~~~~~~~~~~~~~~~~~~~~~~~~~~~~~~~~~~~~Collection of arrows of the category\newline
$b\rightarrow c*a\rightarrow b$~~~~~~~~~~~~~~~~~~~~~~~~~~~~~~~~~Composition of $b\rightarrow c$ with $a\rightarrow b$\newline
$1_x$~~~~~~~~~~~~~~~~~~~~~~~~~~~~~~~~~~~~~~~~~~~~~~~~~~~~~Reflexive arrow for $x$\newline
$\textgoth{1}_\mathcal C$~~~~~~~~~~~~~~~~~~~~~~~~~~~~~~~~~~~~~~~~~~~~~~~~~~~~~Unit function of the category; \newline
$\{a\rightarrow_{iso}b\}$~~~~~~~~~~~~~~~~~~~~~~~~~~~~~~~~~~~~~~~~Collection of isomorphisms for $a,b$\newline
$\textgoth{F}:\mathcal{C}\rightarrow\mathcal{D}$~~~~~~~~~~~~~~~~~~~~~~~~~~~~~~~~~~~~~
Functor\newline
$\textgoth{F}_\mathcal O$~~~~~~~~~~~~~~~~~~~~~~~~~~~~~~~~~~~~~~~~~~~~~~~~~~
Object function of the functor $\textgoth F$\newline
$\textgoth{F}_\mathcal A$~~~~~~~~~~~~~~~~~~~~~~~~~~~~~~~~~~~~~~~~~~~~~~~~~
Arrow function of the functor $\textgoth F$\newline
$\mathcal Cat$~~~~~~~~~~~~~~~~~~~~~~~~~~~~~~~~~~~~~~~~~~~~~~~~~~Category of cateories as c-objects and functors as arrows\newline
$\textgoth I_\mathcal C$~~~~~~~~~~~~~~~~~~~~~~~~~~~~~~~~~~~~~~~~~~~~~~~~~~~Identity functor for $\mathcal C$\newline
$\textgoth I_{Cat}$~~~~~~~~~~~~~~~~~~~~~~~~~~~~~~~~~~~~~~~~~~~~~~~~~Identity functor for $\mathcal Cat$\newline
$\{\mathcal C\textgoth F\mathcal D\}$~~~~~~~~~~~~~~~~~~~~~~~~~~~~~~~~~~~~~~~~~~~Collection of functors\newline
$\mathcal C\textgoth F\mathcal D$~~~~~~~~~~~~~~~~~~~~~~~~~~~~~~~~~~~~~~~~~~~~~Collection of functors for subcategories\newline
$\{\mathcal C\textgoth F_{iso}\mathcal D\}$~~~~~~~~~~~~~~~~~~~~~~~~~~~~~~~~~~~~~~~Collection of all isomorphisms $\mathcal C\rightarrow\mathcal D$\newline
$\mathcal C^{op}$~~~~~~~~~~~~~~~~~~~~~~~~~~~~~~~~~~~~~~~~~~~~~~~~~Opposite category\newline
$f^{op}:b\rightarrow_{op}a$~~~~~~~~~~~~~~~~~~~~~~~~~~~~~~~~~Opposite arrow of $f$\newline
$\textgoth F^{op}:\mathcal C^{op}\rightarrow\mathcal D^{op}$~~~~~~~~~~~~~~~~~~~~~~~~~~~Opposite functor of $\textgoth F:\mathcal C\rightarrow\mathcal D$\newline
$\textbf{op}:\mathcal Cat\rightarrow\mathcal Cat$~~~~~~~~~~~~~~~~~~~~~~~~~~~~~~Functor that sends a categories and functors to their opposite\newline
$\textgoth F^{\ltimes}$~~~~~~~~~~~~~~~~~~~~~~~~~~~~~~~~~~~~~~~~~~~~~~~~~Contravariant functor\newline
$\textgoth f\times\textgoth g:\mathcal C\rightarrow\mathcal D_1\times\mathcal D_2$~~~~~~~~~~~~~~~~~~~~~Product functor of common domain\newline
$\textgoth f\times\textgoth g:\mathcal C_1\times\mathcal C_2\rightarrow\mathcal D\times\mathcal D$~~~~~~~~~~~~~~Product functor of common range\newline
$(h*,*g)$~~~~~~~~~~~~~~~~~~~~~~~~~~~~~~~~~~~~~~~~~~Left-right operation function\newline
$\textgoth B:\mathcal C_1\times\mathcal C_2\rightarrow\mathcal D$~~~~~~~~~~~~~~~~~~~~~~~~~~Bifunctor\newline
$\tau:\mathcal{O|C}\rightarrow\mathcal{A|D}$~~~~~~~~~~~~~~~~~~~~~~~~~~~~ Bridge or natural transformation\newline
$\mathcal Z$~~~~~~~~~~~~~~~~~~~~~~~~~~~~~~~~~~~~~~~~~~~~~~~~~~~Discrete number system\newline
$\mathcal Z_<$~~~~~~~~~~~~~~~~~~~~~~~~~~~~~~~~~~~~~~~~~~~~~~~~~Discrete number system with transitive arrows\newline
$\mathcal Z_{\leq}$~~~~~~~~~~~~~~~~~~~~~~~~~~~~~~~~~~~~~~~~~~~~~~~~~Discrete number system with transitive and reflexive arrows\newline
$\{+x\}_\leq$~~~~~~~~~~~~~~~~~~~~~~~~~~~~~~~~~~~~~~~~~~~~~~~Order of automorphisms $+x$, for $\mathcal Z_\leq$\newline
$\mathbb Z^\dagger$~~~~~~~~~~~~~~~~~~~~~~~~~~~~~~~~~~~~~~~~~~~~~~~~~~~~Group of automorphisms $+x:\mathcal Z_\leq\rightarrow\mathcal  Z_\leq$\newline
$\mathbb Z$~~~~~~~~~~~~~~~~~~~~~~~~~~~~~~~~~~~~~~~~~~~~~~~~~~~~~Integer number system\newline
$\mathbb N_0$~~~~~~~~~~~~~~~~~~~~~~~~~~~~~~~~~~~~~~~~~~~~~~~~~~~System of natural numbers\newline
$\mathbb N$~~~~~~~~~~~~~~~~~~~~~~~~~~~~~~~~~~~~~~~~~~~~~~~~~~~~System of natural numbers, without the object of operation $0$\newline
$\mathbb N_\dagger$~~~~~~~~~~~~~~~~~~~~~~~~~~~~~~~~~~~~~~~~~~~~~~~~~~A partial order $\mathbb N\times\{c\}$, isomorphic to $\mathbb N$\newline
$\mathbb N^\dagger$~~~~~~~~~~~~~~~~~~~~~~~~~~~~~~~~~~~~~~~~~~~~~~~~~~A partial order $\mathbb N\times\{c\}$, dual to $\mathbb N$\newline
$\frac{a}{b}$~~~~~~~~~~~~~~~~~~~~~~~~~~~~~~~~~~~~~~~~~~~~~~~~~~~~Arrow $a\rightarrow_\times b$, of the collection $\mathbb N_0\rightarrow_\times\mathbb N$ or $-\mathbb N_0\rightarrow_\times\mathbb N$\newline
$\mathbb Q$~~~~~~~~~~~~~~~~~~~~~~~~~~~~~~~~~~~~~~~~~~~~~~~~~~~~Rational number system\newline
$\textbf{Obj}$~~~~~~~~~~~~~~~~~~~~~~~~~~~~~~~~~~~~~~~~~~~~~~~~Collection of all objects\newline
$\textbf{Obj}_\in$~~~~~~~~~~~~~~~~~~~~~~~~~~~~~~~~~~~~~~~~~~~~~~System of all objects and of arrows $x\in A$\newline
$\mathcal U$~~~~~~~~~~~~~~~~~~~~~~~~~~~~~~~~~~~~~~~~~~~~~~~~~~~Universe of sets; subcollection of $\textbf{Obj}$\newline
$\mathcal V$~~~~~~~~~~~~~~~~~~~~~~~~~~~~~~~~~~~~~~~~~~~~~~~~~~~Add the object $\mathcal U$ to the collection $\mathcal U$\newline
$\mathcal R$~~~~~~~~~~~~~~~~~~~~~~~~~~~~~~~~~~~~~~~~~~~~~~~~~~~~Collection of all normal objects\newline
\textbf{Set}~~~~~~~~~~~~~~~~~~~~~~~~~~~~~~~~~~~~~~~~~~~~~~~~~~Category of small sets; arrows are functions\newline
$A,B,...,X,Y,...$~~~~~~~~~~~~~~~~~~~~~~~~~~~~~~~~Set\newline
$\emptyset$~~~~~~~~~~~~~~~~~~~~~~~~~~~~~~~~~~~~~~~~~~~~~~~~~~~~~Empty set\newline
$x\in A$~~~~~~~~~~~~~~~~~~~~~~~~~~~~~~~~~~~~~~~~~~~~~~$x$ is an element of the set $A$\newline
$\notin$~~~~~~~~~~~~~~~~~~~~~~~~~~~~~~~~~~~~~~~~~~~~~~~~~~~~~Not an element of\newline
$A=\{x\}_{x\in A}$~~~~~~~~~~~~~~~~~~~~~~~~~~~~~~~~~~~~~~~Set $A$ is represesnted as,\newline
$A\subseteq B$~~~~~~~~~~~~~~~~~~~~~~~~~~~~~~~~~~~~~~~~~~~~~~Set $A$ is a subset of set $B$\newline
$A\subset B$~~~~~~~~~~~~~~~~~~~~~~~~~~~~~~~~~~~~~~~~~~~~~~Set $A$ is a subset of set $B$, but they are not the same object\newline
$A=B$~~~~~~~~~~~~~~~~~~~~~~~~~~~~~~~~~~~~~~~~~~~~~~Set $A$ is subset of set $B$, and $B$ is subset of $A$\newline
$\mathcal X$, $\mathcal Y$,...~~~~~~~~~~~~~~~~~~~~~~~~~~~~~~~~~~~~~~~~~~~Family of sets\newline
$\bigcup\mathcal X$ or $\bigcup\limits_{A\in\mathcal X}A$~~~~~~~~~~~~~~~~~~~~~~~~~~~~~~~~~~~Union of all sets in the family\newline
$A\cup B$~~~~~~~~~~~~~~~~~~~~~~~~~~~~~~~~~~~~~~~~~~~~~~Union of sets $A$ and $B$\newline
$\bigcap\mathcal X$ or $\bigcap\limits_{A\in\mathcal X}A$~~~~~~~~~~~~~~~~~~~~~~~~~~~~~~~~~~~Intersection of all sets in the family\newline
$A\cap B$~~~~~~~~~~~~~~~~~~~~~~~~~~~~~~~~~~~~~~~~~~~~~Intersection of sets $A$ and $B$\newline
$A-B$~~~~~~~~~~~~~~~~~~~~~~~~~~~~~~~~~~~~~~~~~~~~~Difference of set $A$ and $B$\newline
$A^c$~~~~~~~~~~~~~~~~~~~~~~~~~~~~~~~~~~~~~~~~~~~~~~~~~~Complement of set $A$\newline
$\textgoth{P}A$~~~~~~~~~~~~~~~~~~~~~~~~~~~~~~~~~~~~~~~~~~~~~~~~~Set of all subsets of set $A$\newline
$\textbf A,\textbf B,...$~~~~~~~~~~~~~~~~~~~~~~~~~~~~~~~~~~~~~~~~~~Collection/set category\newline
$\textbf PA$~~~~~~~~~~~~~~~~~~~~~~~~~~~~~~~~~~~~~~~~~~~~~~~~~Set category of the set $\textgoth PA$\newline
$\textbf{Set}_{\subseteq}$~~~~~~~~~~~~~~~~~~~~~~~~~~~~~~~~~~~~~~~~~~~~~~
Category of small sets; ordered under inclusion\newline
$\mathcal PA$~~~~~~~~~~~~~~~~~~~~~~~~~~~~~~~~~~~~~~~~~~~~~~~~Power category of $A$; ordered under inclusion\newline
$f[[\mathcal X]]$~~~~~~~~~~~~~~~~~~~~~~~~~~~~~~~~~~~~~~~~~~~Image of family $\mathcal X$\newline
$f^{-1}[[\mathcal Y]]$~~~~~~~~~~~~~~~~~~~~~~~~~~~~~~~~~~~~~~~~Inverse image of family $\mathcal Y$\newline
$(fx)\in fA$~~~~~~~~~~~~~~~~~~~~~~~~~~~~~~~~~~~~~Object in the image of $A$; there is $x\in A$ such that $(fx)=fx$\newline
$(f^{-1}B)\in f^{-1}[[\mathcal Y]]$~~~~~~~~~~~~~~~~~~~~~~~~~~~~Object in the family $f^{-1}[[\mathcal Y]]$; there is $B\in\mathcal Y$ such that $(f^{-1}B)=f^{-1}B$\newline
$f^{-1}[z],~f^{-1}\{z\}$~~~~~~~~~~~~~~~~~~~~~~~~~~~~~~~Fiber of object $z$\newline
$\{\{A\}\}$ or $\{\{x\}\}_{x\in A}$~~~~~~~~~~~~~~~~~~~~~~~~~~~~Family of singletons of $A$\newline
$f|^{Im~f}$~~~~~~~~~~~~~~~~~~~~~~~~~~~~~~~~~~~~~~~~~~~~Function onto the image\newline
$\iota_f$~~~~~~~~~~~~~~~~~~~~~~~~~~~~~~~~~~~~~~~~~~~~~~~~~~Inmersion of image into range\newline
$E_f$~~~~~~~~~~~~~~~~~~~~~~~~~~~~~~~~~~~~~~~~~~~~~~~~Image equivalence of $f$\newline
$f/E_f$~~~~~~~~~~~~~~~~~~~~~~~~~~~~~~~~~~~~~~~~~~~Bijective function, $f$ module $E_f$\newline
$Dom~f/E_f$~~~~~~~~~~~~~~~~~~~~~~~~~~~~~~~~~~~Inverse image of the family $\{\{Im~f\}\}$\newline
$p_f$~~~~~~~~~~~~~~~~~~~~~~~~~~~~~~~~~~~~~~~~~~~~~~~~Projection into fiber\newline
$\textbf{n}$~~~~~~~~~~~~~~~~~~~~~~~~~~~~~~~~~~~~~~~~~~~~~~~~~All numbers $x\in\mathbb N$ such that $1\leq x\leq n$\newline
$\mathcal I$~~~~~~~~~~~~~~~~~~~~~~~~~~~~~~~~~~~~~~~~~~~~~~~~~Index set\newline
$s,t,...,(x_i)_i,(y_n)_n,...$~~~~~~~~~~~~~~~~~~~~~Sequence\newline
$(x_i)_{i=1}^n$~~~~~~~~~~~~~~~~~~~~~~~~~~~~~~~~~~~~~~~~~~Finite sequence\newline
$\mathcal O^{\mathbb N}$~~~~~~~~~~~~~~~~~~~~~~~~~~~~~~~~~~~~~~~~~~~~~~Collection of all sequence of objects in $\mathcal O$\newline
$\mathcal O^{\textbf n}$~~~~~~~~~~~~~~~~~~~~~~~~~~~~~~~~~~~~~~~~~~~~~~~Collection of all finite  sequence of the form $\textbf n\rightarrow\mathcal O$\newline
$\textgoth{S},\textgoth{T},...$~~~~~~~~~~~~~~~~~~~~~~~~~~~~~~~~~~~~~~~Order preserving functor $\mathbb N\rightarrow\mathbb N$\newline
$\Lambda_{i=1}^{n+1}$~~~~~~~~~~~~~~~~~~~~~~~~~~~~~~~~~~~~~~~~~~~~Finite operator; function that sends $(x_i)_i$ into $\Lambda_{j=1}^n(x_i)_i\oplus x_{n+1}$\newline
$\Lambda_{i=1}^{n}(x_i)_i$~~~~~~~~~~~~~~~~~~~~~~~~~~~~~~~~~~~~~~Finite operator applied to the sequence $(x_i)_i$\newline
$\Lambda$~~~~~~~~~~~~~~~~~~~~~~~~~~~~~~~~~~~~~~~~~~~~~~~~General operator; sequence of functions $\Lambda_{j=1}^n$\newline
$\Lambda_\mathcal O$~~~~~~~~~~~~~~~~~~~~~~~~~~~~~~~~~~~~~~~~~~~~~~Union of the family that consists of the image of every finite operator\newline
$\Lambda_{i\in\mathbb N}x_i$ or $\Lambda_{i=1}^\infty x_i$~~~~~~~~~~~~~~~~~~~~~~~~~~Series of $(x_i)_i$, under a general operation $\oplus$\newline
$\sum\limits_{i=1}^nx_i$~~~~~~~~~~~~~~~~~~~~~~~~~~~~~~~~~~~~~~~~~~~Partial sum; image of $(x_i)_i$, under the finite operator $\sum_{j=1}^n$\newline
$\sum\limits_{i=1}^\infty x_i$~~~~~~~~~~~~~~~~~~~~~~~~~~~~~~~~~~~~~~~~~~~Series for operation $+$; image of $(x_i)_i$ under the function $\sum\mathbb N$\newline
$\prod\limits_{i=1}^nx_i$~~~~~~~~~~~~~~~~~~~~~~~~~~~~~~~~~~~~~~~~~~~Partial product; image of $(x_i)_i$, under the finite operator $\prod_{j=1}^n$\newline
$\prod\limits_{i=1}^\infty x_i$~~~~~~~~~~~~~~~~~~~~~~~~~~~~~~~~~~~~~~~~~~~Series for operation $\cdot$; image of $(x_i)_i$ under the function $\prod\mathbb N$\newline
$\Phi$~~~~~~~~~~~~~~~~~~~~~~~~~~~~~~~~~~~~~~~~~~~~~~~Sequence of functions\newline
$\phi$~~~~~~~~~~~~~~~~~~~~~~~~~~~~~~~~~~~~~~~~~~~~~~~~Sequence function associated to the sequence of functions $\Phi$\newline
$\phi(x_i)_i$ or $(\phi_i)_i$~~~~~~~~~~~~~~~~~~~~~~~~~~~~~Sequence of partial operations for $(x_i)_i$; every $\phi_n$ is $\Lambda_{j=1}^n(x_i)_i$\newline
$(\sigma_i)_i$~~~~~~~~~~~~~~~~~~~~~~~~~~~~~~~~~~~~~~~~~~~Sequence of partial sums for some sequence $(x_i)_i$\newline
$(\pi_i)_i$~~~~~~~~~~~~~~~~~~~~~~~~~~~~~~~~~~~~~~~~~~~Sequence of partial products for some sequence $(x_i)_i$\newline
$a^n$~~~~~~~~~~~~~~~~~~~~~~~~~~~~~~~~~~~~~~~~~~~~~~Partial product for the constant sequence $(a)_i$\newline
$(\oplus x_i)_i$~~~~~~~~~~~~~~~~~~~~~~~~~~~~~~~~~~~~~~~~Sequence of right operations for $(x_i)_i$ and $\oplus$\newline
$\bigcirc$~~~~~~~~~~~~~~~~~~~~~~~~~~~~~~~~~~~~~~~~~~~~~General operator for composition\newline
$f^n$~~~~~~~~~~~~~~~~~~~~~~~~~~~~~~~~~~~~~~~~~~~~~Composition of the sequence $(f)_i$\newline

\chapter{Systems}

A system is a collection of objects and a collection of relations. When considering a system, it will be decided, for any concept that makes appearence, whether it be an object or a description. We can form collections of systems, just as we can with any other type of object. Relations are those components of the system that come in the form of assertions about the system, objects of the system, and other systems. All relations refer to, at least, the system or objects of the system. Additionally, it is required that every object appear in at least one relation. Therefore, if a certain relation is an object of the system, we are saying it is the object of some relation regarding it. This is kept in mind, when category is defined as a system.

	\section{Generating New Systems}

		\subsection{Taking/Adding Objects/Relations}

We will now discuss the concept of making new system(s), from given system(s). Suppose you have a system $\mathcal{S}$, then the only actions permitted in the forming of new systems are 1) Taking objects or relations from a given system 2) Adding objects or relations to a given system. We will give a restriction for taking or adding objects or relations: no object can be left without a relation. A collection is called a subcollection of another, if it has been obtained by taking objects/relations from the other.

If $\mathcal{S}'$ is obtained from $\mathcal{S}$ by adding only relations, we will say $\mathcal{S}'$ is a \textit{detailed version} of $\mathcal{S}$. Consider any object $x$ that has any relation in $\mathcal{S}ys$, with some object of $\mathcal{S}'$. Then the system $\mathcal{S}''$ formed by adding the object $x$ and any collection of relations, that mention $x$ or an object of $\mathcal{S}'$, is a \textit{complication} of $\mathcal{S}'$. We will say that $\mathcal{S}'$ is a \textit{simplified version} of $\mathcal{S}''$.

\begin{axiom}We will suppose the existence of one system, $\mathcal{S}ys$. It is defined by one relation: All system $S$ is obtained by taking objects or arrows from $\mathcal{S}ys$.\end{axiom}

Another way of stating this axiom is that every $thing$ is an object of $\mathcal{S}ys$; in particular, every system is an object of it. Consequently, $\mathcal Sys$ is an object of $\mathcal Sys$.

\begin{axiom}Given any object $X$, in a system $\mathcal{S}$, we can form the new system $\mathcal{S}X$, whose (only) object is $X$ and whose relations are the relations of $\mathcal{S}$, pertaining to $X$. For every system, there is a system $\mathcal{S}ysX$.
\end{axiom}

The twin system of $\mathcal{S}ys$, is the \textit{empty set}, denoted by $\emptyset$, and constructed from $\mathcal{S}ys$ by taking away all objects and all relations. Any system can be obtained from $\emptyset$ by adding objects or relations. The emptyset is a \textit{thing} in $\mathcal Sys$; the thing that consists of nothing.

		\subsection{Separating and Combining Subsystems}

When studying a system, we can decide it convenient to see which objects are not related amongst themselves, in any chosen manner. To be more explicit, consider $\mathcal{S}$ to be such that its objects can be distinguished as two kinds $a,b,c,..;x,y,z,...$ and suppose there is not a single relation in $\mathcal{S}$ for which one of each kind of object is present. We consider two, evident, simplified versions of $\mathcal{S}$.

Let us now consider the case in which we may want to separate $\mathcal S$ into two systems, but there is no guarantee as too whether we can differentiate objects as before (there may be relations, in $\mathcal S$, that involve objects of both collections). We will form the two simplifications by defining the two kinds of objects. Relations will be two kinds also: the one kind of relations are those that do not refer to the second kind of object, and the second kinds of relation are those that do not refer to the first kind of object. In this case we have a \textit{separation that loses information}. We accomplish a separation that does not lose information by defining the first kind of relations to be those referring to objects of the first kind, and the second kind of relations are defined as those that mention the second kind of objects.

	\section{Relations and Objects}

A relation is a collection of statements referring to systems and objects. If all the statements of a relation can be obtained by changing the objects (subjects of the statement) in one statement, the statements are \textit{components} of the relation. Arrows will be used to represent relations; a collection of arrows can be the simplfied form for certain relations. The main idea behind the use of an arrow is to distinguish two objects, in a given scenario. Thus, $a\rightarrow b$ means \textit{first is a, second is b}. The object to the left of the arrow is the \textit{source object} and the one to the right is the \textit{target object}. We can represent an arrow by saying $f:a\rightarrow b$. It is reasonable that for some relations, the collection of arrows is \textit{discernible}. This means, that there can be more than one arrow between two objects. A pair of arrows is \textit{parallel} if they have the same source and same target object. For objects $a,b$ in $\mathcal C$, define $\{a\rightarrow b\}$ as the collection of all arrows with $a$ as source and $b$ as target. We also define $\{a\rightarrow\}$ as the collection of arrows with $a$ as source and $\{\rightarrow b\}$ as the collection that has $b$ in the target.

\textit{Binary relations} are those such that every statement refers to only two objects $a,b$. An \textit{ordered pair relation} is any relation whose components are arrows between two objects of the system, and two components are the same if they have the same objects in the same position with respect to the arrow. This last means that the arrows are non-discernible (parallel arrows are the same arrow).

An ordered pair relation is said to be \textit{reflexive} if for every object $a$ of the system we have the component $a\rightarrow a$; these arrows will be called reflexive. In case there are no reflexive arrows, the relation is $irreflexive$. It will be called \textit{symmetrical} if all arrows are double ended; for every arrow $a\rightarrow b$ we have the arrow $b\rightarrow a$. The relation is \textit{anti-symmetric} if given any  choice of distinct objects $a,b$ there is at most one arrow between them (only reflexive arrows are allowed to be double ended). Suppose for every every pair of components $a\rightarrow b$ and $b\rightarrow c$ the component $a\rightarrow c$ exists in the collection of statements, then the relation will be called \textit{transitive}. If there is an arrow $a\rightarrow b$ or $b\rightarrow a$, we will say $a,b$ are $\rightarrow$-\textit{comparable}. We say that a relation is trivial if all the arrows are reflexive.

Throughout mathematics we will also use the equivalence relation, amongst others, with great frequency. It is desireable to regard certain objects as the same with respect to certain criteria. We can determine objects by the relations it has in other systems that are not $\mathcal{S}ys$. We wish to say when two systems are the same, in a lesser sense. An ordered pair relation is called an \textit{equivalence relation} if it is reflexive, symmetric and transitive.

		\subsection{Equivalence}

The collection $\mathcal{O}_{1}\rightarrow_{\times}\mathcal{O}_{2}$ defined in terms of the collections $\mathcal{O}_{1},\mathcal{O}_{2}$ will be called the \textit{cartesian product} and it consists of all arrows $a\rightarrow_{\times}b$, where $a,b$ are in $\mathcal{O}_{1},\mathcal{O}_{2}$, respectively. Given a collection $\mathcal{O}$, we immediately get an equivalence relation $\mathcal{O}\rightarrow_{\times}\mathcal{O}$. Such equivalence relations are called \textit{simple}.

Take an object $x$ of an equivalence relation and let $X$ be the collection of objects of the equivalence that are comparable with $x$. We can carry out a separation of the equivalence, so as to consider a system that, for convenience, is also denoted by $X$. When giving the separation, into the collection $X$, it does not matter if you carry out the separation that loses information, or the separation that does not lose information. We invariably get a separation that does not lose information and is one of the reasons that simple equivalence relations are so important. For objects $x,y$ in $X$, we shall verify that the system $Y$, correspoding to $y$, this is the same system as $X$. Let $a$ be an object of $X$ then we can suppose the existence of an arrow $a\rightarrow x$ because of symmetry. This together with the existence of an arrow $x\rightarrow y$ imply the existence of an arrow $a\rightarrow y$ which means $a$ is an object of $Y$. A similar argument can be given to verify that all objects of $Y$ also belong to $X$. Therefore, the systems $X,Y$ are the same because they both consist of the same objects and both consist of all arrows amongst these objects.

		\subsection{Order}

A $preorder$ is a reflexive and transitive ordered pair relation. A preorder that is anti-symmetric will be called $partial~order$. Not all the objects are ordered amongst each other and when there is an arrow amongst two objects $x,y$ we say they are \textit{order comparable}, for example the arrow $x\leq y$. If an arrow for an order is not reflexive we may express $x<y$. Arrows $\leq$ may be called inequalities, while an arrow $<$ is a strict inequailty. Anti-symmetry holds if $x\leq y$ and $y\leq x$, imply $x=y$. An object $M$ of a partial order is said to be \textit{maximal} if for every $x$ in the partial order such that $M\leq x$, we have $x=M$. We say that an object $m$ is minimal if $x\leq m$ implies $x=m$. Maximality of $M$ means that it appears as source object in one arrow only, the reflexive arrow. Of course, minimality means $m$ appears as target, only in the reflexive arrow.

A \textit{natural order} is a transitive binary relation, in which every object of the collection appears in exactly one arrow with every other object. We can see that a natural order is a partial order, however not all partial orders are necesarilly natural. Of course, given a collection of partial orders, the system formed by including all the objects of all the orders with their respective arrows, is again a partial order. 

\begin{proposition}Given a partial order $\mathcal P$, and a subcollection of it, we can specify a simplified version of $\mathcal P$, that is itself a partial order on the objects of the subcollection.\end{proposition}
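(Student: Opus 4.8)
The plan is to write down the restriction of $\mathcal{P}$ to the subcollection explicitly and then check the three clauses in the definition of partial order one at a time. Write $\mathcal{Q}$ for the given subcollection of objects. I would define the candidate simplified version $\mathcal{P}'$ to be the system whose objects are exactly those of $\mathcal{Q}$ and whose components are precisely those arrows $a\rightarrow b$ of $\mathcal{P}$ for which both $a$ and $b$ lie in $\mathcal{Q}$. This is nothing but the \emph{separation that loses information} applied with $\mathcal{Q}$ as the first kind of object: we keep only the relations that do not refer to an object outside $\mathcal{Q}$, discarding every arrow crossing out of $\mathcal{Q}$. Since we are only taking away objects together with the relations attached to them, this does produce a simplified version of $\mathcal{P}$, and $\mathcal{P}'$ remains an ordered pair relation (parallel arrows are still identified, having been inherited from $\mathcal{P}$).

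First I would confirm that $\mathcal{P}'$ is a legitimate system, that is, that the standing restriction \emph{no object can be left without a relation} is respected after the cross arrows have been thrown away. This is exactly where reflexivity of $\mathcal{P}$ is used: for each object $a$ of $\mathcal{Q}$ the arrow $a\rightarrow a$ is a component of $\mathcal{P}$, and since both of its endpoints equal $a\in\mathcal{Q}$ it survives into $\mathcal{P}'$. Hence no object of $\mathcal{Q}$ is left dangling, and at the same moment reflexivity of $\mathcal{P}'$ is established.

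Next I would verify transitivity and anti-symmetry, both of which come for free from the fact that $\mathcal{P}'$ retains a subcollection of the arrows of $\mathcal{P}$ on a subcollection of its objects. For transitivity, given $a\rightarrow b$ and $b\rightarrow c$ in $\mathcal{P}'$, these are components of $\mathcal{P}$ with $a,b,c$ all in $\mathcal{Q}$; transitivity of $\mathcal{P}$ yields $a\rightarrow c$, and since $a,c$ lie in $\mathcal{Q}$ this arrow already belongs to $\mathcal{P}'$. For anti-symmetry, between any two distinct objects $\mathcal{P}$ admits at most one arrow, so the same bound holds a fortiori for the smaller collection $\mathcal{P}'$; phrased through comparability, $a\leq b$ and $b\leq a$ in $\mathcal{P}'$ are also relations in $\mathcal{P}$, forcing $a=b$.

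The genuinely delicate point — the one I would emphasise — is the system-validity check of the second paragraph, not the order axioms. The order axioms drop out purely because we are restricting a relation that already holds on a larger collection, but the requirement that every object appear in some relation could in principle fail once arrows are discarded. It is precisely reflexivity that rescues us, guaranteeing each retained object keeps its loop $a\rightarrow a$. So the crux of the argument is the observation that reflexivity is exactly the ingredient needed to reconcile \textbf{restrict the order} with \textbf{leave no object without a relation}.
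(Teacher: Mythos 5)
Your proof is correct. Note that the paper states this proposition with no proof at all, so there is no argument of its own to compare against; your write-up supplies exactly the verification the paper omits, and it does so using the paper's own machinery: the restricted order $\mathcal{P}'$ is the \emph{separation that loses information} applied to $\mathcal{Q}$, and reflexivity, transitivity, and anti-symmetry restrict verbatim because every arrow of $\mathcal{P}'$ is an arrow of $\mathcal{P}$ whose endpoints both lie in $\mathcal{Q}$. Your closing observation is also the right one to emphasize: the only point in this framework that is not purely formal is the standing rule that no object may be left without a relation, and it is precisely the reflexive arrows $a\rightarrow a$, which survive the separation, that guarantee the restricted system is legitimate.
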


Informally, the  axiomatic base for numbers is given by 1) an object exists, 2) there is one arrow on each side of every object, and 3) an order relation is given.
\begin{axiom}We suppose the existence of a system such that 

\begin{itemize}\item[1)] The system is non-empty.\item[2)] Every object is source of exactly one arrow and target of exactly one arrow.\item[3)] The relations of the system are components of a non-trivial natural order.\end{itemize}\end{axiom}

We are able to ask that 2) holds because we know we can take arrows away to form a new system and we are not breaching our only rule: no object is left without a relation (arrow).

We have objects 0 and 1, such that $0\rightarrow1$. We are unable to apply 2) to conlcude $1\rightarrow0$, so we have $-1\rightarrow0\rightarrow1$, for some object $-1$ that is not $0,1$. We are now unable to say $1\rightarrow-1$, because transitivity would lead to a contradiction with the fact that the order is irreflexive. We now know there are objects $-2,2$ such that $-2\rightarrow-1\rightarrow0\rightarrow1\rightarrow2$. We continue construction in this manner and call such a system a \textit{discrete number system}.

		\subsection{Comparability}

It is desireable to have relations amongst relations themselves. Some orders arise in the form of relations amongst relations, as will be seen in building rational systems. Consider systems $\mathcal{S}_{1},\mathcal{S}_{2}$, with respective arrows $\rightarrow_{1}$ and $\rightarrow_{2}$ in the systems. Suppose we have formed a system whose objects are the arrows of $\mathcal{S}_{1},\mathcal{S}_{2}$. For an ordered pair relation, with a component being an arrow of the form $a\rightarrow_{1}c\longrightarrow b\rightarrow_{2}d$ or $a\rightarrow_{2}c\longrightarrow b\rightarrow_{1}d$, we will express $a,c;b,d$. We will in such cases say $a,c$ and $b,d$ are comparable. This kind of comparability will give us some operations and it will allow many constructions for finding the integers and rationals and it will also arise in categories, where we define functors and natural transformations in terms of comparability properties.

		\subsection{Function}

\subsubsection{Definition} 

Consider a system $f$ with two kinds of objects. The first collection of objects called the \textit{domain} and the second kind will be called \textit{range}; unless a more convenient notation is taken up in particular cases, these will be represented by $Dom~f$ and $Range~f$, respectively. If the system has an ordered pair relation such that the collection of target objects is a subcollection of $Range~f$ and any source object appears to the left of exactly one arrow, we will say the system is a function from $Dom~f$ into $Range~f$. This will be expressed by $f:Dom~f\rightarrow Range~f$. For the most part, except in cases where this is not convenient, the components of a function $f$ will be denoted by $a\mapsto_{f}fa$ or $a\rightarrow_ffa$ or $a;fa,f$. We can, in place of $fa$, write $af$ or $f(a)$. If we are to consider two functions $f,g:\mathcal{O}_{1}\rightarrow\mathcal{O}_{2}$, such that $fa$ and $gb$ are the same object, we will write $a,g;b,f$. It will be common to name functions by $f,g:Dom~f,Dom~g\rightarrow Range~f,Range~g$, meaning $f:Dom~f\rightarrow Range~f$ and $g:Dom~g\rightarrow Range~g$.

\subsubsection{Image} The collection of all objects that appear to the right of an arrow is represented by $Im~f$. Let $f:\mathcal{O}_{1}\rightarrow\mathcal{O}_{2}$, and for any subcollection $\mathcal{Q}_{2}$ of $\mathcal{O}_{2}$, we will write $f^{-1}\mathcal{Q}_{2}$ to represent the subcollection of objects $x$ in $\mathcal O_1$, such that $fx$ is in $\mathcal{Q}_{2}$. This collection is the preimage of $\mathcal Q_2$. We say $f\mathcal{Q}_{1}$ is the image of $\mathcal Q_1$, and it is defined as the collection of objects $fx$ in $\mathcal{O}_{2}$, for every $x$ in $\mathcal{Q}_{1}$. If there exists the possibility of confusion, the image and preimage will be written as $f[\mathcal{Q}_{1}]$ and $f^{-1}[\mathcal{Q}_{2}]$.

\subsubsection{Classifications} If the function is such that all the objects of $Im~f$ appear in exactly one relation, it will be called \textit{monic}. If every object in $Range~f$ appears in at least one relation, then it will be said the function is from $Dom~f$ onto $Range~f$, or if all else is clear from the context, we will simply say $f$ is  $onto$. If every object in $Range~f$ appears in exactly one arrow, the function is monic and onto; the function is said to be \textit{bijective}.

One would think that we are able to restrict the function to a certain subcollection, in the $Im~f$ and obtain a function that is onto. Also, we expect to restrict $Dom~f$ in such a way that we obtain a monic function. These two instances will be studied and used.

\subsubsection{Composition} Given two functions $f,g:\mathcal{O}_{1},\mathcal{O}_{2}\rightarrow\mathcal{O}_{2},\mathcal{O}_{3}$, we form $g\circ f:\mathcal{Q}_{1}\rightarrow\mathcal{O}_{3}$, where $a\mapsto_{g\circ f}c$ if there exists an object $b$ in $\mathcal{O}_{2}$ such that $a\mapsto_{f}b\mapsto_{g}c$. This is a principal of transitivity because we are assigning an arrow from $a$ to $c$, given there are arrows from $a$ to $b$ and from $b$ to $c$. Any function $g\circ f$ will be called a \textit{composition}. The composition is associative:\begin{eqnarray}\nonumber f;f(g\circ h)a,(g\circ h)a\\\nonumber f;f(g(ha)),(g\circ h)a\\\nonumber f;(f\circ g)(ha),(g\circ h)a\\\nonumber f;((f\circ g)\circ h)a,(g\circ h)a\end{eqnarray}

\subsubsection{Inverse and Unit}Let $I:\mathcal O\rightarrow\mathcal O$ such that $x\mapsto_I x$; this function will be denoted by $I_\mathcal O$ and if no confusion is possible we simply write $I$. Given a bijective function $f:\mathcal{O}_{1}\rightarrow\mathcal{O}_{2}$, we will say the function $f^{-1}:\mathcal{O}_{2}\rightarrow\mathcal{O}_{1}$ is inverse of $f$, if $f^{-1}\circ  f$ and $I_{\mathcal{O}_{1}}$ are the same function. 

\begin{proposition}Let $f$ be a bijective function and $f^{-1}$ an inverse of $f$. Then $f\circ f^{-1}$ is the function $I_{\mathcal O_2}$.

The function $f^{-1}:\mathcal O_2\rightarrow\mathcal O_1$, that is obtained by reversing all arrows of $f$, is an inverse of $f$. The definition of equality, in the next paragraph, makes $f^{-1}$ unique.\end{proposition}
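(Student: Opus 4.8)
The plan is to handle the three assertions in a dependent order: first that the one-sided relation $f^{-1}\circ f=I_{\mathcal O_1}$ already forces $f\circ f^{-1}=I_{\mathcal O_2}$, then that reversing the arrows of $f$ produces an actual inverse (existence), and finally that any two inverses coincide (uniqueness).

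For the first assertion I would argue objectwise. Note that $f\circ f^{-1}$ has domain $\mathcal O_2$, since $f^{-1}$ lands in $\mathcal O_1=Dom~f$. Fix $y$ in $\mathcal O_2$. Because $f$ is bijective it is onto $Range~f=\mathcal O_2$, so $y$ is the target of some arrow of $f$; that is, there is $x$ in $\mathcal O_1$ with $x\mapsto_f y$, and the monic half of bijectivity makes this $x$ unique. Applying $f^{-1}\circ f=I_{\mathcal O_1}$ to $x$ gives $f^{-1}(fx)=x$, i.e. $f^{-1}y=x$, whence $f(f^{-1}y)=fx=y$. Since $y$ was arbitrary, $f\circ f^{-1}$ and $I_{\mathcal O_2}$ carry every object of $\mathcal O_2$ to the same object, which gives the first claim.

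For existence I would let $g:\mathcal O_2\to\mathcal O_1$ be the system obtained by reversing every arrow of $f$, so that $y\mapsto_g x$ precisely when $x\mapsto_f y$. The step that genuinely uses bijectivity (not mere ontoness) is verifying that $g$ is a function: each source object $y$ of $g$ must sit to the left of exactly one arrow, and this is exactly the statement that each $y$ in $\mathcal O_2$ is the target of exactly one arrow of $f$, which is bijectivity. Granting that $g$ is a function, $g\circ f=I_{\mathcal O_1}$ follows straight from the definition of composition, since $fx=y$ gives $gy=x$ and hence $(g\circ f)x=x$. Thus $g$ is an inverse of $f$, and an inverse exists.

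Uniqueness I would obtain by running the first computation backward. If $h$ is any inverse, then for $y$ in $\mathcal O_2$ the relation $h\circ f=I_{\mathcal O_1}$ forces $hy$ to be the unique $x$ with $fx=y$, which is exactly $gy$; so $h$ and $g$ have identical components, and the forthcoming definition of equality of functions (which I take to identify functions having the same arrows) yields $h=g=f^{-1}$. Equivalently one can write $h=h\circ I_{\mathcal O_2}=h\circ(f\circ g)=(h\circ f)\circ g=I_{\mathcal O_1}\circ g=g$, using the associativity already established together with the first claim applied to $g$. I do not expect a serious obstacle; the one point demanding consistent care is the one-sidedness of the inverse definition, so that bijectivity --- in its sharp form that each target object is hit by exactly one arrow --- is invoked precisely at the two moments it is needed: to make the reversal a genuine function, and to pin down $f^{-1}y$ in the passage from the one-sided to the two-sided identity.
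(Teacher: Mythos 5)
Your proof is correct, and at bottom it takes the same route as the paper's --- reverse the arrows of $f$ to obtain the candidate inverse and check everything objectwise --- but you supply steps the paper garbles or omits, so the comparison is worth recording. The paper's proof of the first claim only writes the chain $x\mapsto_f fx\mapsto_{f^{-1}}x$ and concludes $(f^{-1}\circ f)x=x$, which is the defining property of an inverse, not the assertion $f\circ f^{-1}=I_{\mathcal O_2}$; to reach the latter one must, as you do, invoke ontoness to write an arbitrary $y\in\mathcal O_2$ as $fx$ and then read the same chain in the opposite order, $(f\circ f^{-1})y=f(f^{-1}(fx))=fx=y$, with monicness pinning down the $x$. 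Similarly, your remark that bijectivity in the sharp form ``each object of $\mathcal O_2$ is the target of exactly one arrow of $f$'' is exactly what makes the reversed system a \emph{function} is left tacit in the paper. Finally, the paper does not argue uniqueness at all --- it merely gestures at the forthcoming definition of equality of functions --- whereas you prove it twice: once componentwise ($h\circ f=I_{\mathcal O_1}$ forces $hy$ to be the unique $x$ with $fx=y$, hence $h$ and $g$ have the same arrows) and once algebraically, $h=h\circ(f\circ g)=(h\circ f)\circ g=g$, the latter being the argument that generalizes to any associative composition once both two-sided identities are available. Nothing in your proposal needs repair; it is the paper's own proof that is the terser and weaker of the two.
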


\begin{proof}We know that for every object in $\mathcal O_1$, we have $x\mapsto_f fx\mapsto_{f^{-1}}x$, so that $(f^{-1}\circ f)x$ is $x$.

Now, let $f^{-1}$ be the function obtained by reversing all arrows of $f$. If $x$ is an object in $\mathcal O_1$, then $f^{-1}$ is the function that sends $fx\mapsto_{f^{-1}}x$ because we have $x\mapsto_f fx$.\end{proof}

The inverse of a composition $g\circ f:\mathcal O_{1}\rightarrow\mathcal O_3$ is $f^{-1}\circ g^{-1}:\mathcal O_3\rightarrow\mathcal O_1$. This is easily figured out by observing the bijectivity of $f,g$ and $f^{-1},g^{-1}$.

\subsubsection{Two Part Function and the Meaning of Equality for Functions} 

Our first equivalence relation is stated so as to determine an equality criteria for functions. Two functions will be the same if they both have the same domain, range, and components. If a function can be seperated into two systems $f_{\mathcal{A}}$ and $f_\mathcal{B}$, without losing information, we say $f$ is a two part function $f_{\mathcal{A}},f_{\mathcal{B}}$. We shall say $\mathcal{A},\mathcal{B}$ are the objects of the first and second kind given by the separation; notice that each of these collections consists of objects in the domain and range. We are considering simplified versions of the function $f$. This is done by taking away arrows and objects in the domain, in such a way that the new system is also a function. 

If $f:\mathcal{O}_{1}\rightarrow\mathcal{O}_2$ and $\mathcal{Q}_{1}$ is the subcollection of $\mathcal{O}_1$ considered, then $f|_{\mathcal{Q}_1}:\mathcal{Q}_{1}\rightarrow\mathcal{O}_2$ is called the restriction of $f$ to $\mathcal{Q}_1$. Notice that we are carrying out the construction of a simplified version of $f$. If $f$ is monic, then any restricted function of $f$ is also monic.  A similar remark does not hold if $f$ is onto. A function $\iota$ is called an \textit{inmersion} if $Dom~\iota\subseteq Range~\iota$ and $x\mapsto_{\iota}x$. Of course, the identity function $I$ is the special case of $\iota$, when equality holds. Also, if we restrict $I$ to a subcollection of its domain, then it becomes an inmersion into its own domain.

\subsubsection{Selection Function}

In this section, we give an axiom that allows us to identify any object with a reflexive arrow. We are looking for a simple way of characterizing the selection of an object, amongst all others in some collection $\mathcal O$. A function $\{x\}\rightarrow Range~f$, where the domain consists of one object, is called \textit{selection function}. For any $x$ in $\mathcal O$, we have the selection function $\iota_x:\{x\}\rightarrow\mathcal O$ that is an inmersion. Let $\mathcal X$ represent a collection of collections and write $\bigcup\mathcal X$ for the collection of objects that belong to some $\mathcal O$ in $\mathcal X$. A \textit{selection function for $\mathcal X$} is a function $\textbf{Sel}:\mathcal X\rightarrow \bigcup\mathcal X$ such that $\textbf{Sel}\mathcal O\in\mathcal O$.

\begin{AOC}Given a non-empty family of functions $\mathcal X$, there exists a selection function.\end{AOC}

We are stating that objects can be chosen one by one, from arbitrary families. This axiom will take another interesting form, in a later chapter. There, we will discuss the axiom of choice in a slightly different context. We will be able to choose elements from partial orders.

\subsubsection{Order Preserving Function and Galois Connections [VIII]}

There are certain functions from an order into another, that can transform the objects in such a manner that the function suits the orders well. We can say that the function recognizes the order of both. Let $\mathcal{P}$ and $\mathcal Q$ be two partial orders where arows are $\leq$ and $\preceq$, respectively.

\begin{definition} A function $\mathcal P\rightarrow\mathcal Q$ defined on the objects of these categories is said to preserve the order if for every arrow $a\leq b$ we also have $fa\preceq fb$; if instead of this, we have $fb\preceq fa$, then the function is said to be order reversing.

We say an order preserving function is an order embedding if for every $fa\preceq fb$ we have $a\leq b$. An order embedding that is onto, is called order bijectivity.\end{definition}

It is easy to see an order bijectivity is indeed a bijective function. For this, we only need to verify that every order embedding is monic. To prove this, take $a,b$ in $\mathcal P$ such that $fa=fb$, and verify $a=b$.

The opposite of a partial order is denoted by $\mathcal P^{op}$ and it is defined as the system whose objects are those from $\mathcal P$ and whose arrows are the reversed arrows of $\mathcal P$. It will be seen that there is not always an order bijectivity for a partial order and its opposite. When it does exist, we say \textit{$\mathcal P$ is self dual}.

When there is an order bijectivity $\mathcal P\rightarrow \mathcal Q$ we write $\mathcal P,\mathcal Q;\mathcal P^{op},\mathcal Q^{op}$. If there is an order bijectivity into the opposite of $\mathcal Q$, then we say $\mathcal P$ and $\mathcal Q$ are dual orders and we represent this with $\mathcal P,\mathcal Q^{op};\mathcal P^{op},\mathcal Q$. Notice that self dualtiy is expressed as $\mathcal P,\mathcal P^{op};\mathcal P^{op},\mathcal P$.

We shall address the issue of whether there exists an order bijectivity $\mathcal P^{op}\rightarrow\mathcal Q^{op}$ given there is an order bijectivity $\mathcal P\rightarrow\mathcal Q$. Let $f:\mathcal P\rightarrow\mathcal Q$ be an order bijectivity, we wish to prove that $f$ is also an order bijectivity $\mathcal P^{op}\rightarrow\mathcal Q^{op}$. We know $x\leq_{op}y$ if and only if $y\leq x$ which is true if and only if $fy\preceq fx$ and this holds if and only if $fx\preceq_{op}fy$.

If we consider an order bijectivity $f:\mathcal P\rightarrow\mathcal Q$, and its inverse $f^{-1}$, we have two order preserving functions; one from $\mathcal P$ to $\mathcal Q$ and the other from $\mathcal Q$ to $\mathcal P$. There is a similar situation with more generality than this last example. Let $f,g:\mathcal P,\mathcal Q\rightarrow\mathcal Q,\mathcal P$. We will ask that $f,g$ are order preserving but we will also ask that the compositions be order preserving, in a different sense. This last condition is a less restrictive condition than the situation with $f,f^{-1}$ in which we have $ff^{-1}q=q$ and $f^{-1}fp=p$.

\begin{definition}A pair of functions $f,g$, as mentioned above, is called a \textit{Galois connection} if 1) $f,g$ are order preserving, and 2) for every object $p$ in $\mathcal P$ and $q$ in $\mathcal Q$ we have $p\leq gfp$ and $fgq\leq q$. \end{definition}

Here is a characterization of Galois connections.
Suppose that the functions $f,g:\mathcal P,\mathcal Q\rightarrow\mathcal Q,\mathcal P$ are such that for every $p,q$ in $\mathcal P,\mathcal Q$ we have $fp\leq q$ if and only if $p\leq gq$. We will sometimes say $f,g$ are comparable functions and we will write $f;g$ instead of using the notation for arrows $\mathcal P,\mathcal Q;\mathcal Q,\mathcal P$.

\begin{proposition}A pair of functions is a Galois connection if and only if it is comparable.\end{proposition}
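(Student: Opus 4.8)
The plan is to prove the two implications separately, using throughout the reflexivity and transitivity that $\mathcal P$ and $\mathcal Q$ enjoy as partial orders, together with the order-preservation of $f,g$ where it is available. I will write $\leq$ for the order of $\mathcal P$ and $\preceq$ for that of $\mathcal Q$, reading the $\mathcal Q$-valued comparisons $fp\,?\,q$ and $fgq\,?\,q$ appearing in the statement as $\preceq$.

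First I would show that every Galois connection is comparable. Assuming $f,g$ satisfy conditions 1) and 2), I prove each direction of the biconditional $fp\preceq q \iff p\leq gq$. Suppose $fp\preceq q$; applying the order-preserving $g$ gives $gfp\leq gq$, and combining this with the unit inequality $p\leq gfp$ by transitivity yields $p\leq gq$. For the converse, suppose $p\leq gq$; applying the order-preserving $f$ gives $fp\preceq fgq$, and combining with the counit inequality $fgq\preceq q$ by transitivity yields $fp\preceq q$. This establishes comparability.

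Next I would show that every comparable pair is a Galois connection. The step I expect to carry the real content is the observation that the unit and counit inequalities fall out by feeding reflexive arrows into the biconditional: taking $q=fp$, reflexivity $fp\preceq fp$ gives $p\leq g(fp)=gfp$, and taking $p=gq$, reflexivity $gq\leq gq$ gives $f(gq)=fgq\preceq q$. These are exactly the two inequalities of condition 2). It then remains to recover order-preservation. For $f$: from $p_1\leq p_2$ and the unit $p_2\leq gfp_2$ I get $p_1\leq gfp_2$ by transitivity, whence comparability with $q=fp_2$ gives $fp_1\preceq fp_2$. Symmetrically for $g$: from $q_1\preceq q_2$ and the counit $fgq_1\preceq q_1$ I get $fgq_1\preceq q_2$, whence comparability with $p=gq_1$ gives $gq_1\leq gq_2$.

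The main obstacle here is bookkeeping rather than depth: one must keep the two orders $\leq$ and $\preceq$ straight and invoke the correct instance of the biconditional, substituting a reflexive arrow into the appropriate variable, at each step. Once the substitution trick producing the unit and counit is in hand, the remainder is routine transitivity chaining, so I expect no genuine difficulty beyond attention to these directional details.
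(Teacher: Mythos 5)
Your proof is correct and follows essentially the same route as the paper's: reflexivity fed into the biconditional yields the unit and counit inequalities, order-preservation is recovered by chaining with the unit/counit and then invoking comparability, and the converse direction chains $p\leq gfp\leq gq$ and $fp\preceq fgq\preceq q$ exactly as the paper does. No gaps; the only difference is that you spell out the substitutions and transitivity steps more explicitly than the paper's terser wording.
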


\begin{proof}Suppose the pair $f,g$ is comparable. For $p$, we know that $fp\leq fp$ and this implies $p\leq gfp$. We can similarly prove this for $q$ in $\mathcal Q$ and that proves 2). Now, we shall prove $f,g$ are order preserving. Take $p,r$, both in $\mathcal P$, such that $p\leq r\leq gfr$. This is equivalent to stating that $fp\leq fr$. If $fgr\leq r\leq q$ are in $\mathcal Q$, we have $gr\leq gq$.

Now, suppose $f,g$ is a Galois connection. If we suppose $fp\leq q$, then $p\leq gfp\leq gq$. Let $p\leq gq$, then $fp\leq fgq\leq q$.\end{proof}

\subsubsection{Natural Pair of Functions} 

Let $*a:\mathcal{O}_1\rightarrow\mathcal{Q}_1$ and $*b:\mathcal{O}_2\rightarrow\mathcal{Q}_2$ be onto functions. Consider the collection $\mathcal{O}$ of all functions of the form $O_1\rightarrow\mathcal{O}_2$. Let $y$ in $\mathcal{Q}_1$ and $f$ a function in $\mathcal{O}$, then we have $x$ in $\mathcal{O}_1$ such that $x\mapsto_{*a}y$. Define $\mathcal{F}f:\mathcal{Q}_1\rightarrow\mathcal{Q}_2$ such that $ x*a\mapsto_{\mathcal{F}f}fx*b$; this simply means $fx,\mathcal{F}f;x*a,*b$. The situation described here is of great importance. If we have functions $F,G:\mathcal{O}_1,\mathcal{Q}_1\rightarrow\mathcal{O}_2,\mathcal{Q}_2$, then we would like to know when it is possible to give functions $*a:\mathcal{O}_1\rightarrow\mathcal{Q}_1$ and $*b:\mathcal{O}_2\rightarrow\mathcal{Q}_2$ that will allow us to describe the actions of applying $G$, in terms of the functions $*b$ and $F$. That is, we are giving a function $F$ and we also give rules for transforming the domain and range of $F$ (that is the role of $*a$ and $*b$ respectively), so as to generate a new function $G$ that is defined for the transformed domain and range. The way this function $G$ works is by sending objects $*a(x)$ in the transfromed image, into the object that results from transforming the orginal object, first under $F$, and then under $*b$. Explicitly we have $Fx,G;x*a,*b$. We will say that $F,G$ are a natural pair of functions, under $*a$ and $*b$ and this will be expressed by $F,G;*a,*b$.

		\subsection{Operation}

Sometimes it will be convenient to use the collection $\{\mathcal O_1\rightarrow\mathcal O_2\}$ of all functions from $\mathcal O_1$ into $\mathcal O_2$; we will use the notation $\{\mathcal O_1f\mathcal O_2\}$. However, in the definition of operation, this collection is not large enough to be useful. Let $\mathcal O_1f\mathcal O_2$ represent the collection of all functions of the form $\mathcal{Q}_1\rightarrow\mathcal{Q}_2$, where $\mathcal{Q}_1,\mathcal{Q}_2$ are any subcollections of $\mathcal{O}_1,\mathcal{O}_2$, respectively. 

An operation is a function $*:\mathcal{O}\rightarrow\mathcal{O}_{1}f\mathcal{O}_{2}$. We say the objects in $\mathcal O$ are the \textit{actions} of the operation. The image of $a$ in $\mathcal O$, under $*$, is a function $a*$ called \textit{left operation}. The image of $x$ in $\mathcal O_1$, under $a*$ is $a*x$ and we express $a;a*x,x$ instead of the function notation $x;a*(x),a*$. Just as we have characterized the operation in terms of functions associated to the objects of $\mathcal O$, we can also determine a function in terms of functions associated to the objects of $\mathcal O_1$. Let $x$ be any object in $\mathcal O_1$; we wish to build a general function $*x$. An object is in the domain of $*x$ if and only if there exists an object $a$, in $\mathcal O$, such that $a*x$ is defined, i.e., $x$ is in the domain of $a*$. We define the transformation as $a\mapsto_{*x}a*(x)$; this is expressed with $a;a*x,x$ instead of the function notation $a;*x(a),*x$. the objects of $\mathcal O_1,\mathcal O_2$ are called \textit{origin} and \textit{target} objects, respectively. An operation $*$ is \textit{full} if $*:\mathcal O\rightarrow\{\mathcal{O}_{1}f\mathcal{O}_{2}\}$. This means $\mathcal O_1$ is  the domain of every left operation. Equivalently, an operation is complete if the domain of every right operation is $\mathcal O$. 

We will frequently encounter functions of the form $*:\mathcal O_1\rightarrow\mathcal O_1f\mathcal O_2$; we call them \textit{operation from $\mathcal O_1$ to $\mathcal O_2$}. For this kind of operation, we can define the concept of commutativity. We say $x$ in $\mathcal O_1$ commutes if $*x,x*$ are the same function. We say $*$ commutes if every action (and thus target) object $x$ commutes. Other times we will consider operations $*:\mathcal O_1\rightarrow\mathcal O_2f\mathcal O_2$ and we say \textit{the objects of $\mathcal O_1$ are actions for $\mathcal O_2$}. If the objects for $\mathcal O_1$ are actions for $\mathcal O_2$. Suppose we have such an operation, and suppose there exists an action object $e$ such that $e*$ is the identity operation for $\mathcal O_2$; we say $e$ is a \textit{left unit of *}. If $*:\mathcal O_1\rightarrow\mathcal O_2f\mathcal O_1$, and $*e$ is $I_{\mathcal O_1}$, for some $e$ in $\mathcal O_2$, we say $e$ is a \textit{right unit}. We can speak of a \textit{unit} if $*:\mathcal O\rightarrow\mathcal Of\mathcal O$ and there is an object $e$ that commutes; $e$ is a unit if the left/right operations are the identity function. When there is a unit, two objects can be related in a special manner and we say $x^{-1}$ is right inverse of $x$ if $x;e,x^{-1}$. We are justified in saying $x$ is left inverse of $x^{-1}$.

Form the system whose weak arrows are the arrows of the right operation functions $*x$, for every $x$ in $\mathcal{O}$. Two weak arrows $a\rightarrow_{*x}c$ and $b\rightarrow_{*y}d$ will be related if $c$ and $d$ are the same object; in such case $a,y;b,x$ will be expressed instead of $a,c;b,d$. This relation is an equivalence relation so we can equally say $b,x;a,y$. We choose to do this because in terms of the notation for functions we can express $a,*y;b,*x$ or $b,*x;a,*y$. Two objects $x,y$ commute if $x,x;y,y$.

If $*:\mathcal O\rightarrow\{\mathcal Of\mathcal O\}$, then the composition is defined for the functions which are left/right operations. We want to define an operation $\circ$ such that the actions, source, and target objects are the left and right operations of $*$. The operation in question is $\circ:\{\mathcal Of\mathcal O\}\rightarrow\{\mathcal Of\mathcal O\}f\{\mathcal Of\mathcal O\}$, where $\circ$ is composition of functions. We will say the operation is associative if $x*,x*;*y,*y$, for every choice of obects $x,y$. This means that $x*\circ*y$ is the same function as $*y\circ x*$; the functions $x*$ and $*y$ commute. Associativity can also be seen as a transitive property regarding the operation, for the following reason. We have assigned $a\rightarrow_{y*\circ*x} c$ to any arrows $a\rightarrow_{y*}b\rightarrow_{*x}c$. In terms of the objects and the original operation, associativity is expressed by $a,c;a*b,b*c$, for every objects $a,b,c$. This last form of writing the associativity gives intuition for why we say it is a transitive principle: on one side we have $a,c$, while on the other we have $a,b$ and $b,c$, in that order. In conclusion, $*$ is associative if for every pair $x,y$ of objects, $x*$ is a natural pair of functions with itself, under $*y$, $*y$. Or, we can say $*y$ is a natural pair with itself under $x*$, $x*$.

An operation and a function can commute. This happens for $f:\mathcal O\rightarrow\mathcal O$ and $*:\mathcal O\rightarrow\mathcal Of\mathcal O$ if $f,f;*fx,*x$, for every $x$; put differently, $f\circ *x$ is the same function as $*fx\circ f$. Given two operations $\oplus:\mathcal O_1\rightarrow\{\mathcal O_1f\mathcal O_1\}$ and $*:\mathcal O_2\rightarrow\{\mathcal O_1f\mathcal O_1\}$, we say \textit{$*$ distributes over $\oplus$ on the left} if for all $b$ in $\mathcal O_1$ and all $x$ in $\mathcal O_2$ we have $x*,x*;\oplus(x*b),\oplus b$. Another way of saying this is all the left operations of $*$ commute with $\oplus$. If $*:\mathcal O_1\rightarrow\{\mathcal O_2f\mathcal O_1\}$ and instead we verify the above with $*x,*x;\oplus(b*x),\oplus b$ then we say $*$ \textit{distributes on the right}. Distribution on the left can be written as $x;(x*a)\oplus(x*b),a\oplus b$ and distribution on the right is $a\oplus b;(a*x)\oplus(b*x),x$.

Suppose we have a full operation $*:\mathcal O\rightarrow\{\mathcal O_1f\mathcal O_2\}$. Propose a function $*:(\mathcal O_1\rightarrow_\times\mathcal O)\longrightarrow\mathcal O_2$ which is called a \textit{binary operation}. Let $a\rightarrow_\times x$ be an object in $\mathcal O_1\rightarrow_\times\mathcal O$. We define $*(a\rightarrow_\times x):=a*x$. Let us consider the construction of the operation, provided we have a binary operation. We wish to give a function $*x$ for every object in $\mathcal O$; this function must send objects of $\mathcal O_1$ into $\mathcal O_2$. The components of the function are $a\rightarrow^{*x}*(a\rightarrow_\times x)$. Given a function $((\mathcal O_2\rightarrow_\times\mathcal O_1)\rightarrow_\times\mathcal O)\longrightarrow\mathcal O_3$, there is an operation $\mathcal O\rightarrow\{(\mathcal O_2\rightarrow_\times\mathcal O_1)f\mathcal O_3\}$. Every function in the image can be turned into an operation of the form $\mathcal O_1\rightarrow\{\mathcal O_2f\mathcal O_3\}$. It is then possible to build an operation $\mathcal O\rightarrow\{\mathcal O_1f\{\mathcal O_2f\mathcal O_3\}\}$.

\chapter{Category}

Here, we will treat objects and arrows as two different kinds of objects. The relations in a category are not the arrows. The relations, in this definition of category, are statements about arrows and objects. It will be clear, from the definition, that a category is indeed a system whose objects consist of relations also. This is important to consider an environment where a functor is to be treated as a function.

A collection of arrows is said to have the reflexive property if for all object $x$ there exists at least one reflexive arrow. It will be said that the collection of arrows has the transitive property if the composition for the arrows (adequately seen as functions) defines an operation $*:\mathcal{A|C}\rightarrow\mathcal{A\|C}f\mathcal{A|C}$. Clearly, the composition is not always a full operation. A collection of arrows that satisfies the last two properties, for all objects of some collection $\mathcal{O}$, will be called a \textit{collection of order arrows} for $\mathcal{O}$.

\begin{definition}A category is a system with objects of two kinds; the one kind are called c-objects, while the other kind is a collection of order arrows, for the collection of c-objects. Additionally, for any choice of c-objects $x,y$, and arrows $f:a\rightarrow x$ and $g:b\rightarrow y$, there is a reflexive arrow $1_{b},1_x$ such that \begin{itemize}\item[1)] $1_{x};f,f$~~~~~~~~~and~~~~~~~~$g;g,1_{b}$~~~~~~~~~~~~~~~~~~~~~
~~~~~~~~~~~~~~~~~~~~~~~~~~~~~~~~~~~~~~~~~~Unit\item[2)] $g*,g*;*f,*f$~~~~~~~~~~~~~~~~~~~~~~~~~~~~~~~~~~~~~~~~~~~~~~~~~~~~
~~~~~~~~~~~~~~~~~~~~~~~~~~~~~~~~~~~~Associativity\end{itemize}\end{definition}

The two kinds of objects are represented by $\mathcal{O|C},\mathcal{A|C}$, respectively. In 2) we are stating that for every $h:x\rightarrow b$, the arrows $g*(h*f)$ and $(g*h)*f$ are the same. All we are saying is that both, the collection of arrows and the operation for the arrows, satisfy their own reflexive and transitive properties.

	\section{Arrows}

		\subsection{Defining Equality of Arrows} 

Given two arrows we need to provide a definition of equality between them. That is, we will establish an equivalence relation for arrows of any given category. Since categories may have arrows that are discernible, an arrow is not necessarily determined by the objects and the position with respect to the arrow, as is the case with non-discernible arrows. 

\begin{definition}Two arrows $f,g:x\rightarrow b$, of the same category, are the same arrow if for every arrow $h:a\rightarrow x$ and $i:b\rightarrow y$ we verify $f,h;g,h$ and $i,g;i,f$.\end{definition}

What this means is that an arrow is defined to be the same as another in terms of the result of the operation $*$. We say $f,g$ are the same when $f*h$ is the same as $g*h$ and $i*f$ is the same as $i*g$, for every $h,i$ composable with $f,g$.

\newpage

It remains to be proven that this is indeed an equivalence relation, defined on the arrows of the category. Reflexivity is trivially verified. Symmetry is not hard to prove; we know $g,h;f,h$ is a notation to replace $f,h;g,h$ and $i,f;i,g$ replaces $i,g;i,f$. Suppose $e$ is the same as $f$ and $f$ is the same as $g$. Since $e,h;f,h$ and $f,h;g,h$ we can say that $e,h;g,h$. Similarly, one finds $i,g;i,e$.

		\subsection{Classifying Arrows [I]}

\subsubsection{Invertible}Defining types of arrows is important for classifying functions, and more generally, functors. We say that $f:a\rightarrow b$ is \textit{invertible} if there exists $f^{-1}:b\rightarrow a$ such that $f^{-1}*f$ and $f*f^{-1}$ are $1_a$ and $1_b$, respectively. If there is an invertible arrow between two objects, we say they are \textit{isomorphic}. So, we say an arrow that makes two object isomorphic, is an \textit{isomorphism}. The collection of isomorphisms from for any pair of objects is represented as $\{a\rightarrow_{iso}b\}$.

\begin{proposition}The relation of isomorphism, is an equivalence relation on the objects of the category.\end{proposition}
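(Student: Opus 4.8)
The plan is to verify the three defining properties of an equivalence relation — reflexivity, symmetry, and transitivity — for the relation ``$a$ is isomorphic to $b$'', which by the preceding paragraph holds precisely when there exists an invertible arrow $f:a\rightarrow b$. Throughout I would use only the two category axioms, namely the unit law ($1_x;f,f$ and $g;g,1_b$) and associativity ($g*,g*;*f,*f$), together with the existence of composites of composable arrows guaranteed by the transitive property of the collection of order arrows. Since the definition of invertibility asks only that an inverse \emph{exist} (not that it be unique), in each case it suffices to exhibit one arrow and a two-sided inverse for it.

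For reflexivity I would take the reflexive arrow $1_a:a\rightarrow a$ and show it is its own inverse: the unit law applied with $f=1_a$ gives $1_a*1_a=1_a$, so $1_a$ is invertible with $(1_a)^{-1}=1_a$, and hence $a$ is isomorphic to itself. For symmetry, suppose $f:a\rightarrow b$ is invertible with inverse $f^{-1}:b\rightarrow a$, so that $f^{-1}*f=1_a$ and $f*f^{-1}=1_b$. Reading these same two equalities with the roles of $f$ and $f^{-1}$ exchanged says exactly that $f$ is a two-sided inverse of $f^{-1}$; thus $f^{-1}:b\rightarrow a$ is itself invertible and $b$ is isomorphic to $a$.

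The transitivity step carries the real content. Given invertible $f:a\rightarrow b$ and $g:b\rightarrow c$, the candidate isomorphism $a\rightarrow c$ is the composite $g*f$ (which exists, since $f$ and $g$ are composable at $b$), and its proposed inverse is $f^{-1}*g^{-1}:c\rightarrow a$, in analogy with the earlier remark that the inverse of a composition reverses the factors. I would then compute
\begin{eqnarray}\nonumber (f^{-1}*g^{-1})*(g*f)&=&f^{-1}*(g^{-1}*g)*f=f^{-1}*1_b*f=f^{-1}*f=1_a\\ \nonumber (g*f)*(f^{-1}*g^{-1})&=&g*(f*f^{-1})*g^{-1}=g*1_b*g^{-1}=g*g^{-1}=1_c\end{eqnarray}
where each rebracketing is an instance of associativity and each removal of the middle $1_b$ is an instance of the unit law. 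This exhibits $f^{-1}*g^{-1}$ as a two-sided inverse of $g*f$, so $a$ is isomorphic to $c$, completing the proof.

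The genuinely delicate point, and the part I expect to require the most care, is bookkeeping with the paper's nonstandard notion of equality of arrows together with the partiality of composition. Each equality displayed above is an equality of arrows in the sense defined earlier, and before invoking associativity — which is asserted only ``whenever it makes sense'' — I must confirm that every composite written down is actually defined, i.e.\ that sources and targets match along the chain $c\rightarrow b\rightarrow a$ and $a\rightarrow b\rightarrow c$. Once the composability is tracked, the associativity rearrangements in the transitivity step reduce to repeated appeals to the two axioms, and reflexivity and symmetry are immediate.
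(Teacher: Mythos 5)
Your proposal is correct and follows essentially the same route as the paper: reflexivity via $1_a$ being its own inverse ($1_a;1_a,1_a$), symmetry by exchanging the roles of $f$ and $f^{-1}$ in the defining equations, and transitivity by exhibiting $f^{-1}*g^{-1}$ as the two-sided inverse of $g*f$. The only difference is that you write out the associativity/unit computation that the paper dismisses as ``easy to verify,'' which is a welcome expansion rather than a divergence.
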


\begin{proof}First, we know that any object $a$, of the category, is isomorphic to itself because $1_a$ is invertible and $1_a;1_a,1_a$. By definition, symmetry holds; if $f$ is invertible, then so is $f^{-1}$. Now, let $a,b$ be isomorphic with $f$ and $b,c$ isomorphic with $g$. Consider the arrow $f^{-1}*g^{-1}$, and it is easy to verify $g*f;1_c,f^{-1}*g^{-1}$ and $f^{-1}*g^{-1};1_a,g*f$.\end{proof}

Say $f^{-1}$ and $g$ both make $f$ invertible, we shall prove they are the same arrow. We know they are both functions of the form $b\rightarrow a$, so now it must be shown that the operations coincide. Let $h$ be an arrow composable with $f^{-1},g$.
\begin{eqnarray}\nonumber g&;&g*h,h\\\nonumber g&;&1_a*(g*h),h\\\nonumber g&;&(1_a*g)*h,h\\\nonumber g&;&[(f^{-1}*f)*g]*h,h\\\nonumber g&;&[f^{-1}*(f*g)]*h,h\\\nonumber g&;&[f^{-1}*1_b]*h,h\\\nonumber g&;&f^{-1}*h,h\\\nonumber g,h&;&f^{-1},h\end{eqnarray}

With this we prove that $f^{-1},g$ have the same left operation for composition. We can just as easily show that they have the same right operation. 
\begin{proposition}The inverse of an invertible arrow is unique.\end{proposition}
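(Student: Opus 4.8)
The plan is to invoke the equality criterion for arrows established just above: two parallel arrows coincide exactly when their left operations agree on every composable arrow and, likewise, their right operations agree. So I would begin by supposing that $g$ and $f^{-1}$ are both inverses of an invertible arrow $f:a\to b$. Both are then arrows $b\to a$, hence parallel, and the four identities $f^{-1}*f=1_a$, $f*f^{-1}=1_b$, $g*f=1_a$, $f*g=1_b$ are all at my disposal, the first two because $f^{-1}$ is an inverse and the last two because $g$ is. It therefore remains only to match the two one-sided operations of $g$ and $f^{-1}$.

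For the left operations, I would fix an arrow $h$ composable with $g$ (and hence with $f^{-1}$) and run the computation already displayed: rewrite $g*h$ as $1_a*(g*h)$ by the unit law, regroup by associativity to $(1_a*g)*h$, replace $1_a$ by $f^{-1}*f$, reassociate to $f^{-1}*(f*g)*h$, use $f*g=1_b$, and collapse the unit to reach $f^{-1}*h$. This gives $g*h=f^{-1}*h$, i.e. $g,h;f^{-1},h$, for every such $h$. For the right operations I would carry out the mirror computation, fixing an arrow $i$ composable on the other side and threading it through an analogous chain that uses $f*f^{-1}=1_b$ and $g*f=1_a$ in place of the two identities above, arriving at $i*g=i*f^{-1}$, that is $i,g;i,f^{-1}$. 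With both one-sided operations in agreement, the equality criterion for arrows forces $g$ and $f^{-1}$ to be the same arrow, so an invertible arrow has at most one inverse.

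The step I expect to require the most care is conceptual rather than algebraic. Because arrows here may be discernible, equality of arrows is defined through the operation $*$ and not by source, target, and a direct rewriting; consequently the familiar one-line cancellation $g=g*1_b=g*(f*f^{-1})=(g*f)*f^{-1}=1_a*f^{-1}=f^{-1}$ is not by itself licensed, since it silently assumes that $*$ respects arrow-equality. The remedy, and the reason the trailing $*h$ and leading $i*$ are kept throughout, is to keep every intermediate assertion at the level of the composite objects until the final appeal to the equality criterion; along the way one must also check that each composite appearing is actually defined, tracking which unit, $1_a$ or $1_b$, applies on which side.
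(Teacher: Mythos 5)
Your proof is correct and follows essentially the same route as the paper: you run the same chain $g*h=1_a*(g*h)=\cdots=[f^{-1}*(f*g)]*h=f^{-1}*h$ to match the left operations, mirror it for the right operations, and then invoke the equality criterion for arrows, which is exactly how the paper argues. Your closing remark on why the bare one-line cancellation is not licensed (since equality of arrows is defined through $*$) is a fair articulation of the point the paper's notation is designed to respect.
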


\subsubsection{Left and Right Inverse}If two discernible arrows have the same source and target, we say they are \textit{parallel}. An arrow $f:x\rightarrow b$ is \textit{left cancellable} if for every pair of parallel arrows $g,h:a\rightarrow x$, the expression $f,g;f,h$ implies $1_x,g;1_x,h$. Suppose for every $g,h:b\rightarrow y$, the expression $g,f;h,f$ implies $g,1_b;h,1_b$. Then we say $f$ is \textit{right cancellable}.

If there is an arrow $l:b\rightarrow a$ such that $l*f$ is $1_a$, we say $f$ has \textit{left inverse}. In a similar manner, $f$ has \textit{right inverse} if there is $r:b\rightarrow a$ such that $f;1_b,r$.

\begin{proposition}If $f$ has left inverse, then it is left cancellable because we can apply $f^{-1}$ to $f,g;f,h$. An arrow with right inverse is right cancellable.
\end{proposition}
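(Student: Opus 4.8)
The plan is to prove both halves by the single maneuver the statement already hints at: compose the given equation of composites with the one-sided inverse, then collapse the result using associativity and the unit law. The two halves are mirror images, so I would carry out the left-cancellable case in detail and obtain the right-cancellable case by the dual argument. Throughout I rely only on the category axioms (the unit law $1_x;f,f$ and associativity in the form $a,c;a*b,b*c$) together with the definitions of left/right inverse.

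For the first claim, let $f:x\to b$ and let $l:b\to x$ be a left inverse, so that $l*f$ is $1_x$ (the definition of left inverse, with its generic objects specialized to $x,b$). Suppose $g,h:a\to x$ are parallel arrows with $f,g;f,h$, i.e.\ $f*g$ and $f*h$ are the same arrow. First I would apply the left operation $l*$ to this common arrow, yielding that $l*(f*g)$ and $l*(f*h)$ are again the same arrow. Next I would invoke associativity, $a,c;a*b,b*c$, to rewrite $l*(f*g)$ as $(l*f)*g$ and likewise $l*(f*h)$ as $(l*f)*h$. Finally, using $l*f=1_x$ and the unit law $1_x;g,g$ (so that $1_x*g$ is $g$, and similarly $1_x*h$ is $h$), both sides collapse to $g$ and $h$ respectively; the resulting identity is precisely $1_x,g;1_x,h$, which is the conclusion demanded by left cancellability.

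For the second claim, let $f:x\to b$ have right inverse $r:b\to x$ with $f*r$ equal to $1_b$, and suppose $g,h:b\to y$ satisfy $g,f;h,f$, i.e.\ $g*f$ and $h*f$ are the same arrow. The dual maneuver is to apply the right operation $*r$, regroup $(g*f)*r$ as $g*(f*r)$ and $(h*f)*r$ as $h*(f*r)$ by associativity, and then collapse $f*r=1_b$ via the unit law $g;g,1_b$ to obtain $g*1_b=h*1_b$, that is $g,1_b;h,1_b$, which is the defining condition for right cancellability.

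The computation itself is routine; the one point demanding care is bookkeeping in the comparability notation, namely keeping straight that $l*$ is applied on the left in the first case and $*r$ on the right in the second, and checking that each regrouping matches the associativity axiom $a,c;a*b,b*c$ as stated rather than its mirror. A secondary subtlety worth a sentence is the step passing from ``$f*g$ and $f*h$ are the same arrow'' to ``$l*(f*g)$ and $l*(f*h)$ are the same arrow'': this is legitimate because the sameness of arrows was defined exactly so as to be respected by composition on either side, so applying the left operation $l*$ preserves it.
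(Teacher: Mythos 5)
Your proof is correct and is exactly the argument the paper intends: the proposition's own justification is ``apply the inverse to $f,g;f,h$,'' and you simply spell out that maneuver---compose with $l*$ (resp.\ $*r$), regroup by associativity, and collapse $l*f$ (resp.\ $f*r$) to the unit---which is the same approach, just with the bookkeeping made explicit.
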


Now we give a result that characterizes monic functions as functions with left inverse. It will later be proven that onto functions are characterized as functions with right inverse.

\begin{lemma f1}A function has left inverse if and only if it is monic.\end{lemma f1}

\begin{proof}If the function has left inverse, and $a,b$ are different objects, then $(l\circ f)a$ and $(l\circ f)b$ are also different. Therefore, $fa$ and $fb$ must be different, so that $l$ is a function. This means $f$ is monic.

Suppose$f$ is monic; we will give $l:Range~f\rightarrow Dom~f$ such that $a;a,l\circ f$. Take $y$ in the image of $f$ so that $x;y,f$ for some $x$ in the domain. We define $y;x,l$. If, however, $y$ is not in the image, we can take any $a$ in the domain and define $y;a,l$. The reader can prove this is a left inverse.\end{proof}

\begin{lemma f2}A function has right inverse if and only if it is onto.\end{lemma f2}

\begin{proof}Let $f$ be a function with right inverse $r:Range~f\rightarrow Dom~f$. This means that for every $y$ in the range of $f$, we verify $y;y,f\circ r$. It is tue that $f$ is onto because to every $y$, in the range, we assign an object, in the domain, whose image, under $f$, is $y$. 

If the function is onto, every element of the range has non-empty inverse image. Let $f^{-1}[[\{\{Im~f\}\}]]$ represent the collection of inverse images, of objects in the range; this notation will later be justified when we study set functions. We are giving a collection whose objects are $f^{-1}[\{y\}]$, where $\{y\}$ can be any collection of one object, in $Range~f$. There is a selection function $S:f^{-1}[[\{\{Im~f\}\}]]\rightarrow Dom~f$, for the family $f^{-1}[[\{\{Im~f\}\}]]$. Let $T:Range~f\rightarrow f^{-1}[[\{\{Im~f\}\}]]$ be the function that sends $y\mapsto_Tf^{-1}[\{y\}]$. It is now necessary to verify that $S\circ T$ acts as right inverse of $f$.\end{proof}

\begin{theorem}\label{th bij}A function has inverse if and only if it is bijective.\end{theorem}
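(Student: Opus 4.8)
The plan is to read the theorem as the conjunction of the two preceding lemmas, exploiting that bijectivity was defined as the combination of being monic and onto. The bridge is the elementary observation that $f:\mathcal{O}_1\rightarrow\mathcal{O}_2$ possesses a two-sided inverse exactly when it possesses both a left inverse and a right inverse; here I understand ``$f$ has an inverse'' in the two-sided sense, i.e.\ there is $g:\mathcal{O}_2\rightarrow\mathcal{O}_1$ with $g\circ f$ equal to $I_{\mathcal{O}_1}$ and $f\circ g$ equal to $I_{\mathcal{O}_2}$.

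For the forward direction I would suppose $f$ has such an inverse $g$. Reading the equation $g\circ f=I_{\mathcal{O}_1}$, the function $g$ is a left inverse of $f$, so Lemma I forces $f$ to be monic; reading $f\circ g=I_{\mathcal{O}_2}$, the same $g$ is a right inverse, so Lemma II forces $f$ to be onto. A function that is monic and onto is bijective by definition, which closes this implication with no further work. For the converse I would suppose $f$ bijective, hence both monic and onto, and invoke the two lemmas in the opposite direction: Lemma I supplies a left inverse $l$ with $l\circ f=I_{\mathcal{O}_1}$, and Lemma II supplies a right inverse $r$ with $f\circ r=I_{\mathcal{O}_2}$, both of type $\mathcal{O}_2\rightarrow\mathcal{O}_1$.

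The only genuinely non-formal step, and the point I expect to be the main obstacle, is that the two lemmas hand back \emph{separate} one-sided inverses with no guarantee that the chosen $l$ and $r$ agree, whereas the definition of inverse demands a single two-sided function. I would overcome this with the standard associativity collapse
\[ l = l\circ I_{\mathcal{O}_2} = l\circ(f\circ r) = (l\circ f)\circ r = I_{\mathcal{O}_1}\circ r = r, \]
which uses exactly the associativity of composition and the unit behaviour of the identity functions established earlier for functions. This shows $l=r$, so that the common function serves simultaneously as left and right inverse and is therefore an inverse of $f$ in the required sense. Everything outside this computation is just reading off monic and onto from Lemmas I and II and applying the definition of bijective, so I would keep the write-up short and let the associativity display carry the essential content.
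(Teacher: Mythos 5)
Your proof is correct, and it follows exactly the route the paper intends: Theorem \ref{th bij} is stated immediately after Lemmas I and II with no proof of its own, so it is evidently meant to be read as their conjunction, which is precisely what you do. Your associativity collapse $l=l\circ(f\circ r)=(l\circ f)\circ r=r$ supplies the one piece of glue the paper leaves implicit, reconciling the separately obtained one-sided inverses into the single two-sided inverse that the definition of inverse requires.
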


uodate nexr appt send miles cb

	\section{Some Categories}

We see that a category is an efficient way of describing partial orders, collections of functions, and operations. These descriptions are given in the present section.

		\subsection{Partial Order}

A partial order can be seen as a category. We shall prove that the objects and relations of a partial order define c-objects and arrows of an \textit{order category}. The relations of the partial order are a collection of order arrows for the objects because the order is reflexive and transitive. Anti-symmetry means there is at most one relation for two objects. The reflexive arrow is the unit and we now only need to verify associativity for the order relations. We see that the operation for two relations $b\leq c*a\leq b$, results in $a\leq c$; recall that this operation works as composition for functions. It is therefore clear that associativity holds, since we get the same result  from $c\leq d*(b\leq c*a\leq b)$ and $(c\leq d*b\leq c)*a\leq b$.

		\subsection{Algebraic Category}

An important type of category is that which consists of domains and functions. That is, we consider categories whose c-objects are collections and whose arrows are functions. When we study sets, we will see that functions are arrows in the category of sets. For now, we study a particular case. Consider a system that consists of one c-object $\mathcal O$ and the arrows are functions $\mathcal O\rightarrow\mathcal O$. We want to prove that this system is a category. The collection of functions is a collection of order arrows for $\mathcal O$ because they are distinguishable and we have defined the function $I_\mathcal O$ and composition for functions. The function $I$ acts as the unit. Associativity has already been proven for functions. When cosidering these types of categories, we will usually consider the category $\{\mathcal Of_{iso}\mathcal O\}$ of bijective functions. Notice that this category consists of one c-object and the arrows have inverse. We now move on to a type of category that generalizes these categories of functions.

An \textit{algebraic category} is a category $\mathcal{C}$ with one object in $\mathcal{O|C}$; it consists of one c-object, $e$. In this particular instance, the arrows, all of which are reflexive arrows of $e$, are called \textit{objects of operation}. Notice that the operation is full. We are using the concept of category to simplify our view of a system that has an operation defined. We have chosen a unit object for the operation, by choosing to have one c-object. Our unit is clearly the arrow that appears in $1)$ of the definition to a category. Moreover, we also represent this arrow with $e$. We may say that a system is an \textit{associative category} if it satisfies everything except 1) in the definition of category (there is no unit).

A \textit{group} is an algebraic category such that its c-objects all have inverse to the unit. This form of reasoning will also help build the integers with sum. For all objects of operation $f,g,h$, of a group,

\begin{itemize}

\item[1)]$f;f,e$~~~~~~~~~~~~~~~~~~~~~~~~~~~~~~~~~ ~~~~~~~~~~~~~~~~~~~~~~~~~~~~~~~~~~~~~~~~~~~~~~~~~~~~~~~~~
~~~~~~~~~~~~~$Ref.~(Unit)$

\item[2)] $f;e,f^{-1}$~~~~~~~~~~~~~~~~~~~~~~~~~~~~~~~~~~~~~~~~~~~~~~~
~~~~~~~~~~~~~~~~~~~~~~~~~~~~~~~~~~~~~~~~~~~~~~~~~~~~~$Symm.~(duality)$

\item[3)] $g*,g*;*f,*f$~~~~~~~$or$~~~~~~~~~~~$g,f;g*h,h*f$~~~~~~~~~~~~~~~~~~
~~~~
~~~~~~~~~~~~~~~~~~~~~~~$Trans.~(Associativity)$,

\end{itemize}We will prove $f^{-1};e,f$ and $e;f,f$ also hold.
\begin{eqnarray}\nonumber f^{-1}&;&f^{-1}*f,f\\\nonumber f^{-1}&;&(f^{-1}* f)*e,f\\\nonumber f^{-1}&;&(f^{-1}* f)* [f^{-1}*(f^{-1})^{-1}],f\\\nonumber f^{-1}&;&[(f^{-1}* f)* f^{-1}]*(f^{-1})^{-1},f\\\nonumber f^{-1}&;&[f^{-1}* (f* f^{-1})]*(f^{-1})^{-1},f\\\nonumber f^{-1}&;&f^{-1}*(f^{-1})^{-1},f\\\nonumber f^{-1}&;&e,f\end{eqnarray}
\begin{eqnarray}\nonumber e&;&e*f,f\\\nonumber e&;&(f*f^{-1})*f,f\\\nonumber e&;&f*(f^{-1}*f),f\\\nonumber e&;&f*e,f\end{eqnarray}
What is more, $f$ and $(f^{-1})^{-1}$ are the same object.
\begin{eqnarray}\nonumber f&;&f,e\\\nonumber f&;&f*e,e\\\nonumber f&;&f*[f^{-1}*(f^{-1})^{-1}],e\\\nonumber f&;&(f* f^{-1})*(f^{-1})^{-1},e\\\nonumber f&;&(f^{-1})^{-1},e\end{eqnarray}

One of the important properties of a group is that there always is a unique solution to $f;g,x$, for $x$. It is straightforward to verify that we have one solution to $f;g,x$:
\begin{eqnarray}\nonumber f&;&f*(f^{-1}*g),f^{-1}*g\\\nonumber f&;&(f^{-1}*f)\cdot g,f^{-1}*g\\\nonumber f&;&g,f^{-1}*g.\end{eqnarray}

We use associativity to prove $*$ is left cancellable; this last means $f$ is left cancellable, for every $f$ in the category that is not $e$. If $f,g;f,h$, then
\begin{eqnarray}\nonumber f^{-1}&;&f^{-1}*(f*g),f*h\\\nonumber f^{-1}&;&(f^{-1}*f)*g,f*h\\\nonumber f^{-1}&;&g,f*h\\\nonumber f^{-1}*f&;&g,h\\\nonumber f^{-1}*f,e&;&g,h\\\nonumber e,e&;&g,h\\\nonumber e,g&;&e,h.\end{eqnarray}

Let us now suppose we have two solutions $x_{1},x_{2}$. That is, $f;g,x_{1}$ and $f;g,x_{2}$. We may conclude $e,x_{1};e,x_{2}$ which means they are the same object of operation. An important consequence of this is that the unit and inverse are unique. A similar exposition shows the only solutions to $x;g,f$ is $g*f^{-1}$:
\begin{eqnarray}\nonumber g*f^{-1}&;&(g*f^{-1})*f,f\\\nonumber g*f^{-1}&;&g*(f^{-1}*f),f\\\nonumber g*f^{-1}&;&g,f.\end{eqnarray}

Also, given $g,f;h,f$ we have $g,e;h,e$. A group is said to be \textit{abelian} if the operation is commutative.

\begin{proposition}If $*$ is left cancellable and $x$ is not a unit, then the left operation functions $x*$ are monic. If the operation is right cancellable, the right operation functions $*x$ are onto.\end{proposition}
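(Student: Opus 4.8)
The plan is to read both assertions directly off the definitions of the classifications \emph{monic} and \emph{onto} together with the cancellation hypotheses, treating the two halves separately and expecting the first to be essentially definitional and the second to hide the real content.

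For the first half, recall that the left operation $x*$ is the function $a\mapsto x*a$, and that a function is monic exactly when each object of its image arises from only one input. So I would take two origin objects $a,b$ with $x*a$ and $x*b$ the same object, i.e.\ $x,b;x,a$ in the operation notation, and aim to conclude $a=b$. Since $x$ is not the unit, the hypothesis that $*$ is left cancellable applies to $x$, and left cancellability is precisely the statement that $x,b;x,a$ forces $1_x,b;1_x,a$, that is $a=b$. Hence distinct inputs of $x*$ have distinct images and $x*$ is monic. This half is just a translation of the definition of left cancellability into the language of the function $x*$, so I expect no difficulty; the only point worth stating is why the clause ``$x$ is not a unit'' is invoked, namely that cancellability was established earlier only for non-unit actions (and for the unit, $x*$ is the identity $I$, which is monic in any case).

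For the second half I would first record the symmetric fact: right cancellability of $x$ says that $a*x=b*x$ forces $a=b$, so the right operation $*x\colon a\mapsto a*x$ is at least monic. To upgrade this to \emph{onto} I would route the argument through Lemma II, which characterizes onto functions as exactly those admitting a right inverse: it would suffice to produce a function $r$ on the range of $*x$ with $*x\circ r$ equal to the identity, which amounts to selecting, for each target object $c$, some $a$ with $a*x=c$, and then bundling these choices by the selection function exactly as in the proof of Lemma II. Here lies the main obstacle, and it is a genuine one: right cancellation by itself yields only injectivity, never the \emph{existence} of such a preimage $a$. The solvability of $a*x=c$ is what must be supplied from additional structure --- in the group setting it is furnished by the earlier result that $a;c,x$ has the unique solution $a=c*x^{-1}$, using invertibility of $x$. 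So my plan is to make ontoness rest on this solvability of $a*x=c$, and I would flag explicitly that pure right-cancellation is not the feature doing the work: the selection-function step is routine once solvability is in hand, but it is solvability (hence invertibility of $x$), rather than cancellation, that is the crux.
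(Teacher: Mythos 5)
Your first half coincides with the paper's own argument: the paper takes $f,g;f,h$, observes this says $f*(g)$ and $f*(h)$ are the same object, and invokes left cancellability to get $e,g;e,h$, i.e.\ $g=h$, so $f*$ is monic. Your reading of this as a pure translation of the definition is exactly right, including the remark about the unit case being harmless.

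On the second half you have diagnosed something real, and here you depart from the paper in a way that is to your credit. The paper's entire proof of the onto claim is the sentence ``A similar proof is valid for functions $*f$, given $*$ is right cancellable'' --- but a \emph{similar} proof can only deliver what cancellation delivers, namely that $*f$ is monic; it cannot manufacture preimages. As you say, right cancellability never yields solvability of $a*x=c$, so the statement as literally worded (cancellable $\Rightarrow$ onto) is not proved by the paper and is false for general operations. What rescues it is the ambient group context: the paper has already shown that $x;g,f$ has the unique solution $g*f^{-1}$, and the proposition immediately following (that the left and right operation functions of a group are invertible) is the fact actually doing the work. Your plan --- get ontoness from solvability via $x^{-1}$, packaged through the right-inverse characterization of onto functions --- is the correct repair. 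One small simplification: once $x^{-1}$ is available you need not invoke Lemma II or any selection function at all, since for each target $c$ the object $c*x^{-1}$ is an explicit, canonical preimage under $*x$; the choice apparatus is only needed when preimages exist but cannot be named. So your proposal is sound, and it is more careful than the paper's own proof, which silently substitutes ``similar'' for an argument that needs invertibility rather than cancellation.
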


\begin{proof}Take a left operation $f*$, and let $g,h$ be objects of the category such that $f,g;f,h$ which is true if and only if $f*(h)$ is the same object as $f*(g)$. We know that $e,g;e,h$ holds, so $f*$ is monic.

A similar proof is valid for functions $*f$, given $*$ is right cancellable.\end{proof}

\begin{proposition}\label{prop lr can grp}The left and right operation functions, for a group $G$, are invertible.\end{proposition}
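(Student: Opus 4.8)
The plan is to exhibit an explicit inverse function for each operation function, since invertibility of an arrow in the category of functions means precisely possessing a two‑sided inverse under composition. The preceding development supplies every fact needed: for a group each object of operation $f$ has an $f^{-1}$ with $f;e,f^{-1}$ and $f^{-1};e,f$ — so that $f*f^{-1}$ and $f^{-1}*f$ are both $e$ — while $e;f,f$ and $f;f,e$ show $e$ is a two‑sided unit, whence the operation functions $e*$ and $*e$ are each the identity $I_{\mathcal{O}}$ on the collection of objects of operation. Fullness of the group operation guarantees that every $f*$ and every $*f$ has domain all of $\mathcal{O}$, so the composites below are defined everywhere.

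For the left operation $f*$ I would claim that the inverse is the left operation $f^{-1}*$. Evaluating $(f^{-1}*)\circ(f*)$ at an arbitrary $g$ gives $f^{-1}*(f*g)$, which by the associativity identity $a*(b*c)=(a*b)*c$ coming from $a,c;a*b,b*c$ equals $(f^{-1}*f)*g=e*g=g$; symmetrically $(f*)\circ(f^{-1}*)$ sends $g$ to $f*(f^{-1}*g)=(f*f^{-1})*g=e*g=g$. Both composites are therefore $I_{\mathcal{O}}$, so $f*$ is invertible with inverse $f^{-1}*$.

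The right operation is treated as the mirror image: $*f$ has inverse $*f^{-1}$, since $(*f^{-1})\circ(*f)$ carries $g$ to $(g*f)*f^{-1}=g*(f*f^{-1})=g*e=g$, and $(*f)\circ(*f^{-1})$ carries $g$ to $(g*f^{-1})*f=g*(f^{-1}*f)=g*e=g$. Thus each right operation is invertible as well, and by Theorem \ref{th bij} all of these operation functions are in particular bijective.

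The mathematics here is light; the step I expect to require the most care — essentially the only obstacle — is notational bookkeeping. At each collapse I must invoke the correct instance of associativity and then the matching one‑sided unit law ($e*g=g$ for the left operations, $g*e=g$ for the right), being careful not to transpose the roles of $f$ and $f^{-1}$ when reassociating. Once these are aligned the conclusion is immediate, and it is worth noting that the inverses produced are themselves operation functions — a fact that anticipates the left (and likewise the right) operations of a group forming a group under composition.
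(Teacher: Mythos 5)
Your proof is correct and follows essentially the same route as the paper: the inverse of $f*$ (resp.\ $*f$) is exhibited as $f^{-1}*$ (resp.\ $*f^{-1}$), with associativity and the two-sided unit and inverse laws collapsing each composite to the identity. The paper merely compresses this into the remark that $f$ and $f^{-1}$ commute, so your version simply spells out the computations (and the right operations) that the paper leaves implicit.
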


\begin{proof}In a group, the inverse objects of operation commute. It is therefore clear that $x*\circ(x^{-1})*$ is the same function as $(x^{-1})*\circ x*$.\end{proof}

	\section{Functor}

At various points in our studies, we will come across a concept of generalized functions; we will consider functions from a category into another. This means that we need to define a function on the collection of objects of a category. It will be necessary to define two part functions for categories, so that c-objects will be sent into c-objects, and arrows will be sent into arrows. In this section we discuss the concept of functor; a two part function between categories. There is an important aspect about the functor, that will arise in the definition, and we mention it briefly. When defining a functor, we request three conditions, for the two part function; natural pair of functions are behind each of these.

Let $\textgoth{1}_{\mathcal{C}}$ be the unit function of $\mathcal{C}$, defined by $x\mapsto_{\textgoth{1}_{\mathcal{C}}}1_{x}$. The first condition that a functor satisfies means that we invariably obtain the same result by i) sending an object into its reflexive arrow and then transforming the arrow, or ii) transforming the object first, and associating the reflexive arrow second. In short, $\textgoth F1_x$ and $1_{\textgoth Fx}$ are the same object. The condition is expressed in terms of the notation for composition of functions as operation.

A functor contains an arrow function and this arrow function will preserve arrows in the following, intuitive sense. If $f:a\rightarrow b$, then the functor will send $f$ into an arrow from the transformed object of $a$, into the transformed object of $b$. In other words, the arrow function sends arrows, to arrows between corresponding objects. In condition 2) we will use the notation in terms of comparability, where weak arrows are the arrows of $\mathcal C$ and $\mathcal D$ and strong arrows are the components of $\textgoth F_\mathcal A$.

We will finally request that functors preserve the operation of arrows. This means that we will obtain the same result after applying the functor to a composition of arrows, as we would from composing the corresponding transformed arrows. This means that the notation is used in terms of the composition operation for functions.

\begin{definition}A functor $\textgoth{F}:\mathcal{C}\rightarrow\mathcal{D}$ is a two part function $\textgoth{F}_{\mathcal{O}},\textgoth{F}_{\mathcal{A}}:\mathcal{O|C},\mathcal{A|C}\rightarrow\mathcal{O|D},\mathcal{A|D}$ such that for every arrow $f:a\rightarrow b$, we have \begin{itemize}
\item[1)] $\textgoth{1}_{\mathcal{D}},\textgoth{1}_{\mathcal{C}};\textgoth{F}_{\mathcal{A}},\textgoth{F}_{\mathcal{O}}$~~~~~~~~~~~~~~~~~~~~~~~~~~~~~~~~~~~~~
~~~~~~~~~~~~~~~~~~~~~~~~~~~~~~~~~~~~~~~~~Preserves Unit
\item[2)]$a,b;\textgoth{F}_\mathcal Oa,\textgoth{F}_\mathcal Ob$~~~~~~~~~~~~~~~~~~~~~~~~~~~~~~~~~~~~~~~~~~~~~~~~~~
~~~~~~~~~~~~~~~~~~~~~~~~~
~~~~~Preserves Objects\item[3)] $\textgoth{F}_\mathcal A,\textgoth{F}_\mathcal A;*\textgoth{F}_\mathcal Of,*f$~~~~~~~~~~~~~~~~~~~~~~~~~~~~~~~~~~~~~~
~~~~~~~~~~~~~~~~~~~~~~~~~~~~~~~~~~~~~~~~Preserves Composition\end{itemize}\end{definition}

So, a functor is a function that preserves: 1) the unit property for the operation, 2) arrows amongst respective pairs of objects, and 3) the transitive property for the collection of arrows.

The composition of a functor is a functor. Let $\textgoth{F,G}:\mathcal{C,D}\rightarrow\mathcal{D,E}$. We have $\textgoth{1}_{\mathcal{D}},\textgoth{1}_{\mathcal{C}};\textgoth{F}_{\mathcal{A}},\textgoth{F}_{\mathcal{O}}$ and $\textgoth{1}_{\mathcal{E}},\textgoth{1}_{\mathcal{D}};\textgoth{G}_{\mathcal{A}},\textgoth{G}_{\mathcal{O}}$. Let $\textgoth{G}\circ\textgoth{F}$ be the two part function that consists of $\textgoth{G}_{\mathcal O}\circ\textgoth{F}_{\mathcal O}$ and $\textgoth{G}_{\mathcal A}\circ\textgoth{F}_{\mathcal A}$.
 \begin{eqnarray}\nonumber\textgoth{1}_{\mathcal{E}}&;&(\textgoth{1}
_{\mathcal{E}}\circ\textgoth{G}_{\mathcal{O}})\circ\textgoth{F}_{\mathcal{O}},\textgoth{G}_{\mathcal{O}}\circ
\textgoth{F}_{\mathcal{O}}\\\nonumber\textgoth{1}_{\mathcal{E}}&;
&(\textgoth{G}_{\mathcal{A}}\circ\textgoth{1}_{\mathcal{D}})\circ\textgoth{F}
_{\mathcal{O}},\textgoth{G}_{\mathcal{O}}\circ
\textgoth{F}_{\mathcal{O}}\\\nonumber\textgoth{1}_{\mathcal{E}}&;
&\textgoth{G}_{\mathcal{A}}\circ(\textgoth{1}_{\mathcal{D}}\circ\textgoth{F}
_{\mathcal{O}}),\textgoth{G}_{\mathcal{O}}\circ
\textgoth{F}_{\mathcal{O}}\\\nonumber\textgoth{1}_{\mathcal{E}}&;
&\textgoth{G}_{\mathcal{A}}\circ(\textgoth{F}
_{\mathcal{A}}\circ \textgoth{1}_{\mathcal{C}}),\textgoth{G}_{\mathcal{O}}\circ
\textgoth{F}_{\mathcal{O}}\\\nonumber\textgoth{1}_{\mathcal{E}}&;
&(\textgoth{G}_{\mathcal{A}}\circ\textgoth{F}
_{\mathcal{A}})\circ \textgoth{1}_{\mathcal{C}},\textgoth{G}_{\mathcal{O}}\circ
\textgoth{F}_{\mathcal{O}}\\\nonumber\textgoth{1}_{\mathcal{E}},\textgoth{1}_{\mathcal{C}}&;
&\textgoth{G}_{\mathcal{A}}\circ\textgoth{F}
_{\mathcal{A}},\textgoth{G}_{\mathcal{O}}\circ
\textgoth{F}_{\mathcal{O}}.\end{eqnarray}

Next, we verify condition 2) is satisfied.
\begin{eqnarray}\nonumber f&;&\textgoth{G}[\textgoth{F}(a\rightarrow b)],\textgoth{G}\circ\textgoth{F}\\\nonumber f&;&\textgoth{G}(\textgoth{F}a\rightarrow\textgoth{F}b),\textgoth{G}\circ\textgoth{F}\\\nonumber f&;&\textgoth{G}(\textgoth{F}a)\rightarrow\textgoth{G}(\textgoth{F}b),\textgoth{G}\circ\textgoth{F}\\\nonumber f&;&(\textgoth{G}\circ\textgoth{F})a\rightarrow(\textgoth{G}\circ\textgoth{F})b,\textgoth{G}\circ\textgoth{F}.\end{eqnarray}

Finally, we see 3) holds:
\begin{eqnarray}\nonumber a*b&;&(\textgoth{G}\circ\textgoth{F})(a*b),\textgoth{G}\circ\textgoth{F}\\\nonumber a*b&;&\textgoth{G}[\textgoth{F}(a*b)],\textgoth{G}\circ\textgoth{F}\\\nonumber a*b&;&\textgoth{G}[\textgoth{F}a*\textgoth{F}b],\textgoth{G}\circ\textgoth{F}\\\nonumber a*b&;&(\textgoth{G}\circ\textgoth{F})a*(\textgoth{G}\circ\textgoth{F})b,\textgoth{G}\circ\textgoth{F}.\end{eqnarray}

The following result establishes that functors send ismorphisms into isomorphisms. 

\begin{proposition}If $\textgoth F:\mathcal C\rightarrow\mathcal D$ is a functor and $f:a\rightarrow b$ is an isomorphism in $\mathcal C$, then $\textgoth Ff$ is an isomorphism in $\mathcal D$.\label{iso map}\end{proposition}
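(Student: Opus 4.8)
The plan is to produce an explicit inverse for $\textgoth{F}f$, namely the image $\textgoth{F}f^{-1}$ of the inverse of $f$, and then check that it satisfies the two defining equations of an invertible arrow. Since $f$ is an isomorphism, by the definition of invertible arrow there is an arrow $f^{-1}:b\rightarrow a$ with $f^{-1}*f$ equal to $1_a$ and $f*f^{-1}$ equal to $1_b$. First I would record, using condition 2) of the definition of functor (preserves objects), that $\textgoth{F}f$ is an arrow $\textgoth{F}a\rightarrow\textgoth{F}b$ and $\textgoth{F}f^{-1}$ is an arrow $\textgoth{F}b\rightarrow\textgoth{F}a$, so that the two composites appearing below are in fact defined in $\mathcal{D}$.

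The heart of the argument is to apply $\textgoth{F}$ to each defining equation and push the functor through the composition. From $f^{-1}*f$ being $1_a$, condition 3) (preserves composition) gives that $\textgoth{F}f^{-1}*\textgoth{F}f$ and $\textgoth{F}(f^{-1}*f)$ are the same arrow, and condition 1) (preserves unit) rewrites $\textgoth{F}1_a$ as $1_{\textgoth{F}a}$; hence $\textgoth{F}f^{-1}*\textgoth{F}f$ is $1_{\textgoth{F}a}$. Running the identical computation on $f*f^{-1}$ being $1_b$ yields that $\textgoth{F}f*\textgoth{F}f^{-1}$ is $1_{\textgoth{F}b}$. These two statements are precisely the conditions for $\textgoth{F}f^{-1}$ to be an inverse of $\textgoth{F}f$, so $\textgoth{F}f$ is invertible and therefore an isomorphism in $\mathcal{D}$; in particular $\textgoth{F}a$ and $\textgoth{F}b$ are isomorphic.

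I do not anticipate a genuine obstacle, since the result is essentially a one-line consequence of the three functor axioms. The only place that needs care is the bookkeeping in the comparability/semicolon notation: each displayed equality must be transcribed faithfully into the paper's arrow notation (for example the composition step is the instance $\textgoth{F}_\mathcal{A},\textgoth{F}_\mathcal{A};*\textgoth{F}_\mathcal{O}f,*f$ read off the definition), and one must keep the order of the composite straight, since $g*f$ denotes ``$g$ after $f$'' with domain that of $f$. A secondary remark, not needed for the statement but worth noting, is that the inverse produced is canonical: since inverses are unique (proved above), $\textgoth{F}f^{-1}$ is forced to equal $(\textgoth{F}f)^{-1}$, so $\textgoth{F}$ is compatible with the passage $f\mapsto f^{-1}$ on isomorphisms.
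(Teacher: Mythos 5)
Your proposal is correct and follows essentially the same route as the paper: apply $\textgoth F$ to $f^{-1}$ and push the functor through $f^{-1}*f$ and $f*f^{-1}$ using conditions 3) and 1) to obtain the unit arrows $1_{\textgoth Fa}$ and $1_{\textgoth Fb}$. The only difference is one of completeness — the paper writes out just the composite $\textgoth Ff^{-1}*\textgoth Ff$ and leaves the other direction implicit, whereas you check both, which is the more careful reading of the definition of invertible arrow.
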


\begin{proof}The arrow $f^{-1}:b\rightarrow a$ has image $\textgoth Ff^{-1}:\textgoth Fb\rightarrow\textgoth Fa$; we wish to show $\textgoth Ff^{-1}$ and $\textgoth Ff$ are inverse.\begin{eqnarray}\nonumber\textgoth Ff^{-1}&;&\textgoth Ff^{-1}*\textgoth Ff,\textgoth Ff\\\nonumber\textgoth Ff^{-1}&;&\textgoth F(f^{-1}*f),\textgoth Ff\\\nonumber\textgoth Ff^{-1}&;&\textgoth F1_a,\textgoth Ff\\\nonumber\textgoth Ff^{-1}&;&1_{\textgoth Fa},\textgoth Ff\end{eqnarray}\end{proof}

\subsection{Natural Pair of Functions in the Definition of Functor}

We now study the role of natural pair of functions in the definition of functor. The concept appears in very clear form when we give 1). In defining a functor we consider two categories $\mathcal C$ and $\mathcal D$. But, we have also brought into play $\textgoth1_\mathcal C:\mathcal{O|C}\rightarrow\mathcal{A|C}$. So, let $\textgoth F_\mathcal O$ and $\textgoth F_\mathcal A$ be the transformation of the domain and image, respectively. We conlcude that in order for $\textgoth F$ to be a functor, $\textgoth1_\mathcal C$ and $\textgoth1_\mathcal D$ have to be a natural pair of functions under the two simplified versions of $\textgoth F$.

Given an arrow $f:a\rightarrow b$ in $\mathcal C$, we will consider it to be a one component function. Also, the arrow $\textgoth Ff$ is a one component function $\textgoth Fa\rightarrow\textgoth Fb$. In 2), we are stating that $f$ is sent into some function $\textgoth Ff$, with which it is a natural pair of functions, under $\textgoth F_\mathcal O$. That is, the one component function $\textgoth F_\mathcal O:a\rightarrow b$.

Now we consider that $*f$ and $*\textgoth Ff$ transform $\mathcal{A|C}$ and $\mathcal{A|D}$, respectively. The third statement in the definition of functor states that $\textgoth F$ can be applied to an arrow, after applying $*f$, or one can first apply $\textgoth F$ and then $*\textgoth Fa$. Thus, we can restate 3) by saying $\textgoth F_\mathcal A$ is a natural pair with itself, under $*f$ and $*\textgoth Ff$.

		\subsection{Isomorphism}

The purpose of this subsection is to show that we already have a criteria of isomorhpism defined for functors. We will also describe weaker forms of isomorphism that make sense, for functors. 

Let $\mathcal Cat$ represent the system with all categories as objects, and functors as arrows. It is not difficult to see that functors are a collection of order arrows for categories. They are discernible arrows and composition is well defined. Consider the \textit{identity functor} $\textgoth{I}_\mathcal C$ which acts as unit on objects and arrows of the category $\mathcal C$. We know $\mathcal Cat$ is associative because functions are associative. Since identity functions act as unit under composition, so does the identity functor. This means that $\mathcal Cat$ is a category. The functor $\textgoth I_{\mathcal Cat}$ is an arrow of $\mathcal Cat$; proving this is a functor is not difficult.

Since we are dealing with a category, and functors are the arrows, it is oportune to mention that $\{\mathcal C\textgoth F\mathcal D\}$ represents the collection of functors from $\mathcal C$ into $\mathcal D$. As one may expect, we use $\mathcal C\textgoth F\mathcal D$ to represent the collection of all functors from any subcategory of $\mathcal C$ into any subcategory of $\mathcal D$.

\begin{definition}A functor $\textgoth F$ is an isomorphism between $\mathcal C$ and $\mathcal D$ if $\textgoth F_\mathcal O$ and $\textgoth F_\mathcal A$ are bijections. An isomorphism from one category to itself is called an automorphism.\end{definition}

Following our conventions, we agree to use $\{\mathcal C\textgoth F_{iso}\mathcal D\}$ in representation of the collection that consists of all isomorphisms from $\mathcal C$ to $\mathcal D$. From theorem \ref{th bij} we know the isomorphism has inverse functions $\textgoth F_{\mathcal O}^{-1}$ and $\textgoth F_{\mathcal A}^{-1}$, to the object and arrow functions, respectively. We may now prove the following result.

\begin{lemma iso}The definition of isomorphism between categories, coincides with the definition of isomorphic arrows in the category $\mathcal Cat$. That is to say, all isomorphisms are invertible arrows.\end{lemma iso}

\begin{theorem}The system of one c-object, $\mathcal C$, and automorphisms, is a group.\label{auto 1}\end{theorem}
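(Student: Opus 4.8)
The plan is to recognize this system as a one-object restriction of $\mathcal Cat$: take the single c-object to be $\mathcal C$ itself, and let the arrows be exactly the automorphisms, that is, the isomorphisms $\textgoth F:\mathcal C\rightarrow\mathcal C$. Since every such arrow has $\mathcal C$ as both source and target, they are all reflexive arrows of the one c-object, so the system is a candidate for an algebraic category. To invoke the definition of group, I must check that it is in fact an algebraic category and that every object of operation has an inverse to the unit.

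First I would establish that the automorphisms form a collection of order arrows for the single c-object. The reflexive property holds because $\textgoth I_\mathcal C$, whose object and arrow functions are identity functions and hence bijections, is itself an automorphism. For the transitive property I would use the earlier fact that the composition of two functors is a functor, together with the fact that a composite of bijections is a bijection: if $\textgoth F,\textgoth G$ are automorphisms, then $\textgoth G\circ\textgoth F$ is a functor whose object and arrow functions $\textgoth G_\mathcal O\circ\textgoth F_\mathcal O$ and $\textgoth G_\mathcal A\circ\textgoth F_\mathcal A$ are bijections, so $\textgoth G\circ\textgoth F$ is again an automorphism. Because every arrow shares the source and target $\mathcal C$, any two are composable, so the operation $*$ is full.

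Next I would verify the two category axioms. The unit axiom is immediate: $\textgoth I_\mathcal C$ acts as identity under composition on both the object and arrow functions, exactly as in $\mathcal Cat$, so it is the reflexive arrow serving as unit. Associativity is inherited from $\mathcal Cat$, which is associative because composition of functions is associative; restricting attention to the automorphisms of a single category changes nothing. These checks show the system is an algebraic category, with $\textgoth I_\mathcal C$ playing the role of the unit object $e$.

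Finally, to upgrade the algebraic category to a group I must produce an inverse for each object of operation. Here I would invoke the preceding Lemma, which identifies automorphisms with invertible arrows of $\mathcal Cat$: each automorphism $\textgoth F$ has an inverse functor $\textgoth F^{-1}$ with $\textgoth F^{-1}*\textgoth F$ and $\textgoth F*\textgoth F^{-1}$ both equal to $\textgoth I_\mathcal C$, and by Theorem \ref{th bij} the inverse functions $\textgoth F_\mathcal O^{-1}$ and $\textgoth F_\mathcal A^{-1}$ are themselves bijections, so $\textgoth F^{-1}$ again lives in the system. Hence every c-object has an inverse to the unit, and the system is a group. I expect the only genuine point of care to be the closure step, namely confirming that the composite of two isomorphism-functors is again an isomorphism rather than merely a functor; everything else is a direct reading-off of the results already established for $\mathcal Cat$.
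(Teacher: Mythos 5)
Your proposal is correct and follows essentially the same route as the paper's own proof: closure of automorphisms under composition (functors compose, bijections compose), the identity functor $\textgoth I_\mathcal C$ as unit, associativity inherited from composition of functions, the observation that one c-object makes the system an algebraic category, and the preceding Lemma (isomorphisms are invertible arrows in $\mathcal Cat$) together with Theorem \ref{th bij} to supply inverses. The only difference is that you spell out the verifications the paper compresses into a few lines, including the closure step you rightly flag as the one point of genuine care.
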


\begin{proof}We know that the composition of an isomorphism is again an isomorphism; the composition of functors is a functor, and the composition of bijective functions is bijective. Also, $\textgoth I_\mathcal C$ is an isomorphism, so that isomorphisms are a collection of order arrows for categories. Furthermore, the system of categories and isomorphisms is a category because functions are associative.

The system of one c-object, $\mathcal C$, and automorphisms for arrows, is indeed  an algebraic category because it consists of one c-object, namely $\mathcal C$. It is a group because of the lemma.\end{proof}

\begin{corollary}The collection of bijective functions $\{\mathcal Of_{iso}\mathcal O\}$ is the $\textit{the group of transformations of X}$. If $H$ is a group such that the group of transformations is a detailed version of $H$, we say $H$ is a \textit{group of transformations of X}.\end{corollary}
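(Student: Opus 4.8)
The plan is to recognize $\{\mathcal Of_{iso}\mathcal O\}$ as an algebraic category and then verify the single extra property that promotes an algebraic category to a group, namely that every object of operation has an inverse to the unit. This is exactly the argument of Theorem \ref{auto 1} carried out one level down. There the category $\mathcal Cat$ and its isomorphisms (functors) were used, together with the lemma identifying isomorphisms with invertible arrows; here I would use the category $\{\mathcal Of\mathcal O\}$ of all functions $\mathcal O\rightarrow\mathcal O$ and its isomorphisms (bijective functions), with Theorem \ref{th bij} playing the role that lemma played.

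First I would recall, from the discussion of algebraic categories, that the system whose single c-object is $\mathcal O$ and whose arrows are all functions $\mathcal O\rightarrow\mathcal O$ is a category, with unit $I_\mathcal O$ and associativity inherited from composition of functions. Restricting the arrows to the bijective ones, I would check that $\{\mathcal Of_{iso}\mathcal O\}$ is still a category: the identity $I_\mathcal O$ is bijective, and the composition $g\circ f$ of two bijective functions is again bijective, since its inverse is $f^{-1}\circ g^{-1}$, as noted after the paragraph on inverse and unit. Hence the bijective functions still form a collection of order arrows for $\mathcal O$, and since there is exactly one c-object, $\{\mathcal Of_{iso}\mathcal O\}$ is an algebraic category.

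The group condition then follows at once from Theorem \ref{th bij}. Each arrow $f$ of $\{\mathcal Of_{iso}\mathcal O\}$ is bijective, so it has an inverse function $f^{-1}$; by the proposition that $f\circ f^{-1}=I_{\mathcal O_2}$ for bijective $f$, both $f^{-1}\circ f$ and $f\circ f^{-1}$ equal $I_\mathcal O$, and $f^{-1}$ is itself bijective, hence again an arrow of the category. In the notation of the group axioms this reads $f;e,f^{-1}$ with $e=I_\mathcal O$, which is precisely the requirement that every object of operation have an inverse to the unit. Therefore $\{\mathcal Of_{iso}\mathcal O\}$ is a group, the group of transformations of $X$.

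The only real work is bookkeeping rather than a new idea: one must confirm that the inverse delivered by Theorem \ref{th bij} lands back inside the restricted arrow collection (that is, is itself bijective) and that the two-sided identities $f^{-1}\circ f=f\circ f^{-1}=I_\mathcal O$ are transcribed correctly into the operation notation $f;e,f^{-1}$. No step poses a genuine obstacle. The closing sentence of the statement is a definition and requires no proof: it extends the name group of transformations of $X$ to any group $H$ of which this canonical group is a detailed version, in the sense of adding only relations.
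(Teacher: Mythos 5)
Your proposal is correct and matches the paper's intent: the paper offers no separate proof, treating the statement as immediate from Theorem \ref{auto 1}, and your argument is exactly that theorem's proof transcribed one level down --- bijective functions form a collection of order arrows on the single c-object $\mathcal O$ (identity and composition preserve bijectivity), and Theorem \ref{th bij} supplies the two-sided inverses, playing the role the isomorphism lemma played for $\mathcal Cat$. Your closing remark that the second sentence is a definition requiring no proof is also right.
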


		\subsection{Classification of Other Functor Types}

We have no guarantee that, given $a,b$ in $\mathcal C$ and an arrow $h:\textgoth Fa\rightarrow\textgoth Fb$ in $\mathcal D$, there is an arrow $f:a\rightarrow b$ in $\mathcal C$ such that $f;h,\textgoth F$. When this is true for every $a,b$ in $\mathcal C$ and arrow $h:\textgoth Fa\rightarrow\textgoth Fb$, we say the functor is \textit{full}. It is \textit{faithful} if for every $f,g:a\rightarrow b$, the expression $f,\textgoth F;g,\textgoth F$ implies $f,\textgoth I_{\mathcal{C}};g,\textgoth I_{\mathcal{C}}$.

Consider the function $\textgoth F_\mathcal A|_{\{a\rightarrow b\}}^{\{\textgoth Fa\rightarrow\textgoth Fb\}}$; we restrict the domain, of $\textgoth F_\mathcal A$, to $\{a\rightarrow b\}$, and then we retrict the range of $\textgoth F_\mathcal A|_{\{a\rightarrow b\}}$ to its image, $\{\textgoth Fa\rightarrow\textgoth Fb\}$. We are making the observation $\textgoth F\{a\rightarrow b\}:=\{\textgoth Fa\rightarrow\textgoth Fb\}$.

\begin{proposition}\makebox[5pt][]{}\mbox {}\begin{itemize}\item[1)]A functor $\textgoth F:\mathcal C\rightarrow\mathcal D$ is full if and only if for every pair of objects $a,b$, in the domain category $\mathcal C$, the function $\textgoth F_{\mathcal A}|_{\{a\rightarrow b\}}^{\{\textgoth Fa\rightarrow\textgoth Fb\}}$ is onto.\item[2)]A functor $\textgoth F:\mathcal C\rightarrow\mathcal D$ is faithful if and only if for every pair of objects $a,b$, in the domain category $\mathcal C$, the function $\textgoth F_{\mathcal A}|_{\{a\rightarrow b\}}^{\{\textgoth Fa\rightarrow\textgoth Fb\}}$ is monic.\end{itemize}\end{proposition}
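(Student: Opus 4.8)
The plan is to prove both biconditionals by unravelling the compact notation of the definitions; no computation is needed, only a careful translation. Throughout I would write $\textgoth F_\mathcal A|_{\{a\to b\}}^{\{\textgoth Fa\to\textgoth Fb\}}$ for the restricted arrow function whose domain is $\{a\to b\}$, whose range is read as the full collection $\{\textgoth Fa\to\textgoth Fb\}$ of \emph{all} arrows of $\mathcal D$ from $\textgoth Fa$ to $\textgoth Fb$ (not merely the image, which would trivialize being onto), and whose single component sends $f\mapsto\textgoth F f$. The first thing I would record is that this is a legitimate function into $\{\textgoth Fa\to\textgoth Fb\}$: condition 2) in the definition of functor guarantees that every $f:a\to b$ is sent to an arrow $\textgoth F f:\textgoth Fa\to\textgoth Fb$, so the image is contained in the declared range.

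For part 1) I would expand the word \emph{onto}. By its definition, $\textgoth F_\mathcal A|_{\{a\to b\}}^{\{\textgoth Fa\to\textgoth Fb\}}$ is onto exactly when every object of its range appears in at least one component, that is, when for every arrow $h:\textgoth Fa\to\textgoth Fb$ there is some $f\in\{a\to b\}$ with $\textgoth F f=h$, which in the paper's notation is $f;h,\textgoth F$. This is verbatim the clause defining fullness for the pair $a,b$. Quantifying over all pairs $a,b$ then yields that $\textgoth F$ is full if and only if $\textgoth F_\mathcal A|_{\{a\to b\}}^{\{\textgoth Fa\to\textgoth Fb\}}$ is onto for every $a,b$.

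For part 2) I would expand \emph{monic} and \emph{faithful} and observe that they coincide pairwise. By the definition of monic, $\textgoth F_\mathcal A|_{\{a\to b\}}^{\{\textgoth Fa\to\textgoth Fb\}}$ is monic precisely when, for $f,g\in\{a\to b\}$, the equality $\textgoth F f=\textgoth F g$ forces $f=g$. On the other side, faithfulness reads: for all $f,g:a\to b$, the relation $f,\textgoth F;g,\textgoth F$ implies $f,\textgoth I_\mathcal C;g,\textgoth I_\mathcal C$; since $\textgoth I_\mathcal C$ is the identity functor, the hypothesis is $\textgoth F f=\textgoth F g$ and the conclusion is $f=g$. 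Thus the faithfulness condition for the pair $a,b$ is exactly the monicity of the restricted function. Here I would insert the one remark that needs checking, namely that shrinking the range from all of $\mathcal{A|D}$ to $\{\textgoth Fa\to\textgoth Fb\}$ leaves monicity untouched, since injectivity depends only on the domain and the components, not on the ambient range. Quantifying over all pairs $a,b$ completes part 2).

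The statement is essentially a dictionary entry, so I do not expect a serious obstacle; the only care required lies in handling equality of arrows consistently. Both comparisons $\textgoth F f=\textgoth F g$ and $f=g$ must be read with the equivalence relation defining equality of arrows (established earlier), and the same relation underlies the word \emph{monic}; I would make sure the translation invokes that relation uniformly on both sides, so that \emph{monic} and \emph{faithful} are literally the same predicate rather than two similar-looking ones.
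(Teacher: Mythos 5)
Your proof is correct, and it is essentially the argument the paper intends: the paper states this proposition without any proof at all, treating it as an immediate consequence of the definitions assembled in the preceding paragraph, and your definitional unwinding (onto $\Leftrightarrow$ full, monic $\Leftrightarrow$ faithful, pair by pair) is exactly that missing content. One point where you are actually more careful than the source: the paper describes the restricted function by saying its range is cut down ``to its image, $\{\textgoth Fa\rightarrow\textgoth Fb\}$,'' silently conflating the image of $\textgoth F_{\mathcal A}|_{\{a\rightarrow b\}}$ with the full collection of arrows $\textgoth Fa\rightarrow\textgoth Fb$ in $\mathcal D$; your explicit decision to read the range as the whole hom collection (so that ``onto'' is not vacuous), together with your remark that monicity is unaffected by enlarging the range, is precisely the reading the proposition needs in order to say anything.
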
Of course, a functor that is full and faithful is not necessarily an isomorphism because we have no knowledge regarding the object function $\textgoth F_\mathcal O$.

			\subsubsection{Embedding}

An embedding is a faithful functor with monic object function. We have already studied an instance of this, when we introduced order preserving functions. We have left clear that an order embedding is a monic function. We make this the monic function  of a functor $\mathcal P\rightarrow\mathcal Q$. We need to define the arrow function and this could not be easier. We already know that the arrows of a patial order are non-discernible. Since $a\leq b$ if and only if $fa\leq fb$, the arrow function is defined.

Embedding will be a concept used throughout in order to show that one category can be seen as an extension of another. Let us consider this in terms of systems. Take a system $S$, and give a separation of it that loses information. We keep a certain subcollection $S$, and we take away the arrows that do not pertain to the objects we are left with. This is a one way of giving an idea of embedding in a system. We are attributing an object in one system to an object in another system. Different objects are assigned different objects. The relations in one system are relations in another.

			\subsubsection{Functor for Partial Orders}

We are now able to speak of functors for partial orders; these functors are order preserving functions.

\begin{lemma func par}Given an order preserving function $f:\mathcal{O|P}\rightarrow\mathcal{O|Q}$, we have a functor $\textgoth F:\mathcal P\rightarrow\mathcal Q$ whose object function is $f$.\end{lemma func par}

\begin{proof}We have already seen that partial orders are categories, so it is left to show that the properties of functor are satisfied. The arrow function is determined by property 2) of functors; recall that there is at most one arrow in a partial order. This means that a relation $a\leq b$, in $\mathcal P$, is sent into the relation $fa\leq fb$ for objects of $\mathcal Q$; we know this relation exists in $\mathcal Q$ because the function is order preserving.

Take an object $a$ in $\mathcal P$ and apply the functor to it and then we send that object to the unit arrow; the result is $\textgoth Fa\leq\textgoth Fa$. If, on the contrary we apply $\textgoth1_\mathcal P$ to $a$ and then send $a\leq a$ to its corresponding arrow, the result is $\textgoth Fa\leq\textgoth Fa$.

We are left to prove 3)\begin{eqnarray}\nonumber b\leq c*a\leq b&;&\textgoth F(b\leq c*a\leq b),\textgoth F\\\nonumber b\leq c*a\leq b&;&\textgoth F(a\leq c),\textgoth F\\\nonumber b\leq c*a\leq b&;&\textgoth Fa\leq\textgoth Fc,\textgoth F\\\nonumber b\leq c*a\leq b&;&\textgoth Fb\leq\textgoth Fc*\textgoth Fa\leq\textgoth F b,\textgoth F\\\nonumber b\leq c*a\leq b&;&\textgoth F(b\leq c)*\textgoth F(a\leq b),\textgoth F.\end{eqnarray}\end{proof}

\begin{theorem}If $f$ is an order bijectivity, then $\textgoth F$, with $f$ as object function, is an isomorphism.\end{theorem}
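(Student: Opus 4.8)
The plan is to appeal to the preceding lemma to know that $\textgoth F$ is already a well-defined functor $\mathcal P\rightarrow\mathcal Q$ with object function $f$, and then simply to verify the two conditions in the definition of isomorphism between categories: that $\textgoth F_\mathcal O$ and $\textgoth F_\mathcal A$ are both bijective functions. So the whole argument reduces to checking bijectivity of the two component functions separately.

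For the object function there is essentially nothing new to do: $\textgoth F_\mathcal O$ is exactly $f$, and we have already remarked that an order bijectivity is a bijective function (it is an onto order embedding, and every order embedding is monic). Hence $\textgoth F_\mathcal O$ is a bijection immediately, and the real content lies in the arrow function.

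The work therefore concentrates on $\textgoth F_\mathcal A$, whose components send $a\leq b$ into $fa\preceq fb$. To see it is monic I would use that arrows of a partial order are non-discernible: if $\textgoth F_\mathcal A$ sends both $a\leq b$ and $c\leq d$ to the same arrow, that arrow has a single source and a single target, forcing $fa=fc$ and $fb=fd$, and injectivity of $f$ then yields $a=c$ and $b=d$, so the two arrows coincide. To see it is onto, take any arrow $q_1\preceq q_2$ in $\mathcal Q$; since $f$ is onto, choose $p_1,p_2$ with $fp_1=q_1$ and $fp_2=q_2$, and since $fp_1\preceq fp_2$ and $f$ is an order embedding we obtain $p_1\leq p_2$ in $\mathcal P$. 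Then $\textgoth F_\mathcal A$ sends $p_1\leq p_2$ to $q_1\preceq q_2$, so that arrow is attained.

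The one step that genuinely uses more than bookkeeping is the surjectivity of $\textgoth F_\mathcal A$: it is exactly here that both defining features of an order bijectivity are needed at once, namely surjectivity of $f$ to produce the source and target objects $p_1,p_2$, and the embedding (reverse) implication $fp_1\preceq fp_2\Rightarrow p_1\leq p_2$ to guarantee that the arrow $p_1\leq p_2$ actually exists in $\mathcal P$ and not merely the arrow in $\mathcal Q$. With both component functions shown bijective, the definition of isomorphism of categories is satisfied; and by the earlier Lemma this coincides with $\textgoth F$ being an invertible arrow of $\mathcal Cat$, so $\textgoth F$ is an isomorphism.
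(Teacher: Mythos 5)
Your proposal is correct and follows essentially the same route as the paper's own proof: the object function is bijective because $f$ is, the arrow function is monic by non-discernibility of order arrows together with injectivity of $f$, and it is onto by lifting an arrow $q_1\preceq q_2$ through surjectivity of $f$ and the embedding implication. The only difference is that you make explicit the use of the embedding property ($fp_1\preceq fp_2\Rightarrow p_1\leq p_2$) in the surjectivity step, which the paper leaves implicit when it asserts the existence of the arrow $a\leq b$.
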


\begin{proof}We only need to prove that the arrow function is bijective, because $f$ is bijective. The arrows in the partial orders are non-discernible and $f$ sends every $a\leq b$ into a unique arrow $fa\leq fb$. 

If we have an arrow $x\leq y$, in $\mathcal Q$. Then, there are $a,b$ in $\mathcal P$ such that $a;x,\textgoth F$ and $b;y,\textgoth F$, and an arrow $a\leq b$. The arrow $\textgoth Fa\leq\textgoth Fb$ is the arrow assigned to $a\leq b$.\end{proof}

\subsubsection{Opposite Category and Contravariant Functor}

The topic of this paragraph is inspired on previous developments for partial orders. Given a category $\mathcal C$, we construct a system consisting of the same objects, and for every arrow $f:a\rightarrow b$ we give an arrow $f^{op}:b\rightarrow_{op}a$. The system that results is called the \textit{opposite of $\mathcal C$} and we write it with $\mathcal C^{op}$. 

Of course, $\textgoth1_{\mathcal C^{op}}$ is defined as the function that acts as $x;1_x^{op},\textgoth1_{\mathcal C^{op}}$. Now it is on us to define the operation in $\mathcal C^{op}$. Let us consider a composable pair of arrows, say $f:a\rightarrow b$ and $g:b\rightarrow c$. Then we define the arrow $f^{op}*g^{op}$ as the arrow $c\rightarrow_{op}a$ that is opposite of $b\rightarrow c*a\rightarrow b$. In other words, $f^{op}*g^{op}$ is defined as $(g*f)^{op}$.

\begin{lemma op func 1}The opposite, of a category $\mathcal C$, is also a category.\end{lemma op func 1}

\begin{proof}We would like to show that $1_a^{op}$ is indeed the unit of $a$ in the opposite category. If $f^{op}:x\rightarrow_{op}a$ is an arrow in $\mathcal C^{op}$, then we have\begin{eqnarray}\nonumber1^{op}_a&;&1^{op}_a*f^{op},f^{op}\\\nonumber
1^{op}_a&;&(f*1_a)^{op},f^{op}\\\nonumber
1^{op}_a&;&f^{op},f^{op}\end{eqnarray}

If $g^{op}:y\rightarrow_{op}b$, then\begin{eqnarray}\nonumber g^{op}&;&g^{op}*1_y^{op},1_y^{op}\\\nonumber
g^{op}&;&(1_y*g)^{op},1_y^{op}\\\nonumber
g^{op}&;&g^{op},1_y^{op}\end{eqnarray}

To show that associativty holds, consider $h^{op}:b\rightarrow_{op}x$:\begin{eqnarray}\nonumber f^{op}&;&f^{op}*(h^{op}*g^{op}),h^{op}*g^{op}\\\nonumber f^{op}&;&f^{op}*(g*h)^{op},h^{op}*g^{op}\\\nonumber f^{op}&;&[(g*h)*f]^{op},h^{op}*g^{op}\\\nonumber f^{op}&;&[g*(h*f)]^{op},h^{op}*g^{op}\\\nonumber f^{op}&;&(h*f)^{op}*g^{op},h^{op}*g^{op}\\\nonumber f^{op}&;&(f^{op}*h^{op})*g^{op},h^{op}*g^{op}\end{eqnarray}\end{proof}

\begin{lemma op func 2}Given a functor $\textgoth F:\mathcal C\rightarrow\mathcal D$, there is a functor $\textgoth F^{op}:\mathcal C^{op}\rightarrow\mathcal D^{op}$.\end{lemma op func 2}

\begin{proof}We define $\textgoth F^{op}$ as the functor with $\textgoth F_\mathcal O$ as object function. Suppose $f^{op}:b\rightarrow_{op}a$ is an arrow in $\mathcal C^{op}$; it is the opposite of the arrow $f:a\rightarrow b$ and we define $f^{op};(\textgoth Ff)^{op},\textgoth F^{op}$.

To prove 1) of functors, let $x$ be a c-object in $\mathcal C^{op}$

\begin{eqnarray}\nonumber x&;&(\textgoth1_{\mathcal D^{op}}\circ\textgoth F_\mathcal O)x,\textgoth1_{\mathcal D^{op}}\circ\textgoth F^{op}_\mathcal O\\\nonumber x&;&\textgoth1_{\mathcal D^{op}}(\textgoth Fx),\textgoth1_{\mathcal D^{op}}\circ\textgoth F^{op}_\mathcal O\\\nonumber x&;&1_{\textgoth Fx}^{op},\textgoth1_{\mathcal D^{op}}\circ\textgoth F^{op}_\mathcal O\\\nonumber x&;&\textgoth F^{op}(1_x^{op}),\textgoth1_{\mathcal D^{op}}\circ\textgoth F^{op}_\mathcal O\\\nonumber x&;&(\textgoth F^{op}\circ\textgoth1_{\mathcal C^{op}})x,\textgoth1_{\mathcal D^{op}}\circ\textgoth F^{op}_\mathcal O.\end{eqnarray}We know that $(\textgoth Ff)^{op}$ is of the form $\textgoth Fb\rightarrow_{op}\textgoth Fa$. This means that 2) is satisified. To verify 3), let $g^{op}:c\rightarrow_{op}b$, so that \begin{eqnarray}\nonumber f^{op}*g^{op}&;&\textgoth F^{op}(f^{op}*g^{op}),\textgoth F^{op}\\\nonumber f^{op}*g^{op}&;&\textgoth F^{op}(g*f)^{op},\textgoth F^{op}\\\nonumber f^{op}*g^{op}&;&[\textgoth F(g*f)]^{op},\textgoth F^{op}\\\nonumber f^{op}*g^{op}&;&(\textgoth Fg*\textgoth Ff)^{op},\textgoth F^{op}\\\nonumber f^{op}*g^{op}&;&(\textgoth Ff)^{op}*(\textgoth Fg)^{op},\textgoth F^{op}\\\nonumber f^{op}*g^{op}&;&\textgoth F^{op}f^{op}*\textgoth F^{op}g^{op},\textgoth F^{op}\end{eqnarray}\end{proof}

\begin{theorem}There is a functor $\textbf{op}:\mathcal Cat\rightarrow\mathcal Cat$ that sends a category to its opposite category, and a functor to its opposite functor.\end{theorem}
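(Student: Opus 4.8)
The plan is to read the object and arrow functions of $\textbf{op}$ straight off the two preceding lemmas and then check the three defining conditions of a functor, now applied to $\textbf{op}$ itself on $\mathcal Cat$. Concretely, I would define the two part function $\textbf{op}_\mathcal O,\textbf{op}_\mathcal A$ by $\textbf{op}_\mathcal O\mathcal C:=\mathcal C^{op}$ on c-objects (categories) and $\textbf{op}_\mathcal A\textgoth F:=\textgoth F^{op}$ on arrows (functors). Lemma op func 1 guarantees $\mathcal C^{op}$ is again a category, so $\textbf{op}_\mathcal O$ lands among the c-objects of $\mathcal Cat$, and Lemma op func 2 guarantees $\textgoth F^{op}$ is again a functor, so $\textbf{op}_\mathcal A$ lands among the arrows of $\mathcal Cat$. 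Thus the two part function is well defined, and it remains only to verify conditions 1), 2), 3) of the definition of functor.

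Condition 2) (that an arrow is sent to an arrow between the images) is immediate, since Lemma op func 2 says precisely that $\textgoth F:\mathcal C\rightarrow\mathcal D$ is carried to $\textgoth F^{op}:\mathcal C^{op}\rightarrow\mathcal D^{op}$, i.e. to an arrow $\textbf{op}_\mathcal O\mathcal C\rightarrow\textbf{op}_\mathcal O\mathcal D$. For condition 1) I would prove $(\textgoth I_\mathcal C)^{op}=\textgoth I_{\mathcal C^{op}}$: the identity functor $\textgoth I_\mathcal C$ has identity object and arrow functions, and by the construction in Lemma op func 2 its opposite keeps the same (identity) object function and sends $f^{op}\mapsto(\textgoth I_\mathcal C f)^{op}=f^{op}$, hence is the identity on the arrows of $\mathcal C^{op}$ as well. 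So $(\textgoth I_\mathcal C)^{op}$ is the identity functor of $\mathcal C^{op}$, which is exactly $\textbf{op}$ carrying the unit arrow of each object to the unit arrow of its image, i.e. the statement $\textgoth 1_{\mathcal Cat},\textgoth 1_{\mathcal Cat};\textbf{op}_\mathcal A,\textbf{op}_\mathcal O$.

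The main work is condition 3), preservation of composition, which amounts to the identity $(\textgoth G\circ\textgoth F)^{op}=\textgoth G^{op}\circ\textgoth F^{op}$ for composable $\textgoth F:\mathcal C\rightarrow\mathcal D$ and $\textgoth G:\mathcal D\rightarrow\mathcal E$. On objects both sides have object function $\textgoth G_\mathcal O\circ\textgoth F_\mathcal O$ and so agree. On an arrow $f^{op}$ of $\mathcal C^{op}$ I would compute, using the defining rule $\textgoth H^{op}(f^{op})=(\textgoth H f)^{op}$ from Lemma op func 2, that $(\textgoth G\circ\textgoth F)^{op}$ sends $f^{op}\mapsto((\textgoth G\circ\textgoth F)f)^{op}=(\textgoth G(\textgoth F f))^{op}$, whereas $\textgoth G^{op}\circ\textgoth F^{op}$ sends $f^{op}\mapsto\textgoth G^{op}((\textgoth F f)^{op})=(\textgoth G(\textgoth F f))^{op}$, so the two coincide. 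I expect this to be the only delicate point, because it is where the order of composition interacts with the arrow reversal $f^{op}*g^{op}=(g*f)^{op}$; the indices must be kept straight so that the covariant direction $\mathcal C^{op}\rightarrow\mathcal E^{op}$ comes out correctly rather than accidentally reversed. Once conditions 1), 2), 3) are established, $\textbf{op}$ satisfies the definition of a functor $\mathcal Cat\rightarrow\mathcal Cat$, which is the claim.
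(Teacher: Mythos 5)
Your proposal is correct and follows essentially the same route as the paper's own proof: define $\textbf{op}$ on c-objects by $\mathcal C\mapsto\mathcal C^{op}$ and on arrows by $\textgoth F\mapsto\textgoth F^{op}$ (well defined by the two preceding lemmas), note that condition 2) is exactly what Lemma op func 2 provides, and establish condition 3) via the same computation showing both $(\textgoth G\circ\textgoth F)^{op}$ and $\textgoth G^{op}\circ\textgoth F^{op}$ send $f^{op}$ to $(\textgoth G(\textgoth Ff))^{op}$. If anything, your treatment of condition 1), proving $(\textgoth I_\mathcal C)^{op}=\textgoth I_{\mathcal C^{op}}$ explicitly, is more detailed than the paper's, which dismisses that step as "not difficult to see."
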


\begin{proof}The functor $\textbf{op}$, acts as $\textgoth F;\textgoth F^{op},\textbf{op}$ and $\mathcal C;\mathcal C^{op},\textbf{op}$. It is not difficult to see $(\textgoth1_{\mathcal Cat}\circ\textbf{op}_\mathcal O)\mathcal C$ is the same as $(\textbf{op}_\mathcal A\circ\textgoth1_{\mathcal Cat})\mathcal C$, which verifies 1) for functors. 2) is true because $\mathcal C,\mathcal D;\textbf{op}\mathcal C,\textbf{op}\mathcal D$ is the same as saying $\textgoth F:\mathcal C\rightarrow\mathcal D$ is sent into $\textgoth F^{op}:\mathcal C^{op}\rightarrow\mathcal D^{op}$. Given $\textgoth G:\mathcal D\rightarrow\mathcal E$, the relation $\textgoth G\circ\textgoth F;\textbf{op}\textgoth G\circ\textbf{op}\textgoth F,\textbf{op}$ holds; $(\textgoth G\circ\textgoth F)^{op}:\mathcal C^{op}\rightarrow\mathcal E^{op}$ and $\textgoth G^{op}\circ\textgoth F^{op}:\mathcal C^{op}\rightarrow\mathcal D^{op}\rightarrow\mathcal E^{op}$ are equal. Let $f^{op}$ be an arrow in $\mathcal C^{op}$,

\begin{eqnarray}\nonumber f&;&(\textgoth G\circ\textgoth F)^{op}f^{op},(\textgoth G\circ\textgoth F)^{op}\\\nonumber f&;&[(\textgoth G\circ\textgoth F)f]^{op},(\textgoth G\circ\textgoth F)^{op}\\\nonumber f&;&[\textgoth G(\textgoth Ff)]^{op},(\textgoth G\circ\textgoth F)^{op}\\\nonumber f&;&\textgoth G^{op}(\textgoth Ff)^{op},(\textgoth G\circ\textgoth F)^{op}\\\nonumber f&;&\textgoth G^{op}(\textgoth F^{op}f^{op}),(\textgoth G\circ\textgoth F)^{op}\\\nonumber f&;&(\textgoth G^{op}\circ\textgoth F^{op})f^{op},(\textgoth G\circ\textgoth F)^{op}.\end{eqnarray}\end{proof}

We say $\textgoth F^{\ltimes}$ is \textit{contravariant} if instead of satisfying condition 2), it satisfies $2)'~a,b;\textgoth F^\ltimes_{\mathcal O}b,\textgoth F^\ltimes_\mathcal Oa$. This means that arrows are reversed. In terms of natural pair of functions, we are saying that $f:a\rightarrow b$ is sent into some function $\textgoth F^\ltimes f:\textgoth F^\ltimes b\rightarrow\textgoth F^\ltimes a$ such that $f$ is a natural pair of functions with $(\textgoth F^\ltimes f)^{op}$, under $\textgoth F^\ltimes_\mathcal O$. This has implications for condition 3); let $g:b\rightarrow c$:\begin{eqnarray}\nonumber b\rightarrow c*a\rightarrow b&;&\textgoth F^\ltimes(b\rightarrow c*a\rightarrow b),\textgoth F^\ltimes\\\nonumber b\rightarrow c*a\rightarrow b&;&\textgoth F^\ltimes(a\rightarrow c),\textgoth F^\ltimes\\\nonumber b\rightarrow c*a\rightarrow b&;&\textgoth F^\ltimes c\rightarrow \textgoth F^\ltimes a,\textgoth F^\ltimes\\\nonumber b\rightarrow c*a\rightarrow b&;&\textgoth F^\ltimes b\rightarrow \textgoth F^\ltimes a*\textgoth F^\ltimes c\rightarrow\textgoth F^\ltimes b,\textgoth F^\ltimes \\\nonumber b\rightarrow c*a\rightarrow b&;&\textgoth F^\ltimes (a\rightarrow b)*\textgoth F^\ltimes (b\rightarrow c),\textgoth F^\ltimes \end{eqnarray}Thus, a contravariant functor satisfies $3)'~\textgoth F^\ltimes ,\textgoth F^\ltimes ;*\textgoth F^\ltimes g,g*$ instead of $3)$. Equivalently, $\textgoth F^\ltimes (g*f)$ is the same object as $\textgoth F^\ltimes f*\textgoth F^\ltimes g$; in notation for function, we have $g*f;\textgoth F^\ltimes f*\textgoth F^\ltimes g,\textgoth F^\ltimes $. This can be resolved in a simple way, using the opposite category of the range category. Let us be clear, a functor is \textit{covariant} when it satisfies $1),2),3)$, and \textit{contravariant} when it satisfies $1),2)',3)'$.

\begin{theorem}\makebox[5pt][]{}\mbox {}\begin{itemize}\item[1)]Given a contravariant functor $\textgoth F^\ltimes:\mathcal C\rightarrow\mathcal D$, there is a covariant functor $\textgoth F:\mathcal C\rightarrow\mathcal D^{op}$.\item[2)]Given a covariant functor $\textgoth F:\mathcal C\rightarrow\mathcal D$, there is a contravariant functor $\textgoth F^\ltimes:\mathcal C\rightarrow\mathcal D^{op}$.\end{itemize}\end{theorem}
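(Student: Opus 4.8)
The plan is to build both functors by leaving the object function untouched and post-composing the arrow function with the opposite-arrow operation of $\mathcal D^{op}$. The single ingredient that does all the work is the composition rule of the opposite category recorded in its construction, $f^{op}*g^{op}=(g*f)^{op}$; this built-in reversal is exactly what trades contravariance for covariance (and back).

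For 1), I would put $\textgoth F_\mathcal O:=\textgoth F^\ltimes_\mathcal O$—legitimate since, by the lemma that the opposite of a category is a category, $\mathcal D^{op}$ has the same c-objects as $\mathcal D$—and for an arrow $f:a\rightarrow b$ of $\mathcal C$ set $\textgoth Ff:=(\textgoth F^\ltimes f)^{op}$. Condition 2) is then immediate: by 2)' the arrow $\textgoth F^\ltimes f$ runs $\textgoth F^\ltimes b\rightarrow\textgoth F^\ltimes a$ in $\mathcal D$, so $(\textgoth F^\ltimes f)^{op}$ runs $\textgoth Fa\rightarrow_{op}\textgoth Fb$ in $\mathcal D^{op}$, which is the covariant direction. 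Condition 1) comes from unit preservation for $\textgoth F^\ltimes$ together with the definition of $\textgoth1_{\mathcal D^{op}}$, giving $\textgoth F1_x=(\textgoth F^\ltimes 1_x)^{op}=(1_{\textgoth F^\ltimes x})^{op}=1_{\textgoth Fx}^{op}$.

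Condition 3) is the decisive step. For a composable pair $f:a\rightarrow b$, $g:b\rightarrow c$ in $\mathcal C$ I would compute $\textgoth F(g*f)=(\textgoth F^\ltimes(g*f))^{op}$, rewrite the interior by 3)' as $(\textgoth F^\ltimes f*\textgoth F^\ltimes g)^{op}$, and then apply the opposite-composition rule to obtain $(\textgoth F^\ltimes g)^{op}*(\textgoth F^\ltimes f)^{op}=\textgoth Fg*\textgoth Ff$, which is precisely the covariant law $\textgoth F(g*f)=\textgoth Fg*\textgoth Ff$ in $\mathcal D^{op}$. Part 2) is the mirror construction: set $\textgoth F^\ltimes_\mathcal O:=\textgoth F_\mathcal O$ and $\textgoth F^\ltimes f:=(\textgoth Ff)^{op}$, read conditions 2) and 1) for $\textgoth F$ through the opposite-arrow operation to land 2)' and 1), and repeat the same composition calculation—now invoking 3) rather than 3)'—to get $\textgoth F^\ltimes(g*f)=(\textgoth Fg*\textgoth Ff)^{op}=(\textgoth Ff)^{op}*(\textgoth Fg)^{op}=\textgoth F^\ltimes f*\textgoth F^\ltimes g$, which is exactly 3)'.

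I expect the only step demanding care to be the bookkeeping in 3)/3)': one must track simultaneously the order-reversal imposed by the opposite category's composition rule and the order-reversal already present in contravariance, and verify that in part 1) the two reversals cancel to yield genuine covariance, whereas in part 2) the single reversal introduced by $(\cdot)^{op}$ manufactures contravariance. Everything else is routine once the opposite category is known to be a category with the arrows $1_x^{op}$ as its units, so that $\mathcal D^{op}$ is a valid target for a functor.
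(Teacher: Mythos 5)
Your construction coincides with the paper's own proof: both parts keep the object function, send an arrow $f$ to $(\textgoth F^\ltimes f)^{op}$ (respectively $(\textgoth Ff)^{op}$), check unit preservation through $\textgoth1_{\mathcal D^{op}}$, and trade condition $3)'$ for $3)$ (and back) via the opposite-composition rule $f^{op}*g^{op}=(g*f)^{op}$. The proposal is correct and takes essentially the same route as the paper.
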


\begin{proof}\makebox[5pt][]{}\mbox {}\begin{itemize}\item[1)]The covariant functor $\textgoth F$ is defined to have the same object function but the arrow function is $\textgoth F_\mathcal A$ which sends $f:a\rightarrow b$ into $\textgoth F_\mathcal Af:=(\textgoth F_\mathcal A^\ltimes f)^{op}:\textgoth Fa\rightarrow\textgoth Fb$. 

To show that 1) holds,\begin{eqnarray}\nonumber x&;&(\textgoth1_{\mathcal D^{op}}\circ\textgoth F^\ltimes_\mathcal O)x,\textgoth1_{\mathcal D^{op}}\circ\textgoth F_\mathcal O\\\nonumber x&;&\textgoth1_{\mathcal D^{op}}(\textgoth F_\mathcal O^\ltimes x),\textgoth1_{\mathcal D^{op}}\circ\textgoth F_\mathcal O\\\nonumber x&;&(1_{\textgoth F_\mathcal O^\ltimes x})^{op},\textgoth1_{\mathcal D^{op}}\circ\textgoth F_\mathcal O\\\nonumber x&;&(\textgoth F^\ltimes_\mathcal A1_x)^{op},\textgoth1_{\mathcal D^{op}}\circ\textgoth F_\mathcal O\\\nonumber x&;&\textgoth F_\mathcal A1_x,\textgoth1_{\mathcal D^{op}}\circ\textgoth F_\mathcal O\\\nonumber x&;&(\textgoth F_\mathcal A\circ\textgoth1_\mathcal C)x,\textgoth1_{\mathcal D^{op}}\circ\textgoth F_\mathcal O.\end{eqnarray}This means $\textgoth1_{\mathcal D^{op}},\textgoth1_\mathcal C;\textgoth F_\mathcal A,\textgoth F_\mathcal O$. The definition of $\textgoth F_\mathcal A$ assures us that $2)$ is satisfied. Moving on to $3)$, we make $f:a\rightarrow b$ and $g:b\rightarrow c$.\begin{eqnarray}\nonumber g*f&;&\textgoth F(g*f),\textgoth F\\\nonumber g*f&;&[\textgoth F^\ltimes(g*f)]^{op},\textgoth F\\\nonumber g*f&;&[\textgoth F^\ltimes f*\textgoth F^\ltimes g]^{op},\textgoth F\\\nonumber g*f&;&(\textgoth F^\ltimes g)^{op}*(\textgoth F^\ltimes f)^{op},\textgoth F\\\nonumber g*f&;&\textgoth F g*\textgoth Ff,\textgoth F\end{eqnarray}\item[2)]We will carry out the proof of the second part of this theorem, although it is similar to the first. Here, we define $\textgoth F^\ltimes_\mathcal Af:=(\textgoth F_\mathcal Af)^{op}$\begin{eqnarray}\nonumber x&;&(\textgoth1_{\mathcal D^{op}}\circ\textgoth F_\mathcal O)x,\textgoth1_{\mathcal D^{op}}\circ\textgoth F^\ltimes_\mathcal O\\\nonumber x&;&\textgoth1_{\mathcal D^{op}}(\textgoth F_\mathcal O x),\textgoth1_{\mathcal D^{op}}\circ\textgoth F^\ltimes_\mathcal O\\\nonumber x&;&(1_{\textgoth F_\mathcal O x})^{op},\textgoth1_{\mathcal D^{op}}\circ\textgoth F^\ltimes_\mathcal O\\\nonumber x&;&(\textgoth F_\mathcal A1_x)^{op},\textgoth1_{\mathcal D^{op}}\circ\textgoth F^\ltimes_\mathcal O\\\nonumber x&;&\textgoth F^\ltimes_\mathcal A1_x,\textgoth1_{\mathcal D^{op}}\circ\textgoth F^\ltimes_\mathcal O\\\nonumber x&;&(\textgoth F^\ltimes_\mathcal A\circ\textgoth1_\mathcal C)x,\textgoth1_{\mathcal D^{op}}\circ\textgoth F^\ltimes_\mathcal O.\end{eqnarray}Since $(\textgoth F_\mathcal Af)^{op}$ is $\textgoth Fb\rightarrow_{op}\textgoth Fa$, we know $\textgoth F^\ltimes$ is contravariant. Now, we prove $3)'$ holds:\begin{eqnarray}\nonumber g*f&;&\textgoth F^\ltimes(g*f),\textgoth F^\ltimes\\\nonumber g*f&;&[\textgoth F(g*f)]^{op},\textgoth F^\ltimes\\\nonumber g*f&;&[\textgoth Fg*\textgoth Ff]^{op},\textgoth F^\ltimes\\\nonumber g*f&;&(\textgoth Ff)^{op}*(\textgoth Fg)^{op},\textgoth F^\ltimes\\\nonumber g*f&;&\textgoth F^\ltimes f*\textgoth F^\ltimes g,\textgoth F^\ltimes.\end{eqnarray}\end{itemize}\end{proof}

\begin{corollary}If $f^\ltimes:\mathcal{O|P}\rightarrow\mathcal{O|Q}$ is an order reversing function, then it is the object function of a covariant functor $\textgoth F:\mathcal P\rightarrow\mathcal Q^{op}$.\end{corollary}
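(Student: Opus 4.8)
The plan is to reduce this corollary to the lemma already established for order preserving functions on partial orders (Lemma func par), by recognizing that an order reversing function into $\mathcal Q$ is precisely an order preserving function into $\mathcal Q^{op}$. First I would observe that $\mathcal Q^{op}$ is itself a partial order: reversing all arrows of $\mathcal Q$ preserves reflexivity and transitivity, and since anti-symmetry only forbids two distinct arrows between a pair of distinct objects, reversal cannot create any, so anti-symmetry survives as well. Thus $\mathcal Q^{op}$ qualifies as a legitimate target partial order for that lemma. Recall also that comparability in the opposite order satisfies $x\leq_{op}y$ if and only if $y\preceq x$.

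The key step is to verify that $f^\ltimes$, regarded now as a function $\mathcal{O|P}\rightarrow\mathcal{O|Q^{op}}$, is order preserving. Take any arrow $a\leq b$ in $\mathcal P$. Because $f^\ltimes$ is order reversing we have $f^\ltimes b\preceq f^\ltimes a$ in $\mathcal Q$, and by the definition of the opposite order this is exactly the statement $f^\ltimes a\leq_{op}f^\ltimes b$ in $\mathcal Q^{op}$. Hence every arrow $a\leq b$ is sent to an arrow $f^\ltimes a\leq_{op}f^\ltimes b$, which is precisely the defining property of an order preserving function from $\mathcal P$ into $\mathcal Q^{op}$.

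With this in hand I would simply invoke Lemma func par, which produces, from any order preserving function, a functor whose object function is that very function. Applied to the order preserving function $f^\ltimes:\mathcal{O|P}\rightarrow\mathcal{O|Q^{op}}$, it yields a covariant functor $\textgoth F:\mathcal P\rightarrow\mathcal Q^{op}$ having $f^\ltimes$ as object function, as desired; the functor is covariant by construction, since the lemma builds it to satisfy the standard conditions $1),2),3)$.

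I expect no serious obstacle here, as the whole content is the translation $f^\ltimes b\preceq f^\ltimes a \iff f^\ltimes a\leq_{op} f^\ltimes b$ together with the applicability of the earlier lemma. The one point worth stating explicitly, to keep the argument self-contained, is that $\mathcal Q^{op}$ must be checked to be a partial order directly (from preservation of anti-symmetry under arrow reversal), rather than merely appealing to the fact that it is a category.
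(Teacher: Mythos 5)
Your proof is correct, but it takes a genuinely different route from the paper's. The paper treats the statement as a true corollary of the theorem immediately preceding it: it first builds a \emph{contravariant} functor $\mathcal P\rightarrow\mathcal Q$ out of $f^\ltimes$ — condition 2)' holds because the function is order reversing, and condition 3)' is verified by an explicit computation on the composition $b\leq c*a\leq b$ in the comparability notation — and then invokes part 1) of that theorem to convert this contravariant functor into a covariant functor $\mathcal P\rightarrow\mathcal Q^{op}$. You bypass contravariance altogether: you check that $\mathcal Q^{op}$ is again a partial order, reinterpret the order reversing function into $\mathcal Q$ as an order preserving function into $\mathcal Q^{op}$, and then apply the earlier lemma on functors for partial orders (the one producing a functor from any order preserving function) with $\mathcal Q^{op}$ as target. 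Both arguments are sound, and in both the object function is $f^\ltimes$ itself, since $\mathcal Q$ and $\mathcal Q^{op}$ have the same objects. Yours is more economical: no functor axiom needs to be re-verified, and it makes transparent the conceptual point that contravariance on partial orders is nothing but covariance into the opposite order; the price is the explicit check that $\mathcal Q^{op}$ is a partial order, which the paper never needs in this form (it only needs $\mathcal Q^{op}$ to be a category, supplied by its Lemma I on opposite categories). The paper's route, in exchange, exhibits the intermediate contravariant functor and the accompanying computation, which is what the surrounding section is designed to illustrate, and it explains why the statement sits as a corollary of the covariant/contravariant conversion theorem rather than of the partial-order lemma.
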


\begin{proof}All that we need to prove is that there is a contravariant functor $\mathcal P\rightarrow\mathcal Q$. The proof of 1) is the same as in theorem , and 2) is given by the fact that $f$ is order reversing. We are left to prove 3).\begin{eqnarray}\nonumber b\leq c*a\leq b&;&\textgoth F(b\leq c*a\leq b),\textgoth F\\\nonumber b\leq c*a\leq b&;&\textgoth F(a\leq c),\textgoth F\\\nonumber b\leq c*a\leq b&;&\textgoth Fc\leq\textgoth Fa,\textgoth F\\\nonumber b\leq c*a\leq b&;&\textgoth Fb\leq\textgoth Fa*\textgoth Fc\leq\textgoth Fb,\textgoth F\\\nonumber b\leq c*a\leq b&;&\textgoth F(a\leq b)*\textgoth F(b\leq c),\textgoth F.\end{eqnarray}\end{proof}

\begin{corollary}\makebox[5pt][]{}\mbox {}\begin{itemize}\item[1)]Let $f$ be an order bijectivity $\mathcal P,\mathcal Q;\mathcal P^{op},\mathcal Q^{op}$. Then, there is a covariant isomorphism $\textgoth F:\mathcal P\rightarrow\mathcal Q$.\item[2)]Suppose $\mathcal P$ and $\mathcal Q$ are dual orders. Then, there is a covariant isomorphism $\textgoth F:\mathcal P\rightarrow\mathcal Q^{op}$.\end{itemize}\end{corollary}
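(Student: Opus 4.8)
The plan is to reduce both parts to the theorem proved just after the lemma that builds a functor from an order-preserving function — namely, the theorem asserting that an order bijectivity $f$ is the object function of a covariant isomorphism of order categories. The entire mathematical content is already carried by that theorem; the hypotheses of the two parts are merely notational repackagings of the existence of a single order bijectivity, and the only thing that changes between them is the target order.

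For part 1), I would begin by unwinding the notation. The expression $\mathcal P,\mathcal Q;\mathcal P^{op},\mathcal Q^{op}$ was introduced above precisely to abbreviate the statement that there is an order bijectivity $f:\mathcal P\rightarrow\mathcal Q$. Since such an $f$ is in particular order preserving, the earlier lemma supplies a covariant functor $\textgoth F:\mathcal P\rightarrow\mathcal Q$ with $f$ as object function; and since $f$ is moreover an order bijectivity, the cited theorem upgrades $\textgoth F$ to an isomorphism. This settles part 1) with no computation, and the functor so obtained is covariant because conditions $1),2),3)$ (rather than the reversed $2)',3)'$) are the ones verified in the lemma.

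For part 2), I would recall that, by definition, $\mathcal P$ and $\mathcal Q$ being dual orders means there is an order bijectivity into the opposite of $\mathcal Q$, i.e. an order bijectivity $f:\mathcal P\rightarrow\mathcal Q^{op}$. The one point that deserves to be stated explicitly is that $\mathcal Q^{op}$ is itself a legitimate partial order: its arrows are the reversed arrows of $\mathcal Q$, and reflexivity, transitivity, and anti-symmetry are all preserved under reversal. Hence the same theorem applies verbatim, now with $\mathcal Q^{op}$ playing the role of the target order, and it yields a covariant isomorphism $\textgoth F:\mathcal P\rightarrow\mathcal Q^{op}$, exactly as claimed.

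I do not expect any genuine obstacle here; the difficulty is purely one of bookkeeping. The key step in each part is simply recognizing that the stated hypothesis is, by the definitions given earlier, the existence of an order bijectivity into the appropriate order ($\mathcal Q$ in part 1), $\mathcal Q^{op}$ in part 2)), after which the previously established theorem does all the work. The closest thing to a subtlety is confirming that the opposite order is a valid target, which is why I would make that remark explicit rather than leave it implicit.
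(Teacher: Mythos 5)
Your proposal is correct and matches the paper's treatment: the paper states this corollary without any proof at all, treating it exactly as you do --- as an immediate consequence of unwinding the notation ($\mathcal P,\mathcal Q;\mathcal P^{op},\mathcal Q^{op}$ means an order bijectivity $\mathcal P\rightarrow\mathcal Q$ exists; dual orders means one into $\mathcal Q^{op}$ exists) and then citing the theorem that an order bijectivity is the object function of a covariant isomorphism of order categories. Your explicit remark that $\mathcal Q^{op}$ is itself a partial order, so that the theorem applies verbatim with it as target, is the only step the paper leaves implicit, and it is a reasonable one to spell out.
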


There is another side to this. We can prove that given a covariant functor $\textgoth F:\mathcal C\rightarrow\mathcal D$, there is a contravariant functor $\textgoth F_\ltimes:\mathcal C^{op}\rightarrow\mathcal D$. As one may expect, we can also say that given a contravariant functor $\mathcal C\rightarrow\mathcal D$, there is a covariant $\mathcal C^{op}\rightarrow\mathcal D$.

\begin{proposition}If $\textgoth F:\mathcal C\rightarrow\mathcal D$ is a covariant functor, then so is $\textgoth F^{op}$. Likewise, $(\textgoth F^\ltimes)^{op}$ is contravariant, given $\textgoth F^\ltimes$ is contravariant.\end{proposition}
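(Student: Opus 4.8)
The plan is to handle the two assertions in parallel, reusing the computations already carried out for the opposite functor. For the first assertion there is essentially nothing new to prove: the functor $\textgoth F^{op}:\mathcal C^{op}\rightarrow\mathcal D^{op}$ was constructed above (Lemma II), and the verification given there establishes conditions $1)$, $2)$, $3)$, which are exactly the three conditions defining a \emph{covariant} functor. In particular, for $f^{op}:b\rightarrow_{op}a$ the image $\textgoth F^{op}f^{op}=(\textgoth Ff)^{op}$ runs $\textgoth F_\mathcal O b\rightarrow_{op}\textgoth F_\mathcal O a$, so the source of $f^{op}$ is sent to the object function of the source and its target to the object function of the target; this is condition $2)$ and not $2)'$, whence $\textgoth F^{op}$ is covariant.

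For the second assertion I would first extend the opposite construction to a contravariant functor, defining $(\textgoth F^\ltimes)^{op}:\mathcal C^{op}\rightarrow\mathcal D^{op}$ to have object function $\textgoth F^\ltimes_\mathcal O$ and arrow function sending $f^{op}:b\rightarrow_{op}a$ to $(\textgoth F^\ltimes f)^{op}$. Condition $1)$ is checked verbatim as in Lemma II, since it involves only the object function and the reflexive arrows. The decisive step is condition $2)'$: because $\textgoth F^\ltimes$ is contravariant, $\textgoth F^\ltimes f$ is an arrow $\textgoth F^\ltimes b\rightarrow\textgoth F^\ltimes a$, so its opposite $(\textgoth F^\ltimes f)^{op}$ runs $\textgoth F^\ltimes a\rightarrow_{op}\textgoth F^\ltimes b$. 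Reading off the source $b$ and target $a$ of $f^{op}$, the image therefore goes from the image of the \emph{target} to the image of the \emph{source}, that is $b,a;\textgoth F^\ltimes_\mathcal O a,\textgoth F^\ltimes_\mathcal O b$, which is precisely $2)'$ for $(\textgoth F^\ltimes)^{op}$.

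It then remains to verify $3)'$. Using the rule $f^{op}*g^{op}=(g*f)^{op}$ for composition in the opposite categories, I would rewrite $(\textgoth F^\ltimes)^{op}(f^{op}*g^{op})$ as $(\textgoth F^\ltimes(g*f))^{op}$, apply the contravariance of $\textgoth F^\ltimes$ in the form $\textgoth F^\ltimes(g*f)=\textgoth F^\ltimes f*\textgoth F^\ltimes g$, and then unwind the opposite-composition rule once more to arrive at $(\textgoth F^\ltimes)^{op}g^{op}*(\textgoth F^\ltimes)^{op}f^{op}$, in the reversed order demanded by $3)'$. This is the same chain of rewritings as in Lemma II, with the extra reversal supplied by the contravariance of $\textgoth F^\ltimes$.

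The computations are all routine; the sole point requiring care, and the main obstacle, is the double bookkeeping of opposites. Two independent reversals are in play: the passage from $\mathcal C,\mathcal D$ to $\mathcal C^{op},\mathcal D^{op}$, which replaces every $\rightarrow$ by $\rightarrow_{op}$, and the arrow-level operation $h\mapsto h^{op}$ built into the definition of the functor. One must confirm that these two reversals reinforce rather than cancel, so that the variance of $\textgoth F^\ltimes$ is \emph{preserved} and not flipped; this is in deliberate contrast with the earlier theorem, where reversing only the codomain turned a contravariant functor into a covariant one.
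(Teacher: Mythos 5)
Your proof is correct; note that the paper states this proposition with no proof of its own (it is left as a consequence of the surrounding discussion), so there is no written argument to diverge from. What you give is exactly the intended derivation: the first assertion is a rereading of Lemma II, whose verification already establishes the covariant conditions 1), 2), 3) for $\textgoth F^{op}$, and the second assertion repeats that lemma's computations with the contravariance of $\textgoth F^\ltimes$ supplying the single extra reversal in 2)$'$ and 3)$'$ --- confirming, as you observe, that reversing both domain and codomain preserves variance, whereas reversing only the codomain (as in the earlier theorem) flips it.
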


\subsubsection{Algebraic Functor}An algebraic functor is one such that the source and target are algebraic categories. Given two algebraic categories, it is trivial to define a functor from one category to the other. Suppose $e_1$, $e_2$ are the c-objects of $\mathcal C$, $\mathcal D$, respectively. This means that every arrow in $\mathcal C$ is a reflexive arrow for $e_1$, and similarly for $\mathcal D$ and $e_2$. With this in mind, we see that any functor $\mathcal C\rightarrow\mathcal D$ has the object function defined by $\textgoth F_\mathcal Oe_1:=e_2$. One can easily see that the composition of algebraic functors is again an agebraic functor. We already know that the composition of functors is a functor and it is easy to give the object function of a compostition of algebraic functors.

Let $*$ be the operation of composition for the objects of operation, in any category $\mathcal C$, and give a function $\odot:\mathcal O\rightarrow\mathcal C\textgoth F\mathcal C$, with a collection $\mathcal O$ in the domain. We are assigning every object in the domain, a functor, from some subcategory into the category itslef. This defines an operation $\cdot:\mathcal O\rightarrow\mathcal{A|C}f\mathcal{A|C}$. Since every object in the image of $\odot$ is a functor, we can say that $\odot x(f*g)$ is the same as $\odot xf*\odot xg$. In other words, $f*g;f\cdot x*g\cdot x,x$ in terms of the notation for $\cdot$.

\begin{proposition}If there is a function from some collection $\mathcal O$ into $\mathcal C\textgoth F\mathcal C$, then there is an operation $\cdot$ that distributes over the operation $*$, of the category.\end{proposition}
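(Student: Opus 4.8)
The plan is to read off the operation $\cdot$ directly from the given function $\odot$ and then obtain the distributive law as nothing more than condition 3) in the definition of functor. First I would pin down the construction of $\cdot:\mathcal O\rightarrow\mathcal{A|C}f\mathcal{A|C}$. For each action $x$ in $\mathcal O$ the image $\odot x$ is a functor from some subcategory of $\mathcal C$ into $\mathcal C$, and I take its arrow function $(\odot x)_\mathcal A$, a function carrying arrows of that subcategory into $\mathcal{A|C}$. Setting $f\cdot x:=\odot x(f)$ then assigns to every $x$ a function on arrows, so $\cdot$ is genuinely an operation of the stated form; I would remark that it need not be full, since $\odot x$ is defined only on a subcategory.

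Next I would identify precisely which distributive identity has to be checked. Taking $\oplus=*$ to be the composition of arrows, saying that $\cdot$ distributes over $*$ on the right means $f*g;(f\cdot x)*(g\cdot x),x$, that is, $(f*g)\cdot x$ and $(f\cdot x)*(g\cdot x)$ are the same arrow for every action $x$ and every composable pair $f,g$. By construction $(f*g)\cdot x=\odot x(f*g)$, and $\odot x$ is a functor, so condition 3) of the functor definition gives exactly $\odot x(f*g)=\odot x(f)*\odot x(g)=(f\cdot x)*(g\cdot x)$. This is the entire content of the distributive law, so the verification reduces to a one-line appeal to functoriality (preservation of composition).

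The one point needing care, and the only real (and mild) obstacle, is that both sides of the identity be defined simultaneously. Because $\odot x$ is a functor, its domain is a subcategory whose arrows form a collection of order arrows and are therefore closed under the composition $*$; hence whenever $f,g$ are composable arrows lying in that subcategory, $f*g$ lies there as well, and all of $\odot x(f)$, $\odot x(g)$, $\odot x(f*g)$ are defined. This closure is what licenses the use of condition 3) and guarantees that $f*g;(f\cdot x)*(g\cdot x),x$ holds wherever it is asserted, which completes the argument.
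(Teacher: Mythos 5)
Your proposal is correct and takes essentially the same route as the paper: the operation $\cdot$ is read off from the arrow functions of the functors $\odot x$, and the distributive law $f*g;(f\cdot x)*(g\cdot x),x$ is exactly condition 3) of the functor definition (preservation of composition) applied to each $\odot x$. Your closing remark about the domain being a subcategory closed under $*$ merely makes explicit a point the paper leaves implicit, so nothing of substance differs.
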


Recall theorem \ref{auto 1}; we sometimes consider an algebraic functor $\mathcal Z\rightarrow\mathcal C\textgoth F\mathcal C$, given $\mathcal Z$ is algebraic.
		
			\subsection{Product Category and Bifunctor}

Given two $\mathcal C_1$, $\mathcal C_2$ categories, form a system $\mathcal C$, with $\mathcal{O|C}:=\mathcal{O|C}_1\rightarrow_\times\mathcal{O|C}_2$ and $\mathcal{A|C}:=\mathcal{A|C}_1\rightarrow_\times\mathcal{A|C}_2$; the arrows of $\mathcal C$ are objects that consist of two arrows. We are using comparability for arrows. Weak arrows are those in $\mathcal C_1$ and $\mathcal C_2$. The arrows in $\mathcal C$ are strong (ordered pairs of arrows). In short, $\mathcal C$ is a category, with $\mathcal{O|C}:=\mathcal{O|C}_1\rightarrow_\times\mathcal{O|C}_2$ and $\mathcal{A|C}:=\mathcal{A|C}_1\rightarrow_\times\mathcal{A|C}_2$.

We define the arrows in a natural manner. Let $a\rightarrow_\times b$ a c-object, in $\mathcal C$. Take arrows $f:a\rightarrow_{\mathcal C_1}c$ and $g:b\rightarrow_{\mathcal C_2}d$; to avoid confusion, we will use the notation $a\rightarrow_{\mathcal C_1}c$ to express that the arrow is in category $\mathcal C_1$. The reader can see that $a\rightarrow_\times b$ and $c\rightarrow_\times d$ are a natural pair of functions, under $f,g$. Moreover, $a\rightarrow_\times b$ and $c\rightarrow_\times d$ are c-objects in $\mathcal C$. We define an arrow $f\rightarrow_\times g:a\rightarrow_\times b\longrightarrow c\rightarrow_\times d$.

\begin{theorem}The system $\mathcal C$, defined in terms of $\mathcal C_1$, $\mathcal C_2$, is a category and we say it is the product category of $\mathcal C_1$ and $\mathcal C_2$ (in that order). This category may be expressed as $\mathcal C_1\times\mathcal C_2$.\end{theorem}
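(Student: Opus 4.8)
The plan is to check the definition of category for $\mathcal C=\mathcal C_1\times\mathcal C_2$ one clause at a time, reducing each clause to the corresponding property holding separately in $\mathcal C_1$ and $\mathcal C_2$. Since every piece of structure on $\mathcal C$ is built coordinatewise, no clause will require anything beyond the fact that $\mathcal C_1$ and $\mathcal C_2$ are already categories.

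First I would establish that $\mathcal{A|C}_1\rightarrow_\times\mathcal{A|C}_2$ is a collection of order arrows for the c-objects $\mathcal{O|C}_1\rightarrow_\times\mathcal{O|C}_2$. For the reflexive property, to each c-object $a\rightarrow_\times b$ I assign the reflexive arrow $1_{a\rightarrow_\times b}:=1_a\rightarrow_\times 1_b$; this is a legitimate strong arrow because $1_a$, $1_b$ are the reflexive arrows supplied by $\mathcal C_1$, $\mathcal C_2$, and it has $a\rightarrow_\times b$ as both source and target. For the transitive property I define composition coordinatewise: for composable strong arrows $f\rightarrow_\times g:a\rightarrow_\times b\longrightarrow c\rightarrow_\times d$ and $f'\rightarrow_\times g':c\rightarrow_\times d\longrightarrow e\rightarrow_\times h$, set $(f'\rightarrow_\times g')*(f\rightarrow_\times g):=(f'*f)\rightarrow_\times(g'*g)$. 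This is well defined exactly because composability of the strong arrows is, by construction, composability of each weak component, so $f'*f$ exists in $\mathcal C_1$ and $g'*g$ exists in $\mathcal C_2$, and the result is again an object of $\mathcal{A|C}_1\rightarrow_\times\mathcal{A|C}_2$.

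With the operation and units in hand, the two category axioms reduce to their factorwise analogues. For the unit law, $1_{c\rightarrow_\times d}*(f\rightarrow_\times g)$ equals $(1_c*f)\rightarrow_\times(1_d*g)$, and each coordinate collapses to $f$, respectively $g$, by the unit law in the factor; the source-unit equation $(f\rightarrow_\times g)*1_{a\rightarrow_\times b}=f\rightarrow_\times g$ is symmetric. For associativity, a bracketed triple composite evaluates coordinatewise to $[(f''*f')*f]\rightarrow_\times[(g''*g')*g]$, while the opposite bracketing gives $[f''*(f'*f)]\rightarrow_\times[g''*(g'*g)]$; these agree because associativity holds in $\mathcal C_1$ and in $\mathcal C_2$ coordinate by coordinate. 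In the paper's notation this is exactly the statement $g*,g*;*f,*f$ read in each factor.

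The argument is mechanical, so there is no real obstacle; the only place demanding care is the initial identification, namely that a strong arrow of $\mathcal C$ — an object of the cartesian product $\mathcal{A|C}_1\rightarrow_\times\mathcal{A|C}_2$ — carries well-defined source and target c-objects, that equality of strong arrows reduces to componentwise equality of the weak arrows, and that ``composable in $\mathcal C$'' means precisely ``composable in each coordinate''. Once this dictionary between strong arrows and ordered pairs of weak arrows is fixed, the unit and associativity laws are immediate consequences of their validity in the two factors.
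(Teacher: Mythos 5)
Your proposal is correct and follows essentially the same route as the paper's proof: you define the unit arrow $1_{a\rightarrow_\times b}$ as $1_a\rightarrow_\times 1_b$, define composition coordinatewise by $(h\rightarrow_\times i)*(f\rightarrow_\times g):=(h*f)\rightarrow_\times(i*g)$, and then reduce both the unit law and associativity to the corresponding laws in $\mathcal C_1$ and $\mathcal C_2$, exactly as the paper does. Your closing remark on fixing the dictionary between strong arrows and ordered pairs of weak arrows is a sound point of care, but it does not change the substance of the argument.
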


\begin{proof}We first need to prove that $\mathcal{A|C}_1\rightarrow_\times\mathcal{A|C}_2$ is a collection of order arrows for $\mathcal{O|C}_1\rightarrow_\times\mathcal{O|C}_2$. Given a c-object $a\rightarrow_\times b$, in $\mathcal C$, we know the c-objects $a,b$ are in $\mathcal C_1$, $\mathcal C_2$, respectively. These satisfy $a;1_a,\textgoth1_{\mathcal C_1}$ and $b;1_b,\textgoth1_{\mathcal C_2}$, for their unit arrows because $\mathcal C_1$ and $\mathcal C_2$ are categories. We define the unit arrow $1_{a\rightarrow_\times b}$ as the arrow $1_a\rightarrow_\times1_b:a\rightarrow_\times b\longrightarrow a\rightarrow_\times b$.

Suppose we have two arrows in the product category; say $f\rightarrow_\times g:a\rightarrow_\times b\longrightarrow c\rightarrow_\times d$ and $h\rightarrow_\times i:c\rightarrow_\times d\longrightarrow x\rightarrow_\times y$. It is clear that there is an arrow $h*f$, in $\mathcal C_1$, and an arrow $i*g$, in $\mathcal C_2$. The composition $h\rightarrow_\times i*f\rightarrow_\times g:a\rightarrow_\times b\longrightarrow x\rightarrow_\times y$, is defined as the arrow $h*f\rightarrow_\times i*g$, so that $a\rightarrow_\times b$ and $x\rightarrow_\times y$ are a natural pair under $h*f:a\rightarrow x$ and $i*g:b\rightarrow y$. In other words, if we use the notation for the operation $*$ of $\mathcal C$, we have \begin{equation}h\rightarrow_\times i;h*f\rightarrow_\times i*g,f\rightarrow_\times g.\label{def prod cat}\end{equation}

We move on to show that the unit and associativity are true, for the operation $*$  that acts on the category $\mathcal C$. First, we will show that $1_{c\rightarrow_\times d}$ is the unit of the c-object $c\rightarrow_\times d$, in $\mathcal C$. \begin{eqnarray}\nonumber1_{c\rightarrow_\times d}&;&1_c\rightarrow_\times1_d*f\rightarrow_\times g,f\rightarrow_\times g\\\nonumber1_{c\rightarrow_\times d}&;&1_c*f\rightarrow_\times 1_d*g,f\rightarrow_\times g\\\nonumber1_{c\rightarrow_\times d}&;&f\rightarrow_\times g,f\rightarrow_\times g.\end{eqnarray} \begin{eqnarray}h\rightarrow_\times i&;&h\rightarrow_\times i*1_c\rightarrow_\times1_d,\nonumber1_{c\rightarrow_\times d}\\\nonumber h\rightarrow_\times i&;&h*1_c\rightarrow_\times i*1_d,1_{c\rightarrow_\times d}\\\nonumber h\rightarrow_\times i&;&h\rightarrow_\times i,1_{c\rightarrow_\times d}.\end{eqnarray}

Finally, we have to prove associativity. We have to consider two more arrows, let us use $j:x\rightarrow w$ and $k:y\rightarrow z$. We have a new arrow in $\mathcal C$; the arrow is $j\rightarrow_\times k$. In terms of the notation for $*$,\begin{eqnarray}\nonumber j\rightarrow_\times k&;&(j\rightarrow_\times k)*(h\rightarrow_\times i*f\rightarrow_\times g),h\rightarrow_\times i*f\rightarrow_\times g\\\nonumber j\rightarrow_\times k&;&(j\rightarrow_\times k)*(h*f\rightarrow_\times i*g),h\rightarrow_\times i*f\rightarrow_\times g\\\nonumber j\rightarrow_\times k&;&j*(h*f)\rightarrow_\times k*(i*g),h\rightarrow_\times i*f\rightarrow_\times g\\\nonumber j\rightarrow_\times k&;&(j*h)*f\rightarrow_\times(k*i)*g,h\rightarrow_\times i*f\rightarrow_\times g\\\nonumber j\rightarrow_\times k&;&(j*h\rightarrow_\times k*i)*(f\rightarrow_\times g),h\rightarrow_\times i*f\rightarrow_\times g\\\nonumber j\rightarrow_\times k&;&(j\rightarrow_\times k*h\rightarrow_\times i)*(f\rightarrow_\times g),h\rightarrow_\times i*f\rightarrow_\times g\\\nonumber j\rightarrow_\times k,f\rightarrow_\times g&;&j\rightarrow_\times k*h\rightarrow_\times i,h\rightarrow_\times i*f\rightarrow_\times g\end{eqnarray}\end{proof}

\begin{proposition}The opposite of a product category, is the product of the opposites; that is to say $(\mathcal C_1\times\mathcal C_2)^{op}$ and $\mathcal C_1^{op}\times\mathcal C_2^{op}$ are the same category.\end{proposition}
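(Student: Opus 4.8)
The plan is to unwind both constructions down to their c-objects, arrows, unit, and composition operation, and then invoke the criterion that two systems are the same exactly when they consist of the same objects and the same relations. Since forming the opposite of a category leaves its c-objects untouched, and the cartesian product of c-objects does not depend on the order-arrows of its factors, both $(\mathcal{C}_1\times\mathcal{C}_2)^{op}$ and $\mathcal{C}_1^{op}\times\mathcal{C}_2^{op}$ have as c-objects exactly the objects $a\rightarrow_\times b$ of $\mathcal{O|C}_1\rightarrow_\times\mathcal{O|C}_2$. So the entire content of the claim lies in matching up the arrows and checking that the unit and composition coincide.

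For the arrows I would set up the identification $(f\rightarrow_\times g)^{op}\leftrightarrow f^{op}\rightarrow_\times g^{op}$, where $f:a\rightarrow c$ in $\mathcal{C}_1$ and $g:b\rightarrow d$ in $\mathcal{C}_2$. First I would check the endpoints agree: on the left, $f\rightarrow_\times g:a\rightarrow_\times b\longrightarrow c\rightarrow_\times d$, so its opposite runs from $c\rightarrow_\times d$ to $a\rightarrow_\times b$; on the right, $f^{op}:c\rightarrow_{op}a$ and $g^{op}:d\rightarrow_{op}b$, so $f^{op}\rightarrow_\times g^{op}$ also runs from $c\rightarrow_\times d$ to $a\rightarrow_\times b$. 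Every arrow of a product category is a pair $f\rightarrow_\times g$, and every arrow of an opposite category is some $\phi^{op}$, so this correspondence is exhaustive on both sides and is a bijection of the arrow-collections.

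Next I would verify that the operations coincide under this identification, using a second composable pair $h:c\rightarrow x$ and $i:d\rightarrow y$. On the left, composition in the opposite category uses $\alpha^{op}*\beta^{op}=(\beta*\alpha)^{op}$ together with the product composition rule~(\ref{def prod cat}), giving $(f\rightarrow_\times g)^{op}*(h\rightarrow_\times i)^{op}=(h*f\rightarrow_\times i*g)^{op}$, which under the identification reads $(h*f)^{op}\rightarrow_\times(i*g)^{op}$. On the right, the product rule followed by the opposite-composition rule applied in each factor gives $(f^{op}\rightarrow_\times g^{op})*(h^{op}\rightarrow_\times i^{op})=(f^{op}*h^{op})\rightarrow_\times(g^{op}*i^{op})=(h*f)^{op}\rightarrow_\times(i*g)^{op}$. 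The two results agree. For units, on the left $1_{a\rightarrow_\times b}^{op}=(1_a\rightarrow_\times 1_b)^{op}$ corresponds to $1_a^{op}\rightarrow_\times 1_b^{op}$, which is precisely the unit of $a\rightarrow_\times b$ in $\mathcal{C}_1^{op}\times\mathcal{C}_2^{op}$ by the product unit rule, since the units of the factors $\mathcal{C}_1^{op},\mathcal{C}_2^{op}$ are $1_a^{op},1_b^{op}$.

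The only real obstacle is bookkeeping: the opposite reverses the order of composition while the product composes componentwise, so I must track each source, target, and left/right order with care, making sure that the two separate order-reversals on the right (one inside each factor's opposite) land in the same place as the single order-reversal of the product's opposite on the left. Once the identification of an opposite of a paired arrow with the pair of opposite arrows is made explicit, every comparison above is a direct substitution, and the system-equality criterion then yields that $(\mathcal{C}_1\times\mathcal{C}_2)^{op}$ and $\mathcal{C}_1^{op}\times\mathcal{C}_2^{op}$ are the same category.
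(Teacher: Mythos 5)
Your proposal is correct and follows essentially the same route as the paper: identify $(f\rightarrow_\times g)^{op}$ with $f^{op}\rightarrow_\times g^{op}$, note the c-objects coincide, and verify that composing in $(\mathcal C_1\times\mathcal C_2)^{op}$ and in $\mathcal C_1^{op}\times\mathcal C_2^{op}$ both yield $(h*f)^{op}\rightarrow_\times(i*g)^{op}$. Your extra checks of the unit arrows and the exhaustiveness of the arrow correspondence are harmless additions to what the paper records implicitly.
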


\begin{proof}A category and its opposite have the same c-objects, so we have not much to prove about the c-objects, we are interested in the arrows. Let $g\rightarrow_{\times~op} f$ represent the arrow $(f\rightarrow_\times g)^{op}$, in $(\mathcal C_1\times\mathcal C_2)^{op}$. The arrow $f\rightarrow_\times g$ is of the form $a\rightarrow_\times b\longrightarrow c\rightarrow_\times d$ because $f:a\rightarrow_{\mathcal C_1} c$ and $g:b\rightarrow_{\mathcal C_2}d$. We are able to express $g\rightarrow_{\times~op} f:c\rightarrow_\times d\longrightarrow_{op} a\rightarrow_\times b$ which means that we want to define  $(f\rightarrow_\times g)^{op}$ as an ordered pair, of $c\rightarrow_{\mathcal C_1}a$ and $d\rightarrow_{\mathcal C_2}b$. We define $(f\rightarrow_\times g)^{op}:=f^{op}\rightarrow_\times g^{op}$, which is in $\mathcal A|(\mathcal C_1^{op}\times\mathcal C_2^{op})$.

We have to show that this definition and the definition of product for opposite category are consistent. with each other. We reiterate the naming of arrows, as in the previous result; let $f:a\rightarrow_{\mathcal C_1}c$, $h:c\rightarrow_{\mathcal C_1}x$, $g:b\rightarrow_{\mathcal C_2}d$, and $i:d\rightarrow_{\mathcal C_2}y$. We know $f\rightarrow_\times g$ takes $a\rightarrow_\times b$ into $c\rightarrow_\times d$, and $h\rightarrow_\times i$ takes $c\rightarrow_\times d$ into $x\rightarrow_\times y$.\begin{eqnarray}\nonumber(f\rightarrow_\times g)^{op}&;&(f\rightarrow_\times g)^{op}*(h\rightarrow_\times i)^{op},(h\rightarrow_\times i)^{op}\\\nonumber(f\rightarrow_\times g)^{op}&;&f^{op}\rightarrow_\times g^{op}*h^{op}\rightarrow_\times i^{op},(h\rightarrow_\times i)^{op}\\\nonumber(f\rightarrow_\times g)^{op}&;&f^{op}*h^{op}\rightarrow_\times g^{op}*i^{op},(h\rightarrow_\times i)^{op}\\\nonumber(f\rightarrow_\times g)^{op}&;&(h*f)^{op}\rightarrow_\times (i*g)^{op},(h\rightarrow_\times i)^{op}\\\nonumber(f\rightarrow_\times g)^{op}&;&(h*f\rightarrow_\times i*g)^{op},(h\rightarrow_\times i)^{op}\\\nonumber(f\rightarrow_\times g)^{op}&;&(h\rightarrow_\times i*f\rightarrow_\times g)^{op},(h\rightarrow_\times i)^{op}\end{eqnarray}\end{proof}

			\subsubsection{Product Functor for Common Domain}It is interesting to see what happens when a functor is defined for product categories. What can be said about a functor $\textgoth F:\mathcal C_1\times\mathcal C_2\rightarrow\mathcal D_1\times\mathcal D_2$? We will carry out a detailed analysis, one step at a time. Begin, considering a functor into a product category.

\begin{proposition}Given functors $\textgoth f,\textgoth g:\mathcal C\rightarrow\mathcal D_1,\mathcal D_2$, there is a functor $\textgoth f\times\textgoth g:\mathcal C\rightarrow\mathcal D_1\times\mathcal D_2$, and we say it is the product functor of common domain.\label{common domain 1}\end{proposition}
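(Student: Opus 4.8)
The plan is to build $\textgoth f\times\textgoth g$ componentwise out of the two given functors, mirroring exactly the way the arrows of $\mathcal D_1\times\mathcal D_2$ were assembled in the product-category theorem. First I would define the object function by $(\textgoth f\times\textgoth g)_\mathcal O a:=\textgoth f_\mathcal O a\rightarrow_\times\textgoth g_\mathcal O a$, which is a legitimate c-object of $\mathcal D_1\times\mathcal D_2$ because $\textgoth f a$ and $\textgoth g a$ are c-objects of $\mathcal D_1$ and $\mathcal D_2$. For the arrow function I would send an arrow $h:a\rightarrow b$ of $\mathcal C$ to $(\textgoth f\times\textgoth g)_\mathcal A h:=\textgoth f_\mathcal A h\rightarrow_\times\textgoth g_\mathcal A h$. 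Since $\textgoth f h:\textgoth f a\rightarrow\textgoth f b$ in $\mathcal D_1$ and $\textgoth g h:\textgoth g a\rightarrow\textgoth g b$ in $\mathcal D_2$, this is a genuine strong arrow of the product category, running from $\textgoth f a\rightarrow_\times\textgoth g a$ to $\textgoth f b\rightarrow_\times\textgoth g b$. This observation already delivers condition 2), namely $a,b;(\textgoth f\times\textgoth g)_\mathcal Oa,(\textgoth f\times\textgoth g)_\mathcal Ob$.

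Next I would verify condition 1), preservation of units. Applying $\textgoth1_\mathcal C$ to $a$ and then the arrow function yields $\textgoth f1_a\rightarrow_\times\textgoth g1_a$; since $\textgoth f$ and $\textgoth g$ are functors, $\textgoth f1_a=1_{\textgoth fa}$ and $\textgoth g1_a=1_{\textgoth ga}$, so the result is $1_{\textgoth fa}\rightarrow_\times1_{\textgoth ga}$. By the construction of units in the product-category theorem (where the unit of $a\rightarrow_\times b$ is declared to be $1_a\rightarrow_\times1_b$), this is precisely $1_{\textgoth fa\rightarrow_\times\textgoth ga}$, which is exactly what the object function followed by $\textgoth1_{\mathcal D_1\times\mathcal D_2}$ produces. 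Hence $\textgoth1_{\mathcal D_1\times\mathcal D_2},\textgoth1_\mathcal C;(\textgoth f\times\textgoth g)_\mathcal A,(\textgoth f\times\textgoth g)_\mathcal O$.

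Finally, for condition 3) I would take composable arrows $h:a\rightarrow b$ and $k:b\rightarrow c$ and compute
\begin{eqnarray}\nonumber (\textgoth f\times\textgoth g)(k*h)&=&\textgoth f(k*h)\rightarrow_\times\textgoth g(k*h)\\\nonumber &=&(\textgoth fk*\textgoth fh)\rightarrow_\times(\textgoth gk*\textgoth gh)\\\nonumber &=&(\textgoth fk\rightarrow_\times\textgoth gk)*(\textgoth fh\rightarrow_\times\textgoth gh),\end{eqnarray}
where the middle step uses that $\textgoth f$ and $\textgoth g$ each preserve composition, and the last step is the composition rule \eqref{def prod cat} of the product category read from right to left. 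This is exactly the assertion that $(\textgoth f\times\textgoth g)_\mathcal A$ forms a natural pair with itself under $*(\textgoth f\times\textgoth g)h$ and $*h$, i.e. condition 3).

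I do not anticipate a real obstacle: the whole content is that the functor axioms for $\textgoth f$ and $\textgoth g$ transport componentwise through the definitions of unit and composition in $\mathcal D_1\times\mathcal D_2$. The closest thing to a hard part is notational bookkeeping, making sure each use of \eqref{def prod cat} lines up source with source and target with target, and recognizing the paired units $1_{\textgoth fa}\rightarrow_\times1_{\textgoth ga}$ as the genuine product-category unit $1_{\textgoth fa\rightarrow_\times\textgoth ga}$ rather than as a mere ordered pair of units. Everything else is routine substitution, so the verification should close cleanly.
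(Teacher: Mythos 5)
Your construction is correct and is essentially identical to the paper's own proof: both define the object and arrow functions componentwise as $\textgoth fa\rightarrow_\times\textgoth ga$ and $\textgoth fh\rightarrow_\times\textgoth gh$, then verify conditions 1), 2) and 3) by pushing the functor axioms of $\textgoth f$ and $\textgoth g$ through the unit and composition rules of the product category. There is nothing to add; the argument closes exactly as you anticipate.
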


\begin{proof}Let $\textgoth f:\mathcal C\rightarrow D_1$ and $\textgoth g:\mathcal C\rightarrow\mathcal D_2$ be two functors. We shall build a functor $\textgoth F:\mathcal C\rightarrow\mathcal D$, where the range is the product category. The object function, of $\textgoth F$, sends $a\mapsto\textgoth fa\rightarrow_\times\textgoth ga$. We make the arrow function send $f:a\rightarrow_{\mathcal C}c$ into $\textgoth ff\rightarrow_\times\textgoth gf$. To verify we have a functor, apply $\textgoth1_\mathcal D\circ\textgoth F$ to any c-object $a$ in $\mathcal C$, the result is $1_{\textgoth fa}\rightarrow_\times1_{\textgoth ga}$. On the other hand, $a\mapsto_{\textgoth1_\mathcal C}(1_a)\mapsto_\textgoth F(\textgoth f1_a\rightarrow_\times\textgoth g1_a)$; we can say 1) of functors is true because it is true for $\textgoth f$ and $\textgoth g$. We would like to see that $\textgoth Ff$ is an arrow of the form $\textgoth fa\rightarrow_\times\textgoth ga\longrightarrow\textgoth fc\rightarrow_\times\textgoth gc$, which is true because $\textgoth Ff$ is $\textgoth ff\rightarrow_\times\textgoth gf$. With this we have proven that an arrow from $a\rightarrow_\mathcal Cc$ is sent into an arrow $\textgoth Fa\rightarrow\textgoth Fc$. Lastly, we will show that 3) holds. Let $h:c\rightarrow x$, then\begin{eqnarray}\nonumber h*f&;&\textgoth F(h*f),\textgoth F\\\nonumber h*f&;&\textgoth f(h*f)\rightarrow_\times\textgoth g(h*f),\textgoth F\\\nonumber h*f&;&\textgoth fh*\textgoth ff\rightarrow_\times\textgoth gh*\textgoth gf,\textgoth F\\\nonumber h*f&;&\textgoth fh\rightarrow_\times\textgoth gh*\textgoth ff\rightarrow_\times\textgoth gf,\textgoth F\\\nonumber h*f&;&\textgoth Fh*\textgoth Ff,\textgoth F.\end{eqnarray}\end{proof}

One may now ask, given a functor $\textgoth F:\mathcal C\rightarrow\mathcal D_1\times\mathcal D_2$, is it possible to decompose it into two functors with common domain? We are saying that the functor sends c-objects $a\mapsto(\textgoth F_1a\rightarrow_\times\textgoth F_2a)$; since the image of a c-object is an order arrow $\rightarrow_\times$, we use $\textgoth F_1a$ to denote the source associated to $a$ and $\textgoth F_2a$ is the target. Also, the functor sends an arrow $f:a\rightarrow_\mathcal Cc$ into an arrow $\textgoth F_1a\rightarrow_\times\textgoth F_2a\longrightarrow\textgoth F_1c\rightarrow_\times\textgoth F_2c$ which means we have two arrows, $\textgoth F_1f:\textgoth F_1a\rightarrow\textgoth F_1c$ and $\textgoth F_2f:\textgoth F_2a\rightarrow\textgoth F_2c$. We can associate two arrows in the range, to every arrow in the domain; $f$ is assigned the arrows $\textgoth F_1f$, $\textgoth F_2f$.

\begin{proposition}Given a functor $\textgoth F:\mathcal C\rightarrow\mathcal D_1\times\mathcal D_2$, there are functors $\textgoth f:\mathcal C\rightarrow\mathcal D_1$ and $\textgoth g:\mathcal C\rightarrow\mathcal D_2$ such that $\textgoth f\times\textgoth g$ and $\textgoth F$ are the same functor.\label{common domain 2}\end{proposition}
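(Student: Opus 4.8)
The plan is to run the construction sketched in the paragraph preceding the statement and to verify that it yields genuine functors. First I would define $\textgoth f$ and $\textgoth g$ as the two-part functions whose object functions are $a\mapsto\textgoth F_1a$ and $a\mapsto\textgoth F_2a$ and whose arrow functions are $f\mapsto\textgoth F_1f$ and $f\mapsto\textgoth F_2f$. These are well defined precisely because every c-object of $\mathcal D_1\times\mathcal D_2$ is an order arrow $\rightarrow_\times$ with an unambiguous source and target, and every arrow is a strong arrow $\rightarrow_\times$ with two unambiguous components; thus $\textgoth Fa=\textgoth F_1a\rightarrow_\times\textgoth F_2a$ and $\textgoth Ff=\textgoth F_1f\rightarrow_\times\textgoth F_2f$ are simply the defining coordinate decompositions.

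The heart of the argument is to push the three functor conditions for $\textgoth F$ through the component-wise description of units and composition in the product category, as established in the product category theorem. For condition 1) I would use that the unit of $a\rightarrow_\times b$ in the product category is $1_a\rightarrow_\times 1_b$: since $\textgoth F1_a=1_{\textgoth Fa}=1_{\textgoth F_1a}\rightarrow_\times 1_{\textgoth F_2a}$ while at the same time $\textgoth F1_a=\textgoth F_11_a\rightarrow_\times\textgoth F_21_a$, comparing first and second coordinates gives $\textgoth F_11_a=1_{\textgoth F_1a}$ and $\textgoth F_21_a=1_{\textgoth F_2a}$, which is exactly $\textgoth1_{\mathcal D_1},\textgoth1_{\mathcal C};\textgoth f_\mathcal A,\textgoth f_\mathcal O$ and the analogue for $\textgoth g$. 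Condition 2) is immediate from the decomposition of $\textgoth Ff$ above, which already exhibits $\textgoth F_1f:\textgoth F_1a\rightarrow\textgoth F_1c$ and $\textgoth F_2f:\textgoth F_2a\rightarrow\textgoth F_2c$. For condition 3) I would invoke the composition formula (\ref{def prod cat}): applying $\textgoth F$ to $h*f$ gives $\textgoth F_1(h*f)\rightarrow_\times\textgoth F_2(h*f)$, while $\textgoth Fh*\textgoth Ff=(\textgoth F_1h\rightarrow_\times\textgoth F_2h)*(\textgoth F_1f\rightarrow_\times\textgoth F_2f)=\textgoth F_1h*\textgoth F_1f\rightarrow_\times\textgoth F_2h*\textgoth F_2f$, so equating coordinates yields $\textgoth F_1(h*f)=\textgoth F_1h*\textgoth F_1f$ and likewise for the second coordinate, which is condition 3) for $\textgoth f$ and for $\textgoth g$.

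Having shown that $\textgoth f$ and $\textgoth g$ are functors, I would close by appealing to Proposition \ref{common domain 1}: the product functor $\textgoth f\times\textgoth g$ sends $a\mapsto\textgoth fa\rightarrow_\times\textgoth ga=\textgoth F_1a\rightarrow_\times\textgoth F_2a=\textgoth Fa$ and $f\mapsto\textgoth ff\rightarrow_\times\textgoth gf=\textgoth F_1f\rightarrow_\times\textgoth F_2f=\textgoth Ff$. Hence $\textgoth f\times\textgoth g$ and $\textgoth F$ have the same domain, range, object function and arrow function, so by the equality criterion for functions they are the same functor.

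I expect the only genuinely delicate point to be the legitimacy of the decomposition itself, namely that ``take the source and target of $\textgoth Fa$'' and ``take the two components of $\textgoth Ff$'' really determine functions on the c-objects and arrows of $\mathcal C$. This rests entirely on the strong-arrow structure of the product category, in which c-objects and arrows are literal $\rightarrow_\times$ pairs with canonical first and second coordinates; once that is granted, every remaining step is a component-wise restatement of an identity already verified for $\textgoth F$, so no fresh computation is needed beyond reading off the two coordinates.
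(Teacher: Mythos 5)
Your proposal is correct and takes essentially the same approach the paper intends: the coordinate decomposition $\textgoth Fa=\textgoth F_1a\rightarrow_\times\textgoth F_2a$ and $\textgoth Ff=\textgoth F_1f\rightarrow_\times\textgoth F_2f$ is exactly the construction set up in the paragraph preceding the proposition, which the paper then states without any formal proof. Your component-wise verification of the three functor conditions (using the unit $1_a\rightarrow_\times 1_b$ and the composition rule of the product category) and the closing appeal to Proposition \ref{common domain 1} together with the equality criterion for functions simply supply the details the paper leaves implicit.
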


We have proven a functor has a product category, in the image, if and only if it can be decomposed as two functors with common domain.

			\subsubsection{Bifunctor[IX]}It is possible to give a functor that acts on pairs of c-objects, and pairs of arrows; a product category is domain, instead of range. The first thing to be brought to our attention will be the limitations in dealing with these functors. One may expect to be able to form two functors with common range, by decomposing a functor $\textgoth F:\mathcal C_1\times\mathcal C_2\rightarrow\mathcal D$. This is not true, however. The reader may convince himself that there is no straightforward way of defining $\textgoth f:\mathcal C_1\rightarrow\mathcal D$ or $\textgoth g:\mathcal C_2\rightarrow\mathcal D$, given a functor $\mathcal C_1\times\mathcal C_2\rightarrow\mathcal D$.  Also, it is interesting to notice that the range is a simple category (not necessarily a product category) and we assign each pair of objects into a single object in $\mathcal D$. Again, we take two arrows $f,g$ (from $\mathcal C_1$ and $\mathcal C_2$) and we give a single arrow.

When we try giving the product of common range, for two functors $\mathcal C_1,\mathcal C_2\rightarrow\mathcal D$, we come across the following difficulty. Is it possible to build a functor from the product category of the domains into $\mathcal D$? Take functors $\textgoth f,\textgoth g:\mathcal C_1,\mathcal C_2\rightarrow\mathcal D$, and define $\textgoth F:\mathcal C_1\times\mathcal C_2\rightarrow\mathcal D$ such that the c-objects are transformed according to $(a\rightarrow_\times b)\mapsto_\textgoth F\textgoth F(a,b):=\textgoth fa$. Give arrows $f:a\rightarrow c$ and $g:b\rightarrow d$, then the action of $\textgoth F$ on the arrow $f\rightarrow_\times g$ is defined by $\textgoth F(f,g):=\textgoth ff:\textgoth fa\rightarrow\textgoth fc$. Let us prove $\textgoth F$ this is a functor. It is easily seen that $(\textgoth1_\mathcal D\circ\textgoth F)(a\rightarrow_\times b)$ is $1_{\textgoth fa}$, just as $(\textgoth F\circ\textgoth1_{\mathcal C_1\times\mathcal C_2})(a\rightarrow_\times b)$. The second condition of functors is satisfied; if we apply $\textgoth F$ to $f\rightarrow_\times g:a\rightarrow_\times b\longrightarrow c\rightarrow_\times d$, the result is an arrow of the form $\textgoth F(a,b)\rightarrow\textgoth F(c,d)$, because $\textgoth F(a,b)$ is $\textgoth fa$ and $\textgoth F(c,d)$ is $\textgoth fc$. To verify that 3) also holds, take a composable pair of arrows $f\rightarrow_\times g$ and $h\rightarrow_\times i$, and apply $\textgoth F$ to their composition, which has been defined as $h\rightarrow_\times i*f\rightarrow_\times g:=h*f\rightarrow_\times i*g$. This is expressed as $\textgoth F(h*f,i*g)$, and in reality it is the arrow $\textgoth f(h*f)$. Also, $\textgoth F(h,i)*\textgoth F(f,g)$ is the same as $\textgoth fh*\textgoth ff$. Of course, one can also build a functor $\textgoth G$ such that $a\rightarrow_\times b$ is sent into $\textgoth gb$ and $f\rightarrow_\times g$ is sent into $\textgoth gg$. Although we were able to find functors $\textgoth F$ and $\textgoth G$, given $\textgoth f,\textgoth g$ that have the same range, the functors we found are relatively simple. They are simple in the sense that we are losing all the infomation of one functor, or another. We are building functors whose transformation disregards the information regarding one functor. A solution to this, is proposed in the following proposition.

\begin{proposition}Let $\textgoth f,\textgoth g:\mathcal C_1,\mathcal C_2\rightarrow\mathcal D$ be functors, then we can define a functor $\textgoth f\times\textgoth g:\mathcal C_1\times\mathcal C_2\rightarrow\mathcal D\times\mathcal D$ called the product of common range.\label{bif prop1}\end{proposition}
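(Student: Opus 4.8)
The plan is to mirror the construction used for the product functor of common domain in Proposition \ref{common domain 1}, but now distributing the two functors across the two factors of the \emph{domain} product category rather than the range. First I would define $\textgoth F:=\textgoth f\times\textgoth g$ as the two part function whose object function sends each c-object $a\rightarrow_\times b$ of $\mathcal C_1\times\mathcal C_2$ (with $a$ in $\mathcal C_1$ and $b$ in $\mathcal C_2$) to $\textgoth fa\rightarrow_\times\textgoth gb$, which is a legitimate c-object of $\mathcal D\times\mathcal D$ since $\textgoth fa$ and $\textgoth gb$ both lie in $\mathcal D$. Its arrow function sends each strong arrow $f\rightarrow_\times g:a\rightarrow_\times b\longrightarrow c\rightarrow_\times d$ (so $f:a\rightarrow c$ in $\mathcal C_1$ and $g:b\rightarrow d$ in $\mathcal C_2$) to $\textgoth ff\rightarrow_\times\textgoth gg$, again an arrow of $\mathcal D\times\mathcal D$.

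For condition 1, I would evaluate $\textgoth1_{\mathcal D\times\mathcal D}\circ\textgoth F$ on a c-object $a\rightarrow_\times b$: applying $\textgoth F$ and then the unit function gives $1_{\textgoth fa\rightarrow_\times\textgoth gb}$, which by the construction of the product category equals $1_{\textgoth fa}\rightarrow_\times 1_{\textgoth gb}$. Going the other way, $\textgoth F\circ\textgoth1_{\mathcal C_1\times\mathcal C_2}$ first produces $1_{a\rightarrow_\times b}=1_a\rightarrow_\times 1_b$ and then $\textgoth f1_a\rightarrow_\times\textgoth g1_b$; since $\textgoth f$ and $\textgoth g$ preserve units this is again $1_{\textgoth fa}\rightarrow_\times 1_{\textgoth gb}$, so the two agree. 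Condition 2 is then immediate from the fact that $\textgoth f$ and $\textgoth g$ preserve objects: $\textgoth ff:\textgoth fa\rightarrow\textgoth fc$ and $\textgoth gg:\textgoth gb\rightarrow\textgoth gd$, hence $\textgoth ff\rightarrow_\times\textgoth gg$ runs from $\textgoth fa\rightarrow_\times\textgoth gb=\textgoth F(a\rightarrow_\times b)$ to $\textgoth fc\rightarrow_\times\textgoth gd=\textgoth F(c\rightarrow_\times d)$, exactly as required.

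For condition 3, I would take a composable pair $f\rightarrow_\times g$ and $h\rightarrow_\times i$, whose composite is $h*f\rightarrow_\times i*g$ by the definition of composition in the product category (equation \ref{def prod cat}). Applying $\textgoth F$ gives $\textgoth f(h*f)\rightarrow_\times\textgoth g(i*g)$; because $\textgoth f$ and $\textgoth g$ preserve composition this becomes $\textgoth fh*\textgoth ff\rightarrow_\times\textgoth gi*\textgoth gg$, which by the same product-category composition rule factors as $(\textgoth fh\rightarrow_\times\textgoth gi)*(\textgoth ff\rightarrow_\times\textgoth gg)=\textgoth F(h\rightarrow_\times i)*\textgoth F(f\rightarrow_\times g)$.

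I do not expect a genuine obstacle here: the verification is a direct transcription of Proposition \ref{common domain 1} with the roles of domain and range interchanged, and every step rests on a property already established for $\textgoth f$, for $\textgoth g$, or for the product category. The one point worth stating carefully is conceptual rather than technical. Unlike the collapsing functors $\textgoth F,\textgoth G$ discussed just before the proposition (which discard one of the two functors), sending the pair into $\mathcal D\times\mathcal D$ rather than into $\mathcal D$ is precisely what lets both $\textgoth f$ and $\textgoth g$ be recorded simultaneously, so the target must genuinely be the product $\mathcal D\times\mathcal D$ and not the simple category $\mathcal D$.
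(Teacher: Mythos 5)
Your construction coincides exactly with the paper's: the object function $(a,b)\mapsto\textgoth fa\rightarrow_\times\textgoth gb$ and the arrow function $(f,g)\mapsto\textgoth ff\rightarrow_\times\textgoth gg$. The paper simply defines these and leaves the functor verification to the reader (pointing to the similar proof elsewhere in the division), so your explicit check of conditions 1)--3) using unit, object, and composition preservation by $\textgoth f$, $\textgoth g$ and the product-category structure is precisely that omitted verification, carried out correctly.
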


\begin{proof}We define a c-object function, $(\textgoth f\times\textgoth g)(a,b):=\textgoth fa\rightarrow_\times\textgoth gb$. Let the arrow function be defined by that transformation $(\textgoth f\times\textgoth g)_\mathcal A(f,g):=\textgoth ff\rightarrow_\times\textgoth gg$. The reader may prove that this is a functor. If he runs across any difficulty, a similar construction and proof is carried out, in the theorem of this division.\end{proof}

Let $\textgoth B:\mathcal C_1\times\mathcal C_2\rightarrow\mathcal D$ be a function which has a product category in the domain, and suppose it sends a c-object $a\rightarrow_\times b$ into a c-object $\textgoth B(a,b)$. An arrow $f\rightarrow_\times g:a\rightarrow_\times b\longrightarrow c\rightarrow_\times d$, is sent into an arrow $\textgoth B(f,g):\textgoth B(c,b)\rightarrow\textgoth B(a,d)$. We know there are arrows $c\rightarrow_\times b$ and $a\rightarrow_\times d$ in $\mathcal C_1\times\mathcal C_2$. We will say \textit{$\textgoth B$ is contravariant with respect to the first argument and covariant with respect to the second argument} if the source and target objects of $\textgoth B(f,g)$, are the images of $c\rightarrow_\times b$ and $a\rightarrow_\times d$, respectively. Although we are sending pairs into singles, we do it in such a way that the source of $\textgoth B(f,g)$ depends on $c$ and $b$. Conversely, the target depends on $a$ and $d$. A function $\textgoth B$ can be suitably defined to be covariant in the first argument and contravariant in the second. There can also be functions covariant (or contravariant) in both arguments.

In (\ref {def prod cat}), we are giving an operation $*:\mathcal{A|C}\rightarrow\mathcal {A|C}f\mathcal{A|C}$. For every $f\rightarrow_\times g$, there is a right operation function $(*f,*g):\mathcal{A|C}\rightarrow\mathcal {A|C}$. The function $(*f,*g)$ sends $h\rightarrow_\times i$ into $h*f\rightarrow_\times i*g$. We have potential for more functions than just the right operation. The left operation of $h\rightarrow_\times i$ is defined as the function $(h*,i*)$ such that $f\rightarrow_\times g;h*f\rightarrow_\times i*g,(h*,i*)$. Still this is not all, since we can define the right-left operation which is a function $(*f,i*)$ that sends $h\rightarrow_\times g$ into $h*f\rightarrow i*g$. Finally, we define the following the left-right operation and use this to express contravariance in the first argument and covariance in the second. Given two arrows $h:c\rightarrow_{\mathcal C_1}x$ and $g:b\rightarrow_{\mathcal C_2}d$, we define the function $(h*,*g):\mathcal{A|C}\rightarrow\mathcal{A|C}$ that sends $(f\rightarrow_\times i)\mapsto(h*f\rightarrow_\times i*g)$, where $f:a\rightarrow_{\mathcal C_1}c$ and $i:d\rightarrow_{\mathcal C_2}y$. If we apply $\textgoth B$ to $h*f\rightarrow_\times i*g$, the result is the arrow $\textgoth B(h*f,i*g)$, which is of the form $\textgoth B(x,b)\rightarrow\textgoth B(a,y)$. On the other hand, applying $*\textgoth B(h,g)$ to $\textgoth B(f,i)$, we get \begin{eqnarray}\nonumber \textgoth B(f,i)&;&*\textgoth B(h,g)\textgoth B(f,i),*\textgoth B(h,g)\\\nonumber \textgoth B(f,i)&;&*[\textgoth B(x,b)\rightarrow\textgoth B(c,d)][\textgoth B(c,d)\rightarrow\textgoth B(a,y)],*\textgoth B(h,g)\\\nonumber \textgoth B(f,i)&;&[\textgoth B(c,d)\rightarrow\textgoth B(a,y)]*[\textgoth B(x,b)\rightarrow\textgoth B(c,d)],*\textgoth B(h,g)\\\nonumber \textgoth B(f,i)&;&\textgoth B(x,b)\rightarrow\textgoth B(a,y),*\textgoth B(h,g).\end{eqnarray} With this we show that $\textgoth B(f,i)*\textgoth B(h,g)$ is parallel to $\textgoth B(h*f,i*g)$, given $\textgoth B$ is contravariant in the first argument and covariant in the second. A function $\textgoth B:\mathcal C_1\times\mathcal C_2\rightarrow\mathcal D$ is a \textit{bifunctor} if\begin{itemize}\item[1)]$\textgoth1_\mathcal D,\textgoth1_{\mathcal C_1\times\mathcal C_2};\textgoth B_\mathcal A,\textgoth B_\mathcal O$\item[2)]$a\rightarrow_\times b,c\rightarrow_\times d;\textgoth B(c,b),\textgoth B(a,d)$\item[3)]$\textgoth B,\textgoth B;*\textgoth B(h,g),(h*,*g)$.\end{itemize}The first condition is stating that for any c-object, in $\mathcal C$, $\textgoth B(1_a,1_b)$, is the same arrow in $\mathcal D$, as $1_{\textgoth B(a,b)}$. It is the usual first condition for functors, where $\mathcal C$ is now a product category. The second request for a bifunctor is the statement that the arrow $f\rightarrow_\times g:a\rightarrow_\times b\longrightarrow c\rightarrow_\times d$ is sent into an arrow of the form $\textgoth B(c,b)\rightarrow\textgoth B(a,d)$. In 3) we are expressing that applying $\textgoth B\circ(h*,*g)$ or $*\textgoth B(h,g)\circ\textgoth B$ to $(f\rightarrow_\times i)$, yields the same result in both cases. If it is a bifunctor, then $h*f\rightarrow_\times i*g;\textgoth B(f,i)*\textgoth B(h,g),\textgoth B$ is true. Equivalently, to 3), we can say for every composable pair of arrows, $f\rightarrow_\times g$ and $h\rightarrow_\times i$, we verify condition $3)'~h\rightarrow_\times i*f\rightarrow_\times g;\textgoth B(f\rightarrow_\times i)*\textgoth B(h\rightarrow_\times g),\textgoth B$.

\begin{proposition}$\textgoth F:\mathcal C_1\times\mathcal C_2\rightarrow\mathcal D$ is a functor if and only if there is a bifunctor $\textgoth B:\mathcal C_1^{op}\times\mathcal C_2\rightarrow\mathcal D$.\label{bif prop2}\end{proposition}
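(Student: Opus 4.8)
The plan is to exhibit the functor and the bifunctor as the \emph{same data}, transported across the relabeling of the first factor's arrows by the opposite operation. The first observation is that $\mathcal C_1$ and $\mathcal C_1^{op}$ share the same c-objects, so $\mathcal C_1\times\mathcal C_2$ and $\mathcal C_1^{op}\times\mathcal C_2$ have identical collections of c-objects; any object function for one is therefore an object function for the other. For the forward direction, given a functor $\textgoth F:\mathcal C_1\times\mathcal C_2\rightarrow\mathcal D$, I would set $\textgoth B_\mathcal O:=\textgoth F_\mathcal O$ and, on an arrow $f^{op}\rightarrow_\times g$ of $\mathcal C_1^{op}\times\mathcal C_2$ (with $f:a\rightarrow_{\mathcal C_1}c$ and $g:b\rightarrow_{\mathcal C_2}d$), define $\textgoth B(f^{op}\rightarrow_\times g):=\textgoth F(f\rightarrow_\times g)$. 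The whole proof is then the verification that this assignment satisfies the three conditions in the definition of bifunctor.

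For condition 1) the unit of $a\rightarrow_\times b$ in $\mathcal C_1^{op}\times\mathcal C_2$ is $1_a^{op}\rightarrow_\times 1_b$, and $\textgoth B(1_a^{op}\rightarrow_\times 1_b)=\textgoth F(1_a\rightarrow_\times 1_b)=1_{\textgoth F(a,b)}=1_{\textgoth B(a,b)}$, using that $\textgoth F$ preserves units. For condition 2) the arrow $f^{op}\rightarrow_\times g$ runs from $c\rightarrow_\times b$ to $a\rightarrow_\times d$ (because $f^{op}:c\rightarrow_{op}a$), while its image $\textgoth F(f\rightarrow_\times g)$ runs from $\textgoth F(a,b)$ to $\textgoth F(c,d)$, that is from $\textgoth B(a,b)$ to $\textgoth B(c,d)$. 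Reading this against the pattern $a\rightarrow_\times b,c\rightarrow_\times d;\textgoth B(c,b),\textgoth B(a,d)$ with source $c\rightarrow_\times b$ and target $a\rightarrow_\times d$ shows the source and target of the image are exactly $\textgoth B$ of the prescribed mixed pair, so $\textgoth B$ is contravariant in the first argument and covariant in the second, as required.

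Condition 3) is the crux, and I expect the source/target bookkeeping there to be the main obstacle. I would take a composable pair in $\mathcal C_1^{op}\times\mathcal C_2$, expand the composite using the product-category rule (\ref{def prod cat}) together with the opposite-composition rule $f^{op}*h^{op}=(h*f)^{op}$ that defines $\mathcal C^{op}$, and then push $\textgoth B$ through by its definition and the composition-preservation of $\textgoth F$. The delicate point is that the first coordinate reverses order under $\mathrm{op}$ while the second does not: I must confirm that the factor-reversal built into the bifunctor law $3)'$, where $h\rightarrow_\times i * f\rightarrow_\times g$ is matched against $\textgoth B(f\rightarrow_\times i)*\textgoth B(h\rightarrow_\times g)$, is precisely the reversal produced by translating $\textgoth F$'s ordinary composition law through the op-relabeling. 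Once the arrow names are fixed this is a routine chain of substitutions entirely analogous to the computation in the opposite-functor lemma (\emph{Lemma II} for opposite functors), but the order of the two factors and the matching of endpoints is where an error is easiest to make.

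Finally, for the converse I would run the same relabeling backwards: given a bifunctor $\textgoth B:\mathcal C_1^{op}\times\mathcal C_2\rightarrow\mathcal D$, use that $(\mathcal C_1^{op})^{op}$ and $\mathcal C_1$ are the same category to define $\textgoth F$ on $\mathcal C_1\times\mathcal C_2$ by $\textgoth F(f\rightarrow_\times g):=\textgoth B(f^{op}\rightarrow_\times g)$, and then verify the three functor conditions by reading the previous three verifications in reverse. Since the two assignments are visibly mutually inverse, the existence of one object is equivalent to the existence of the other, which is the claim.
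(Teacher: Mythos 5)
Your proposal is correct and follows essentially the same route as the paper: both keep the object function unchanged (since $\mathcal C_1$ and $\mathcal C_1^{op}$ share c-objects), define the arrow functions by the relabeling $\textgoth B(f^{op},g):=\textgoth F(f,g)$, verify the unit condition, check the mixed source/target pattern of condition 2) exactly as you describe, and establish condition 3) by the same chain you outline, namely expanding $f^{op}\rightarrow_\times i*h^{op}\rightarrow_\times g$ via the product-category rule and $f^{op}*h^{op}=(h*f)^{op}$, then invoking composition-preservation of $\textgoth F$. The converse in the paper is likewise handled by running the relabeling backwards, defining $\textgoth F(f,g):=\textgoth B(f^{op},g)$, just as you propose.
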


\begin{proof}To verify that we have a bifunctor, we start with the property of unit arrows.\begin{eqnarray}\nonumber a\rightarrow_\times b&;&(\textgoth1_\mathcal D\circ\textgoth F)(a\rightarrow_\times b),\textgoth1_{\mathcal D}\circ\textgoth B_{\mathcal O}\\\nonumber a\rightarrow_\times b&;&\textgoth1_\mathcal D\textgoth F(a,b),\textgoth1_{\mathcal D}\circ\textgoth B_{\mathcal O}\\\nonumber a\rightarrow_\times b&;&1_{\textgoth F(a,b)},1_{\mathcal D}\circ\textgoth B_{\mathcal O}\\\nonumber a\rightarrow_\times b&;&\textgoth F_\mathcal A1_{(a\rightarrow_\times b)},1_{\mathcal D}\circ\textgoth B_{\mathcal O}\\\nonumber a\rightarrow_\times b&;&\textgoth F_\mathcal A(1_a\rightarrow_\times 1_b),1_{\mathcal D}\circ\textgoth B_{\mathcal O}\\\nonumber a\rightarrow_\times b&;&\textgoth B_\mathcal A[1_a^{op}\rightarrow_\times 1_b],1_{\mathcal D}\circ\textgoth B_{\mathcal O}\\\nonumber a\rightarrow_\times b&;&(\textgoth B_\mathcal A\circ\textgoth1_{\mathcal C_1^{op}\times\mathcal C_2})(a\rightarrow_\times b),1_{\mathcal D}\circ\textgoth B_{\mathcal O}\end{eqnarray}With this we are saying that $\textgoth B_\mathcal O(a,b)$ and $\textgoth F_\mathcal O(a,b)$ are the same, while $\textgoth B_\mathcal A(f^{op},g):=\textgoth F_\mathcal A(f,g)$. An arrow, in the domain of $\textgoth B$, say $f^{op}\rightarrow g:c\rightarrow_\times b\longrightarrow a\rightarrow_\times d$, is transformed into the arrow $\textgoth F(f,g):\textgoth F(a,b)\rightarrow\textgoth F(c,d)$. We prove that composition is preserved, with\begin{eqnarray}\nonumber f^{op}\rightarrow_\times i*h^{op}\rightarrow_\times g&;&\textgoth B(f^{op}*h^{op}\rightarrow_\times i*g),\textgoth B\\\nonumber f^{op}\rightarrow_\times i*h^{op}\rightarrow_\times g&;&\textgoth B(f^{op}*h^{op},i*g),\textgoth B\\\nonumber f^{op}\rightarrow_\times i*h^{op}\rightarrow_\times g&;&\textgoth B[(h*f)^{op},i*g],\textgoth B\\\nonumber f^{op}\rightarrow_\times i*h^{op}\rightarrow_\times g&;&\textgoth F(h*f,i*g),\textgoth B\\\nonumber f^{op}\rightarrow_\times i*h^{op}\rightarrow_\times g&;&\textgoth F(h\rightarrow_\times i)*\textgoth F(f\rightarrow_\times g),\textgoth B\\\nonumber f^{op}\rightarrow_\times i*h^{op}\rightarrow_\times g&;&\textgoth F(h,i)*\textgoth F(f,g),\textgoth B\\\nonumber f^{op}\rightarrow_\times i*h^{op}\rightarrow_\times g&;&\textgoth B(h^{op},i)*\textgoth B(f^{op},g),\textgoth B\end{eqnarray}

Given $\textgoth B$, we define a functor with $\textgoth F(a,b):=\textgoth B(a,b)$ and $\textgoth F(f,g):=\textgoth B(f^{op},g)$.\end{proof}

There is a bifunctor, for every functor with a product category in the domain. We will continue to give examples of bifunctors, from given functors. Given two functors, it shouldn't be difficult to build a bifunctor, since it is contravariant in the first argument and covariant in the second argument. In fact, the reader may verify that the following result is a corollary of the last two propositions. Nevertheless, we still give the proof, so as to help the reader with understanding the constructions better.

\begin{proposition}Given two functors $\textgoth f,\textgoth g:\mathcal C_1, \mathcal C_2\rightarrow\mathcal D$, it is possible to define a bifunctor of the form $\textgoth f\times\textgoth g:\mathcal C_1^{op}\times\mathcal C_2\rightarrow\mathcal D\times\mathcal D$. The object function is defined so that $(\textgoth f\times\textgoth g)(a,b)$ is $\textgoth fa\rightarrow_\times\textgoth gb$, and $(\textgoth f\times\textgoth g)(f^{op},g)$ is the same as $\textgoth ff\rightarrow_\times\textgoth gg$.\label{bif prop3}\end{proposition}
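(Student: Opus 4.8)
The plan is to read the statement as the composite of the two preceding propositions and record that first, then redo the verification directly because the paper asks for it. By Proposition \ref{bif prop1} the functors $\textgoth f,\textgoth g$ give a product of common range $\textgoth f\times\textgoth g:\mathcal C_1\times\mathcal C_2\rightarrow\mathcal D\times\mathcal D$ whose object function sends $(a,b)\mapsto\textgoth fa\rightarrow_\times\textgoth gb$ and whose arrow function sends $(f,g)\mapsto\textgoth ff\rightarrow_\times\textgoth gg$. Since $\mathcal D\times\mathcal D$ is itself a category, Proposition \ref{bif prop2} applies with $\mathcal D\times\mathcal D$ in the role of $\mathcal D$, and it turns this functor into a bifunctor $\mathcal C_1^{op}\times\mathcal C_2\rightarrow\mathcal D\times\mathcal D$ given by $(a,b)\mapsto\textgoth fa\rightarrow_\times\textgoth gb$ on c-objects and $(f^{op},g)\mapsto\textgoth ff\rightarrow_\times\textgoth gg$ on arrows. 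These are precisely the object and arrow functions asserted, so the proposition follows as a corollary.

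For the self-contained proof I would check the three defining conditions of a bifunctor against these two functions. For $1)$, the unit of $(a,b)$ in $\mathcal C_1^{op}\times\mathcal C_2$ is $1_a^{op}\rightarrow_\times1_b$, whose image is $\textgoth f1_a\rightarrow_\times\textgoth g1_b$; because $\textgoth f$ and $\textgoth g$ each preserve units this equals $1_{\textgoth fa}\rightarrow_\times1_{\textgoth gb}$, the unit of $\textgoth fa\rightarrow_\times\textgoth gb$ in $\mathcal D\times\mathcal D$. For $2)$, an arrow $f^{op}\rightarrow_\times g:c\rightarrow_\times b\longrightarrow a\rightarrow_\times d$, with $f:a\rightarrow c$ in $\mathcal C_1$ and $g:b\rightarrow d$ in $\mathcal C_2$, is sent to $\textgoth ff\rightarrow_\times\textgoth gg$, whose source is $(\textgoth f\times\textgoth g)(a,b)$ and whose target is $(\textgoth f\times\textgoth g)(c,d)$; this is exactly the contravariant-in-the-first, covariant-in-the-second pattern demanded by $2)$.

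I expect condition $3)$ to be the main obstacle, because it is where the variance bookkeeping has to be managed. Taking a composable pair $f^{op}\rightarrow_\times g$ and $h^{op}\rightarrow_\times i$ in $\mathcal C_1^{op}\times\mathcal C_2$, their composite is $(h^{op}*f^{op})\rightarrow_\times(i*g)$, and the definition of the opposite category gives $h^{op}*f^{op}=(f*h)^{op}$. Applying the arrow function and then the functoriality of $\textgoth f$ and $\textgoth g$ separately in each factor rewrites $\textgoth f(f*h)\rightarrow_\times\textgoth g(i*g)$ as $(\textgoth ff*\textgoth fh)\rightarrow_\times(\textgoth gi*\textgoth gg)$, while componentwise composition in $\mathcal D\times\mathcal D$ reduces the other side of $3)$ to the same arrow. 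The delicate point, and the reason I would reserve it for last, is keeping the first factor's order-reversal straight: the contravariance built into the bifunctor definition is matched exactly by $h^{op}*f^{op}=(f*h)^{op}$ in $\mathcal C_1^{op}$, so that the covariant functor $\textgoth f$ restores the correct composition order in $\mathcal D$. Once that alignment is fixed, the chain of comparability identities closes in the same mechanical style as the proof of Proposition \ref{bif prop2}.
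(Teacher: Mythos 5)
Your proposal is correct and takes essentially the same approach as the paper: your self-contained verification of the three bifunctor conditions (units, the contravariant-in-the-first/covariant-in-the-second form of the image arrows, and the composition condition handled via $h^{op}*f^{op}=(f*h)^{op}$ followed by functoriality of $\textgoth f,\textgoth g$ and componentwise composition in $\mathcal D\times\mathcal D$) is exactly the paper's own proof. Your preliminary derivation from Propositions \ref{bif prop1} and \ref{bif prop2} is the shortcut the paper itself acknowledges right before the statement (``the reader may verify that the following result is a corollary of the last two propositions'') but deliberately sets aside in favor of the explicit computation.
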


\begin{proof}To prove 1),\begin{eqnarray}\nonumber\textgoth B(a,b)&;&\textgoth1_{\mathcal D\times\mathcal D}\textgoth B(a,b),\textgoth1_{\mathcal D\times\mathcal D}\\\nonumber\textgoth B(a,b)&;&\textgoth1_{\mathcal D\times\mathcal D}(\textgoth fa\rightarrow_\times\textgoth gb),\textgoth1_{\mathcal D\times\mathcal D}\\\nonumber\textgoth B(a,b)&;&1_{(\textgoth fa\rightarrow_\times\textgoth gb)},\textgoth1_{\mathcal D\times\mathcal D}\\\nonumber\textgoth B(a,b)&;&1_{\textgoth fa}\rightarrow_\times1_{\textgoth gb},\textgoth1_{\mathcal D\times\mathcal D}\\\nonumber\textgoth B(a,b)&;&\textgoth f1_a\rightarrow_\times\textgoth g1_b,\textgoth1_{\mathcal D\times\mathcal D}\\\nonumber\textgoth B(a,b)&;&\textgoth B(1_a^{op},1_b),\textgoth1_{\mathcal D\times\mathcal D}\\\nonumber\textgoth B(a,b)&;&\textgoth B(1_a^{op}\rightarrow_\times 1_b),\textgoth1_{\mathcal D\times\mathcal D}\\\nonumber\textgoth B(a,b)&;&(\textgoth B\circ\textgoth1_{\mathcal C_1^{op}\times\mathcal C_2})(a\rightarrow_\times b),\textgoth1_{\mathcal D\times\mathcal D}.\end{eqnarray}

Let us now look at the second condition of bifunctors; an arrow $f^{op}\rightarrow_\times g:c\rightarrow_\times b\longrightarrow a\rightarrow_\times d$ is sent into $\textgoth ff\rightarrow_\times\textgoth gg:\textgoth fa\rightarrow_\times\textgoth gb\longrightarrow\textgoth fc\rightarrow_\times\textgoth gd$, which is of the form $\textgoth B(a,b)\rightarrow\textgoth B(c,d)$. Finally,\begin{eqnarray}\nonumber f^{op}\rightarrow_\times i*h^{op}\rightarrow_\times g&;&\textgoth B(f^{op}\rightarrow_\times i*h^{op}\rightarrow_\times g),\textgoth B\\\nonumber f^{op}\rightarrow_\times i*h^{op}\rightarrow_\times g&;&\textgoth B(f^{op}*h^{op}\rightarrow_\times i*g),\textgoth B\\\nonumber f^{op}\rightarrow_\times i*h^{op}\rightarrow_\times g&;&\textgoth B(f^{op}*h^{op},i*g),\textgoth B\\\nonumber f^{op}\rightarrow_\times i*h^{op}\rightarrow_\times g&;& \textgoth f(h*f)\rightarrow_\times\textgoth g(i*g),\textgoth B\\\nonumber f^{op}\rightarrow_\times i*h^{op}\rightarrow_\times g&;&\textgoth fh*\textgoth ff\rightarrow_\times\textgoth gi*\textgoth gg,\textgoth B\\\nonumber f^{op}\rightarrow_\times i*h^{op}\rightarrow_\times g&;&\textgoth fh\rightarrow_\times\textgoth gi*\textgoth ff\rightarrow_\times\textgoth gg,\textgoth B\\\nonumber f^{op}\rightarrow_\times i*h^{op}\rightarrow_\times g&;&\textgoth B(h^{op},i)*\textgoth B(f^{op},g),\textgoth B.\end{eqnarray}\end{proof}

\subsubsection{Decomposing Bifunctors}

We proceed with the study of a functor $\textgoth F:\mathcal C_1\times\mathcal C_2\rightarrow\mathcal D_1\times\mathcal D_2$. To begin with, it sends a c-object, say $a\rightarrow_\times b$ in $\mathcal C_1\times\mathcal C_2$, into a c-object $\textgoth F(a,b):=x\rightarrow_\times y$ in $\mathcal D_1\times\mathcal D_2$. Arrows have to be sent into arrows; give $f:a\rightarrow c$ and $g:b\rightarrow d$ in $\mathcal C_1$ and $\mathcal C_2$, respectively. Suppose $\textgoth F(b,d)=w\rightarrow_\times z$, then $f\rightarrow_\times g$ is an arrow of the form $a\rightarrow_\times b\longrightarrow c\rightarrow_\times d$, in the domain. The corresponding image is an arrow $\textgoth F(f\rightarrow_\times g):\textgoth F(a,b)\longrightarrow \textgoth F(c,d)$. Let us make observations based on the results that we already have. First, is an extension of propositions \ref{common domain 1} and \ref{bif prop1}.

\begin{proposition}Given two functors $\textgoth f,\textgoth g:\mathcal C_1,\mathcal C_2\rightarrow\mathcal D_1,\mathcal D_2$, there is a functor $\textgoth f\times\textgoth g:C_1\times\mathcal C_2\rightarrow\mathcal D_1\times\mathcal D_2$.\end{proposition}

\begin{proof}The product functor is defined as $\textgoth F_\mathcal O(a,b):=\textgoth fa\rightarrow_\times\textgoth gb$ and $\textgoth F_\mathcal A(f,g):=\textgoth ff\rightarrow_\times\textgoth gg$.\end{proof}

\begin{theorem}$\textgoth B:\mathcal C_1\times\mathcal C_2\rightarrow\mathcal D_1\times\mathcal D_2$ is a bifunctor if and only if $\textgoth B$ can be expressed as a product of common domain for two bifunctors $\textgoth p,\textgoth q:\mathcal C_1\times\mathcal C_2\rightarrow\mathcal D_1,\mathcal D_2$.\end{theorem}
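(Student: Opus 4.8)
The plan is to mirror the decomposition already carried out for functors into a product range (Propositions \ref{common domain 2} and \ref{common domain 1}), adapting it to the contravariant--covariant bookkeeping of a bifunctor. The whole statement amounts to observing that, because the range $\mathcal D_1\times\mathcal D_2$ is a product category in which objects and arrows are ordered pairs and composition is componentwise (equation \ref{def prod cat}), a single two-part function landing in $\mathcal D_1\times\mathcal D_2$ carries exactly the same data as a pair of two-part functions landing separately in $\mathcal D_1$ and $\mathcal D_2$, and the three bifunctor axioms split cleanly across the two components.

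For the forward direction I would suppose $\textgoth B:\mathcal C_1\times\mathcal C_2\rightarrow\mathcal D_1\times\mathcal D_2$ is a bifunctor. Each object $\textgoth B(a,b)$ is an order arrow $\rightarrow_\times$, so I write $\textgoth B(a,b)=\textgoth p(a,b)\rightarrow_\times\textgoth q(a,b)$, defining the object parts of the candidates $\textgoth p,\textgoth q$ as source and target. By bifunctor condition 2, an arrow $f\rightarrow_\times g:a\rightarrow_\times b\longrightarrow c\rightarrow_\times d$ is sent to $\textgoth B(f,g):\textgoth B(c,b)\rightarrow\textgoth B(a,d)$; since arrows of a product category are ordered pairs of arrows, this splits as $\textgoth B(f,g)=\textgoth p(f,g)\rightarrow_\times\textgoth q(f,g)$ with $\textgoth p(f,g):\textgoth p(c,b)\rightarrow\textgoth p(a,d)$ and $\textgoth q(f,g):\textgoth q(c,b)\rightarrow\textgoth q(a,d)$, fixing the arrow parts. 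I then verify each of $\textgoth p,\textgoth q$ is a bifunctor: condition 1 follows by reading components off $\textgoth B(1_a,1_b)=1_{\textgoth B(a,b)}=1_{\textgoth p(a,b)}\rightarrow_\times 1_{\textgoth q(a,b)}$; condition 2 is immediate from the source/target assignment just made; condition 3 follows by projecting the composition law of $\textgoth B$ onto each factor, using that composition in $\mathcal D_1\times\mathcal D_2$ is componentwise. By construction $\textgoth B=\textgoth p\times\textgoth q$.

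For the converse, given bifunctors $\textgoth p,\textgoth q:\mathcal C_1\times\mathcal C_2\rightarrow\mathcal D_1,\mathcal D_2$ I set $(\textgoth p\times\textgoth q)(a,b):=\textgoth p(a,b)\rightarrow_\times\textgoth q(a,b)$ and $(\textgoth p\times\textgoth q)(f,g):=\textgoth p(f,g)\rightarrow_\times\textgoth q(f,g)$, exactly in the spirit of Proposition \ref{common domain 1}, and check the three bifunctor axioms, each reducing to the corresponding axiom for $\textgoth p$ and $\textgoth q$ because the unit, the source/target rule, and the left-right operation $(h*,*g)$ all act coordinatewise in the product category. A more structural alternative is to feed $\textgoth B$ through Proposition \ref{bif prop2} to obtain a genuine functor $\mathcal C_1^{op}\times\mathcal C_2\rightarrow\mathcal D_1\times\mathcal D_2$, decompose it by Proposition \ref{common domain 2}, and transport the two resulting functors back to bifunctors by \ref{bif prop2} again; this avoids re-proving the axioms at the cost of checking that decomposition commutes with the opposite-category correspondence.

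The main obstacle I anticipate is condition 3 in the forward direction: the bifunctor composition axiom is phrased through the left-right operation $(h*,*g)$ and is contravariant in the first slot, so I must track the index pairs $(c,b)$ and $(a,d)$ carefully and confirm that the single parallelism of $\textgoth B(f,i)*\textgoth B(h,g)$ with $\textgoth B(h*f,i*g)$ genuinely projects to two separate parallelisms for $\textgoth p$ and $\textgoth q$ with matching sources and targets. Everything else is the same coordinatewise bookkeeping that made Proposition \ref{common domain 2} go through.
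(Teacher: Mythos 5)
Your proposal is correct, but your primary argument is not the one the paper uses; the paper's proof is exactly the ``structural alternative'' you sketch at the end. It feeds $\textgoth B$ through Proposition \ref{bif prop2} to obtain a functor $\mathcal C_1^{op}\times\mathcal C_2\rightarrow\mathcal D_1\times\mathcal D_2$, decomposes that functor by Proposition \ref{common domain 2} into two functors $\textgoth f,\textgoth g:\mathcal C_1^{op}\times\mathcal C_2\rightarrow\mathcal D_1,\mathcal D_2$ of common domain, and applies \ref{bif prop2} a second time to turn these into the bifunctors $\textgoth p,\textgoth q$; the converse retraces the same three steps in the opposite order through the product functor $\textgoth f\times\textgoth g$. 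Your direct componentwise route buys two things the paper's does not. First, the equality $\textgoth B=\textgoth p\times\textgoth q$ holds by construction, since you define $\textgoth p,\textgoth q$ as the source and target components of $\textgoth B$; the paper, by contrast, ends with ``the reader is left to prove the equality of the product with $\textgoth B$'' --- precisely the point where one must check that the correspondence of \ref{bif prop2} and the decomposition of \ref{common domain 2} are mutually consistent. Second, the contravariance worry you raise about condition 3 dissolves on your route: because composition in $\mathcal D_1\times\mathcal D_2$ is componentwise by (\ref{def prod cat}), the single relation $h*f\rightarrow_\times i*g;\textgoth B(f,i)*\textgoth B(h,g),\textgoth B$ projects onto one relation in each of $\mathcal D_1$ and $\mathcal D_2$ with the index pairs unchanged, so the three bifunctor conditions for $\textgoth p$ and $\textgoth q$ are literally the two coordinates of the conditions for $\textgoth B$. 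What the paper's route buys is economy: no axiom is re-verified at all, every verification being delegated to the two earlier propositions, at the cost of leaving the final identification unproved.
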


\begin{proof}Proposition \ref{common domain 2} implies that the corresponding functor $\mathcal C_1^{op}\times\mathcal C_2\rightarrow\mathcal D_1\times\mathcal D_2$, associated to $\textgoth B$, can be decomposed as two functors of common domain $\textgoth f,\textgoth g:\mathcal C_1^{op}\times\mathcal C_2\rightarrow\mathcal D_1,\mathcal D_2$. We consider bifunctors $\textgoth p,\textgoth q:\mathcal C_1\times\mathcal C_2\rightarrow\mathcal D_1,\mathcal D_2$, by applying proposition \ref{bif prop2} a second time.

Let $\textgoth p,\textgoth q:\mathcal C_1\times\mathcal C_2\rightarrow\mathcal D_1,\mathcal D_2$ two bifunctors of common domain, then we can construct two functors $\textgoth f,\textgoth g:\mathcal C_1^{op}\times\mathcal C_2\rightarrow\mathcal D_1,\mathcal D_2$, again with result \ref{bif prop2}. From these two we form a functor of common domain, $\textgoth f\times\textgoth g:\mathcal C_1^{op}\times\mathcal C_2\rightarrow\mathcal D_1\times\mathcal D_2$. Now we can give a bifunctor $\textgoth B:\mathcal C_1\times\mathcal C_2\rightarrow\mathcal D_1\times\mathcal D_2$.

The reader is left to prove the eequality of the product with $\textgoth B$.\end{proof}

\begin{corollary}$\textgoth B:\mathcal C_1\times\mathcal C_2\rightarrow\mathcal D\times\mathcal D$ is a bifunctor if and only if there are bifunctors $\textgoth p,\textgoth q:\mathcal C_1\times\mathcal C_2\rightarrow\mathcal D$, parallel in $\mathcal Cat$.\end{corollary}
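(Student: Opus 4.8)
The plan is to obtain the corollary as the special case $\mathcal D_1=\mathcal D_2=\mathcal D$ of the preceding theorem, the only new ingredient being the translation of the common-target hypothesis into the language of parallel arrows in $\mathcal Cat$. First I would apply the theorem verbatim with both factors of the range set equal to $\mathcal D$: it gives that $\textgoth B:\mathcal C_1\times\mathcal C_2\rightarrow\mathcal D\times\mathcal D$ is a bifunctor if and only if $\textgoth B$ is the product of common domain of two bifunctors $\textgoth p,\textgoth q:\mathcal C_1\times\mathcal C_2\rightarrow\mathcal D,\mathcal D$. Thus both directions of the biconditional are already supplied; what remains is to verify that ``$\textgoth p,\textgoth q$ are bifunctors with common domain $\mathcal C_1\times\mathcal C_2$ and common target $\mathcal D$'' is the same condition as ``$\textgoth p,\textgoth q$ are parallel in $\mathcal Cat$''.

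For the forward direction I would take $\textgoth B$ a bifunctor, extract $\textgoth p,\textgoth q:\mathcal C_1\times\mathcal C_2\rightarrow\mathcal D$ from the theorem, and observe that each one, read through the correspondence of Proposition \ref{bif prop2} as the functor $\mathcal C_1^{op}\times\mathcal C_2\rightarrow\mathcal D$ it determines, is an arrow of $\mathcal Cat$ with source $\mathcal C_1^{op}\times\mathcal C_2$ and target $\mathcal D$. Since $\textgoth p$ and $\textgoth q$ share this source and this target, they are parallel in $\mathcal Cat$ by the definition of parallel arrows. Conversely, given $\textgoth p,\textgoth q$ parallel in $\mathcal Cat$, parallelism forces a common source $\mathcal C_1\times\mathcal C_2$ and common target $\mathcal D$, so they are exactly a pair of the shape required by the theorem; its converse half then assembles their product of common domain into a bifunctor $\textgoth B:\mathcal C_1\times\mathcal C_2\rightarrow\mathcal D\times\mathcal D$.

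The main, and essentially only, obstacle is conceptual rather than computational: a bifunctor $\mathcal C_1\times\mathcal C_2\rightarrow\mathcal D$ is contravariant in its first argument and so is not literally a covariant arrow of $\mathcal Cat$, so one must be careful to interpret ``parallel in $\mathcal Cat$'' through the functor/bifunctor correspondence of Proposition \ref{bif prop2} (equivalently, to read parallelism simply as sameness of domain $\mathcal C_1\times\mathcal C_2$ and codomain $\mathcal D$). Once that identification is fixed, no further functoriality or bifunctoriality axioms need checking, since all of that bookkeeping was already discharged in the proof of the preceding theorem; the corollary merely renames the $\mathcal D_1=\mathcal D_2=\mathcal D$ hypothesis in the vocabulary of $\mathcal Cat$.
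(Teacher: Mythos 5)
Your proposal is correct and matches the paper's intent exactly: the paper states this corollary without proof, as the immediate specialization $\mathcal D_1=\mathcal D_2=\mathcal D$ of the preceding theorem, which is precisely what you carry out. Your extra care in reading ``parallel in $\mathcal Cat$'' through the correspondence of Proposition \ref{bif prop2} (so that parallelism just records the common domain $\mathcal C_1\times\mathcal C_2$ and common target $\mathcal D$) is a sound way to discharge the only point the paper leaves implicit.
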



		\subsection{Natural Transformation}

Our ultimate goal is to generalize concepts, and the natural transformation is next, as exposed by [I]. We see that one way of giving meaning to the situations so far discussed is by interpreting the binary relations by stronger and weaker arrows. We will see the natural transformation as a strong arrow between two weak arrows, the functors. 

In the last section, given $\textgoth f,\textgoth g:\mathcal C\rightarrow\mathcal D_1,\mathcal D_2$, we were able to give a functor that was product for common domain. We did this by giving a category $\mathcal D_1\times\mathcal D_2$, and then we can assign a c-object in $\mathcal C$, into an arrow $\rightarrow_\times$, which is a c-object of the product category. Let us consider the situation where $\mathcal D_1$ and $\mathcal D_2$ are the same category. We give a functor from $\mathcal C$ into $\mathcal D\times\mathcal D$. Here, we take a turn. Because $\mathcal D\times\mathcal D$ is a collection of non-discernible arrows, instead of assigning an arrow $\rightarrow_\times$, to an object in $\mathcal C$, we will assign it an arrow in $\mathcal D$.

Let functors $\textgoth f,\textgoth g:\mathcal C\rightarrow\mathcal D$, and function $\tau:\mathcal{O|C}\rightarrow\mathcal{A|D}$ be such that $a\mapsto_\tau\tau a:\textgoth fa\rightarrow\textgoth ga$. We say $\tau$ is a bridge from $\textgoth f$ to $\textgoth g$, and we write $\tau:\textgoth f\rightarrow\textgoth g$.

\begin{proposition}Given a bridge, $\tau:\textgoth f\rightarrow\textgoth g$, we can give a category $\vec{\mathcal D}:=\vec{\mathcal D}(\tau;\textgoth f,\textgoth g)$, with $\mathcal{O|\vec{D}}:=Im~\tau$, and a functor $\textgoth T:\mathcal C\rightarrow\vec{\mathcal D}$ such that $\textgoth T_\mathcal O:=\tau$. \end{proposition}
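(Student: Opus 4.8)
The plan is to realize $\vec{\mathcal D}$ as a relabelled full subcategory of the product category $\mathcal D\times\mathcal D$, and to obtain $\textgoth T$ as the product functor of common domain $\textgoth f\times\textgoth g$ of Proposition \ref{common domain 1}, corestricted to its image. Concretely, I would take the c-objects of $\vec{\mathcal D}$ to be the arrows $\tau a$ in $Im~\tau$, each viewed as an arrow $\textgoth f a\rightarrow\textgoth g a$ of $\mathcal D$, and I would declare an arrow of $\vec{\mathcal D}$ from $\tau a$ to $\tau b$ to be an ordered pair $p\rightarrow_\times q$ with $p:\textgoth f a\rightarrow\textgoth f b$ and $q:\textgoth g a\rightarrow\textgoth g b$ in $\mathcal D$ — that is, exactly an arrow of $\mathcal D\times\mathcal D$ between the c-objects $\textgoth f a\rightarrow_\times\textgoth g a$ and $\textgoth f b\rightarrow_\times\textgoth g b$. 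The crucial point to stress here is that \emph{no} commuting-square (naturality) condition is imposed: since a bridge carries no such hypothesis, the arrows of $\vec{\mathcal D}$ must be all such pairs, and this is precisely what will let $\textgoth T$ be defined for an arbitrary bridge.

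Having made that definition, the category axioms for $\vec{\mathcal D}$ come essentially for free. I would declare the unit of the c-object $\tau a$ to be $1_{\textgoth f a}\rightarrow_\times 1_{\textgoth g a}$ and composition to be componentwise, $(p'\rightarrow_\times q')*(p\rightarrow_\times q):=p'*p\rightarrow_\times q'*q$; both are inherited verbatim from $\mathcal D\times\mathcal D$, which has already been shown to be a category. Thus the unit and associativity properties hold on $\vec{\mathcal D}$ because they hold on $\mathcal D\times\mathcal D$, and the only thing to check is that the relevant hom-collections are closed under these operations, which they are, since the source and target c-objects of a composite of the stated form are again of the stated form.

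I would then define $\textgoth T$ by $\textgoth T_\mathcal O:=\tau$, so $a\mapsto\tau a$, and $\textgoth T_\mathcal A h:=\textgoth f h\rightarrow_\times\textgoth g h$ for each arrow $h:a\rightarrow b$ of $\mathcal C$. This is nothing but the product functor $\textgoth f\times\textgoth g:\mathcal C\rightarrow\mathcal D\times\mathcal D$ of Proposition \ref{common domain 1}, read into $\vec{\mathcal D}$ under the relabelling $\textgoth f a\rightarrow_\times\textgoth g a\leftrightarrow\tau a$. Consequently the three functor conditions are immediate: preservation of units gives $\textgoth T 1_a=\textgoth f 1_a\rightarrow_\times\textgoth g 1_a=1_{\textgoth f a}\rightarrow_\times 1_{\textgoth g a}=1_{\tau a}$; condition 2) holds because $\textgoth f h\rightarrow_\times\textgoth g h$ is by construction an arrow from $\tau a$ to $\tau b$; and preservation of composition is $\textgoth T(h'*h)=\textgoth f(h'*h)\rightarrow_\times\textgoth g(h'*h)=\textgoth f h'*\textgoth f h\rightarrow_\times\textgoth g h'*\textgoth g h=\textgoth T h'*\textgoth T h$, using that $\textgoth f$ and $\textgoth g$ preserve composition. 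All of this is already packaged in Proposition \ref{common domain 1}, so after the relabelling there is nothing new to compute.

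The one genuine subtlety — and the place I would be most careful — is the relabelling itself, i.e. the use of the arrows as c-objects. I must confirm that taking $Im~\tau$ as the object collection is coherent: if $\tau a=\tau b$ as arrows of $\mathcal D$ then they share source and target, so $\textgoth f a=\textgoth f b$ and $\textgoth g a=\textgoth g b$, and the hom-collection between two c-objects depends only on these underlying source and target objects, not on the index $a$; hence the assignment of arrows is well-posed. I would also be explicit that the unit $1_{\tau a}$ of a c-object refers to the identity of the \emph{product-category} object $\textgoth f a\rightarrow_\times\textgoth g a$, namely $1_{\textgoth f a}\rightarrow_\times 1_{\textgoth g a}$, and not to $\tau a$ regarded as an arrow of $\mathcal D$. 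Once this bookkeeping is settled, the statement is a direct corollary of the product-category theorem together with Proposition \ref{common domain 1}.
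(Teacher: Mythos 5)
Your proposal is correct, but it is a genuinely different construction from the paper's. The paper builds $\vec{\mathcal D}$ out of $\mathcal C$ itself: for each arrow $f:a\rightarrow b$ of $\mathcal C$ it introduces an arrow $\tau f:\tau a\rightarrow\tau b$, transports composition from $\mathcal C$ by setting $\tau g*\tau f:=\tau(g*f)$, takes $1_{\tau b}:=\tau 1_b$ as unit, and then $\textgoth T_{\mathcal A}f:=\tau f$ is a functor essentially by fiat; in particular every arrow of the paper's $\vec{\mathcal D}$ is an image under $\textgoth T$, and the argument never invokes the functoriality of $\textgoth f$ and $\textgoth g$ --- only the category structure of $\mathcal C$ and the function $\tau$. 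You instead pull the structure back from $\mathcal D\times\mathcal D$: your hom-collections are \emph{all} pairs $p\rightarrow_\times q$, composition and units are those of the product category, and $\textgoth T$ is the corestriction of $\textgoth f\times\textgoth g$ from proposition \ref{common domain 1}, which does use that $\textgoth f,\textgoth g$ are functors. Both satisfy the statement, since it prescribes only $\mathcal{O|\vec{D}}:=Im~\tau$ and $\textgoth T_\mathcal O:=\tau$. What your route buys is economy and safety of verification: associativity, units, and the functor axioms are inherited from results already proven, and no new composition law has to be checked for well-definedness. What the paper's route buys is minimality and intent: its $\vec{\mathcal D}$ is exactly the category generated by the bridge (arrows included), whereas yours is in general much larger, containing every compatible pair of $\mathcal D$-arrows of which the arrows coming from $\mathcal C$ form only a subcollection; so your remark that the arrows ``must be'' all such pairs overstates the case --- the paper shows a leaner choice works. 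Finally, your care over the relabelling (that hom-collections depend only on the source and target of $\tau a$, not on the index $a$) is well placed, and it addresses a subtlety that the paper's own proof passes over silently, namely what happens to composability in $\vec{\mathcal D}$ when $\tau$ identifies distinct c-objects of $\mathcal C$.
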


\begin{proof}First, we must define the arrows of $\vec{\mathcal D}$. Given an arrow $f:a\rightarrow b$, define an arrow $\tau f:\tau a\rightarrow\tau b$, for the category $\vec{\mathcal D}$. The composition in $\vec{\mathcal D}$ is defined by $\tau g*\tau f:=\tau(f*g)$; we guarantee the existence of $\tau(f*g)$, for composable arrows in the category $\mathcal C$.

Given a c-object $\tau b$, we have a unit arrow $1_{\tau b}:=\tau1_b:\tau b\rightarrow\tau b$, such that\begin{eqnarray}\nonumber 1_{\tau b}&;&1_{\tau b}*\tau f,\tau f\\\nonumber 1_{\tau b}&;&\tau1_{b}*\tau f,\tau f\\\nonumber 1_{\tau b}&;&\tau(1_{b}*f),\tau f\\\nonumber 1_{\tau b}&;&\tau f,\tau f.\end{eqnarray}Similarly, we have $\tau f;\tau f,1_{\tau a}$ which proves $\vec{\mathcal D}$ has a unit arrow for every c-object $\tau a$. The reader should not have difficulty showing associativity holds for the composition, so we conlcude $\vec{\mathcal D}$ is a category.

If we define $\textgoth T_\mathcal Af:=\tau f$, we have a functor.\end{proof}

\begin{definition}A bridge $\tau:\textgoth f\rightarrow\textgoth g$ is a \textit{natural transformation} if for every arrow $f:a\rightarrow b$, in $\mathcal{C}$, we have $\tau b,\tau a;\textgoth g~f,\textgoth f~f$ in terms of composition for $\mathcal D$.\end{definition}

The expression $\tau b,\tau a;\textgoth gf,\textgoth ff$ means that $\tau b*\textgoth ff$ is the same arrow as $\textgoth gf*\tau a$. It is reasonable to request this because they are parallel, $\tau b*\textgoth ff:\textgoth fa\rightarrow\textgoth gb$ and $\textgoth gf*\tau a:\textgoth fa\rightarrow\textgoth gb$. Equivalently, one can say the bridge is a natural transformation if and only if $\textgoth ff,\textgoth gf$ are a natural pair of functions, under $\tau a,\tau b$.

\begin{proposition}Let $\tau$, a bridge for two functors $\textgoth f,\textgoth g:\mathcal C\rightarrow\mathcal D$, and suppose the arrows of $\mathcal D$ are non-discernible. Then $\tau$ is a natural transformation.\label{nat tran 1}\end{proposition}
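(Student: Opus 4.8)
The plan is to exploit the hypothesis on $\mathcal{D}$ directly: since $\tau$ is already assumed to be a bridge $\textgoth f\rightarrow\textgoth g$, the only thing left to verify is the naturality condition, and non-discernibility will make it automatic. First I would fix an arbitrary arrow $f:a\rightarrow b$ in $\mathcal{C}$ and record the four arrows of the naturality square: the arrows $\textgoth ff:\textgoth fa\rightarrow\textgoth fb$ and $\textgoth gf:\textgoth ga\rightarrow\textgoth gb$ exist with these sources and targets because $\textgoth f,\textgoth g$ are functors (condition 2 of the definition of functor), and the arrows $\tau a:\textgoth fa\rightarrow\textgoth ga$ and $\tau b:\textgoth fb\rightarrow\textgoth gb$ exist with these sources and targets by the definition of a bridge.

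Next I would form the two composites occurring in the naturality requirement $\tau b,\tau a;\textgoth gf,\textgoth ff$. The composite $\tau b*\textgoth ff$ is defined because the target of $\textgoth ff$ is $\textgoth fb$, which is the source of $\tau b$; it is an arrow $\textgoth fa\rightarrow\textgoth gb$. Likewise $\textgoth gf*\tau a$ is defined because the target of $\tau a$ is $\textgoth ga$, which is the source of $\textgoth gf$; it too is an arrow $\textgoth fa\rightarrow\textgoth gb$. Both composites exist because the collection of arrows of $\mathcal{D}$ has the transitive property, composition being defined whenever the relevant source and target match.

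The key step is then immediate. The arrows $\tau b*\textgoth ff$ and $\textgoth gf*\tau a$ are parallel, since both have source $\textgoth fa$ and target $\textgoth gb$. By hypothesis the arrows of $\mathcal{D}$ are non-discernible, so parallel arrows are the same arrow. Hence $\tau b*\textgoth ff$ and $\textgoth gf*\tau a$ coincide, which is exactly the statement $\tau b,\tau a;\textgoth gf,\textgoth ff$. As $f:a\rightarrow b$ was arbitrary, $\tau$ satisfies the defining condition of a natural transformation.

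I expect essentially no obstacle in this argument; the only point demanding care is the bookkeeping of sources and targets, so as to confirm that the two composites are genuinely parallel before invoking non-discernibility. Everything else is forced by the definitions of bridge and functor, so the proof amounts to checking that the naturality square has the shape that lets the hypothesis on $\mathcal{D}$ close it automatically.
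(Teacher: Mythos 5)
Your proof is correct and is exactly the argument the paper intends: the remark immediately preceding the proposition already observes that $\tau b*\textgoth ff$ and $\textgoth gf*\tau a$ are parallel arrows $\textgoth fa\rightarrow\textgoth gb$, and non-discernibility (parallel arrows are the same arrow) closes the square. Your bookkeeping of sources and targets fills in the details the paper leaves implicit, so there is nothing to add.
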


\begin{corollary}Suppose there are two order preserving functions $f,g:\mathcal{O|P}\rightarrow\mathcal{O|Q}$ such that $fa\leq ga$ for every c-object in $\mathcal C$. Then, $\tau:\mathcal{O|P}\rightarrow\mathcal{A|Q}$ that sends $a\mapsto fa\leq ga$ is a natural transformation $\textgoth f\rightarrow\textgoth g$, where the functors satisfy $\textgoth f_\mathcal O:=f$ and $\textgoth g_\mathcal O:=g$.\end{corollary}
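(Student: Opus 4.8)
The plan is to reduce this statement to Proposition \ref{nat tran 1}, since the target category $\mathcal Q$, being a partial order, has non-discernible arrows. First I would invoke the lemma relating order preserving functions to functors in order to obtain genuine functors $\textgoth f,\textgoth g:\mathcal P\rightarrow\mathcal Q$ with object functions $f$ and $g$ respectively; this is exactly what that lemma guarantees, so no further work is needed to produce the functors themselves.

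Next I would check that $\tau$ is a well-defined bridge, i.e. that for each c-object $a$ the assignment $a\mapsto\tau a$ really lands in $\mathcal{A|Q}$ as an arrow $\textgoth fa\rightarrow\textgoth ga$. Since $\textgoth fa=fa$ and $\textgoth ga=ga$, and the hypothesis gives $fa\leq ga$, the relation $fa\leq ga$ is precisely an arrow of the partial order $\mathcal Q$ from $fa$ to $ga$. Thus $\tau a:\textgoth fa\rightarrow\textgoth ga$ exists for every $a$, so $\tau$ is a bridge $\textgoth f\rightarrow\textgoth g$ in the sense of the definition preceding the proposition above.

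With the bridge in hand, the conclusion is immediate: $\mathcal Q$ is a partial order, and its defining anti-symmetry means there is at most one arrow between any two objects, i.e. the arrows of $\mathcal Q$ are non-discernible. Proposition \ref{nat tran 1} then applies verbatim and yields that $\tau$ is a natural transformation $\textgoth f\rightarrow\textgoth g$.

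If instead one wished to argue directly without citing Proposition \ref{nat tran 1}, the only thing to verify is the naturality condition $\tau b,\tau a;\textgoth gf,\textgoth ff$ for each arrow $f:a\rightarrow b$ of $\mathcal P$ (the unique arrow witnessing $a\leq b$). Both composites $\tau b*\textgoth ff$ and $\textgoth gf*\tau a$ are arrows $fa\rightarrow gb$: the first exists because $fa\leq fb$ by order preservation and $fb\leq gb$ by hypothesis, the second because $fa\leq ga$ by hypothesis and $ga\leq gb$ by order preservation. Since $\mathcal Q$ admits at most one arrow from $fa$ to $gb$, the two composites coincide, which is exactly the naturality condition. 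There is essentially no obstacle here; the one point requiring care is simply recording that a partial order furnishes non-discernible arrows, after which everything collapses to the uniqueness of arrows in $\mathcal Q$.
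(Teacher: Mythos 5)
Your proposal is correct and follows exactly the route the paper intends: the corollary is positioned as an immediate consequence of Proposition \ref{nat tran 1}, obtained by using the lemma on functors for partial orders to produce $\textgoth f,\textgoth g$, observing that $a\mapsto fa\leq ga$ is a bridge, and then invoking non-discernibility of the arrows of $\mathcal Q$. Your supplementary direct verification (both composites are arrows $fa\rightarrow gb$, hence equal by anti-symmetry) is also sound and matches the reasoning the paper sketches informally for the integers right after this corollary.
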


We are playing with an order defined on the functors of the partial orders. For any two order preserving $f,g:\mathcal{O|P}\rightarrow\mathcal{O|Q}$, we say $f\preceq g$ if and only if $fx\leq gx$, for every object in $\mathcal P$. Of course, if $\textgoth f$, $\textgoth g$ are the functors corresponding to $f,g$, respectively, then we say $\textgoth f\leq\textgoth g$.

\begin{proposition}\label{prop order funct order}Let $\textgoth f$ and $\textgoth g$ be two functors $\mathcal P\rightarrow\mathcal Q$, for partial orders. The relation $\textgoth f\leq\textgoth g$, is a partial order on the functors.\end{proposition}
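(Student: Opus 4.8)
The plan is to verify the three defining properties of a partial order in this paper's sense, namely that the relation $\textgoth f\leq\textgoth g$ is reflexive, transitive, and anti-symmetric. Recall that the relation was just defined by declaring $\textgoth f\leq\textgoth g$ precisely when $fx\leq gx$ holds in $\mathcal Q$ for every object $x$ of $\mathcal P$, where $f$ and $g$ are the object functions of $\textgoth f$ and $\textgoth g$. The strategy throughout is to transfer each order-theoretic property from the underlying order $\leq$ of $\mathcal Q$, applied pointwise, up to the relation on functors.

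First I would dispatch reflexivity and transitivity, which are immediate. For reflexivity, since $\mathcal Q$ is a partial order its arrows are reflexive, so $fx\leq fx$ for every $x$, giving $\textgoth f\leq\textgoth f$. For transitivity, suppose $\textgoth f\leq\textgoth g$ and $\textgoth g\leq\textgoth h$; then for each object $x$ we have $fx\leq gx$ and $gx\leq hx$, and transitivity of $\leq$ in $\mathcal Q$ yields $fx\leq hx$ for every $x$, which is exactly $\textgoth f\leq\textgoth h$. No subtlety arises here, as the functor structure plays no role and only the pointwise order of $\mathcal Q$ is used.

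The substantive step is anti-symmetry, and this is where the functor structure must be invoked. Assume $\textgoth f\leq\textgoth g$ and $\textgoth g\leq\textgoth f$. Pointwise this gives $fx\leq gx$ and $gx\leq fx$ for every object $x$, so anti-symmetry of $\leq$ in $\mathcal Q$ forces $fx=gx$; that is, the two object functions are the same function. The obstacle is that equality of functors, under this paper's conventions, requires equality as two-part functions, so I must still argue that the arrow functions agree. Here I would appeal to the fact, established in the Lemma on functors for partial orders, that the arrows of a partial order are non-discernible and the arrow function of such a functor is completely determined by property $2)$: the unique arrow $a\leq b$ is sent to the unique arrow $fa\leq fb$. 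Since the object functions coincide, $fa\leq fb$ and $ga\leq gb$ are the same arrow of $\mathcal Q$ for every comparable pair $a,b$, so the two arrow functions coincide as well.

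Having matched both the object and arrow functions, $\textgoth f$ and $\textgoth g$ agree as two-part functions and are therefore the same functor, which completes anti-symmetry. I expect the only place demanding care is precisely this last transition from pointwise equality of object functions to genuine equality of functors; the temptation would be to treat anti-symmetry as purely pointwise and forget that a functor carries an arrow function, but the non-discernibility of arrows in a partial order makes the arrow function redundant data, so the pointwise conclusion suffices. With reflexivity, transitivity, and anti-symmetry all verified, the relation $\textgoth f\leq\textgoth g$ is a partial order on the functors $\mathcal P\rightarrow\mathcal Q$.
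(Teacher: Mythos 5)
Your proof is correct and takes essentially the same route as the paper: reflexivity, transitivity, and anti-symmetry are each transferred pointwise from the order of $\mathcal Q$ to the functors via their object functions. If anything, your anti-symmetry step is more complete than the paper's, which stops at the conclusion $fx=gx$; your extra observation that the arrows of a partial order are non-discernible, so that equal object functions force equal arrow functions and hence genuine equality of the functors as two-part functions, makes explicit a point the paper leaves implicit.
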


\begin{proof}Let $f$ represent the object function of $\textgoth f$. We observe reflexivity holds in the fact that $\textgoth f\leq\textgoth f$ is equivalent to the statement $fx\leq fx$, for every $x$ in $\mathcal P$. Suppose $\textgoth f\leq\textgoth g\leq\textgoth h$. Transitivity holds because we can assert $fx\leq gx\leq hx$, for every $x$ in $\mathcal P$, where $g,h$ is the object function of $\textgoth g,~\textgoth h$. If $\textgoth f\leq\textgoth g$ and $\textgoth g\leq\textgoth f$, then we have $fx\leq gx$ and $gx\leq fx$ for every $x$. This means $fx=gx$.\end{proof}

We will build two different categories, using natural transformations as strong arrows for weak arrows, the functors. That is to say, our categories will have functors as c-objects and natural transformations as arrows. Consider the collection of natural transformations $\textgoth f\rightarrow\textgoth g$, for a pair of parallel functors $\textgoth f,\textgoth g:\mathcal C\rightarrow\mathcal D$; we define $Nat(\textgoth f,\textgoth g):=\{\textgoth f\rightarrow\textgoth g\}$. 

$\mathcal Cat(\mathcal C,\mathcal D)$ is a category if $\mathcal Cat(\mathcal C,\mathcal D)_\mathcal O:=\{\mathcal C\textgoth F\mathcal D\}$ and the natural transformations are arrows. First, every functor has a natural transformation $1_\textgoth f:\textgoth f\rightarrow\textgoth f$ so that $x\mapsto_{1_\textgoth f}1_{\textgoth fx}:=\textgoth fx\rightarrow\textgoth fx$; in other words, the unit arrow of $\textgoth f$ sends an object $x$, in $\mathcal C$, into the unit arrow $1_{\textgoth fx}$, in $\mathcal D$. Let use define the operation of arrows as $\cdot$ so that $x\mapsto_{\sigma\cdot\tau}\sigma x*\tau x$ for $\tau,\sigma:\textgoth f,\textgoth h\rightarrow\textgoth h,\textgoth j$. Of course, $\sigma x*\tau x:\textgoth fx\rightarrow\textgoth jx$ is the composition, in $\mathcal D$, of $\sigma x:\textgoth hx\rightarrow\textgoth jx$ and $\tau x:\textgoth fx\rightarrow\textgoth hx$. To prove this is indeed a natural transformation, use the arrow $\sigma b*\textgoth hf*\tau a$. Suppose we have a natural transformation $\tau:\textgoth f\rightarrow\textgoth h$, then $\tau\cdot1_{\textgoth f}$ and $1_{\textgoth h}\cdot\tau$ are the same as $\tau$. To prove associativity; show $\rho\cdot(\sigma\cdot\tau)$ and $(\rho\cdot\sigma)\cdot\tau$ are the same function. Applying $\rho\cdot(\sigma\cdot\tau)$ to $x$, \begin{eqnarray}\nonumber x&;&[\rho\cdot(\sigma\cdot\tau)]x,\rho\cdot(\sigma\cdot\tau)\\\nonumber x&;&\rho x*(\sigma\cdot\tau)x,\rho\cdot(\sigma\cdot\tau)\\\nonumber x&;&\rho x*(\sigma x*\tau x),\rho\cdot(\sigma\cdot\tau)\\\nonumber x&;&(\rho x*\sigma x)*\tau x,\rho\cdot(\sigma\cdot\tau)\\\nonumber x&;&(\rho\cdot\sigma)x*\tau x,\rho\cdot(\sigma\cdot\tau)\\\nonumber x&;&[(\rho\cdot\sigma)\cdot\tau]x,\rho\cdot(\sigma\cdot\tau)\end{eqnarray}

Now we study the category of all categories, $\vec{\mathcal C}at$. Let $\tau,\alpha:\textgoth f,\textgoth g\rightarrow\textgoth h,\textgoth i$ be natural transformations for the functors $\textgoth f,\textgoth h:\mathcal C\rightarrow\mathcal D$ and $\textgoth g,\textgoth i:\mathcal D\rightarrow\mathcal E$. If we apply $\tau$ to $x$, the reuslt is $\tau x:\textgoth fx\rightarrow\textgoth hx$. If we apply $\alpha$ to $\textgoth fx$, we get $\alpha\textgoth fx:\textgoth g(\textgoth fx)\rightarrow\textgoth i(\textgoth fx)$. In the same way, $\alpha\textgoth hx:\textgoth g(\textgoth hx)\rightarrow\textgoth i(\textgoth hx)$. We define $\alpha\circ\tau:\textgoth g\circ\textgoth f\rightarrow\textgoth i\circ\textgoth h$, a natural transformation such that \begin{equation}\label{define nat compose}(\alpha\circ\tau)x:=\alpha(\textgoth hx)*\textgoth g(\tau x).\end{equation} We can equivalently say $(\alpha\circ\tau)x:=\textgoth i(\tau x)*\alpha(\textgoth fx)$ because $\alpha(\textgoth hx),\alpha(\textgoth fx);\textgoth i(\tau x),\textgoth g(\tau x)$. To prove $(\alpha\circ\tau)$ is in $Nat(\textgoth g\circ\textgoth f,\textgoth i\circ\textgoth h)$, we must prove $(\alpha\circ\tau)b,(\alpha\circ\tau)a;(\textgoth i\circ\textgoth h)f,(\textgoth g\circ\textgoth f)f$ for any $f:a\rightarrow b$, in $\mathcal C$. Recall that for every arrow $\textgoth hf:\textgoth ha\rightarrow\textgoth hb$ in $\mathcal D$, we have $\alpha(\textgoth hb),\alpha(\textgoth ha);\textgoth i(\textgoth hf),\textgoth g(\textgoth hf)$ because $\alpha$ is in $Nat(\textgoth g,\textgoth i)$.\begin{eqnarray}\nonumber(\alpha\circ\tau)b&;&(\alpha\circ\tau)b*(\textgoth g\circ\textgoth f)f,(\textgoth g\circ\textgoth f)f\\\nonumber(\alpha\circ\tau)b&;&\alpha(\textgoth hb)*\textgoth g(\tau b)*\textgoth g\textgoth ff,(\textgoth g\circ\textgoth f)f\\\nonumber(\alpha\circ\tau)b&;&\alpha(\textgoth hb)*\textgoth g(\tau b*\textgoth ff),(\textgoth g\circ\textgoth f)f\\\nonumber(\alpha\circ\tau)b&;&\alpha(\textgoth hb)*\textgoth g(\textgoth hf*\tau a),(\textgoth g\circ\textgoth f)f\\\nonumber(\alpha\circ\tau)b&;&\alpha(\textgoth hb)*\textgoth g(\textgoth hf)*\textgoth g\tau a,(\textgoth g\circ\textgoth f)f\\\nonumber(\alpha\circ\tau)b&;&\textgoth i(\textgoth hf)*\alpha(\textgoth ha)*\textgoth g\tau a,(\textgoth g\circ\textgoth f)f\\\nonumber(\alpha\circ\tau)b&;&(\textgoth i\circ\textgoth h)f*(\alpha\circ\tau)a,(\textgoth g\circ\textgoth f)f\\\nonumber(\alpha\circ\tau)b,(\alpha\circ\tau)a&;&(\textgoth i\circ\textgoth h)f,(\textgoth g\circ\textgoth f)f.\end{eqnarray}To prove associativity, let $\mu:\textgoth l\rightarrow\textgoth m$, where $\textgoth l,\textgoth m:\mathcal E\rightarrow\mathcal F$. This implies $\mu\circ\alpha:\textgoth l\circ\textgoth g\rightarrow\textgoth m\circ\textgoth i$ such that $(\mu\circ\alpha)\textgoth hx:=\mu\textgoth i(\textgoth hx)*\textgoth l\alpha(\textgoth hx)$. Thus,\begin{eqnarray}\nonumber x&;&[\mu\circ(\alpha\circ\tau)]x,\mu\circ(\alpha\circ\tau)\\\nonumber x&;&\mu(\textgoth i\circ\textgoth h)x*\textgoth l(\alpha\circ\tau)x,\mu\circ(\alpha\circ\tau)\\\nonumber x&;&\mu\textgoth i(\textgoth hx)*\textgoth l(\alpha\textgoth hx*\textgoth g\tau x),\mu\circ(\alpha\circ\tau)\\\nonumber x&;&\mu\textgoth i(\textgoth hx)*\textgoth l\alpha(\textgoth hx)*(\textgoth l\circ\textgoth g)\tau x,\mu\circ(\alpha\circ\tau)\\\nonumber x&;&(\mu\circ\alpha)\textgoth hx*(\textgoth l\circ\textgoth g)\tau x,\mu\circ(\alpha\circ\tau)\\\nonumber x&;&[(\mu\circ\alpha)\circ\tau]x,\mu\circ(\alpha\circ\tau).\end{eqnarray}

\begin{lemma intlaw}Let $\alpha,\beta:\textgoth g,\textgoth i\rightarrow\textgoth i,\textgoth k$ be natual transformations of functors $\textgoth g,\textgoth i,\textgoth k:\mathcal D\rightarrow\mathcal E$. In terms of the notation operation $\circ$ of $\vec{\mathcal C}at$, we verify $\beta\cdot\alpha;(\beta\circ\sigma)\cdot(\alpha\circ\tau),\sigma\cdot\tau$. Equivalently, if the notation is used for $\cdot$, which is the operation in the categories $\mathcal Cat(\mathcal C,\mathcal D)$ and $\mathcal Cat(\mathcal D,\mathcal E)$, then $\beta\circ\sigma;(\beta\cdot\alpha)\circ(\sigma\cdot\tau),\alpha\circ\tau$.\end{lemma intlaw}

\begin{proof}Notice $\beta\cdot\alpha:\textgoth g\rightarrow\textgoth k$, and $\sigma\cdot \tau:\textgoth f\rightarrow\textgoth j$.\begin{eqnarray}\nonumber x&;&[(\beta\cdot\alpha)\circ(\sigma\cdot\tau)]x,(\beta\cdot\alpha)\circ(\sigma\cdot\tau)\\\nonumber x&;&(\beta\cdot\alpha)(\textgoth jx)*\textgoth g[(\sigma\cdot\tau)x],(\beta\cdot\alpha)\circ(\sigma\cdot\tau)\\\nonumber x&;&[\beta(\textgoth jx)*\alpha(\textgoth jx)]*\textgoth g(\sigma x*\tau x),(\beta\cdot\alpha)\circ(\sigma\cdot\tau)\\\nonumber x&;&\beta(\textgoth jx)*[\alpha(\textgoth jx)*\textgoth g(\sigma x)]*\textgoth g(\tau x),(\beta\cdot\alpha)\circ(\sigma\cdot\tau)\\\nonumber x&;&\beta(\textgoth jx)*[\textgoth i(\sigma x)*\alpha(\textgoth hx)]*\textgoth g(\tau x),(\beta\cdot\alpha)\circ(\sigma\cdot\tau)\\\nonumber x&;&[\beta(\textgoth jx)*\textgoth i(\sigma x)]*[\alpha(\textgoth hx)*\textgoth g(\tau x)],(\beta\cdot\alpha)\circ(\sigma\cdot\tau)\\\nonumber x&;&(\beta\circ\sigma)x*(\alpha\circ\tau)x,(\beta\cdot\alpha)\circ(\sigma\cdot\tau)\\\nonumber x&;&[(\beta\circ\sigma)\cdot(\alpha\circ\tau)]x,(\beta\cdot\alpha)\circ(\sigma\cdot\tau).\end{eqnarray}\end{proof}

\begin{theorem}Given categories $\mathcal C,\mathcal D,\mathcal E$, there is a functor $\textgoth F:\mathcal Cat(\mathcal C,\mathcal D)\times\mathcal Cat(\mathcal D,\mathcal E)\rightarrow\mathcal Cat(\mathcal C,\mathcal E)$ such that $\textgoth F_\mathcal O(\textgoth f\rightarrow_\times\textgoth g):=\textgoth g\circ\textgoth f$ and $\textgoth F_\mathcal A(\tau\rightarrow_\times\alpha):=\alpha\circ\tau$.\end{theorem}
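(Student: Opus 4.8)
The plan is to check the three defining conditions of a functor for $\textgoth{F}$, noting at the outset that the substantive work is already available: condition 2) is built into the definition of horizontal composition, condition 3) is precisely the interchange law proved just above, and only condition 1) needs a short computation.

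First I would confirm that $\textgoth{F}$ is a well-defined two-part function. A c-object $\textgoth{f}\rightarrow_\times\textgoth{g}$ of $\mathcal Cat(\mathcal C,\mathcal D)\times\mathcal Cat(\mathcal D,\mathcal E)$ is a pair of functors $\textgoth{f}:\mathcal C\rightarrow\mathcal D$ and $\textgoth{g}:\mathcal D\rightarrow\mathcal E$, so $\textgoth{g}\circ\textgoth{f}$ is a functor $\mathcal C\rightarrow\mathcal E$, i.e. a c-object of $\mathcal Cat(\mathcal C,\mathcal E)$. An arrow $\tau\rightarrow_\times\alpha:(\textgoth{f}\rightarrow_\times\textgoth{g})\longrightarrow(\textgoth{h}\rightarrow_\times\textgoth{i})$ consists of $\tau:\textgoth{f}\rightarrow\textgoth{h}$ in $\mathcal Cat(\mathcal C,\mathcal D)$ and $\alpha:\textgoth{g}\rightarrow\textgoth{i}$ in $\mathcal Cat(\mathcal D,\mathcal E)$; by its very construction the horizontal composite $\alpha\circ\tau$ is a natural transformation $\textgoth{g}\circ\textgoth{f}\rightarrow\textgoth{i}\circ\textgoth{h}$, which is an arrow of $\mathcal Cat(\mathcal C,\mathcal E)$ from $\textgoth{F}_\mathcal O(\textgoth{f}\rightarrow_\times\textgoth{g})$ to $\textgoth{F}_\mathcal O(\textgoth{h}\rightarrow_\times\textgoth{i})$. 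This is condition 2).

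Next I would verify condition 1). The unit arrow of $\textgoth{f}\rightarrow_\times\textgoth{g}$ in the product category is $1_\textgoth{f}\rightarrow_\times1_\textgoth{g}$, and $\textgoth{F}_\mathcal A$ sends it to $1_\textgoth{g}\circ1_\textgoth{f}$; I would show this equals the unit $1_{\textgoth{g}\circ\textgoth{f}}$ by evaluating on an arbitrary c-object $x$ of $\mathcal C$. Using $(1_\textgoth{g}\circ1_\textgoth{f})x=1_\textgoth{g}(\textgoth{f}x)*\textgoth{g}(1_\textgoth{f}x)$, the identity $1_\textgoth{f}x=1_{\textgoth{f}x}$, and the fact that $\textgoth{g}$, being a functor, preserves units, the right-hand side reduces to $1_{\textgoth{g}\textgoth{f}x}*1_{\textgoth{g}\textgoth{f}x}=1_{\textgoth{g}\textgoth{f}x}=1_{(\textgoth{g}\circ\textgoth{f})x}$, which is exactly $1_{\textgoth{g}\circ\textgoth{f}}$ evaluated at $x$. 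Since they agree at every $x$, they are the same natural transformation.

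Finally, condition 3) is the interchange law verbatim. For composable arrows $\tau\rightarrow_\times\alpha$ and $\sigma\rightarrow_\times\beta$, formula (\ref{def prod cat}) computes their composite in the product category component-wise as $(\sigma\cdot\tau)\rightarrow_\times(\beta\cdot\alpha)$, where $\cdot$ is the vertical composition serving as the operation in each factor; applying $\textgoth{F}_\mathcal A$ yields $(\beta\cdot\alpha)\circ(\sigma\cdot\tau)$. On the other hand, composing the images in $\mathcal Cat(\mathcal C,\mathcal E)$, again under $\cdot$, yields $(\beta\circ\sigma)\cdot(\alpha\circ\tau)$. The interchange law asserts exactly that these coincide, so condition 3) holds and $\textgoth{F}$ is a functor. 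The only real care needed is bookkeeping: reading both the product-category composition and the target composition as the vertical operation $\cdot$, while $\textgoth{F}_\mathcal A$ itself produces horizontal composites. The genuine mathematical obstacle, the compatibility of $\circ$ with $\cdot$, has already been cleared as the interchange lemma, so the theorem is in essence its corollary.
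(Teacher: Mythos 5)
Your proposal is correct and follows essentially the same route as the paper's proof: condition 1) by evaluating $1_{\textgoth g}\circ1_{\textgoth f}$ pointwise via (\ref{define nat compose}) and unit preservation of $\textgoth g$, condition 2) from the construction of the horizontal composite $\alpha\circ\tau:\textgoth g\circ\textgoth f\rightarrow\textgoth i\circ\textgoth h$, and condition 3) as a direct invocation of the interchange lemma. No gaps to report.
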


\begin{proof}We will need to prove, first of all, that the unit natural transformation, $1_{\textgoth g\circ\textgoth f}$, is the same as $1_\textgoth g\circ1_\textgoth f$. To see that this is true, note (\ref{define nat compose}) implies $x;1_\textgoth g(\textgoth fx)*\textgoth g(1_{\textgoth fx}),1_\textgoth g\circ1_\textgoth f$ so that for any c-object $x$ in $\mathcal C$\begin{eqnarray}\nonumber x&;&1_{\textgoth g\circ\textgoth f}x,1_{\textgoth g\circ\textgoth f}\\\nonumber x&;&1_{(\textgoth g\circ\textgoth f)x},1_{\textgoth g\circ\textgoth f}\\\nonumber x&;&1_{(\textgoth g\circ\textgoth f)x}*1_{(\textgoth g\circ\textgoth f)x},1_{\textgoth g\circ\textgoth f}\\\nonumber x&;&1_{\textgoth g(\textgoth fx)}*\textgoth g1_{\textgoth fx},1_{\textgoth g\circ\textgoth f}\\\nonumber x&;&1_{\textgoth g}(\textgoth fx)*\textgoth g1_{\textgoth fx},1_{\textgoth g\circ\textgoth f}\\\nonumber x&;&(1_\textgoth g\circ1_\textgoth f)x,1_{\textgoth g\circ\textgoth f}.\end{eqnarray}We know $(\textgoth1_{\mathcal Cat(\mathcal C,\mathcal E)}\circ\textgoth F_\mathcal O)(\textgoth f\rightarrow_\times \textgoth g)$, results in $1_{\textgoth g\circ\textgoth f}$. On the other hand, $(\textgoth F_\mathcal A\circ\textgoth1_{\mathcal C})(\textgoth f\rightarrow_\times\textgoth g)$ is $1_\textgoth f\circ1_\textgoth g$, where we use $\mathcal C:=\mathcal Cat(\mathcal C,\mathcal D)\times\mathcal Cat(\mathcal D,\mathcal E)$.

Let $\tau:\textgoth f\rightarrow\textgoth h$ and $\alpha:\textgoth g\rightarrow\textgoth i$, which means $\tau\rightarrow_\times\alpha:\textgoth f\rightarrow_\times\textgoth g\longrightarrow\textgoth h\rightarrow_\times i$. The image of the arrow in the domain category is $\alpha\circ\tau:\textgoth g\circ\textgoth f\rightarrow\textgoth i\circ\textgoth h$.

The proof that $\textgoth F$ satisfies 3) is given by the lemma:\begin{eqnarray}\nonumber(\sigma\rightarrow
_\times\beta)\cdot(\tau\rightarrow_\times\alpha)&;&\textgoth F[(\sigma\rightarrow
_\times\beta)\cdot(\tau\rightarrow_\times\alpha)],\textgoth F\\\nonumber(\sigma\rightarrow
_\times\beta)\cdot(\tau\rightarrow_\times\alpha)&;&\textgoth F[\sigma\cdot\tau
\rightarrow_\times\beta\cdot\alpha],\textgoth F\\\nonumber(\sigma\rightarrow
_\times\beta)\cdot(\tau\rightarrow_\times\alpha)&;&(\beta\cdot\alpha)\circ(\sigma\cdot\tau
),\textgoth F\\\nonumber(\sigma\rightarrow
_\times\beta)\cdot(\tau\rightarrow_\times\alpha)&;&(\beta\circ\sigma)\cdot(\alpha\circ\tau
),\textgoth F\\\nonumber(\sigma\rightarrow
_\times\beta)\cdot(\tau\rightarrow_\times\alpha)&;&\textgoth F(\sigma\rightarrow_\times\beta)\cdot\textgoth F(\tau\rightarrow_\times\alpha
),\textgoth F.\end{eqnarray}\end{proof}

The functor is an object function $\textgoth F_\mathcal O:\mathcal O|\mathcal Cat(\mathcal C,\mathcal D)\times\mathcal O|\mathcal Cat(\mathcal D,\mathcal E)\rightarrow\mathcal O|\mathcal Cat(\mathcal C,\mathcal E)$ and an arrow function given by $\textgoth F_\mathcal A:\mathcal A|\mathcal Cat(\mathcal C,\mathcal D)\times\mathcal A|\mathcal Cat(\mathcal D,\mathcal E)\rightarrow\mathcal A|\mathcal Cat(\mathcal C,\mathcal E)$. These functions give operations. The operation for functors is $\textgoth F_\mathcal O:\mathcal O|\mathcal Cat(\mathcal C,\mathcal D)\rightarrow\{\mathcal O|\mathcal Cat(\mathcal D,\mathcal E)~f~\mathcal O|\mathcal Cat(\mathcal C,\mathcal E)\}$, and the operation for natural transformations is $\textgoth F_\mathcal A:\mathcal A|\mathcal Cat(\mathcal C,\mathcal D)\rightarrow\{\mathcal A|\mathcal Cat(\mathcal D,\mathcal E)~f~\mathcal A|\mathcal Cat(\mathcal C,\mathcal E)\}$.

	\section{Integer Systems}

We have proven the existence of a discrete number system, but there are many such systems. For any discrete number system $\mathcal Z$, consider the system obtained by adding transitive arrows. We conserve the property of having non-discernible arrows; add only one arrow for every pair $a\rightarrow b$ and $b\rightarrow c$. We denote such a system with $\mathcal Z_<$. If we add one reflexive arrow to every c-object of $\mathcal Z_<$, the system obtained is denoted by $\mathcal{Z}_{\leq}$. This new system is a partial order, in which we consider the arrows in the natural order, of axiom 3. When considering a partial order, we may write $a<b$ to make it clear $a$ and $b$ are not the same object.

Now that there is a partial order, associated to any discrete number system $\mathcal Z$, we can consider functors $\mathcal Z_\leq\rightarrow\mathcal Z_\leq$. We have the identity functor, which we assign to the object $0$, by $+0:=\textgoth I_{\mathcal Z_\leq}$. But, there is another functor to consider. We call it the functor \textit{sum 1}, denoted $+1:\mathcal Z_\leq\rightarrow\mathcal Z_\leq$. To prove that we have a functor, we will give an order preserving function, and then we define $+1$ as the functor associated to said function. It is not necessary to look far for our function, the arrows of $\mathcal Z$ form a bijective function. These arrows form a function, with $\mathcal Z$ in the domain, because every object is source of one arrow only. Since every object of $\mathcal Z$ is target of exactly one arrow, we can say that the object function of $+1$, is bijective. Take $a<b$ in $\mathcal Z_\leq$, then we also have $a+1\leq b$. Since we also have $b<b+1$, it turns out $a+1<b+1$. 

\begin{proposition}The arrows of a discrete number system $\mathcal Z$, are the components of an order bijectivity for the objects of $\mathcal Z_\leq$. The corresponding functor, $+1$, is an automorphism of the partial order $\mathcal Z_\leq$.\end{proposition}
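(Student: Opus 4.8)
The plan is to reduce both assertions to the earlier theorem stating that an order bijectivity induces an isomorphism between the corresponding order categories; since here the source and target are the single partial order $\mathcal Z_\leq$, that isomorphism is by definition an automorphism. So the real content is to upgrade the order preserving function already used to define $+1$ into an order bijectivity. First I would record what the paragraph preceding the statement has established: by part 2) of axiom 3 every object of $\mathcal Z$ is the source of exactly one arrow, so the arrows assemble into a function on objects, $a \mapsto a+1$; and since every object is also the target of exactly one arrow, this function is both monic and onto, hence a bijection of the objects of $\mathcal Z_\leq$. Order preservation is likewise already in hand: the text shows $a < b$ implies $a+1 < b+1$, and the reflexive case is immediate, so $a \leq b$ implies $a+1 \leq b+1$.

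The one remaining ingredient, and the step I expect to be the main obstacle, is the embedding direction: $a+1 \leq b+1$ must force $a \leq b$. Here I would exploit the fact that $\mathcal Z_\leq$ is built from a non-trivial natural order, which by its definition has every object joined to every other object by exactly one arrow, so any two objects are order comparable. Suppose $a+1 \leq b+1$ but $a \leq b$ fails; then by comparability $b \leq a$, and $a \neq b$ (else $a \leq b$ would hold), so $b < a$. Order preservation then gives $b+1 \leq a+1$, and together with $a+1 \leq b+1$ antisymmetry of the partial order yields $a+1 = b+1$; injectivity of the object function gives $a = b$, contradicting $b < a$. Hence $a+1 \leq b+1$ implies $a \leq b$, which is exactly the order embedding condition.

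With order preservation, the embedding property, and surjectivity all verified, the object function is an order embedding that is onto, that is, an order bijectivity $\mathcal Z_\leq \to \mathcal Z_\leq$. By the earlier lemma producing a functor from an order preserving function, the associated two part function $+1$ is a functor, and by the theorem that an order bijectivity is the object function of an isomorphism of the corresponding order categories, $+1$ is an isomorphism $\mathcal Z_\leq \to \mathcal Z_\leq$. Since its source and target category coincide, this isomorphism is by definition an automorphism of the partial order $\mathcal Z_\leq$, which completes the argument.
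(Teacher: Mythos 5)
Your proof is correct and follows essentially the same route as the paper: the arrows of $\mathcal Z$ give a bijective object function by axiom 3, order preservation is the computation $a<b\Rightarrow a+1\leq b<b+1$, and the conclusion is delegated to the earlier lemma and theorem turning an order bijectivity into an isomorphism of order categories. Your explicit check of the embedding direction (using comparability in the natural order together with antisymmetry and injectivity) is a step the paper leaves implicit, so it strengthens rather than departs from the paper's argument.
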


Compose this automorphism with itself to form a new automorphism, and call it \textit{sum 2}. For any $x+1$ of the partial order $\mathcal Z_\leq$, such that $0<x+1$, we give the automorphism $+(x+1):=+1\circ+x$. Since we have given a functor $+x$, the object function is order preserving and the corresponding functor is precisely $+x$. We will prove that these functors are related as in proposition \ref{prop order funct order}.

\begin{lemma order funct1}If $x,x+1$ are two comparable objects, in a discrete number system, and $0<x+1$, then the functors satisfy $+x<+(x+1)$.\end{lemma order funct1}

\begin{proof}In terms of the order preserving functions, we have $a+x<(a+x)+1=a+(x+1)$ for any $a$ in $\mathcal Z_\leq$. This is the same as saying $+x_\mathcal O\preceq+(x+1)_\mathcal O$.\end{proof}

Since $+1$ is an automorphism, we have the inverse automorphism $+(-1):\mathcal Z_\leq\rightarrow\mathcal Z_\leq$. This is the functor \textit{minus 1}. If $x<0$, we define $+(x-1):=+(-1)\circ+x$, and $+(-1):=+1^{-1}$. The reader may verify that, in general, $+(-x)$ is the inverse of $+x$. Here, we give the second part of the last result.

\begin{lemma order funct2}For comparable objects $x,x-1$ with $x-1<0$, the functors satisfy $+(x-1)<+x$.\end{lemma order funct2}

\begin{proof}Note that $a+(x-1)=(a+x)-1<a+x$.\end{proof}

\begin{theorem}If $\mathcal Z$ is a discrete number system, then there is an isomorphism $\oplus:\mathcal Z_\leq\rightarrow\{+x\}_\leq$, where $\{+x\}_\leq$ is the partial order on automorphisms $+x$. Given any two $a\leq b$, in $\mathcal Z_\leq$, there is a natural transformation $+a\rightarrow+b$.\end{theorem}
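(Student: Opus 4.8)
The plan is to read $\oplus$ as the object function $x\mapsto +x$ and to prove that this function is an order bijectivity $\mathcal Z_\leq\to\{+x\}_\leq$; the theorem that an order bijectivity is the object function of an isomorphism of categories then upgrades it at once to the desired isomorphism. For the second assertion I will produce a bridge $+a\to+b$ and invoke Proposition \ref{nat tran 1}, since the arrows of $\mathcal Z_\leq$ are non-discernible, so that every bridge into it is automatically a natural transformation.

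First I would settle order preservation by showing $+x<+(x+1)$ for \emph{every} object $x$. When $0<x+1$ this is exactly the first of the two preceding lemmas on consecutive automorphisms; when $x+1\leq 0$, writing $y=x+1$ gives $y-1=x<0$, so the second lemma yields $+(y-1)<+y$, that is $+x<+(x+1)$, and the boundary case $x=-1$ is covered by the second lemma with $y=0$. Thus consecutive automorphisms form an increasing chain $\cdots<+(-1)<+0<+1<\cdots$. For a general comparable pair $a\leq b$, discreteness of $\mathcal Z$ supplies a finite chain $a\to a+1\to\cdots\to b$, and transitivity of the functor order (Proposition \ref{prop order funct order}) then delivers $+a\leq +b$, with equality only when $a=b$.

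Next I would check that $x\mapsto +x$ is an order embedding and onto. Ontoness holds by construction, since $\{+x\}_\leq$ is defined to be the collection of these automorphisms equipped with the induced order. For the embedding direction, suppose $+a\leq +b$ while $a\not\leq b$; because the order on $\mathcal Z_\leq$ is natural, hence total, this forces $b<a$, whence $+b<+a$ by the previous paragraph, and antisymmetry of the functor order contradicts $+a\leq +b$. Therefore $a\leq b$, so the object function is an order bijectivity (its monicity is subsumed by the embedding property), and the cited theorem makes $\oplus$ an isomorphism of categories.

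Finally, for a fixed $a\leq b$ the relation $+a\leq +b$ unwinds, by the definition of the functor order, to $x+a\leq x+b$ for every object $x$, so each $x$ admits the unique arrow $x+a\to x+b$ in $\mathcal Z_\leq$. Sending $x$ to this arrow defines a bridge $\tau:+a\to+b$, and since $\mathcal Z_\leq$ has non-discernible arrows, Proposition \ref{nat tran 1} guarantees $\tau$ is a natural transformation. The main obstacle I anticipate is the passage from the two consecutive-step lemmas to order preservation for arbitrary $a\leq b$: it rests on the chain between $a$ and $b$ being finite, which is exactly where discreteness of the number system must be used carefully, together with the transitivity furnished by Proposition \ref{prop order funct order}.
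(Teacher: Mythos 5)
Your proposal is correct and takes essentially the same route as the paper: the two consecutive-step lemmas yield the ordered chain of automorphisms, the object function $x\mapsto +x$ is an order bijectivity (onto by construction, monic), non-discernibility of arrows in a partial order settles the arrow function, and the corollary of Proposition \ref{nat tran 1} produces the natural transformation $+a\rightarrow+b$. Your write-up is merely more careful than the paper's at two points it glosses over---extending the consecutive-step lemmas to arbitrary comparable pairs via transitivity of the functor order, and proving monicity by totality plus antisymmetry rather than asserting that distinct functors differ.
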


\begin{proof}The lemmas give a discrete number system, $\cdots<+(-2)<+(-1)<+0<+1<+2<\cdots$. The object function of $\oplus$ is bijective; we have defined it to be onto, and we need to show it is monic. We verify this last, because any two functors $+a,+b$ are different. 

The arrow function is also bijective because partial orders consist of none discernible orders. All we need to do is assign arrows amongst respective objects.

Finally, if $a\leq b$, we know $+a\leq+b$. From the corollary of proposition \ref{nat tran 1}, we have a natural transformation of the form $+a\rightarrow+b$.\end{proof}

The following lemma will be extremely useful in proving our next theorem, where we wish to show the functors in the image of $\oplus$, commute under composition.

\begin{lemma commute funct}Let $f,g:\mathcal O\rightarrow\mathcal O$ be bijective functions that commute. Then $f^{-1},g$ also commute.\end{lemma commute funct}

\begin{proof}\begin{eqnarray}\nonumber g&;&g\circ f^{-1},f^{-1}\\\nonumber g&;&(f^{-1}\circ f)\circ(g\circ f^{-1}),f^{-1}\\\nonumber g&;&f^{-1}\circ (f\circ g)\circ f^{-1},f^{-1}\\\nonumber g&;&f^{-1}\circ (g\circ f)\circ f^{-1},f^{-1}\\\nonumber g&;&(f^{-1}\circ g)\circ (f\circ f^{-1}),f^{-1}\\\nonumber g&;&f^{-1}\circ g,f^{-1}\\\nonumber g,g&;&f^{-1},f^{-1}\end{eqnarray}\end{proof}

\begin{theorem}Let $\mathcal Z$ be a discrete number system. The object function, of the isomorphism $\oplus$, defines a full operation $+:\mathcal{O|Z}\rightarrow\{\mathcal{O|Z}f_{iso}\mathcal{O|Z}\}$. \begin{itemize}\item[1)]The functors of $\{+x\}_\leq$ form a commutative group, under composition; we represent it with $\mathbb Z^\dagger$. \item[2)]Using $+$, we construct a commutative group $\mathbb Z$, on the objects of the discrete number system.\item[3)]There is an isomorphism $\mathbb Z\rightarrow\mathbb Z^\dagger$.\end{itemize}\end{theorem}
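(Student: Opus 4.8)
The plan is to treat $\{+x\}$ as the cyclic group generated by the single automorphism $+1$ and then to transport its structure back to the objects through the bijection supplied by $\oplus$. First I would pin down the operation itself: the object function of $\oplus$ sends $x\mapsto+x$, and since each $+x$ is an automorphism of $\mathcal Z_\leq$ its object function is a bijection $\mathcal{O|Z}\to\mathcal{O|Z}$, hence an element of $\{\mathcal{O|Z}f_{iso}\mathcal{O|Z}\}$; as every automorphism is defined on all of $\mathcal{O|Z}$, the resulting operation $+:\mathcal{O|Z}\to\{\mathcal{O|Z}f_{iso}\mathcal{O|Z}\}$ is full. I would record two facts used throughout, both immediate by induction along the discrete structure from $+(x+1)=+1\circ+x$ and $+(x-1)=+(-1)\circ+x$: the function $+1$ sends each object to its successor, so $(+x)(0)=x$ and, reading $+x$ as a shift, $(+x)(a)=a+x$.

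For 1), associativity, the unit $+0=\textgoth{I}_{\mathcal Z_\leq}$, and the inverse $+(-x)$ of $+x$ are already in hand (composition of functors is associative, and $+(-x)$ was observed to invert $+x$), so the substantive points are \emph{commutativity} and \emph{closure}. I would obtain both by first proving $+1\circ+x=+x\circ+1$ for every $x$ by induction along $\mathcal Z$: the step toward successors uses associativity and the inductive hypothesis, while the step toward predecessors is exactly the case $f=g=+1$ of the lemma asserting that $f^{-1}$ commutes with $g$ whenever $f,g$ do, which yields $+(-1)=+1^{-1}$ commuting with $+1$. Since each $+y$ is a composite of copies of $+1$ and $+1^{-1}$, a second induction promotes this to $+x\circ+y=+y\circ+x$ for all $x,y$. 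Closure then follows from the shift identities $+x\circ+1=+(sx)$ and $+x\circ+(-1)=+(px)$, where $s,p$ denote successor and predecessor in $\mathcal Z$; these are consequences of commutativity together with $+(x+1)=+1\circ+x$, and folding the generators of $+y$ into $+x$ one at a time shows $+x\circ+y$ is again some $+z$. Hence $\{+x\}$ is a commutative group, which I name $\mathbb Z^\dagger$.

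For 2), I transport this structure to the objects by taking the binary operation $a+b:=(+b)(a)$ furnished by the full operation $+$ (commutativity makes the choice of left versus right harmless). The closure and shift identities of 1) identify $a+b$ as the unique object $z$ with $+z=+a\circ+b$, so the bijection $x\mapsto+x$ carries the group laws of $\mathbb Z^\dagger$ to $(\mathcal{O|Z},+)$: the unit is $0$, the inverse of $a$ is $-a$, and associativity and commutativity are inherited. This is the commutative group $\mathbb Z$.

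Finally, 3) is immediate once 2) is phrased this way: the map $\mathbb Z\to\mathbb Z^\dagger$ given by $a\mapsto+a$ is the object function of $\oplus$, a bijection by the preceding theorem, and it is a homomorphism because $+(a+b)=+a\circ+b$ holds by the very definition of $+$ on objects; hence it is an isomorphism of groups. The one genuine obstacle in the whole argument is the commutativity of $\mathbb Z^\dagger$ in part 1); everything else is bookkeeping against results already established, and commutativity is precisely what the commuting-bijections lemma was arranged to deliver for the generator $+1$.
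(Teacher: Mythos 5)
Your proposal is correct and follows essentially the same route as the paper: commutativity of $\mathbb Z^\dagger$ is obtained by induction anchored at $+1$, with the commuting-bijections lemma supplying the inverse (predecessor) direction; the group structure on objects is transported through the bijection $x\mapsto +x$, reading $a+b$ off as the action of $+b$ on $a$ and pinning it down by $+(a+b)=+a\circ +b$; and the isomorphism of part 3) is that same bijection, a homomorphism for exactly this reason. The one step you omit is specific to the paper's framework: there a \emph{group} is by definition an algebraic category, a single c-object whose objects of operation are the arrows, so after verifying the group laws the paper must still exhibit a c-object $\mathbb Z_\dagger$ (a sufficiently large category, in effect $\mathcal Z_\leq$ itself) on which the objects of $\mathcal Z$ act as automorphisms, and it sets $\mathcal{A}|\mathbb{Z}:=\mathcal{O}|\mathcal{Z}_\leq$. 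Your construction stops at a commutative operation on $\mathcal{O}|\mathcal{Z}$ satisfying the group axioms, which is enough in the ordinary sense but leaves $\mathbb Z$ unpackaged as an algebraic category in the paper's sense; one added sentence naming $\mathbb Z_\dagger$ as the c-object closes this without disturbing the rest of your argument. A minor stylistic difference in part 2): you inherit associativity and commutativity wholesale through the bijection, whereas the paper verifies them directly by evaluating the functors at $0$; your version is a little cleaner, the paper's a little more explicit about how $0$, inverses, and the notation $a;a+x,x$ interact.
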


\begin{proof}The object function $\oplus_\mathcal O$ sends $x$ into the automorphism $+x$, for $\mathcal Z_\leq$. The object function of $+x$ is $+x_\mathcal O$, we may also write it as $+x$. Thus, every object in $\mathcal Z_\leq$ is assigned a function $+x_\mathcal O:\mathcal{O|Z_\leq}\rightarrow\mathcal{O|Z_\leq}$. The fact we are sending every $x$ into an automorphism, implies the operation is full. The operation can be expressed as $+:\mathcal{O|Z}\rightarrow\mathcal{O|Z}f\mathcal{O|Z}$ because the objects of $\mathcal Z$ and $\mathcal Z_\leq$ are the same.\begin{itemize}\item[1)]We know that the collection of automorphisms is a group. Since $+0$ is the identity, we only need to notice the functors of $\{+x\}_\leq$ have inverse also in $\{+x\}_\leq$. Therefore, the functors $+x$ form a group, under $\circ$.

Now we would like to see the group is commutative. We wish to prove $+x\circ+y$ and $+y\circ+x$ are the same function, whoever $x,y$ of $\mathcal Z$ may be. If $0\leq x$, then $+1\circ +x$ is the same as $+x\circ +1$ because $+x$ is compositions of $+1$; using the lemma, one can prove $+(-x)\circ+1$ and $+1\circ+(-x)$ are the same. This proves $+1$ commutes with all. Now, suppose $+y$ commutes with all; we will prove $+(y+1)$ commutes with all other functors of the group.\begin{eqnarray}\nonumber+(y+1)&;&+(y+1)\circ+x,+x\\\nonumber+(y+1)&;&(+1\circ+y)\circ+x,+x\\\nonumber+(y+1)&;&+1\circ(+y\circ+x),+x\\\nonumber+(y+1)&;&+1\circ(+x\circ+y),+x\\\nonumber+(y+1)&;&(+1\circ+x)\circ+y,+x\\\nonumber+(y+1)&;&(+x\circ+1)\circ+y,+x\\\nonumber+(y+1)&;&+x\circ(+1\circ+y),+x\\\nonumber+(y+1)&;&+x\circ+(y+1),+x.\end{eqnarray}One can equally show $+(y-1)$ commutes with all $+x$, given $+y$ commutes with all.

\item[2)] We know the unit exists because $+0$ is identity. It will be useful to see how other objects act on 0. We know $0+1$ is the target in the arrow $0\rightarrow1$. Suppose $0+x$ is $x$, then we also have $0+(x+1)$ is $x+1$. The same is true for $x-1$, so that $0;x,x$ for every $x$.

Also, we see that every $x$ has an inverse. Since $+(-x):=+x^{-1}$ is true, we can conclude the inverse of $x$, is $-x$ because\begin{eqnarray}\nonumber x&;&x+(-x),-x\\\nonumber x&;&(0+x)+x^{-1},-x\\\nonumber x&;&(0+x)+x^{-1},-x\\\nonumber x&;&(+x^{-1}\circ+x)0,-x\\\nonumber x&;&0,-x\end{eqnarray}Commutativity of two objects $x,y$ follows from commutativity of their functors:\begin{eqnarray}\nonumber x&;&x+y,y\\\nonumber x&;&(0+x)+y\\\nonumber x&;&(+y\circ+x)0,y\\\nonumber x&;&(+x\circ+y)0,y\\\nonumber x&;&y+x,y\\\nonumber x,x&;&y,y.\end{eqnarray}Associativity is proven using commutativty of the operation and commutativity of the functors. To prove the operation is associative we will prove $x+$ and $+y$ commute. We use the fact $+x$ and $+y$ commute and that the operation itself commutes, which means left and right operations are the same functions.\begin{eqnarray}\nonumber +x,+x&;&+y,+y\\\nonumber x+,x+&;&+y,+y.\end{eqnarray}Up to this point we have proven that there is an commutative operation that satisfies the properties of group. However, we still need to build a group which consists of one c-object and objects of operation on it. We may consider the objects of a discrete number system as automorphisms on a category that is sufficiently large. We mean sufficiently large in the sense that we can make a category as large as we want to give as many automorphisms (arrows) as can be needed. We know that this is true for the group of ordered automorphisms $\mathbb Z^\dagger$. So, for any discrete array of numbers, we say each is an automorphism of a suitable category. This category is the c-object of the group, and we will give it the symbol $\mathbb Z_\dagger$. Our group $\mathbb Z$ is defined as the algebraic category with $\mathbb Z_\dagger$ as object of the automorphisms, which are the objects of the discrete number system $\mathcal Z$. We are defining $\mathcal A|\mathbb Z:=\mathcal{O|Z_\leq}$. The operation of $\mathbb Z$ is the one defined by the functors of $\mathbb Z^\dagger$, which are the inverses and compositions of the functor determined by the arrows of $\mathcal Z$.

 \item[3)]Lastly, consider the algebraic functor $\textgoth i:\mathbb Z\rightarrow\mathbb Z^\dagger$ such that $\textgoth i_\mathcal O\mathbb Z_\dagger:=\mathcal Z_\leq$, and $\textgoth i_\mathcal Ax:=+x$ for every automorphism $x:\mathbb Z_\dagger\rightarrow\mathbb Z_\dagger$. To show this is indeed a functor, we must prove the third condition of functors. That would be to verify $\textgoth i(x+y)$ is the same object of operation as $\textgoth ix\circ\textgoth iy$. In other words, one must prove $\textgoth i(x+y):=+(x+y)$ and $\textgoth ix\circ\textgoth iy:=+x\circ+y$. Start with associativity and apply commutativity to prove this:\begin{eqnarray}\nonumber a,x&;&a+y,y+x\\\nonumber a,x&;&a+y,x+y.\end{eqnarray}\end{itemize}\end{proof}

Every discrete number system generates a group of functors, that determine the objects of the discrete number system as a group; both groups involved will be isomorphic. At the same time, the discrete number system is a group of automorphisms of another category. Thus, automorphisms of the kind generated by discrete number systems, follow the pattern of a discrete number system. Every object is in the same position; every object has one arrow into it and one arrow from.

For practical purposes, $\begin{array}{rr} a \\ c \end{array}$ or $a~~c$ will express $a\leq c$. We have freedom to write $a~~b~~c\cdots$ or its vertical counterpart, in order to express all the possible relations $a\leq b$, $a\leq c$, $b\leq c$, \ldots. The question, now, is how we got from $\leq$ to this $+$. This is basically an explanation of (\ref{grid int sum}), below. We generalize the system of integers, in the following manner. Begin with one row, which represents $\mathcal{Z}_{\leq}$, and to each object of that row assign a function. The function $+x$, represented by the columns of $0$ and $x$, consists of the arrows from the column 0 into the column $x$. One object is chosen, $0$, for which the arrows of its function are all reflexive; the identity function. The functions $+x$, all preserve the natural order of $\mathcal{Z}_{\leq}$; every column is $\mathcal{Z}_{\leq}$. The functions are order bijectivities and since the functors are ordered, natural transformations define a partial order for them. If $0\leq x$, the natural transformation $+0\longrightarrow_{\tau}+x$ is the function that assigns to each object $a$ of $\mathcal{Z}_{\leq}$, the arrow $a\rightarrow a+x$, so $a\rightarrow_{\tau}\tau a:a\leq a+x$.

The fact that our bridge is a natural transformation, means
$\tau b,\tau a;+x(a\leq b),+0(a\leq b)$. This can otherwise be written as $b\leq b+x,a\leq a+x;a+x\leq b+x,a \leq b$. This last is equivalent to saying $b\leq b+x\circ a\leq b$ is the same composition as $a+x\leq b+x\circ a\leq a+x$; both are $a\leq b+x$.

The grid of objects (\ref{grid int sum}) is a representation of the constructions we have carried out. We can see it as a collection of arrows for the objects of $\mathcal{Z}_{\leq}$. In this case, we are providing no new information, to what is given by a discrete number system. We can also, and more meaningfully, interpret this as comparability of relations. 

\begin{eqnarray}\begin{array}{rrrrrrrr} & \vdots & \vdots & \vdots & \vdots & \vdots & \vdots & \\ \cdots & -5 & -4 & -3 & -2 & -1 & 0 & \cdots \\ \cdots & -4 & -3 & -2 & -1 & 0 & 1 &\cdots \\\cdots &-3 & -2 & -1 & 0 & 1 & 2 &\cdots\\ \cdots&-2 & -1 & 0 & 1 & 2 & 3&\cdots  \\\cdots& -1 & 0 & 1 & 2 & 3 & 4 &\cdots\\\cdots& 0 & 1 & 2 & 3 & 4 & 5&\cdots\\& \vdots & \vdots & \vdots & \vdots & \vdots & \vdots &\end{array}.\label{grid int sum}\end{eqnarray}

We say $a\leq c$ and $b\leq d$ are comparable if they appear in a rectangle: $\begin{array}{rr}a & b \\ c & d \end{array}$. This would of course be expressed as $a,c;b,d$. We have chosen this definition of comparability for good reason. Suppose $0$ is one of the four objects, and you have a rectangle of the form:\\1) 
$\begin{array}{rr} 0 & a \\ x & n\end{array}$~~~~~~~~~~~~~~~~2) $\begin{array}{rr} a & n \\ 0 & x\end{array}$~~~~~~~~~~~~~~~~3)  $\begin{array}{rr} n & x\\ a & 0\end{array}$~~~~~~~~~~~~~~~~4) $\begin{array}{rr} x & 0 \\ n & a\end{array}$.\\This means we have applied the functor $+a$ to 0, and $x$. After applying $+a$ to 0, we naturally get $a$. If we apply $+a$, to $x$, the result is $n$. In terms of the notation for $+$, we can assert $0,x;a,n$. The operation is defined by $a\rightarrow_{+x}n$ if 1) or 3) occur. If, on the contrary, we have the occurence of 2), the rectangle in which $a,x$ occur will be unique. The same is true for 4). One can see this is true in general; we need not take $0$ as a corner. Given any rectangle $\begin{array}{rr} a & b \\ c & d\end{array}$, we have $a,c;b,d$ in terms of the operation as well as the comparability of arrows. Although, we must be careful in using the notation $,;,$ for the following reason. When representing comparability for the order relation, or the operation, $a,c;b,d$ and $a,b;c,d$ mean the same thing, because we have said that $\begin{array}{rr}a\\c\end{array}$ means the same thing as $a~~c$. However, changing $a,c;b,d$ for $b,d;a,c$ is only valid if the notation is used in representation of the operation. This is due to anti-symmetry of the order. Defining the operation in terms of comparability, with arrows that have a 0, assures the operation is an ordered triple; we are eliminating one component, by fixing 0.

	\section{Rational Systems}

		\subsection{Product}

A new operation is defined, in terms of the sum, for the objects of $\mathcal Z$. For $x\geq0$ we define the \textit{product} as the operation $\cdot$ given by $a;(a\cdot x)+a,x+1$. This is to say $\cdot(x+1)a:=(+a\circ\cdot x)a$. It is in our best interest to define the operation in such a way that $x\cdot1$ means we add $x$ once. Said differently, we want $x;x,1$. To achieve this, we define $x;0,0$. If we have $0;0,x$, we also have $0;0,x+1$. Similarly, $1;x+1,x+1$ if $1;x,x$. From this point on, we will use the notation $a-x$ to express $a+(-x)$. The product for $x\leq0$, is defined by $a;(a\cdot x)-a,x-1$. It is immediate, from this definition, that $x;-x,-1$. Again, we have $0;0,x-1$ and $-1;-(x-1),x-1$ given $0;0,x$ and $-1;-x,x$. We thus have, for every integer $a$,
\[a;0,0~~~~~~~~~~0;0,a~~~~~~~~~~1;a,a~~~~~~~~~~a;a,1.\]

\begin{theorem}The product is a full operation $\cdot:\mathbb Z\rightarrow\{\cdot x\}$, where $\{\cdot x\}$ is a collection of functions $\cdot x$. Every $\cdot x$ determines a functor $\cdot x$ on $\mathbb Z$. Furthemore, the operation is commutative and associative.\end{theorem}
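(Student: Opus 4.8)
The plan is to prove all four clauses by induction over the discrete number system, exploiting the two recursive defining relations $a;(a\cdot x)+a,x+1$ and $a;(a\cdot x)-a,x-1$ together with the fact that $(\mathbb{Z},+)$ has already been shown to be a commutative group. Since the objects of $\mathcal{Z}$ run in both directions away from $0$, every induction splits into a successor step $x\mapsto x+1$ and a predecessor step $x\mapsto x-1$, anchored at the base identities $a\cdot 0=0$, $0\cdot a=0$, $a\cdot 1=a$, $1\cdot a=a$ established just before the theorem. The crucial design decision is the order of proof: \emph{fullness}, then \emph{functoriality} (right distributivity), then \emph{commutativity}, then \emph{associativity}; each clause is consumed by the next, and commutativity is the genuinely delicate step.

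\emph{Fullness and functoriality.} Fullness is the easy part: the recursion builds $\cdot(x+1)$ from $\cdot x$ by one application of $+a$, and $+$ is a full operation on $\mathbb{Z}$ because $\mathbb{Z}$ is a group; hence if $\cdot x$ has domain $\mathbb{Z}$ so do $\cdot(x+1)$ and $\cdot(x-1)$, and induction from $\cdot 0$ gives that $\mathbb{Z}$ is the domain of every $\cdot x$. I would then observe that $\cdot x$ being a functor on the algebraic category $\mathbb{Z}$ reduces to unit preservation $0\cdot x=0$ (already in hand) together with right distributivity $(a+b)\cdot x=(a\cdot x)+(b\cdot x)$, the latter being exactly condition $3)$ for a functor, since composition of arrows in $\mathbb{Z}$ is $+$. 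Fixing $a,b$ and inducting on $x$, the bases $x=0,1$ are the stated identities, while the successor step reads $(a+b)\cdot(x+1)=(a+b)\cdot x+(a+b)=(a\cdot x+b\cdot x)+(a+b)$, and a rearrangement by commutativity and associativity of $+$ turns this into $(a\cdot x+a)+(b\cdot x+b)=a\cdot(x+1)+b\cdot(x+1)$; the predecessor step is identical with $-$ in place of $+$.

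\emph{Commutativity} is where I expect the main obstacle. Because the recursion acts only on the second argument, nothing immediately relates $a\cdot b$ to $b\cdot a$. The remedy is to extract a left-hand recursion \emph{from} the distributivity just proved: $(x+1)\cdot a=(x\cdot a)+(1\cdot a)=x\cdot a+a$, and, reading $0=0\cdot a=(1+(-1))\cdot a=a+(-1)\cdot a$, the identity $(-1)\cdot a=-a$, whence $(x-1)\cdot a=x\cdot a-a$. Equipped with these, commutativity follows by induction on $b$: the base $b=0$ is $a\cdot 0=0=0\cdot a$, the successor step is $a\cdot(b+1)=a\cdot b+a=b\cdot a+a=(b+1)\cdot a$, and the predecessor step is the same computation with subtraction. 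This is precisely why functoriality has to be secured first.

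\emph{Associativity.} Combining commutativity with the right distributivity already available yields left distributivity $a\cdot(y+z)=a\cdot y+a\cdot z$. I would then prove $(a\cdot b)\cdot c=a\cdot(b\cdot c)$ by induction on $c$: the bases $c=0,1$ are immediate, and the successor step is $(a\cdot b)\cdot(c+1)=(a\cdot b)\cdot c+a\cdot b=a\cdot(b\cdot c)+a\cdot b=a\cdot(b\cdot c+b)=a\cdot(b\cdot(c+1))$, using the inductive hypothesis, left distributivity, and the defining recursion in turn; the predecessor step again only swaps $+$ for $-$. Together these four inductions give fullness, the functoriality of each $\cdot x$, and the commutativity and associativity of the product.
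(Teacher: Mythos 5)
Your proposal is correct and follows essentially the same route as the paper: right distributivity (i.e.\ functoriality of $\cdot x$) is proved first by induction in both directions along the discrete number system, commutativity then follows by induction using the left recursion $(b+1)\cdot a = b\cdot a + a$ extracted from that distributivity, and associativity is closed out with the left distributivity that commutativity supplies. The only deviations are cosmetic --- you induct on the last variable for associativity where the paper inducts on the middle one, and you make fullness and the identity $(-1)\cdot a = -a$ explicit where the paper leaves them implicit.
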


\begin{proof}To prove $\cdot x$ is a functor suppose, for $x\geq0$, that $a+b;(a\cdot x)+(b\cdot x),x$.\begin{eqnarray}\nonumber a+b&;&(a+b)\cdot x+(a+b),x+1\\\nonumber a+b&;&[(a\cdot x)+(b\cdot x)]+(a+b),x+1\\\nonumber a+b&;&[(a\cdot x)+a]+[(b\cdot x)+b],x+1\\\nonumber a+b&;&a\cdot(x+1)+b\cdot(x+1),x+1.\end{eqnarray}This implies $\cdot x$ is a functor from $\mathbb Z$ to itself. It is not, however, an automorphism. It is not difficult to see under what condidtions it is monic.

We know 0 commutes with any integer, so now we wish to know if the same is true for $x+1$, given that $x$ commutes with any integer. Using the last result we find
\begin{eqnarray}\nonumber a+1&;&(a+1)\cdot x+(a+1),x+1\\\nonumber a+1&;&[(a\cdot x)+x]+(a+1),x+1\\\nonumber a+1&;&[((a\cdot x)+x)+a]+1,x+1\\\nonumber a+1&;&[((a\cdot x)+a)+x]+1,x+1\\\nonumber a+1&;&[(a\cdot x)+a]+(x+1),x+1\\\nonumber a+1&;&[(x\cdot a)+a]+(x+1),x+1\\\nonumber a+1&;&(x+1)\cdot(a+1),x+1\\\nonumber a+1,a+1&;&x+1,x+1\end{eqnarray}

Of course we can prove that $x;x\cdot a+x\cdot b,a+b$ because of commutativity. We now turn to associativity for the product. If $a,b;a\cdot x,x\cdot b$ then,
\begin{eqnarray}\nonumber a&;&a\cdot[(x+1)\cdot b],(x+1)\cdot b\\\nonumber a&;&a\cdot[(x\cdot b)+b],(x+1)\cdot b\\\nonumber a&;&a\cdot(x\cdot b)+a\cdot b,(x+1)\cdot b\\\nonumber a&;&(a\cdot x)\cdot b+a\cdot b,(x+1)\cdot b\\\nonumber a&;&[(a\cdot x)+a]\cdot b,(x+1)\cdot b\\\nonumber a,b&;&a\cdot (x+1),(x+1)\cdot b\end{eqnarray}

Similar proofs hold for $x\leq0$. That is, we are able to prove that if the properties hold for $x$, they also hold for $x-1$.\end{proof}

		\subsection{Dual Orders}

The main concept of a ratio is that of comparing two relations. Therefore, we will take a more general view, by defining an order on arrows of $\mathbb{Z}\times\mathbb Z$. We will carry out a construction of two ordered systems, seperately, and then we will combine them. This process will be iterated three times; until we have a satisfactory system. At which point in time we will close out the construction by defining the two resulting systems as the same. 

We build a new system by taking away, from $\mathbb Z$, all the objects $x<0$, and relations of such objects. This new system is $\mathbb{N}_{0}$. If we take away the object of operation $0$ from $\mathbb{N}_{0}$, we are left with the system $\mathbb{N}$. The notation used here is a convenience in representation of arrows of the cartesian products; $a\rightarrow_{\times}c$ is $\frac{a}{c}$. An order will be defined for the c-objects of $\mathbb{N}\times\mathbb{N}$; make $\frac{a}{c}\leq\frac{b}{c}$ if $a\leq b$. The order with respect to the target objects is a bit different. We say $d\leq c$ if and only if $\frac{a}{c}\leq\frac{a}{d}$. The first order is represented by $\mathbb N_\dagger:=\mathbb N\times\{c\}$. The second is $\mathbb N^\dagger:=\{a\}\rightarrow_\times\mathbb N$, for any $0<a,c$.

\begin{proposition}The orders $\mathbb N_\dagger$ and $\mathbb N^\dagger$ are dual; explicitly $\mathbb N_\dagger,(\mathbb N^{\dagger})^{op};(\mathbb N_\dagger)^{op},\mathbb N^\dagger$. Therefore, there is a contravariant isomorphism $\mathbb N_\dagger\rightarrow\mathbb N^\dagger$.\end{proposition}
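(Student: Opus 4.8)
The plan is to produce an explicit order bijectivity $\mathbb{N}_\dagger\to(\mathbb{N}^\dagger)^{op}$, since by the definition of dual orders this is precisely the assertion $\mathbb{N}_\dagger,(\mathbb{N}^\dagger)^{op};(\mathbb{N}_\dagger)^{op},\mathbb{N}^\dagger$, and then to read off the contravariant isomorphism from the results already established for order-preserving and order-reversing functions. First I would observe that $\mathbb{N}_\dagger=\mathbb{N}\times\{c\}$ carries its order on the numerator ($\frac{n}{c}\leq\frac{n'}{c}$ iff $n\leq n'$, so $\frac{n}{c}\mapsto n$ identifies it with $\mathbb{N}$), whereas $\mathbb{N}^\dagger=\{a\}\rightarrow_\times\mathbb{N}$ carries its order on the denominator but reversed (by definition $\frac{a}{m}\leq\frac{a}{m'}$ iff $m'\leq m$). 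This dictates the natural candidate $f^\ltimes:\mathbb{N}_\dagger\to\mathbb{N}^\dagger$, $\frac{n}{c}\mapsto\frac{a}{n}$, which transfers the numerator value into the denominator slot; it is a bijection because both orders are indexed by $\mathbb{N}$ and $f^\ltimes$ acts as the identity on that index, and it is well defined for every admissible choice of the fixed parameters $a,c>0$.

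Next I would check that $f^\ltimes$ reverses order, equivalently that it is order preserving as a map $\mathbb{N}_\dagger\to(\mathbb{N}^\dagger)^{op}$, via the chain $\frac{n}{c}\leq\frac{n'}{c}$ iff $n\leq n'$ iff $\frac{a}{n'}\leq\frac{a}{n}$ iff $f^\ltimes\frac{n'}{c}\leq f^\ltimes\frac{n}{c}$, where the middle equivalence is the defining rule for the denominator order. Since each step is an equivalence, $f^\ltimes$ is not merely order reversing but an order embedding, and being bijective it is onto; hence it is an order bijectivity onto $(\mathbb{N}^\dagger)^{op}$. This is exactly $\mathbb{N}_\dagger,(\mathbb{N}^\dagger)^{op};(\mathbb{N}_\dagger)^{op},\mathbb{N}^\dagger$, so the two orders are dual.

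For the concluding clause I would apply the corollary that an order-reversing function is the object function of a contravariant functor between the two order categories: applied to $f^\ltimes$ it furnishes a contravariant functor $\textgoth F^\ltimes:\mathbb{N}_\dagger\to\mathbb{N}^\dagger$ with object function $f^\ltimes$. Because $f^\ltimes$ is a bijection with order-reversing inverse $\frac{a}{m}\mapsto\frac{m}{c}$, both its object and arrow functions are bijective, the arrow functions trivially so since partial orders have non-discernible arrows, so by the same reasoning that upgrades an order bijectivity to an isomorphism, $\textgoth F^\ltimes$ is a contravariant isomorphism. I expect the only real difficulty to be bookkeeping rather than anything deep: the two orders record their comparisons on different coordinates (numerator versus reversed denominator), so the crux is simply to keep the reversal straight through the chain of equivalences and to confirm that everything is independent of the fixed numerator $a$ and denominator $c$.
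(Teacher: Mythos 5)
Your proposal is correct and follows essentially the same route as the paper: both construct the explicit map $\frac{x}{c}\mapsto\frac{a}{x}$, verify it reverses order using the defining rule for the denominator order, and read off the contravariant isomorphism from the earlier results on order-reversing functions and order bijectivities. Your write-up is simply a more detailed version of the paper's terse argument, with the bookkeeping of the reversal made explicit.
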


\begin{proof}If we hold the target fixed, then there is an order bijectivity to $\mathbb N$. In the case we hold fixed the source, the resulting order is dual to $\mathbb N$.

We will give the contravariant functor, to be more precise, and better understand the situation. Let $\mathbb N_\dagger:=\mathbb N\times\{c\}$ and $\mathbb N^\dagger:=\{a\}\times\mathbb N$. Then we make the object function $\frac xc\mapsto\frac ax$. For arrows, $\frac xc\leq\frac yc\mapsto\frac ay\leq\frac ax$.\end{proof}

In a similar manner, we form the system $-\mathbb{N}_{0}$, where we now take away the objects $x>0$ and relations of every $x>0$. Notice that the systems $\mathbb N^\dagger$ are order bijective with $-\mathbb N$. What we want then, is to define a partial order $-\mathbb N\times\mathbb N$ which is dual to $\mathbb N\times\mathbb N$. Consider a functor $\textgoth d:-\mathbb N\times\mathbb N\rightarrow\mathbb N\times\mathbb N$. The object function is defined as $\textgoth d\left(\frac{-a}{c}\right):=\frac ac$. So, we define the arrow function $\textgoth d(\frac{-a}{c}\leq\frac{-b}{d}):=\frac bd\leq\frac ac$. This defines a partial order that is dual to the order of $\mathbb N\times\mathbb N$ because we have defined the order in terms of a contravariant functor.

\begin{eqnarray}\begin{array}{rrrrrrrrrrrrrrrrrrrrrrrrrrrrrrrrrrrrrrrrrrrrrrrrrrrrrrrrrrrrrrrrrrrrrrrrrrrrrrrr}-\infty&\rightarrow &\cdots & \frac{-4}{1}&\rightarrow & \frac{-3}{1}&\rightarrow  & \frac{-2}{1}&\rightarrow  & \frac{-1}{1}&\rightarrow  & \frac{0}{1}&\rightarrow   & \frac{1}{1} &\rightarrow & \frac{2}{1}&\rightarrow  & \frac{3}{1}&\rightarrow  & \frac{4}{1} &\cdots&\rightarrow&\infty\\\\ &&&\downarrow&& \downarrow&&\downarrow&& \downarrow&&\updownarrow&&\uparrow &&\uparrow&&\uparrow&&
\uparrow&&& \\\\-\infty&\rightarrow&\cdots & \frac{-4}{2}&\rightarrow & \frac{-3}{2}&\rightarrow  & \frac{-2}{2}&\rightarrow  & \frac{-1}{2}&\rightarrow  & \frac{0}{2}&\rightarrow   & \frac{1}{2} &\rightarrow & \frac{2}{2}&\rightarrow  & \frac{3}{2}&\rightarrow  & \frac{4}{2}&\cdots &\rightarrow&\infty\\
\\ &&&\downarrow&& \downarrow&&\downarrow&& \downarrow&&\updownarrow&&\uparrow &&\uparrow&&\uparrow&&\uparrow&&& \\\\-\infty&\rightarrow&\cdots & \frac{-4}{3}&\rightarrow & \frac{-3}{3}&\rightarrow  & \frac{-2}{3}&\rightarrow  & \frac{-1}{3}&\rightarrow  & \frac{0}{3}&\rightarrow   & \frac{1}{3} &\rightarrow & \frac{2}{3}&\rightarrow  & \frac{3}{3}&\rightarrow  & \frac{4}{3} &\cdots&\rightarrow &\infty\\\\
&&&\vdots & &\vdots & & \vdots & & \vdots  && \vdots  && \vdots && \vdots && \vdots && \vdots & & &\end{array}.\end{eqnarray}

We combine the two systems already formed, into one system, represented above. We give a partial order $\mathbb Q_\dagger$, with $\mathcal O|\mathbb Q_\dagger:=\mathcal O|\mathbb Z\times\mathbb N$. We do this in such a manner that $\frac{-a}{c}<0$ and $0<\frac{a}{c}$, for every $0<a,c$. To achieve this, we define $\frac{0}{x}:=0$, for every $x$ in $\mathbb N$. Said differently, any order $\mathbb{Z}\times\{x\}$ is a partial order generated by a discrete number system, for $0<x$.

\begin{eqnarray}\begin{array}{rrrrrrrrrrrrrrrrrrrrrrrrrrrrrrrrrrrrrrrrrrrrrrrrrrrrrrrrrrrrrrrrrrrrrrrrrrrrrrrr}
-\infty&\leftrightarrow&\cdots & \frac{-4}{0}&\leftrightarrow& \frac{-3}{0}&\leftrightarrow& \frac{-2}{0}&\leftrightarrow& \frac{-1}{0}&&&& \frac{1}{0} &\leftrightarrow& \frac{2}{0}&\leftrightarrow& \frac{3}{0}&\leftrightarrow& \frac{4}{0} &\cdots&\leftrightarrow&\infty\\\\&&&\downarrow&& \downarrow&&\downarrow&& \downarrow&&&&\uparrow &&\uparrow&&\uparrow&&
\uparrow&&&\\\\-\infty&\rightarrow &\cdots & \frac{-4}{1}&\rightarrow & \frac{-3}{1}&\rightarrow  & \frac{-2}{1}&\rightarrow  & \frac{-1}{1}&\rightarrow  & \frac{0}{1}&\rightarrow   & \frac{1}{1} &\rightarrow & \frac{2}{1}&\rightarrow  & \frac{3}{1}&\rightarrow  & \frac{4}{1} &\cdots&\rightarrow&\infty\\\\ &&&\downarrow&& \downarrow&&\downarrow&& \downarrow&&\updownarrow&&\uparrow &&\uparrow&&\uparrow&&
\uparrow&&& \\\\-\infty&\rightarrow&\cdots & \frac{-4}{2}&\rightarrow & \frac{-3}{2}&\rightarrow  & \frac{-2}{2}&\rightarrow  & \frac{-1}{2}&\rightarrow  & \frac{0}{2}&\rightarrow   & \frac{1}{2} &\rightarrow & \frac{2}{2}&\rightarrow  & \frac{3}{2}&\rightarrow  & \frac{4}{2}&\cdots &\rightarrow&\infty\\\\ &&&\downarrow&& \downarrow&&\downarrow&& \downarrow&&\updownarrow&&\uparrow &&\uparrow&&\uparrow&&
\uparrow&&& \\\\-\infty&\rightarrow&\cdots & \frac{-4}{3}&\rightarrow & \frac{-3}{3}&\rightarrow  & \frac{-2}{3}&\rightarrow  & \frac{-1}{3}&\rightarrow  & \frac{0}{3}&\rightarrow   & \frac{1}{3} &\rightarrow & \frac{2}{3}&\rightarrow  & \frac{3}{3}&\rightarrow  & \frac{4}{3} &\cdots&\rightarrow &\infty\\\\&&&\downarrow&& \downarrow&&\downarrow&& \downarrow&&\updownarrow&&\uparrow &&\uparrow&&\uparrow&&
\uparrow&&& \\\\-\infty&\rightarrow&\cdots & \frac{-4}{4}&\rightarrow & \frac{-3}{4}&\rightarrow  & \frac{-2}{4}&\rightarrow  & \frac{-1}{4}&\rightarrow  & \frac{0}{4}&\rightarrow   & \frac{1}{4} &\rightarrow & \frac{2}{4}&\rightarrow  & \frac{3}{4}&\rightarrow  & \frac{4}{4} &\cdots&\rightarrow &\infty
\\\\&&&\vdots & &\vdots & & \vdots & & \vdots  && \vdots  && \vdots && \vdots && \vdots && \vdots & & &\\\\&&&\downarrow&& \downarrow&&\downarrow&& \downarrow&&\updownarrow&&\uparrow &&\uparrow&&\uparrow&&
\uparrow&&& \\\\
&&& 0&& 0&& 0&& 0&& 0 && 0&& 0&&0&&0&&&\\\\&&&\downarrow&& \downarrow&&\downarrow&& \downarrow&&\updownarrow&&\uparrow &&\uparrow&&\uparrow&&
\uparrow&&&\\\\&&&\vdots & &\vdots & & \vdots & & \vdots  && \vdots && \vdots && \vdots && \vdots && \vdots & & &\\\\\infty&\leftarrow&\cdots & \frac{-4}{-4}&\leftarrow & \frac{-3}{-4}&\leftarrow  & \frac{-2}{-4}&\leftarrow  & \frac{-1}{-4}&\leftarrow  & \frac{0}{-4}&\leftarrow   & \frac{1}{-4} &\leftarrow & \frac{2}{-4}&\leftarrow  & \frac{3}{-4}&\leftarrow  & \frac{4}{-4} &\cdots&\leftarrow &-\infty\\\\&&&\downarrow&& \downarrow&&\downarrow&& \downarrow&&\updownarrow&&\uparrow &&\uparrow&&\uparrow&&
\uparrow&&&\\\\\infty&\leftarrow&\cdots & \frac{-4}{-3}&\leftarrow & \frac{-3}{-3}&\leftarrow  & \frac{-2}{-3}&\leftarrow  & \frac{-1}{-3}&\leftarrow  & \frac{0}{-3}&\leftarrow   & \frac{1}{-3} &\leftarrow & \frac{2}{-3}&\leftarrow  & \frac{3}{-3}&\leftarrow  & \frac{4}{-3} &\cdots&\leftarrow &-\infty\\\\&&&\downarrow&& \downarrow&&\downarrow&& \downarrow&&\updownarrow&&\uparrow &&\uparrow&&\uparrow&&
\uparrow&&& \\\\\infty&\leftarrow&\cdots & \frac{-4}{-2}&\leftarrow & \frac{-3}{-2}&\leftarrow  & \frac{-2}{-2}&\leftarrow  & \frac{-1}{-2}&\leftarrow  & \frac{0}{-2}&\leftarrow   & \frac{1}{-2} &\leftarrow & \frac{2}{-2}&\leftarrow  & \frac{3}{-2}&\leftarrow  & \frac{4}{-2} &\cdots&\leftarrow &-\infty\\\\&&&\downarrow&& \downarrow&&\downarrow&& \downarrow&&\updownarrow&&\uparrow &&\uparrow&&\uparrow&&
\uparrow&&&\\\\\infty&\leftarrow&\cdots & \frac{-4}{-1}&\leftarrow & \frac{-3}{-1}&\leftarrow  & \frac{-2}{-1}&\leftarrow  & \frac{-1}{-1}&\leftarrow  & \frac{0}{-1}&\leftarrow   & \frac{1}{-1} &\leftarrow & \frac{2}{-1}&\leftarrow  & \frac{3}{-1}&\leftarrow  & \frac{4}{-1} &\cdots&\leftarrow &-\infty\\\\&&&\downarrow&& \downarrow&&\downarrow&& \downarrow&&&&\uparrow &&\uparrow&&\uparrow&&
\uparrow&&&\\\\\infty&\leftrightarrow&\cdots & \frac{4}{0}&\leftrightarrow& \frac{3}{0}&\leftrightarrow& \frac{2}{0}&\leftrightarrow& \frac{1}{0}&&&& \frac{-1}{0} &\leftrightarrow& \frac{-2}{0}&\leftrightarrow& \frac{-3}{0}&\leftrightarrow& \frac{-4}{0} &\cdots&\leftrightarrow&-\infty\end{array}.\label{diagramRationals}\end{eqnarray}

An \textit{extended rational system} is any collection of objects such that the order between them is given by (II.5). The objects that are connected to $\infty$, or $-\infty$, by double arrows, will all be denoted by $\infty$, or $-\infty$. If the collection of objects does not include the objects $\infty,-\infty$, then the system is simply \textit{rational}. We will call the objects of such systems, \textit{rationals}. Our objects $-\infty,+\infty$ are comparable in the order but not so much in the operations. The object $\frac00$ is not comparable even in the order; the reader may closely study (II.5) to see that there is no convenient definition.

\subsection{Involution}

The bottom part of the system (II.5) is $\mathbb Q^\dagger$, and $\mathbb Q^\dagger_\mathcal O:=\mathcal O|\mathbb Z\times-\mathbb N$. There is an order bijectivity $\mathbb Q_\dagger\rightarrow\mathbb Q^\dagger$. We shall prove that they are also dual orders. This means that (II.5) is the combination of two dual systems that are actually the same system. Thus, in this case we can consider the simplified version (II.4), or its dual, because it is self dual. 

Consider the bijective function $q:\mathbb Q_\dagger\rightarrow\mathbb Q^\dagger$ that sends $\frac ac\mapsto_q\frac {-a}{-c}$. The function $p$ has the same domain and and image, but $\frac xc\mapsto_p\frac{x}{-c}$. Also, $r_1,r_2:\mathbb N_0\times\mathbb N,\mathbb N_0\times-\mathbb N\rightarrow-\mathbb N_0\times\mathbb N,-\mathbb N_0\times-\mathbb N$ such that for any integer $x$, we have $\frac ax\mapsto_r\frac{-a}x$.

\begin{proposition}The function $q$ is a two part function

\begin{eqnarray}\nonumber q|_{\mathbb N_0\times\mathbb N}&=&r_2\circ p|_{\mathbb N_0\times\mathbb N}=p|_{-\mathbb N_0\times\mathbb N}\circ r_1\\\nonumber q|_{-\mathbb N_0\times\mathbb N}&=&r^{-1}_2\circ p|_{-\mathbb N_0\times\mathbb N}=p|_{\mathbb N_0\times\mathbb N}\circ r^{-1}_1.\end{eqnarray}\end{proposition}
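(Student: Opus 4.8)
The plan is to treat this identity as a pure element chase, since each of $p$, $q$, $r_1$, $r_2$ is given by an explicit rule on the numerator and denominator. I would first collect the four defining actions in one place: $q$ negates both entries, $\frac{a}{c}\mapsto_q\frac{-a}{-c}$; $p$ negates only the denominator, $\frac{x}{c}\mapsto_p\frac{x}{-c}$; and $r_1,r_2$ negate only the numerator, $\frac{a}{x}\mapsto_r\frac{-a}{x}$, with $r_1$ having domain $\mathbb{N}_0\times\mathbb{N}$ and $r_2$ having domain $\mathbb{N}_0\times-\mathbb{N}$. The essential content is that negating the numerator and negating the denominator are independent operations whose composite, in either order, negates both entries and hence equals $q$; the whole proposition is the assertion that this commutation survives once we track which copy of ``negate the numerator'' is legal on each subcollection.

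First I would establish that $q$ is genuinely a two part function, splitting its domain $\mathcal{O}|\mathbb{Z}\times\mathbb{N}$ by the sign of the numerator into $\mathbb{N}_0\times\mathbb{N}$ and $-\mathbb{N}_0\times\mathbb{N}$. Since every c-object $\frac{a}{c}$ has a determined numerator sign and each arrow of $q$ refers to exactly one domain object, separating by this sign is a separation that loses no information in the sense of the opening section; this licenses writing $q|_{\mathbb{N}_0\times\mathbb{N}}$ and $q|_{-\mathbb{N}_0\times\mathbb{N}}$ and handling them independently.

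Next I would verify the top line by chasing $\frac{a}{c}$ with $a\in\mathbb{N}_0$ and $c\in\mathbb{N}$. Through $r_2\circ p$: the map $p$ sends it to $\frac{a}{-c}\in\mathbb{N}_0\times-\mathbb{N}$, which is exactly the domain of $r_2$, and $r_2$ then gives $\frac{-a}{-c}$. Through $p\circ r_1$: the map $r_1$ sends it to $\frac{-a}{c}\in-\mathbb{N}_0\times\mathbb{N}$, on which $p|_{-\mathbb{N}_0\times\mathbb{N}}$ is defined and produces $\frac{-a}{-c}$. Both agree with $q\frac{a}{c}=\frac{-a}{-c}$, proving the first equality. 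The bottom line is the same chase applied to $\frac{-a}{c}$ with $a\in\mathbb{N}_0$, the only difference being that moving a negative numerator back to a positive one now requires the inverse maps: $r_2^{-1}$ (defined on $-\mathbb{N}_0\times-\mathbb{N}$) after $p|_{-\mathbb{N}_0\times\mathbb{N}}$, and $r_1^{-1}$ (defined on $-\mathbb{N}_0\times\mathbb{N}$) followed by $p|_{\mathbb{N}_0\times\mathbb{N}}$; both composites return $\frac{a}{-c}=q\frac{-a}{c}$.

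The only genuine obstacle is domain bookkeeping rather than algebra: at each step one must confirm that the intermediate object actually lies in the domain of the next map, so that the composite is a legal function, and that the correct restriction of $p$ and the correct one of $r_1,r_2$ or their inverses is selected. Because $r_1$ and $r_2$ are bijections onto their stated ranges, their inverses exist and land in precisely the subcollections on which the relevant restriction of $p$ is defined, so every composite is well formed. Once this is checked, the four equalities follow immediately from the explicit rules, and the self-duality consequence for $\mathbb{Q}_\dagger$ and $\mathbb{Q}^\dagger$ that motivates the statement is then in hand.
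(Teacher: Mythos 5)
Your proof is correct. The paper states this proposition with no proof at all, and your element chase is precisely the verification its definitions are set up to make immediate: $q$ negates both entries, $p$ negates only the denominator, $r_1,r_2$ (and their inverses) negate only the numerator on the appropriate subcollections, so the two composites in each line send $\frac{a}{c}\mapsto\frac{-a}{-c}$ and $\frac{-a}{c}\mapsto\frac{a}{-c}$ respectively; the only genuine content is the domain bookkeeping you single out — that each intermediate object lies in the domain of the next map, and that on the negative-numerator part one must use $r_2^{-1}$ and $r_1^{-1}$ rather than $r_2$ and $r_1$ — and you handle it correctly.
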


In the following proposition we are saying that $p$ establishes an order bijectivity $\mathbb Q_\dagger,(\mathbb Q^{\dagger})^{op};(\mathbb Q_\dagger)^{op},\mathbb Q^\dagger$. On the other hand, $q$ establishes an order bijectivity $\mathbb Q_\dagger,\mathbb Q^\dagger;(\mathbb Q_\dagger)^{op},(\mathbb Q^\dagger)^{op}$.

\begin{proposition}$q:\mathbb Q_\dagger\rightarrow\mathbb Q^\dagger$ is an order bijectivity. What is more, both systems are dual orders if we consider the function $p$.\end{proposition}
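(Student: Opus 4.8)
The plan is to read both assertions as instances of "a composite of two order reversals is an order preservation," leaning on the two-part decomposition of $q$ established in the preceding proposition. Both $q$ and $p$ are plainly bijections on objects: $q$ sends $\frac{a}{c}\mapsto\frac{-a}{-c}$ with $c>0$, and the assignment $\frac{b}{d}\mapsto\frac{-b}{-d}$ (now $d<0$) is a two-sided inverse $\mathbb Q^\dagger\to\mathbb Q_\dagger$; likewise $p$ flips only the sign of the denominator and is undone by flipping it back. So in each case it remains only to analyse the order, and since the maps are bijections it suffices to show order preservation (resp. order reversal) together with order reflection, which is exactly the content of being an order bijectivity onto $\mathbb Q^\dagger$ (resp. onto $(\mathbb Q^\dagger)^{op}$).

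First I would settle the duality, that is, that $p:\mathbb Q_\dagger\to(\mathbb Q^\dagger)^{op}$ is an order bijectivity. Because the partial order $\mathbb Q_\dagger$ is generated under transitivity by its arrows, I would verify $p$ only on the two generating families: the horizontal arrows $\frac{a}{c}\le\frac{b}{c}$ (from $a\le b$ at fixed $c>0$) and the vertical comparabilities between $\frac{a}{c}$ and $\frac{a}{d}$ at fixed numerator. Since $p$ keeps the numerator and sends $c$ to $-c$, a horizontal arrow $\frac{a}{c}\le\frac{b}{c}$ is carried to the pair $\frac{a}{-c},\frac{b}{-c}$, which in $\mathbb Q^\dagger$ is ordered the other way because the denominator is now negative, and the vertical comparabilities are reversed in the same fashion; being a bijection, $p$ also reflects order. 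This is precisely the claim $\mathbb Q_\dagger,(\mathbb Q^{\dagger})^{op};(\mathbb Q_\dagger)^{op},\mathbb Q^\dagger$, where the companion bijectivity $(\mathbb Q_\dagger)^{op}\to\mathbb Q^\dagger$ is automatic from the earlier remark that an order bijectivity stays an order bijectivity between the opposite orders.

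With $p$ in hand, $q$ follows from the preceding proposition, which writes $q|_{\mathbb N_0\times\mathbb N}=r_2\circ p=p\circ r_1$ and $q|_{-\mathbb N_0\times\mathbb N}=r_2^{-1}\circ p=p\circ r_1^{-1}$. The maps $r_1,r_2$ negate the numerator, and negating the numerator is order reversing, as was used in building the contravariant functor $\textgoth d$ that exhibits $-\mathbb N\times\mathbb N$ as the dual of $\mathbb N\times\mathbb N$. Hence on each block $q$ is a composite of two order-reversing bijections, so it is order preserving and order reflecting there; together with the bijectivity already noted this makes $q$ an order bijectivity $\mathbb Q_\dagger\to\mathbb Q^\dagger$, and the bracket form $\mathbb Q_\dagger,\mathbb Q^\dagger;(\mathbb Q_\dagger)^{op},(\mathbb Q^\dagger)^{op}$ again comes from the opposite-order remark.

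The single point the block decomposition does not immediately cover, and which I expect to be the main obstacle, is a comparability linking the two blocks, i.e. one whose source has negative numerator and whose target has nonnegative numerator, or conversely. These are handled by the separating object $0=\frac{0}{x}$, introduced so that $\frac{-a}{c}<0<\frac{a}{c}$ for all $0<a,c$: every cross-block comparability factors through $0$ by transitivity, hence splits into one comparability inside each block, to which the block-wise conclusions for $p$ and $q$ already apply. Since $p$ negates value it interchanges the two blocks while $q$ preserves value and fixes each block, so reassembling the two factors shows $p$ reverses and $q$ preserves these cross-block comparabilities too, which completes both claims.
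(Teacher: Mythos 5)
Your proposal is correct and takes essentially the same route as the paper: the paper also justifies this proposition by noting that $p$ and $r$ both act as the order-reversing negation $-$, so that $q$, being negation applied twice via the two-part decomposition $q|_{\mathbb N_0\times\mathbb N}=r_2\circ p$, $q|_{-\mathbb N_0\times\mathbb N}=r_2^{-1}\circ p$, must preserve order, while $p$ alone exhibits the duality. The paper leaves all of this as an informal remark following the statement, so your generator-by-generator check and the factorization of cross-block comparabilities through $0$ merely supply detail the paper omits.
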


We can say, more concisely, that $p$ is a bijective function from one collection onto itself, with the interesting characteristic that its action on a rational number, with respect to the order, is that of taking opposite order. Given our equivalence relation, we see that $p$ and $r$ are the same function $-$, that gives the inverse under $+$. The interesting thing to notice is that if we reverse the arrows two times, we have the same order; that is to say, the opposite, of the opposite order, is the original order. This is why we have an order bijectivity $q$, after applying the function $-$ two times (once with the function $p$ and once with the function $r$). This is a special case of something more general. An \textit{involution} is a function $f:\mathcal O\rightarrow\mathcal O$ such that $fx;x,f$ which means $f(fx)$ is $x$. We will encounter many kinds of involutions.

	\subsection{Product for Rational Systems}

Presently, we will define a relation for determining when we will regard two arrows of $\mathbb{Z}\times \mathbb{Z}$ as the same. This of course is given in terms of the product for integers. The arrows $a\rightarrow_\times c$ and $b\rightarrow_\times d$ are $=$ comparable if $a,c;b,d$ in terms of the product; we will say $a\rightarrow_{\times}c=b\rightarrow_{\times}d$. This has an important meaning in terms of the notation: $a,c;b,d$ is used for stating 1) $a\cdot d$ and $b\cdot c$ result in the same object of operation, and 2) $\frac ac=\frac bd$. First, we observe that $a,c;a,c$, is obtained from $a,a;c,c$ by using commutativity. Second, we see that $a,c;b,d$ can be re-written as $b,d;a,c$. We will later see that transitivity holds, because in terms of the operation we have $a,c;b,d$ and $b,d;e,f$ which we will prove implies $a,c;e,f$. Some arrows in the equivalence are 
$\frac{-a}c=\frac a{-c}$. This is proven by the rules for product:\begin{eqnarray}\nonumber -a&;&(-a)\cdot(-c),-c\\\nonumber -a&;&[(-1)\cdot a]\cdot[(-1)\cdot c],-c\\\nonumber -a&;&[(-1\cdot a)\cdot -1]\cdot c,-c\\\nonumber-a&;&[a\cdot(-1\cdot -1)]\cdot c,-c\\\nonumber-a&;&a\cdot c,-c\\\nonumber-a,c&;&a,-c.\end{eqnarray}

We turn to define the product for the objects of the dual systems in (II.4); notice we are excluding objects of the form $\frac{x}{0}$. We define the operation by $\frac{a}{c};\frac{a\cdot b}{c\cdot d},\frac{b}{d}$. We first have to find the unit for this operation. It is not difficult to verify $\frac11$ is unit. We see $0;0,\frac ab$ and $\frac ab;0,0$.

Application of associativity, commutativity for integers, and the relation of equality defined above, allow us to prove $\frac{a}{c};\frac{a}{c},\frac{x}{x}$. Proofs for associativity and commutativity are a direct application of the defintion of product for rationals. So now, we turn to find inverse, under the product. We readily verify that the inverse of $\frac{a}{b}$ is $\frac{b}{a}$.

There is one more thing we wish to prove in this section. That is, if we multiply $\frac{a}{b}$ by any two rationals that are the same with respect to $=$, then the results are the same. That is, given $\frac{c}{d}=\frac{x}{y}$, we have to verify $(a\cdot c)\cdot(b\cdot y)$ and $(a\cdot x)\cdot(b\cdot d)$ are the same object. These expressions are rewritten as $(a\cdot b)\cdot(c\cdot y)$ and $(a\cdot b)\cdot(x\cdot d)$ which are both the same, since $c\cdot y$ and $x\cdot d$ are the same object. Also, if $\frac{a\cdot c}{b\cdot d}=\frac{a\cdot x}{b\cdot y}$, then $\frac cd=\frac xy$. Let us now return to the matter of transitivty for the relation $=$ defined above. This means we have the equalities $\frac ac=\frac bd$ and $\frac bd=\frac ef$, and we wish to prove $\frac ac=\frac ef$. The equalities give $\frac ac\cdot f=\frac bd\cdot f=\frac ef\cdot f=e$.

\begin{theorem}The system $\mathbb Q$, with rational numbers as objects of operation, is a commutative group under the product. As with the sum, the product is also a functor; in this case $\cdot:\mathbb{Q}\rightarrow\mathbb{Q}\textgoth{F}\mathbb{Q}$.\end{theorem}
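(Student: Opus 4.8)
The plan is to dispatch the two assertions separately, exploiting the fact that the paragraphs preceding the statement have already carried out almost all of the necessary verifications. For the group claim I would cast $\mathbb{Q}$ in the mould of the integer theorems: an algebraic category with a single c-object whose objects of operation are the (nonzero) rationals and whose composition is the product $\frac ac\cdot\frac bd:=\frac{ab}{cd}$. The text has already shown that this product is well defined on the $=$-classes (multiplying a fixed rational by two $=$-equal rationals yields $=$-equal results), that $\frac11$ is a unit with every $\frac xx$ being $=$-equal to it, that associativity and commutativity transcribe directly from the integer laws through the definition, and that $\frac ba$ inverts $\frac ab$. The one point needing care is the absorbing object $0=\frac0x$, which satisfies $\frac ab;0,0$ and hence admits no inverse, together with the already-excluded objects $\frac x0$; so the group is taken on the nonzero rationals. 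Collecting these facts verifies the definition of group, an algebraic category all of whose objects of operation invert, and commutativity makes it abelian.

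For the functor claim I would read ``functor'' exactly as for the sum, where each $+x$ was realized as an automorphism of the order $\mathcal Z_\leq$: here each left operation $\cdot x$ is to be a functor on the self-dual rational order of (II.5). When $0<x$, multiplication by $x$ both preserves and reflects the order comparability $\frac ac\leq\frac bd$, so $\cdot x$ is an order bijectivity; by the lemma that an order-preserving function is the object function of a functor, together with the theorem that an order bijectivity induces an isomorphism, $\cdot x$ is an automorphism of $\mathbb{Q}$. When $x<0$, the map $\cdot x$ is multiplication by a positive rational followed by the negation involution $-$ (the function shown in the involution subsection, $p=r$, to take opposite order), hence order-reversing; the corollary on order-reversing functions then makes it the object function of a covariant functor $\mathbb{Q}\to\mathbb{Q}^{op}$, which the self-duality of (II.5) lets me carry back to $\mathbb{Q}$. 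When $x=0$, the operation $\cdot0$ is the constant functor sending every object to $0$ and every arrow to $1_0$. The assignment $x\mapsto\cdot x$ is then precisely the operation $\cdot:\mathbb{Q}\to\mathbb{Q}\textgoth{F}\mathbb{Q}$ valued in functors.

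The main obstacle is not any single computation but the uniform packaging of the functor claim across the sign of the multiplier: positive multipliers yield covariant automorphisms, negative ones genuinely order-reversing (contravariant) functors, and $0$ a degenerate constant functor, yet all three must land in the single collection $\mathbb{Q}\textgoth{F}\mathbb{Q}$. I expect the clean resolution to route the negative case through the self-duality of the rational system established earlier, so that every $\cdot x$ is exhibited as an honest functor on $\mathbb{Q}$ itself, and to confirm first that $\cdot x$ descends to $=$-classes before checking the three functor conditions — preservation of unit and of objects being immediate, and preservation of composition reducing to the associativity and commutativity of the rational product already in hand.
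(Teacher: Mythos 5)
Your group half is essentially the paper's own: both of you simply collect the verifications already carried out in the preceding subsection (well-definedness on $=$-classes, the unit $\frac11$, the inverse $\frac ba$, associativity, commutativity), and your explicit exclusion of $0$ and of the arrows $\frac x0$ is a point the paper leaves tacit. The functor half is where you diverge, and there is a genuine gap there. In the paper, ``the product is also a functor'' means that the assignment $x\mapsto\cdot\,x$ turns the product into composition: the entire proof is the verification that $\cdot\,(x\cdot y)$ and $\cdot\,x\circ\cdot\,y$ are the same function, obtained by applying commutativity once to the associativity expression $a,x;a\cdot y,y\cdot x$ to get $a,x;a\cdot y,x\cdot y$. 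This is the third functor condition for the operation $\cdot:\mathbb{Q}\rightarrow\mathbb{Q}\textgoth{F}\mathbb{Q}$, exactly parallel to part 3) of the integer theorem, where the content was that $\textgoth{i}(x+y)$ is the same object as $\textgoth{i}x\circ\textgoth{i}y$. You never establish this homomorphism property anywhere in your proposal; showing that each individual value $\cdot\,x$ is a functor only makes $\cdot$ a \emph{function valued in functors}, not a functor.

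Moreover, your order-theoretic route for the individual $\cdot\,x$ does not actually go through for negative multipliers. When $x<0$ the function $\cdot\,x$ is order-reversing, so at best it is a covariant functor $\mathbb{Q}\rightarrow\mathbb{Q}^{op}$, i.e.\ an object of $\mathbb{Q}\textgoth{F}(\mathbb{Q}^{op})$ rather than of $\mathbb{Q}\textgoth{F}\mathbb{Q}$. Composing with a self-duality isomorphism $\mathbb{Q}^{op}\rightarrow\mathbb{Q}$ does yield a covariant endofunctor, but its object function is then $a\mapsto d(a\cdot x)$ for the duality map $d$, not $a\mapsto a\cdot x$; what lands in $\mathbb{Q}\textgoth{F}\mathbb{Q}$ is no longer left multiplication by $x$, so the ``uniform packaging'' you hope for cannot be achieved this way. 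The clean repair is to drop the order reading entirely and argue as the paper does: the categories in play are algebraic, $\mathbb{Q}$ is the group just constructed, and the functor claim is precisely the composition law $\cdot\,(x\cdot y)=\cdot\,x\circ\cdot\,y$, for which you already hold all the ingredients, namely associativity and commutativity of the rational product.
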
\begin{proof}The condition $\cdot,\cdot;\circ(\cdot y),\cdot y$ is the statement that $\cdot(x\cdot y)$ is the same function as $\cdot x\circ\cdot y$. Again, this is proven by applying commutativity once to the expression $a,x;a\cdot y,y\cdot x$, of associativity for the product. The result is $a,x;a\cdot y,x\cdot y$.\end{proof}

	\subsection{Sum for Rational Systems}

Now, given that arrows of the form $\frac{a}{0}$ are not rationals, we do not define the sum for these. The sum for rationals is defined in the last step\begin{eqnarray}\nonumber \frac{a}{c}&;&\frac{a}{c}+\frac{b}{d},\frac{b}{d}\\\nonumber \frac{a}{c}&;&\frac{a}{c}\cdot\frac{d}{d}+\frac{b}{d}\cdot\frac{c}{c},\frac{b}{d}\\\nonumber \frac{a}{c}&;&\frac{a\cdot d}{c\cdot d}+\frac{b\cdot c}{d\cdot c},\frac{b}{d}\\\nonumber \frac{a}{c}&;&\frac{a\cdot d}{c\cdot d}+\frac{b\cdot c}{c\cdot d},\frac{b}{d}\\\nonumber\frac{a}{c}&;&\frac{a\cdot d+b\cdot c}{c\cdot d},\frac{b}{d}.\end{eqnarray}Once we multiply by $\frac{c}{c}$ and $\frac{d}{d}$, the rest is straightforward. But why did we choose to multiply by that, and not by $\frac{a}{a}$ and $\frac{b}{b}$? Because we have guarantee that $c,d$ are both not 0. To prove commutativity and associativity for this operation, we proceed as follows, first with commutativity:\begin{eqnarray}\nonumber\frac{a}{c}&;&\frac{a\cdot d+b\cdot c}{c\cdot d},\frac{b}{d}\\\nonumber \frac{a}{c}&;&\frac{b\cdot c+a\cdot d}{d\cdot c},\frac{b}{d}\\\nonumber\frac{a}{c}&;&\frac{b}{d}+\frac{a}{c},\frac{b}{d}\\\nonumber\frac{a}{c},\frac{a}{c}&;&\frac{b}{d},\frac{b}{d}.\end{eqnarray}For associativity,\begin{eqnarray}\nonumber\frac{a}{c}&;&\frac{a}{c}+\frac{x\cdot d+b\cdot y}{y\cdot d},\frac{x}{y}+\frac{b}{d}\\\nonumber\frac{a}{c}&;&\frac{a\cdot(y\cdot d)+(x\cdot d+b\cdot y)\cdot c}{c\cdot (y\cdot d)},\frac{x}{y}+\frac{b}{d}\\\nonumber\frac{a}{c}&;&\frac{a\cdot (y\cdot d)+[(x\cdot d)\cdot c+(b\cdot y)\cdot c]}{c\cdot(d\cdot y)},\frac{x}{y}+\frac{b}{d}\\\nonumber\frac{a}{c}&;&\frac{b\cdot(y\cdot c)+[(x\cdot c)\cdot d+(a\cdot y)\cdot d]}{d\cdot(y\cdot c)},\frac{x}{y}+\frac{b}{d}\\\nonumber\frac{a}{c}&;&\frac{b}{d}+\left(\frac{x}{y}+\frac{a}{c}\right),\frac{x}{y}+\frac{b}{d}\\\nonumber\frac{a}{c}&;&\frac{b}{d}+\left(\frac{a}{c}+\frac{x}{y}\right),\frac{x}{y}+\frac{b}{d}\\\nonumber\frac{a}{c}&;&\left(\frac{a}{c}+\frac{x}{y}\right)+\frac{b}{d},\frac{x}{y}+\frac{b}{d}\\\nonumber\frac{a}{c},\frac{b}{d}&;&\frac{a}{c}+\frac{x}{y},\frac{x}{y}+\frac{b}{d}.\end{eqnarray}As before, the object 0 which \textit{absorbs} with product, serves as unit for sum. This is verified by\begin{eqnarray}\nonumber\frac ac&;&\frac ac+\frac 0x,\frac0x\\\nonumber\frac ac&;&\frac{a\cdot x+0\cdot c}{c\cdot x},\frac0x\\\nonumber\frac ac&;&\frac{a\cdot x}{c\cdot x},\frac0x\\\nonumber\frac ac&;&\frac ac\cdot\frac xx,\frac0x\\\nonumber\frac ac&;&\frac ac,\frac0x.\end{eqnarray}In light of the fact that we consider $\frac{0}{x}$ to be the same element for all $x$, we represent each such rational by 0, and we verify that objects dual with respect to the unit $0$, are objects $\frac{a}{b}$ and $\frac{-a}{b}$. Of course, our equivalence relation makes $\frac{a}{b}$ and $\frac{a}{-b}$ dual. In the same manner, $\frac{-a}{-b}$ is dual with $\frac{-a}{b}$ and $\frac{a}{-b}$.

\subsection{Embedding}

Define a bijective function $\iota:\mathbb Z\rightarrow\mathbb Z\times\{1\}$ such that $x\mapsto\frac x1$. It is not difficult to prove $\iota$ is an isomorphism for the operations sum and product. We follow the definition of sum to prove $\frac x1+\frac y1$ is $\frac{x+y}{1}$. This is also true for the product because $\frac x1\cdot\frac y1$ and $\frac{x\cdot y}1$ are the same.

The order for objects of operation, in $\mathbb Q$, is defined by $\frac{a}{c}\leq\frac{b}{d}$ if and only if $\frac{a\cdot d}{c\cdot d}\leq\frac{b\cdot c}{d\cdot c}=\frac{b\cdot c}{c\cdot d}$. This means that $a\cdot d\leq b\cdot c$ if and only if we have $c\cdot d>0$, and $b\cdot c\leq a\cdot d$ if and only if $c\cdot d<0$. 

In this case, $\iota$ is a functor for $\mathcal Z_{\leq}$ as a partial order and not an algebraic category. Additionally, we will prove that the system $\mathbb{Q}$ is a natural order. We know exactly one arrow is assigned to any two objects of $\mathbb Q$; this is given from the definition. We must prove transitivity for the order of $\mathbb{Q}$.

\begin{proposition}$\frac ac\leq\frac bd\Longleftrightarrow\frac ac\frac xx\leq\frac bd$\end{proposition}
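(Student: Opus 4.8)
The plan is to reduce both sides of the claimed equivalence to inequalities between integers, using the definition of the order on $\mathbb{Q}$ together with the formula for the rational product. First I would compute $\frac{a}{c}\frac{x}{x}$ by the definition $\frac{a}{c};\frac{a\cdot b}{c\cdot d},\frac{b}{d}$ of the product, obtaining $\frac{a}{c}\frac{x}{x}=\frac{a\cdot x}{c\cdot x}$; here $x$ is any object with $x\neq 0$, so that $\frac{x}{x}$ is the unit for the product. Thus the right-hand comparison $\frac{a\cdot x}{c\cdot x}\leq\frac{b}{d}$ is to be matched against the left-hand comparison $\frac{a}{c}\leq\frac{b}{d}$.

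Next I would unfold each comparison through the stated order criterion, namely that $\frac{p}{q}\leq\frac{r}{s}$ amounts to $p\cdot s\leq r\cdot q$ when $q\cdot s>0$ and to $r\cdot q\leq p\cdot s$ when $q\cdot s<0$. Applying this to $\frac{a}{c}\leq\frac{b}{d}$ governs the comparison by the sign of $c\cdot d$ and by the integer inequality between $a\cdot d$ and $b\cdot c$. Applying it to $\frac{a\cdot x}{c\cdot x}\leq\frac{b}{d}$ governs the comparison by the sign of $(c\cdot x)\cdot d$ and by the inequality between $(a\cdot x)\cdot d$ and $b\cdot(c\cdot x)$; using associativity and commutativity of the integer product, these numerator terms become $x\cdot(a\cdot d)$ and $x\cdot(b\cdot c)$, while the denominator product becomes $x\cdot(c\cdot d)$.

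The heart of the argument is then a sign analysis in the variable $x$. When $x>0$, the sign of $x\cdot(c\cdot d)$ agrees with that of $c\cdot d$, and multiplying an integer inequality by a positive $x$ preserves it, so the two criteria select the same relation between $a\cdot d$ and $b\cdot c$. When $x<0$, the sign of $x\cdot(c\cdot d)$ is opposite to that of $c\cdot d$, so the order criterion for the right-hand side switches which of $x\cdot(a\cdot d)$, $x\cdot(b\cdot c)$ is required to be the smaller; but multiplying by the negative $x$ also reverses the integer inequality, and these two reversals cancel, again yielding the same relation between $a\cdot d$ and $b\cdot c$ as on the left. In every case the left and right comparisons reduce to one and the same integer inequality, which gives the equivalence.

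I expect the main obstacle to be bookkeeping rather than ideas: one must check all combinations of the sign of $x$ against the sign of $c\cdot d$ and confirm that the flip of the denominator-sign condition and the flip of the integer inequality compensate exactly. It is worth noting that, since we proved earlier $\frac{a}{c};\frac{a}{c},\frac{x}{x}$, that is $\frac{a\cdot x}{c\cdot x}=\frac{a}{c}$ under the equivalence $=$, this proposition is precisely the statement that $\leq$ is well defined on $=$-classes; the sign computation is the price paid for admitting denominators of either sign.
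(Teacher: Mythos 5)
Your proposal is correct and takes essentially the same route as the paper's own proof: both unfold the order of $\mathbb Q$ into integer inequalities governed by the sign of the denominator product, then observe that multiplying by $x$ preserves or reverses the integer inequality in exact compensation with the sign change of $(c\cdot x)\cdot d$, so every case collapses to the same comparison of $a\cdot d$ with $b\cdot c$. The only difference is organizational: the paper fixes the case $c\cdot d<0$ and leaves $c\cdot d>0$ to the reader, whereas you run the sign analysis over all combinations of signs at once.
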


\begin{proof}We have $\frac ac\leq\frac bd$ if and only if $\frac{a\cdot d}{c\cdot d}\leq\frac{b\cdot c}{d\cdot c}$; this is the definition of the order in $\mathbb Q_{\leq}$. Then, $c\cdot d<0$ if and only if $b\cdot c\leq a\cdot d$. 

First, we take $x>0$ which means $(c\cdot x)\cdot d=d\cdot(c\cdot x)=(c\cdot d)\cdot x<0$. Since $b\cdot c\leq a\cdot d$ we also have $(b\cdot c)\cdot x\leq(a\cdot d)\cdot x$ which is the same as $b\cdot (c\cdot x)\leq(a\cdot x)\cdot d$. We conlude that $\frac{(a\cdot x)\cdot d}{(c\cdot x)\cdot d}\leq\frac{b\cdot(c\cdot x)}{d\cdot(c\cdot x)}$.

Now we let $x<0$ and we get $0<(c\cdot x)\cdot d=d\cdot(c\cdot x)=(c\cdot d)\cdot x$. Here, $x$ reverses inequalities, so that $(a\cdot x)\cdot d\leq b\cdot(c\cdot x)$. The conclusion is the same.

The reader can similarly treat the case for $c\cdot d>0$.\end{proof}

\begin{proposition}\makebox[5pt][]{}\mbox {}\begin{itemize}\item[1)]If $\frac xy>0$, then $\frac ac\leq\frac bd\Longleftrightarrow\frac ac\frac xy\leq\frac bd\frac xy$\item[2)]If $\frac xy<0$, then $\frac ac\leq\frac bd\Longleftrightarrow\frac bd\frac xy\leq\frac ac\frac xy$\end{itemize}\end{proposition}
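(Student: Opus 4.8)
The plan is to reduce both equivalences to the integer monotonicity facts already used in the preceding proposition, namely that multiplying an integer inequality by a positive integer preserves it and by a negative integer reverses it. First I would translate the sign hypotheses into conditions on a single integer: comparing $\frac xy$ with $0=\frac01$ through the definition of the order in $\mathbb Q$, one finds $\frac xy>0$ exactly when $x\cdot y>0$, and $\frac xy<0$ exactly when $x\cdot y<0$. This converts the hypotheses into statements about $x\cdot y$, which is precisely the quantity the monotonicity facts consume.

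Next I would compute the products outright, using $\frac ac\cdot\frac xy=\frac{a\cdot x}{c\cdot y}$ and $\frac bd\cdot\frac xy=\frac{b\cdot x}{d\cdot y}$. The crucial observation is that when the order definition is applied to $\frac{a\cdot x}{c\cdot y}$ and $\frac{b\cdot x}{d\cdot y}$, the sign deciding the direction of the resulting integer inequality is that of $(c\cdot y)\cdot(d\cdot y)=(c\cdot d)\cdot y^2$; since $y^2>0$ this agrees with the sign of $c\cdot d$, which is exactly the sign governing the original comparison $\frac ac\leq\frac bd$. This keeps the two comparisons in lockstep even though denominators may be negative, collapsing the four denominator-sign possibilities back to the two controlled by $c\cdot d$.

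With these reductions in hand the argument is a short case analysis on the sign of $c\cdot d$. For part 1), when $c\cdot d>0$ the relation $\frac ac\leq\frac bd$ reads $a\cdot d\leq b\cdot c$, while $\frac ac\frac xy\leq\frac bd\frac xy$ reads $(a\cdot d)\cdot(x\cdot y)\leq(b\cdot c)\cdot(x\cdot y)$; since $x\cdot y>0$ these are equivalent, and the subcase $c\cdot d<0$ is identical after interchanging the two sides to $b\cdot c\leq a\cdot d$. Part 2) runs the same way, except that $x\cdot y<0$ now reverses the integer inequality, which is exactly why the conclusion must swap $\frac ac\frac xy$ and $\frac bd\frac xy$. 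I would invoke the previous proposition ($\frac ac\leq\frac bd\Longleftrightarrow\frac ac\frac xx\leq\frac bd$) only implicitly, to guarantee all comparisons are well defined on the equivalence classes under $=$.

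I expect the main obstacle to be purely the sign bookkeeping rather than any conceptual difficulty: tracking which of $a\cdot d\leq b\cdot c$ or $b\cdot c\leq a\cdot d$ is the active inequality in each subcase, and confirming that the $y^2>0$ reduction legitimately aligns the denominator sign of the product with that of $c\cdot d$. Once this is organized, each of the four subcases is settled by a single application of integer monotonicity.
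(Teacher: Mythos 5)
Your proposal is correct and takes essentially the same route as the paper's own proof: convert the hypothesis $\frac xy>0$ (resp. $\frac xy<0$) into $x\cdot y>0$ (resp. $x\cdot y<0$), unwind both comparisons into integer inequalities via the definition of the order on $\mathbb Q$, and multiply through by $x\cdot y$. The only difference is organizational: the paper runs its cases on the signs of $c\cdot y$ and $d\cdot y$ separately, while you observe $(c\cdot y)\cdot(d\cdot y)=(c\cdot d)\cdot y^{2}$ so that the sign of $c\cdot d$ alone governs both comparisons, which is a slightly tighter bookkeeping of the same argument.
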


\begin{proof}Suppose $c\cdot y,d\cdot y<0$ which means $0<(c\cdot y)\cdot(d\cdot y)$ and $0<c\cdot d$. If $\frac ac\leq\frac bd$ we then have $a\cdot d\leq b\cdot c$. From $0<\frac xy$ we can easily see $0<x\cdot y$. This means that $(a\cdot x)\cdot(d\cdot y)\leq(b\cdot x)\cdot(c\cdot y)$ and we may conclude $\frac{a\cdot x}{c\cdot y}\leq\frac{b\cdot x}{d\cdot y}$.

Let us make $0<d\cdot y$ so that $(c\cdot y)\cdot(d\cdot y)<0$ and $c\cdot d<0$. Hence, $(b\cdot x)\cdot(c\cdot y)\leq(a\cdot x)\cdot(d\cdot y)$. The conclusion is the same as before. The reader may follow a similar proof for 2).\end{proof}

\begin{proposition}The order of $\mathbb Q$ is transitive.\end{proposition}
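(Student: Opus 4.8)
The plan is to reduce comparability in $\mathbb{Q}$ to comparability of integers, where transitivity is already available because $\mathcal{Z}_\leq$ is a partial order. Suppose $\frac{a}{c}\leq\frac{b}{d}$ and $\frac{b}{d}\leq\frac{e}{f}$, with $c,d,f$ nonzero; the goal is $\frac{a}{c}\leq\frac{e}{f}$. The chief nuisance in working directly from the definition is that it splits into two cases according to the sign of the product of denominators, so a naive cross-multiplication chain drowns in sign bookkeeping. I would avoid this by first moving all three rationals onto a common positive denominator.

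First I would invoke the proposition stating that $\frac{a}{c}\leq\frac{b}{d}\Longleftrightarrow\frac{a}{c}\frac{x}{x}\leq\frac{b}{d}$, which guarantees that comparability depends only on the equivalence class of each fraction, for every nonzero scalar $x$ (its proof already treats $x>0$ and $x<0$ alike). Applying it on each side, I may replace $\frac{a}{c}$, $\frac{b}{d}$, $\frac{e}{f}$ by equivalent fractions sharing the denominator $c\cdot d\cdot f$; scaling once more by $\frac{c\cdot d\cdot f}{c\cdot d\cdot f}$ if that quantity is negative, I arrange a common denominator $N=(c\cdot d\cdot f)^2>0$, obtaining representatives $\frac{\alpha}{N}$, $\frac{\beta}{N}$, $\frac{\gamma}{N}$ for which the two hypotheses and the desired conclusion are each unchanged.

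With a common positive denominator the order collapses to the integers: since $N\cdot N>0$, the definition gives $\frac{\alpha}{N}\leq\frac{\beta}{N}$ if and only if $\alpha\cdot N\leq\beta\cdot N$, and because $N>0$ this is equivalent to $\alpha\leq\beta$ (multiplication by a positive integer preserves the integer order, a fact used freely in the two preceding propositions). Thus the hypotheses become $\alpha\leq\beta$ and $\beta\leq\gamma$ in $\mathbb{Z}$, whence $\alpha\leq\gamma$ by transitivity of the natural order on the integers. Reading the equivalence backwards yields $\frac{\alpha}{N}\leq\frac{\gamma}{N}$, and one last appeal to the scaling proposition returns $\frac{a}{c}\leq\frac{e}{f}$.

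I expect the main obstacle to be organizational rather than deep: making the common denominator genuinely positive (hence the squaring step) and being explicit that the same-denominator comparison reduces to an integer inequality via positivity of $N$. Everything else is either the scaling proposition or transitivity of $\mathcal{Z}_\leq$, both already in hand. If one prefers to stay closer to the style of the two preceding propositions, the same reduction can be phrased as multiplying both hypotheses by suitable positive rationals using the monotonicity proposition, but the common-denominator route keeps all of the sign analysis confined to a single place.
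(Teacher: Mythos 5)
Your proof is correct, but it takes a genuinely different route from the paper's. The paper works directly from the definition of the order: it splits into four cases according to the signs of $c\cdot d$ and $f\cdot d$, cross-multiplies the two hypotheses by suitable factors (reversing inequalities when the factor is negative), applies transitivity of $\mathbb Z_{\leq}$, and reads the conclusion back through the definition; only the case $0<c\cdot d$, $f\cdot d<0$ is written out, the remaining three being declared similar. You instead spend the two preceding propositions to normalize all three fractions to one positive denominator $(c\cdot d\cdot f)^2$, after which the rational order literally coincides with the integer order on numerators and transitivity is inherited in a single stroke. What your route buys is uniformity: the sign bookkeeping occurs exactly once (in making the denominator positive) and no case split survives, whereas the paper's route is self-contained from the definition but leaves most of its cases to the reader. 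One point should be made explicit in your argument: the proposition you invoke, $\frac ac\leq\frac bd\Longleftrightarrow\frac ac\frac xx\leq\frac bd$, rescales only the left operand, so by itself it does not show that comparability depends only on equivalence classes. To rescale the right operand you must combine it with the monotonicity proposition, e.g. $\frac ac\leq\frac bd\Longleftrightarrow\frac ac\frac xx\leq\frac bd\frac xx\Longleftrightarrow\frac ac\leq\frac bd\frac xx$, using that $\frac xx>0$ for nonzero $x$; your closing remark essentially concedes this, and once that combination is spelled out the proof is complete, and arguably cleaner than the paper's, since it establishes all four of its cases at once.
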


\begin{proof}Supposing $\frac ac\leq\frac bd\leq\frac ef$, we have $\frac{a\cdot d}{c\cdot d}\leq\frac{b\cdot c}{d\cdot c}$ and $\frac{b\cdot f}{d\cdot f}\leq\frac{e\cdot d}{f\cdot d}$. Let $0<c\cdot d$ and $f\cdot d<0$, then $a\cdot d\leq b\cdot c$ and $e\cdot d\leq b\cdot f$. We also have $c\cdot f<0$. The inequality $a\cdot d\leq b\cdot c$ is the equivalent $(a\cdot f)\cdot(d\cdot f)\leq(b\cdot f)\cdot(c\cdot f)$. The inequality $e\cdot d\leq b\cdot f$ is true if and only if $(b\cdot f)\cdot(c\cdot f)\leq(e\cdot d)\cdot (c\cdot f)$. We apply transitivity for $\mathbb Z_{\leq}$, and get $(a\cdot f)\cdot(d\cdot f)\leq(e\cdot d)\cdot (c\cdot f)=(e\cdot c)\cdot(d\cdot f)$. This last expression is the same as $e\cdot c\leq a\cdot f$. Since $c\cdot f<0$ we conclude $\frac ac\leq\frac ef$.

The remainding cases 1) $c\cdot d<0$, and $f\cdot d>0$, 2) $0<c\cdot d,f\cdot d$, 3) $c\cdot d,f\cdot d<0$. Can be treated in a similar manner.\end{proof}

\begin{theorem}$\mathbb Q$ is a natural order and there is an embedding of $\mathbb Z$ into $\mathbb Q$ for $+,\cdot,\leq$.\end{theorem}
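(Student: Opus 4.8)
The plan is to verify the two assertions separately, using results already in hand. For the claim that $\mathbb Q$ is a natural order, recall that a natural order is a transitive binary relation in which every pair of objects is joined by exactly one arrow. Transitivity has just been established in the preceding proposition, so what remains is to show that any two rationals $\frac ac$ and $\frac bd$ are order comparable by a single arrow. First I would normalize denominators: using the equivalence $\frac{-a}{-c}=\frac ac$ (which follows from the earlier relation $\frac{-a}c=\frac a{-c}$ together with $-(-c)=c$) and the proposition $\frac ac\leq\frac bd\Longleftrightarrow\frac ac\frac xx\leq\frac bd$, every rational may be represented with a positive denominator, so it suffices to compare $\frac ac,\frac bd$ with $c,d>0$. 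Then $c\cdot d>0$, and the definition of the order reduces the comparison to comparing the integers $a\cdot d$ and $b\cdot c$ in $\mathbb Z_\leq$.

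Since the integers form a natural order, for distinct classes exactly one of $a\cdot d\leq b\cdot c$ or $b\cdot c\leq a\cdot d$ holds, which gives exactly one arrow between $\frac ac$ and $\frac bd$; antisymmetry follows because the equality $a\cdot d=b\cdot c$ says precisely $\frac ac=\frac bd$ under the relation $=$. For the embedding, I would take the bijective function $\iota:\mathbb Z\to\mathbb Z\times\{1\}$, $x\mapsto\frac x1$, already introduced, and check it respects all three structures. Preservation of $+$ and $\cdot$ has already been recorded, namely $\frac x1+\frac y1=\frac{x+y}1$ and $\frac x1\cdot\frac y1=\frac{x\cdot y}1$, so $\iota$ is an isomorphism onto its image for both operations. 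For the order, the definition gives $\frac x1\leq\frac y1$ if and only if $x\cdot1\leq y\cdot1$ (the denominator product $1\cdot1=1$ being positive), which is exactly $x\leq y$; hence $\iota$ is an order embedding in both directions. Being monic (indeed bijective onto $\mathbb Z\times\{1\}$), it therefore qualifies as an embedding that simultaneously preserves $+,\cdot,\leq$.

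The main obstacle is the careful sign bookkeeping in the natural-order part: the order on $\mathbb Q$ was defined by cases according to the sign of $c\cdot d$, and one must confirm that the reduction to positive denominators, justified by the well-definedness proposition, genuinely covers every pair, so that comparability and antisymmetry transfer cleanly from $\mathbb Z_\leq$. Once the positive-denominator normal form is secured, the remaining verifications are routine applications of the facts that the integers form a natural order and that $\iota$ is already known to be an isomorphism for the operations.
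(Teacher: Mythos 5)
Your proposal is correct and follows essentially the same route as the paper: the paper treats this theorem as a summary of the immediately preceding development, citing the transitivity proposition, the observation that the order definition assigns exactly one arrow to any pair (which you work out more carefully via normalization to positive denominators and comparability in $\mathbb Z_\leq$), and the already-verified facts that $\iota:x\mapsto\frac x1$ preserves $+$, $\cdot$, and $\leq$. Your filling-in of the comparability and antisymmetry details is a legitimate elaboration of what the paper asserts "is given from the definition," not a different argument.
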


We can say more about the order of the rationals. Because, we have already seen that the orders of (III.2) are dual. Daulity appears in many forms in $\mathbb Q$, but the real duality here is what we did at the beginning by defining columns dually to rows at the beginning of the section Dual Orders. In the right side of system (III.2), we have a system such that rows and columns are dual. The rows are order isomorphic to $\mathbb N$ and the columns are dual to $\mathbb N$.

\chapter{Set Theory}
In the foregoing, and to maintain the rigor of mathematics, we will use the $\Rightarrow$ relation that is representative of implication. We will not use it in a rigorous sense as of yet; for now $\Rightarrow$ is representative for $if$. We will say the collection of all objects is $\textbf{Obj}$. Consider the system $\textbf{Obj}_\in$ whose collection of objects is $\textbf{Obj}$; the relations of the system are $x\in A$ if and only if $A$ is a collection and $x$ is an object of $A$. We say $x~is~element~of~A$; of course $\notin$ will be used in the opposite manner in such cases $x$ is not element of $A$. Two collections $A,B$ are related $A\subseteq B$ if for every arrow $x\in A$, we also have $x\in B$. If the arrow $\subseteq$ has been ruled out to be reflexive we will write $A\subset B$. In case we have $A\subseteq B$ and $B\subseteq A$ we will say $A=B$; this is saying that the arrow between $A,B$ is double ended and when that happens we regard the two collections to be the same.

Any collection or object is said to be $normal$ if it is not an elment of itself. First of all, we have a case because the collection of all collections is an object in itself. Furthermore, any collection can be turned into a normal collection; if $\mathcal O\in\mathcal O$ we make the new collection $\underline{\mathcal O}$ by taking away the object $\mathcal O$. Now consider the collection of normal objects, denote it by $\mathcal R$. This collection is elusive in nature. Suppose $\mathcal R\in\mathcal R$, then $\mathcal R\notin\mathcal R$. If $\mathcal R\notin\mathcal R$, then by definition, $\mathcal R\in\mathcal R$. We cannot establish if this collection contains itself or not; both conclusions are simultaneously true and, therefore, simultaneously not true.

Let $\mathcal U$ be a normal subcollection of $\textbf{Obj}$ that will be called the \textit{universe of sets}. We will use $\mathcal{V}$ as the collection which is obtained from $\mathcal{U}$ by adding one object: $\mathcal{U}$. If two sets are related $A\subseteq B$, we say \textit{$A$ is a subset of $B$}. If $x\in X$, for some set $X\in\mathcal{X}$, we say $\mathcal{X}$ is a \textit{family}. The \textit{union} of a family is the collection of objects that consists of objects that belong to any member of the family; we write $\bigcup\mathcal{X}$. The collection of all subsets of $A$ is $\textgoth{P}A$.

\begin{definition}The following poperties define $\mathcal{U}$.\begin{itemize}\item[1)]$\mathcal{A|}\mathbb{Z}\subseteq\mathcal{U}$\item[2)]$x
\in A\in \mathcal{U}\Rightarrow x\in\mathcal{U}$\item[3)]$A\subseteq B\in\mathcal{U}\Rightarrow A\in\mathcal{U}$\item[4)]$A,B\in\mathcal{U}\Rightarrow A\rightarrow_{\times}B\in\mathcal{U}$
\item[5)]$A\in\mathcal{U}\Rightarrow\textgoth{P}A\in\mathcal{U}$\item[6)]
$\mathcal{X}\in\mathcal{U}\Rightarrow\bigcup\mathcal{X}\in\mathcal{U}$\item[7)]
If a function $f:A\rightarrow B\subseteq\mathcal{U}$ is onto and $A\in\mathcal{U}$, then $B\in\mathcal{U}$.
\end{itemize}\end{definition}

We ask 3) and 7) hold because we do not want sets to have \textit{too many objects}; we want to keep them small so as to distinguish them from arbitrary collections, which can be unimaginably large. 

\begin{definition}A set is any object of $\mathcal{U}$. A set function is any function $f:A\rightarrow B$, where $A,B\in\mathcal{U}$.\end{definition}

We see that the properties defining the universe allow us to find that any set function is an object in the universe because $f\subseteq A\rightarrow_{\times}B$. This allows us to define a category where the collection of objects is $\mathcal{U}$ and the collection of arrows is the set of all set functions. This is the \textit{category of small sets} and it will be written as $\textbf{Set}$. A set $A$ may be represented by the notation $A=\{x\}_{x\in A}$. If, for example a set consists of two objects, we then have $A=\{x,y\}=\{y,x\}$. A set that consists of one element is called a \textit{singleton}; if the element of the singleton is $x$, then the singleton is $\{x\}$.

	\section{Set Operations}

		\subsection{First Generation}

\subsubsection{Union} Take $A,B\in\mathcal{V}$. We will establish $\bigcup$ as an operation $\mathcal{V}\rightarrow\mathcal{V}f\mathcal{V}$. When we are considering such a union, we will write $A\cup B$ in place of $\bigcup\{A,B\}$.

\begin{proposition}For any $A,B,C\in\mathcal{V}$ and $\mathcal{X}\subseteq\mathcal{Y}\in\mathcal{V}$ we verify\begin{itemize}\item[1)]$\bigcup\mathcal{X}\subseteq\bigcup\mathcal{Y}$\item[2)]$A\cup B=B\cup A$
\item[3)]$A\cup(B\cup C)=(A\cup B)\cup C=A\cup B\cup C$.
\end{itemize}\end{proposition}

Notice that we have a special case for 1); namely that $A\subseteq\bigcup\mathcal{X}$, for every $A\in\mathcal{X}$.

\subsubsection{Difference} The union has an inverse operation, just as the sum has. The function $\cup B$ adds the objects of $B$ to $A$. Well, in view that systems are created by adding and taking objects, it is natural to give a function $-B$ that takes the objects of $B$ from $A$. Of course, $A-B\subseteq A$.

It is easily verified that the difference is not symmetric, just as the operation $-$ is not symmetric; we recall that $a-b$ and $b-a$ are not the same. Also, the difference is not associative, just as $(a-b)-c$ is not the same as $a-(b-c)$:

\begin{eqnarray}\nonumber(A-B)-C&=&A-(B\cup C)\\\nonumber A-(B-C)&=&(A-B)\cup[A-(A-C)].\end{eqnarray}

To reason this, let us examine the expression $A-(A-C)$. It is the collection of objects obtained by taking from $A$ the objects of $A-C\subseteq A$. We are leaving only those objects that are in $C$. That is, $x\in A-(A-C)$ if and only if $x$ is in $A$ and $C$ because $A-C$ consists of all objects in $A$ and not in $C$.

	\subsection{Second Generation}

		\subsubsection{Intersection} In set theory there will be an analogy between sum and product. The corresponding operations to sum and product, here, are union and intersection.

\begin{eqnarray}\nonumber A\cap B=A-(A-B).\end{eqnarray}

In general we say $x\in\bigcap\mathcal{X}$ if and only if $x\in A$, for every $A\in\mathcal{X}$.

\begin{proposition}For any $A,B,C\in\mathcal{V}$ and $\mathcal{X}\subseteq\mathcal{Y}\in\mathcal{V}$ we verify\begin{itemize}\item[1)]$\bigcap\mathcal{Y}\subseteq\bigcap\mathcal{X}$\item[2)]$A\cap B=B\cap A$\item[3)]$A\cap(B\cap C)=(A\cap B)\cap C=A\cap B\cap C$.\end{itemize}\end{proposition}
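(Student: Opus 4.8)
The plan is to reduce all three assertions to the element-wise description of intersection and then to elementary logical facts about conjunction and universal quantification. The key preliminary is the membership criterion that $x\in A\cap B$ if and only if $x\in A$ and $x\in B$. This is essentially already established in the preceding paragraph: since $A\cap B:=A-(A-B)$, and the analysis of $A-(A-C)$ shows that $x\in A-(A-C)$ precisely when $x$ lies in both $A$ and $C$, the criterion follows by taking $C:=B$. For the general intersection of a family the definition gives directly that $x\in\bigcap\mathcal{X}$ if and only if $x\in A$ for every $A\in\mathcal{X}$, so no extra work is needed there.

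For 1) I would take any $x\in\bigcap\mathcal{Y}$, so that $x\in A$ holds for every $A\in\mathcal{Y}$. Because $\mathcal{X}\subseteq\mathcal{Y}$, every member of $\mathcal{X}$ is also a member of $\mathcal{Y}$, whence $x\in A$ for every $A\in\mathcal{X}$; this is exactly $x\in\bigcap\mathcal{X}$. Thus $\bigcap\mathcal{Y}\subseteq\bigcap\mathcal{X}$. I would note that the inclusion is reversed relative to the corresponding statement for unions, reflecting the duality between $\bigcup$ and $\bigcap$.

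For 2) and 3) I would argue purely at the level of membership. For commutativity, $x\in A\cap B$ holds if and only if $x\in A$ and $x\in B$, which is the same condition as $x\in B$ and $x\in A$, that is $x\in B\cap A$; since the two collections have the same elements, they are equal under the equality criterion (double inclusion). For associativity, applying the criterion twice gives that $x\in A\cap(B\cap C)$ iff $x\in A$ and ($x\in B$ and $x\in C$), while $x\in(A\cap B)\cap C$ iff ($x\in A$ and $x\in B$) and $x\in C$; these conjunctions coincide, and both coincide with the condition defining $\bigcap\{A,B,C\}=A\cap B\cap C$, namely that $x$ belong to each of $A$, $B$, $C$. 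Hence all three collections are equal.

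I expect the only genuine step to be the membership criterion for $\cap$; once it is in hand, each item is a direct transcription of a tautology of propositional logic (conjunction is monotone, commutative, associative) together with the definition of set equality by mutual inclusion. A minor point to keep clean is that all objects involved lie in $\mathcal{V}$, so that the operations $\cup$, $-$, and hence $\cap$, are defined on them; this is guaranteed by the hypotheses $A,B,C\in\mathcal{V}$ and $\mathcal{X}\subseteq\mathcal{Y}\in\mathcal{V}$.
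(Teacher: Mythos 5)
Your proof is correct. Note that the paper itself offers no proof of this proposition: it is stated immediately after the definition $A\cap B=A-(A-B)$ and the membership criterion for $\bigcap\mathcal{X}$, and left unproven, exactly as with its earlier analogue for unions. Your argument therefore supplies what the paper leaves implicit, and it does so using precisely the ingredients the paper has already set up: the criterion that $x\in\bigcap\mathcal{X}$ if and only if $x\in A$ for every $A\in\mathcal{X}$ (which the paper takes as the general definition), and the analysis of $A-(A-C)$ from the Difference paragraph, which with $C:=B$ yields that $x\in A\cap B$ if and only if $x\in A$ and $x\in B$. With these in hand, your 1) is just monotonicity of universal quantification when the family shrinks, and 2), 3) are commutativity and associativity of conjunction combined with the paper's double-inclusion definition of equality of sets. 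The step you rightly isolate as the only genuine one --- grounding the binary membership criterion in the difference-based definition rather than assuming it --- is exactly the step a careless reading would skip, since the paper defines $A\cap B$ by difference, not by membership; making it explicit is what turns the paper's unproven assertion into a proof consistent with its own framework.
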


And we see that 1) gives for a special case; $\bigcap\mathcal{X}\subseteq A$, for every $A\in\mathcal{X}$.

		\subsubsection{Complement} 

Now, we consider the function that is left operation $\mathcal{U}-$. The image of $A$ may be represented by $A^c$.

\begin{proposition}For any $A\in\mathcal{V}$ we have\begin{itemize}\item[1)]$A\cup A^c=\mathcal{U}$\item[2)]$(A^c)^c=A$\end{itemize}\end{proposition}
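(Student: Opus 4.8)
The plan is to unfold the definition $A^c := \mathcal{U}-A$ (the image of $A$ under the left operation $\mathcal{U}-$) and argue by the element-wise criteria already established for $\cup$, $\cap$, and $-$, reducing both claims to mutual inclusion in the sense of the definition of $=$ for sets. First I would record the one preliminary fact that every hypothesis uses: for $A\in\mathcal{V}$ we have $A\subseteq\mathcal{U}$. Indeed, either $A\in\mathcal{U}$, in which case property 2) of the universe ($x\in A\in\mathcal{U}\Rightarrow x\in\mathcal{U}$) gives $A\subseteq\mathcal{U}$, or $A=\mathcal{U}$, in which case $A\subseteq\mathcal{U}$ by reflexivity of $\subseteq$. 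This is the only role played by the assumption $A\in\mathcal{V}$, and it is what forces the complement to behave.

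For part 1) I would prove the two inclusions of $A\cup A^c=\mathcal{U}$ separately. The inclusion $A\cup A^c\subseteq\mathcal{U}$ is immediate: any $x\in A\cup A^c$ lies in $A$ or in $A^c=\mathcal{U}-A$; in the first case $x\in\mathcal{U}$ by the preliminary fact, and in the second $x\in\mathcal{U}$ because $\mathcal{U}-A\subseteq\mathcal{U}$ (a difference is a subset of its first argument). The reverse inclusion $\mathcal{U}\subseteq A\cup A^c$ is the substantive half: given $x\in\mathcal{U}$, I split on whether $x\in A$. If $x\in A$ then $x\in A\cup A^c$ trivially; if $x\notin A$, then the pair of conditions $x\in\mathcal{U}$ and $x\notin A$ is exactly the membership criterion for $x\in\mathcal{U}-A=A^c$, so again $x\in A\cup A^c$.

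For part 2) I would invoke the identity $A\cap B=A-(A-B)$ (equivalently, the element description recorded just after the difference was introduced: $x\in A-(A-C)$ iff $x\in A$ and $x\in C$) with the roles $\mathcal{U}$ for the outer set and $A$ for the inner one. This gives $(A^c)^c=\mathcal{U}-(\mathcal{U}-A)=\mathcal{U}\cap A$. It then remains to note $\mathcal{U}\cap A=A$, which holds because $A\subseteq\mathcal{U}$: for any $x$, the condition $x\in\mathcal{U}$ and $x\in A$ collapses to $x\in A$, so the two sets have the same objects and are equal by the definition of $=$.

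The only real care needed — and the main obstacle such as it is — is the bookkeeping separating $A\in\mathcal{V}$ from the derived fact $A\subseteq\mathcal{U}$, and verifying that the branch ``$x\notin A$'' is legitimately the defining membership condition of the complement rather than a mere negation. Once the preliminary inclusion is in hand, both statements are direct applications of the already-proved properties of $\cup$, $\cap$, and $-$ together with the definition of equality for sets, so no further computation is required.
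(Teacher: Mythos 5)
Your proof is correct and follows essentially the same route as the paper: the paper's only stated justification is the remark that 2) takes the form $A=\mathcal{U}-(\mathcal{U}-A)=\mathcal{U}\cap A$, which is precisely your use of the identity $A\cap B=A-(A-B)$ together with $A\subseteq\mathcal{U}$. You are in fact more complete than the paper, which offers no argument at all for 1) and leaves the preliminary fact $A\subseteq\mathcal{U}$ (the only place $A\in\mathcal{V}$ is used) implicit; your elementwise case split on $x\in A$ versus $x\notin A$ supplies exactly what is missing.
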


We see that 2) takes the form $A=\mathcal{U}-(\mathcal{U}-A)=\mathcal{U}\cap A$

		\subsection{Properties}

			\subsubsection{Unit}

We now study the units of these operations, starting with the union. Notice that $A\cup B=A$ if and only if $B\subseteq A$. Since $\emptyset$ is a subset of any set, we have found the unit of the union operation to be $\emptyset$.

If we consider the intersection, we see that $A\cap B=A-(A-B)=A$ if and only if $A\subseteq B$. We need not look any further to find the unit, $\mathcal{U}$.

It is easily seen that $A=A-B$ if and only if $A\subseteq A-B$ and this is true if and only if $\emptyset=A-(A-B)=A\cap B$. Thus, $\emptyset$ is a right unit for the difference. In the case of the left operation, we are unable to find a unit; clearly there is no set $B$ such that $B-A=A$.

		\subsubsection{Inverse} 

It is now desireable to find the inverse set of $A$, for each of the operations. However, after a quick inspection we see that this is impossible. For the union we must find $B$ such that $A\cup B=\emptyset$, and for the intersection $C$ must satisfy $A\cap C=\mathcal{U}$. 

We will find a way around this, but the developments here will not give us an inverse set, in the strict sense. We will say $A$ and $A^{-1}$ are \textit{inverse sets for the union} if $A\cup A^{-1}=\mathcal{U}$. We say they are \textit{inverse sets for intersection} if $A\cap A^{-1}=\emptyset$. We say that $A^c$ is the \textit{exact inverse} of $A$.

		\subsubsection{Other Representations}

We have an interesting consequence of our definition for the operations $-$ and $\cap$. Each of these operations takes away objects from the source and this is in such a way that

\begin{eqnarray}A&=&(A\cap B)\cup(A-B)\label{decoset1}\\\nonumber\emptyset&=&(A\cap B)\cap(A-B).\end{eqnarray}

We have a similar decomposition for $A\cup B$:

\begin{eqnarray}A\cup B&=&A\cup(B-A)\label{decoset2}\\\nonumber\emptyset&=&A\cap(B-A).\end{eqnarray}

We also have \begin{eqnarray}\nonumber A-B&=&A-(\mathcal{U}-B^c)\\\nonumber&=&(A-\mathcal{U})\cup[A-(A-B^c)]\\\nonumber&=&A-(A-B^c)\\&=&A\cap B^c\end{eqnarray}

We are stating that the functions $-A,\cap A^c$ are the same, for any $A\in\mathcal{U}$. Of course we can also say \begin{eqnarray}\nonumber A\cap B=A-B^c\end{eqnarray}

\subsubsection{Distributions} In the foregoing we will accept a notation that will allow us to express some general relations for set operations:

\begin{eqnarray}\nonumber\bigcup\mathcal{X}&=&\bigcup_{A\in\mathcal{X}}A
\\\nonumber\\\nonumber\bigcap\mathcal{X}&=&\bigcap_{A\in\mathcal{X}}A
\end{eqnarray}

Of course there is a natural distribution for the union and intersection; as we had mentioned, there would be a clear parallelism with sum and product. However, the distribution here is both ways:

\begin{eqnarray} A\cup\bigcap_{B\in\mathcal{X}}B&=&\bigcap_{B\in\mathcal{X}}(A\cup B)\\\nonumber\\\nonumber A\cap\bigcup_{B\in\mathcal{X}}B&=&\bigcup_{B\in\mathcal{X}}(A\cap B)\end{eqnarray}

Now we give the distributions that involve the difference.

\begin{eqnarray} A-\bigcup_{B\in\mathcal{X}}B&=&\bigcap_{B\in\mathcal{X}}(A-B)
\\\nonumber\\\nonumber A-\bigcap_{B\in\mathcal{X}}B&=&\bigcup_{B\in\mathcal{X}}(A-B)
\end{eqnarray}

Naturally, the complement has a distribution rule, following that it is a special case of difference.

\begin{eqnarray}\nonumber\mathcal{U}-\bigcup_{B\in\mathcal{X}}B&=&\bigcap_{B\in\mathcal{X}}(\mathcal{U}-B)
\\\nonumber\\\nonumber \mathcal{U}-\bigcap_{B\in\mathcal{X}}B&=&\bigcup_{B\in\mathcal{X}}(\mathcal{U}-B)\end{eqnarray}

This is re-written as,

\begin{eqnarray} \left(\bigcup_{A\in\mathcal{X}}A\right)^c&=&\bigcap_{A\in\mathcal{X}}A^c
\\\nonumber\\ \left(\bigcap_{A\in\mathcal{X}}A\right)^c&=&\bigcup_{A\in\mathcal{X}}A^c\end{eqnarray}

These last expressions are the \textit{laws of DeMorgan}. The following relations are not difficult to prove.

\begin{eqnarray}\nonumber\bigcup_{A\in\mathcal{X}}A-\bigcup_{B\in\mathcal{Y}}B
&\subseteq&\bigcup(\mathcal{X}-
\mathcal{Y})\\\nonumber\\\nonumber\bigcap_{A\in\mathcal{X}}A-\bigcap_
{B\in\mathcal{Y}}B&\subseteq&\bigcap(\mathcal{X}-
\mathcal{Y}).\end{eqnarray}

	\section{Categories}

We have constructed the category of sets \textbf{Set}, and the main objective in this section is to give other descriptions of sets in terms of categories. That is, any set can be viewed as either a collection category or a partial order.

\subsection{Collection Category}

Unless it is otherwise specified, we define $\{\{A\}\}:=\{\{a\}\}_{a\in A}$, for any set $A$. We will build a category $\textbf{A}$ whose collection of c-objects is the family $\{\{A\}\}$. Consider a selection function for $\{\{A\}\}$; this function must send every set $\{a\}$ into the object $a$. We have exactly one selection function $f$ and it is defined by $Dom~f=\{\{A\}\}$, $Range~f=A$ and $\{a\}\mapsto_fa$. This is clearly a category; the unit arrow of each object is the only arrow corresponding to it.

Let $\textbf{A},\textbf{B}$ be two \textit{collection categories} and suppose there is an operation $*$ on each of the collections. We have a natural way of defining a functor $\textgoth{F}:\textbf{A}\rightarrow\textbf{B}$, given a function $f:A\rightarrow B$ such that $f,f;*fx,*x$. When considering the collection category of a power set we will write $\textbf{P}A$.

\subsection{Partial Order, Under Inclusion}

Let us consider the partial order defined on collection $\mathcal{V}$; we form a category $\textbf{Set}_{\subseteq}$ where the objects are ordered by inclusion. In other words, arrows are $A\subseteq B$. We see that we have a category because $A\subseteq A$ and if $A\subseteq B\subseteq C$, then $A\subseteq C$. Recall that in a partial order we are dealing with non-discernible arrows. In light of this, associativity of the composition holds. We note that a partial order may have objects which are not related; and in this order, that is the case. There are no arrows between $\{1\}$ and $\{2\}$, for example. Given a set $A$, we can identify it with a partial order, where the collection of objects is $\textgoth{P}A$; this order is a simplified version of $\textbf{Set}_{\subseteq}$, and we denote it by $\mathcal{P}A$.

		\subsection{Concrete Category}

We had the idea that given a discrete number system, there is a category such that the objects of the system are automorphisms of the category. After all, categories are quite large. So, we will consider the universe of sets in order to formailze the concept of a category whose arrows are functions. A category $\mathcal C$ is \textit{concrete} if we can provide a faithful functor $\textgoth C:\mathcal C\rightarrow\textbf{Set}$. We are sending the c-objects of the category into sets, while the arrows are sent into functions. Parallel arrows are sent into different functions of the same form. There have been several instances in which we have used arrows of categories as functions of one component. We have really been trying to represent the arrows as functions to define an equality of compositions in terms of natural pair of functions. A particular case was the second request in the definition of functors. What happens if the category is concrete?

We are giving a way of studying absract categories in terms of a well defined category. We had previously said that some algebraic categories can be seen as consisting of a category as c-object, and automorphisms of that category as arrows. For a concrete category, all the c-objects $x,y,z,...$ can be seen as sets, and all the arrows can be seen as set functions, in such a way that the transformation is a functor. This means we will not study a general abstract category with objects and arrows. We are going to study a collection of domains and ranges and set functions. This is a concrete concept because sets are well defined. The composition in set functions can be described as an operation $\circ:\mathcal A|\textbf{Set}\rightarrow\mathcal A|\textbf{Set}f\mathcal A|\textbf{Set}$ such that $f\mapsto_\circ\circ f$, where $g\mapsto_{\circ f}g\circ f$.

\subsubsection{Hom Set}Recall $\{a\rightarrow c\}$ is the collection of arrows in $\mathcal C$, such that $a$ is source and $c$ is target. Also, we have defined $\{a\rightarrow\}$ and $\{ 			\rightarrow c\}$ as the collections of arrows from $a$, and arrows into $c$, respectively. If $\{a\rightarrow c\}$ is a set, for every pair of c-objects in the category, we say $\mathcal C$ \textit{has all Hom sets}.

\begin{definition}Let $x$ be any c-object of $\mathcal C$, and define the contravariant functor $\textgoth R^{\ltimes}_x:\mathcal C\rightarrow\textbf{Set}$, where $a\mapsto\{a\rightarrow x\}$; every arrow $f:a\rightarrow c$ is sent into the set function $*f:\{c\rightarrow x\}\rightarrow\{a\rightarrow x\}$, of Hom sets. Define the covariant functor $\textgoth L_x:\mathcal C\rightarrow\textbf{Set}$ such that $a\mapsto\{x\rightarrow a\}$, so that $f$ is sent into a set function $f*:\{x\rightarrow a\}\rightarrow\{x\rightarrow c\}$.\end{definition}

\begin{lemma nat t1}Let $\textgoth F:\mathcal C_1\times\mathcal C_2\rightarrow\mathcal D$ be a functor. Then, every arrow $f:a\rightarrow c$, in $\mathcal C_1$, determines a natural transfomation $\tau_f:\textgoth F_a\rightarrow\textgoth F_c$ that sends $x\mapsto\textgoth F(f,1_x)$. The functors are of the form $\textgoth F_a,\textgoth F_c:\mathcal C_2\rightarrow\mathcal D$. A similar result can be formulated if $\mathcal C_2$ takes the place of $\mathcal C_1$.\end{lemma nat t1}

\begin{proof}The functors of the natural transformation are defined by the object functions $\textgoth F_ax:=\textgoth F(a,x)$ and $\textgoth F_cx:=\textgoth F(c,x)$. Let $g$ be an arrow in $\mathcal C_2$, then the arrow functions are $\textgoth F_ag:=\textgoth F(1_a,g)$ and $\textgoth F_cg:=\textgoth F(1_c,g)$. We will show $\tau y,\tau x;\textgoth F_cg,\textgoth F_ag$ for any arrow $g:x\rightarrow y$ in $\mathcal C_2$.\begin{eqnarray}\nonumber\tau y&;&\tau y*\textgoth F_ag,\textgoth F_ag\\\nonumber\tau y&;&\textgoth F(f,1_y)*\textgoth F(1_a,g),\textgoth F_ag\\\nonumber\tau y&;&\textgoth F(f*1_a,1_y*g),\textgoth F_ag\\\nonumber\tau y&;&\textgoth F(1_c*f,g*1_x),\textgoth F_ag\\\nonumber\tau y&;&\textgoth F(1_c,g)*\textgoth F(f,1_x),\textgoth F_ag\\\nonumber\tau y&;&\textgoth F_cg*\tau x,\textgoth F_ag\\\nonumber\tau y,\tau x&;&\textgoth F_cg,\textgoth F_ag.\end{eqnarray}\end{proof}

\begin{lemma nat t2}Let $\mathcal C_1,\mathcal C_2,\mathcal D$ be categories. For every $x$ in $\mathcal C_1$, and $y$ in $\mathcal C_2$, let $\textgoth R _x,\textgoth L_y:\mathcal C_2,\mathcal C_1\rightarrow\mathcal D$ be functors such that $y,\textgoth L_y;x,\textgoth R_x$. There exists a functor $\textgoth F:\mathcal C_1\times\mathcal C_2\rightarrow\mathcal D$, such that $\textgoth F_x$ is $\textgoth R_x$ and $\textgoth F_y$ is $\textgoth L_y$, if and only if for every arrow $f\rightarrow_\times g:a\rightarrow_\times b\longrightarrow c\rightarrow d$, in $\mathcal C_1\times\mathcal C_2$, we verify \begin{equation}\textgoth L_df,\textgoth L_bf;\textgoth R_cg,\textgoth R_ag.\label{eq lem cat}\end{equation} We define $\textgoth F(f,g):=\textgoth L_df*\textgoth R_ag$, and $\textgoth F(a,b):=\textgoth R_ab$.\end{lemma nat t2}

\begin{proof}We will first suppose the condition (\ref{eq lem cat}). First of all, $\textgoth1_\mathcal D\circ\textgoth F_\mathcal O$ is the same as $\textgoth F_\mathcal A\circ\textgoth 1_{\mathcal C_1\times\mathcal C_2}$:\begin{eqnarray}\nonumber a\rightarrow_\times b&;&(\textgoth1_\mathcal D\circ\textgoth F_\mathcal O)(a\rightarrow_\times b),\textgoth1_\mathcal D\circ\textgoth F_\mathcal O\\\nonumber a\rightarrow_\times b&;&\textgoth1_\mathcal D(\textgoth F(a,b)),\textgoth1_\mathcal D\circ\textgoth F_\mathcal O\\\nonumber a\rightarrow_\times b&;&\textgoth1_\mathcal D(\textgoth R_ab),\textgoth1_\mathcal D\circ\textgoth F_\mathcal O\\\nonumber a\rightarrow_\times b&;&1_{(\textgoth R_ab)},\textgoth1_\mathcal D\circ\textgoth F_\mathcal O\\\nonumber a\rightarrow_\times b&;&1_{(\textgoth R_ab)}*1_{(\textgoth R_ab)},\textgoth1_\mathcal D\circ\textgoth F_\mathcal O\\\nonumber a\rightarrow_\times b&;&1_{(\textgoth L_ba)}*1_{(\textgoth R_ab)},\textgoth1_\mathcal D\circ\textgoth F_\mathcal O\\\nonumber a\rightarrow_\times b&;&\textgoth L_b1_a*\textgoth R_a1_b,\textgoth1_\mathcal D\circ\textgoth F_\mathcal O\\\nonumber a\rightarrow_\times b&;&\textgoth F_\mathcal A(1_a,1_b),\textgoth1_\mathcal D\circ\textgoth F_\mathcal O\\\nonumber a\rightarrow_\times b&;&(\textgoth F_\mathcal A\circ\textgoth1_{\mathcal C_1\times\mathcal C_2})(a\rightarrow_\times b),\textgoth1_\mathcal D\circ\textgoth F_\mathcal O.\end{eqnarray} Given an arrow $f\rightarrow_\times g:a\rightarrow_\times b\longrightarrow c\rightarrow_\times d$, we have $\textgoth F(f,g):\textgoth F(a,b)\rightarrow\textgoth F(c,d)$. To prove this, notice $\textgoth R_ag,\textgoth L_df:\textgoth R_ab,\textgoth L_da\rightarrow\textgoth R_ad,\textgoth L_dc$. Since $d,\textgoth L_d;a,\textgoth R_a$ and $d,\textgoth L_d;c,\textgoth R_c$ are true, we may conlcude $\textgoth F(f,g):\textgoth F_ab\rightarrow\textgoth R_cd$. We move on to prove the third condition of functors. Let $h:c\rightarrow x$ and $i:d\rightarrow y$, \begin{eqnarray}\nonumber h\rightarrow_\times i*f\rightarrow_\times g&;&\textgoth F(h*f,i*g),\textgoth F\\\nonumber h\rightarrow_\times i*f\rightarrow_\times g&;&\textgoth L_y(h*f)*\textgoth R_a(i*g),\textgoth F\\\nonumber h\rightarrow_\times i*f\rightarrow_\times g&;&\textgoth L_yh*(\textgoth L_yf*\textgoth R_ai)*\textgoth R_ag,\textgoth F\\\nonumber h\rightarrow_\times i*f\rightarrow_\times g&;&\textgoth L_yh*\textgoth F(f,i)*\textgoth R_ag,\textgoth F\\\nonumber h\rightarrow_\times i*f\rightarrow_\times g&;&\textgoth L_yh*(\textgoth R_ci*\textgoth L_df)*\textgoth R_ag,\textgoth F\\\nonumber h\rightarrow_\times i*f\rightarrow_\times g&;&(\textgoth L_yh*\textgoth R_ci)*(\textgoth L_df*\textgoth R_ag),\textgoth F\\\nonumber h\rightarrow_\times i*f\rightarrow_\times g&;&\textgoth F(h,i)*\textgoth F(f,g),\textgoth F\\\nonumber h\rightarrow_\times i*f\rightarrow_\times g&;&\textgoth F(h\rightarrow_\times i)*\textgoth F(f\rightarrow_\times g),\textgoth F\end{eqnarray}
Finally, we must show $\textgoth F_x$ is the same functor as $\textgoth R_x$, and $\textgoth F_y$ is the same as $\textgoth L_y$. The observation is trivial for the object function; $\textgoth F_xy$ is $\textgoth F(x,y)$, whom we have defined as $\textgoth R_xy$. This implies that $\textgoth L_yx$ is $\textgoth F_yx:=\textgoth F(x,y)$. For the arrow functions, we have\begin{eqnarray}\nonumber i*g&;&\textgoth F_x(i*g),\textgoth F_x\\\nonumber i*g&;&\textgoth F(1_x,i*g),\textgoth F_x\\\nonumber i*g&;&\textgoth L_y1_x*\textgoth R_x(i*g),\textgoth F_x\\\nonumber i*g&;&1_{(\textgoth L_yx)}*\textgoth R_x(i*g),\textgoth F_x\\\nonumber i*g&;&1_{(\textgoth R_xy)}*\textgoth R_x(i*g),\textgoth F_x\\\nonumber i*g&;&\textgoth R_x1_y*\textgoth R_x(i*g),\textgoth F_x\\\nonumber i*g&;&\textgoth R_x[1_y*(i*g)],\textgoth F_x\\\nonumber i*g&;&\textgoth R_x(i*g),\textgoth F_x.\end{eqnarray}\begin{eqnarray}\nonumber h*f&;&\textgoth F_y(h*f),\textgoth F_y\\\nonumber h*f&;&\textgoth F(h*f,1_y),\textgoth F_x\\\nonumber h*f&;&\textgoth R_x(1_y)*\textgoth L_y(h*f),\textgoth F_y\\\nonumber h*f&;&1_{(\textgoth R_xy)}*\textgoth L_y(h*f),\textgoth F_y\\\nonumber h*f&;&1_{(\textgoth L_yx)}*\textgoth L_y(h*f),\textgoth F_y\\\nonumber h*f&;&\textgoth L_y1_x*\textgoth L_y(h*f),\textgoth F_y\\\nonumber h*f&;&\textgoth L_y[1_x*(h*f)],\textgoth F_y\\\nonumber h*f&;&\textgoth L_y(h*f),\textgoth F_y.\end{eqnarray}

Now, suppose the contrary and prove (\ref{eq lem cat})\begin{eqnarray}\nonumber \textgoth L_df&;&\textgoth L_df*\textgoth R_ag,\textgoth R_ag\\\nonumber \textgoth L_df&;&\textgoth F_df*\textgoth F_ag,\textgoth R_ag\\\nonumber \textgoth L_df&;&\textgoth F(f,1_d)*\textgoth F(1_a,g),\textgoth R_ag\\\nonumber \textgoth L_df&;&\textgoth F(f*1_a,1_d*g),\textgoth R_ag\\\nonumber \textgoth L_df&;&\textgoth F(1_c*f,g*1_b),\textgoth R_ag\\\nonumber \textgoth L_df&;&\textgoth F(1_c,g)*\textgoth F(f,1_b),\textgoth R_ag\\\nonumber \textgoth L_df&;&\textgoth F_cg*\textgoth F_bf,\textgoth R_ag\\\nonumber \textgoth L_df&;&\textgoth R_cg*\textgoth L_bf,\textgoth R_ag.\end{eqnarray}\end{proof}

\begin{theorem}Given any category with all Hom sets, we can form a bifunctor $Hom:\mathcal C\times\mathcal C\rightarrow\textbf{Set}$; referred to as the Hom bifunctor. 

The two functors that constitute Hom, are called the contravariant and covariant Hom functors. Given an arrow $f:a\rightarrow c$ in $\mathcal C$, we have natural transformations $f^\dagger:\textgoth L_c\rightarrow\textgoth L_a$ and $f_\dagger:\textgoth R^\ltimes_a\rightarrow\textgoth R^\ltimes_c$.\end{theorem}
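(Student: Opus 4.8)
The plan is to realize the Hom assignment as a functor via Lemma II, transport it to a bifunctor by Proposition \ref{bif prop2}, and then read off the two natural transformations from Lemma I. On objects I set $Hom(a,b):=\{a\rightarrow b\}$, which genuinely lands in $\textbf{Set}$ precisely because $\mathcal C$ has all Hom sets, so each $\{a\rightarrow b\}$ is an object of $\mathcal U$. The two families of functors are already available: for a fixed first slot the covariant $\textgoth L_x:\mathcal C\rightarrow\textbf{Set}$, $b\mapsto\{x\rightarrow b\}$, and for a fixed second slot the contravariant $\textgoth R^\ltimes_y:\mathcal C\rightarrow\textbf{Set}$, $a\mapsto\{a\rightarrow y\}$. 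They agree on objects, $\textgoth L_x y=\{x\rightarrow y\}=\textgoth R^\ltimes_y x$, which is exactly the compatibility $y,\textgoth L_y;x,\textgoth R_x$ that Lemma II demands of its input.

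Since $Hom$ is required to be contravariant in its first argument, I cannot build a plain functor on $\mathcal C\times\mathcal C$; instead I apply Lemma II with $\mathcal C_1:=\mathcal C^{op}$, $\mathcal C_2:=\mathcal C$, $\mathcal D:=\textbf{Set}$. The role of the lemma's $\textgoth R_x$ (a covariant functor $\mathcal C_2\rightarrow\mathcal D$) is played by $\textgoth L_x$, and the role of the lemma's $\textgoth L_y$ (a covariant functor $\mathcal C_1\rightarrow\mathcal D$) is played by $\textgoth R^\ltimes_y$ regarded covariantly on $\mathcal C^{op}$, which is legitimate by the earlier remark that a contravariant functor $\mathcal C\rightarrow\mathcal D$ is a covariant functor $\mathcal C^{op}\rightarrow\mathcal D$. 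Lemma II then yields a functor $\textgoth F:\mathcal C^{op}\times\mathcal C\rightarrow\textbf{Set}$ with $\textgoth F(a,b)=\{a\rightarrow b\}$, whose slot-components are $\textgoth F_x=\textgoth L_x$ and $\textgoth F_y=\textgoth R^\ltimes_y$. Proposition \ref{bif prop2}, applied with $\mathcal C_1=\mathcal C^{op}$, converts $\textgoth F$ into a bifunctor on $\mathcal C_1^{op}\times\mathcal C_2=\mathcal C\times\mathcal C$ (using $(\mathcal C^{op})^{op}=\mathcal C$); this bifunctor is $Hom$, and its two constituents are exactly the contravariant $\textgoth R^\ltimes$ and covariant $\textgoth L$ Hom functors named in the statement.

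The single hypothesis of Lemma II that must actually be checked is the coherence identity (\ref{eq lem cat}), and this is where the only real content lies — though it is light. Unwinding the substitutions, (\ref{eq lem cat}) asserts that on the relevant Hom set the set map ``pre-compose with $f$'' and the set map ``post-compose with $g$'' commute; for a composable arrow $h$ this is the equality of $(g*h)*f$ with $g*(h*f)$, which is nothing but the associativity axiom 2) of the category $\mathcal C$. Hence no genuine obstruction arises, and the only care required is the variance bookkeeping: keeping straight that the first argument lives in $\mathcal C^{op}$, so that the contravariant Hom functors appear as the ``fix the second slot'' components $\textgoth F_y$ and the covariant ones as the ``fix the first slot'' components $\textgoth F_x$.

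For the natural transformations I invoke Lemma I on $\textgoth F$. An arrow $f:a\rightarrow c$ in $\mathcal C$ is an arrow $f^{op}:c\rightarrow a$ of the first factor $\mathcal C^{op}$, so the stated ($\mathcal C_1$) form of Lemma I gives $\tau_{f^{op}}:\textgoth F_c\rightarrow\textgoth F_a$, that is $f^\dagger:\textgoth L_c\rightarrow\textgoth L_a$, with component at $b$ the precomposition map $*f:\{c\rightarrow b\}\rightarrow\{a\rightarrow b\}$. Reading $f$ instead as an arrow of the second factor $\mathcal C_2=\mathcal C$ and using the ``similar result'' clause of Lemma I gives $\tau_f:\textgoth F_a\rightarrow\textgoth F_c$ fixing the second slot, that is $f_\dagger:\textgoth R^\ltimes_a\rightarrow\textgoth R^\ltimes_c$, with component at $b$ the postcomposition map $f*:\{b\rightarrow a\}\rightarrow\{b\rightarrow c\}$. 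Naturality of both is automatic, being the conclusion of Lemma I, so nothing further need be verified.
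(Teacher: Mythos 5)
Your proposal is correct and takes essentially the same route as the paper's own proof: both instantiate Lemma II with $\textgoth L_x$ and $\textgoth R^\ltimes_y$ (the latter viewed covariantly on $\mathcal C^{op}$) to obtain $\textgoth F:\mathcal C^{op}\times\mathcal C\rightarrow\textbf{Set}$, where the coherence condition (\ref{eq lem cat}) amounts to associativity of composition, then pass to the bifunctor $Hom:\mathcal C\times\mathcal C\rightarrow\textbf{Set}$ via Proposition \ref{bif prop2}, and finally extract $f^\dagger$ and $f_\dagger$ from Lemma I. The only cosmetic difference is that the paper checks (\ref{eq lem cat}) by explicitly computing both composites on an arrow $i$ (each yielding $f*(g*i)$), whereas you identify that computation directly as the associativity axiom.
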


\begin{proof}For every $x$ in $\mathcal C$, there are covariant functors $\textgoth R_x:\mathcal C^{op}\rightarrow\textbf{Set}$ and $\textgoth L_x:\mathcal C\rightarrow\textbf{Set}$; the functor $\textgoth R_x$ is defined by $\textgoth R_x(g^{op}):=(\textgoth R_x^\ltimes g)^{op}$. Notice that $\textgoth L_df$ is the function $f*:\{d\rightarrow a\}\rightarrow\{d\rightarrow c\}$ that sends $h:d\rightarrow a$ into $f*h$. On the other hand, $\textgoth R_a(g^{op})$ is $(\textgoth R^\ltimes_ag)^{op}:\{b\rightarrow a\}\rightarrow\{d\rightarrow a\}$ that sends $i^{op}:b\rightarrow_{op}a$ into $i^{op}*g^{op}$. Therefore, $\textgoth L_df\circ\textgoth R_a(g^{op})$ is the function $(f*)\circ(*g^{op}):\{b\rightarrow a\}\rightarrow\{d\rightarrow c\}$; the arrow function sends $i$ into $f*(i^{op}*g^{op})$, which is $f*(g*i)$.

One can just as easily prove $\textgoth R_c(g^{op})\circ\textgoth L_bf$ is the function $(*g^{op})\circ(f*):\{b\rightarrow a\}\rightarrow\{d\rightarrow c\}$ that sends $i$ into $(f*i^{op})*g^{op}$ which is $f*(i^{op}*g^{op})$. Using the second lemma, we find a functor $\textgoth F:\mathcal C^{op}\times\mathcal C\rightarrow\textbf{Set}$; consequently, we have a bifunctor $Hom:\mathcal C\times\mathcal C\rightarrow\textbf{Set}$.

Finally, apply the first lemma to the functor $\textgoth F$. For any arrow $f^{op}:a\rightarrow_{op}c$ in $\mathcal C^{op}$, we have a natural transformation $f^\dagger:\textgoth L_c\rightarrow\textgoth L_a$. The reader can find $f_\dagger:\textgoth R_a\rightarrow\textgoth R_c$ in the same way.\end{proof}

\subsubsection{Yoneda's Lemma}Here we will see that the construction provided for the natural numbers is not a coincidince. It is a specific application of the following result that generalizes even results from group theory (Cayley's Theorem). We will try to see why Yoneda's Lemma generalizes this result. The Yoneda embedding is provided as a corollary to the lemma and it enables us to prove Cayley's theorem. 

Let $E:\mathcal Cat(\mathcal C,\textbf{Set})\times\mathcal C\rightarrow\textbf{Set}$, be defined by $E(\textgoth C,x):=\textgoth Cx$ as object function. If $\tau\rightarrow_\times f$ is an arrow in the domain, with $\tau:\textgoth C\rightarrow\textgoth D$ as a natural transformation and $f:a\rightarrow c$, then the arrow function of $E$ is defined by the element given in $\tau c,\tau a;\textgoth Df,\textgoth Cf$. That is to say, $E(\tau,f):=\tau c\circ\textgoth Cf$. 

Now let $N:\mathcal Cat(\mathcal C,\textbf{Set})\times\mathcal C\rightarrow\textbf{Set}$ such that $N(\textgoth C,x):=Nat(\textgoth L_x,\textgoth C)$. Suppose $\textgoth C\rightarrow_\times a$ and $\textgoth D\rightarrow_\times c$ are c-objects in the domain such that $\tau:\textgoth C\rightarrow\textgoth D$ and $f:a\rightarrow c$. If we want for $N$ to be a functor, we have to give an arrow function that sends $\tau\rightarrow_\times f$ into a function $N(\tau,f):Nat(\textgoth L_a,\textgoth C)\rightarrow Nat(\textgoth L_c,\textgoth D)$. Take any $\alpha$ in $Nat(\textgoth L_a,\textgoth C)$ and define $N(\tau,f)\alpha:=\tau\cdot\alpha\cdot f^\dagger$; this is the vertical composition of natural transformations where we recall $f^\dagger:\textgoth L_c\rightarrow\textgoth L_a$.

We say that a natural transformation is a \textit{natural isomorphism} if all the components $\tau x$ are isomorphisms of the category in the range of the functors. This serves as a natural transformation from each functor to the other so that we can jump from one functor to the other and back.

\begin{Y Lemma}Let $a$ be any c-object in a category $\mathcal C$, with all Hom sets.\begin{itemize}\item[1)]Let $\textgoth C:\mathcal C\rightarrow\textbf{Set}$ be a functor, then there is a bijective function $\phi_{\textgoth Ca}:Nat(\textgoth L_a,\textgoth C)\rightarrow\textgoth Ca$.\item[2)]$E,N$ are functors and the functions $\phi_{\textgoth Ca}$ are components of a natural isomorphism $\Phi:N\rightarrow E$.\end{itemize}\end{Y Lemma}

\begin{proof}\makebox[5pt][]{}\mbox {}\begin{itemize}\item[1)]Let us define the function $\phi$ so that $\tau:\textgoth L_a\rightarrow\textgoth C$ is transformed by $\tau\mapsto\tau a(1_a)$. We know $\tau$ sends any c-object $x$, in $\mathcal C$, into the function $\tau x:\{a\rightarrow x\}\rightarrow\textgoth Cx$. This means $\tau a(1_a)$ is an element of $\textgoth Ca$. We also know that for any $f:a\rightarrow c$, the relation $\tau c,\tau a;\textgoth Cf,f*$ holds in terms of composition of functions; recall $\textgoth L_af$ is a function $f*:\{a\rightarrow a\}\rightarrow\{a\rightarrow c\}$. Therefore, we may say $\tau c(f*1_a)$ is the same as $\textgoth Cf[\tau a(1_a)]$. If we use the notation for applying functions, we express $f;\textgoth Cf[\tau a(1_a)],\tau c$. Let us suppose $\tau $ and $\sigma$ are different natural transformations in $Nat (\textgoth L_a,\textgoth C)$; that is, there exists an object $c$ such that $\tau c$ and $\sigma c$ are different set functions. As a consequence, we can give an arrow $f\in\{a\rightarrow c\}$ and we verify $\tau cf\neq\sigma cf$. Thus, $\textgoth Cf(\phi\tau)\neq\textgoth Cf(\phi\sigma)$, which implies $\phi\tau\neq\phi\sigma$, and we conlcude $\phi$ is monic. 

Let $x\in\textgoth Ca$, and define $\tau_x:\mathcal{O|C}\rightarrow\mathcal A|\textbf{Set}$ such that $\tau_xc:\{a\rightarrow c\}\rightarrow\textgoth Cc$. The function $\tau_xc$ transforms $f\mapsto\textgoth Cf(x)$. We will show this defines a natural transformation in $Nat(\textgoth L_a,\textgoth C)$. Apply the function $\tau_xc\circ f*$ to an arrow $g\in\{a\rightarrow a\}$:\begin{eqnarray}\nonumber g&;&(\tau_xc\circ f*)g,\tau_xc\circ f*\\\nonumber g&;&\tau_xc(f*g),\tau_xc\circ f*\\\nonumber g&;&\textgoth C(f*g)(x),\tau_xc\circ f*\\\nonumber g&;&(\textgoth Cf\circ\textgoth Cg)(x),\tau_xc\circ f*\\\nonumber g&;&\textgoth Cf[\textgoth Cg(x)],\tau_xc\circ f*\\\nonumber g&;&\textgoth Cf[\tau_xa(g)],\tau_xc\circ f*\\\nonumber g&;&(\textgoth Cf\circ\tau_xa)g,\tau_xc\circ f*\\\nonumber g&;&(\textgoth Cf\circ\tau_xa)g,\tau_xc\circ f*.\end{eqnarray}We have thus proven $\tau_xc,\tau_xa;\textgoth Cf,\textgoth L_af$ and we conclude $\phi$ is bijective.

\item[2)]Now, we would like to show $E,N$ are functors. We begin with $E$, verifying $\textgoth1_\textbf{Set}E(\textgoth C,x)$ is the identity function. The same results from sending $\textgoth C\rightarrow_\times x$ into its unit arrow, and transforming that with $E$; this results in $(1_\textgoth Cx)\circ( \textgoth C1_x)$, which is the composition of the identity function $I_{\textgoth Cx}$ with itself. Take a c-object in the domain, say $\textgoth C\rightarrow_\times a$ and $\textgoth D\rightarrow_\times c$, then the second condition of functors is given by $E(\tau,f):=\tau c\circ\textgoth Cf:\textgoth Ca\rightarrow\textgoth Cc\rightarrow\textgoth Dc$. The last condition is not difficult to prove either; let $\tau\rightarrow_\times f$ and $\sigma\rightarrow_\times g$ be composable arrows in the domain:\begin{eqnarray}\nonumber\sigma\rightarrow_\times g*\tau\rightarrow_\times f&;&E(\sigma\rightarrow_\times g*\tau\rightarrow_\times f),E\\\nonumber\sigma\rightarrow_\times g*\tau\rightarrow_\times f&;&E(\sigma\cdot\tau,g*f),E\\\nonumber\sigma\rightarrow_\times g*\tau\rightarrow_\times f&;&(\sigma\cdot\tau)c\circ\textgoth C(g*f),E\\\nonumber\sigma\rightarrow_\times g*\tau\rightarrow_\times f&;&\sigma c\circ\tau c\circ\textgoth Cg\circ\textgoth Cf,E\\\nonumber\sigma\rightarrow_\times g*\tau\rightarrow_\times f&;&\sigma c\circ\textgoth Dg\circ\tau b\circ\textgoth Cf,E\\\nonumber\sigma\rightarrow_\times g*\tau\rightarrow_\times f&;&E(\sigma,g)\circ E(\tau,f),E.\end{eqnarray}

To show the first condition of functros is valid for $N$, notice $(\textgoth1_\textbf{Set}\circ N)(\textgoth C,x)$ results in the identity function of $Nat(\textgoth L_x,\textgoth C)$. On the other hand, if we apply $N$ to the unit arrow of $\textgoth C\rightarrow_\times x$, we get a function $N(1_\textgoth C,1_x):Nat(\textgoth L_x,\textgoth C)\rightarrow Nat(\textgoth L_x,\textgoth C)$ such that $\alpha\mapsto1_\textgoth C\cdot\alpha\cdot1_x^\dagger$. Since $1_x^\dagger$ is the natural transformation $1_{\textgoth L_x}:\textgoth L_x\rightarrow\textgoth L_x$, we say 1) for functors is true. The second condition for $N$ to be a functor has been shown to be true, by construction. We are left to prove 3); we must show $N(\sigma\cdot\tau,g*f)$ is the same function as $N(\sigma,g)\circ N(\tau,f)$.\begin{eqnarray}\nonumber\alpha&;&N(\sigma\cdot\tau,g*f)\alpha,N(\sigma
\cdot\tau,g*f)\\\nonumber\alpha&;&(\sigma\cdot\tau)\cdot\alpha\cdot(g*f)^\dagger,
N(\sigma\cdot\tau,g*f)\\\nonumber\alpha&;&(\sigma\cdot\tau)\cdot\alpha\cdot
(f^\dagger\cdot g^\dagger),N(\sigma\cdot\tau,g*f)\\\nonumber\alpha&;&
\sigma\cdot(\tau\cdot\alpha\cdot f^\dagger)\cdot g^\dagger,N(\sigma\cdot\tau,g*f)
\\\nonumber\alpha&;&N(\sigma,g)(\tau\cdot\alpha\cdot f^\dagger),N(\sigma\cdot\tau,g*f)\\\nonumber\alpha&;&[N(\sigma,g)\circ N(\tau,f)]\alpha,N(\sigma\cdot\tau,g*f).\end{eqnarray}Here, we are considering that for composable arrows $f,g:a,b\rightarrow b,c$ we have the natural transfomations $f^\dagger:\textgoth L_b\rightarrow\textgoth L_a$ and $g^\dagger:\textgoth L_b\rightarrow\textgoth L_c$.

Now we prove there is a natural transformation from $\Phi:N\rightarrow E$. To verify this, let $\tau\rightarrow_\times f$ be an arrow $\textgoth C\rightarrow_\times a$ into $\textgoth D\rightarrow_\times b$ and let $\alpha:\textgoth L_a\rightarrow\textgoth C$:\begin{eqnarray}\nonumber \alpha&;&[\Phi(\textgoth C,a)\circ N(\tau,f)]\alpha,\Phi(\textgoth C,a)\circ N(\tau,f)\\\nonumber\alpha&;&\phi_{\textgoth Ca}(\tau\cdot\alpha\cdot f^\dagger),\Phi(\textgoth C,a)\circ N(\tau,f)\\\nonumber\alpha&;&[(\tau \cdot\alpha\cdot f^\dagger)b](1_b),\Phi(\textgoth C,a)\circ N(\tau,f)\\\nonumber\alpha&;&[(\tau\cdot\alpha)b\circ f^\dagger b](1_b),\Phi(\textgoth C,a)\circ N(\tau,f)\\\nonumber\alpha&;&[(\tau\cdot\alpha)b]f,\Phi(\textgoth C,a)\circ N(\tau,f)\end{eqnarray}Since $f^\dagger$ is a natural transformation that sends c-objects in $\mathcal C$, into set functions, we know $f^\dagger b$ is a function $\{b\rightarrow_{op} b\}\rightarrow\{b\rightarrow_{op}a \}$, so that $f^\dagger b(1_b)$ is defined as $f^{op}*1_b^{op}$, which is $f$. We continue,\begin{eqnarray}\nonumber\alpha&;&(\tau b\circ\alpha b)f,\Phi(\textgoth C,a)\circ N(\tau,f)\\\nonumber\alpha&;&\tau b(\alpha bf),\Phi(\textgoth C,a)\circ N(\tau,f)\\\nonumber\alpha&;&\tau b[\textgoth Cf(\alpha a(1_a))],\Phi(\textgoth C,a)\circ N(\tau,f)\\\nonumber\alpha&;&(\tau b\circ\textgoth Cf)(\alpha a1_a),\Phi(\textgoth C,a)\circ N(\tau,f)\\\nonumber\alpha&;&E(\tau,f)(\phi_{\textgoth Ca}\alpha),\Phi(\textgoth C,a)\circ N(\tau,f)\\\nonumber\alpha&;&E(\tau,f)[\Phi(\textgoth C,a)\alpha],\Phi(\textgoth C,a)\circ N(\tau,f)\\\nonumber\alpha&;&[E(\tau,f)\circ\Phi(\textgoth C,a)]\alpha,\Phi(\textgoth C,a)\circ N(\tau,f).\end{eqnarray}Notice we are using the fact that $\alpha bf$ is the same as $\textgoth Cf[\alpha a(1_a)]$. With this we have proven $\Phi$ is natural. Now, we only need to notice that the components are bijective functions, to conclude $\Phi$ is a natural isomorphism.\end{itemize}\end{proof}

The result presented below has been proven above, for the most part; it is commonly known as the Yoneda Embedding.

\begin{corollary}There is a contravariant functor $Y^\dagger:\mathcal C\rightarrow\mathcal Cat(\mathcal C,\textbf{Set})$ such that $Y^\dagger x:=\textgoth L_x$. There is a covariant functor $Y_\dagger:\mathcal C\rightarrow\mathcal Cat(\mathcal C^{op},\textbf{Set})$ that makes $Y_\dagger x:=\textgoth R_x$. The functors $Y^\dagger$ and $Y_\dagger$ are full embeddings, with the arrow functions $f\mapsto_{Y^\dagger}f^\dagger$ and $f\mapsto_{Y_\dagger} f_\dagger$.\end{corollary}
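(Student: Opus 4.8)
The plan is to recognize this corollary as a direct repackaging of the Yoneda Lemma just proved, so that almost all the work is already in hand. I would first establish the statement for $Y^\dagger$ and then obtain $Y_\dagger$ by running the identical argument inside $\mathcal C^{op}$, using that $\textgoth R_x$, regarded as a covariant functor $\mathcal C^{op}\to\textbf{Set}$, is precisely the left-Hom functor $\textgoth L_x$ of the category $\mathcal C^{op}$ (since $\{x\to_{op}a\}=\{a\to x\}=\textgoth R_x a$), together with the theorem converting a contravariant functor out of $\mathcal C^{op}$ into a covariant functor out of $\mathcal C$. Three things must be checked for $Y^\dagger$: that $f\mapsto f^\dagger$ is functorial in the contravariant sense, that it is a bijection on each hom-collection (yielding full and faithful), and that $x\mapsto\textgoth L_x$ is monic on objects.

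For functoriality I would record the explicit components. For $f:a\to c$, the component of $f^\dagger:\textgoth L_c\to\textgoth L_a$ at an object $x$ is the precomposition map $*f:\{c\to x\}\to\{a\to x\}$, $h\mapsto h*f$; this is exactly $\textgoth F(f^{op},1_x)$ for the Hom bifunctor $\textgoth F$, as produced by the first lemma. Unit preservation is immediate, since $*1_a$ is the identity, so $1_a^\dagger=1_{\textgoth L_a}$. For composition, given $f:a\to c$ and $g:c\to e$, the component of $f^\dagger\cdot g^\dagger$ at $x$ sends $h\mapsto(h*g)*f=h*(g*f)$ by associativity, which is exactly the component of $(g*f)^\dagger$; hence $(g*f)^\dagger=f^\dagger\cdot g^\dagger$, the contravariant law.

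For the full-embedding claims the key observation is that the Yoneda bijection is literally the inverse of $f\mapsto f^\dagger$. Instantiating part 1) of the Yoneda Lemma at $\textgoth C:=\textgoth L_a$ gives a bijection $\phi_{\textgoth L_a,c}:Nat(\textgoth L_c,\textgoth L_a)\to\textgoth L_a c=\{a\to c\}$, $\tau\mapsto\tau c(1_c)$. Evaluating at $\tau=f^\dagger$ gives $(f^\dagger)c(1_c)=1_c*f=f$, so $\phi_{\textgoth L_a,c}$ inverts the arrow function of $Y^\dagger$ restricted to $\{a\to c\}$. Bijectivity on hom-collections then yields fullness (surjectivity supplies a preimage $f$ for every $\eta:\textgoth L_c\to\textgoth L_a$) and faithfulness (injectivity), via the proposition characterizing full and faithful functors, applied after the contravariant-to-covariant conversion. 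For the object function, $\textgoth L_a=\textgoth L_b$ forces $\{a\to a\}=\textgoth L_a a=\textgoth L_b a=\{b\to a\}$; since $1_a$ lies in the left-hand side it lies in the right-hand side, so its source is simultaneously $a$ and $b$, whence $a=b$ and $Y^\dagger_\mathcal O$ is monic. This makes $Y^\dagger$ a full embedding, and the dual argument over $\mathcal C^{op}$ delivers $Y_\dagger$ with arrow function $f\mapsto f_\dagger$.

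Every computation above is a one-liner, so the genuine care goes entirely into variance bookkeeping: confirming that $f^\dagger$ (built from $f^{op}$ through the Hom bifunctor on $\mathcal C^{op}\times\mathcal C$) points from $\textgoth L_c$ to $\textgoth L_a$ rather than the reverse, and that its $x$-component is precomposition and not postcomposition. The only real subtlety I anticipate is ensuring that the identification of $\textgoth R_x$ with the left-Hom functor of $\mathcal C^{op}$ respects directions, so that the $\mathcal C^{op}$-Yoneda embedding, reinterpreted as a covariant functor on $\mathcal C$, is indeed $Y_\dagger$ sending $f$ to $f_\dagger$; once the opposite-arrow conventions are pinned down this is automatic.
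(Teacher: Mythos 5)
Your proposal is correct and its core is exactly the paper's own argument: instantiate part 1) of the Yoneda Lemma at $\textgoth C:=\textgoth L_b$ (your $\textgoth L_a$) and observe that the resulting bijection $\phi$ satisfies $\phi f^\dagger=f^\dagger c(1_c)=f$, so the arrow function $f\mapsto f^\dagger$ is its inverse, giving fullness and faithfulness. The extra steps you supply --- explicit contravariant functoriality of $(\cdot)^\dagger$, monicity of the object function via $1_a\in\{a\rightarrow a\}=\{b\rightarrow a\}$, and the identification $\textgoth R_x=\textgoth L_x^{op}$ to dualize for $Y_\dagger$ --- are details the paper compresses into ``has been proven above, for the most part,'' not a different route.
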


\begin{proof}We only need to prove the arrow function is bijective. In the lemma, make $\textgoth C:=\textgoth L_b$, so that we have a bijective function $\phi:Nat(\textgoth L_a,\textgoth L_b)\rightarrow\textgoth L_ba$; this is a bijection $\{b\rightarrow a\}\rightarrow Nat(\textgoth L_a,\textgoth L_b)$. Given any arrow $f$, we know there is a natural transformation $f^\dagger$ in $Nat(\textgoth L_a,\textgoth L_b)$. Additionally, we have proven $\phi f^\dagger:=f^\dagger a(1_a)=f$.\end{proof}

\begin{corollary}Suppose $G$ is a group such that the collection of objects of operation is a set, $\mathcal A|G\in\textbf{Set}$. Then $G$ is isomorphic to a group of transformations $G_\dagger,G^\dagger:\{e\rightarrow e\}\rightarrow\{e\rightarrow e\}$.\end{corollary}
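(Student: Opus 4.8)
The plan is to exhibit $G$ as a one c-object category in which every arrow is invertible, and then apply the Yoneda machinery of the preceding lemma and corollary. By hypothesis $\mathcal A|G=\{e\rightarrow e\}$ is a set, so $G$ has all Hom sets (there is only the one), and the functors $\textgoth L_e,\textgoth R_e:G\rightarrow\textbf{Set}$ are available; $\textgoth L_e$ sends the sole object $e$ to the set $\{e\rightarrow e\}$ and each arrow $g$ to its left operation $g*$. The Yoneda embedding corollary then furnishes the full embeddings $Y^\dagger$ and $Y_\dagger$, so the entire task comes down to reading off, in this one-object situation, what these embeddings say concretely.

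The key step is to identify $Nat(\textgoth L_e,\textgoth L_e)$ with a group of transformations of the set $\{e\rightarrow e\}$. A bridge $\tau:\textgoth L_e\rightarrow\textgoth L_e$ has a single component $\tau e:\{e\rightarrow e\}\rightarrow\{e\rightarrow e\}$, and naturality against each arrow $g$ says $\tau e\circ(g*)$ and $(g*)\circ\tau e$ agree. Setting $h:=\tau e(1_e)$ and using $g=(g*)(1_e)$ together with the unit law, I would compute $\tau e(g)=\tau e((g*)(1_e))=(g*)(\tau e(1_e))=g*h$, so that $\tau e$ is exactly the right operation $*h$; conversely every $*h$ commutes with all left operations by associativity, hence is such a bridge, and by Proposition \ref{prop lr can grp} it is a bijection. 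Thus $Nat(\textgoth L_e,\textgoth L_e)$ is precisely the collection $G^\dagger$ of right-operation maps, a subcollection of the group of transformations $\{\{e\rightarrow e\}f_{iso}\{e\rightarrow e\}\}$. The symmetric computation with $\textgoth R_e$, whose natural endomorphisms commute with the right operations and so are the left operations, produces the maps $G_\dagger$.

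With this in hand I would invoke that $Y^\dagger$ is a full embedding: its arrow function $f\mapsto f^\dagger$ is a bijection $\{e\rightarrow e\}\rightarrow Nat(\textgoth L_e,\textgoth L_e)=G^\dagger$, and as the arrow part of a functor it carries the composition $*$ of $G$ into the composition $\circ$ of transformations. That makes $G\cong G^\dagger$ an isomorphism of groups, and the covariant $Y_\dagger$ yields $G\cong G_\dagger$ in the same way. That $G^\dagger$ and $G_\dagger$ are themselves groups of transformations is immediate, since each is closed under $\circ$ and under inverse (the inverse of $*h$ being $*h^{-1}$), matching the corollary to Theorem \ref{auto 1}.

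I expect the naturality-to-right-operation identification to be the main obstacle, for it is precisely what converts the abstract $Nat$ set of the embedding into an honest collection of bijections of $\{e\rightarrow e\}$. A secondary bookkeeping nuisance is variance: since $Y^\dagger$ is contravariant, the Yoneda formula $f^\dagger e(1_e)=f$ shows $f^\dagger e=*f$, so $f\mapsto f^\dagger$ is strictly an anti-homomorphism onto $G^\dagger$; one reconciles this with the group structure either by composing with the inverse map $x\mapsto x^{-1}$ or by noting, as in the discussion of dual orders, that a group and its opposite coincide, so $G^\dagger$ is a group and $G$ is isomorphic to it regardless.
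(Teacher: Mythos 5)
Your proof is correct and takes essentially the same route as the paper's: both apply the Yoneda embedding with $\textgoth C:=\textgoth L_e$ and then use Proposition \ref{prop lr can grp} to realize the objects of operation as bijections of the set $\{e\rightarrow e\}$. Your explicit computation identifying $Nat(\textgoth L_e,\textgoth L_e)$ with the right operations, and your repair of the contravariance (anti-homomorphism) issue via $x\mapsto x^{-1}$, simply make precise details that the paper's terser argument leaves implicit.
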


\begin{proof}Let $\mathcal C:=G$, be the group. The functor $\textgoth C:=\textgoth L_e:G\rightarrow\textbf{Set}$ is as we have described before; the c-object $e$ is sent into a set $S$, while objects of operation are sent into set functions $S\rightarrow_{iso}S$. The Yoneda embedding provides a functor $Y^\dagger:G\rightarrow\mathcal Cat(G,\textbf{Set})$, where the c-object is transformed into the Hom functor $\textgoth L_e$. The existence of inverse objects of operation, in the group $G$, implies $f*$ and $*f$ are isomorphisms (bijective function) for the set $\mathcal A|G$; this was proven in proposition (\ref{prop lr can grp}). This means we indeed have a group of transformations for $\mathcal A|G$, if we replace the image by a category where $S$ is c-object, instead of $\textgoth L_e$.\end{proof}

In building the group of integers, we took a category $\mathbb Z_\dagger$ as the c-object of the group. The objects of operation are of course the integers which we considered as automorphisms for the category mentioned. Then, the group of functors $\mathbb Z^\dagger$ was constructed as compositions and inverses of $+1$. This is the functor we used to later prove that we can view the discrete number system as a group. This is the functor that gave an operation.

			\subsubsection{Representable Functors}An object in the collection $\mathcal C\rightarrow_\times Nat(\textgoth L_x,\textgoth C)$, is said to be a \textit{representation of} $\textgoth C:\mathcal C\rightarrow\textbf{Set}$. The functor is said to be \textit{representable} and $x$ is called a \textit{representing object}.

We see that given two representations of a functor, it is possible to find an isomorphism, in $\mathcal C$, that relates the natural transformations.

\begin{proposition}Let $(x,\beta)$ and $(y,\gamma)$ be two representations of a functor $\textgoth F:\mathcal C\rightarrow\textbf{Set}$. Then there is a unique isomorphism $f:x\rightarrow y$ such that $\gamma=\beta\cdot f^\dagger$.\end{proposition}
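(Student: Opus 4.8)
The plan is to reduce the whole statement to the Yoneda embedding $Y^\dagger:\mathcal C\rightarrow\mathcal Cat(\mathcal C,\textbf{Set})$ of the corollary above, which is a full and faithful contravariant functor sending an arrow $f:x\rightarrow y$ to the natural transformation $f^\dagger:\textgoth L_y\rightarrow\textgoth L_x$. Here I read a representation $(x,\beta)$ as carrying a \emph{natural isomorphism} $\beta:\textgoth L_x\rightarrow\textgoth F$ (the notion of natural isomorphism having just been introduced); this is what makes $\textgoth F$ genuinely representable, and it is what the argument needs, since I will have to invert $\beta$ and $\gamma$ vertically. Thus I would begin by forming the vertical composite $\eta:=\beta^{-1}\cdot\gamma:\textgoth L_y\rightarrow\textgoth L_x$, which is again a natural isomorphism, and observe that $\beta\cdot\eta=\gamma$ by left-cancelling $\beta$ against $\beta^{-1}$.

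First I would produce the arrow and settle the identity and uniqueness. Since $Y^\dagger$ is full, there is an arrow $f:x\rightarrow y$ with $f^\dagger=\eta$, and since $Y^\dagger$ is faithful this $f$ is the only arrow $x\rightarrow y$ whose image is $\eta$. Substituting $f^\dagger=\eta$ into $\beta\cdot\eta=\gamma$ gives $\gamma=\beta\cdot f^\dagger$, the required equation. Uniqueness of $f$ subject to this equation is then immediate: if $f'$ also satisfies $\gamma=\beta\cdot(f')^\dagger$, then cancelling the isomorphism $\beta$ on the left gives $(f')^\dagger=\beta^{-1}\cdot\gamma=\eta=f^\dagger$, so $f'=f$ by faithfulness of $Y^\dagger$.

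The step I expect to be the main obstacle is verifying that $f$ is an isomorphism, precisely because of the contravariance of $Y^\dagger$ and the convention that $g*f$ means ``first $f$, then $g$'': one must track composition orders carefully. I would again invoke fullness to obtain an arrow $g:y\rightarrow x$ with $g^\dagger=\eta^{-1}=\gamma^{-1}\cdot\beta$. Because $Y^\dagger$ is a contravariant functor, it carries $*$ to vertical composition in reversed order, so $(g*f)^\dagger=f^\dagger\cdot g^\dagger=\eta\cdot\eta^{-1}=1_{\textgoth L_x}=(1_x)^\dagger$ and, symmetrically, $(f*g)^\dagger=g^\dagger\cdot f^\dagger=\eta^{-1}\cdot\eta=1_{\textgoth L_y}=(1_y)^\dagger$. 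Faithfulness of $Y^\dagger$ then forces $g*f=1_x$ and $f*g=1_y$, exhibiting $g$ as a two-sided inverse of $f$; hence $f:x\rightarrow y$ is an isomorphism, completing the proof. (Alternatively one could quote that full and faithful functors reflect isomorphisms, but since that lemma is not isolated in the text, the explicit construction of the inverse via $\eta^{-1}$ is the safer route.)
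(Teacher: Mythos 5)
Your proposal is correct and follows essentially the same route as the paper's own proof: both form the composite natural isomorphism $\beta^{-1}\cdot\gamma:\textgoth L_y\rightarrow\textgoth F\rightarrow\textgoth L_x$ and invoke the Yoneda embedding to produce $f$ with $f^\dagger=\beta^{-1}\cdot\gamma$. Where you genuinely diverge is the isomorphism step, and your treatment is the stronger one: the paper disposes of that step by citing the proposition that functors send isomorphisms into isomorphisms, but that result points in the wrong direction here --- what is needed is that the full and faithful embedding $Y^\dagger$ \emph{reflects} isomorphisms, i.e.\ that $f^\dagger$ being a natural isomorphism forces $f$ itself to be invertible in $\mathcal C$. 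Your explicit construction supplies exactly this missing argument: fullness yields $g$ with $g^\dagger=\eta^{-1}$, contravariance gives $(g*f)^\dagger=f^\dagger\cdot g^\dagger=(1_x)^\dagger$ and $(f*g)^\dagger=g^\dagger\cdot f^\dagger=(1_y)^\dagger$, and faithfulness then forces $g*f=1_x$ and $f*g=1_y$. You also spell out the verification of $\gamma=\beta\cdot f^\dagger$ and the uniqueness clause (cancel the isomorphism $\beta$ on the left, then apply faithfulness), neither of which appears in the paper's three-line proof; so your write-up both matches the paper's strategy and repairs its one real gap.
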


\begin{proof}We can define a composition of natural isomorphisms, $\beta^{-1}\circ\gamma:\textgoth L_y\rightarrow\textgoth C\rightarrow \textgoth L_x$, where $\beta^{-1}$ is the natural isomorphism of inverse arrows. From Yoneda's embedding, we know there is an arrow $f$ such that $\beta^{-1}\circ\gamma=f^\dagger$. We have proven that functors send isomorphisms into isomorphisms; proposition \ref{iso map}.\end{proof}

	\section{Set Function}

		\subsection{Image}

Here we will give relations for the image and preimage of set functions. If we have an object in $fA$, for some $A\subseteq Dom~f$, then there is an $x\in A$ such that our original object, in $fA$, is the object $fx$. For this reason, we will be justified in denoting our object of interest with $(fx)\in fA$. The notation says that $(fx)$ is an object in $Range~f$, such that $(fx)=fx$.

When considering a family of sets, we also consider another family, to be regarded as the image of the original. Let $\mathcal{X}=\{A\}_{A\in\mathcal{X}}$ be a family of subsets of $Dom~f$. The \textit{image of family $\mathcal{X}$}, is the family that consists of the images of sets in $\mathcal{X}$. This means, $f[[\mathcal{X}]]=\{fA\}_{A\in\mathcal{X}}$. Let us suppose we have a family $\mathcal{Y}=\{B\}_{B\in\mathcal{Y}}$ of subsets $B\subseteq Range~f$. We define the \textit{inverse image of family $\mathcal{Y}$}, as the family consisting of the sets that are inverse image of sets in $\mathcal{Y}$. Let $f^{-1}[[\mathcal{Y}]]$ be the inverse image of $\mathcal{Y}$, and take an object of it. Then there exists $B\in\mathcal{Y}$ such that our original object is the inverse image of $B$. We are justified in representing an arbitrary object in $f^{-1}[[\mathcal{Y}]]$, with $(f^{-1}B)$.

		\subsubsection{Image and Inclusions}

The image and preimage preserve subsets. That is, $fA\subseteq fB$ and $f^{-1}A\subseteq f^{-1}B$; the first relation holds given $A\subseteq B\subseteq Dom~f$ and the second holds given $A\subseteq B\subseteq Range~f$.

\begin{proposition}Let $f:Dom~f\rightarrow Range~f$ be a set function. Then, for every $A\subseteq Dom~f$ and $B\subseteq Range~f$ we verify \begin{itemize}\item[1)]$A\subseteq f^{-1}fA$, and $A=f^{-1}fA$ if $f$ is monic.\item[2)]$B\supseteq ff^{-1}B$, and $B=ff^{-1}B$ if $f$ is onto.\end{itemize}\end{proposition}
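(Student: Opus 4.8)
The plan is to prove each of the four inclusions by a direct element chase, using only the definitions of image and preimage given earlier (recall $fA$ consists of the objects $fx$ for $x\in A$, while $f^{-1}B$ consists of the objects $x$ with $fx\in B$), and invoking the monic and onto hypotheses at precisely the two points where they are needed. The unconditional inclusions $A\subseteq f^{-1}fA$ and $ff^{-1}B\subseteq B$ require no hypothesis at all; the reverse inclusions are exactly where monicity and surjectivity enter.

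For 1), I would first establish $A\subseteq f^{-1}fA$. Take any $x\in A$; by definition of image $fx\in fA$, and since $fx$ lands in $fA$, the object $x$ belongs to the preimage $f^{-1}fA$. For the reverse inclusion assuming $f$ is monic, take $x\in f^{-1}fA$, so that $fx\in fA$; by definition of image there is some $a\in A$ with $fx=fa$. Because $f$ is monic---every object of $Im~f$ appears in exactly one relation---the equality $fx=fa$ forces $x=a$, whence $x\in A$. This gives $f^{-1}fA\subseteq A$, and combined with the first inclusion we obtain $A=f^{-1}fA$.

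For 2), the argument is dual. I would first show $ff^{-1}B\subseteq B$: any object of $ff^{-1}B$ has the form $fx$ for some $x\in f^{-1}B$, and $x\in f^{-1}B$ means precisely that $fx\in B$, so $fx\in B$ as required. For the reverse inclusion assuming $f$ is onto, take $y\in B\subseteq Range~f$; since $f$ is onto there is $x$ in the domain with $fx=y$, and then $fx=y\in B$ shows $x\in f^{-1}B$, so $y=fx\in ff^{-1}B$. This gives $B\subseteq ff^{-1}B$ and hence $B=ff^{-1}B$.

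There is no real obstacle here; the content is entirely bookkeeping. The one point deserving care is locating the hypotheses correctly: monicity is used solely to pass from $fx=fa$ to $x=a$ in part 1), and surjectivity is used solely to produce a preimage of each $y\in B$ in part 2). Since the two outer inclusions hold for an arbitrary set function, the main discipline is to avoid silently appealing to either hypothesis when it is not in force, and to keep the directions of $\subseteq$ and $\supseteq$ straight in the second part where the statement is phrased with $ff^{-1}B\subseteq B$ rather than the reverse.
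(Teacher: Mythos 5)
Your proof is correct and follows essentially the same route as the paper's own argument: a direct element chase from the definitions of image and preimage, with monicity invoked only to collapse $fx=fa$ to $x=a$ and surjectivity only to produce a preimage of each $y\in B$. The paper's version is merely terser (it compresses the monic step into a single biconditional), so there is nothing to change.
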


\begin{proof}$x\in A$ implies $fx\in fA$, which is true if and only if $x\in f^{-1}fA$. If $f$ is monic, then $x\in A$ if and only if $fx\in fA$, which proves the equality holds.

We have $(fx)\in ff^{-1}B$ if and only if there exists $x\in f^{-1}B$ such that $(fx)=fx\in B$. Let $f$ be onto, then for every $(fx)\in B$ there exists $x\in f^{-1}B$ such that $(fx)=fx$, which proves $(fx)\in ff^{-1}B$.\end{proof}

Using the results so far given, we get \begin{eqnarray}\nonumber fA\subseteq B &\Longleftrightarrow&A\subseteq f^{-1}B\end{eqnarray}

which was to be expected since we define the inverse image by $x\in f^{-1}A\Leftrightarrow fx\in A$. If $f$ is onto,

 \begin{eqnarray}\nonumber  f^{-1}B\subseteq A&\Longrightarrow&B\subseteq fA.\end{eqnarray}

If $f$ is monic, then

 \begin{eqnarray}\nonumber B\subseteq fA&\Longrightarrow&f^{-1}B\subseteq A.\end{eqnarray}

		\subsubsection{Image and Set Operations}

The difference is preserved under preimage; $x\in f^{-1}(B-A)~\Leftrightarrow~fx\in B-A~\Leftrightarrow~fx\in B$ and $fx\notin A~\Leftrightarrow~x\in f^{-1}B$ and $x\notin f^{-1}A~\Leftrightarrow~x\in f^{-1}B-f^{-1}A$. We get the following result, as a consequence of this:

\begin{eqnarray}f^{-1}(Range~f-A)=Dom~f-f^{-1}A.\nonumber\end{eqnarray}

We will generally say $A^c$ is $X-A$ if we accept that all work will be done in a certain set $X$. If we are considering the sets $Dom~f$ and $Range~f$, we may re-write the above expression as $f^{-1}A^c=(f^{-1}A)^c$. Let $B\subseteq Range~f$, then $f|_{A}^{-1}B=f|_{A}^{-1}[(B^c)^c]=A-f|_A^{-1}B^c=A-(f|^{-1}_AB)^c=A\cap f|_A^{-1}B$. It is easy to prove $A\cap f|_A^{-1}B=A\cap f^{-1}B$, so we conclude \begin{eqnarray}f|_A^{-1}B=A\cap f^{-1}B.\end{eqnarray}This means the inverse image of $B$, under $f|_A$, is equal to $A\cap f^{-1}B$.

\begin{proposition}Let $\mathcal{X}=\{A\}_{A\in\mathcal{X}}$ a family of subsets $A\subseteq Dom~f$ and $\mathcal{Y}=\{B\}_{B\in\mathcal{Y}}$ a family of subsets $B\subseteq Range~f$. Then \begin{eqnarray}f\bigcup_{A\in\mathcal{X}}A&=&\bigcup_{fA\in f[[\mathcal{X}]]} fA\\\nonumber\\\nonumber f\bigcap_{A\in\mathcal{X}}A&\subseteq&\bigcap_{fA\in f[[\mathcal{X}]]}fA\\\nonumber\\\nonumber f^{-1}\bigcup_{B\in\mathcal{Y}}B&=&\bigcup_{f^{-1}B\in f^{-1}[[\mathcal{Y}]]} f^{-1}B\\\nonumber\\\nonumber  f^{-1}\bigcap_{B\in\mathcal{Y}}B&=&\bigcap_{f^{-1}B\in f^{-1}[[\mathcal{Y}]]}f^{-1}
B.\end{eqnarray}We verify equality in the second relation, given $f$ is monic.\end{proposition}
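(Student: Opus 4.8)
The plan is to prove all four relations by membership (element-chasing) arguments, exploiting the defining biconditional for preimage, namely $x\in f^{-1}B\Leftrightarrow fx\in B$, together with the convention that a typical object of $fA$ is written $(fx)$ with $x\in A$ and $(fx)=fx$. I would use only two facts about families: $z\in\bigcup\mathcal{X}$ iff $z\in A$ for some $A\in\mathcal{X}$, and $z\in\bigcap\mathcal{X}$ iff $z\in A$ for every $A\in\mathcal{X}$. The overall strategy is to observe that three of the four statements reduce to chains of biconditionals (hence equalities, with no hypothesis on $f$), and to isolate the one place, relation 2, where only one implication survives in general.

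First I would dispatch the two preimage relations. For the union, $x\in f^{-1}\bigcup_{B\in\mathcal{Y}}B$ holds iff $fx\in\bigcup_{B\in\mathcal{Y}}B$, iff $fx\in B$ for some $B\in\mathcal{Y}$, iff $x\in f^{-1}B$ for some such $B$, iff $x\in\bigcup_{f^{-1}B\in f^{-1}[[\mathcal{Y}]]}f^{-1}B$. The intersection case is verbatim the same with ``for some'' replaced by ``for every'' throughout; the point worth flagging is that the biconditional $fx\in B\Leftrightarrow x\in f^{-1}B$ passes cleanly through a universal quantifier, which is exactly why preimage commutes with intersection even though image will not. Relation 1, the image of a union, is handled the same way on a representative $(fx)$: membership in $f\bigcup_{A\in\mathcal{X}}A$ says there is $x\in\bigcup_{A}A$ with $(fx)=fx$, i.e. some $A$ with $x\in A$, which is equivalent to $(fx)\in fA$ for some $A$, i.e. to membership in $\bigcup_{fA\in f[[\mathcal{X}]]}fA$. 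All biconditionals, so equality holds unconditionally.

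The main obstacle is relation 2 and, specifically, its equality clause under the monic hypothesis. The forward inclusion $f\bigcap_{A}A\subseteq\bigcap_{A}fA$ is immediate: if $(fx)=fx$ with $x\in\bigcap_{A}A$, then $x\in A$ for every $A$, so $fx\in fA$ for every $A$. The reverse inclusion fails in general because an object of $\bigcap_{A}fA$ need only equal $fx_A$ for some witness $x_A\in A$, and these witnesses may differ from one $A$ to another, so there need be no single preimage lying in every $A$. Here I would invoke that $f$ is monic, in the sense defined earlier (each object of $Im~f$ appears in exactly one arrow): given $y\in\bigcap_{A}fA$, pick for each $A$ a witness $x_A\in A$ with $fx_A=y$; monicity forces all the $x_A$ to be one and the same object $x$, whence $x\in A$ for every $A$, so $x\in\bigcap_{A}A$ and $y=fx\in f\bigcap_{A}A$.

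I would close by remarking that this last step uses no choice principle: since $f$ is monic each fiber $f^{-1}[\{y\}]$ is a singleton, so the witness $x$ is determined, not chosen. This injectivity step is the only genuinely non-formal point in the whole argument; everything else is the mechanical transfer of the quantifier defining $\bigcup$ or $\bigcap$ across the membership biconditionals for image and preimage.
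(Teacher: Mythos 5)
Your proof is correct and follows essentially the same route as the paper's: membership chasing through the defining biconditionals for image and preimage, with the monic hypothesis used exactly as the paper uses it, to force the per-set witnesses $x_A$ in $\bigcap_{fA\in f[[\mathcal{X}]]}fA$ to coincide in a single $x\in\bigcap_{A\in\mathcal{X}}A$. Your closing remark that monicity makes the witness determined rather than chosen is a nice clarification the paper leaves implicit, but the substance of the argument is the same.
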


\begin{proof}\makebox[5pt][]{}\mbox {}\begin{itemize}\item[1)]We know $(fx)\in f\bigcup_{A\in\mathcal{X}}A$ if and only if there exists $x\in\bigcup_{A\in\mathcal{X}}A$ such that $(fx)=fx$. This is the same as saying $(fx)\in fA$, for some $fA\in f[[\mathcal{X}]]$.\item[2)]The relation $(fx)\in f\bigcap_{A\in\mathcal{X}}A$ holds if and only if $(fx)=fx$, for some $x\in \bigcap_{A\in\mathcal{X}}A$. Now, let $(fA)\in\ f[[\mathcal{X}]]$. Then $(fA)=fA$, for some $A\in\mathcal{X}$ such that $fx\in fA$. In other words, $(fx)\in(fA)$, for every $(fA)\in f[[\mathcal{X}]]$.

Take $(fx)\in\bigcap_{fA\in f[[\mathcal{X}]]}fA$. This is stating that $(fx)$ is in the image of every $A\in\mathcal{X}$. This holds if and only if, for every $A\in\mathcal{X}$, there exists $x_a\in A$ such that $(fx)=fx_a$. If $f$ is monic, then  $x_a=x$, for some $x$ that is the same for all $A\in\mathcal{X}$. We conclude $(fx)=fx\in f\bigcap_{A\in\mathcal{X}}A$ because $x\in\bigcap_{A\in\mathcal{X}}A$.\item[3)]It is from the definition of inverse image that we have $fx\in\bigcup_{B\in\mathcal{Y}}B$, for every $x\in f^{-1}\bigcup_{B\in\mathcal{Y}}B$. Then, $x\in f^{-1}B$, for some $B\in\mathcal{Y}$. We conclude $x\in\bigcup_{B\in\mathcal{Y}}f^{-1}B$. All the implications in the argument are $\Leftrightarrow$.\item[4)]$x\in f^{-1}\bigcap_{B\in\mathcal{Y}}B$ if and only if $fx\in\bigcap_{B\in\mathcal{Y}}B$. It follows from this that $x\in f^{-1}B$, for every $B$.\end{itemize}\end{proof}

		\subsubsection{Fiber}

We say that the preimage of a singleton $\{z\}\subseteq Im~f$ is the \textit{fiber for $z$, under $f$}. We will express this by $f^{-1}\{z\}$ or $f^{-1}[z]$, in place of the strict notation $f^{-1}[\{z\}]$. We notice that $x\in f^{-1}[z]\Leftrightarrow x\in f^{-1}\{z\}\Leftrightarrow fx\in\{z\}\Leftrightarrow fx=z$. It is clear that $f^{-1}[z]=\emptyset$ implies $z\in Range~f-Im~f$. From the definition of function we know every object in the domain belongs to exactly one fiber of $f$. If $fA=z\in Im~f$, then $A\subseteq f^{-1}[z]$. A function with $z\in Im~f$ such that $f^{-1}[z]=Dom~f$ is called \textit{constant function to z} and we write $\rightarrow z$.

A function is onto if and only if every fiber of $f$ is non-empty. Let $f$ be a bijective function and let $g$ be the inverse function of $f$, then $x\in f^{-1}[z]$ if and only if $z\in g^{-1}[x]$. Said differently, this last means $x\in f^{-1}\{z\}$ if and only if $z\in(f^{-1})^{-1}\{x\}$; we take notice that $f^{-1}\{z\}$ is the inverse image of $\{z\}$, under $f$, while $(f^{-1})^{-1}\{x\}$ is representative of the inverse image of $\{x\}$ under $f^{-1}$.

Let us consider the family $f^{-1}\{\{B\}\}:=f^{-1}[[\{\{b\}\}_{b\in B}]]=\{f^{-1}\{b\}\}_{b\in B}$, which represents inverse image of the family $\{\{b\}\}_{b\in B}$.

\begin{proposition}Let $f$ be a monic function and take $A\subseteq Dom~f$. Then   $fA=B\Leftrightarrow A=\bigcup f^{-1}\{\{B\}\}$.\label{FIB}\end{proposition}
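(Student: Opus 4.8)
The plan is to collapse both implications onto the single identity $\bigcup f^{-1}\{\{B\}\}=f^{-1}B$, and then read the biconditional off the two earlier results describing $f^{-1}fA$ and $ff^{-1}B$.

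First I would record the core identity. By definition $f^{-1}\{\{B\}\}=f^{-1}[[\{\{b\}\}_{b\in B}]]=\{f^{-1}\{b\}\}_{b\in B}$, so its union is exactly the union of the fibers of the elements of $B$. Since $\bigcup\{\{b\}\}_{b\in B}=B$, the distributivity of inverse image over unions (the third relation in the proposition on image and set operations) gives $\bigcup f^{-1}\{\{B\}\}=f^{-1}\bigcup\{\{b\}\}_{b\in B}=f^{-1}B$. Alternatively one checks this by hand: $x\in\bigcup f^{-1}\{\{B\}\}$ holds iff $fx=b$ for some $b\in B$, i.e. iff $fx\in B$, i.e. iff $x\in f^{-1}B$. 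With this in place the claim reduces to $fA=B\Leftrightarrow A=f^{-1}B$.

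For the forward implication I would assume $fA=B$, so that $f^{-1}B=f^{-1}fA$; because $f$ is monic, the first part of the earlier proposition on $f^{-1}fA$ yields $f^{-1}fA=A$, and combining this with the core identity gives $\bigcup f^{-1}\{\{B\}\}=A$. For the converse I would assume $A=\bigcup f^{-1}\{\{B\}\}=f^{-1}B$, whence $fA=ff^{-1}B$. Here I would invoke $B\subseteq Im~f$, which is built into the setup since each fiber $f^{-1}\{b\}$ is only defined for $b\in Im~f$, so the index set of $\{\{B\}\}$ lies in $Im~f$. Granting $B\subseteq Im~f$, I would prove $ff^{-1}B=B$ directly: the inclusion $ff^{-1}B\subseteq B$ is the first half of part 2) of the earlier proposition, and conversely any $b\in B\subseteq Im~f$ can be written $b=fx$, so $x\in f^{-1}B$ and $b=fx\in ff^{-1}B$. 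Hence $fA=B$.

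The main obstacle is precisely this last equality $ff^{-1}B=B$: part 2) of the earlier proposition only gives it when $f$ is onto, whereas here $f$ is merely monic. The resolution is to observe that the surjectivity actually needed is onto $Im~f$, not onto $Range~f$, and that this holds automatically as soon as $B\subseteq Im~f$ — a containment forced by the definition of fiber. Once the core identity and this refinement are in hand, every remaining step is a routine substitution.
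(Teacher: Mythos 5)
Your proof is correct, and it is organized differently from the paper's. You first collapse the family union to a single preimage, $\bigcup f^{-1}\{\{B\}\}=f^{-1}B$, and then settle the biconditional by citing the two earlier identities $A=f^{-1}fA$ (for $f$ monic) and $ff^{-1}B=B$ (which you repair so that it only needs $B\subseteq Im~f$ rather than $f$ onto). The paper never makes this reduction; it works termwise with the families instead. For the direction $A=\bigcup f^{-1}\{\{B\}\}\Rightarrow fA=B$ it distributes the image over the union of fibers and collapses each $ff^{-1}\{b\}$ to $\{b\}$; for the converse it writes $A=\bigcup_{a\in A}\{a\}$, uses monicity in the pointwise form $f^{-1}f\{a\}=\{a\}$, and reindexes the union over $b\in fA=B$. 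The mathematical content is the same --- both arguments ultimately rest on monicity and on the fibers over $B$ being nonempty --- but your version is more modular: it reuses the stated propositions instead of redoing them on singletons, and it isolates exactly where the hypothesis $B\subseteq Im~f$ enters, a point the paper leaves implicit in the step $ff^{-1}\{b\}=\{b\}$ (which is false for $b\notin Im~f$, so the proposition as stated really does require that reading of the fiber notation, as you observe). What the paper's termwise manipulation buys in exchange is a form of the argument that is reused verbatim in the later Lemma II on direct and direct inverse images, where the decomposition of $A$ into fibers, rather than the identity $A=f^{-1}B$, is the object actually needed.
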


\begin{proof}Suppose $A=\bigcup f^{-1}\{\{B\}\}=\bigcup\{f^{-1}\{b\}\}_{b\in B}$, then \begin{eqnarray}\nonumber fA&=&f\bigcup\{f^{-1}\{b\}\}_{b\in B}\\\nonumber&=&\bigcup f\{f^{-1}\{b\}\}_{b\in B}\\\nonumber&=&\bigcup\{\{b\}\}_{b\in B}\\\nonumber&=&\bigcup_{b\in B}\{b\}.\end{eqnarray}

If on the contrary, $fA=B$, then

\begin{eqnarray}\nonumber A&=&\bigcup_{a\in A}\{a\}\\\nonumber&=&\bigcup_{a\in A}f^{-1}f\{a\}\\\nonumber&=&\bigcup_{b\in fA}f^{-1}\{b\}\\\nonumber&=&\bigcup\{f^{-1}\{b\}\}_{b\in B}.\end{eqnarray}

\end{proof}

	\subsection{Quotient Sets and Decomposition of Functions}

We start this section by giving a result that characterizes onto functions as functions that have and right inverse. Consider the family $\{\{Im~f\}\}=\{\{x\}\}_{x\in Im~f}$, which means it consists of the sets $\{x\}$, where $x\in Im~f$. The inverse image, $f^{-1}[[\{Im~f\}]]$, is a family of sets in $Dom~f$ which turn out to be the fibers of $f$. We will view $f^{-1}[[\{\{Im~f\}\}]]$ as a simple set, in which we will not take into account what the objects of the sets in the family are; we only care for the sets of the family.

A function can always be expressed as a composition of an inmersion and an onto function; a composition of one onto function and one monic function. Let $f|^{Im~f}:Dom~f\rightarrow Im~f$, denote the function $f$ restricted to the image.

\begin{lemma2.1}For any $f$ and $\iota_f:Im~f\rightarrow Range~f$ we verify $f=\iota_f\circ f|^{Im~f}$.\end{lemma2.1}

Further on, we will complete this description by expressing a function as the composition of three functions, one of each type: onto, bijective, and monic.

Given a set function $f$, we will give an equivalence relation $E_f$ defined for $Dom~f$. We say two objects $x,y\in Dom~f$ are related, $xE_fy$, if $fx=fy$. We call such a relation the \textit{image equivalence of f}. Another way of seeing this is $$xE_fy\Longleftrightarrow x,y\in f^{-1}[z],$$for some $z\in Im~f$. We may conclude that the fibers of $f$ form the simple equivalence reations of $E_{f}$.

Define a new domain $Dom~f/E_f:=f^{-1}[[\{\{Im~f\}\}]]$, for the function $$f/E_f:Dom~f/E_f\rightarrow Im~f.$$ Naturally, we define $f^{-1}[z]\mapsto_{f/E_f}z$, because for every $x\in f^{-1}[z]$, we have $x\mapsto_fz$.

\begin{lemma2.2}The function $f/E_f$ is said to be the function $f$ module $E_{f}$ and it is bijective.\end{lemma2.2}

\begin{proof}The function is onto because every fiber of $Im~f$ is non-empty. To see that it is also monic, take two different fibers $f^{-1}[w]$ and $f^{-1}[z]$. These two fibers consist of objects $x\mapsto_f w$ and $y\mapsto_f z$ and they form simple equivalence relations under $E_f$. Therefore, $w\neq z$.\end{proof}

Let $p_f:Dom~f\rightarrow Dom~f/E_f$ that sends an object to its fiber. We can do this because we have already stated that every object in $Dom~f$ is associated one fiber. We know the function is onto because every fiber in $f^{-1}[[\{Im~f\}]]$ is non-empty.

\begin{theorem}Given a function $f$, we can decompose it as $f=\iota_f\circ f/E_f\circ p_f$\end{theorem}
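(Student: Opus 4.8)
The claim is that any set function $f$ factors as $f=\iota_f\circ f/E_f\circ p_f$, where $p_f:\mathit{Dom}~f\rightarrow\mathit{Dom}~f/E_f$ sends each object to its fiber, $f/E_f:\mathit{Dom}~f/E_f\rightarrow\mathit{Im}~f$ sends each fiber $f^{-1}[z]$ to $z$, and $\iota_f:\mathit{Im}~f\rightarrow\mathit{Range}~f$ is the inmersion of the image into the range. The plan is to verify that the composition is well-defined (domains and ranges match up) and then to compute its action on an arbitrary object $x\in\mathit{Dom}~f$, showing that the result is $fx$, so that by the equality-of-functions criterion the composition equals $f$.

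First I would check the typing of the three arrows so that the composition is defined: $p_f$ has range $\mathit{Dom}~f/E_f$, which is exactly the domain of $f/E_f$; and $f/E_f$ has range $\mathit{Im}~f$, which is the domain of $\iota_f$. Thus $\iota_f\circ f/E_f\circ p_f$ is a function $\mathit{Dom}~f\rightarrow\mathit{Range}~f$, with the same domain and range as $f$. It then remains only to match components. The key computation is the following chase, where I fix $x\in\mathit{Dom}~f$ and let $z:=fx$, so that $x\in f^{-1}[z]$:
\begin{eqnarray}\nonumber x&;&(\iota_f\circ f/E_f\circ p_f)x,\iota_f\circ f/E_f\circ p_f\\\nonumber x&;&(\iota_f\circ f/E_f)(f^{-1}[z]),\iota_f\circ f/E_f\circ p_f\\\nonumber x&;&\iota_f(z),\iota_f\circ f/E_f\circ p_f\\\nonumber x&;&z,\iota_f\circ f/E_f\circ p_f\\\nonumber x&;&fx,\iota_f\circ f/E_f\circ p_f.\end{eqnarray}
Here the first step uses $p_f x=f^{-1}[z]$ (the fiber containing $x$, which is well-defined because every object of $\mathit{Dom}~f$ belongs to exactly one fiber), the second uses the defining rule $f^{-1}[z]\mapsto_{f/E_f}z$, and the third uses that $\iota_f$ is an inmersion, so $z\mapsto_{\iota_f}z$.

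The only genuine obstacle, and the point I would make explicit, is that $p_f$ is a well-defined function at all: this requires that each $x$ lie in exactly one fiber, which is the remark (drawn from the definition of function) that every object in the domain belongs to exactly one fiber of $f$. Everything else is bookkeeping. Since the displayed chase shows $(\iota_f\circ f/E_f\circ p_f)x$ and $fx$ are the same object for every $x\in\mathit{Dom}~f$, and the two functions share domain and range, the equality criterion for functions gives $f=\iota_f\circ f/E_f\circ p_f$. Note that Lemma I (the factorization $f=\iota_f\circ f|^{\mathit{Im}~f}$) and Lemma II (bijectivity of $f/E_f$) are not strictly needed for the identity itself, though they confirm that the three factors are respectively monic, bijective, and onto, which is the content the theorem is meant to package.
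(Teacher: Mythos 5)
Your proof is correct and is essentially the argument the paper intends: the paper states this theorem with no explicit proof at all, treating it as immediate from the construction of $p_f$, $f/E_f$, and $\iota_f$, and your element chase $x\mapsto f^{-1}[z]\mapsto z\mapsto z=fx$ is precisely the verification that those definitions compose to give $f$. Your remark that the only genuine point is the well-definedness of $p_f$ (each object lies in exactly one fiber) likewise matches the justification the paper gives just before defining $p_f$, so nothing is missing.
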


	\subsection{Sequence}

From this point on, we may use $\mathbb N$ to refer either to the collection or $\mathbb N_{\leq}$. A sequence is a set function $s:\mathbb{N}\rightarrow A$. A \textit{general sequence} is a sequence in which the image $A$ is a collection, not necesarilly a set. We will say a set function is a sequence, if $Dom~s$ is $\mathbb{N}_{0}$ instead of $\mathbb{N}$. Let $n\in\mathbb{N}$, and define $\textbf{n}$ as the set of all $x\in\mathbb{N}$ such that $1\leq x\leq n$. A \textit{finite sequence} is a function $\textbf{n}\rightarrow A$. Also, a set $A$ is said to be finite if there exists $n\in\mathbb{N}$ such that there is a bijective function $\textbf{n}\rightarrow A$.

Consider now the category $\mathbb{N}$ which stands for the partial order. Let $\textgoth{S}:\mathbb{N}\rightarrow\mathbb{N}$ be a functor on the order. Then $s\circ\textgoth{S}$ is a sequence and we will call it a subsequence of 
$s$. We request $\textgoth{S}$ to be a functor so that the the order of $\mathbb{N}$ is preseved.

		\subsubsection{Representation}

A sequence is, informally, an ordered collection. It makes sense to, for example, write a sequence of two numbers as $(a,b)$ and not $\{a,b\}$ which is a simple set. In general, to express a sequence with components $n\mapsto x_n$, we will write $(x_i)_i$. If the sequence is finite, we will write $(x_i)_{i=1}^n$. When we write $(x)_{i}$, we are expressing that the sequence is a constant function, $\rightarrow x$.

		\subsubsection{Monotonic Sequence}

Suppose we define a functor $\mathbb{N}\rightarrow A$, where $A$ is a partial order. Then, the object function of such a functor is called a increasing sequence in $A$. If there is a functor $\mathbb{N}\rightarrow A^{op}$, we will call the object function of such a functor, a decreasing sequence in $A$.

		\subsubsection{Sequence of Objects for an Operation}
 
We can give a general form of expressing the operation union and intersection for sets. Let $\mathcal{I}$ be a set, the \textit{index set}, and let $\mathcal{I}\rightarrow\mathcal{X}$ such that $i\mapsto A_i$. The following may be used as a common notation

$$\bigcup\mathcal{X}=\bigcup_{i\in\mathcal{I}}A_i.$$

Similar remarks hold for the intersection. We will use the set of integers as an index set, in order to express general operations. Let $\oplus$ be an operation $\oplus:\mathcal{O}\rightarrow\mathcal{O}f\mathcal{O}$ and let $\mathcal{O}^\mathbb{N}$ represent the collection of sequences 
$\mathbb{N}\rightarrow\mathcal{O}$. We will give, for $n+1\in\mathbb{N}_0$, a function $\Lambda_{i=1}^{n+1}:\mathcal{O}^\mathbb{N}\rightarrow\mathcal{O}$ such that $(x_i)_i\mapsto\Lambda_{i=1}^{n}(x_i)_i\oplus x_{n+1}$, where $\Lambda_{i=1}^1(x_i)_i=x_1$. These functions are called the \textit{finite operators}. The sequence $\Lambda:\mathbb{N}\rightarrow\mathcal{O}^\mathbb{N}f\mathcal{O}$ is defined by $n\mapsto_{\Lambda}\Lambda_{i=1}^n$; we call it the \textit{general operator}. For any sequence of sets, we can give union and intersection and this is expressed by $\bigcup_{i\in\mathbb{N}}A_i$ or $\bigcap_{i\in\mathbb{N}}A_i$. We define $\Lambda_\mathcal{O}=\bigcup_{n\in\mathbb{N}}Im~\Lambda n$. If there is a sensible way of defining a function $\Lambda\mathbb N:A\subseteq\mathcal O^\mathbb N\rightarrow\mathcal O$, we may say the \textit{series} of $(x_i)_i$ is the result in $(x_{i})_i;\Lambda_{i\in\mathbb N}x_i,\Lambda\mathbb N$. We may also represent the series by $\Lambda_{i=1}^\infty x_i$.

We may use this development to express the general sum of multiple rational numbers. Just as we use the notation $\mathcal{O}^{\mathbb{N}}$, we will say $\mathcal{O}^{\textbf{n}}$ stands for the collection of all finite sequences $\textbf{n}\rightarrow\mathcal{O}$. Thus the sum of a finite sequence of rational numbers, $(x_i)_i\in\mathbb{Q}^\textbf{n}$, is given by $$\sum_{i=1}^nx_i=\sum_{i=1}^{n-1}x_i+x_n$$

and from this we get

$$a\cdot n=\sum_{i=1}^na.$$

We are making it clear that $\cdot n$ and $\sum_{i=1}^n$ are the same function when applied, respectively, to $a\in\mathbb{Q}$ and the sequence $(a)_i\in\mathbb{Q}^\textbf{n}$. Note that every natural number $a\in\mathbb{N}$ is represented as $a=\sum_{i=1}^a1$.

		\subsubsection{Sequence of Functions}

We have just defined a sequence of functions, that can be applied to a sequence, and the result is another sequence. We study this with a general view. A sequence of objects, in a collection of functions, is called a \textit{sequence of functions}. A sequence of functions from $\mathcal O$ into $\mathcal Q$ is a function $\Phi:\mathbb N\rightarrow\mathcal Of\mathcal Q$. Let $n\mapsto_\Phi f_n$ be the components of $\Phi$, and define $\phi:\mathcal O\rightarrow\mathcal Q^\mathbb N$ such that $x\mapsto_\phi(f_ix)_i$; it is the \textit{sequence function}. 

In the case of series, the general operator $\Lambda:\mathbb N\rightarrow\mathcal O^\mathbb Nf\mathcal O$ is the sequence of functions $\Phi$. The sequence function is $\phi:\mathcal O^\mathbb N\rightarrow\mathcal O^\mathbb N$. The image of $(x_i)_i$, under $\phi$, is the \textit{sequence of partial operations for $(x_i)_i$} and we represent it with $(\phi_i)_i$. Clearly, every $\phi_n$ is $\Lambda_{i=1}^n(x_i)_i$, so we have stated $\phi(x_i)_i$ is $\left(\Lambda_{i=1}^n(x_i)_i\right)_n$. We will consider an important special case of this. Let $(a)_i\in\mathcal O^\mathbb N$ be a constant sequence, and suppose there is an operation on the collection, called the product operation. Then the \textit{sequence of powers} is defined as the sequence of partial products. For the operation of product $\cdot$ in $\mathbb Q$, we use $\prod$ as the general operator and

$$a^n=\prod_{i=1}^na,$$

in representing the finite operator of product, applied to $(a)_i$.

		\subsubsection{Sequence for Composition} 

There is something important to be noticed in the last paragraph. For a fixed sequence $(x_i)_i\in\mathcal{O}^{\mathbb{N}}$, we have a sequence of functions $(\oplus x_i)_i$ which we compose to find $\Lambda_{i=1}^{n}(x_i)_i$. This sequence of functions has something particular about it. If the operation is full, each one of these functions is of the form $\mathcal{O}\rightarrow\mathcal{O}$. Let us make the case for a more general situation.

Let $(f_i)_i$ be a sequence of functions such that $Im~f_n=Dom~f_{n+1}$. This is a composable sequence of functions and to better understand this definition we will give a more specific description of the composition of functions. Let $f:Dom~f\rightarrow Range~f$ and $g:Dom~g\subseteq Range~f\rightarrow Range~g$. Let $D=f^{-1}[Im~f\cap Dom~g]$, then the composition is $g\circ f:D\rightarrow Range~g$; we are assured $x\mapsto_{f}fx\mapsto_{g}g(fx)$. The composition is onto if and only if g is onto, and it is monic if and only if both $f,g$ are monic.

The compositions of a sequence are $\bigcirc_{i=1}^{n+1}(f_i)_i\mapsto\bigcirc_{i=1}^n(f_i)_i\circ f_{n+1}$, remembering that $\bigcirc_{i=1}^1(f_i)_i$ is $f_1$. We have a special case, when we take a constant sequence $(f)_i$, consisting of only one function $f$. The composition $\bigcirc_{i=1}^n(f)_i$ is the function $f^n$. We see that $Im~f^2=f[Im~f]$, $Im~f^3=f[f[Im~f]]$, etc...

If $x\mapsto_{f}x$, then we say \textit{x is an invariant object under f}. If $fA=A$, then \textit{A is an invariant subset under f}. Notice that it is not the same to say $A$ is an invariant subset under $f$ as opposed to saying $A$ is a set of invariant objects under $f$. In this last case we say that $A$ is a \textit{strongly invariant subset}; these are subsets of $Inv~f$ which is the set of invariant objects. If $Im~f$ is strongly invariant, $Im~f\subseteq Inv~f$, then the function is said to be \textit{once-effective}. The situation is that $f^2:Im~f\rightarrow f[Im~f]=Im~f$ and $x\mapsto_{f^2}fx$ because $fx$ is invariant under $f$. This means the sequence of compositions $(f^{i})_{i}$ is the constant sequence of the function $f|_{Im~f}$.

Let $a$ be invariant under $f$ and suppose that for every $x\in Dom~f$ there exists $n\in\mathbb{N}$ such that $x\mapsto_{f^n}a$; we say \textit{f stableizes at a}. If there is $n\in\mathbb{N}$ such that $x\mapsto_{f^n}a$, for every $x\in Dom~f$, then \textit{f is nilpotent} into $a$. 

		\subsubsection{Sequence and Cartesian Product}

From this point on, we make the convention of writing $A\times B$ in the place of $A\rightarrow_\times B$. The cartesian product of two collections $A,B$ is a collection of arrows $a\rightarrow_\times b$, with $a,b\in A,B$ respectively. Let $\textbf{2}f(A\sqcup B)$ be the collection of two part functions $\textbf{2}\rightarrow A\cup B$ that send $1$ to an object in $A$ and send 2 into object of $B$. We have a bijective function of the form $A\times B\rightarrow\textbf{2}f(A\sqcup B)$.

Let us now consider the cartesian product of a sequence of sets; that is, $A=\times_{i=1}^{n+1}A_i$. This is the set of all arrows $a_1\longrightarrow a_{n+1}:(a_1\longrightarrow a_{n})\rightarrow_\times a_{n+1}$, where $a_1\longrightarrow a_{2}$ is $a_1\rightarrow_\times a_{2}$ and $a_i\in A_i$. We see that each of these arrows $a_1\longrightarrow a_{n+1}$ is a finite sequence $\textbf{n+1}\rightarrow\bigsqcup_{i=1}^{n+1}A_i$ with $i\mapsto a_i$.

If we limit ourselves to $A_i=A$, for all $i\in\textbf{n+1}$, then we have a finite sequence on $A$. In making such considerations, we have a bijective function $\times_{i=1}^{n+1}A\rightarrow A^{n+1}$, where $A^{n+1}$ is the collection of all sequences $\textbf{n+1}\rightarrow A$. We define $A^\mathbb{N}=\times_{i\in\mathbb{N}}A$ as the collection of all sequences for $A$.

	\subsection{Net}

We will be working with functions that have a kind of special domain. This will lead to a useful generalization of sequence. Sequence is generalized because we generalize the concept of ordered index set; where we have used $\mathbb{N}$ we will now use a general kind of set called directed set.

		\subsubsection{Bounds}

Let $X$ be a set that forms a partial order with some relation $\leq$. A subset $A\subseteq X$ has a unique object $\max A\in A$ that is \textit{maximum} if $a\leq\max A$, for every $a\in A$. Now consider the set $\downarrow A$ of all $x\in X$ such that $a\leq x$, for every $a\in A$. This is the set of \textit{upper bounds of $A$} and it need not have a \textit{minimum}, defined dually to the maximum. Therefore, we cannot always say that there is an upper bound; if $\downarrow A\neq\emptyset$ we still cannot say their is a least upper bound $\min \downarrow A$; even if $\min \downarrow A$ does exist we cannot say that it is in $A$. When this most important object $\min\downarrow A$, exists in $X$, we say it is the \textit{supremum} and denote it by $\sup A$. The \textit{infimum}, denoted $\inf A$ is defined, when it exists, as $\max \uparrow A$. Of course, if the supremum is  in $A$, then it is the same as the maximum. 

If $\downarrow A=\emptyset$, then \textit{A is not bounded by above}. Similarly, we define sets that are \textit{not bounded by below}. If a set is not bounded by below and is not bounded by above it is simply not bounded.

		\subsubsection{Directed Set}

A set $I$ that has defined a preorder is said to be a directed set if for every $i,j\in I$ there is a $k\in I$ such that $i,j\leq k$. We give an equivalent definition for directed set, in terms of bounds.

\begin{proposition}A partial order is directed if and only if every finite subset has an upper bound.\end{proposition}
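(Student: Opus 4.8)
The plan is to prove the two implications separately, with the forward direction (directed $\Rightarrow$ every finite subset bounded) carrying all the weight and the reverse being essentially a specialization. Recall that an upper bound of $A$ is an object of $\downarrow A$, that is, an object $x$ with $a\leq x$ for every $a\in A$.

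For the easy direction, I would suppose every finite subset of $I$ has an upper bound. Given $i,j\in I$, the subset $\{i,j\}$ is finite, so it has an upper bound $k$; by definition $i\leq k$ and $j\leq k$, which is exactly the directedness condition. Hence $I$ is directed.

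For the forward direction, I would assume $I$ is directed and argue by induction on the size $n$ of the finite subset $A$, where by size I mean the $n\in\mathbb N$ for which there is a bijective function $\textbf{n}\rightarrow A$ (the notion of finite set introduced earlier). The base case $n=1$ is handled by reflexivity of the preorder: a singleton $\{a\}$ has $a$ itself as an upper bound. For the inductive step, suppose every subset of size $n$ has an upper bound and let $A$ have size $n+1$. Enumerate $A$ as a finite sequence $(a_i)_{i=1}^{n+1}$ via the bijection $\textbf{n+1}\rightarrow A$. The truncation $\{a_1,\dots,a_n\}$ has size $n$, so by the inductive hypothesis it has an upper bound $u$, meaning $a_i\leq u$ for $1\leq i\leq n$. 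Now apply directedness to the pair $u,a_{n+1}$ to obtain $k\in I$ with $u\leq k$ and $a_{n+1}\leq k$. Transitivity of the preorder then gives $a_i\leq u\leq k$, hence $a_i\leq k$ for $1\leq i\leq n$, and we already have $a_{n+1}\leq k$; thus $k$ is an upper bound of all of $A$.

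The step requiring the most care is not any single computation but the bookkeeping: one must confirm that a finite subset genuinely can be written as a finite sequence, so that removing one element produces a smaller finite set to which the hypothesis applies, and that transitivity is precisely what propagates the bound $u$ of the first $n$ elements up to the common bound $k$. I would also flag the mild degenerate cases: the singleton base case relies only on reflexivity, and if one wishes to count the empty subset among the finite subsets, then its having an upper bound merely records that $I$ is non-empty, which is the standard standing assumption on directed sets.
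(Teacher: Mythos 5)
Your proof is correct; note that the paper states this proposition bare, with no proof at all, so your argument supplies what the text omits rather than paralleling it. Both directions are handled properly: the reverse direction is, as you say, just the specialization to two-element subsets, and the forward direction is the standard induction on the size of the finite subset, using directedness on the pair consisting of the inductive upper bound $u$ and the new element $a_{n+1}$, with transitivity of the preorder propagating $a_i\leq u\leq k$. Your bookkeeping concern is legitimate in this paper's framework, since the text defines a finite set precisely as one admitting a bijection $\textbf{n}\rightarrow A$, and restricting that bijection to $\textbf{n}$ inside $\textbf{n+1}$ does produce a set of size $n$, so the inductive hypothesis applies. The remark about the empty subset is also the right one: the paper's lattice chapter explicitly takes directed sets to be non-empty, so an upper bound for $\emptyset$ exists exactly because $I$ is assumed non-empty, and no circularity arises. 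One small point worth making explicit if this were spliced into the text: the proposition says "partial order" but the definition of directed set in the paper is given for preorders; your proof only uses reflexivity and transitivity, so it is valid at that level of generality, which is the correct reading.
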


\begin{proposition}$\mathbb{Q}\supseteq\mathbb{Z}\supseteq\mathbb{N}$ are directed sets. In general, a natural order is a directed set.\end{proposition}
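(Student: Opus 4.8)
The plan is to reduce both assertions to a single observation: in a natural order any two objects are comparable, and comparability immediately furnishes an upper bound for a pair. First I would invoke the definition of natural order, according to which every object appears in exactly one arrow with every other object; in particular, for any two objects $i,j$ of the collection, either $i\leq j$ or $j\leq i$. Combined with the already-recorded fact that a natural order is a partial order (hence reflexive, transitive, and anti-symmetric), this makes the order total.

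Next I would verify the directedness condition straight from its definition, which asks that for every $i,j$ there be a $k$ with $i,j\leq k$. Given $i,j$ in the order, comparability gives two cases. If $i\leq j$, set $k:=j$; reflexivity yields $j\leq j$, so $i\leq k$ and $j\leq k$. If instead $j\leq i$, set $k:=i$ and argue symmetrically. In either case $k$ is an upper bound of $\{i,j\}$, which is exactly what the definition of directed set requires. This establishes the general claim that every natural order is directed. (Alternatively one could appeal to the preceding proposition characterizing directedness by the existence of upper bounds for finite subsets, and check by induction on cardinality that a finite subset of a total order has a maximum; but the pairwise statement suffices.)

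Finally, for the concrete systems I would note that each of $\mathbb{N}$, $\mathbb{Z}$, $\mathbb{Q}$ is a natural order. The order $\mathbb{Z}_{\leq}$, and its sub-order $\mathbb{N}$, arise from a discrete number system, whose relations are by hypothesis components of a non-trivial natural order (the axiom postulating the discrete number system); and $\mathbb{Q}$ was shown to be a natural order in the theorem closing the section on rational systems. Hence the general result applies to each, so all three are directed sets; the chain $\mathbb{Q}\supseteq\mathbb{Z}\supseteq\mathbb{N}$ merely records that they are nested, each carrying the restriction of the ambient order.

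I do not expect a genuine obstacle here, since the content is entirely definitional. The only point requiring a moment's care is confirming that the three systems really satisfy the definition of natural order, namely totality together with the partial-order axioms; and, if the finite-subset characterization were used in place of the direct argument, that the induction producing the maximum of a finite subset goes through, which relies only on the comparability of each newly adjoined element with the current maximum.
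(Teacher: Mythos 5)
Your proposal is correct, and in fact the paper states this proposition with no proof at all, treating it as immediate from the definitions; your argument (any two objects of a natural order are comparable, so the larger one is an upper bound for the pair) is exactly the verification being left implicit. Your identification of $\mathbb{N}$, $\mathbb{Z}$, $\mathbb{Q}$ as natural orders also matches the paper's own developments: the discrete number system axiom for $\mathbb{Z}$ (hence $\mathbb{N}$), and the theorem closing the rational systems section for $\mathbb{Q}$.
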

We already know of an important directed set that is not the set of integers or rationals. The category $\mathcal{P}\mathbb{N}$ is directed, under inclusion.

\begin{proposition}For any set $A$, the category $\mathcal{P}A$ gives a directed set. Let $x\uparrow x_0$ represent the set of objects $x\in X$ such that $x\leq x_0\in X$. Then $x\uparrow x_0$ is a directed set.\end{proposition}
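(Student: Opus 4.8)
The plan is to treat the two assertions separately, in each case reducing directedness to its definition: for every pair of objects there must exist a common upper bound lying in the set. For the partial order $\mathcal{P}A$, ordered by inclusion, I would take two objects $B,C$, that is subsets $B,C\subseteq A$, and propose $B\cup C$ as the witnessing bound. The first point to check is that $B\cup C$ is itself an object of $\mathcal{P}A$, i.e. that $B\cup C\subseteq A$; this holds because every element of $B\cup C$ lies in $B$ or in $C$, and both are contained in $A$. That $B\subseteq B\cup C$ and $C\subseteq B\cup C$ is exactly the special case $A\subseteq\bigcup\mathcal{X}$ of the union proposition, applied to the family $\{B,C\}$. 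Hence $B\cup C$ is a common upper bound of $B$ and $C$ inside $\mathcal{P}A$, and $\mathcal{P}A$ is directed.

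For the second assertion I would unwind the definition $x\uparrow x_0=\{x\in X:x\leq x_0\}$ and observe that $x_0$ is a greatest element of this very set. Given $i,j\in x\uparrow x_0$, we have $i\leq x_0$ and $j\leq x_0$ by definition, while $x_0\leq x_0$ by reflexivity of the partial order $X$, so $x_0\in x\uparrow x_0$. Taking $k:=x_0$ then furnishes the required common upper bound $i,j\leq k$ with $k$ in the set, so $x\uparrow x_0$ is directed. This argument in fact yields slightly more: $x\uparrow x_0$ has a maximum, namely $x_0$, which trivially forces directedness.

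Alternatively, both parts follow immediately from the characterization established just above, that a partial order is directed if and only if every finite subset has an upper bound: for $\mathcal{P}A$ the union of any finite family of subsets of $A$ is again a subset of $A$ and bounds them all, while for $x\uparrow x_0$ the element $x_0$ bounds every subset. There is no genuine obstacle here; the only points demanding care are confirming that the chosen bound is closed inside the relevant order, namely $B\cup C\subseteq A$ in the first case and $x_0\leq x_0$ placing $x_0$ in $x\uparrow x_0$ in the second, so that the bound is an object of the directed set itself and not merely of some ambient order.
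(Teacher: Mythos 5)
Your proof is correct. The paper states this proposition without giving any proof at all, so there is nothing to compare against; your argument --- taking $B\cup C$ as the common upper bound of $B,C$ in $\mathcal{P}A$, and observing that reflexivity places $x_0$ in $x\uparrow x_0$ as a maximum --- is precisely the natural argument the paper leaves implicit, and your care in verifying that each proposed bound actually lies in the set in question ($B\cup C\subseteq A$, and $x_0\leq x_0$) is exactly where the content of the claim resides.
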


Given preorders $\leq_1,\leq_2$ on sets $I,J$, we can form the new order $\leq$ by defining $(i_1,j_1)\leq(i_2,j_2)$ if and only if $i_1\leq i_2$ and $j_1\leq j_2$.

\begin{proposition}If $I,J$ are two directed sets with orders $\leq_1,\leq_2$, then the cartesian product $I\times J$ is also a directed set under the corresponding product order.\end{proposition}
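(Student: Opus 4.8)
The plan is to verify directly the two ingredients demanded by the definition of a directed set: that the product relation $\leq$ is a preorder on $I\times J$, and that any two of its objects admit a common upper bound. First I would confirm that $\leq$ is reflexive and transitive, since only then does ``directed set'' make sense for $I\times J$. Reflexivity follows from reflexivity of $\leq_1$ and $\leq_2$: for any $(i,j)$ we have $i\leq_1 i$ and $j\leq_2 j$, hence $(i,j)\leq(i,j)$. For transitivity, suppose $(i_1,j_1)\leq(i_2,j_2)$ and $(i_2,j_2)\leq(i_3,j_3)$; unwinding the definition of the product order gives $i_1\leq_1 i_2\leq_1 i_3$ and $j_1\leq_2 j_2\leq_2 j_3$, so transitivity of each component order yields $i_1\leq_1 i_3$ and $j_1\leq_2 j_3$, which is exactly $(i_1,j_1)\leq(i_3,j_3)$.

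Next I would establish directedness. Take any two objects $(i_1,j_1)$ and $(i_2,j_2)$ of $I\times J$. Because $I$ is directed, there is $k\in I$ with $i_1\leq_1 k$ and $i_2\leq_1 k$; because $J$ is directed, there is $l\in J$ with $j_1\leq_2 l$ and $j_2\leq_2 l$. Then the pair $(k,l)$ satisfies both $(i_1,j_1)\leq(k,l)$ and $(i_2,j_2)\leq(k,l)$ by the definition of the product order, so $(k,l)$ is the required upper bound and $I\times J$ is directed.

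I expect no serious obstacle here; the argument is entirely routine. The only point demanding a little care is to keep the two coordinates independent, choosing the bound $k$ using only the directedness of $I$ and the bound $l$ using only that of $J$, and then recombining them into a single pair. If one prefers, the same conclusion follows from the earlier characterization that a partial order is directed exactly when every finite subset has an upper bound, applied coordinatewise, but the direct two-object argument above is the shortest route.
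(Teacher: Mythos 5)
Your proof is correct: the coordinatewise verification of reflexivity and transitivity for the product relation, followed by choosing the bound $k$ from directedness of $I$ and $l$ from directedness of $J$ and combining them into the pair $(k,l)$, is exactly what the definition demands. The paper states this proposition without any proof at all, so your argument simply supplies the routine verification the author omitted; nothing further is needed.
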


		\subsubsection{Definition}

A \textit{net} is a function $\eta:I\rightarrow\mathcal{O}$, where $I$ is a directed set. The concept of net generalizes that of a sequence because we know $\mathbb{N}$ is a directed set. 

When considering a finite sequence, we now think in a broader sense. A finite sequence is a function $N\rightarrow \mathcal{O}$, where $N\subseteq\mathbb{N}$ is bijective to $\textbf{n}$, for some $n\in\mathbb{N}$. A general definition of a sequence, but less general than a net, is to replace the set $N$ with any directed subset $I\subseteq\mathbb{N}$. But, it turns out that any subset of $\mathbb{N}$ is directed. So, a sequence is a function $I\subseteq\mathbb{N}\rightarrow\mathcal{O}$. A net generalizes this definition. The first thing we will do to put to use the conpcept of net, is to give a definition of of matrix.

	\subsection{Matrix}

The reason why we did not consider the object $\frac{0}{0}$ in our grid of the rational numbers, (\ref{diagramRationals}), is that we did not have any way of considering this strange object in terms of our operation. So we leave it out, and in fact, we make changes to the normal behavior of the grid, along the border. This does not mean, however, that we are not able to define a good order in $\mathbb{Z}\times\mathbb{Z}$, if we just ignore the operation in this process. Again, representing $a\rightarrow_\times b$ with $\frac{a}{b}$, we say $\mathbb{Z}^2_{\leq}$ is the matrix order, defined by $\frac{a}{c}\leq\frac{b}{d}$ if and only if $a<b$ or, if this fails to be true, $c\leq d$. This order is a natural order.

\begin{definition}Let $I,J\subseteq\mathbb{Z}$, then we say the function $(a_i^j)_{i,j}:J\times I\subseteq\mathbb{Z}^2_\leq\rightarrow A$ is a matrix for the objects of $A$. If $i,j\in I,J$ respectively, then the image of $\frac{j}{i}$, under the matrix, is represented by $a^j_i$.

A matrix is said to be finite if $I,J$ are bijective to sets $\textbf{m},\textbf{n}\subseteq{N}$ respectively. The matrix is regular if it is of the special form $\textbf{m}\times\textbf{n}\rightarrow A$, that is, if $I=\textbf{m}$ and $J=\textbf{n}$. In any case, we say the matrix is of order $m\times n$.

If we restrict the matrix to one of the sets $\textbf{m}\times \{j\}$, then we have a matrix $(a_i^j)_i=(a_i^j)_{i,j}|_{\textbf{m}\times \{j\}}$, of the form $\textbf{m}\times \{j\}\rightarrow A$ and we say $(a_i^j)_i$ is the j-th column of the matrix $(a_i^j)_{i,j}$. A similar remark holds for the restriction of $(a_i^j)_{i,j}$ to a set $\{i\}\times\textbf{n}$; in this case we say $(a_i^j)_j$ is the i-th row of $(a_i^j)_{i,j}$.\end{definition}

\begin{proposition}\makebox[5pt][]{}\mbox {}
\begin{itemize}\item[1)]Every column or row is a matrix\item[2)]Every matrix is uniquely identified with a sequence of rows and with a sequence of columns.\item[3)]Any sequence can be  uniquely identified with one column matrix and one row matrix.\end{itemize}\end{proposition}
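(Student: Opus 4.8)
The plan is to prove all three parts constructively, by exhibiting the relevant set functions explicitly and appealing to the product structure of the domains together with the earlier proposition that a subcollection of a partial order is again a partial order. Throughout I identify an element of the product with the symbol $\frac ji$ and write $a^j_i$ for its image under a matrix $(a_i^j)_{i,j}$, so that fixing the superscript $j$ and letting $i$ range isolates a column, while fixing the subscript $i$ and letting $j$ range isolates a row.

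For part 1), I would observe that the $j$-th column $(a_i^j)_i$ is, by definition, the restriction of $(a_i^j)_{i,j}$ to the subcollection $\textbf m\times\{j\}$ of its domain. Since $\mathbb Z^2_\leq$ is a partial order, the earlier proposition on subcollections guarantees that $\textbf m\times\{j\}$ carries an induced partial order; moreover it already has the product shape required of a matrix domain, with both factors subsets of $\mathbb Z$. Hence the restriction is itself a matrix (of order $m\times1$). The argument for a row $(a_i^j)_j$, restricted to $\{i\}\times\textbf n$, is identical with the roles of the two indices exchanged.

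For part 2), I would set up the correspondence between a matrix and its family of columns as a pair of mutually inverse functions. The forward map sends $(a_i^j)_{i,j}$ to the sequence $j\mapsto(a_i^j)_i$ whose value at each index $j$ is the $j$-th column; this is a genuine sequence because any subset of $\mathbb Z$ is directed, hence a valid index set, and by part 1) each value is a matrix. The backward map takes a compatible family of columns (compatible in the sense that the $j$-th term has domain $\textbf m\times\{j\}$) and reassembles the single function on $\bigcup_j(\textbf m\times\{j\})=\textbf m\times J$ defined by $\frac ji\mapsto$ the value of the $j$-th column at $i$. One then checks that composing the two maps in either order returns the identity: reading off the columns of a reassembled function recovers the given family, and reassembling the columns of a matrix recovers the original function componentwise. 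This mutual invertibility is precisely the assertion that the matrix is \emph{uniquely} identified with its sequence of columns, and the sequence-of-rows statement follows the same way, restricting instead to the sets $\{i\}\times\textbf n$. Part 3) is then the degenerate case: given a sequence $s$ on a directed $I\subseteq\mathbb Z$, fixing the superscript at a single value $j_0$ transports $s$ into the column matrix $\frac{j_0}i\mapsto s(i)$ on $I\times\{j_0\}$, and fixing the subscript at $i_0$ yields the row matrix $\frac j{i_0}\mapsto s(j)$ on $\{i_0\}\times I$; for each fixed choice this is visibly a bijection with inverse ``forget the constant index,'' giving the claimed unique identification.

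The routine content is the componentwise verification that the two maps in part 2) are inverse. The one point deserving care — and essentially the only place the order $\mathbb Z^2_\leq$ enters — is confirming that the reassembled domain $\textbf m\times J$ is a genuine subcollection of $\mathbb Z^2_\leq$ carrying the induced order, so that the backward map really produces a matrix in the defined sense rather than a bare set function. This follows once more from the subcollection proposition, so I expect the main obstacle to be organizational (keeping the two index directions and their domains straight) rather than substantive.
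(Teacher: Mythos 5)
Your proof is correct, but there is nothing in the paper to compare it against: the proposition is stated at the end of the matrix subsection with no proof at all (the text proceeds directly to the section on special families), so the author evidently regards all three claims as immediate from the definition. Your argument supplies exactly the missing content, and by the natural route: part 1) follows from restriction together with the earlier proposition that a subcollection of a partial order is again a partial order, part 2) from the mutually inverse disassembly/reassembly (currying-style) maps checked componentwise, and part 3) as the degenerate case in which one index is held constant.

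Two small points worth tightening. First, the paper's official notion of sequence requires the index set to be a subset of $\mathbb N$ (a function on a general directed set is a net), whereas a matrix allows $I,J\subseteq\mathbb Z$; so the family of columns indexed by $J$ is strictly speaking a net, and to obtain a sequence in the paper's sense you should either restrict to finite matrices, where the definition provides a bijection of $J$ with some $\textbf n\subseteq\mathbb N$, or compose with such a bijection. Second, in part 3) the identification depends on the chosen constant index $j_0$ (or $i_0$); as you note, it is a bijection only for each fixed choice, so uniqueness should be read as uniqueness once that choice is fixed. Neither point affects the substance of your argument, and both are looseness inherited from the paper's own statement rather than gaps you introduced.
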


\section{Special Families}

	\subsection{Power Set}

		\subsubsection{Power Functor} 

We move on to study the power set $\textgoth{P}$. We will find a functor $\textgoth{P}:\textbf{Set}\rightarrow\textbf{Set}$ such that its object function is given by $B\mapsto\textgoth{P}B$. Since we are to give a functor, we need to specify an arrow function such that $f:B\rightarrow C$ is assigned a function $\textgoth{P}f:\textgoth{P}B\rightarrow\textgoth{P}C$. To this end, we define the arrow function such that $\textgoth{P}f$ is the function that makes $A\mapsto f[A]$. We verify that $\textgoth{P}$ is indeed a functor. We must first prove that $1_{\textbf{Set}},1_{\textbf{Set}};\textgoth{P},\textgoth{P}$, where $1_{\textbf{Set}}B$ is the identity function for $B$. That is, $(1_{\textbf{Set}}\circ\textgoth{P})B$ and $(\textgoth{P}\circ1_{\textbf{Set}})B$ are the same functions, for every $B\in\textbf{Set}$. We also observe that $\textgoth{P}(g\circ f)$ is the same function as $\textgoth{P}g\circ\textgoth{P}f$. Let $A\subseteq B$, then $\textgoth{P}(g\circ f)A=(g\circ f)[A]=g[f[A]]=g[\textgoth{P}fA]=\textgoth{P}g(\textgoth{P}fA)=(\textgoth{P}g\circ
\textgoth{P}f)A$.

\textbf{Properties and Relations} Here we find equalities that involve expressions of power sets. We first note that for any family of sets $\mathcal{X}=\{A\}_{A\in\mathcal{X}}$,

$$\mathcal{X}\subseteq\textgoth{P}\bigcup_{A\in\mathcal{X}}A.$$

We verify the validity of this. If $A\in\mathcal{X}$, then every object of $A$ is in $\bigcup\mathcal{X}$. Therefore, $A$ is an object in the power set of such union. 

In particular, we have \begin{equation}\{A\}\subseteq\textgoth{P}A.\label{eqnpwr1}\end{equation}  We know $A=\bigcup\{A\}\subseteq\bigcup\textgoth{P}A$. Also, $X\subseteq A$, for every $X\in\textgoth{P}A$, we may conlcude $A=\bigcup\textgoth{P}A$.  

The arrow in (\ref{eqnpwr1}) leads to the trivial relation $A\in\textgoth{P}A$. The relation $A\in\textgoth{P}A$ implies $\textgoth{P}A\in\textgoth{P}\textgoth{P}A$. We will give a generalization of this last relation.

\begin{proposition}Let $A$ be any set and $\mathcal{X}_{A}\subseteq\textgoth{P}A$ any family of subsets of $A$. Then $\mathcal{X}_{A}\in\textgoth{P}\textgoth{P}A$.
\end{proposition}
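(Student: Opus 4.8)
The plan is to observe that this statement is, at bottom, a direct unwinding of the definition of the power set, so the proof will be very short. Recall that for any set $B$, the symbol $\textgoth{P}B$ denotes the collection of all subsets of $B$; hence membership $Y\in\textgoth{P}B$ is by definition equivalent to the inclusion $Y\subseteq B$. Applying this with $B:=\textgoth{P}A$, the object $\textgoth{P}\textgoth{P}A$ is the collection of all subsets of $\textgoth{P}A$, and therefore $\mathcal{X}_A\in\textgoth{P}\textgoth{P}A$ holds if and only if $\mathcal{X}_A\subseteq\textgoth{P}A$.

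First I would record that the hypothesis $\mathcal{X}_A\subseteq\textgoth{P}A$ is exactly this required inclusion, so the desired membership is immediate once we know the two sides are legitimate sets. This generalizes the trivial relation $\textgoth{P}A\in\textgoth{P}\textgoth{P}A$ noted just above, which is the special case $\mathcal{X}_A:=\textgoth{P}A$; here we simply allow $\mathcal{X}_A$ to be any subcollection of $\textgoth{P}A$ rather than all of it.

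The one genuine point to verify, and the only place where anything beyond the definition is invoked, is that $\mathcal{X}_A$ is indeed an object of the universe $\mathcal{U}$, so that it is eligible to be an element of $\textgoth{P}\textgoth{P}A$. Since $A\in\mathcal{U}$, property 5) of the definition of $\mathcal{U}$ gives $\textgoth{P}A\in\mathcal{U}$. Then from $\mathcal{X}_A\subseteq\textgoth{P}A\in\mathcal{U}$ and property 3) of that same definition, we conclude $\mathcal{X}_A\in\mathcal{U}$, i.e. $\mathcal{X}_A$ is a set. With this in hand, the inclusion $\mathcal{X}_A\subseteq\textgoth{P}A$ yields $\mathcal{X}_A\in\textgoth{P}\textgoth{P}A$ by the definition of the power set, completing the argument. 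I do not expect any real obstacle: the content is definitional, and the only care needed is the appeal to axioms 5) and 3) to keep everything inside $\mathcal{U}$.
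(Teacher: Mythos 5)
Your proof is correct and follows exactly the route the paper intends: the paper states this proposition without any proof, presenting it as the evident generalization of the trivial relation $\textgoth{P}A\in\textgoth{P}\textgoth{P}A$, i.e.\ as an immediate unwinding of the definition of the power set. Your additional check that $\mathcal{X}_A\in\mathcal{U}$ via properties 5) and 3) of the universe is a sound (and slightly more careful) supplement to that definitional argument.
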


Consider a family $\mathcal{X}$, and an element $A\in\mathcal{X}$. We wish to prove $\textgoth{P}A\subseteq\textgoth P\bigcup\mathcal{X}$. This is clear because $\textgoth{P}A$ is a family of subsets of $A\subseteq\bigcup\mathcal{X}$. Applying the last proposition, 

\begin{proposition}For $A\in\mathcal{X}\in\mathcal{U}$, we verify $\textgoth{P}A\in\textgoth{P}\textgoth{P}\bigcup\mathcal{X}$.\end{proposition}

\begin{proposition}The object function of the power functor, satisfies $$\textgoth{P}(A\cap B)=\textgoth{P}A\cap\textgoth{P}B$$
$$\textgoth{P}A\cup\textgoth{P}B
\subseteq\textgoth{P}(A\cup B).$$
\end{proposition}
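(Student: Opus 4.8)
The plan is to establish both relations by reducing membership in a power set to the subset relation and then carrying out an element chase, exactly as the earlier image--inclusion results were handled. Throughout I recall that $X\in\textgoth{P}A$ is by definition the assertion $X\subseteq A$, and that an equality of sets is proven by establishing inclusion in both directions (the anti-symmetry built into the meaning of $=$ for $\subseteq$).

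For the first identity I would argue by a chain of biconditionals. An object $X$ satisfies $X\in\textgoth{P}(A\cap B)$ exactly when $X\subseteq A\cap B$. Since $x\in A\cap B$ holds if and only if $x\in A$ and $x\in B$, the condition $X\subseteq A\cap B$ is equivalent to requiring every element of $X$ to lie in $A$ and in $B$, that is, $X\subseteq A$ and $X\subseteq B$. This in turn says $X\in\textgoth{P}A$ and $X\in\textgoth{P}B$, i.e. $X\in\textgoth{P}A\cap\textgoth{P}B$. Because every step is an $\Leftrightarrow$, the two power sets have exactly the same objects, and the equality follows at once.

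For the inclusion I would take $X\in\textgoth{P}A\cup\textgoth{P}B$, which means $X\in\textgoth{P}A$ or $X\in\textgoth{P}B$, i.e. $X\subseteq A$ or $X\subseteq B$. Using the special case of the union--inclusion proposition that $A\subseteq A\cup B$ and $B\subseteq A\cup B$, either disjunct yields $X\subseteq A\cup B$ by transitivity of $\subseteq$, hence $X\in\textgoth{P}(A\cup B)$, as required.

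No step here is a genuine obstacle; the only subtlety worth flagging is why the second relation is merely an inclusion rather than an equality. The argument only produces subsets of $A\cup B$ that happen to be contained in one of the two sets, whereas a general subset of $A\cup B$ may meet both $A$ and $B$ without being contained in either, so the reverse inclusion fails. I would close by recording a one-line witness, e.g. with $A$ and $B$ disjoint singletons the set $A\cup B$ itself lies in $\textgoth{P}(A\cup B)$ but in neither $\textgoth{P}A$ nor $\textgoth{P}B$.
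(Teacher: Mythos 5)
Your proof is correct. The paper states this proposition without any proof at all, so there is nothing to compare against; your reduction of power-set membership to the subset relation followed by an element chase is exactly the style the paper uses for the neighbouring results (e.g. $\mathcal{X}\subseteq\textgoth{P}\bigcup_{A\in\mathcal{X}}A$ and $A=\bigcup\textgoth{P}A$), and your closing witness with disjoint singletons correctly explains why the union relation is only an inclusion and not an equality.
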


		\subsubsection{Direct Image}

Now we define the \textit{direct image} of a family of sets as the collection $f[\mathcal{X}]$ of all $B\subseteq Range~f$ such that $f^{-1}B\in\mathcal{X}$. Of course, the \textit{direct inverse image} of a family $\mathcal{Y}$, is the collection $f^{-1}[\mathcal{Y}]$ of sets $A\subseteq Dom~f$ such that $fA\in\mathcal{Y}$. Let $f^\rightarrow A$ be the collection of all subsets of the range, whose inverse image is $A$, and $f^\leftarrow B$ be the collection of all subsets of the domain, whose image is $B$.

Let $\textgoth{P}f^{-1}\{B\}$ be the fiber of $B\subseteq Range~f$, for the function $\textgoth{P}f$. Then the fiber has a maximum $M=\bigcup\textgoth{P}f^{-1}\{B\}$. All we need to verify is that $B=\textgoth{P}fM$. This is straightforward, $fM=f[\bigcup\textgoth{P}f^{-1}\{B\}]=\bigcup_{X\in\textgoth{P}f^{-1}\{B\}}fX=
\bigcup_{X\in\textgoth{P}f^{-1}\{B\}}B=B$. 

\begin{lemma3.1}Let $B\in Im~\textgoth{P}f$, then $f^{-1}B
=\bigcup\textgoth{P}f^{-1}\{B\}$.\end{lemma3.1}

\begin{proof}We know $M\subseteq f^{-1}fM=f^{-1}B$. Also, $f^{-1}B\in\textgoth{P}f^{-1}\{B\}$ because $\textgoth{P}f(f^{-1}B)=B$. This implies $f^{-1}B\subseteq\bigcup\textgoth{P}f^{-1}\{B\}$.\end{proof}

\begin{lemma3.2}\makebox[5pt][]{}\mbox {}\begin{itemize}\item[1)]Let $f$ be an onto function, then $B\in f[\mathcal{X}]$ if and only if $\bigcup\textgoth{P}f^{-1}\{B\}\in\mathcal{X}$\item[2)]Let $f$ be a monic function, $A\in f^{-1}[\mathcal{Y}]$ if and only if $A=\bigcup f^{-1}\{\{B\}\}$, for some $B\in\mathcal{Y}$.\end{itemize}\label{lemma3.2}\end{lemma3.2}

\begin{proof}By definition of direct image and lemma I, $B\in f[\mathcal{X}]$ if and only if $M=f^{-1}B\in\mathcal{X}$.

On the other hand, $A\in f^{-1}[\mathcal{Y}]$ if and only if $fA\in\mathcal{Y}$. From proposition $\ref{FIB}$, we get $A=\bigcup f^{-1}\{\{B\}\}$, for $B=fA\in\mathcal{Y}$.\end{proof}

Define $f\emptyset:=Range~f-Im~f$, and let $f(A\cup\emptyset)$ be the collection of all $Y$ such that $fA\subseteq Y\subseteq fA\cup f\emptyset$.

\begin{lemma3.3}\makebox[5pt][]{}\mbox {}\begin{itemize}\item[1)]If $f$ is onto, then $f^\leftarrow B=\textgoth{P}f^{-1}\{B\}$\item[2)]$f^\rightarrow A\subseteq f(A\cup\emptyset)$ and equality holds given $f$ is monic.\end{itemize}\end{lemma3.3}

\begin{proof}The first result is a direct consequence of the definitions of $f^\leftarrow B$ and $\textgoth{P}f$.

To prove the second result we first take $B$ such that $A=f^{-1}B$. We get $fA=ff^{-1}B\subseteq B$. Suppose that $(fx)\in B$ and $(fx)\notin fA\cup f\emptyset$. This means $(fx)\in (fA\cup f\emptyset)^c=(fA)^c\cap Im~f=Im~f-fA$. But, we have $A=f^{-1}B$ which is equivalent to saying $fx\in B$ if and only if $x\in A$. This is a clear contradiction, therefore $B\subseteq fA\cup f\emptyset$. We conlcude $B\in f(A\cup\emptyset)$.

If we wanted to prove $f(A\cup\emptyset)\subseteq f^\rightarrow A$ by taking $Y$ that satisfies the conditions of being an object of $f(A\cup\emptyset)$, then $A\subseteq f^{-1}fA\subseteq f^{-1}Y\subseteq f^{-1}(fA\cup f\emptyset)=f^{-1}fA$. We would get $x\in f^{-1}Y~\Rightarrow~fx\in fA$, but $fx\in fA$ does not imply $x\in A$. We request the function be monic.\end{proof}

\begin{theorem}Suppose $\mathcal{X}$ is a family of non-empty subsets of $Dom~f$ and $\mathcal{Y}$ is a family of non-empty subsets of $Range~f$. If $B\subseteq Im f$, then $$B\in f[\mathcal{X}]~\Longleftrightarrow~\bigcup f^\leftarrow B\in\mathcal{X}.$$

If $f$ is monic, we have $$A\in f^{-1}[\mathcal{Y}]~\Longleftrightarrow~\bigcap f^\rightarrow A\in\mathcal{Y}.$$\end{theorem}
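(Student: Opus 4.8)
The plan is to reduce each biconditional to the defining condition of the corresponding direct (inverse) image family, and then to prove two set identities that carry all the content: $\bigcup f^\leftarrow B=f^{-1}B$ for the first part and $\bigcap f^\rightarrow A=fA$ for the second. Once these are in place, the equivalences follow by unwinding the definitions of $f[\mathcal X]$ and $f^{-1}[\mathcal Y]$.

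For the first equivalence I would start from the definition of direct image: $B\in f[\mathcal X]$ holds precisely when $f^{-1}B\in\mathcal X$. Hence it suffices to prove $\bigcup f^\leftarrow B=f^{-1}B$ whenever $B\subseteq Im~f$. The inclusion $\bigcup f^\leftarrow B\subseteq f^{-1}B$ is immediate, since any $A\in f^\leftarrow B$ satisfies $fA=B$, so $A\subseteq f^{-1}fA=f^{-1}B$ by the proposition on image and inclusions. For the reverse inclusion I would verify that $f^{-1}B$ is itself a member of $f^\leftarrow B$, i.e.\ $f(f^{-1}B)=B$: the inclusion $ff^{-1}B\subseteq B$ holds always, while $B\subseteq ff^{-1}B$ uses the hypothesis $B\subseteq Im~f$, because each $y\in B$ equals $fx$ for some $x$, and then $x\in f^{-1}B$ forces $y=fx\in ff^{-1}B$. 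Combining the two inclusions gives the identity, whence $B\in f[\mathcal X]\Leftrightarrow f^{-1}B\in\mathcal X\Leftrightarrow\bigcup f^\leftarrow B\in\mathcal X$.

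For the second equivalence, with $f$ monic, the definition of direct inverse image gives $A\in f^{-1}[\mathcal Y]$ iff $fA\in\mathcal Y$, so it suffices to prove $\bigcap f^\rightarrow A=fA$. Here I would invoke Lemma III, part 2), which asserts $f^\rightarrow A=f(A\cup\emptyset)$ when $f$ is monic; recall $f(A\cup\emptyset)$ is the collection of all $Y$ with $fA\subseteq Y\subseteq fA\cup f\emptyset$. Since every such $Y$ contains $fA$, and since $fA$ itself lies in the collection (as $fA\subseteq fA\subseteq fA\cup f\emptyset$), the intersection of the family is exactly its least element $fA$. The family is non-empty — indeed $f^{-1}fA=A$ for monic $f$ shows $fA\in f^\rightarrow A$ — so the intersection is well defined. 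This yields $A\in f^{-1}[\mathcal Y]\Leftrightarrow fA\in\mathcal Y\Leftrightarrow\bigcap f^\rightarrow A\in\mathcal Y$.

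The reductions through the definitions are routine; the substance is in the two identities, and I expect the main obstacle to be the reverse inclusion of the first one, namely deriving $ff^{-1}B=B$ from the weaker hypothesis $B\subseteq Im~f$ rather than from surjectivity of $f$. This is the single place where $B\subseteq Im~f$ must be used carefully, and it is also the step that keeps the first part strictly more general than the onto case packaged in Lemma III, part 1) together with Lemma II, part 1). The monic identity, by contrast, is comparatively direct once Lemma III is available, reducing to the elementary observation that $fA$ is the minimum of $f(A\cup\emptyset)$. The stated non-emptiness of the members of $\mathcal X$ and $\mathcal Y$ serves only to keep the degenerate case $B=\emptyset$ (respectively $A=\emptyset$) from clouding the statement and is not essential to either identity.
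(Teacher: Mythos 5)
Your proof is correct, and in substance it follows the same route as the paper: reduce each membership statement to the defining conditions of $f[\mathcal X]$ and $f^{-1}[\mathcal Y]$, then establish the two identities $\bigcup f^\leftarrow B=f^{-1}B$ and $\bigcap f^\rightarrow A=fA$. Your second half is essentially the paper's argument verbatim: the paper proves it by the chain $fA=\bigcap f(A\cup\emptyset)=\bigcap f^{\rightarrow}A$, using Lemma III for monic $f$, exactly as you do. The difference is in the first half. The paper disposes of it with the single line ``follows from lemmas II, III,'' but those lemmas are stated under the hypothesis that $f$ is \emph{onto} and are phrased through the power-functor fiber $\textgoth{P}f^{-1}\{B\}$; you instead inline their content and prove $\bigcup f^\leftarrow B=f^{-1}B$ directly from the weaker hypothesis $B\subseteq Im~f$, isolating the one step where that hypothesis is genuinely used (obtaining $B\subseteq ff^{-1}B$, hence $ff^{-1}B=B$, so that $f^{-1}B$ is itself a member of $f^\leftarrow B$). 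This buys two things: your argument is self-contained, and it closes a small rigor gap in the paper, since the theorem is asserted for $B\subseteq Im~f$ while the cited lemmas nominally require ontoness; the bridge between the two hypotheses is precisely the computation you make explicit. What the paper's packaging buys in return is economy: once $\textgoth{P}f$, its fibers, and Lemmas I--III are in place, the theorem is an immediate corollary, and the fiber language makes the duality between $f^\leftarrow B$ and $f^\rightarrow A$ visible as a statement about the power functor.
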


\begin{proof}The first result follows from lemmas II,III.

We know that $A\in f^{-1}[\mathcal{Y}]$ if and only if $B=fA=\bigcap f(A\cup\emptyset)=\bigcap f^{\rightarrow}A$, for some $B\in\mathcal{Y}$.\end{proof}

We see that there is a duality relationship for fiber and image in the forms of $f^\leftarrow B$ and $f^\rightarrow A$.

		\subsubsection{Another Functor}

We form yet another functor, using the power set $\textgoth{P}$ as object function of this functor $\textbf{Set}_{\subseteq}\rightarrow\textbf{Set}_{\subseteq}$. This means the second condition is satisfied because the functor applied to any arrow $A\subseteq B$ results in $\textgoth{P}A\subseteq\textgoth{P}B$. Since the first condition is trivial, we are left to give a proof of condition 3) for functors. It means that transitivity is preserved. This is verified by

\begin{eqnarray}\nonumber\textgoth{P}[(A\subseteq B)\circ(B\subseteq C)]&=&\textgoth{P}(A\subseteq C)\\\nonumber&=&\textgoth{P}A\subseteq\textgoth{P}C\\\nonumber&=&(\textgoth{P}A
\subseteq\textgoth{P}B)\circ(\textgoth{P}B\subseteq\textgoth{P}C)
\\\nonumber&=&\textgoth{P}(A\subseteq B)\circ\textgoth{P}(B\subseteq C).\end{eqnarray}

	\subsection{Family of Families}

We will now give two basic results in applying the operations of sets to families of families of sets. We will be using $\mathbb{X}=\{\mathcal{X}\}_{\mathcal{X}\in\mathbb{X}}$ to represent the family that consists of families, such that $\mathcal{X}=\{A\}_{A\in\mathcal{X}}$. This is to say, we have a two generation family of sets.

\begin{equation}\bigcup\bigcap\mathbb{X}\subseteq\bigcup\bigcup\mathbb{X}\label{ex1st}
\end{equation}

\begin{equation}\bigcap\bigcup\mathbb{X}\subseteq\bigcap\bigcap \mathbb{X}\label{ex2st}.\end{equation}

Both of these follow from the fact that $\bigcap\mathbb{X}\subseteq\bigcup\mathbb{X}$.

	\subsection{Nest}

We have pointed out that the power set is a partial order under inclusion. The objects of study in the present section are those families of sets that form a natural order, under inclusion. In particular, there are two cases of main interest. We will use this context to extend results (\ref{decoset1}) and (\ref{decoset2}).

		\subsubsection{Increasing Nest}

The first case happens when we have an infinite chain going forward. This means that there is a sequence of sets $\{A_i\}_{i\in\mathbb{N}}$ such that $A_n\subseteq A_{n+1}$. When such a family is encountered, one usually needs to find the union; the intersection is clearly $A_{1}$. The union will be expressed in terms of a family of disjoint sets. Consider the family $B_{i}=A_{i}-A_{i-1}$. We know $\bigcup_{i}B_i\subseteq\bigcup_iA_i$ because each $B_i\subseteq A_i$. Also, if $x\in\bigcup_iA_i$, then $x\in B_m$, where $m$ is the smallest integer such that $x\in A_{m}$. We conclude

\begin{eqnarray}\bigcup_iA_i&=&\bigcup_iB_i\\\nonumber
\\\nonumber\emptyset&=&\bigcap_iB_i.\end{eqnarray}

\subsubsection{Decreasing Nest}

Now we analyze a sequence of sets such that $A_{n+1}\subseteq A_n$. We are able to express \begin{equation}A_{1}=\bigcup_i(A_{i}-A_{i+1})\end{equation}

To prove this, we begin with $A_{1}=(A_{1}\cap A_{2})\cup(A_{1}-A_{2})= A_{2}\cup(A_{1}-A_{2})$. Then, since $A_{2}=A_{3}\cup(A_{2}-A_{3})$ we can say $A_{1}=(A_{1}-A_{2})\cup(A_{2}-A_{3})\cup A_{3}$. We continue in this manner and see that this is the union of a disjoint family.

Often, when using nests, one needs to find the intersection. It is expressed by

\begin{eqnarray}\nonumber\bigcap_iA_{i}&=&\left(\bigcup_iA_i^c\right)^c\\&=&
\left(\bigcup_i(A_{i}^c-A_{i-1}^c)\right)^c\end{eqnarray}

This is obtained by using the sequence of $A_i^c$ as a growing nest.

	\subsection{$\sigma$-Algebra}

A set of subsets of some $A\in\mathcal{U}$ can be such that it is closed for the operations of union, intersection and complement. Formally, a $\sigma-algebra$, $\mathcal{A}$, is an object of $\textgoth{P}\textgoth{P}A$ such that 1) $A\in\mathcal{A}$, 2) If $X\in\mathcal{A}$, then $X^c\in\mathcal{A}$, and 3) For any sequence $\mathcal{B}\subseteq\mathcal{A}$ we verify $\bigcup\mathcal{B}\in\mathcal{A}$. This is similar to having a group, where the objects of operation are the objects of the $\sigma-$algebra. Remember, however, that the complement is not the inverse in the sense defined for categories.

For the same subset $\mathcal{B}$, we have $\bigcap\mathcal{B}\in\mathcal{A}$. The proof is $\bigcap\mathcal{B}=\bigcup\mathcal{B}^c$, since the objects of $\mathcal{B}^c$ are also in $\mathcal{A}$.

		\subsubsection{Generated $\sigma-$algebra}

Let $\mathcal{B}\in\textgoth{P}\textgoth{P}A$, then the $\sigma-$algebra generated by $\mathcal{B}$ is represented by $\sigma(\mathcal{B})$. We define it as the smallest $\sigma-$algebra that contains all the objects of $\mathcal{B}$. In other words, if $\mathcal{B}\subseteq\mathcal{A}$, where $\mathcal{A}$ is a $\sigma-$algebra, then $\mathcal{B}\subseteq\sigma(\mathcal{B})\subseteq\mathcal{A}$.

Let $\Sigma(\mathcal{B})$ be the collection of all $\sigma-$algebras that contain all the objects of $\mathcal{B}$.

\begin{proposition}The $\sigma-$algebra generated by a collection $\mathcal{B}$, of subsets of $A$, is expressed by

\begin{eqnarray}\nonumber\sigma(\mathcal{B})&=&\bigcap\Sigma(\mathcal{B})
\end{eqnarray}\end{proposition}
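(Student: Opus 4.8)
The plan is to prove the two inclusions that, together with antisymmetry of $\subseteq$, force equality, by showing that $\bigcap\Sigma(\mathcal{B})$ is itself the smallest $\sigma$-algebra containing $\mathcal{B}$ and then invoking the definition of $\sigma(\mathcal{B})$ as exactly that smallest object. First I would check that $\Sigma(\mathcal{B})$ is non-empty, which is needed for $\bigcap\Sigma(\mathcal{B})$ to be meaningful: the full power set $\textgoth{P}A$ satisfies the three defining clauses of a $\sigma$-algebra and contains every subset of $A$, hence contains $\mathcal{B}$, so $\textgoth{P}A\in\Sigma(\mathcal{B})$.

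The central step is to verify that $\bigcap\Sigma(\mathcal{B})$ satisfies the three clauses defining a $\sigma$-algebra, reading membership through the convention that $X\in\bigcap\Sigma(\mathcal{B})$ holds exactly when $X\in\mathcal{A}$ for every $\mathcal{A}\in\Sigma(\mathcal{B})$. For clause 1), $A\in\mathcal{A}$ for each $\mathcal{A}\in\Sigma(\mathcal{B})$, so $A\in\bigcap\Sigma(\mathcal{B})$. For clause 2), if $X\in\bigcap\Sigma(\mathcal{B})$ then $X\in\mathcal{A}$ for every $\mathcal{A}$, whence $X^c\in\mathcal{A}$ for every such $\mathcal{A}$, so $X^c\in\bigcap\Sigma(\mathcal{B})$. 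For clause 3), given a sequence $\mathcal{B}'\subseteq\bigcap\Sigma(\mathcal{B})$, each member of $\mathcal{B}'$ lies in every $\mathcal{A}\in\Sigma(\mathcal{B})$, so $\mathcal{B}'\subseteq\mathcal{A}$ and $\bigcup\mathcal{B}'\in\mathcal{A}$ for every such $\mathcal{A}$; therefore $\bigcup\mathcal{B}'\in\bigcap\Sigma(\mathcal{B})$.

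Next I would establish containment and minimality. Every object of $\mathcal{B}$ belongs, by the definition of $\Sigma(\mathcal{B})$, to every $\sigma$-algebra in $\Sigma(\mathcal{B})$, so $\mathcal{B}\subseteq\bigcap\Sigma(\mathcal{B})$; thus $\bigcap\Sigma(\mathcal{B})$ is a $\sigma$-algebra containing $\mathcal{B}$. For minimality, let $\mathcal{A}$ be any $\sigma$-algebra with $\mathcal{B}\subseteq\mathcal{A}$; then $\mathcal{A}\in\Sigma(\mathcal{B})$, and since the intersection of a family is a subcollection of each of its members, $\bigcap\Sigma(\mathcal{B})\subseteq\mathcal{A}$. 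Hence $\bigcap\Sigma(\mathcal{B})$ is contained in every $\sigma$-algebra that contains $\mathcal{B}$, which is precisely the defining property of the smallest such $\sigma$-algebra, namely $\sigma(\mathcal{B})$.

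Finally I would conclude by uniqueness: any two smallest $\sigma$-algebras containing $\mathcal{B}$ are mutually contained in one another, hence equal by the antisymmetry of inclusion established for $\textbf{Set}_{\subseteq}$, giving $\sigma(\mathcal{B})=\bigcap\Sigma(\mathcal{B})$. No step presents a genuine obstacle; the only points demanding care are confirming $\Sigma(\mathcal{B})\neq\emptyset$ so that the intersection behaves as intended, and consistently interpreting $\bigcap\Sigma(\mathcal{B})$ via the membership rule $x\in\bigcap\mathcal{X}\Leftrightarrow x\in A$ for every $A\in\mathcal{X}$, rather than as some operation applied to individual subsets of $A$.
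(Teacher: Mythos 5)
Your proof is correct, but it takes a genuinely different route from the paper's. The paper's proof is two lines: since $\sigma(\mathcal{B})$ is by definition contained in every $\sigma$-algebra that contains $\mathcal{B}$, it is contained in $\bigcap\Sigma(\mathcal{B})$; and since $\sigma(\mathcal{B})$ is itself a $\sigma$-algebra containing $\mathcal{B}$, it belongs to $\Sigma(\mathcal{B})$, so the intersection is contained in it. That argument leans entirely on the definition of $\sigma(\mathcal{B})$ as the smallest $\sigma$-algebra containing $\mathcal{B}$, and in particular it silently assumes that such a smallest object exists. Your proof does not assume this: by noting $\textgoth{P}A\in\Sigma(\mathcal{B})$ (so the family being intersected is non-empty), verifying the three defining clauses of a $\sigma$-algebra for $\bigcap\Sigma(\mathcal{B})$, checking $\mathcal{B}\subseteq\bigcap\Sigma(\mathcal{B})$, and observing that the intersection sits inside every member of $\Sigma(\mathcal{B})$, you exhibit $\bigcap\Sigma(\mathcal{B})$ as a smallest $\sigma$-algebra containing $\mathcal{B}$ and then conclude equality by antisymmetry of inclusion. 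What the paper's approach buys is brevity, at the cost of resting on a definition whose well-posedness is never verified in the text; what yours buys is that very well-posedness --- it constructs the generated $\sigma$-algebra rather than postulating it, which is the standard measure-theoretic argument and arguably the proof this proposition actually needs.
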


\begin{proof}We know $\sigma(\mathcal{B})\subseteq\bigcap\Sigma(\mathcal{B})$ because $\sigma(\mathcal{B})\subseteq\mathcal{A}$, for every $\mathcal{A}\in\Sigma(\mathcal{B})$. Also, $\bigcap\Sigma(\mathcal{B})\subseteq\sigma(\mathcal{B})$ because $\sigma(\mathcal{B})\in\Sigma(\mathcal{B})$.\end{proof}

		\subsection{Set Filter}

A \textit{filter} $\mathcal{F}$ is a non-empty family of non-empty subsets (there is at least one set in the filter and each set has at least one object) of $X$ such that for every $F,G\in\mathcal{F}$ and any $H\supseteq F$:

\begin{itemize}\item[A)]$F\cap G\in\mathcal{F}$\item[B)] $H\in\mathcal{F}$\end{itemize}

A \textit{filter base} is a non-empty family of non-empty subsets of $X$ such that $A)$ holds. We say a) holds if for every $F,G\in\mathcal{F}$ there exists an $H\in\mathcal{F}$ such that

\begin{itemize}\item[a)]$H\subseteq F\cap G$.\end{itemize}

Condition A) can be taken to be the \textit{finite intersection property} because for any finite family of sets $\mathcal X\subseteq\mathcal F$ we have $\bigcap\mathcal X\in\mathcal F$. A base filter can be defined equivalently, if we ask for the condition a) instead of condition A). So, a filter may also be defined as a family that verifies conditions a) and B). 

		\subsubsection{Filter Generated by a Base}

We will now give a construction that sheds light on the terms selected. We will at times be justified in saying that a certain filter base, is \textit{base} of some filter. Let $\mathcal{B}\in\textbf{FB}X$ be an element in the set of all filter bases for $X$, and say $\langle\mathcal{B}\rangle\in\textgoth{P}\textgoth{P} X$ consists of those sets $A$ such that $F\subseteq A$, for some $F\in\mathcal{B}$. We note that $\mathcal{B}\subseteq\langle\mathcal{B}\rangle$.

\begin{proposition}The family $\langle\mathcal{B}\rangle$ is a filter and we will say the filter base is $\mathcal{B}$.\end{proposition}

\begin{proof}We only need to prove that condition B) in the definition of filter, holds. Take $B\supseteq A\in\langle\mathcal{B}\rangle$. Then, there is $F\in\mathcal{B}$ such that $F\subseteq A\subseteq B$. This means $B\in\langle\mathcal{B}\rangle$.\end{proof}

Let $\textbf{FB}X$ and $\textbf{F}X$ represent the collections of filter bases and filters for $X$. Then, the last proposition is simply stating that there is a function from $\textbf{FB}X$ onto $\textbf{F}X$, because every filter is also a filter base and the filter generated by any filter is itself. Two base filters that generate the same filter are called equivalent bases. For any $X\in\mathcal{U}$, we have $\textbf{Filt}_X:(\textbf{FB}X)\rightarrow(\textbf{F}X)$, and $\textbf{Filt}_X\mathcal{B}=\langle\mathcal{B}\rangle$. All we have said with respect to this is that for every $\mathcal{F}\in\textbf{F}X$ we also have $\mathcal{F}\in\textbf{FB}X$ and $\textbf{Filt}_X\mathcal{F}=\mathcal{F}$.

\begin{proposition}For any $\mathcal{B}\in\textbf{FB}X$, we have $\langle\mathcal{B}\rangle=\mathcal{B}$ if and only if $\mathcal{B}\in\textbf{F}X$.\end{proposition}

What is more, for any filter base $\mathcal{B}$ we find that the filter $\textbf{Filt}_X\mathcal{B}$ is the smallest filter that contains the filter base.

\begin{proposition}Let $\mathcal{B}\in\textbf{FB}X$ and $\mathcal{F}\in\textbf{F}X$. If $\mathcal{B}\subseteq\mathcal{F}$, then $\mathcal{B}\subseteq\langle\mathcal{B}\rangle\subseteq\mathcal{F}$.\end{proposition}

		\subsubsection{Principal Filter and Filter Generated by a Point}

It is clear that a family $\{F\}$ consisting of one non-empty set is a filter base. The filter generated from such a filter base is called a \textit{principal filter}. Of course, if we consider the inclusion order, we have $\langle\{F\}\rangle=\downarrow F$.

\begin{lemma4.1}Let $F\in\mathcal{F}$, then $\textbf{Filt}_X\{F\}\subseteq\mathcal{F}$.\end{lemma4.1}

From this, we get the following result, where we consider the filter as an index set by noting the function $\mathcal{F}\rightarrow\textgoth{P}\textgoth{P}X$ that makes $F\mapsto \langle F\rangle=\textbf{Filt}_X{\{F\}}$.

\begin{theorem}Any filter is the union of principal filters. In particular,$$\mathcal{F}=\bigcup_{F\in\mathcal{F}}\langle F\rangle.$$\end{theorem}

In case we have $F=\{x\}$, for some $x\in X$, then we write $\langle x\rangle$ instead of the strict notation $\textbf{Filt}_X\{\{x\}\}=\langle\{\{x\}\}\rangle$. These are called \textit{point generated filters}.

\subsubsection{Cofinite Filter} Consider the family of all finite subsets, of $X$, and denote it by $\mathcal{F}^c$. Then, the family $\mathcal{F}=\{F\}_{F^c\in\mathcal{F}^c}$, is called the \textit{cofinite filter} of $X$. We are using $F$ to represent the complement of $F^c$.

\begin{proposition}The cofinite filter of $X$ is a filter of $X$.\end{proposition}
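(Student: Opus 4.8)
The plan is to check directly the two defining conditions of a filter from the definition given earlier, namely A) closure under finite intersection and B) upward closure, together with the two standing requirements that $\mathcal{F}$ be non-empty and that every member of $\mathcal{F}$ be non-empty. Recall that a set $F$ belongs to the cofinite filter precisely when its complement $F^c$ is a finite subset of $X$, where $F$ denotes the complement of the finite set $F^c$.

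First I would dispose of the non-emptiness conditions. Since $\emptyset$ is finite and $X^c=\emptyset$, the whole set $X$ lies in $\mathcal{F}$, so $\mathcal{F}$ is non-empty. For the requirement that each member be non-empty I would assume $X$ is infinite, which is the natural hypothesis for this construction: if $F\in\mathcal{F}$ then $F=X-F^c$ is obtained from an infinite set by removing finitely many objects, hence $F$ is non-empty. This is the one point where a hypothesis beyond the bare statement is needed, and I would flag it explicitly.

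Next, condition A). Taking $F,G\in\mathcal{F}$, the laws of DeMorgan established earlier give $(F\cap G)^c=F^c\cup G^c$. Since $F^c$ and $G^c$ are finite, I only need the fact that the union of two finite sets is finite; granting this, $(F\cap G)^c$ is finite and so $F\cap G\in\mathcal{F}$. For condition B), if $H\supseteq F$ with $F\in\mathcal{F}$, then passing to complements reverses the inclusion to $H^c\subseteq F^c$, and a subset of a finite set is finite, so $H^c$ is finite and $H\in\mathcal{F}$.

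The only real obstacle is that the two finiteness facts I rely on — that a union of two finite sets is finite, and that a subset of a finite set is finite — were not formally established in the text, where finiteness is defined only through the existence of a bijection $\textbf{n}\rightarrow A$. I would either cite them as standard or supply a short argument: a subset of $\textbf{n}$ is bijective to some $\textbf{m}$ with $m\leq n$, and two finite sets are merged by concatenating their enumerations into a bijection from some $\textbf{k}$ onto their union. With these in hand, the verification of A), B), and the two non-emptiness clauses is entirely routine.
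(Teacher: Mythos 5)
Your proof is correct and takes essentially the same route as the paper's: condition A) is verified via $(F\cap G)^c=F^c\cup G^c$ together with the fact that a union of two finite sets is finite, and condition B) via $H\supseteq F\Rightarrow H^c\subseteq F^c$. You are in fact more careful than the paper, which checks only A) and B) and silently ignores the non-emptiness requirements --- in particular the need for $X$ to be infinite (otherwise $\emptyset$ would have finite complement and lie in $\mathcal{F}$), a hypothesis you rightly flag.
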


\begin{proof}First, we see that $F^c\cap G^c=(F\cup G)^c$, where $F^c,G^c\in\mathcal{F}^c$. We know $F\cup G$ is finite because $A,B$ are finite. This proves that $F^c\cap G^c\in\mathcal{F}$.

Now, let $F\subseteq H$. This means $H^c\subseteq F^c$, thus proving $H^c$ is finite.\end{proof}

\subsubsection{Fr\'echet Filter Base and Filter} Let $\mathcal I$ be a directed set and consider, for every $i\in\mathcal I$, the set of objects $\textbf{i}$ that consists of all $x\in\mathcal I$ such that $x\leq i$. Let $i,j\in\mathcal I$, then we can find $k\in\mathcal I$ such that $i,j\leq k$. This means that $\textbf{k}^c$ is a subset of $\textbf{i}^c$ and $\textbf{j}^c$.  This proves that $\{\textbf{i}^c\}_{i\in\mathcal{I}}$ is the \textit{Fr\'echet filter base} of $\mathcal I$.

If $\{\textbf{i}\}_{i\in\mathcal{I}}$ is a family of finite sets, then the cofinite filter of $\mathcal{I}$ is called the \textit{Fr\'echet filter} of $\mathcal{I}$ and we will write $Fr(\mathcal{I})=\{\textbf i^c\}_{i\in\mathcal I}$.

\subsubsection{Filter as a Directed Set} A filter can easily be seen as a directed set, if we consider the partial order $\mathcal{P}X$, with arrows reversed. This new partial order can of course be written as $\mathcal{P}^{-1}X$. That is why we may say that a filter is \textit{downward directed}.

We now give a proof for our assertion. Let $\mathcal{F}$ be our filter and define an order for this family, where $F\leq G$ if $G\subseteq F$. We know we have a partial ordering and all that needs to be shown is that given $F,G\in\mathcal{F}$, we have $H\in\mathcal{F}$ such that $F,G\leq H$. This follows from condition a) for filters.

\subsubsection{Image} The image of a filter base behaves in a good manner. By this we mean that functions send bases into bases. However, filters are not sent into filters. They are sent into base filters, but this is not a problem because we already have a construction that sends base filters into filters!

\begin{proposition}Let $f:X\rightarrow Y$ be a set function. Also, let  $\mathcal{B}\in\textbf{FB}X$ and $\mathcal{F}\in\textbf{F}X$. Then $f[[\mathcal{B}]],f[[\mathcal{F}]]\in\textbf{FB}Y$.\end{proposition}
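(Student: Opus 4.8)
The goal is to show that $f$ sends filter bases to filter bases: given $\mathcal{B}\in\textbf{FB}X$ and $\mathcal{F}\in\textbf{F}X$, both $f[[\mathcal{B}]]$ and $f[[\mathcal{F}]]$ are elements of $\textbf{FB}Y$. Recall from the excerpt that $f[[\mathcal{X}]]=\{fA\}_{A\in\mathcal{X}}$ is the image family, and that a filter base is a non-empty family of non-empty subsets satisfying condition a): for every $F,G$ in the family there exists $H$ in the family with $H\subseteq F\cap G$. Since every filter is in particular a filter base (as noted after the definition of cofinite filter, where the excerpt observes $\mathcal{F}\in\textbf{F}X$ implies $\mathcal{F}\in\textbf{FB}X$), it suffices to prove the statement for an arbitrary filter base $\mathcal{B}$; the claim for $\mathcal{F}$ follows immediately by applying the same argument to $\mathcal{F}$ viewed as a filter base.

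The plan is to verify the three defining conditions for $f[[\mathcal{B}]]$ to be a filter base. First I would check non-emptiness: since $\mathcal{B}$ is non-empty, there is some $F\in\mathcal{B}$, and then $fF\in f[[\mathcal{B}]]$, so the image family is non-empty. Second, each member of $f[[\mathcal{B}]]$ is non-empty: a typical member is $fF$ for some $F\in\mathcal{B}$, and since $F$ is non-empty there is $x\in F$, whence $fx\in fF$, so $fF\neq\emptyset$. These two steps are essentially immediate and use only that the image of a non-empty set under a function is non-empty.

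The substantive step is condition a). Take two members of $f[[\mathcal{B}]]$, say $fF$ and $fG$ with $F,G\in\mathcal{B}$. Because $\mathcal{B}$ is a filter base, there exists $H\in\mathcal{B}$ with $H\subseteq F\cap G$. I would then claim $fH$ is the required lower bound, i.e. $fH\subseteq fF\cap fG$. This follows from the monotonicity of the image established in the subsubsection on image and inclusions (the image preserves subsets): from $H\subseteq F\cap G\subseteq F$ we get $fH\subseteq fF$, and from $H\subseteq F\cap G\subseteq G$ we get $fH\subseteq fG$, so $fH\subseteq fF\cap fG$. Since $fH\in f[[\mathcal{B}]]$, condition a) holds.

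The one subtlety worth flagging is exactly why the argument produces only a filter base and not a filter, and this is where the inclusion $fH\subseteq fF\cap fG$ is used rather than an equality. In general $f(F\cap G)$ is only a subset of $fF\cap fG$ (equality needs $f$ monic, per the earlier proposition on image and set operations), so one cannot expect $f[[\mathcal{F}]]$ to be closed under finite intersection nor upward-closed; hence it satisfies a) but not necessarily A) or B). Thus the main (indeed only) obstacle is conceptual rather than computational: recognizing that monotonicity of the image suffices for a) and resisting the temptation to prove the stronger filter conditions, which genuinely fail without injectivity. This is precisely why the excerpt remarks that filters are sent into filter bases, to be promoted back to filters by the generation construction $\textbf{Filt}_Y$.
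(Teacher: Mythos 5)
Your proof is correct, and in outline it is the same as the paper's: given two members $(fF),(fG)$ of the image family, exhibit a member of $f[[\mathcal{B}]]$ contained in their intersection, and observe that nothing stronger than condition a) can be expected. The difference is in the witness chosen and in the care taken. The paper takes the witness to be $f(F\cap G)$ and writes $(fF)\cap(fG)=fF\cap fG\subseteq f(F\cap G)\in f[[\mathcal{B}]]$; this presupposes $F\cap G\in\mathcal{B}$, i.e.\ the A)-form of the base definition (true for filters, but not for a base satisfying only condition a)), and moreover the displayed inclusion is stated in the wrong direction: for an arbitrary function one has $f(F\cap G)\subseteq fF\cap fG$, not the reverse, and it is this correct direction that condition a) actually needs. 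Your version repairs both points at once: choosing $H\in\mathcal{B}$ with $H\subseteq F\cap G$ and invoking monotonicity of the image gives $fH\subseteq fF\cap fG$ with $fH\in f[[\mathcal{B}]]$, which proves the claim under the weaker a)-definition and hence for filters as a special case. Your closing remark, that equality $f(F\cap G)=fF\cap fG$ requires $f$ monic and that this is precisely why the image is only a filter base and must be promoted back to a filter by $\textbf{Filt}_Y$, is the correct explanation of the phenomenon the paper only gestures at. So your route buys a little more: it covers a)-bases, and it avoids the reversed inclusion that appears in the paper's own argument.
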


\begin{proof}Take $(fF),(fG)\in f[[\mathcal{B}]]$, then there exist $F,G\in\mathcal{B}$ for which $fF=(fF)$ and $fG=(fG)$. Therefore, $(fF)\cap(fG)=fF\cap fG\subseteq f(F\cap G)\in f[[\mathcal{B}]]$. In particular, this conclusion also holds for filters in place of bases.\end{proof}

We will say that the \textit{generated image} $\langle f\mathcal{F}\rangle$, of the filter $\mathcal{F}$, is the filter generated by the corresponding image, which is a filter base; this is $\langle f\mathcal{F}\rangle=\textbf{Filt}_Y{f[[\mathcal{F}]]}$.

The preimage of a filter base behaves well, under certain conditions.

\begin{proposition}Let $f:X\rightarrow Y$ be a set function and $\mathcal{B}\in\textbf{FB}Y$. Then $f^{-1}[[\mathcal{B}]]\in\textbf{FB}X$ if and only if $F\cap fX\neq\emptyset$, for every $F\in\mathcal{B}$.\end{proposition}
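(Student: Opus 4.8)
The plan is to unfold the definition of filter base for the family $f^{-1}[[\mathcal{B}]]=\{f^{-1}B\}_{B\in\mathcal{B}}$ and to notice that, of the two requirements (a non-empty family of non-empty sets, and condition a)), only the non-emptiness of the members can fail; that failure is controlled exactly by the stated intersection condition. The single observation on which everything turns is that, for a set $F\subseteq Y$, the preimage $f^{-1}F$ is non-empty if and only if $F\cap fX\neq\emptyset$: by definition $x\in f^{-1}F$ means $fx\in F$, and such an $x$ exists precisely when $F$ meets the image $fX=Im~f$. First I would record this equivalence, since it translates the hypothesis directly into the non-emptiness of the members of $f^{-1}[[\mathcal{B}]]$.

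For the direction assuming $F\cap fX\neq\emptyset$ for all $F\in\mathcal{B}$, I would check the three points in turn. The family $f^{-1}[[\mathcal{B}]]$ is non-empty because $\mathcal{B}$ is; each member $f^{-1}F$ is non-empty by the equivalence above; and condition a) follows formally: given $f^{-1}F$ and $f^{-1}G$, use that $\mathcal{B}$ is a filter base to obtain $H\in\mathcal{B}$ with $H\subseteq F\cap G$, then invoke that the preimage preserves inclusion and commutes with intersection (the earlier proposition on image and set operations) to get $f^{-1}H\subseteq f^{-1}(F\cap G)=f^{-1}F\cap f^{-1}G$, with $f^{-1}H\in f^{-1}[[\mathcal{B}]]$. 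Hence $f^{-1}[[\mathcal{B}]]\in\textbf{FB}X$. Conversely, if $f^{-1}[[\mathcal{B}]]$ is a filter base then by definition its members are non-empty, so $f^{-1}F\neq\emptyset$ for every $F\in\mathcal{B}$, and the equivalence yields $F\cap fX\neq\emptyset$.

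There is no serious obstacle here; the one point that deserves care is to keep the two filter-base conditions separate. Condition a) holds unconditionally, being a purely formal consequence of the fact that taking preimages commutes with finite intersections, so the entire content of the biconditional is carried by whether the members $f^{-1}F$ are non-empty. I would therefore emphasize that the hypothesis $F\cap fX\neq\emptyset$ is exactly the non-emptiness requirement rephrased through the image of $f$, and state the equivalence $f^{-1}F\neq\emptyset\Leftrightarrow F\cap fX\neq\emptyset$ as the hinge of both implications.
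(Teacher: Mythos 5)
Your proposal is correct and follows essentially the same route as the paper's proof: both directions hinge on the equivalence $f^{-1}F\neq\emptyset\Leftrightarrow F\cap fX\neq\emptyset$ together with the fact that preimage commutes with intersection, $f^{-1}(F\cap G)=f^{-1}F\cap f^{-1}G$. If anything, you are slightly more explicit than the paper, which verifies the intersection condition via $F\cap G\in\mathcal{B}$ (condition A)) and leaves the non-emptiness of the members in the converse direction implicit, whereas you isolate that equivalence as the hinge and check it in both directions.
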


\begin{proof}If we suppose $f^{-1}[[\mathcal{B}]]$ is a filter base of $X$, then for every $F\in\mathcal{B}$ we have $f^{-1}F\neq\emptyset$. This implies that there exists an $x\in X$ such that $fx\in F$.

If $(f^{-1}F),(f^{-1}G)\in f^{-1}[[\mathcal{B}]]$, then there exist $F,G\in\mathcal{B}$ such that $(f^{-1}F)\cap(f^{-1}G)=f^{-1}F\cap f^{-1}G=f^{-1}(F\cap G)\in f^{-1}[[\mathcal{B}]]$.\end{proof}

The \textit{generated inverse image} of $\mathcal{F}\in\textbf{F}Y$ is defined as $\langle f^{-1}\mathcal{F}\rangle=\textbf{Filt}_Xf^{-1}[[\mathcal{F}]]$; given, of course, $F\cap fX\neq\emptyset$, for every $F\in\mathcal{F}$.

\subsubsection{Elementary Filter of a Net} Given a net $\eta:\mathcal I\rightarrow X$, we can give a filter associated to it, $\eta[\textbf{Filt}_X\{\textbf{i}^c\}_{i\in\mathcal{I}}]$. This is nothing more than the direct image, under $\eta$, of the filter generated by $\{\textbf{i}^c\}_{i\in\mathcal{I}}$. In case we have a Fr\'echet filter, we will of course write $\eta[Fr(\mathcal{I})]$. In any case, the filter given is called the \textit{elementary filter of $\eta$}.

\subsubsection{Ultrafilter}

We have seen how a filter is a partial order, that is filters have an internal order. Well it turns out that the concept of filter gives rise to another type of order, an external order. This has already been manifest in that we have enconutered a minimal filter $\langle\mathcal{B}\rangle$ that contains $\mathcal B$. In this paragraph we will dedicate ourselves to the study of a special kind of maximal filters. In the next section we will study the order defined on $\textbf{F}X$, using a more general definition of filter.

\begin{definition}If $\mathcal B,\mathcal C\in\textbf{FB}X$ are such that for every $B\in\mathcal B$ there is $C\in\mathcal C$ such that $C\subseteq B$ then we say $\mathcal C$ is finer than $\mathcal B$, or that $\mathcal C$ is a refinement of $\mathcal B$. This is represented by $\mathcal B\preceq\mathcal C$.

A filter $\mathcal{F}$ is an \textit{ultrafilter} if for every $A\subseteq X$ we have $A\in\mathcal{F}$ or $A^c\in\mathcal{F}$. The collection of ultrafilters on a set $X$ is $\textbf{UF}X$.\end{definition}

\begin{proposition}The relation of refinement $\preceq$ is a preorder for $\textbf{FB}X$.\end{proposition}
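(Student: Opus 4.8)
The plan is to verify directly the two defining properties of a preorder as given in the Order subsection, namely that $\preceq$ is a reflexive and transitive ordered pair relation on the collection $\textbf{FB}X$. No antisymmetry need be checked, and this is precisely why the statement is phrased as a preorder rather than a partial order: two distinct but equivalent bases (bases generating the same filter) refine one another without being the same object, so $\preceq$ genuinely fails to be anti-symmetric. Thus the entire content is reflexivity and transitivity.

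For reflexivity I would show $\mathcal B\preceq\mathcal B$ for every $\mathcal B\in\textbf{FB}X$. Unwinding the definition of refinement, this requires that for each $B\in\mathcal B$ there exist some $C\in\mathcal B$ with $C\subseteq B$; taking $C:=B$ works, since $B\subseteq B$ by reflexivity of inclusion. For transitivity I would assume $\mathcal B\preceq\mathcal C$ and $\mathcal C\preceq\mathcal D$ and deduce $\mathcal B\preceq\mathcal D$. Given an arbitrary $B\in\mathcal B$, the first hypothesis furnishes $C\in\mathcal C$ with $C\subseteq B$; feeding this $C$ into the second hypothesis furnishes $D\in\mathcal D$ with $D\subseteq C$. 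Then $D\subseteq C\subseteq B$, and transitivity of inclusion gives $D\subseteq B$. Since $B$ was arbitrary, for every $B\in\mathcal B$ there is $D\in\mathcal D$ with $D\subseteq B$, which is exactly $\mathcal B\preceq\mathcal D$.

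There is no real obstacle here; the only point worth flagging is that both steps simply transport the reflexivity and transitivity of the inclusion order through the ``for every $B$ there exists $C$'' quantifier shape of the refinement relation. These properties of $\subseteq$ are already available from the earlier construction of $\textbf{Set}_{\subseteq}$ as a partial order under inclusion, so I would cite that and keep the verification to the two short quantifier manipulations above.
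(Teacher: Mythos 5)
Your proof is correct and follows exactly the same route as the paper, which disposes of the proposition in one line by saying it follows from reflexivity and transitivity of set inclusion; you have merely spelled out the quantifier bookkeeping that the paper leaves implicit. Your side remark that anti-symmetry fails for equivalent bases also agrees with the paper's subsequent discussion, so nothing is missing.
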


\begin{proof}This follows from reflexivity and transitivty for set inclusion.\end{proof}

\begin{proposition}Let $\mathcal F,\mathcal G\in\textbf{F}X$. Then $\mathcal F\subseteq\mathcal G$ if and only if $\mathcal F\preceq\mathcal G$.\end{proposition}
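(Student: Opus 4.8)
The plan is to prove the two implications separately, the forward one being immediate and the backward one being where the defining property of a filter (as opposed to a mere filter base) does the work.

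First I would treat the direction $\mathcal F\subseteq\mathcal G\Rightarrow\mathcal F\preceq\mathcal G$. Recall that $\mathcal F\preceq\mathcal G$ means that for every $F\in\mathcal F$ there is some $G\in\mathcal G$ with $G\subseteq F$. So I would take an arbitrary $F\in\mathcal F$; since $\mathcal F\subseteq\mathcal G$, the set $F$ is itself a member of $\mathcal G$, and taking $G:=F$ gives $G\subseteq F$ by reflexivity of inclusion. Thus the refinement condition is verified for each $F\in\mathcal F$, and $\mathcal F\preceq\mathcal G$. This half never uses that $\mathcal F,\mathcal G$ are filters rather than bases: inclusion always implies refinement.

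For the converse $\mathcal F\preceq\mathcal G\Rightarrow\mathcal F\subseteq\mathcal G$, I would take an arbitrary $F\in\mathcal F$ and show $F\in\mathcal G$. By the assumption $\mathcal F\preceq\mathcal G$ there exists $G\in\mathcal G$ with $G\subseteq F$. Now I invoke condition B) in the definition of a filter applied to $\mathcal G$: since $G\in\mathcal G$ and $F\supseteq G$, upward closure gives $F\in\mathcal G$. As $F\in\mathcal F$ was arbitrary, this yields $\mathcal F\subseteq\mathcal G$, completing the equivalence.

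The step I expect to be the crux — though it is short — is this appeal to condition B) in the backward direction. It is precisely the property that distinguishes a filter from a filter base, and it is the reason the equivalence sharpens the general fact (true for arbitrary bases) that inclusion implies refinement into a genuine biconditional for filters. No calculation is required beyond these two observations, so I would not anticipate any real obstacle.
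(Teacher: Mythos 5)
Your proof is correct and follows exactly the paper's own argument: the forward direction takes $G:=F$ and uses reflexivity of inclusion, and the backward direction applies the upward-closure condition B) of the filter $\mathcal G$ to the witness $G\subseteq F$. Nothing to add.
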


\begin{proof}Take $F\in\mathcal F\subseteq\mathcal G$, then for $F\in\mathcal G$ we have $F\subseteq F$.

Now suppose $\mathcal G$ is finer than $\mathcal F$ and let $F\in\mathcal F$. We have $G\subseteq F$ for some $G\in\mathcal G$. Since $\mathcal G$ is a filter we have $F\in\mathcal G$.\end{proof}

\begin{corollary}The relation $\preceq$ for filters is a partial order.
\end{corollary}

We do not generally have anti-symmetry for the refinement relation in $\textbf{FB}X$ but we do have the following important result.

\begin{proposition}Let $\mathcal B,\mathcal C\in\textbf{FB}X$. Then $\mathcal B\preceq\mathcal C$ and $\mathcal C\preceq\mathcal B$ if and only if $\langle\mathcal B\rangle=\langle\mathcal C\rangle$.\end{proposition}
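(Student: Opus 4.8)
The plan is to reduce everything to two facts established earlier in the excerpt: the defining property of the generated filter, namely that $A\in\langle\mathcal B\rangle$ exactly when $F\subseteq A$ for some $F\in\mathcal B$, together with the noted inclusion $\mathcal B\subseteq\langle\mathcal B\rangle$. Both directions are then a matter of chaining inclusions while tracking the quantifiers in the refinement relation, so I expect no serious obstacle; the only thing to watch is the orientation of $\preceq$, where $\mathcal B\preceq\mathcal C$ means that every $B\in\mathcal B$ contains some $C\in\mathcal C$.

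First I would treat the forward implication. Assuming $\mathcal B\preceq\mathcal C$ and $\mathcal C\preceq\mathcal B$, take any $A\in\langle\mathcal B\rangle$, so that $B\subseteq A$ for some $B\in\mathcal B$. Since $\mathcal B\preceq\mathcal C$, there is $C\in\mathcal C$ with $C\subseteq B\subseteq A$, whence $A\in\langle\mathcal C\rangle$; this gives $\langle\mathcal B\rangle\subseteq\langle\mathcal C\rangle$. The reverse inclusion follows identically from $\mathcal C\preceq\mathcal B$, and so $\langle\mathcal B\rangle=\langle\mathcal C\rangle$.

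Next I would handle the converse. Suppose $\langle\mathcal B\rangle=\langle\mathcal C\rangle$. To verify $\mathcal B\preceq\mathcal C$, take $B\in\mathcal B$; because $\mathcal B\subseteq\langle\mathcal B\rangle$ we have $B\in\langle\mathcal B\rangle=\langle\mathcal C\rangle$, and the definition of $\langle\mathcal C\rangle$ supplies $C\in\mathcal C$ with $C\subseteq B$, which is exactly what $\mathcal B\preceq\mathcal C$ requires. Interchanging the roles of $\mathcal B$ and $\mathcal C$ yields $\mathcal C\preceq\mathcal B$, completing the proof. The symmetry of the argument in both directions means I would only write one side in full and invoke the symmetric case for the other.

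The main conceptual point worth making explicit, rather than a genuine difficulty, is that this proposition says equivalent bases (in the earlier terminology, those generating the same filter) are precisely the pairs that refine one another; so the content is that $\preceq$ induces anti-symmetry only after passing to generated filters, matching the earlier remark that $\preceq$ need not be anti-symmetric on $\textbf{FB}X$ itself.
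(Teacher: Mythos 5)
Your proof is correct, and where it overlaps with the paper it follows the same route: your forward direction is exactly the paper's chain $C\subseteq B\subseteq A$ obtained by combining the witness for $A\in\langle\mathcal B\rangle$ with the refinement $\mathcal B\preceq\mathcal C$. It is worth noting that the paper's printed proof stops after establishing the single inclusion $\langle\mathcal B\rangle\subseteq\langle\mathcal C\rangle$ and never addresses the converse, so your second paragraph --- extracting $\mathcal B\preceq\mathcal C$ from $B\in\langle\mathcal B\rangle=\langle\mathcal C\rangle$ via the inclusion $\mathcal B\subseteq\langle\mathcal B\rangle$ --- supplies the half of the equivalence (and the symmetric inclusion) that the paper leaves unproven. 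Your orientation of $\preceq$ agrees with the paper's definition, so there is no gap anywhere in your argument.
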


\begin{proof}Let $F\in\langle\mathcal B\rangle$. Then $\mathcal B\preceq\mathcal C$ is true if and only if there exists $B\in\mathcal B$, and consequently $C\in\mathcal C$, such that $C\subseteq B\subseteq F$. Therefore $F\in\langle\mathcal C\rangle$ and we may conlcude $\langle\mathcal B\rangle\subseteq\langle\mathcal C\rangle$.\end{proof}

\begin{theorem}A filter $\mathcal F$ is ultrafilter if and only if for every $\mathcal G\in\textbf{F}X$ such that $\mathcal F\leq\mathcal G$, then $\mathcal G=\mathcal F$.\end{theorem}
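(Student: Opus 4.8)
The plan is to recognize that this theorem simply identifies ultrafilters with the maximal elements of $\textbf{F}X$ under the refinement order. Since the corollary preceding this statement tells us that $\preceq$ is a partial order on filters and that $\mathcal{F}\subseteq\mathcal{G}$ is equivalent to $\mathcal{F}\preceq\mathcal{G}$, I will throughout read the hypothesis $\mathcal{F}\leq\mathcal{G}$ as plain inclusion $\mathcal{F}\subseteq\mathcal{G}$. The proof then splits into the two implications.

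For the forward direction, I would assume $\mathcal{F}$ is an ultrafilter and take any filter $\mathcal{G}$ with $\mathcal{F}\subseteq\mathcal{G}$. Arguing by contradiction, suppose there is a set $A\in\mathcal{G}\setminus\mathcal{F}$. Because $\mathcal{F}$ is an ultrafilter and $A\notin\mathcal{F}$, the complement $A^c$ lies in $\mathcal{F}$, hence in $\mathcal{G}$. Then condition A) (the finite intersection property) forces $A\cap A^c=\emptyset$ into $\mathcal{G}$, contradicting that every member of a filter is non-empty. Thus no such $A$ exists and $\mathcal{G}=\mathcal{F}$, establishing maximality.

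For the converse, I would assume $\mathcal{F}$ is maximal and show it decides every subset. Given $A\subseteq X$ with $A\notin\mathcal{F}$, the key construction is the family $\mathcal{B}:=\{F\cap A: F\in\mathcal{F}\}$. I would first check that if every $F\cap A$ were non-empty, then $\mathcal{B}$ is a filter base: the identity $(F_1\cap A)\cap(F_2\cap A)=(F_1\cap F_2)\cap A$ together with $F_1\cap F_2\in\mathcal{F}$ shows condition a) holds. The generated filter $\langle\mathcal{B}\rangle$ then contains $\mathcal{F}$ (each $F$ contains $F\cap A\in\mathcal{B}$, so $F\in\langle\mathcal{B}\rangle$ by upward closure) and also contains $A$ (taking $F=X$ gives $A=X\cap A\in\mathcal{B}$). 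Maximality would then force $\langle\mathcal{B}\rangle=\mathcal{F}$, so $A\in\mathcal{F}$, contrary to assumption. Hence some $F\in\mathcal{F}$ must satisfy $F\cap A=\emptyset$, i.e.\ $F\subseteq A^c$, and condition B) (upward closure) gives $A^c\in\mathcal{F}$. This shows $\mathcal{F}$ is an ultrafilter.

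The main obstacle is the converse direction, specifically verifying that $\mathcal{B}=\{F\cap A\}_{F\in\mathcal{F}}$ really is a filter base finer than $\mathcal{F}$ that captures $A$; the delicate point is separating the two cases---either all the sets $F\cap A$ are non-empty (which leads, via maximality, to $A\in\mathcal{F}$) or some $F\cap A$ is empty (which delivers $A^c\in\mathcal{F}$). Everything else is routine bookkeeping with the filter axioms and the generated-filter construction already established in the preceding propositions.
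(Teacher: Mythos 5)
Your proof is correct, and while your forward direction is essentially the paper's (both take an undecided set $G\in\mathcal G-\mathcal F$, use the ultrafilter property to place $G^c$ in $\mathcal F\subseteq\mathcal G$, and derive the contradiction $\emptyset=G\cap G^c\in\mathcal G$), your converse genuinely departs from the paper's argument --- and to your advantage. For that implication the paper supposes $\mathcal F\prec\mathcal G$ strictly, picks $G\in\mathcal G$ with $G\notin\mathcal F$, shows that the assumption $G^c\in\mathcal F$ forces $\emptyset\in\mathcal G$, and then concludes that ``no such $\mathcal G$ exists and $\mathcal F$ is maximal.'' But that only establishes $G^c\notin\mathcal F$; it neither rules out the existence of $\mathcal G$ nor produces, for an arbitrary $A\subseteq X$, the dichotomy $A\in\mathcal F$ or $A^c\in\mathcal F$ that defines an ultrafilter (indeed it proves the wrong direction: maximality is the hypothesis there, not the goal). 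Your construction supplies exactly the missing ingredient: given $A\notin\mathcal F$, the family $\mathcal B=\{F\cap A\}_{F\in\mathcal F}$ either consists of non-empty sets, in which case it is a filter base whose generated filter $\langle\mathcal B\rangle$ refines $\mathcal F$ and contains $A$, so maximality forces $A\in\mathcal F$, a contradiction; or some $F\cap A=\emptyset$, whence $F\subseteq A^c$ and condition B) gives $A^c\in\mathcal F$. This case split is what makes the maximality hypothesis actually do work, at the mild cost of invoking the generated-filter machinery from the earlier propositions; the paper's route for this implication, as written, does not go through.
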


\begin{proof}Suppose $\mathcal F$ is an ultrafilter. Since $\preceq$ is a partial order relation, we only need to prove that $\mathcal G\preceq\mathcal F$. As we have just seen, this is equivalent to proving $\mathcal G\subseteq\mathcal F$. Let $G\in\mathcal G$, then $G$ or $G^c$ are in $\mathcal F$; we cannot have both in $\mathcal F$ because that would imply $\emptyset=G\cap G^c\in\mathcal F$ which is a contradiction to the definition of filter. So, we suppose $G^c\in\mathcal F$. Since $\mathcal G$ is finer than $\mathcal F$ we have $H\in\mathcal G$ such that $H\subseteq G^c$. This leads to a contradiction because $\emptyset=G\cap H\in\mathcal G$.

Now, we would like to show that $\mathcal F$ is an utrafilter, given the second condition. Let $\mathcal G$ be a filter that contains $\mathcal F$ in the strict sense, $\mathcal F\prec\mathcal G$. That is, there exists $G\in\mathcal G$ such that $G\notin \mathcal F$. If it were the case $G^c\in\mathcal F$ we would have $H\in\mathcal G$, such that $H\subseteq G^c$, because $\mathcal G$ is finer than $\mathcal F$, but this would imply $\emptyset=G\cap H\in\mathcal G$. Therefore, no such $\mathcal G$ exists and $\mathcal F$ is maximal.\end{proof}

We see that the subcollection of ultrafilters absorbs under union of sets. Let us make a general and fomal statement, for this last observation.

\begin{theorem}Let $\mathcal F,\mathcal G$ be two filters on a set $X$ such that $\mathcal F\cup\mathcal G\in\textbf{UF}X$, then $\mathcal F\in\textbf{UF}X$ or $\mathcal G\in\textbf{UF}X$. Also, if $\mathcal F\in\textbf{UF}X$ and $\mathcal G\in\textbf{F}X$, then $\mathcal F\cup\mathcal G\in\textbf{UF}X$ given $\mathcal F\cup\mathcal G\in\textbf{F}X$.\end{theorem}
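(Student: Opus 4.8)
The plan is to prove the two assertions separately, disposing of the second (easy) half by a direct appeal to the ultrafilter definition and then establishing the first half by contradiction.

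For the second assertion, assume $\mathcal F\in\textbf{UF}X$, $\mathcal G\in\textbf{F}X$, and that $\mathcal F\cup\mathcal G\in\textbf{F}X$. Since $\mathcal F\subseteq\mathcal F\cup\mathcal G$, I would simply transfer the ultrafilter property across the inclusion: for any $A\subseteq X$ the hypothesis on $\mathcal F$ gives $A\in\mathcal F$ or $A^c\in\mathcal F$, hence $A\in\mathcal F\cup\mathcal G$ or $A^c\in\mathcal F\cup\mathcal G$. As $\mathcal F\cup\mathcal G$ is assumed to be a filter, this is exactly the defining condition for $\mathcal F\cup\mathcal G\in\textbf{UF}X$. (Alternatively one can invoke the maximality characterization of the previous theorem: $\mathcal F\preceq\mathcal F\cup\mathcal G$ forces $\mathcal F\cup\mathcal G=\mathcal F$, which is already an ultrafilter.)

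For the first assertion, I would set $\mathcal U:=\mathcal F\cup\mathcal G$ and suppose, for contradiction, that $\mathcal U\in\textbf{UF}X$ while neither $\mathcal F$ nor $\mathcal G$ is an ultrafilter. The failure of the ultrafilter property for $\mathcal F$ yields a set $A$ with $A\notin\mathcal F$ and $A^c\notin\mathcal F$; since $\mathcal U$ is an ultrafilter, $A\in\mathcal U$ or $A^c\in\mathcal U$, and whichever one lies in $\mathcal U$ I would name $P$. Then $P\in\mathcal U=\mathcal F\cup\mathcal G$ but $P\notin\mathcal F$, so $P\in\mathcal G$. Symmetrically, the failure of the ultrafilter property for $\mathcal G$ produces a set $Q$ with $Q\in\mathcal U$, $Q\notin\mathcal G$, hence $Q\in\mathcal F$. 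Next I would intersect the two witnesses: because $P,Q\in\mathcal U$ and $\mathcal U$ is a filter, condition A) gives $P\cap Q\in\mathcal U=\mathcal F\cup\mathcal G$. If $P\cap Q\in\mathcal F$, then $P\cap Q\subseteq P$ together with upward closure (condition B) forces $P\in\mathcal F$, contradicting $P\notin\mathcal F$; if instead $P\cap Q\in\mathcal G$, then $P\cap Q\subseteq Q$ forces $Q\in\mathcal G$, contradicting $Q\notin\mathcal G$. Either branch is impossible, so one of $\mathcal F,\mathcal G$ must be an ultrafilter.

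The only substantive step—and hence the main obstacle—is the first half: recognizing that the non-maximality of each filter can be encoded in two witnesses $P$ and $Q$ that both sit inside the ambient ultrafilter $\mathcal U$, and that closing them under intersection within $\mathcal U$ drives membership back into the very filter from which each was excluded. Once those witnesses are chosen correctly, everything else is routine manipulation with the filter axioms A) and B).
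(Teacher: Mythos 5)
Your proposal is correct, and on the second assertion it is essentially the paper's own argument: the paper takes $H\notin\mathcal F\cup\mathcal G$, notes that then $H\notin\mathcal F$, forces $H^c\in\mathcal F\subseteq\mathcal F\cup\mathcal G$ by the ultrafilter property of $\mathcal F$, and concludes; your version is just the direct rather than contrapositive phrasing, and your maximality alternative is a legitimate shortcut through the preceding theorem. On the first assertion, however, your route is genuinely different, and it buys you a complete argument where the paper's is only a fragment. The paper assumes $\mathcal G$ is not an ultrafilter, picks a single witness $F$ with neither $F$ nor $F^c$ in $\mathcal G$, observes that whichever of $F,F^c$ lies in the ultrafilter $\mathcal F\cup\mathcal G$ must then lie in $\mathcal F$, and stops there; this verifies the dichotomy only for that one particular set, not for every $A\subseteq X$, so as written it does not establish that $\mathcal F$ is an ultrafilter. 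Your two-witness argument closes exactly that gap: assuming both $\mathcal F$ and $\mathcal G$ fail to be ultrafilters, you extract $P\in\mathcal G$ with $P\notin\mathcal F$ and $Q\in\mathcal F$ with $Q\notin\mathcal G$, both lying in $\mathcal U=\mathcal F\cup\mathcal G$, then use condition A) to place $P\cap Q$ in $\mathcal U$ and condition B) to push membership of $P\cap Q$ (in whichever of $\mathcal F,\mathcal G$ it lands) up to $P$ or $Q$, contradicting the choice of witnesses. The cost is a slightly longer proof by contradiction; the gain is that the claim is actually proved, and symmetrically in $\mathcal F$ and $\mathcal G$ rather than privileging one of the two filters.
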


\begin{proof}Suppose $\mathcal F\cup\mathcal G$ is an ultrafilter and $\mathcal G$ is not an ultrafilter. So we take $F\subseteq X$ such that $F,F^c$ are not both $\mathcal G$; we have $F\in\mathcal F\cup\mathcal G$ or $F^c\in\mathcal F\cup\mathcal G$. From this it follows that $F\in\mathcal F$ or $F^c\in\mathcal F$.

Now let $\mathcal F$ be an ultrafilter and $\mathcal G$ a filter. Take $H\subseteq X$ and suppose $H\notin\mathcal F\cup\mathcal G$, then $H^c\in\mathcal F\subseteq\mathcal F\cup\mathcal G$. This means $\mathcal F\cup\mathcal G$ is an ultrafilter.\end{proof}

We have a similar result on the internal structure of filters.

\begin{proposition}$\mathcal F\in\textbf{UF}X$ if and only if $F\cup G\in\mathcal F$ implies $F\in\mathcal F$ or $G\in\mathcal F$.\end{proposition}

\begin{proof}Let $\mathcal F$ be an ultrafilter and let $F,G\subseteq X$ such that $F\cup G\in\mathcal F$. We know $F$ or $F^c$ is in $\mathcal F$, the same is true of $G,G^c$. Of course we cannot have $F^c,G^c$ both in $\mathcal F$.

Now suppose $F\cup G\in\mathcal F\Rightarrow$ $F\in\mathcal F$ or $G\in\mathcal F$ for any $F,G\subseteq X$ and any $\mathcal F\in\textbf{F}X$. Consider the special case $F\cup F^c=X\subseteq X$ which means $F\in\mathcal F$ or $F^c\in\mathcal F$. We conclude $\mathcal F$ is an ultrafilter.\end{proof}

We give the easiest example of an ultrafilter in the following result.

\begin{proposition}If $x\in X$ then $\langle x\rangle=\downarrow\mathcal\{x\}\in\textbf{UF}X$ and we call it a principal ultrafilter.\end{proposition}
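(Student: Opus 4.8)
The plan is to unwind the definition of $\langle x\rangle$ into a concrete membership criterion and then read off the ultrafilter property directly. First I would recall that $\{\{x\}\}$ is a filter base, since $\{x\}$ is a single non-empty subset of $X$; by the earlier proposition that $\langle\mathcal{B}\rangle$ is a filter for any filter base $\mathcal{B}$, it follows at once that $\langle x\rangle=\langle\{\{x\}\}\rangle$ is a genuine filter on $X$. This disposes of everything except the defining condition of an ultrafilter, so there is no need to re-verify the filter axioms by hand.

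Next I would make explicit the description of the underlying family. By the construction of the filter generated by a base, $A\in\langle x\rangle$ holds exactly when there is some $F\in\{\{x\}\}$ with $F\subseteq A$; since the only member of the base is $\{x\}$, this reduces to $\{x\}\subseteq A$, that is, to $x\in A$. Thus $\langle x\rangle=\{A\subseteq X:x\in A\}$, which is the same as $\downarrow\{x\}$ in the inclusion order, as asserted in the statement.

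With this characterization in hand, the ultrafilter condition is immediate. Let $A\subseteq X$ be arbitrary. Either $x\in A$, in which case $A\in\langle x\rangle$, or $x\notin A$; but $x\notin A$ means $x\in A^c$, so $A^c\in\langle x\rangle$. In either case one of $A,A^c$ lies in $\langle x\rangle$, which is precisely the defining property of an ultrafilter, so $\langle x\rangle\in\textbf{UF}X$.

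The hard part, such as it is, is purely organizational rather than mathematical: the entire content is the dichotomy ``$x\in A$ or $x\in A^c$'', which holds for every subset $A$ and every point $x$. The only step requiring a little care is confirming that $\langle x\rangle$ really is the upward-closed family of all sets containing $x$, so that the ``filter generated by a base'' machinery applies and already delivers a filter; once that identification is recorded, the ultrafilter property needs no computation.
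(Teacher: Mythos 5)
Your proof is correct. Note that the paper itself states this proposition with no proof at all (it is followed only by the remark that a point generated filter is an ultrafilter), so your write-up supplies exactly the missing verification rather than diverging from an existing argument. The route you take is the natural one and stays entirely within the paper's own machinery: the earlier proposition that $\langle\mathcal B\rangle$ is a filter for any filter base $\mathcal B$, the identification $\langle\{F\}\rangle=\downarrow F$ specialized to $F=\{x\}$ (so that $\langle x\rangle$ is the family of all subsets containing $x$), and finally the dichotomy that for any $A\subseteq X$ either $x\in A$ or $x\in A^c$, which is precisely the paper's definition of membership in $\textbf{UF}X$.
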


This is to say that a point generated filter is an ultrafilter.

\begin{proposition}If $\mathcal F$ is an ultrafilter on a finite set $X$, then $\mathcal F$ is a principal ultrafilter.\end{proposition}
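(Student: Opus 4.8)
The plan is to produce a point $x \in X$ with $\{x\} \in \mathcal{F}$ and then to identify $\mathcal{F}$ with the principal ultrafilter $\langle x\rangle$, using the fact that ultrafilters are maximal. The whole argument hinges on the finiteness of $X$ only at the very first step; everything after that is the routine maximality machinery already available.

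First I would observe that every filter contains $X$ itself: since $\mathcal{F}$ is non-empty we may pick some $F \in \mathcal{F}$, and then $F \subseteq X$ together with condition B (upward closure) gives $X \in \mathcal{F}$. Because $X$ is finite, I may write $X = \bigcup_{x \in X}\{x\}$ as a \emph{finite} union of singletons. Now I invoke the proposition characterizing ultrafilters by the implication ``$F \cup G \in \mathcal{F}$ $\Rightarrow$ $F \in \mathcal{F}$ or $G \in \mathcal{F}$'', extended by an immediate induction to finitely many sets (write $F_1 \cup \cdots \cup F_n$ as $F_1 \cup (F_2 \cup \cdots \cup F_n)$ and recurse). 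Applied to $X \in \mathcal{F}$, this forces $\{x\} \in \mathcal{F}$ for at least one $x \in X$.

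Next I would show $\mathcal{F} = \langle x\rangle$. Since $\langle x\rangle = \downarrow\{x\}$ is exactly the collection of supersets of $\{x\}$, and $\{x\} \in \mathcal{F}$, condition B gives $\langle x\rangle \subseteq \mathcal{F}$; by the proposition relating $\subseteq$ and $\preceq$ for filters this says $\langle x\rangle \preceq \mathcal{F}$. But $\langle x\rangle$ is itself an ultrafilter (the principal ultrafilter proposition), hence maximal by the theorem characterizing ultrafilters as maximal filters. Therefore $\mathcal{F} = \langle x\rangle$, which is a principal ultrafilter, as claimed.

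The main obstacle is precisely the first step, namely forcing a singleton into $\mathcal{F}$: this is the only place where finiteness of $X$ is indispensable, and an infinite $X$ (e.g. the cofinite filter refined to an ultrafilter) shows the conclusion genuinely fails without it. An alternative route to the same singleton is to note that, $X$ being finite, $\mathcal{F} \subseteq \textgoth{P}X$ is a finite family, so $\bigcap\mathcal{F}$ is a finite intersection of members of $\mathcal{F}$ and hence lies in $\mathcal{F}$ (so is non-empty); applying the ultrafilter dichotomy to $\{x\}$ for any $x \in \bigcap\mathcal{F}$ then pins down $\bigcap\mathcal{F} = \{x\}$. Either way the problem reduces to the maximality argument above.
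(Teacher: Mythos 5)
Your proof is correct. Note that the paper states this proposition with no proof at all, so there is no argument of its own to compare against; your write-up fills that gap using only results the paper has already established. Each step checks out: non-emptiness plus condition B gives $X\in\mathcal F$; the paper's characterization ``$F\cup G\in\mathcal F$ implies $F\in\mathcal F$ or $G\in\mathcal F$'', iterated over the finite decomposition $X=\bigcup_{x\in X}\{x\}$, produces a singleton $\{x\}\in\mathcal F$; condition B then gives $\langle x\rangle=\downarrow\{x\}\subseteq\mathcal F$, hence $\langle x\rangle\preceq\mathcal F$ by the proposition identifying $\subseteq$ with $\preceq$ for filters; and since $\langle x\rangle$ is itself an ultrafilter, the paper's maximality theorem forces $\mathcal F=\langle x\rangle$. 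Your alternative route is also sound: finiteness of $\mathcal F\subseteq\textgoth PX$ makes $\bigcap\mathcal F$ a finite intersection, which lies in $\mathcal F$ by condition A, and the ultrafilter dichotomy applied to any $x\in\bigcap\mathcal F$ pins down $\{x\}\in\mathcal F$. Two trivial points worth flagging: the induction needs its base case (a one-element union), and the statement is vacuous for $X=\emptyset$, since the paper's filters are non-empty families of non-empty sets, so no filter on $\emptyset$ exists.
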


	\section{Zorn's Lemma and the Axiom of Choice}

Here, we will formulate the axiom in terms of partial order. Essentially, we will suppose that given a partial order, we are able to take away objects and relations so as to build a new system that is a natural order. Of course we can take away so few objects and relations so as to be left with an order that is not necessarily natural. The important aspect of the way in which we will take away objects and relations is that we can take away many enough to get a natural order but just enough so that if we take away any less, then we don't get a natural order.

\begin{AOC}Given a non-trivial partial order $\mathcal P$ and a natural order $\mathcal N\subseteq\mathcal P$, there exists at least one maximal natural order $\mathcal M$ such that $\mathcal N\subseteq \mathcal M\subseteq\mathcal P$.\end{AOC}

Maximality of $\mathcal M$ means that if $\mathcal Q\supseteq\mathcal M$ is a natural order then $\mathcal Q=\mathcal M$. If we have a maximal natural order then any upper bound of the natural order is the maximum. Having a natural order that is maximal means we are not able to add any objects to it and conserve the status of natural order. This means that any object we add will not be comparable with the objects of the order. Therefore, if $M$ is an upper bound we must have $M$ in the natural order and we conclude it is the maximum.

\begin{Zlemma}Suppose for every natural order $\mathcal N\subseteq\mathcal P$ we have $\downarrow\mathcal N\neq\emptyset$. Then there exists a maximal element in $\mathcal P$.\end{Zlemma}

\begin{proof}If the partial order is trivial then we have maximal elements. Suppose it is not trivial, then there is a natural order consisting of the comparabale objects $x,y$. We know that there is a maximal natural order $\mathcal M\supseteq\{x,y\}$ and $\mathcal M$ has an upper bound, call it $M$. This object $M$ is the maximum of $\mathcal M$ and it is maximal in $\mathcal P$ because $\mathcal M$ is maximal.\end{proof}

	\chapter{Universal Concepts}

The topics we have covered in sequences and filters are of relation to a concept known as limit. The notion of limit has been found to be most general in the setting of categories, and this is a case of \textit{universal properties}. We will see how instances of this occur when defining the functor of common domain, and a bifunctor. We will discuss here, how filter limits arise from the general definition. We will build the real number system, with the tools provided. Then, we will retake this subject in a later chapter for topologies.

Although the general concept of comma category was not trivial to develop, it has arisen aleady in certain occasions; we call them arrow categories. The instances in which it has appeared have been very particular cases. Let $\textgoth f,\textgoth g:\mathcal C_1,\mathcal C_2\rightarrow\mathcal D$ be two functors with common range. An \textit{arrow category} $\mathcal D(\textgoth f,\textgoth g)$ is a category such that the c-objects are all the arrows $\textgoth fa\rightarrow\textgoth gb$, for any $(a,b)\in\mathcal{O|C}_1\times\mathcal{O|C}_2$. The arrows of $\mathcal D(\textgoth f,\textgoth g)$ are $\tau\rightarrow\sigma:\textgoth fa\rightarrow\textgoth gb\longrightarrow\textgoth fc\rightarrow\textgoth gd$, such that $\sigma,\tau;\textgoth gi,\textgoth fh$ for $h,i:a,b\rightarrow c,d$.

	\chapter{Lattice}

In this chapter we will study certain partial orders that satisfy certain properties in regards to the their bounds or supremum/infimum.

Lattices are a special type of order and one of the main reasons why it is special is that it can be viewed as an algebraic structure. That is, the supremum concept of the order is described in terms of an operation. Here, we will denote partial orders with $\mathcal L$, or the like.

In a sense, we may think that the main idea behind this chapter is that the procedure of saying \textit{a is greater than b, c is greater than a,...} can be viewed as an operation.

	\section{Supremum and Infimum}

We recall that in the study of bounds we define a special bound, whether it be for upper/lower bounds. These bounds are the supremum and infimum. Here we will write the supremum of $\{x,y\}$ as $x\vee y$, and the infimum as $x\wedge y$. More generally, $\bigvee A=\min\downarrow A$ and $\bigwedge A=\max\uparrow A$. As we know, a set can be written as $A=\{x\}_{x\in A}$ and in such cases we may write $\bigvee_{x\in A}x=\min\downarrow A$ and $\bigwedge_{x\in A}x=\max\uparrow A$.

Before moving on, we take on the problem of defining the supremum/infimum of empty set/universe. Let us consider the empty set, first. The supremum is, by definition, the least among upperbounds of $\emptyset$. We take the posture that, by default, any object is an upper bound of $\emptyset$, so that $\bigvee\emptyset=\min\mathcal L=m$. We can just as easily say $\bigwedge\emptyset=\max\mathcal L=M$. Now, consider the set of upper bounds of $\mathcal L$; it is obviously the maximum of $\mathcal L$. Therefore, $\bigvee\mathcal L=M$, while $\bigwedge\mathcal L=m$.

 We provide some properties for supremum and infimum. The proof to the first three is trivial and always follows directly from the definition. Result 4), in the next proposition, requires little knowledge from sets, to reason.

\begin{proposition}Let $\mathcal L$ be a partial order and $A\subseteq B$ be subsets of $\mathcal L$ such that their supremum and infimum exist.

\begin{itemize}

	\item[1)]For every $a\in A$ we have $\bigwedge A\leq a\leq\bigvee A$

	\item[2)]For every $x\in\mathcal L$, we have $x\leq\bigwedge A$ if and only if $x\leq a$, for every $a\in A$.

	\item[3)]For every $x\in\mathcal L$, we have $\bigvee A\leq x$ if and only if $a\leq x$, for every $a\in A$.

	\item[4)]$\bigvee A\leq\bigvee B$ and $\bigwedge B\leq\bigwedge A$.

\end{itemize}\label{prop sup 1}\end{proposition}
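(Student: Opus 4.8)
The plan is to prove the four items of Proposition \ref{prop sup 1} directly from the definitions of supremum and infimum given in the \emph{Bounds} subsection, where $\bigvee A = \min\downarrow A$ and $\bigwedge A = \max\uparrow A$, with $\downarrow A$ the set of upper bounds and $\uparrow A$ the set of lower bounds. The first three items are immediate unpackings of these definitions, and the fourth follows by combining them with the monotonicity of bounds under inclusion.

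First I would establish 1). By definition $\bigvee A = \min\downarrow A$ is itself an upper bound of $A$, so $a \leq \bigvee A$ for every $a \in A$; dually $\bigwedge A = \max\uparrow A$ is a lower bound, giving $\bigwedge A \leq a$. Chaining these yields $\bigwedge A \leq a \leq \bigvee A$. Next, for 2), I would argue both directions: if $x \leq \bigwedge A$, then since $\bigwedge A \leq a$ for every $a$ (by 1)) transitivity of the order gives $x \leq a$ for all $a \in A$; conversely, if $x \leq a$ for every $a \in A$, then $x$ is a lower bound of $A$, i.e. $x \in \uparrow A$, and since $\bigwedge A = \max \uparrow A$ is the maximum of the lower bounds, we get $x \leq \bigwedge A$. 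Item 3) is the order-dual statement and is proven the same way, replacing $\uparrow A$ with $\downarrow A$ and reversing inequalities: $\bigvee A \leq x$ iff $x$ dominates every upper-bound condition, i.e. iff $a \leq x$ for all $a \in A$.

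For item 4), I would use the inclusion $A \subseteq B$. Every upper bound of $B$ is automatically an upper bound of $A$, so $\downarrow B \subseteq \downarrow A$; since $\bigvee B = \min \downarrow B$ lies in $\downarrow B \subseteq \downarrow A$, it is in particular an upper bound of $A$, and therefore the least upper bound of $A$ satisfies $\bigvee A \leq \bigvee B$. Dually, every lower bound of $B$ is a lower bound of $A$, so $\uparrow B \subseteq \uparrow A$, and $\bigwedge B = \max\uparrow B \in \uparrow A$ is a lower bound of $A$, whence $\bigwedge B \leq \bigwedge A$. A clean way to phrase this is to invoke 2) and 3) directly: applying 3) with $x := \bigvee B$ to the set $A$, it suffices to check $a \leq \bigvee B$ for every $a \in A$, which holds because $a \in B$ and $\bigvee B$ bounds $B$ above.

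The main obstacle, such as it is, is purely bookkeeping rather than mathematical: I must be careful that all suprema and infima invoked actually exist, which is exactly the standing hypothesis of the proposition (``such that their supremum and infimum exist''), and that I never silently assume $A$ or $B$ is nonempty, since the conventions for $\bigvee\emptyset$ and $\bigwedge\emptyset$ are handled separately in the lattice section. Beyond that, the entire argument is a routine translation between ``least upper bound'' and the order relation, so I expect no genuine difficulty; the cleanest presentation reduces 4) to a single application of items 2) and 3) rather than re-deriving the inclusion of bound-sets from scratch.
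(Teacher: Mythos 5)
Your proof is correct and follows exactly the route the paper intends: the paper gives no explicit proof, remarking only that items 1)--3) are ``trivial and always follow directly from the definition'' and that 4) needs little more, and your argument is precisely that direct unpacking of $\bigvee A=\min\downarrow A$ and $\bigwedge A=\max\uparrow A$ (using the paper's convention that $\downarrow A$ denotes upper bounds and $\uparrow A$ lower bounds). The reduction of 4) to items 2) and 3) is a clean way to organize the bookkeeping, but it is not a departure from the paper's approach.
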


		\section{Lattice}

Consider a partial order with a full operation $\bigvee:\mathcal L\rightarrow\mathcal L f\mathcal L$ that sends $x\mapsto_{\vee y}x\vee y=\min\downarrow\{x,y\}$. All this means is that the supremum exists for every pair of objects in the order and we make a commutative operation from this. If we are also able to build $\bigwedge:\mathcal L\rightarrow\mathcal Lf\mathcal L$, for the infimum, we say that $\mathcal L$ is a \textit{lattice}. Let us be more specific. Take a lattice $\mathcal L$ and $\mathcal H\subseteq\mathcal L$, where $x\vee y\in\mathcal H$, for every $x,y\in\mathcal H$. Then $\mathcal H$ is called an \textit{upper sublattice}. If, instead, the infimum exists for every pair, we have a \textit{lower sublattice}. In the case $\mathcal H$ is both upper and lower sublattice of $\mathcal L$, we simply say $\mathcal H$ \textit{is sublattice of} $\mathcal L$.

Consider a bounded partial order; we will seek properties of unit. For the supremum, we have as unit, the minimum. That is, $x\vee m=x$. In case of the infimum operation, the operation has  $M$ for unit, this is, $x\wedge M=x$. Also, for the supremum, we have $x\vee M=M$, and for the infimum operation, $x\wedge m=m$. The reader should be able to relate this with the situation encountered in sets.

Recall that the left and right operations of a given object are functions. The following result leaves it clear that a given object may be invariant under more than one right/left operation.

\begin{proposition}Let $\mathcal L$ be a lattice, then $x\leq y$ if and only if $x\vee y=y$ and $x\wedge y=x$.\label{sup td}\end{proposition}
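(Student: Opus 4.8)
The plan is to prove both implications by unwinding the definitions $x\vee y=\min\downarrow\{x,y\}$ (least upper bound) and $x\wedge y=\max\uparrow\{x,y\}$ (greatest lower bound), leaning on Proposition \ref{prop sup 1} together with reflexivity and anti-symmetry of the partial order $\mathcal L$. The two conjuncts $x\vee y=y$ and $x\wedge y=x$ are dual to one another, so I would establish one in detail and obtain the other by the evident symmetric argument (swapping $\vee$ with $\wedge$, $\leq$ with its opposite, and $\min\downarrow$ with $\max\uparrow$).

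For the forward direction, assume $x\leq y$. To show $x\vee y=y$, I would apply part 3) of Proposition \ref{prop sup 1} with $A=\{x,y\}$: the inequality $x\vee y\leq y$ holds if and only if both $x\leq y$ and $y\leq y$, and these are exactly the hypothesis and reflexivity. Conversely, part 1) of the same proposition gives $y\leq x\vee y$. Anti-symmetry of $\leq$ then forces $x\vee y=y$. For the infimum, I would argue dually with part 2): $x\leq x\wedge y$ holds iff $x\leq x$ and $x\leq y$, both of which are immediate, while part 1) supplies $x\wedge y\leq x$; anti-symmetry yields $x\wedge y=x$.

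For the reverse direction, note that only one of the two equalities is actually needed. Assuming $x\vee y=y$, part 1) of Proposition \ref{prop sup 1} gives $x\leq x\vee y$, and substituting $x\vee y=y$ produces $x\leq y$ directly. (Symmetrically, assuming only $x\wedge y=x$, part 1) gives $x\wedge y\leq y$, hence $x\leq y$.) Thus either hypothesis alone recovers $x\leq y$, and in particular the full conjunction does.

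There is no genuine obstacle here; the statement is an elementary consequence of the least-upper-bound and greatest-lower-bound characterizations already recorded in Proposition \ref{prop sup 1}. The only points demanding care are invoking anti-symmetry at the right moment (to upgrade the two one-sided inequalities into equalities in the forward direction) and observing that the backward implication needs just one of the two equations, which is worth remarking on explicitly since it shows the two conditions are individually equivalent to $x\leq y$.
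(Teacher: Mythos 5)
Your proof is correct and takes essentially the same route as the paper's: in the forward direction the paper simply identifies $y$ as the maximum (hence supremum) of $\{x,y\}$ and $x$ as the minimum, which is the same content as your two one-sided inequalities plus anti-symmetry, and in the reverse direction both arguments use only that the supremum is an upper bound, so $x\leq x\vee y=y$. Your explicit observation that either equality alone recovers $x\leq y$ is a fair point the paper leaves implicit, but it does not change the substance of the argument.
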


\begin{proof}If $x\leq y$, then $y$ is a the maximum of $\{x,y\}$ and $x$ is the minimum. We conclude $x\vee y=y$ and $x\wedge y=x$. On the contrary, if $x\vee y =y$ we know that $y$ is an upper bound of $\{x,y\}$ which gives $x\leq y$.\end{proof}

\begin{proposition}The functions $\vee x$, $\wedge x$, of a lattice, preserve the order.\label{sup sb}\end{proposition}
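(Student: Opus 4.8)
The plan is to unwind the definition of order preserving function and reduce each claim to the universal characterizations of supremum and infimum recorded in Proposition \ref{prop sup 1}. By definition, to show $\vee x$ preserves the order I must verify that $a\leq b$ implies $a\vee x\leq b\vee x$, and dually that $\wedge x$ sends $a\leq b$ into $a\wedge x\leq b\wedge x$. Fixing the object $x$ throughout, I would handle the two operations separately but by mirror-image arguments.

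First I would treat the supremum. Assuming $a\leq b$, I invoke part 3) of Proposition \ref{prop sup 1}, which says $\bigvee\{a,x\}\leq b\vee x$ holds if and only if both $a\leq b\vee x$ and $x\leq b\vee x$. The second inequality is immediate from part 1) applied to the set $\{b,x\}$. For the first, part 1) gives $b\leq b\vee x$, and transitivity of the order together with the hypothesis $a\leq b$ yields $a\leq b\vee x$. Hence $a\vee x\leq b\vee x$, which is exactly what order preservation of $\vee x$ requires.

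The infimum case is the order-theoretic dual, and I would run the same argument through part 2) of Proposition \ref{prop sup 1}. Assuming $a\leq b$, I need $a\wedge x\leq b\wedge x$, that is $a\wedge x\leq\bigwedge\{b,x\}$, and part 2) reduces this to checking $a\wedge x\leq b$ and $a\wedge x\leq x$. The latter is part 1) for the set $\{a,x\}$, while the former follows from $a\wedge x\leq a$ (again part 1)) combined with $a\leq b$ by transitivity.

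There is no real obstacle here: the content is carried entirely by the defining bound properties and transitivity, with antisymmetry not even needed once the characterizations are in hand. The only point worth stating explicitly is that $\vee x$ and $\wedge x$ are genuine functions on all of $\mathcal L$, so that the statement is not vacuous; this is guaranteed because $\mathcal L$ is a lattice and the operations $\bigvee,\bigwedge$ were assumed full. If one prefers, the proposition can instead be deduced from Proposition \ref{sup td} by rewriting $a\vee x\leq b\vee x$ as the equality $(a\vee x)\vee(b\vee x)=b\vee x$ and simplifying, but the bound-based argument above is the shorter route.
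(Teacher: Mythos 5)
Your proof is correct, but it follows a genuinely different route from the paper's. The paper argues by a trichotomy on the position of $x$ relative to the chain $a\leq b$: it treats the three cases $x\leq a\leq b$, $a\leq x\leq b$, and $a\leq b\leq x$, and in each case evaluates the joins outright (via Proposition \ref{sup td}, so that $a\vee x$ and $b\vee x$ collapse to one of $a$, $b$, $x$) and compares the results. You instead never compute any join: you reduce $a\vee x\leq b\vee x$ to the universal characterization in part 3) of Proposition \ref{prop sup 1}, checking that $b\vee x$ is an upper bound of $\{a,x\}$ by part 1) and transitivity, and you run the dual argument through part 2) for $\wedge x$. The comparison favors your version on two counts. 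First, the paper's case analysis is exhaustive only when $x$ is comparable with both $a$ and $b$, which holds in a natural order but not in a general lattice; in $\textgoth{P}\{1,2,3\}$ with $a=\{1\}$, $b=\{1,2\}$, $x=\{3\}$, none of the paper's three cases applies, yet the conclusion $a\vee x\subseteq b\vee x$ still holds and is delivered by your bound-based argument. Second, you treat $\wedge x$ explicitly, where the paper leaves the meet case entirely to the reader. What the paper's approach buys in exchange is brevity and concreteness when the order is total — each case is a one-line computation — which is the setting (discrete number systems, $\mathbb{Z}$, $\mathbb{Q}$) the paper most often works in; but as a proof of the proposition as stated, for an arbitrary lattice, your argument is the complete one.
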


\begin{proof}Suppose $x\leq a\leq b$, then $a\vee x=a\leq b=b\vee x$. If $a\leq x\leq b$, then $a\vee x=x\leq b=b\vee x$. Finally, $a\leq b\leq x$ implies $a\vee x=x=b\vee x$.\end{proof}

The next proposition gives a special case of proposition (\ref{sup td}). We will say that a constant function, for some object $x\in Range~f$, is the function \textit{onto $x$}. Also, the compositions $\vee x\circ\wedge x$ and $\wedge x\circ\vee x$ are the \textit{constant function} that sends every $a\in\mathcal L$ into $x$ and manifests that the supremum and infimum operations are commutative and associative.  If we have an operation $*:\mathcal{O}\rightarrow\mathcal{O}f\mathcal{O}$, such that for some object $e$ of $\mathcal{O}$, the notation yields $e;e,e$, then we will say $e$ is a \textit{primitive unit} for $*$.

\begin{proposition}\label{sup alg}For any lattice $\mathcal L$, and any $a,b\in\mathcal L$, we have

\begin{itemize}\item[1)]$a$ is primitve unit under both operations.

\item[2)]The functions $\vee a\circ\wedge a$ and $\wedge a\circ\vee a$ are the function onto $a$.

\item[3)]Both operations are commutative.

\item[4)]Both operations ae associative.

\end{itemize}

\end{proposition}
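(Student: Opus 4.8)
The plan is to dispatch the four claims in increasing order of difficulty, since the first three fall straight out of the definitions and of Proposition \ref{sup td}, leaving associativity as the only genuine piece of work. For (1), I would invoke reflexivity of the order: since $a\leq a$, reading Proposition \ref{sup td} with $x=y=a$ gives $a\vee a=a$ and $a\wedge a=a$, which is precisely the assertion $a;a,a$ that $a$ is a primitive unit for each operation. For (3), commutativity is built into the definition of the operations, since $x\vee y$ and $y\vee x$ are both $\min\downarrow\{x,y\}$ and $\{x,y\}=\{y,x\}$ as a set; dually for $\wedge$. So I would present (3) as a direct unwinding of the definition, requiring no order-theoretic argument.

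For (2), I would compute the two composites pointwise, using the convention that $\vee a\circ\wedge a$ applies $\wedge a$ first. Applying $\vee a\circ\wedge a$ to $x$ yields $(x\wedge a)\vee a$; since $x\wedge a$ is a lower bound of $\{x,a\}$ we have $x\wedge a\leq a$, and then Proposition \ref{sup td} gives $(x\wedge a)\vee a=a$. Dually, $\wedge a\circ\vee a$ sends $x$ to $(x\vee a)\wedge a$, and $a\leq x\vee a$ forces $(x\vee a)\wedge a=a$. Hence both composites are the constant function onto $a$.

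The substantive step is (4), associativity, which I would prove for $\vee$, the argument for $\wedge$ being dual. Writing $p=x\vee(y\vee z)$ and $q=(x\vee y)\vee z$ (both exist since binary suprema exist in a lattice), I would establish $p=q$ by antisymmetry after proving the two inequalities. To get $p\leq q$ it suffices, by Proposition \ref{prop sup 1}, part 3, to check $x\leq q$ and $y\vee z\leq q$: the first holds since $x\leq x\vee y\leq q$, and for the second one notes $y\leq x\vee y\leq q$ and $z\leq q$, whence $y\vee z\leq q$ again by Proposition \ref{prop sup 1}, part 3. The reverse inequality $q\leq p$ follows by the symmetric bookkeeping, and antisymmetry then yields $p=q$. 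The main obstacle here is organizational rather than conceptual: the proof is a repeated application of the universal property of binary suprema together with transitivity, and the care required is to keep the chain of comparabilities correctly oriented so that both inequalities, and hence antisymmetry, can be invoked cleanly; once they are assembled the infimum case is obtained by reversing every inequality.
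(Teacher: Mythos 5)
Your proof is correct, and for parts 1)--3) it runs essentially as the paper's own argument: the paper obtains 1) from the set identity $\{a,a\}=\{a\}$ (so $a\vee a=\bigvee\{a\}=a$, where you instead invoke reflexivity plus Proposition \ref{sup td} --- trivially equivalent), gets 2) by applying Proposition \ref{sup td} to the inequality $a\leq a\vee b$, which is exactly your pointwise computation, and treats commutativity as built into the definition of supremum and infimum of the unordered pair $\{x,y\}$, just as you do. The genuine divergence is in 4). You prove $x\vee(y\vee z)=(x\vee y)\vee z$ by two applications of the universal property of the join, Proposition \ref{prop sup 1} part 3): each generator of one side lies below the other side, then antisymmetry finishes. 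The paper instead argues computationally: from Proposition \ref{sup sb} (the functions $\vee a$ preserve order) it deduces $x\vee(a\vee y)\leq x\vee[(x\vee a)\vee y]$, and then collapses the right-hand side to $(x\vee a)\vee y$ by Proposition \ref{sup td}, since $x\leq(x\vee a)\vee y$; the reverse inequality is declared symmetric. The trade-off is this: your route uses only the defining least-upper-bound property and is the argument that generalizes verbatim to suprema of arbitrary families, while the paper's stays inside the algebraic toolkit it has just assembled (monotonicity plus the absorption-style characterization), which is the style of reasoning it reuses later when lattices are recast as algebraic structures. Both are complete; note only that the appeal to antisymmetry you make explicit at the end is present but unstated in the paper's version.
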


\begin{proof}

From 1), we have $a=a\vee a=\bigvee\{a,a\}\leq\bigvee\{a,b\}=a\vee b$, because $\{a,a\}=\{a\}\subseteq\{a,b\}$. Using proposition (\ref{sup td}) in the last inequality proves the first part of 2); the second part is similar.

Proposition (\ref{sup sb}) gives $a\vee y\leq(x\vee a)\vee y$ and $x\leq x\vee y\leq(x\vee a)\vee y$. We use (\ref{sup sb}) and the first inequality, then we use (\ref{sup td}) and the second inequality to prove

\begin{eqnarray}\nonumber x\vee(a\vee y)&\leq&x\vee[(x\vee a)\vee y]\\\nonumber&=&(x\vee a)\vee y\end{eqnarray}

We can just as easily say $(x\vee a)\vee y\leq x\vee(a\vee y)$.
\end{proof}

The following result states that the supremum of a finite union of sets, is supremum of supremums.

\begin{proposition}If $(A_i)_{i=1}^n$ is a sequence of bounded subsets of a lattice, then their union is also bounded and \begin{itemize}\item[1)]$\bigvee\bigcup\limits_{i=1}^n A_i=\bigvee\limits_{i=1}^n\bigvee A_i$\item[2)]$\bigwedge\bigcup\limits_{i=1}^n A_i=\bigwedge\limits_{i=1}^n\bigwedge A_i$.\end{itemize}\end{proposition}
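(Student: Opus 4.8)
The plan is to reduce the whole statement to the two-element case by induction on $n$, and to settle that two-element case directly from the least-upper-bound characterization of the supremum collected in Proposition~\ref{prop sup 1}. The dual statement 2) will then come for free.

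First I would establish the binary lemma: if $A$ and $B$ are bounded subsets whose suprema exist, then $A\cup B$ has a supremum and $\bigvee(A\cup B)=(\bigvee A)\vee(\bigvee B)$. To see that $(\bigvee A)\vee(\bigvee B)$ is an upper bound of $A\cup B$, take $x\in A$; by Proposition~\ref{prop sup 1}(1) we have $x\le\bigvee A$, and since $\bigvee A\le(\bigvee A)\vee(\bigvee B)$ (again \ref{prop sup 1}(1), applied to the pair $\{\bigvee A,\bigvee B\}$), transitivity gives $x\le(\bigvee A)\vee(\bigvee B)$; the case $x\in B$ is symmetric. To see it is the \emph{least} upper bound, let $u$ be any upper bound of $A\cup B$; then $u$ bounds $A$ and $B$ separately, so Proposition~\ref{prop sup 1}(3) yields $\bigvee A\le u$ and $\bigvee B\le u$, and applying \ref{prop sup 1}(3) once more to the pair $\{\bigvee A,\bigvee B\}$ gives $(\bigvee A)\vee(\bigvee B)\le u$. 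Hence $(\bigvee A)\vee(\bigvee B)$ is the least upper bound, which simultaneously proves that $\bigvee(A\cup B)$ exists and has the claimed value. Because $\vee$ is a full operation on $\mathcal L$, the object $(\bigvee A)\vee(\bigvee B)$ always exists, so no existence gap is left open.

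Boundedness of the union is handled alongside this: each $A_i$ being bounded supplies an upper bound $u_i$ and a lower bound, and the finite supremum $\bigvee_{i=1}^n u_i$ (which exists since $\vee$ is full) bounds $\bigcup_i A_i$ above, dually below. With the binary lemma in hand the induction is routine: the base case $n=1$ is the identity $\bigvee A_1=\bigvee A_1$, and for the step I write $\bigcup_{i=1}^{n+1}A_i=\left(\bigcup_{i=1}^n A_i\right)\cup A_{n+1}$, apply the inductive hypothesis to the inner union and the binary lemma to the outer one, and recognise the resulting $\left(\bigvee_{i=1}^n\bigvee A_i\right)\vee\bigvee A_{n+1}$ as exactly the value of the finite operator $\bigvee_{i=1}^{n+1}$ applied to the sequence $(\bigvee A_i)_i$, per its defining recursion.

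Finally, part 2) follows by the dual argument, interchanging $\vee$ with $\wedge$, the order $\le$ with its reverse, and upper bounds with lower bounds throughout; equivalently one applies part 1) in the opposite order $\mathcal L^{op}$, in which infima become suprema. The main obstacle here is not computational but conceptual: I must take care that boundedness together with fullness of $\vee$ genuinely delivers the existence of $\bigvee(A\cup B)$ rather than silently assuming it, so the verification that $(\bigvee A)\vee(\bigvee B)$ is \emph{least} — and not merely some upper bound — is the step deserving the most attention.
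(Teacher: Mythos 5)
Your proof is correct, and it takes a route that differs from the paper's at the decisive step. The paper also reduces to the binary case and, like you, observes that $\bigvee A\vee\bigvee B$ is an upper bound of $A\cup B$, so that $\bigvee(A\cup B)\leq\bigvee A\vee\bigvee B$; but for the reverse inequality it invokes monotonicity of the supremum under inclusion (part 4 of Proposition \ref{prop sup 1}) applied to $A,B\subseteq A\cup B$, and then concludes by anti-symmetry. That argument tacitly assumes that $\bigvee(A\cup B)$ exists, which in a lattice that is not bounded complete is a genuine hypothesis when the $A_i$ are infinite (the paper's definition of lattice only guarantees suprema of pairs). Your argument instead verifies directly, against an arbitrary upper bound $u$ of $A\cup B$, that $\bigvee A\vee\bigvee B$ is the \emph{least} upper bound, using parts 1 and 3 of Proposition \ref{prop sup 1}; this establishes existence and the value of $\bigvee(A\cup B)$ in a single stroke, closing the gap the paper leaves implicit, at the modest cost of one extra quantification over upper bounds. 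You also make explicit two things the paper's proof leaves to the reader: the induction that passes from the binary lemma to the finite operator $\bigvee_{i=1}^{n}$ via its defining recursion, and the derivation of 2) by dualizing in $\mathcal L^{op}$. Both proofs share the same binary skeleton, but yours is the more complete account of the proposition as stated, since existence of the supremum of the union is part of what needs proving.
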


\begin{proof}Let $A,B\subseteq\mathcal L$, then $\bigvee A,\bigvee B\leq \bigvee A\vee\bigvee B$; this follows from 4) in proposition (\ref{prop sup 1}). Therefore, every object in the union is less than or equal to $\bigvee A\vee\bigvee B$. This means that $\bigvee A\vee\bigvee B$ is an upper bound of $A\cup B$, and we conclude $\bigvee(A\cup B)\leq\bigvee A\vee\bigvee B$. 

We wish to prove the other inequality and it is sufficient to prove $\bigvee(A\cup B)$ is greater than or equal to $\bigvee A$ and $\bigvee B$; this also follows from 4).\end{proof}

We give a characterization of the supremum in terms of the 3-diagram; the result will be rather useful dealing with supremum and infimum. This result inverts 2) and 3) in proposition (\ref{prop sup 1}).

\begin{proposition}Let $\mathcal L\subseteq\mathcal N$, be a lattice, subset of a natural order $\mathcal N$.

\begin{itemize}

	\item[1)]For every $x\in\mathcal L$, we have $x\leq\bigvee A$ if and only if there exists $a\in A$ such that $x\leq a\leq\bigvee A$.

	\item[2)]For every $x\in\mathcal L$, we have $\bigwedge A\leq x$ if and only if there exists $a\in A$ such that $\bigwedge A\leq a\leq x$.

\end{itemize}

 \end{proposition}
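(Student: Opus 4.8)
The plan is to prove each biconditional by its two directions, obtaining (2) as the order-dual of (1). The reverse implication in (1) is immediate and uses none of the hypotheses beyond transitivity: if $x\le a$ and $a\le\bigvee A$ for some $a\in A$, then the transitive (3-diagram) property of the order composes these two arrows into $x\le\bigvee A$. So the content is entirely in the forward implication.

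For the forward implication I would first reduce the conclusion to a single comparison. By part 1) of proposition \ref{prop sup 1}, every $a\in A$ already satisfies $a\le\bigvee A$, so the upper inequality $a\le\bigvee A$ in the desired conclusion is automatic; it suffices to produce an $a\in A$ with $x\le a$. This is exactly where the hypothesis that $\mathcal N$ is a natural order is used: in a natural order every two objects are $\rightarrow$-comparable in exactly one way, so $\mathcal N$ is totally ordered. Suppose toward a contradiction that no $a\in A$ satisfies $x\le a$. Then comparability in $\mathcal N$ forces $a<x$ for every $a\in A$, whence $x$ is an upper bound of $A$, i.e.\ $x\in\downarrow A$, and therefore $\bigvee A=\min\downarrow A\le x$. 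Together with the assumed $x\le\bigvee A$ and anti-symmetry this yields $x=\bigvee A$.

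The hard part is precisely this boundary case $x=\bigvee A$. To close the argument I still need some $a\in A$ with $x\le a$, but since every $a\le\bigvee A=x$, such an $a$ exists only if the supremum is \emph{attained} inside $A$, i.e.\ $A$ possesses a maximum equal to $\bigvee A$, in which case $a:=\bigvee A=x$ is the required witness. For a finite $A$ this is automatic, because $\bigvee A$ is then the last of finitely many iterated binary suprema $\vee$ and hence lies in $A$; this is the setting relevant both to the binary operation $\vee$ and to the preceding finite-union proposition, so I would make the finiteness (or attainment) hypothesis explicit. I expect this to be the genuine obstacle: for an infinite $A$ whose supremum is a strict limit from below (so that $\bigvee A\notin A$), taking $x=\bigvee A$ defeats the forward direction, so the statement must be read with $A$ finite or with $\bigvee A\in A$.

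Finally, part 2) I would obtain by dualization rather than repeating the comparability argument: pass to the opposite order $\mathcal N^{op}$, which is again a natural order, in which $\bigwedge$ computed in $\mathcal N$ becomes $\bigvee$, the lattice structure persists (suprema and infima swap), and all inequalities reverse. Under this translation the assertion of (1) for $\mathcal N^{op}$ reads $\bigwedge A\le x \iff \exists a\in A:\ \bigwedge A\le a\le x$, which is exactly (2).
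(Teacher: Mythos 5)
Your argument is, at its core, the same as the paper's. The paper's entire proof of the forward direction reads: ``$x\leq\bigvee A$ means $x$ is not an upperbound of $A$. So then we know that there is some element $a\in A$ such that $x$ is not greater than it. But, $a,x$ are comparable so that $x\leq a$.'' That is exactly your comparability step: use totality of the natural order to produce a witness $a$ with $x\le a$, the other inequality $a\le\bigvee A$ being automatic from proposition \ref{prop sup 1}. The reverse direction (transitivity), which the paper does not even write out, and part 2), which the paper dismisses with ``the proof of 2) is similar'' and you obtain by passing to the opposite order, coincide in substance as well.

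The difference is that you make explicit what the paper's first sentence silently assumes, and that assumption is false in general: $x\le\bigvee A$ does \emph{not} mean that $x$ fails to be an upper bound of $A$, since $x=\bigvee A$ satisfies the hypothesis while being an upper bound. So the paper's proof only establishes the implication for $x<\bigvee A$, and at the boundary the proposition as stated is actually false: if $\bigvee A\notin A$ (which, as you observe, forces $A$ infinite, because in a total order the supremum of a finite set is its maximum), then $x=\bigvee A$ gives $x\le\bigvee A$ with no $a\in A$ satisfying $x\le a$. Your diagnosis is therefore correct, and your proposed repairs are the right ones: restrict the claim to strict inequality $x<\bigvee A$, or add the hypothesis that the supremum is attained, $\bigvee A\in A$ (equivalently that $\max A$ exists, by proposition \ref{sup max}), or take $A$ finite. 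In short, your proof is the paper's proof done carefully; the ``hard part'' you isolate is a genuine gap in the paper's statement and proof, not in your argument.
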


\begin{proof}Let $\mathcal L\subseteq\mathcal N$, be a lattice, subset of a natural order $\mathcal N$.

$x\leq\bigvee A$ means $x$ is not an upperbound of $A$. So then we know that there is some element $a\in A$ such that $x$ is not greater than it. But, $a,x$ are comparable so that $x\leq a$.

The proof of 2) is similar.
\end{proof}

One can see that it is crucial to have a natural order, otherwise we cannot assure comparability for the objects in the proof.

		\section{Semilattice and Algebraic Aspects}

A \textit{semilattice} is a, commutative, associative category with every object as a primitive unit. A semilattice with unit is a commutative algebraic category with every object as a primitive unit. When we describe a lattice as an algebraic structure, we will consider two, dual, semilattices. 

			\subsection{Order of a Semilattice}

A semilattice always has an order defined in terms of the operation. Let $\diamond$ be the operation of the semilattice. We define an order on the collection of objects, $x\leq y$ if and only if $x;y,y$. In this case, our operation will act as the supremum of the order. We could have just as well said that the order is defined by $y\leq x$ if and only if $x;y,y$ and in such instance the operation is infimum. 

\begin{proposition}A semilattice $\mathcal L$ defines two, opposite, partial orders and the operation acts as supremum or infimum, respectively.\label{sup semi}\end{proposition}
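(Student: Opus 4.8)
The plan is to show that the relation defined by $x \leq y \iff x;y,y$ (that is, $x \diamond y = y$) is a partial order, and that under this order $\diamond$ computes the supremum; the dual claim (swapping the roles to get $y \leq x \iff x;y,y$) then gives the opposite order with $\diamond$ acting as infimum, by the same argument applied to the reversed relation. The key tools are the three defining properties of a semilattice stated just above: commutativity, associativity, and the requirement that every object be a primitive unit, i.e.\ $x;x,x$ (meaning $x \diamond x = x$). I would work directly with the notation $a;b,c$ meaning $c \diamond a = b$ as established for operations, being careful since $x;y,y$ unpacks as $y \diamond x = y$.

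First I would verify the order axioms. \textbf{Reflexivity}: $x \leq x$ is exactly the primitive unit property $x;x,x$, i.e.\ $x \diamond x = x$, which holds by hypothesis. \textbf{Anti-symmetry}: suppose $x \leq y$ and $y \leq x$, so $y \diamond x = y$ and $x \diamond y = x$. Commutativity gives $x \diamond y = y \diamond x$, hence $x = y$. \textbf{Transitivity}: suppose $x \leq y$ and $y \leq z$, so $y \diamond x = y$ and $z \diamond y = z$. Then using associativity, $z \diamond x = (z \diamond y) \diamond x = z \diamond (y \diamond x) = z \diamond y = z$, which says $x \leq z$. These computations are short and mechanical once the notation is translated into the $\diamond$ operation.

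Next I would check that $\diamond$ is the supremum for this order. Given $x, y$, I must show $x \diamond y$ is the least upper bound of $\{x,y\}$. To see it is an upper bound: $x \leq x \diamond y$ requires $(x \diamond y) \diamond x = x \diamond y$, which follows by commutativity and associativity together with the primitive unit property, since $(x \diamond y)\diamond x = (y \diamond x)\diamond x = y \diamond(x \diamond x) = y \diamond x = x \diamond y$; symmetrically $y \leq x \diamond y$. For the least property: if $w$ is any upper bound, so $x \leq w$ and $y \leq w$ (i.e.\ $w \diamond x = w$ and $w \diamond y = w$), then $w \diamond (x \diamond y) = (w \diamond x) \diamond y = w \diamond y = w$, which gives $x \diamond y \leq w$. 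This is exactly the supremum characterization, matching parts 2) and 3) of Proposition \ref{prop sup 1}.

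The main obstacle I anticipate is not any single hard step but rather bookkeeping: keeping the comma-semicolon notation $a;b,c$ consistent with the operation convention throughout, so that $x;y,y$ is correctly read as $x \diamond y = y$ and the associativity/commutativity rewrites are applied in the right order. The dual statement is obtained with essentially no new work: defining $y \leq x \iff x;y,y$ reverses every inequality, so the same three verifications show it is a partial order (the opposite of the first), and the identical supremum computation now exhibits $\diamond$ as the \emph{infimum} with respect to this reversed order. Thus a single argument, read once forward and once with the order reversed, establishes both opposite partial orders and the dual roles of $\diamond$, completing the proposition.
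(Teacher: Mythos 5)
Your proof is correct and follows essentially the same route as the paper's: both work with the order defined by $x \leq y$ if and only if $x \diamond y = y$, use commutativity, associativity, and the primitive-unit (idempotence) property to show $x \diamond y$ is the least upper bound, and then dualize to get the opposite order with $\diamond$ as infimum. The only differences are refinements on your part: you explicitly verify reflexivity, anti-symmetry, and transitivity (which the paper's proof leaves implicit), and you run the minimality computation against a generic upper bound $w$ rather than against the symbol $x \vee y$ itself, which avoids the paper's slight awkwardness of invoking the supremum before it is known to exist.
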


\begin{proof}We will first prove that the semilattice defines the supremum operation. Let $x, y\in\mathcal L$, we wish to prove $x\vee y=x\diamond y$. It is quite obvious that $x\diamond(x\diamond y)=x\diamond y$ means $x\leq x\diamond y$. Of course, we also have $y\leq x\diamond y$ and therefore $x\vee y\leq x\diamond y$. On the other hand, $(x\vee y)\diamond(x\diamond y)=[(x\vee y)\diamond x]\diamond y=x\vee y$ because of associativity and the definition of the order; this means that $x\diamond y\leq x\vee y$. We have associated an order to $\diamond$, such that $\diamond$ is the supremum operation of the order.

The order just used is defined as $x\leq y$ if $x;y,y$ and we denote it by $\mathcal L$, as well. The order that is defined as $y\leq x$, under the same conditions, is the opposite order $\mathcal L^{op}$. Now we wish to prove that $\diamond$ acts as infimum for $\mathcal L^{op}$. From $x\diamond(x\diamond y)=x\diamond y$ we get $x\diamond y\leq x$, and similarly $x\diamond y\leq y$. Finally, to prove that $x\wedge y=x\diamond y$ we need only show that $x\wedge y\leq x\diamond y$. We again have $(x\wedge y)\diamond(x\diamond y)=x\wedge y$, so we are done.\end{proof}

In this last result we see a strong relation between supremum and infimum; namely, supremum and infimum are the same operation on opposite orders. Further on, we shall establish a more precise statement to this observation, in terms of a functor. The following theorem states every lattice determines two semilattices.

\begin{proposition}Let $\mathcal L$ be a partial order such that the least upper bound exists for every pair of objects, then the supremum operation defines a semilattice.

A similar remark holds if the greatest lower bound exists for every pair.\label{sup12}\end{proposition}

\begin{proof}We know that the operation of supremum is commutative and associative. Also, it is easily seen that any object in the partial order is a primitive unit.\end{proof}

			\subsection{Lattice as an Algebraic Structure}

What we have done is to show that an order can induce a pair of operations that have a certain algebraic aspect to them. Two semilattices, on the same collection of objects, with operations $\diamond,\diamondsuit$, are dual if they satisfy condition 2) of proposition (\ref{sup alg}).

\begin{proposition}Two dual semilattices of operations $\diamond,\diamondsuit$ verify $x\diamond y=y$ if and only if $x\diamondsuit y=x$.\label{sup ops}\end{proposition}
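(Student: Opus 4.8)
The plan is to strip the statement down to the two absorption identities that are secretly encoded in the definition of dual semilattices, and then close each implication by a single substitution. First I would unwind what duality means here: by the definition given just above the proposition, $\diamond,\diamondsuit$ are dual precisely when they satisfy condition 2) of Proposition \ref{sup alg}, namely that $\diamond a\circ\diamondsuit a$ and $\diamondsuit a\circ\diamond a$ are the function onto $a$ (the constant function with value $a$). Reading these compositions with the paper's convention that the inner operation acts first, and using that a semilattice operation is commutative, this is exactly the pair of absorption laws $(u\diamondsuit a)\diamond a=a$ and $(u\diamond a)\diamondsuit a=a$, holding for all $u,a\in\mathcal L$. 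Establishing this reformulation cleanly is really the whole content; once it is in place the equivalence is immediate.

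For the forward implication I would assume $x\diamond y=y$ and aim to produce $x\diamondsuit y=x$. Applying the second absorption law $(u\diamond a)\diamondsuit a=a$ with $a:=x$ and $u:=y$ gives $(y\diamond x)\diamondsuit x=x$. By commutativity $y\diamond x=x\diamond y$, which the hypothesis equates to $y$, so the left side collapses to $y\diamondsuit x$; commuting $\diamondsuit$ once more yields $x\diamondsuit y=x$, as wanted. The converse is the mirror image: assuming $x\diamondsuit y=x$, I would invoke the first absorption law $(u\diamondsuit a)\diamond a=a$ with $a:=y$ and $u:=x$, obtaining $(x\diamondsuit y)\diamond y=y$, and substitute $x\diamondsuit y=x$ to conclude $x\diamond y=y$. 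Each direction thus consumes exactly one of the two absorption identities, which is a reassuring sign that both are genuinely needed.

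The main obstacle is not computational but interpretive: condition 2) of Proposition \ref{sup alg} is phrased in terms of the composite operation functions $\diamond a,\diamondsuit a$, so I must be careful to read off the correct composition order and to justify passing between left and right operations via commutativity (the semilattice axioms guarantee $\diamond a$ and $a\diamond$ coincide as functions). After that bookkeeping, the two absorption laws drop out and the proof is a two-line substitution in each direction, with commutativity doing the remaining work. I would keep the write-up short and let the structural symmetry between the two implications carry the second half.
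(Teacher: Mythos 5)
Your proof is correct and follows essentially the same route as the paper's: both reduce duality to the absorption identities from condition 2) of Proposition \ref{sup alg} and then close each implication with a single substitution plus commutativity. The only cosmetic difference is the order of steps (you apply absorption first and substitute second, the paper substitutes into the absorption composite directly), which changes nothing.
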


\begin{proof}Let the conditions of proposition (\ref{sup alg}) hold for the operations. We have $x\diamondsuit y=x\diamondsuit(x\diamond y)=x$. If $x\diamondsuit y=x$, then we can say $x\diamond y=(x\diamondsuit y)\diamond y=y$ becuase of commutativity in the operations.\end{proof}

Now we will see that the algebraic implications of the lattice are actually determinant in the structure of a lattice. That is, if we have an associative category, for two operations that satisfy 1)-4) of (\ref{sup alg}), then we have a lattice. This following result holds if $\mathcal L$ is replaced with an associative category, but in such cases the order will not be bounded.

\begin{theorem}Let $\diamond,\diamondsuit$ be two dual semilattices, both with unit and  both on the collection $\mathcal L$, and define an order such that $x\leq y$ if and only if $x\diamond y=y$. Then the order is a lattice with $x\vee y=x\diamond y$ and $x\wedge y=x\diamondsuit y$. Additionally, the units are maximum and minimum of the order.

If $\mathcal L$ is a bounded lattice, then we have two dual semilattices. The operations for these are supremum and infimum; the maximum and minimum are units.\end{theorem}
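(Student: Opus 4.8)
The plan is to assemble the statement almost entirely from the structural results already proved for semilattices, namely Propositions \ref{sup semi}, \ref{sup ops}, \ref{sup alg}, and \ref{sup12}; the only genuine content is bookkeeping with the two induced orders and their orientations. For the forward direction I would begin from the operation $\diamond$ alone. By Proposition \ref{sup semi}, the semilattice $\diamond$ induces a partial order, and the relation $x\leq y\iff x\diamond y=y$ is exactly the one for which $\diamond$ acts as supremum; this already supplies reflexivity, antisymmetry, and transitivity, together with $x\vee y=x\diamond y$. Next I would invoke Proposition \ref{sup ops}, which, because $\diamond,\diamondsuit$ are dual, rewrites the defining condition as $x\leq y\iff x\diamondsuit y=x$. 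The point is to recognize this as the opposite of the natural order attached to the semilattice $\diamondsuit$: by Proposition \ref{sup semi} applied to $\diamondsuit$, its operation acts as infimum for precisely this order, so $x\wedge y=x\diamondsuit y$. Having both a supremum and an infimum for every pair, with the full operations $\diamond,\diamondsuit$, shows $\mathcal L$ is a lattice with the asserted join and meet.

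For the units I would argue directly from the order $x\leq y\iff x\diamond y=y$. If $e$ is the unit of $\diamond$, then $e\diamond x=x$ for all $x$, whence $e\leq x$ for every $x$ and $e$ is the minimum; dually, if $e'$ is the unit of $\diamondsuit$, then $x\diamondsuit e'=x$ gives $x\leq e'$ for every $x$ in the order rewritten via \ref{sup ops}, so $e'$ is the maximum. This matches the convention that the join-unit is the bottom and the meet-unit is the top.

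For the converse I would start from a bounded lattice $\mathcal L$. Proposition \ref{sup12} yields two semilattices, one from the supremum and one from the infimum, since both bounds exist for every pair. Their duality is exactly condition 2) of Proposition \ref{sup alg}, which holds in any lattice, so $\vee,\wedge$ form a dual pair. Finally, boundedness supplies the minimum $m$ and maximum $M$, and the relations $x\vee m=x$ and $x\wedge M=x$ already noted for bounded lattices show that $m$ is the unit of $\vee$ and $M$ the unit of $\wedge$; hence both semilattices have units, the operations are supremum and infimum, and the units are the maximum and minimum.

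The main obstacle is not computational but conceptual: keeping the orientation conventions straight. One must be careful that the order defined in the theorem through $\diamond$ coincides, as an order and not merely as its opposite, with the order for which $\diamondsuit$ is the infimum. That coincidence is exactly what Proposition \ref{sup ops} guarantees, and it is the one place where the duality hypothesis is essential rather than incidental; without it the two operations could induce opposite orders and the join/meet identification would fail.
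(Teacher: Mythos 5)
Your proposal is correct and follows essentially the same route as the paper's own proof: both directions rest on the same chain of results (Proposition \ref{sup semi} to make $\diamond$ the supremum, Proposition \ref{sup ops} to identify $\diamondsuit$ as the infimum on the same order, Proposition \ref{sup12} plus condition 2) of Proposition \ref{sup alg} for the converse, and the absorption identities $x\vee m=x$, $x\wedge M=x$ for the units). The only difference is that you spell out the unit-is-minimum/maximum argument explicitly where the paper merely asserts it, which is a matter of detail rather than of method.
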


\begin{proof}From proposition (\ref{sup semi}) we can say that $\diamond$ is supremum, on the order defined by $x\leq y$ if $x;y,y$. By the same proposition, the operation $\diamondsuit$ is the supremum or infimum, on the order already defined.
Because of proposition (\ref{sup ops}) and duality of operations, we know that $x\diamondsuit y=x$; therefore $\diamondsuit$ is the infimum. This proves that supremum and infimum are full operations because the operation of a semilattice is full.

Using proposition (\ref{sup12}) we have two sublattices defined by supremum and infimum. Since supremum and infimum satisfy condition 2) of proposition (\ref{sup alg}), which means the sublattices are dual. It is not difficult to see that minimum and maximum of orders are units for supremum and infimum, respectively. In fact, one can also prove that minimum absorbs infimum and maximum absorbs supremum.\end{proof}

		\section{Completeness}

In this section we develop notions that guarantee some existence of supremum.

		\subsection{Complete Partial Order}

The concept of completeness is best studied for a lattice in terms of the order, and not the operation. Therefore, before we introduce completeness in the context of lattice, we will give definitions for partial order. First, we establish a notion for diected subsets of a partial order. After this, we do so for bounded subsets of the order. The following proposition will help prove some results in this section.

\begin{proposition}\label{sup max}The following are equivalent statements\begin{itemize}\item[1)]The maximum of $A$ exists\item[2)]The supremum of $A$ exists and $\bigvee A=\max A$\item[3)]The supremum of $A$ exists and $\bigvee A\in A$.\end{itemize}\end{proposition}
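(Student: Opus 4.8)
The plan is to prove the three statements equivalent by establishing the cycle $1)\Rightarrow 2)\Rightarrow 3)\Rightarrow 1)$, unwinding the definitions of maximum, supremum, and upper bound given earlier in the excerpt. Recall that $\max A$ is the object of $A$ satisfying $a\leq\max A$ for every $a\in A$, that $\downarrow A$ denotes the set of upper bounds of $A$, and that $\bigvee A=\min\downarrow A$ when it exists. I would keep these definitions front and center, since each implication is essentially a matter of checking the defining property of one notion against another.

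First I would prove $1)\Rightarrow 2)$. Assume $m:=\max A$ exists. By definition $a\leq m$ for every $a\in A$, so $m\in\downarrow A$; that is, $m$ is an upper bound of $A$. To see that $m$ is the \emph{least} upper bound, take any $x\in\downarrow A$; since $m\in A$ we have $m\leq x$ by the defining property of upper bound applied to the element $m$. Hence $m\leq x$ for every $x\in\downarrow A$, which is exactly the statement that $m=\min\downarrow A=\bigvee A$. Thus the supremum exists and equals $\max A$.

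Next, $2)\Rightarrow 3)$ is immediate: if $\bigvee A$ exists and $\bigvee A=\max A$, then since $\max A\in A$ by definition of maximum, we get $\bigvee A\in A$. Finally, for $3)\Rightarrow 1)$, assume the supremum exists and $\bigvee A\in A$. Since $\bigvee A\in\downarrow A$, it is an upper bound of $A$, so $a\leq\bigvee A$ for every $a\in A$; combined with $\bigvee A\in A$, this is precisely the defining condition for $\bigvee A$ to be the maximum of $A$. Hence $\max A$ exists (and equals $\bigvee A$), closing the cycle.

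I do not anticipate a serious obstacle here, as the result is a direct consequence of the definitions; the only point requiring a little care is the least-upper-bound step in $1)\Rightarrow 2)$, where one must invoke that $m$ itself belongs to $A$ in order to compare it with an arbitrary upper bound. The anti-symmetry of the partial order guarantees uniqueness of these objects, so no ambiguity arises in identifying $\bigvee A$ with $\max A$. Throughout I would phrase the upper-bound comparisons in the language of $\downarrow A$ to stay consistent with the notation already established in the section on bounds.
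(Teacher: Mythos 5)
Your proof is correct and follows essentially the same route as the paper's: a direct unwinding of the definitions of maximum, upper bound, and supremum, with the key observations that $\max A$ belongs to $A$ (so any upper bound dominates it) and that a supremum lying in $A$ is by definition a maximum. The only difference is organizational --- you close a cycle $1)\Rightarrow2)\Rightarrow3)\Rightarrow1)$ while the paper checks the two biconditionals $1)\Leftrightarrow2)$ and $2)\Leftrightarrow3)$ --- and, if anything, your least-upper-bound step in $1)\Rightarrow2)$ is spelled out more carefully than in the paper.
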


\begin{proof}The maximum is upper bound and if any other object is greater than the maximum, it fails to be in the set. This means that 1) implies 2). Of course 2) implies 1), trivially. To verify 2) implies 3), we simply note that $\max A\in A$. If 3), then 2) is true; the supremum is greater than every $x\in A$.\end{proof}

			\subsubsection{Directed Complete}

If $\mathcal L$ is a directed set, it is a non-empty partial order such that for every pair of objects $x,y\in\mathcal L$ we have $\downarrow\{x,y\}\neq\emptyset$. The definition of semilattice, is more specific, requiring that $\min\downarrow\{x,y\}$ exist for every pair. We can conclude that a partial order which is not a directed set, cannot be a semilattice either.

 Let $D_\mathcal L$ be the non-empty collection of directed subsets of some partial order $\mathcal L$. Suppose we have a function $D_\mathcal L\subseteq\mathcal{PL}\rightarrow\mathcal L$ that sends every object in the domain into its supremum; we are of course making it implicit that the supremum exists for every directed subset of the order. In such cases we say that $\mathcal L$ is \textit{directed complete}. If, additionally, $\mathcal L$ has a minimum, we say it is a \textit{complete partial order.} We restate this: a partial order is complete if and only if it has a minimum and there is a function $\textbf{sup}:D_\mathcal L\rightarrow\mathcal L$ such that $I \mapsto_\textbf{sup}\bigvee I$. This can be stated in terms of the infimum operation if we consider the opposite order. Since the set $I$ is directed, for every $x,y\in\mathcal L^{op}$ we have $k\in\mathcal L^{op}$ such that $k\leq x,y$. By proposition (\ref{sup semi}) we conclude that the order has a maximum and there is a function $\textbf{inf}:D_{\mathcal L^{op}}\rightarrow\mathcal L^{op}$ such that $I\mapsto_{\textbf{inf}}\bigwedge I$.

We recall that a set $\mathcal L$ is finite if there a bijective function $\textbf n\rightarrow\mathcal L$, for some $n\in\mathbb N$.

\begin{proposition}If $\mathcal L$ is a finite partial order, then it is directed complete.\end{proposition}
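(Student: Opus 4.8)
The plan is to show that a finite partial order $\mathcal L$ is directed complete, which by the definitions given means two things: that $\mathcal L$ has a minimum (implicitly needed for the full completeness notion, though directed completeness itself only asks for the supremum function), and more essentially that every directed subset $I \in D_\mathcal L$ has a supremum lying in $\mathcal L$, so that the function $\textbf{sup}: D_\mathcal L \rightarrow \mathcal L$ with $I \mapsto_\textbf{sup} \bigvee I$ is well defined. Since a directed subset of a finite order is itself a finite directed set, the heart of the matter is to prove that \emph{every finite directed subset has a maximum}, and then invoke proposition \ref{sup max} to conclude that this maximum is the supremum and that it belongs to $I$.

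First I would reduce to the key claim: a non-empty finite directed set $I$ has a maximum. The natural approach is induction on the cardinality $n$ of $I$, using the characterization of directed sets given earlier (a partial order is directed if and only if every finite subset has an upper bound). The base case $n=1$ is immediate, the single object being its own maximum. For the inductive step, write $I$ as a set with $n+1$ objects; remove one object to get a directed subset $I'$ of size $n$ (one must check $I'$ is still directed, which follows since any finite subset of $I'$ is a finite subset of $I$ and hence has an upper bound that can be refined inside $I'$ — here I would lean on the directedness of $I$ to pull the bound back into $I'$). By induction $I'$ has a maximum $M'$. Now apply directedness to the pair $\{M', x\}$, where $x$ is the removed object, to obtain $k \in I$ with $M', x \leq k$. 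Since $k \in I$ and $M'$ dominates all of $I'$, the object $k$ dominates every element of $I$, so $k$ is an upper bound of $I$ lying in $I$; anti-symmetry then forces $k$ to be the maximum.

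Once the maximum $\max I$ is secured, proposition \ref{sup max} gives immediately that $\bigvee I$ exists and equals $\max I \in I$, so the assignment $I \mapsto \bigvee I$ defines the required function $\textbf{sup}: D_\mathcal L \rightarrow \mathcal L$. For the minimum of $\mathcal L$ (to the extent the statement of directed completeness in the paper presupposes non-emptiness), I would note that $\mathcal L$ itself, being finite, can be handled by the dual argument applied to $\mathcal L^{op}$, or simply observe that a finite order has minimal elements and a connectedness-type argument via directedness pins down a least one when needed; but strictly for \emph{directed} completeness only the supremum function is required.

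The main obstacle I anticipate is the inductive step's subtlety about whether the smaller set $I'$ remains directed after deleting an object — deletion does not obviously preserve directedness in general posets, so the argument must carefully use that upper bounds of finite subsets of $I$ can be \emph{chosen inside} $I$ (directedness of $I$) and then relate them back to $I'$. A cleaner route that sidesteps this entirely is to forgo induction and argue directly: the finite directed set $I$ has at least one maximal element by finiteness, and directedness forces this maximal element to be comparable above every other element (given any other $y$, pick $k \geq \max, y$; maximality gives $k = \max$, hence $y \leq \max$), so the maximal element is in fact the maximum. I would likely present this direct argument as the primary proof, as it avoids the delicate verification that a subset of a directed set is directed.
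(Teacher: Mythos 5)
Your primary (direct) argument is correct, and it reaches the conclusion by a genuinely different route than the paper. The paper enumerates the finite directed set $I$ as a sequence $(x_i)_{i=1}^n$ and greedily builds an increasing chain: it picks $x_{k_1}\in I$ above $x_1,x_2$, then $x_{k_2}\in I$ above $x_{k_1},x_3$, and so on, absorbing one new element at each step until the chain terminates at an element dominating all of $I$; this simultaneously proves that a maximum exists and identifies it, after which proposition (\ref{sup max}) finishes exactly as you do. Your argument instead splits the work: finiteness gives a maximal element $m\in I$, and directedness promotes maximality to maximality-as-maximum (for any $y\in I$ take $k\in I$ with $m,y\leq k$; maximality forces $k=m$, so $y\leq m$). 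Your version is shorter and cleaner, at the cost of invoking the standard fact that a finite poset has a maximal element (itself provable by a chain argument of the paper's sort, so the paper's proof is the more self-contained of the two). One caution: your first sketch, the induction on cardinality, has a real gap that you correctly sensed --- deleting an element from a directed set need not leave a directed set (take $\{a,b,c\}$ with $a\leq c$, $b\leq c$ and delete $c$), and the parenthetical claim that bounds of subsets of $I'$ can be ``pulled back into $I'$'' is not justified; since you demote that sketch and present the maximal-element argument as the proof, the submission stands, but the inductive variant should be dropped rather than repaired.
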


\begin{proof}To prove that the order is directed complete, we take any directed subset $I\subseteq\mathcal L$. The set is finite so we know that there is finite sequence of the form $\textbf n\rightarrow\mathcal L$. We represent $I$ with $(x_i)_{i=1}^n$ and since it is directed, for every pair $x_i,x_j\in I$, we have $x_i,x_j\leq x_k\in I$. 

Take $x_1,x_2$ and find $x_{k_1}$ such that $x_1,x_2\leq x_{k_1}$. Suppose $x_{k_1}$ is not maximum of $I$. Then there is $x_{k_2}$ such that $x_{k_1},x_3\leq x_{k_2}$ and $x_{k_1}<x_{k_2}$. If $x_{k_2}\neq\max I$, then there is $x_{k_3}$ such that $x_{k_2},x_4\leq x_{k_3}$ and $x_{k_1}<x_{k_2}<x_{k_3}$. We can continue in this manner and if we get to $x_n$, then $x_{k_1}<x_{k_2}<\cdots<x_{k_{n-1}}<x_n=\max I$. There is, however, the possibility that we find $\max I=x_{k_m}$ at some step $m<n$. This means that there is a function $\textbf{sup}:D_\mathcal L\rightarrow\mathcal L$ that makes $I\mapsto_\textbf{sup}\max I=\bigvee I$; use proposition (\ref{sup max}).\end{proof}

			\subsubsection{Naturally Complete}

We say that a partial order $\mathcal P$ is \textit{naturally complete} if and only if every natural order $\mathcal N\subseteq\mathcal P$ has supremum. That is, if $\min\downarrow\mathcal N$ exists for every $\mathcal N$. Zorn's lemma is applied to a partial order whose natural orders are bounded; $\downarrow\mathcal N\neq\emptyset$ for every $\mathcal N$. Thus, we can apply Zorn's lemma to any naturally complete order and conclude that it has a maximal element.

There is a more particular definition, and it will be useful since it will help to take the concept of completeness into the realm of sequences. Define the collection $IS_\mathcal L$ of all natural order $\mathcal N\subseteq\mathcal L$ such that there exists a functor $s:\mathbb N\rightarrow\mathcal N$; this means that we are considering the collection of natural orders that can be arranged in a growing sequence. If there is a function $\textbf{sup}:IS_\mathcal L\subseteq\mathcal{PL}\rightarrow\mathcal L$, we will say $\mathcal L$ is \textit{IS-complete}.

\begin{proposition}If $\mathcal L$ is directed complete, then it is also naturally complete. This last implies that $\mathcal L$ is $IS$-complete.\end{proposition}

No proof is needed for this last result; this is not the case is the next result, however.

\begin{proposition}A partial order $\mathcal L$ is naturally complete if and only if $\min Inv~\textgoth F$ exists for every functor $\textgoth F:\mathcal L\rightarrow\mathcal L$.\end{proposition}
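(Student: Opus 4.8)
The plan is to read the statement as the assertion that a partial order has all chain suprema precisely when every monotone self-map admits a least fixed point, recalling that a functor $\textgoth{F}:\mathcal L\to\mathcal L$ on a partial order is nothing but an order preserving function. So both implications concern an arbitrary order preserving $\textgoth{F}$ and the set $Inv~\textgoth{F}$ of its invariant objects.

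For the forward direction I would assume $\mathcal L$ naturally complete and iterate $\textgoth{F}$ from the bottom. Applying the definition of natural completeness to the empty natural order gives $\min\downarrow\emptyset=m$, a least element of $\mathcal L$, since every object is an upper bound of the empty chain; in particular $m\le\textgoth{F}m$. I would then take $W$ to be the smallest subset of $\mathcal L$ with $m\in W$, with $\textgoth{F}[W]\subseteq W$, and with $W$ closed under the suprema of the natural orders contained in it. The technical heart of this direction is the Bourbaki--Witt style lemma that $W$ is itself a natural order (a chain) and that every $x\in W$ satisfies $x\le\textgoth{F}x$; proving comparability of any two elements of $W$ is the delicate step. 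Granting it, natural completeness yields $w=\bigvee W$, which lies in $W$ by closure under chain suprema and is hence $\max W$, so $\textgoth{F}w\in W$ forces $w\le\textgoth{F}w\le w$ and $w\in Inv~\textgoth{F}$. Minimality is then immediate: for any fixed point $z$ we have $m\le z$, and $\{x:x\le z\}$ contains $m$, is closed under $\textgoth{F}$ (as $x\le z$ gives $\textgoth{F}x\le\textgoth{F}z=z$) and under chain suprema, whence $W\subseteq\{x:x\le z\}$ and $w\le z$. Thus $w=\min Inv~\textgoth{F}$.

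For the converse I would argue by contraposition: assuming some natural order $\mathcal N$ has no supremum, I would construct a functor whose invariant set has no minimum. Writing $U=\downarrow\mathcal N$ for the upper bounds of $\mathcal N$, the failure of $\sup\mathcal N=\min\downarrow\mathcal N$ means $U$ has no least element (possibly $U=\emptyset$). The target is an order preserving $\textgoth{F}$ with $Inv~\textgoth{F}=U$, obtained by letting $\textgoth{F}$ be the identity on $U$ and strictly moving every object outside $U$, so that $\min Inv~\textgoth{F}=\min U$ simply fails to exist; when $U=\emptyset$ the same idea makes $\textgoth{F}$ fixed-point-free, so $Inv~\textgoth{F}=\emptyset$ again has no minimum. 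This is the main obstacle, and is exactly the delicate point in Davis's characterization of completeness by fixed points: one must define $\textgoth{F}$ on $\mathcal L\setminus U$ and verify monotonicity across the boundary between $U$ and its complement. I expect to handle a non-upper-bound $x$ by exploiting the comparability inside the chain $\mathcal N$ (for $x\notin U$ there is $c\in\mathcal N$ with $c\not\le x$) to push $x$ toward $\mathcal N$, arranging $\textgoth{F}x\neq x$ while keeping $\textgoth{F}x\le u$ for every $u\in U$ above $x$. Checking that this assignment is well defined and order preserving is where the real work of the proof lies.
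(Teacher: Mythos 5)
The first thing to note is that your proposal cannot be compared with the paper's own argument, because the paper gives none: the proposition is stated right after the remark that (unlike its predecessor) it does require a proof, and then the text moves on to bounded completeness. So your outline must stand on its own. Your reading of the statement is the right one: a functor $\mathcal L\rightarrow\mathcal L$ is an order preserving map, $Inv~\textgoth F$ is its collection of fixed objects, and the proposition is the known characterization of chain-complete orders by least fixed points. Your forward direction is structurally correct, including the load-bearing observation that the empty natural order must be admitted, so that $\min\downarrow\emptyset=\min\mathcal L$ exists; without this the statement is simply false (a two-object antichain has suprema for all of its non-empty natural orders, yet $Inv~\textgoth I_{\mathcal L}$ has no minimum). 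The closure properties of $W$, the fact that $W=\{x\in W:x\leq\textgoth Fx\}$ by minimality, and the final argument that $W\subseteq\{x:x\leq z\}$ for every fixed object $z$ are all sound. The one piece you defer, comparability of any two objects of $W$, is exactly the Bourbaki--Witt tower lemma; it is delicate but citable, and it can be avoided altogether by transfinite iteration: set $x_0:=m$, $x_{\alpha+1}:=\textgoth Fx_\alpha$, $x_\lambda:=\bigvee_{\alpha<\lambda}x_\alpha$; this is an increasing chain, it stabilizes at some fixed object $w$, and induction gives $x_\alpha\leq z$ for every $z\in Inv~\textgoth F$, so $w$ is the minimum.

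The genuine gap is in the converse, and you have located it without filling it. ``Push $x$ toward $\mathcal N$'' is not yet a definition: for $x\notin U$ there are in general many $c\in\mathcal N$ with $c\not\leq x$, and $\mathcal N$ need not be well ordered, so there is no least such $c$; an arbitrary selection of one will not be order preserving. The missing idea is a well ordered cofinal subchain $(w_\alpha)_{\alpha<\lambda}$ of $\mathcal N$ (every natural order has one, by the axiom of choice). Define $\textgoth Fx:=x$ for $x\in U$, and $\textgoth Fx:=w_{\beta(x)}$ for $x\notin U$, where $\beta(x)$ is the least ordinal $\alpha$ with $w_\alpha\not\leq x$; cofinality guarantees such an $\alpha$ exists exactly when $x\notin U$. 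Then $x\leq y$ implies $\beta(x)\leq\beta(y)$, which gives monotonicity among non-upper-bounds; across the boundary, $\textgoth Fx\in\mathcal N$ lies below every object of $U$, and $U$ is an up-set, so monotonicity holds there too; finally $\textgoth Fx\not\leq x$ forces $\textgoth Fx\neq x$, so $Inv~\textgoth F=U$, which by hypothesis has no minimum. The degenerate cases come out as you predicted: if $U=\emptyset$ the map has no fixed objects at all, and if $\mathcal N=\emptyset$ (i.e.\ $\mathcal L$ has no minimum) the identity functor already witnesses the failure. With that one insertion your two-case sketch becomes a complete proof; without it, the converse is an intention rather than an argument.
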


			\subsubsection{Bounded Complete}

If we are to consider a lattice that is not bounded, the question may arise: \textit{does supremum and infimum exist for the bounded subsets of the lattice?} We are compelled to make definitions in such direction, and we do so in the same manner as we did in the last subdivision. A partial order is \textit{upper bound complete} if and only if $\downarrow A\neq\emptyset$ implies the existence of $\min\downarrow A$, for any $A\subseteq\mathcal P$. We also define the dual concept of \textit{lower bound complete} for partial orders such that $\uparrow A\neq\emptyset$ implies the existence of $\max\uparrow A$.

\begin{proposition}A partial order is upper bound complete if and only if it is lower bound complete.\end{proposition}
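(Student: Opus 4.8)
The plan is to prove the equivalence by exhibiting a duality between the two notions that passes through the opposite order $\mathcal{L}^{op}$. The key observation is that upper bound completeness and lower bound completeness are formally dual statements: replacing $\mathcal{L}$ by $\mathcal{L}^{op}$ swaps $\leq$ with $\geq$, which in turn swaps the set of upper bounds $\downarrow A$ with the set of lower bounds $\uparrow A$, and swaps $\min$ with $\max$. So the statement ``$\mathcal{L}$ is upper bound complete'' should translate, under passage to the opposite order, precisely into ``$\mathcal{L}^{op}$ is lower bound complete.''

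First I would record the definitional unwinding carefully. By definition, $\mathcal{L}$ is upper bound complete iff for every $A \subseteq \mathcal{L}$ with $\downarrow A \neq \emptyset$, the object $\min \downarrow A$ exists. Here $\downarrow A$ is the set of upper bounds (those $x$ with $a \leq x$ for all $a \in A$). Now I would invoke the earlier fact, established when discussing opposite orders, that the upper bounds of $A$ computed in $\mathcal{L}$ coincide with the lower bounds of $A$ computed in $\mathcal{L}^{op}$, since $a \leq x$ in $\mathcal{L}$ is exactly $x \leq_{op} a$ in $\mathcal{L}^{op}$. Likewise the minimum in $\mathcal{L}$ of a subset is the maximum of that subset in $\mathcal{L}^{op}$. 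Hence ``$\min \downarrow A$ exists in $\mathcal{L}$'' is literally the same assertion as ``$\max \uparrow A$ exists in $\mathcal{L}^{op}$,'' where $\uparrow A$ is now read in $\mathcal{L}^{op}$.

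The core step is then a single symmetric argument rather than two separate proofs. Assuming $\mathcal{L}$ is upper bound complete, take any $A \subseteq \mathcal{L}$ with $\uparrow A \neq \emptyset$ (lower bounds in $\mathcal{L}$); reading everything in $\mathcal{L}^{op}$, the set $A$ has nonempty upper-bound set there, so upper bound completeness applied to $\mathcal{L}^{op}$ would give $\min$ in $\mathcal{L}^{op}$, i.e.\ $\max \uparrow A$ in $\mathcal{L}$. The only genuine content I must supply is that upper bound completeness is a self-dual notion in the weak sense needed: that $\mathcal{L}$ upper bound complete forces $\mathcal{L}^{op}$ upper bound complete is not automatic, so instead I would argue directly that the two conditions on $\mathcal{L}$ are interchanged by the bound/min--max correspondence, using the fact that every subset $A$ with $\uparrow A \neq \emptyset$ in $\mathcal{L}$ is a subset with $\downarrow A \neq \emptyset$ when the roles are dualized. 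Carrying out both implications this way makes the proof fully symmetric.

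The main obstacle, and the point requiring the most care, is keeping the two layers of duality from being conflated: the duality internal to a single order (supremum versus infimum, which the earlier results show are the same operation on opposite orders) versus the passage between $\mathcal{L}$ and $\mathcal{L}^{op}$ as distinct systems. I must be explicit that ``$A$ bounded above'' and ``$A$ bounded below'' are genuinely exchanged, not merely that suprema and infima are exchanged, so that the hypothesis of one completeness condition supplies exactly the hypothesis needed to apply the other after dualizing. Once that bookkeeping is pinned down, the proof is short: state the definitional translation, apply it to an arbitrary bounded-below (respectively bounded-above) set, and conclude the existence of the required $\max \uparrow A$ (respectively $\min \downarrow A$) by reading off the corresponding extremum in the opposite order.
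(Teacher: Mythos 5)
Your duality dictionary is correct as far as it goes: in the paper's notation, $\downarrow_{\mathcal L}A=\uparrow_{\mathcal L^{op}}A$ and $\min_{\mathcal L}=\max_{\mathcal L^{op}}$, so ``$\mathcal L$ is upper bound complete'' is literally the same sentence as ``$\mathcal L^{op}$ is lower bound complete.'' But this translation is essentially all your proposal contains, and it cannot by itself prove the proposition. After translating, the claim ``upper bound complete iff lower bound complete, for the same order $\mathcal L$'' becomes ``upper bound completeness of $\mathcal L$ implies upper bound completeness of $\mathcal L^{op}$,'' i.e.\ that the notion is self-dual --- a statement of exactly the same strength as the one you set out to prove: the hypothesis speaks of suprema in $\mathcal L$, the conclusion of suprema in $\mathcal L^{op}$, and no formal rewriting connects them. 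You notice this yourself (``is not automatic''), but the repair you offer --- to ``argue directly that the two conditions on $\mathcal L$ are interchanged by the bound/min--max correspondence'' --- is just the dictionary restated; it never constructs the extremum whose existence is asserted. The proof stalls precisely where the actual work begins.

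The missing idea is the one the paper's proof supplies: given $A\neq\emptyset$ with $\uparrow A\neq\emptyset$, apply upper bound completeness not to $A$ but to the set $\uparrow A$ of lower bounds of $A$. That set is bounded above, because every $a\in A$ is an upper bound of $\uparrow A$; hence $\bigvee\uparrow A$ exists. The same observation then gives $\bigvee\uparrow A\leq a$ for every $a\in A$, i.e.\ $\bigvee\uparrow A\in\uparrow A$, and by proposition \ref{sup max} a supremum that belongs to the set is its maximum. So $\max\uparrow A$ exists and equals $\bigwedge A$, which is lower bound completeness. This single computation --- the supremum of the set of lower bounds is itself a lower bound --- is the entire content of the proposition; the $\mathcal L^{op}$ framing can at most let you write the argument once instead of twice (proving one implication and citing duality for the converse), but it cannot replace the argument. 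Note also the quiet role of $A\neq\emptyset$ here: it is what bounds $\uparrow A$ above, a hypothesis your symmetric bookkeeping never isolates.
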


\begin{proof}Let us prove $\mathcal P$ is lower bound complete, given that it is upper bound complete. Let $A\neq\emptyset$ such that $\uparrow A\neq\emptyset$. We know $\bigvee\uparrow A$ exists given $\uparrow A$ is bounded above. Since $A\neq\emptyset$ we know that $\uparrow A$ is bounded above by some $a\in A$. Now we show that $\bigwedge A=\bigvee\uparrow A$. First of all, if $x$ is upper bound of $\uparrow A$, then $\bigvee\uparrow A\leq x$. Since any object of $A$ is upper bound of $\uparrow A$, we have $\bigvee\uparrow A\leq x$, for all $x\in A$. This is equivalent to saying $\bigvee\uparrow A\in\uparrow A$, and if we recall proposition (\ref{sup max}) we see that it is the same as $\bigvee\uparrow A=\max\uparrow A$.\end{proof}

This means that there is maximum and minimum if the order is upper or lower bound complete. In light of the previous proposition, we say $\mathcal L$ is \textit{bounded complete} if it is upper or lower bound complete.

Given a set $\mathcal L$, let us define an order, for the objects of $\mathcal Lf\mathcal L$, by $f\leq g$ if there exists $A\subseteq\mathcal L$ such that $f$ is the function $g|_{A}$; we are defining an order in terms of complicated versions of systems.

\begin{proposition}The order defined for $\mathcal Lf\mathcal L$ is complete and bounded complete.\end{proposition}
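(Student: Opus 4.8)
The plan is to recognise that the order $f\le g$ on $\mathcal Lf\mathcal L$ is exactly inclusion of the collections of arrows: since $f=g|_A$ forces $A=Dom~f$, the relation $f\le g$ holds precisely when $Dom~f\subseteq Dom~g$ and $f,g$ agree on $Dom~f$, that is, when every arrow $x;fx,f$ of $f$ is an arrow of $g$. Once this reformulation is in place, suprema should be computed as unions of arrow-collections, and the whole statement reduces to checking that such unions are again functions in the two relevant situations.

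First I would exhibit the minimum. The empty function, with empty domain and range, is an object of $\mathcal Lf\mathcal L$, and its collection of arrows is empty, hence contained in that of every $g$; therefore it lies below every function and is the minimum of the order. This supplies the ``has a minimum'' half of the definition of completeness.

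Next, for directed completeness, given a directed subset $I\subseteq\mathcal Lf\mathcal L$ I would let $g$ be the relation whose arrows are exactly the arrows of the members of $I$. The key step, and the main obstacle, is verifying that $g$ is a genuine function: if $x;y_1,f_1$ and $x;y_2,f_2$ with $f_1,f_2\in I$, directedness provides $h\in I$ with $f_1,f_2\le h$, so both arrows lie in $h$ and $y_1=hx=y_2$. Thus each source object appears at most once, $g$ is a function with $Dom~g=\bigcup_{f\in I}Dom~f\subseteq\mathcal L$ and range inside $\mathcal L$, so $g\in\mathcal Lf\mathcal L$. By construction every $f\in I$ satisfies $f\le g$, and any upper bound $k$ of $I$ contains every arrow of every $f\in I$, hence all arrows of $g$, giving $g\le k$; therefore $g=\bigvee I$. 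Together with the minimum this shows the order is complete.

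Finally, for bounded completeness I would argue identically, replacing directedness by the presence of an upper bound. If $A\subseteq\mathcal Lf\mathcal L$ has $\downarrow A\neq\emptyset$, choose an upper bound $h$; then the arrows of every $f\in A$ are arrows of $h$, so the union of all these arrows is a subcollection of the arrow-collection of $h$ and is automatically functional, being contained in the arrows of a single function. Hence this union is an object $g$ of $\mathcal Lf\mathcal L$, and the same two inclusions as before give $g=\min\downarrow A=\bigvee A$. This proves the order is upper bound complete, and by the immediately preceding proposition it is equivalently lower bound complete, i.e. bounded complete. The only delicate points are the two functionality verifications, where directedness (respectively the existence of an upper bound) is precisely what forbids two arrows sharing a source with distinct targets, together with keeping in mind that in this text $\downarrow A$ denotes the collection of upper bounds of $A$.
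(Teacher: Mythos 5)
Your proof is correct and takes essentially the same route as the paper's: the minimum is the empty function $\emptyset\rightarrow\mathcal L$, and the supremum of a directed (respectively bounded-above) subset is the function defined on the union of the domains, with directedness (respectively the common upper bound $h$) supplying exactly the well-definedness you flag as the delicate point. Your framing via inclusion of arrow-collections, and your direct check that any upper bound dominates the constructed supremum, are just cleaner phrasings of the paper's ``simplified version'' argument.
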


\begin{proof}If we are to show that the order is complete we must prove it is directed complete and there is a minimum. Let $I\subseteq\mathcal Lf\mathcal L$ be directed; this is $I\in D_{\mathcal Lf\mathcal L}$. If $f,g\in I$ we have $h\in I$ such that $f,g\leq h$, which means $f,g$ are $h|_A$, $h|_B$, respectively for sets $A,B\subseteq\mathcal L$. So, if $a\in Dom~f\cap Dom~g$ we have $a,g;a,f$. Thus, for any $a\in\bigcup_{i\in I}Dom~i$ we can write $Ia$ in place of the image, under any function of $I$ that is defined for $a$. Consider the set function $\textbf{sup}~I:\bigcup_{i\in I} Dom~i\rightarrow\mathcal L$ that makes $a\mapsto Ia$.

We shall prove that $\textbf{sup}~I=\bigvee I$. It is not difficult to see that $\textbf{sup}~I$ is an upper bound of $I$. This is, every $i\in I$ is a simplified version of $\textbf{sup}~I$. If $f<\textbf{sup}~I$, then there exists $a\in\bigcup_{i\in I}Dom~i- Dom~f$. This object is in some $g\in I$. For this, $f$ is not an upper bound of $I$ and we may conclude that there is a function $\textbf{sup}:D_{\mathcal Lf\mathcal L}\rightarrow\mathcal Lf\mathcal L$ that sends every object in the domain into its supremum. The minimum of this order is the function $\min\mathcal Lf\mathcal L:\emptyset\rightarrow\mathcal L$.

We are left to prove the order is bounded complete. Take a set $X\subseteq \mathcal Lf\mathcal L$ that is bounded by above; there is $h\in\downarrow X$ such that every function $x\in X$ is a simplified version of $h$. We can restate this saying every $x\in X$ is of the form $h|_{A_x}$. Denote with $\textbf{sup}~X:\bigcup_{x\in X}A_x\rightarrow\mathcal L$ the function that is a simplified version of $h$; the arrows are $a\mapsto ha$. Proving $\textbf{sup}~X=\bigvee~X$ is similar to what was done above.\end{proof}

		\subsection{Complete Lattice}

The main idea in the concept of complete lattice is that any subset of a lattice has supremum and infimum. We now give a duality for supremum and infimum. Consider any function $\textbf{sup}\in(\mathcal{PL})f\mathcal L$ such that $A\mapsto_{\textbf{sup}}\bigvee A$. The function is order preserving; this follows from 4) in proposition (\ref{prop sup 1}).

\begin{theorem}Given a function $\textbf{sup}:\mathcal{PL}\rightarrow\mathcal L$, such that $A\mapsto_{\textbf{sup}}\bigvee A$, we are able to construct $\textbf{inf}:\mathcal{PL}\rightarrow\mathcal L$, such that $A\mapsto_{\textbf{inf}}\bigwedge A$, for every $A\subseteq\mathcal L$. In other words, $\textbf{inf}~A=\textbf{sup}\uparrow A$ and $\textbf{sup}~A=\textbf{inf}\downarrow A$.

Moreover, $\textbf{sup}$ is order preserving and $\textbf{inf}$ is order reversing.\end{theorem}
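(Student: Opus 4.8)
The plan is to prove the theorem by first establishing the infimum formula $\textbf{inf}~A = \textbf{sup}\uparrow A = \bigvee\uparrow A$, which is the heart of the statement, and then deducing the dual relation and the monotonicity claims. The key observation is that in a complete lattice the set $\uparrow A$ of lower bounds of $A$ is nonempty (it contains at least the minimum $m$, which exists because $\bigvee\emptyset = m$), so $\textbf{sup}\uparrow A = \bigvee\uparrow A$ is well-defined as an element of $\mathcal L$. I would then show this element equals $\bigwedge A = \max\uparrow A$ by invoking proposition (\ref{sup max}): it suffices to show $\bigvee\uparrow A \in \uparrow A$, i.e.\ that the least upper bound of the lower bounds of $A$ is itself a lower bound of $A$.

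First I would verify the defining properties. Every $a \in A$ is an upper bound of $\uparrow A$ (since each $x \in \uparrow A$ satisfies $x \leq a$ by definition of lower bound), so by part 3) of proposition (\ref{prop sup 1}) we get $\bigvee\uparrow A \leq a$ for every $a \in A$. This says precisely that $\bigvee\uparrow A$ is a lower bound of $A$, hence $\bigvee\uparrow A \in \uparrow A$. Now proposition (\ref{sup max}), applied to the set $\uparrow A$, tells us that since $\bigvee\uparrow A \in \uparrow A$ the maximum of $\uparrow A$ exists and equals $\bigvee\uparrow A$. But $\bigwedge A = \max\uparrow A$ by the definition of infimum given in the subsection on Bounds, so $\textbf{inf}~A := \bigwedge A = \bigvee\uparrow A = \textbf{sup}\uparrow A$, and the construction of $\textbf{inf}:\mathcal{PL}\rightarrow\mathcal L$ is complete. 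The dual identity $\textbf{sup}~A = \textbf{inf}\downarrow A$ follows by the symmetric argument, replacing $\uparrow$ with $\downarrow$ and exchanging the roles of maximum and minimum (equivalently, by passing to $\mathcal L^{op}$ and reusing proposition (\ref{sup semi})'s observation that supremum and infimum swap under order reversal).

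For the monotonicity claims, that $\textbf{sup}$ is order preserving is immediate from part 4) of proposition (\ref{prop sup 1}): if $A \subseteq B$ then $\bigvee A \leq \bigvee B$, which is exactly order preservation for the inclusion order on $\mathcal{PL}$. For $\textbf{inf}$ being order reversing, the same proposition gives $\bigwedge B \leq \bigwedge A$ whenever $A \subseteq B$, so as inclusion increases the infimum decreases, which is precisely order reversal. Alternatively one can argue directly from the formula $\textbf{inf}~A = \textbf{sup}\uparrow A$: the operation $A \mapsto \uparrow A$ reverses inclusion (a larger set has fewer lower bounds), and composing the order-reversing $\uparrow$ with the order-preserving $\textbf{sup}$ yields an order-reversing map.

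The main obstacle I anticipate is not any single calculation but rather handling the degenerate cases cleanly so that $\textbf{inf}$ is genuinely \emph{total} on $\mathcal{PL}$, which is what "complete lattice" demands. Specifically I must confirm that $\uparrow A \neq \emptyset$ for every $A$, including $A = \emptyset$ and $A = \mathcal L$, so that $\bigvee\uparrow A$ always exists as an application of the given total function $\textbf{sup}$; this relies on the conventions $\bigvee\emptyset = m$ and $\bigwedge\emptyset = M$ established earlier, together with the existence of the global bounds $m, M$ in a complete lattice. Once it is clear that $\uparrow A$ is never empty and that $\textbf{sup}$ is defined on all of $\mathcal{PL}$, the argument via proposition (\ref{sup max}) goes through uniformly and no separate case analysis is needed.
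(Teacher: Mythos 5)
Your proof is correct and follows essentially the same route as the paper: every element of $A$ is an upper bound of $\uparrow A$, so $\textbf{sup}\uparrow A$ is a lower bound of $A$, hence $\textbf{sup}\uparrow A\in\uparrow A$, and proposition (\ref{sup max}) then identifies it with $\max\uparrow A=\bigwedge A$; the dual identity is by symmetry and the monotonicity claims are 4) of proposition (\ref{prop sup 1}). Your added care that $\uparrow A\neq\emptyset$ in the degenerate cases (via $\bigvee\emptyset=m$) is a small refinement the paper leaves implicit, not a different argument.
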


\begin{proof}We will show that infimum is $\textbf{sup}\uparrow A$.  Let $x\in A$, then $x$ is an upper bound of $\uparrow A$. This means $\textbf{sup}\uparrow A\leq x$. Therefore, $\textbf{sup}\uparrow A$ is a lower bound of $A$; this is equivalent to $\bigvee\uparrow A=\textbf{sup}\uparrow A\in\uparrow A$. Recall that $\bigvee A\in A$ if and only if $\bigvee A=\max A$, so that $\bigwedge A=\max\uparrow A=\textbf{sup}\uparrow A$.

The reader can just as easily prove that $\textbf{sup}~A=\textbf{inf}\downarrow A$. The second part of this result is 4) from proposition (\ref{prop sup 1}).\end{proof}

A partial  order that has functions $\textbf{sup}$ and $\textbf{inf}$ is a \textit{complete lattice}. Of course, any complete lattice is a lattice. We see that complete lattices are, trivially, bounded complete.

\begin{proposition}Let $\mathcal L$ be a complete lattice, then $\mathcal L$ and $\mathcal L^{op}$ are directed complete.\end{proposition}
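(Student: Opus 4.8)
The plan is to observe that directed completeness is strictly weaker than the completeness already assumed, so almost all the work amounts to recognizing that the given functions restrict appropriately. First I would recall that, by definition, a complete lattice comes equipped with a function $\textbf{sup}:\mathcal{PL}\rightarrow\mathcal L$ sending each $A\subseteq\mathcal L$ to $\bigvee A$; in particular $\bigvee A$ exists for \emph{every} subset of $\mathcal L$, directed or not. Since the collection $D_\mathcal L$ of directed subsets is a subcollection of $\mathcal{PL}$, I would form the restriction $\textbf{sup}|_{D_\mathcal L}:D_\mathcal L\rightarrow\mathcal L$, which sends each directed subset to its supremum. This is exactly the function whose existence the definition of directed completeness requires, so $\mathcal L$ is directed complete.

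For the opposite order, the key is the duality between supremum and infimum: a subset $A\subseteq\mathcal L^{op}$ has, as its supremum computed in $\mathcal L^{op}$, precisely its infimum $\bigwedge A$ computed in $\mathcal L$, because reversing all arrows of $\mathcal L$ interchanges upper and lower bounds and hence least upper bound with greatest lower bound. A complete lattice also carries the function $\textbf{inf}:\mathcal{PL}\rightarrow\mathcal L$ sending $A\mapsto\bigwedge A$ (indeed this was constructed from $\textbf{sup}$ in the preceding theorem). Reading $\textbf{inf}$ as a supremum function for $\mathcal L^{op}$ and restricting it to the directed subsets $D_{\mathcal L^{op}}$ of the opposite order then exhibits $\mathcal L^{op}$ as directed complete by the same argument.

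I expect essentially no hard calculation here; the one point to state with care is purely definitional. A directed subset of $\mathcal L^{op}$ is in general not a directed subset of $\mathcal L$ (it is ``downward directed'' in $\mathcal L$), so one cannot naively transport the first half of the argument across the duality. The step that makes this harmless — and which I would spell out explicitly — is that completeness supplies suprema and infima for \emph{all} subsets, so the directedness hypothesis is never actually used: every directed subset of $\mathcal L^{op}$ admits a supremum simply because every subset of $\mathcal L^{op}$ does. This is the only place a reader could be misled, and once it is made explicit the proof closes at once.
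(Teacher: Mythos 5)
Your proposal is correct and takes essentially the same route as the paper's own proof: $\mathcal L$ is directed complete because suprema exist for all subsets (directed or not), and $\mathcal L^{op}$ is directed complete because the infimum in $\mathcal L$, which completeness also supplies for every subset, is exactly the supremum in $\mathcal L^{op}$. Your explicit caveat that a directed subset of $\mathcal L^{op}$ is only downward directed in $\mathcal L$ — harmless here since completeness never uses the directedness hypothesis — is a point the paper's proof passes over silently while relying on the same fact.
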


\begin{proof}By definition, a complete lattice is obviously directed complete. Since $\mathcal L$ is complete, we know that the infimum of every subset exists. But, this is the same as saying that the supremum exists for every subset of $\mathcal L^{op}$. In particular, the supremum exists for every directed subset and therefore $\mathcal L^{op}$ is directed complete.\end{proof}












	\chapter{Group}

		\section{Subcollections}

We will sometimes express $G$ instead of the collection of objects of operation, $\mathcal{A}|G$. Given a group $G$, we can consider systems obtained from $G$ by taking away objects of operation. These systems are the object of study in the present discussion.

			\subsection{Power operation}

We define an operation such that source and target objects are the objects of operation of $G$. It is a non-commutative operation; that is why sometimes we decide to treat it as an ordered collection of functions. We define this operation in a way that generalizes the definition of product in terms of sum.

Given a group $G$ and any object $x$, in $\mathbb{Z}$, let $^{\uparrow} x:G\rightarrow G$. For $x\geq0$, we define $a;a^{\uparrow}x*a,x+1$ and $a;(a^{x})^{-1},-x$. We accept $a;1,0$ which means $a^{\uparrow}0$ is the unit of the group, for every object $a$ of $G$ that is not the unit. In case $G$ is abelian, we have a functor $^{\uparrow}x:G\rightarrow G$. To prove this, we must prove $^{\uparrow}x,^{\uparrow}x;*(b^{\uparrow}x),*b$. We see that the relation holds for 0, so now we shall verify it holds for $x+1$, given it holds for $x$. \begin{eqnarray}\nonumber a*b&;&[(a*b)^{\uparrow}x]*(a*b),x+1\\\nonumber a*b&;&[(a^{\uparrow}x)*(b^{\uparrow}x)]*(a*b),x+1\\\nonumber a*b&;&
[(a^{\uparrow}x)*a]*[(b^{\uparrow}x)*b],x+1\\\nonumber a*b&;&
[a^{\uparrow}(x+1)]*[b^{\uparrow}(x+1)],x+1.\end{eqnarray}

So, we are defining a function $\mathbb{Z}\rightarrow G\textgoth{F}G$, which is an operation that we will call the \textit{power operation of $G$}. Notice that this operation is almost full, in fact, for it to be full we only need to define $1\uparrow0$ but there is no consistent way so we leave the operation as it is. If $e$ is a primitve unit, then for all $x\in\mathbb{N}$, we have $e;e,x$.

The product operation, defined for integers, is the power for $\mathbb{Z}$. We note this because $a;a\cdot x+a,x+1$. We now wish to find the power operation of $\mathbb{Q}$. We see that this is the operation $^{\uparrow}:\mathbb{Z}\rightarrow\mathbb{Q}\textgoth{F}\mathbb{Q}$, where we define $a;a^{x} \cdot a,x+1$.

		\subsection{Subgroup}

For any subcollection $A$, of a group $G$, and $n\in \mathbb{Z}$, we will write $A^{n}$ to represent the collection of objects of the form $a^{\uparrow}n$, where $a$ is any object of $A$. We write $AB$ to express the collection of all objects of the form $a*b$, where $a,b\in A,B$. For any group, $G=G^{-1}$. A subgroup of $G$ is a group $H$ such that $H\subseteq G$. The collection of all subgroups of $G$ is represented by $\mathcal{G}_{\subseteq}$.

\begin{theorem}The following are equivalent statements\begin{itemize}\item[1)] $H\in \mathcal{G}_{\subseteq}$\item[2)]$HH^{-1}\subseteq H$\item[3)]$H^{2},H^{-1}\subseteq H$\end{itemize}\end{theorem}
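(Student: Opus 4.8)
The plan is to prove the cyclic chain $1)\Rightarrow 2)\Rightarrow 3)\Rightarrow 1)$, treating $H$ as a \emph{nonempty} subcollection of $G$ throughout. Nonemptiness is harmless to assume: being a subgroup forces $H$ to contain the unit, so it is needed for $2)$ and $3)$ to be more than vacuous. I read $H^{2}$ as the product collection $HH=\{h_1*h_2 : h_1,h_2\in H\}$, consistent with the notation $AB$ introduced immediately above the theorem; with this reading, $3)$ is the usual ``closure plus inverses'' two-step test and $2)$ is the one-step test. (I flag one interpretive subtlety below.)

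For $1)\Rightarrow 2)$ I would use the definition of subgroup directly: since $H$ is itself a group, it is closed under $*$ and each $h\in H$ has an inverse lying in $H$. The only point worth a line is that the inverse of $b$ computed inside $H$ agrees with its inverse in $G$, which is immediate from the uniqueness of inverses established earlier for groups. Then for $a,b\in H$ we get $b^{-1}\in H$ and $a*b^{-1}\in H$, i.e. $HH^{-1}\subseteq H$. For $2)\Rightarrow 3)$ I would extract the structure in three steps: first the unit, by choosing any $a\in H$ (this is where nonemptiness enters) and observing $a*a^{-1}=e\in HH^{-1}\subseteq H$; next inverses, since for $h\in H$ we have $e*h^{-1}=h^{-1}\in HH^{-1}\subseteq H$, giving $H^{-1}\subseteq H$; finally closure, since for $a,b\in H$ we now know $b^{-1}\in H$, hence $b=(b^{-1})^{-1}\in H^{-1}$ and $a*b=a*(b^{-1})^{-1}\in HH^{-1}\subseteq H$, giving $HH\subseteq H$.

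The crux is $3)\Rightarrow 1)$: assuming $HH\subseteq H$ and $H^{-1}\subseteq H$, I must verify that $H$, with the restricted operation, satisfies the paper's definition of group, namely an algebraic category on a single c-object all of whose objects of operation have inverse to the unit. The main obstacle here is organizational rather than computational. I would (i) produce the unit by taking $a\in H$, so $a^{-1}\in H$ and $e=a*a^{-1}\in HH\subseteq H$; (ii) observe that closure $HH\subseteq H$ is exactly the statement that $*$ restricts to a \emph{full} operation on $H$, so the restricted system is a well-defined category with one c-object; (iii) note that associativity and the unit and inverse laws are inherited verbatim from $G$, since every element appearing in those identities already lies in $H$; and (iv) supply inverses inside $H$ from $H^{-1}\subseteq H$, with uniqueness of the unit guaranteeing that the unit of $H$ is the unit $e$ of $G$. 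Assembling these shows $H$ is a group with $H\subseteq G$, hence $H\in\mathcal{G}_{\subseteq}$.

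Finally I would record the interpretive caveat, since it is the one genuinely delicate point. Under the literal reading $H^{2}=\{h*h : h\in H\}$ (self-squares coming from the power notation $A^{n}$), the equivalence is \emph{false}: in the additive group of integers the set of $0$ together with all $\pm 2^{k}$ is closed under doubling and under negation yet is not a subgroup. Thus the product-set reading $H^{2}=HH$ is the one that makes $3)\Rightarrow 1)$ go through, and it is the reading I adopt.
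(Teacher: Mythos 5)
Your proof is correct, and its mathematical core coincides with the paper's: the paper also assumes $H$ has at least one object and establishes $2)\Rightarrow 1)$ by exactly your three computations, namely $x*x^{-1}=1$, $1*x^{-1}=x^{-1}$, and $x*(y^{-1})^{-1}=x*y$, each landing in $HH^{-1}\subseteq H$. The only structural difference is the shape of the implication graph: the paper proves $1)\Rightarrow 2),3)$ from the definition, reduces $3)$ to $2)$ via $HH^{-1}\subseteq H^{2}\subseteq H$, and then does the work in $2)\Rightarrow 1)$, whereas you run the cycle $1)\Rightarrow 2)\Rightarrow 3)\Rightarrow 1)$, placing the unit/inverse/closure extraction inside $2)\Rightarrow 3)$ and leaving $3)\Rightarrow 1)$ as assembly; the two organizations buy the same thing at the same cost. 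Your interpretive caveat, however, is a genuine catch rather than pedantry: the notation introduced immediately above the theorem defines $A^{n}$ as the collection of powers $a^{\uparrow}n$ of single elements, and under that literal reading your counterexample ($\{0\}\cup\{\pm2^{k}\}$ in the integers under addition, closed under doubling and negation but not under sums) shows $3)$ does not imply $1)$. The paper's own step $HH^{-1}\subseteq H^{2}$ is valid only under the product-set reading $H^{2}=HH$ that you adopt, so the paper tacitly uses the reading you made explicit; recording it, as you do, is the more careful treatment.
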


\begin{proof}It is clear from the definition of a group, that $1)\Rightarrow2),3)$. We shall prove $2),3)\Rightarrow1)$. Supposing 3) to be true, we find $HH^{-1}\subseteq H^2\subseteq H$. So, all we need to prove is 2) implies 1). Suppose $H$ has at least one object $x$:

\begin{eqnarray}\nonumber x&;&1,x^{-1}\\\nonumber1&;&x^{-1},x^{-1} \\\nonumber x&;&x\cdot y,(y^{-1})^{-1}\end{eqnarray}

These three statements indicate $1$, $x^{-1}$, $x\cdot y$ are all objects of $HH^{-1}\subseteq H$.

\end{proof}

\begin{theorem}Let $a^{x}\subseteq G$ represent the collection of objects of such form, for fixed $a\in G$ and every $x\in\mathbb{Z}$. Then $a^{x}$ is an abelian subgroup of $G$.

\end{theorem}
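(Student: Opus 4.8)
The plan is to apply the characterization of subgroups just proven (the theorem stating $1) \Leftrightarrow 2) \Leftrightarrow 3)$), so that verifying $a^x$ is a subgroup reduces to checking a single containment, say $(a^x)(a^x)^{-1} \subseteq a^x$. Writing $H := a^x = \{a^{\uparrow}n : n \in \mathbb{Z}\}$, I would take two arbitrary objects of $H$, namely $a^{\uparrow}m$ and $a^{\uparrow}n$, and show that $a^{\uparrow}m * (a^{\uparrow}n)^{-1}$ is again of the form $a^{\uparrow}k$ for some integer $k$. The key fact I would need is that $(a^{\uparrow}n)^{-1} = a^{\uparrow}(-n)$ and that the powers add, i.e. $a^{\uparrow}m * a^{\uparrow}n = a^{\uparrow}(m+n)$; once these are in hand, $a^{\uparrow}m * (a^{\uparrow}n)^{-1} = a^{\uparrow}(m-n) \in H$ closes condition 2).

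\textbf{The power-addition law.} The main work, and the main obstacle, is establishing $a^{\uparrow}m * a^{\uparrow}n = a^{\uparrow}(m+n)$ for all integers $m,n$ from the recursive definition $a;a^{\uparrow}x * a, x+1$ (together with $a;1,0$ and $a;(a^x)^{-1},-x$). Since the definition is given by recursion on $x$, I would argue by the inductive structure of the discrete number system $\mathbb Z$: fix $m$, prove the identity for $n \geq 0$ by passing from $n$ to $n+1$ using the defining relation $a^{\uparrow}(n+1) = a^{\uparrow}n * a$ and associativity, then handle $n < 0$ by passing from $n$ to $n-1$ analogously, using the definition of negative powers. The delicate point is that, unlike the earlier functor computations for $\cdot x$, here $a$ is a single fixed object so all the elements $a^{\uparrow}k$ involved are powers of one common base; this is precisely what forces commutativity and lets the exponents combine additively without needing $G$ itself to be abelian.

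\textbf{Commutativity and conclusion.} For the abelian claim I would show any two elements of $H$ commute: $a^{\uparrow}m * a^{\uparrow}n = a^{\uparrow}(m+n) = a^{\uparrow}(n+m) = a^{\uparrow}n * a^{\uparrow}m$, where the middle equality uses commutativity of integer addition (established in the section on integer systems). Thus the power-addition law does double duty, yielding both closure under the subgroup criterion and commutativity of the restricted operation. I would then invoke the subgroup characterization theorem to conclude $H \in \mathcal{G}_{\subseteq}$, and note that since every pair of its objects commutes, $H$ is an abelian subgroup. I expect the routine inductive verification of the exponent law to be the only substantive step; everything else is a direct appeal to results already in the excerpt.
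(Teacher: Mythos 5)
Your overall strategy coincides with the paper's: reduce to the subgroup criterion $HH^{-1}\subseteq H$ from the preceding theorem, establish an exponent law by induction on the recursive definition of powers, and then read commutativity of $a^{x}$ off the commutativity of addition in $\mathbb{Z}$. The genuine gap is in the step you call ``analogous'': handling $n<0$ by passing from $n$ to $n-1$ ``using the definition of negative powers.'' The definition $a^{\uparrow}(-x):=(a^{\uparrow}x)^{-1}$ is not a recursion, so there is no downward defining relation to appeal to. Concretely, your downward step needs $a^{k}*a^{-1}=a^{k-1}$ for arbitrary $k$; but for $k\leq0$ the definitions only yield $a^{k-1}=(a^{1-k})^{-1}=(a^{-k}*a)^{-1}=a^{-1}*(a^{-k})^{-1}=a^{-1}*a^{k}$, which is the wrong side. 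To move $a^{-1}$ across $a^{k}$ you must already know that $a$ (hence $a^{-1}$) commutes with every power of $a$, and that fact requires its own induction; it is not a consequence of the exponent law you are in the middle of proving. As written, your plan is circular for mixed signs: commutativity is presented as an output of the power-addition law, while the negative half of that law's proof silently consumes it.

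This is precisely what the paper's proof is structured around. Its part 1) proves by induction that $a$ commutes with $a^{n}$ (the relation $a,a;a^{n},a^{n}$); its part 2) then proves $a^{n}*a^{-m}=a^{n-m}$ by induction on $n$, invoking part 1) twice in the inductive step --- once to replace $a^{n}*a$ by $a*a^{n}$, and once to turn $a*a^{n-m}$ back into $a^{n-m}*a$ so the defining recursion applies; the subgroup criterion and the commutativity computation (part 3) then finish exactly as you describe. Your proposal can be repaired either by inserting that commutation lemma before the downward induction, or by avoiding downward induction entirely: prove the law for nonnegative exponents (pure associativity), then obtain the mixed-sign cases from $(x*y)^{-1}=y^{-1}*x^{-1}$ together with a case split on whether $m\geq n$. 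Either way, the commutation of the base with its own powers is the substantive step your sketch omits.
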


\begin{proof}

 First we will prove cummutativity with $a$, and then we will prove $a^{x}(a^{x})^{-1}\subseteq a^{x}$. Due to the last theorem, this proves $a^{x}$ is a group. After that we will verify commutatvity.

\begin{itemize}

\item[1)] Suppose that for the product we have $a,a;a^{n},a^{n}$:

\begin{eqnarray}\nonumber a&;&a*a^{n+1},a^{n+1}\\\nonumber a&;&a*(a^{n}*a),a^{n+1}\\\nonumber a&;&(a*a^{n})*a,a^{n+1}\\\nonumber a&;&(a^{n}*a)*a,a^{n+1}\\\nonumber a&;&a^{n+1}*a,a^{n+1}\\\nonumber a,a&;&a^{n+1},a^{n+1}\end{eqnarray}

\item[2)] Given $n$, suppose $a^{n};a^{n-m},a^{-m}$, for any $m$:

\begin{eqnarray}\nonumber a^{n+1}&;&(a^{n}*a)*a^{-m},(a^{m})^{-1}\\\nonumber a^{n+1}&;&(a*a^{n})*a^{-m},(a^{m})^{-1}\\\nonumber a^{n+1}&;&a*(a^{n}*a^{-m}),(a^{m})^{-1}\\\nonumber a^{n+1}&;&a*a^{n-m},(a^{m})^{-1}\\\nonumber a^{n+1}&;&a^{n-m}*a,(a^{m})^{-1}\\\nonumber a^{n+1}&;&a^{n-m+1},(a^{m})^{-1}\end{eqnarray}

\item[3)]\begin{eqnarray}\nonumber a^{n}&;&a^n*a^{-m},a^{-m}\\\nonumber a^{n}&;&a^{n-m},a^{-m}\\\nonumber a^{n}&;&a^{-m+n},a^{-m}\\\nonumber a^{n}&;&a^{-m}*a^n,a^{-m}\\\nonumber a^{n},a^n&;&a^{-m},a^{-m}\end{eqnarray}
\end{itemize}

\end{proof}

	\subsection{Congruence Class}

Given an equivalence relation $\leftrightarrow$ for a collection $A$, carry out a seperation of this system, without losing information. The systems obtained are the subcollections of all equivalent objects. That is, we have all simple equivalence relations. For objects of the same simple equivalence relation we will express $$a\equiv b~~~~mod~\leftrightarrow.$$

Let $H\in\mathcal{G}_{\subseteq}$, and define $a\sim x$ if $h;a,x$ for some $h\in H$. This is, the collection of objects $x$ such that we have an arrow $h\rightarrow_{*x} a$. In other words, $x$ such that its right operation has an arrow with source object in $H$ and target is $a$.

\begin{theorem}The relation $\sim$ is an equivalence relation and $a\sim b$ will be expressed by $$a\equiv b~~~~mod~Hx.$$ Each simple equivalence relation resulting from $\sim$ will be called a right congruence class. Every right congruence class is the subcollection $Hx$, for any $x$ in the class. For every $x\in G$, the subcollection $Hx$ is the right congruence class that contains $x$ and, in particular, $H$ is a congruence class.\end{theorem}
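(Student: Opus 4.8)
The plan is to prove the four assertions in sequence, starting with the fact that $\sim$ is an equivalence relation and ending with the identification of the congruence classes as the subcollections $Hx$. The overall strategy is to exploit the subgroup properties of $H$ established in the previous theorems, together with the unique-solubility of equations in a group that was proven in the chapter on groups, where we showed $f;g,x$ has a unique solution.

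First I would show $\sim$ is reflexive, symmetric, and transitive. For reflexivity, observe that since $H$ is a subgroup it contains the unit, so the arrow $1;a,a$ (i.e. taking $h=1$) gives $a\sim a$. For symmetry, suppose $a\sim b$, so there is $h\in H$ with $h;a,b$, meaning $b*h=a$ in the notation for the right operation; I would apply $h^{-1}\in H$ (available since $H$ is a subgroup and hence closed under inverses by the characterization $H^{-1}\subseteq H$) to obtain $h^{-1};b,a$, giving $b\sim a$. For transitivity, if $a\sim b$ and $b\sim c$ witnessed by $h,k\in H$, then composing the two right operations and using that $H^2\subseteq H$ (again the subgroup criterion) produces an element of $H$ witnessing $a\sim c$. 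This is essentially the same argument given earlier in the excerpt for simple equivalence relations, adapted to the subgroup setting.

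Next I would identify the simple equivalence classes with the sets $Hx$. By definition $a\sim x$ means $a=h*x$ for some $h\in H$, which is exactly the statement $a\in Hx$ (using the notation $AB$ for the collection of products $a*b$). Thus the class of $x$ under $\sim$ is precisely $Hx$, and by the general theory of equivalence relations (the separation without loss of information discussed in the Equivalence subsection), these classes are exactly the simple equivalence relations arising from $\sim$. For the claim that every right congruence class equals $Hx$ for \emph{any} $x$ in the class, I would note that if $y\in Hx$ then $y$ and $x$ are equivalent, so they generate the same class; conversely the class of any of its members recovers the same subcollection.

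Finally, to see that $H$ itself is a congruence class, I would take $x=1$ and observe $H1=H$, using that the unit $1$ is the unit of the operation so $h*1=h$ for every $h\in H$. The main obstacle I anticipate is bookkeeping with the right-operation notation $h;a,x$ versus the membership statement $a\in Hx$: one must be careful that $h;a,x$ encodes $x*h=a$ and not $h*x=a$, and verify this matches the intended definition of $Hx$ as $\{h*x : h\in H\}$ consistently. Getting the side (left versus right) correct throughout, and confirming that the subgroup closure properties $H^2\subseteq H$ and $H^{-1}\subseteq H$ are invoked in exactly the right places for transitivity and symmetry respectively, is where care is needed; the rest is routine application of the group axioms already established.
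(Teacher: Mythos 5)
Your proposal follows the same route as the paper's own proof: reflexivity from the unit of $H$, symmetry by applying $h^{-1}$ (closure of $H$ under inverses), transitivity from $H^{2}\subseteq H$, identification of the class of $x$ with $Hx$, and $H=H1$ as the class containing the unit. That skeleton is correct and is exactly what the paper does. However, the one point you single out as requiring care is stated backwards: in the paper's convention for right operations ($a;a*x,x$ abbreviates $a\mapsto_{*x}a*x$), the expression $h;a,x$ encodes $h*x=a$, \emph{not} $x*h=a$ as you assert in your closing paragraph. Your own identification step uses the correct reading ($a\sim x$ means $a=h*x$, hence $a\in Hx$), so the proposal contradicts itself on this point; if one instead followed the convention you state, consistently, the classes produced would be the left congruence classes $xH$ rather than the right classes $Hx$ that the theorem concerns. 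The same slip occurs in your symmetry step, where you gloss $h;a,b$ as $b*h=a$ instead of $h*b=a$ (the ensuing manipulation, $h^{-1}*a=h^{-1}*(h*b)=(h^{-1}*h)*b=b$, giving $h^{-1};b,a$, is nevertheless valid under the correct reading). Fix these two glosses and your argument coincides with the paper's proof.
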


\begin{proof}\makebox[5pt][]{}\mbox {}
\begin{itemize}

\item[1)]We will prove $\sim$ is indeed an equivalence relation. It is obvious that the reflexive property holds; if we take $h$ to be the unit of $G$ we have $h;a,a$. To prove symmetry, suppose $h;a,b$:

\begin{eqnarray}\nonumber h^{-1}&;&h^{-1}*(h*b),a\\\nonumber h^{-1}&;&(h^{-1}*h)*b,a\\\nonumber h^{-1}&;&b,a.\end{eqnarray}

The result follows from $h^{-1}\in H$. Finally, let $h_{1},h_{2}\in H$ such that $h_{1};a,b$ and $h_{2};b,c$. From this,

\begin{eqnarray}\nonumber h_{1}*h_{2}&;&(h_{1}*h_{2})*c,c\\\nonumber h_{1}*h_{2}&;&h_{1}*(h_{2}*c),c\\\nonumber h_{1}*h_{2}&;&h_{1}*b,c\\\nonumber h_{1}*h_{2}&;&a,c.\end{eqnarray}

We may conclude because $h_{1}*h_{2}\in H$.

\item[2)] Let $x$ be an element of a right congruence class. Then, $a\in Hx~\Leftrightarrow ~h;a,x~\Leftrightarrow~h^{-1};x,a~\Leftrightarrow ~a$ is in the right congruence class of $x$.

If $x\in G$, then $x$ is in some right equivalence class; this is guaranteed by the reflexive property. But this class is the subcollection $Hx$, because $x$ is an object of it.

\end{itemize}
\end{proof}

\begin{theorem}There is a bijective function $H\rightarrow Hx$, for every $x\in G$.\end{theorem}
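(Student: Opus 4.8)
There is a bijective function $H\rightarrow Hx$, for every $x\in G$.

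The plan is to exhibit an explicit function $H\rightarrow Hx$ and verify it is bijective, using the group structure developed earlier in the excerpt. The natural candidate is the right operation function $*x$ restricted to $H$; that is, the function sending $h\mapsto h*x$. I must first confirm this is a well-defined function into $Hx$. By the definition of the congruence class given just above, $Hx$ is precisely the right congruence class containing $x$, and $a\in Hx$ if and only if $h;a,x$ for some $h\in H$, i.e. $a=h*x$. Thus every object of the form $h*x$ with $h\in H$ lies in $Hx$, and conversely every object of $Hx$ has this form, so $*x|_H$ indeed maps onto $Hx$ by the very description of the class.

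The core of the argument is that $*x$ is invertible as a function, which was already established in Proposition~\ref{prop lr can grp}: for a group $G$, the left and right operation functions are invertible. First I would invoke that result to conclude that $*x:\mathcal A|G\rightarrow\mathcal A|G$ is a bijective function on the whole group, with inverse $*x^{-1}$. Since a restriction of a monic function is monic (as noted in the discussion of two-part functions), the restriction $*x|_H$ is automatically monic. For ontoness onto $Hx$, I would argue directly: given any $a\in Hx$, the defining property of the class supplies $h\in H$ with $h;a,x$, which is exactly $a=h*x=(*x)(h)$, so $a$ is in the image of $*x|_H$. Hence $*x|_H:H\rightarrow Hx$ is both monic and onto, therefore bijective by the definition of bijectivity.

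The one point requiring care — and the main obstacle — is checking that the image of $*x|_H$ is contained in $Hx$ and not something larger, together with confirming that distinct elements of $H$ produce distinct elements of $Hx$ after restriction. The containment is immediate from the form $h*x\in Hx$. For injectivity on $H$ I would use left cancellability of $*x$: if $h_1*x=h_2*x$, then applying $*x^{-1}$ (which exists by invertibility of $*x$) gives $h_1=h_2$. Concretely, from $h_1;h_1*x,x$ and $h_2;h_2*x,x$ with $h_1*x=h_2*x$, associativity yields $h_1*(x*x^{-1})=h_2*(x*x^{-1})$, hence $h_1;h_1,e$ and $h_2;h_2,e$ give $h_1=h_2$. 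Assembling these observations, $*x|_H$ is the desired bijection $H\rightarrow Hx$.
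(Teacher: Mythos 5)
Your proof is correct and follows essentially the same route as the paper: both exhibit the right operation $*x$ restricted to $H$ as the bijection, with surjectivity onto $Hx$ coming straight from the definition of the congruence class and injectivity from cancellation (the paper phrases both at once as unique solvability of $h;y,x$ for $h$). Your version merely spells out the cancellation step explicitly via $x^{-1}$ and associativity, which is exactly what underlies the paper's appeal to uniqueness.
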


\begin{proof}Our bijective function is $*x$. Let $y\in Hx$, then there is a unique solution to $h;y,x$, for $h$. This means $*x$ is monic and onto $Hx$.

\end{proof}

It is clear that we still have work to do in the subject matter of congruence classes. We can take an analogous path to prove similar theorems for the \textit{left congruence class}, $xH$. The equivalence relation in this context is given by $a\sim x$ if $x;a,h$. Similar theorems hold for the left congruence class. Of course this is the collection of objects $x\in G$ such that $x\rightarrow_{*h}a$, for some $h\in H$.

	\subsection{Invariant subspace}

A subgroup $N\subseteq G$ is called \textit{invariant} or \textit{normal} if $xN=Nx$, for every $x\in G$. Simply put, it is any subgroup such that the equivalence relation it defines yields the same right and left congruence classes. The collection of all invariant subgroups of $G$ is denoted by $\hat{\mathcal{G}}_{\subseteq}$.

\begin{theorem}The following are equivalent:\begin{itemize}\item[1)]$xN=Nx$, for every $x\in G$.\item[2)]$xNx^{-1}
=N$, for every $x\in G$\item[3)] $xNx^{-1}\subseteq N$, for every $x\in G$. \end{itemize}\end{theorem}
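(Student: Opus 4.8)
The plan is to prove the three conditions equivalent by establishing the cycle $1)\Rightarrow 2)\Rightarrow 3)\Rightarrow 1)$, working throughout with the notation for operations developed earlier in the paper. Recall that the statement $h;a,b$ means the action of $b$ on $h$ gives $a$, i.e. $h*b=a$, and that a subgroup is characterized (by the earlier subgroup theorem) through the conditions $HH^{-1}\subseteq H$ and $H^2,H^{-1}\subseteq H$. The key computational facts I will rely on are the group axioms already proven: associativity, uniqueness and behavior of inverses, and in particular that $(x^{-1})^{-1}=x$, together with the fact that $*x$ is a bijection (Proposition \ref{prop lr can grp}), which lets me pass between set equalities and elementwise membership.

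First I would show $1)\Rightarrow 2)$. Assuming $xN=Nx$ for every $x$, I want $xNx^{-1}=N$. The natural move is to right-operate both sides of $xN=Nx$ by $x^{-1}$: since $*x^{-1}$ is a bijection, applying it to the collection equality $xN=Nx$ yields $xNx^{-1}=Nxx^{-1}=N(xx^{-1})=N\cdot 1=N$, where I use associativity of the operation and the fact that $1$ is the unit so that $N\cdot 1=N$. This direction is essentially a one-line manipulation once I justify that operating a set equality on the right by a fixed element preserves the equality, which follows because $*x^{-1}$ is a well-defined function (indeed bijection) on $G$.

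Next, $2)\Rightarrow 3)$ is immediate: the equality $xNx^{-1}=N$ trivially gives the inclusion $xNx^{-1}\subseteq N$ for every $x$, so no real work is needed there. The substantive direction, and the step I expect to be the main obstacle, is $3)\Rightarrow 1)$. Here I am given only the inclusion $xNx^{-1}\subseteq N$ for every $x\in G$ and must recover the two-sided equality $xN=Nx$. The trick is to exploit that the hypothesis holds for \emph{every} group element, including $x^{-1}$: applying $3)$ with $x^{-1}$ in place of $x$ gives $x^{-1}N(x^{-1})^{-1}=x^{-1}Nx\subseteq N$ (using $(x^{-1})^{-1}=x$). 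From $x^{-1}Nx\subseteq N$ I then left-operate by $x$ and right-operate by $x^{-1}$, using associativity and the unit law, to obtain $N\subseteq xNx^{-1}$. Combining this with the given $xNx^{-1}\subseteq N$ yields $xNx^{-1}=N$, and then right-operating by $x$ produces $xN=Nx$, which is $1)$.

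The only delicate points to verify carefully are the manipulations of collection inclusions under the operation functions: I must confirm that if $A\subseteq B$ then $xA\subseteq xB$ and $Ax\subseteq Bx$, which holds because $x*$ and $*x$ are functions, and that these operations compose associatively at the level of subcollections, which follows by applying the elementwise associativity $a,c;a*b,b*c$ pointwise. Once these bookkeeping facts are in hand, each implication reduces to a short symbolic computation, and the cycle closes to give the full equivalence of $1)$, $2)$, and $3)$.
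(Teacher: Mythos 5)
Your proof is correct and follows essentially the same route as the paper: the same cycle $1)\Rightarrow 2)\Rightarrow 3)\Rightarrow 1)$, resting on the same key move of instantiating the hypothesis of $3)$ at $x^{-1}$ together with associativity, the unit law, and $(x^{-1})^{-1}=x$ (the paper uses this same instantiation implicitly in its ``Similarly, we verify $Nx\subseteq xN$'' step). The only real difference is presentational: you manipulate the collections $xN$, $Nx$, $xNx^{-1}$ wholesale via the left/right operation functions, and reach $1)$ from $3)$ by first establishing the equality $2)$, whereas the paper argues elementwise in its comparability notation and derives $xN\subseteq Nx$ and $Nx\subseteq xN$ directly.
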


\begin{proof} \makebox[5pt][]{}\mbox {}

\begin{itemize}\item[1)$\Rightarrow$2)] For $n_1\in N$, we find $n_2\in N$ such that \begin{eqnarray}\nonumber x*n_{1}&;&(x*n_{1})*x^{-1},x^{-1}\\\nonumber x*n_{1}&;&(n_{2}*x)*x^{-1},x^{-1}\\\nonumber x*n_{1}&;&n_{2}*(x*x^{-1}),x^{-1}\\\nonumber x*n_{1}&;&n_{2},x^{-1} \end{eqnarray}which proves that $xNx^{-1}\subseteq N$. Now, to prove $N\subseteq xNx^{-1}$:

\begin{eqnarray}\nonumber n_{1}&;&n_{1}*(x*x^{-1}),1\\\nonumber n_{1}&;&(n_{1}*x)*x^{-1},1\\\nonumber n_{1}&;&(x*n_{2})*x^{-1},1.\end{eqnarray}

\item[$3)\Rightarrow1)$] We wish to prove $xN\subseteq Nx$:

\begin{eqnarray}\nonumber x&;&(x*n_{1})*1,n_{1}\\\nonumber x&;&(x*n_{1})*(x^{-1}*x),n_{1}\\\nonumber x&;&[(x*n_{1})*x^{-1}]*x,n_1\\\nonumber x&;&n_{2}*x,n_1.\end{eqnarray}

Similarly, we verify $Nx\subseteq xN$.

\end{itemize}

\end{proof}

\section{Quotient group}

Given an invariant subgroup $N$, consider the collection of congruence classes. We say congruence classes because the classes on the right and left are the same. Thus, we can write $$a\equiv b~~~~mod~N$$ if there is an $n\in N$ such that $n;a,b$. We see that every class $Nx$ is related to $N$ in the same manner, so we will consider each class to be a reflexive arrow for $N$. Let $G/N$ be the algebraic category with one c-object, $N$, and objects of operation are the congruence classes. The operation for the classes is defined by $Nx;N(x*y),Ny$. It is left to the reader to verify that the definition is consistent.

\begin{theorem}The algebraic category $G/N$ is a group. Moreover, if $G$ is abelian so is $G/N$.\end{theorem}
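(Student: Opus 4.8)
The plan is to verify the group axioms for $G/N$ using the notation conventions established throughout the text, where $Nx;N(x*y),Ny$ encodes the operation on congruence classes. Since $G/N$ is an algebraic category with one c-object $N$, it suffices to check that the objects of operation (the congruence classes) satisfy unit, inverse, and associativity, exactly as in the definition of group given for algebraic categories earlier in Chapter II.

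First I would confirm the operation is well-defined, which the text defers to the reader; this is the genuine content behind ``the definition is consistent.'' The point is that if $Nx=Nx'$ and $Ny=Ny'$, then $N(x*y)=N(x'*y')$, and this is exactly where invariance of $N$ is used: from $Nx=xN$ one rewrites a representative $x'*y'$ in the form $n*(x*y)$ for some $n\in N$, using the equivalence relation $a\equiv b\ mod\ N$ and the characterization $xNx^{-1}\subseteq N$ proven in the preceding theorem. I would then establish the unit by showing $N$ itself (i.e. the class $Ne$, where $e$ is the unit of $G$) satisfies $Nx;Nx,N$ and $N;N,Nx$, which follows directly from $x;x,e$ and $e;e,e$ holding in $G$ together with the definition of the class operation. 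For the inverse, I would show the class $Nx^{-1}$ is dual to the unit, i.e. $Nx;N,Nx^{-1}$, reducing to $x;e,x^{-1}$ in $G$ via the operation definition $Nx;N(x*y),Ny$ with $y:=x^{-1}$.

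Associativity is the most mechanical part: I would apply the operation definition twice and reduce to associativity in $G$, writing the chain
\begin{eqnarray}\nonumber Nx&;&N[x*(y*z)],N(y*z)\\\nonumber Nx&;&N[(x*y)*z],N(y*z)\\\nonumber Nx,Nz&;&N(x*y),N(y*z),\end{eqnarray}
so that the associativity $x*,x*;*y,*y$ of $G$ transfers directly to the classes. This reflects the general principle, already used when proving that order categories and product categories are categories, that a transitive/associative operation on representatives descends to the quotient once well-definedness is secured.

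For the final clause, if $G$ is abelian I would show $G/N$ is abelian by reducing commutativity of classes to commutativity in $G$: from $x,x;y,y$ in $G$ one obtains $Nx,Nx;Ny,Ny$, since $N(x*y)=N(y*x)$ whenever $x*y$ and $y*x$ are the same object of $G$. The hard part will not be any single computation but rather the well-definedness step, because it is the only place where the invariance hypothesis $xN=Nx$ is essential; every other axiom transfers transparently from $G$ through the representative-wise definition of the operation. I would therefore spend most of the proof carefully exhibiting, for arbitrary representatives of two classes, the element of $N$ witnessing that the product class is independent of the choice, invoking the equivalence $n;a,b$ and the three equivalent conditions for invariance from the theorem immediately above.
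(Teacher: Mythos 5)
Your proof is correct and follows essentially the same route as the paper: the unit $Ne$, the dual class $Nx^{-1}$, associativity, and commutativity of classes are each reduced to the corresponding property of $G$ through the defining relation $Nx;N(x*y),Ny$, exactly as in the paper's chains. The only difference is that you also carry out the well-definedness argument (using $xN=Nx$ to move a representative of one class past a representative of the other), a step the paper explicitly defers with ``it is left to the reader to verify that the definition is consistent,'' so your proposal completes rather than diverges from the paper's argument.
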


\begin{proof}\makebox[5pt][]{}\mbox {}
\nonumber\begin{itemize}\item[1)]We first verify that the operation has a unit and the objects are dual to it.  After that we will prove associativity. The unit in $G/N$ is the invariant subspace $N=Ne$:

\begin{eqnarray}\nonumber Nx&;&Nx*Ne,N\\\nonumber Nx&;&N(x*e),N\\\nonumber Nx&;&Nx,N.\end{eqnarray}

Duality amongst the objects is determined by

\begin{eqnarray}\nonumber Nx&;&Nx*Nx^{-1},Nx^{-1}\\\nonumber Nx&;&N(x*x^{-1}),N\\\nonumber Nx&;&Ne,Nx^{-1}.\end{eqnarray}

We turn to associativity: 

\begin{eqnarray}\nonumber Na&;&Na*N(b*c),Nb*Nc\\\nonumber Na&;&N[a*(b*c)],Nb*Nc\\\nonumber Na&;&N[(a*b)*c],Nb*Nc\\\nonumber Na&;&N(a*b)*Nc,Nb*Nc\\\nonumber Na&;&(Na*Nb)*Nc,Nb*Nc\\\nonumber Na,Nc&;&Na*Nb,Nb*Nc.\end{eqnarray}

\item[2)] Supposing $G$ is abelian:

\begin{eqnarray}\nonumber Na&;&N(a*b),Nb\\\nonumber Na&;&N(b*a),Nb\\\nonumber Na&;&Nb*Na,Nb\\\nonumber Na,Na&;&Nb,Nb\end{eqnarray}

\end{itemize}\end{proof}

We will now construct the two trivial quotient groups of $G$, obtained by considering $N$ to be the subgroup that consists of the unit, and the subgroup $G$. The quotient group $G/1$ is the same group as $G$. We verify this because we only have one solution to $1;x,y$, for $y$. The solution is $x$ and that means all the relations are reflexive:$$x\equiv x~~~~mod ~1.$$Another way of seeing this is by finding the class of $x$ by $xe$. In other words, every $x\in G$ is its congruence class. 

If we turn to find $G/G$, we see its the subgroup $e$. This is true due to the fact that we always have a solution $h\in G$ for $h;x,y$. That is any $x,y\in G$ are related. We also see that this is due to $Gx=G$.

	\section{Commutator}

We go on to study the more general cases of an abelian group, in which not necessarilly all objects of operation commute with all the rest.

We know that the inverse of an object $a*b$ is unique and we find it by \begin{eqnarray}\nonumber a*b&;&(a*b)*(b^{-1}*a^{-1}),(b^{-1}*a^{-1})\\\nonumber a*b&;&a*[b*(b^{-1}*a^{-1})],(b^{-1}*a^{-1})\\\nonumber a*b&;&a*[(b*b^{-1})*a^{-1}],(b^{-1}*a^{-1})\\\nonumber a*b&;&a*a^{-1},(b^{-1}*a^{-1})\\\nonumber a*b&;&e,(b^{-1}*a^{-1}).\end{eqnarray}

For the following definition, note that $a,b$ commute if and only if $(a*b);e,(a^{-1}*b^{-1})$.

\begin{definition}Define the operation $\square:G\rightarrow GfG$ such that $a\square b\mapsto(a*b)*(a^{-1}*b^{-1})$, for $a,b\in G$. The image of $b$, under $a\square$ is $[a,b]$ and we call it the commutator of $a,b$. In other words, the notation for $*$ expresses $a*b;[a,b],a^{-1}*b^{-1}$. The object $a$ is said to be commutable if $a\square:G\rightarrow G$ is the function into $e$. Define $Comm(G)$ as the subcollection of commutable objects and call it the center of $G$.\end{definition}

Notice that the center of $G$ is, by definition, the fiber corresponding to the function into $e$. That is, if we represent the function into $e$ with $\rightarrow e$, then $Comm(G)=\square^{-1}[\rightarrow e]$

\begin{proposition}The center of $G$ verifies the relation $Comm(G)\in\mathcal{G}_{\subseteq}$ and we say it is the central subgroup of $G$.\end{proposition}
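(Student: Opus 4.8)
The claim is that the center $Comm(G)$ of a group $G$ is a subgroup, i.e., $Comm(G)\in\mathcal{G}_{\subseteq}$. The center is defined as the fiber $\square^{-1}[\rightarrow e]$, that is, the collection of commutable objects $a$ such that $a\square:G\rightarrow G$ is the constant function into $e$. Concretely, $a$ is commutable iff $[a,b]=e$ for every $b\in G$, which (by the remark preceding the definition) is the same as saying $a$ commutes with every $b\in G$.

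**Plan of proof.** The natural approach is to invoke the subgroup criterion already proven in the chapter, namely the theorem stating that $H\in\mathcal{G}_{\subseteq}$ if and only if $HH^{-1}\subseteq H$ (equivalently $H^2,H^{-1}\subseteq H$). So it suffices to show that $Comm(G)$ is nonempty and closed under the operation $aa'\mapsto a*(a')^{-1}$. First I would record the reformulation: $a\in Comm(G)$ iff $a,b$ commute for all $b$, i.e. $a*b,b*a$ are the same object for every $b\in G$; this is exactly the content of $a*b;[a,b],a^{-1}*b^{-1}$ collapsing to $a*b;e,a^{-1}*b^{-1}$ together with the remark that $a,b$ commute iff $(a*b);e,(a^{-1}*b^{-1})$. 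Nonemptiness is immediate since the unit $e$ commutes with everything (using $e;x,x$ and $x;x,e$ from the group axioms).

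**Key steps in order.** The plan is: (1) Show $Comm(G)$ is nonempty by verifying $e$ is commutable. (2) Show closure under the operation: given commutable $a,a'$, prove $a*a'$ is commutable, i.e. $a*a'$ commutes with every $b$. This is the computation
\begin{eqnarray}\nonumber b&;&b*(a*a'),a*a'\\\nonumber b&;&(b*a)*a',a*a'\\\nonumber b&;&(a*b)*a',a*a'\\\nonumber b&;&a*(b*a'),a*a'\\\nonumber b&;&a*(a'*b),a*a'\\\nonumber b&;&(a*a')*b,a*a',\end{eqnarray}
using associativity and the commutativity of each of $a,a'$ with $b$. (3) Show closure under inverse: if $a$ is commutable, then $a^{-1}$ is commutable. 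Here I would use the lemma already proven in the integer section (Lemma, ``Let $f,g$ be bijective functions that commute, then $f^{-1},g$ also commute''), applied to the left-operation functions, or argue directly at the object level from $a*b;b*a$ by operating on both sides by $a^{-1}$. Combining (2) and (3) gives $HH^{-1}\subseteq H$, so the subgroup criterion applies.

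**Main obstacle.** The routine algebra in steps (2) and (3) is straightforward given associativity and commutativity of individual elements; the only real care needed is the inverse step, where one must pass from ``$a$ commutes with $b$'' to ``$a^{-1}$ commutes with $b$'' cleanly in the paper's comparability notation $a,c;b,d$. I expect this to be the delicate point, since it requires either invoking the earlier commuting-inverse lemma in the correct form or reproducing its short cancellation argument with the group operation. Once that is handled, the conclusion $Comm(G)\in\mathcal{G}_{\subseteq}$ follows directly from the equivalence $2)\Rightarrow1)$ of the subgroup theorem.
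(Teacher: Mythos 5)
Your proof is correct and takes essentially the same approach as the paper: both arguments reduce the claim to the chapter's subgroup criterion theorem and verify closure through element-level computations that use associativity together with the commutativity of the individual elements. The only immaterial difference is that you verify condition 3) of that theorem ($H^{2},H^{-1}\subseteq H$, via nonemptiness, product-closure, and inverse-closure as separate steps), whereas the paper verifies condition 2) in a single computation, showing $[a*b^{-1},x]$ is $e$ for every $x\in G$, i.e., $Comm(G)[Comm(G)]^{-1}\subseteq Comm(G)$.
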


\begin{proof}Take $a\in Comm(G)$ and $b^{-1}\in[Comm(G)]^{-1}$. We shall verify $[a*b^{-1},x]$ is $e$, for every $x\in G$.

\begin{eqnarray}\nonumber (a*b^{-1})*x&;&[(a*b^{-1})*x]*[(b*a^{-1})*x^{-1}],(a*b^{-1})^{-1}*x^{-1}\\\nonumber (a*b^{-1})*x&;&[(a*b^{-1})*b]*[(x*a^{-1})*x^{-1}],(a*b^{-1})^{-1}*x^{-1}\\\nonumber (a*b^{-1})*x&;&(a*x)*(a^{-1}*x^{-1}),(a*b^{-1})^{-1}*x^{-1}\\\nonumber (a*b^{-1})*x&;&(x*a)*(a^{-1}*x^{-1}),(a*b^{-1})^{-1}*x^{-1}\\\nonumber (a*b^{-1})*x&;&e,(a*b^{-1})^{-1}*x^{-1}.\end{eqnarray}

 We have shown that $a*b^{-1}$ is commutable; this proves $Comm(G)\in\mathcal{G}_{\subseteq}$.\end{proof}

\begin{proposition}If $C\subseteq Comm(G)$ is a subgroup, then $C\in\hat{\mathcal{G}}_{\subseteq}$; we say $C$ is a central invariant subgroup of $G$.\end{proposition}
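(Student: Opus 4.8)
The plan is to verify the third of the three equivalent conditions in the characterization of invariant subgroups (the theorem giving $xNx^{-1}\subseteq N$ as equivalent to $xN=Nx$), namely that $xCx^{-1}\subseteq C$ for every $x\in G$. Since $C$ is given to be a subgroup, once this inclusion is established we immediately obtain $C\in\hat{\mathcal{G}}_{\subseteq}$, which is exactly the claim. So the only content to supply is the normality condition.

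First I would unpack what membership in $Comm(G)$ provides. By the definition of the commutator, an object $c$ is commutable when $c\square:G\rightarrow G$ is the function into $e$, that is $[c,x]=e$ for every $x\in G$. Using the remark preceding that definition, $c,x$ commute if and only if $(c*x);e,(c^{-1}*x^{-1})$, so $[c,x]=e$ is precisely the statement $c*x=x*c$. Hence each $c\in C\subseteq Comm(G)$ commutes with every object $x\in G$; here I would note that the hypothesis $C\subseteq Comm(G)$ is meaningful because the previous proposition already established $Comm(G)\in\mathcal{G}_{\subseteq}$.

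Next I would carry out the short conjugation computation, in the same style as the proof of the invariant subgroup characterization. Fixing $x\in G$ and $c\in C$, I would apply $*x^{-1}$ to $x*c$ and use commutativity of $c$ with $x$ together with associativity; the chain reduces $(x*c)*x^{-1}$ successively to $(c*x)*x^{-1}$, then $c*(x*x^{-1})$, and finally to $c$, so that $x*c;c,x^{-1}$. This exhibits the conjugate of $c$ as $c$ itself, whence $xCx^{-1}\subseteq C$ (in fact $xCx^{-1}=C$), and the characterization yields that $C$ is invariant.

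There is essentially no hard step here; the whole argument is the observation that conjugation by any group element fixes every central object. The only points requiring care are citing the subgroup status of $Comm(G)$ so the hypothesis is well posed, and recalling that $C$ is supplied as a subgroup so that only the normality condition remains to be checked rather than the full subgroup axioms.
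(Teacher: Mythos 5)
Your proof is correct and takes essentially the same approach as the paper: both arguments rest entirely on the observation that every object of $C\subseteq Comm(G)$ commutes with every object of $G$. The paper simply notes $xC=Cx$ directly from the definition of invariant subgroup, while you route through the equivalent conjugation criterion $xCx^{-1}\subseteq C$ and cite the characterization theorem; this is an immaterial difference.
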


\begin{proof}Observe $xC=Cx$, for any $x\in G$.\end{proof}

We now build the collection generated by the commutators of $G$. Take the collection of all commutators $[a,b]$, this is the subcollection $[x,y]_{x,y\in G}$. We recall the notation $\Lambda([x,y]_{x,y\in G})$, where $\Lambda$ is the generailzed operation function of the operation of the group, represents the image of all the operators $\Lambda_{i=1}^n$ in the image of $\Lambda$. We will represent it with $[G,G]$. Of course, if a group contains all the commutators, it also contains $[G,G]$.

\begin{proposition}The subcollection $[G,G]\subseteq G$ is called the commutant of $G$ and $[G,G]\in\hat{\mathcal{G}}_{\subseteq}$.\end{proposition}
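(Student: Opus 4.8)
The plan is to verify the two defining requirements of an invariant subgroup in turn: first that $[G,G]$ is a subgroup of $G$, and then that it is closed under conjugation, invoking in each case the characterizations already established in this chapter.

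First I would show $[G,G]\in\mathcal{G}_{\subseteq}$ by means of the subgroup theorem, which reduces the task to checking $[G,G]^{2}\subseteq[G,G]$ and $[G,G]^{-1}\subseteq[G,G]$. Since $[G,G]$ is by construction the collection $\Lambda([x,y]_{x,y\in G})$ of all finite $*$-products of commutators, closure under the operation is immediate: the concatenation of two finite products of commutators is again such a product. For closure under inverse I would first record the identity $[a,b]^{-1}=[b,a]$, which follows from reversing and inverting the four factors of $[a,b]=(a*b)*(a^{-1}*b^{-1})$; then the inverse of a product $c_{1}*\cdots*c_{n}$ of commutators is $c_{n}^{-1}*\cdots*c_{1}^{-1}$, each factor again a commutator, so the inverse lies in $[G,G]$. (Non-emptiness is clear, since $[e,e]=e\in[G,G]$.)

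Next I would establish invariance through statement 3) of the theorem characterizing $\hat{\mathcal{G}}_{\subseteq}$, so that it suffices to prove $x[G,G]x^{-1}\subseteq[G,G]$ for every $x\in G$. The two ingredients are: (i) conjugation by $x$ respects the operation, since $(x*u*x^{-1})*(x*v*x^{-1})=x*(u*v)*x^{-1}$ by associativity together with $x^{-1}*x=e$; and (ii) the conjugate of a commutator is a commutator, namely $x*[a,b]*x^{-1}=[x*a*x^{-1},\,x*b*x^{-1}]$, obtained by inserting $x^{-1}*x$ between consecutive factors of $[a,b]$ and collecting $x*a^{-1}*x^{-1}=(x*a*x^{-1})^{-1}$. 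Combining (i) and (ii), the conjugate of any finite product of commutators is again a finite product of commutators, hence lies in $[G,G]$; the invariant-subgroup theorem then yields $[G,G]\in\hat{\mathcal{G}}_{\subseteq}$.

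The main obstacle will be bookkeeping rather than conceptual: the element of $[G,G]$ that one conjugates (and the one whose inverse one takes) is an arbitrary finite product produced by the general operator $\Lambda$, so both the inverse computation and the conjugation computation genuinely require the remark that the relevant maps — the order-reversing $u\mapsto u^{-1}$ and the order-preserving $u\mapsto x*u*x^{-1}$ — distribute over $\Lambda$. I expect to phrase this as an induction on the number of commutator factors, the one-factor case being exactly the two identities above, and the inductive step being the closure of $[G,G]$ under $*$ established in the first step.
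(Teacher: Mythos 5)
Your proposal is correct and takes essentially the same route as the paper's own proof: both verify $[G,G]^{2},[G,G]^{-1}\subseteq[G,G]$ via the subgroup criterion (using that the inverse of a product of commutators is the reversed product of inverses and that $[a,b]^{-1}=[b,a]$ is again a commutator), and both establish invariance through the criterion $x[G,G]x^{-1}\subseteq[G,G]$ by distributing conjugation over the product and showing $x*[a,b]*x^{-1}=[x*a*x^{-1},x*b*x^{-1}]$. The only difference is presentational: you make the induction on the number of commutator factors explicit, which the paper leaves implicit in its $\Lambda$-notation.
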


\begin{proof}We first want to prove that $[G,G]$ is a subgroup. We will give a proof that $[G,G]^2$ and $[G,G]^{-1}$ are subcollections of $[G,G]$. If $c,d\in [G,G]$ then they can be represented by $\prod_{i=1}^mC_{i}$ and $\prod_{j=1}^nD_j$, where $C_i,D_j$ are all commutators of $G$. This means $c*d$ is of the same form as each of the factors $c,d$. For the second part, we can easily find the inverse of $\prod_{i=1}^nx_i$, for $x_i\in G$, by the following. Suppose $\prod_{i=1}^nx_i;e,\prod_{i=1}^nx^{-1}_{n+1-i}$. Then, we find\begin{eqnarray}\nonumber\prod_{i=1}^{n+1}x_i&;&\left[\left(\prod_{i=1}^{n}x_i
\right)*x_{n+1}\right]*\left[x^{-1}_{n+1}*\left(\prod_{i=2}^{n+1}x_{n+2-i}^{-1}\right)\right],\prod_{i=1}^{n+1}x_{n+2-i}^{-1}\\\nonumber\prod_{i=1}^{n+1}x_i&;&\left(\prod_{i=1}^{n}x_i
\right)*\left(\prod_{i=1}^{n}x_{n+1-i}^{-1}\right),\prod_{i=1}^{n+1}x_{n+2-i}^{-1}\\\nonumber\prod_{i=1}^{n+1}x_i&;&e,\prod_{i=1}^{n+1}x_{n+2-i}^{-1}.\end{eqnarray}

If $[a,b]\in[x,y]_{x,y\in G}$, then $[a,b]^{-1}\in[x,y]_{x,y\in G}$. In fact,

\begin{eqnarray}\nonumber (a*b)*(a^{-1}*b^{-1})&;&[(a*b)*(a^{-1}*b^{-1})]*[(b*a)*(b^{-1}*a^{-1})],(b*a)*(b^{-1}*a^{-1})\\\nonumber (a*b)*(a^{-1}*b^{-1})&;&[(a*b)*a^{-1}]*[a*(b^{-1}*a^{-1})],(b*a)*(b^{-1}*a^{-1})\\\nonumber (a*b)*(a^{-1}*b^{-1})&;&(a*b)*(b^{-1}*a^{-1}),(b*a)*(b^{-1}*a^{-1})\\\nonumber (a*b)*(a^{-1}*b^{-1})&;&a*a^{-1},(b*a)*(b^{-1}*a^{-1})\\\nonumber (a*b)*(a^{-1}*b^{-1})&;&e,(b*a)*(b^{-1}*a^{-1})\end{eqnarray}

We can now say $c^{-1}\in[G,G]$
 because $c^{-1};\prod_{i=1}^nC^{-1}_{n+1-i},e$.

Next we need to prove $[G,G]$ is invariant. It is easy to see that $x*c*x^{-1};\prod_{i=1}^nx*C_i*x^{-1},e$. Also,

\begin{eqnarray}\nonumber x*[a,b]&;&(x*[a,b])*x^{-1},x^{-1}\\\nonumber x*[a,b]&;&[x*(a*b)*(a^{-1}*b^{-1})]*x^{-1},x^{-1}\\\nonumber x*[a,b]&;&x*(a*x^{-1})*(x*b)*(x^{-1}*x)*(a^{-1}*x^{-1})*(x*b^{-1})*x^{-1},x^{-1}
\\\nonumber x*[a,b]&;&(x*a*x^{-1})*(x*b*x^{-1})*(x*a^{-1}*x^{-1})*(x*b^{-1}*x^{-1}),x^{-1}
\\\nonumber x*[a,b]&;&(x*a*x^{-1})*(x*b*x^{-1})*(x*a*x^{-1})^{-1}*(x*b*x^{-1})^{-1},x^{-1}
\end{eqnarray}

This means that $x*[a,b]*x^{-1}$ is the same as $[xax^{-1},xbx^{-1}]\in[x,y]_{x,y\in G}$. This is, $x*c*x^{-1}\in[G,G]$.

\end{proof}

The commutant of $G$ turns out to be a special subgroup. We cannot say $[G,G]$ is abelian. However, it does generate an abelian quotient group and it is the smallest subgroup of $G$ to do so.

\begin{theorem}Let $N\in\hat{\mathcal{G}}_{\subseteq}$. Then\begin{itemize}\item[1)] $G/[G,G]\in\textbf{Ab}\mathcal{G}_{\subseteq}$\item[2)]$G/N\in\textbf{Ab}
\mathcal{G}_{\subseteq}\Rightarrow[G,G]\subseteq N$\item[3)]$[N,N]\in\hat{\mathcal{G}}_{\subseteq}$.\end{itemize}\end{theorem}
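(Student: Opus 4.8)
The plan is to handle the three assertions one at a time, in every case pulling back to the commutator identity $a*b;[a,b],a^{-1}*b^{-1}$ from the definition of $\square$ and to the description of the quotient group, in which $Nx$ and $Ny$ name the same congruence class precisely when $x$ and $y$ are $\sim$-related, i.e. when some element of $N$ carries one to the other. For 1) I would show that the operation of $G/[G,G]$ commutes, which in the congruence-class notation reads $[G,G]a,[G,G]a;[G,G]b,[G,G]b$; since $Nx;N(x*y),Ny$, this is the claim that $[G,G](a*b)$ and $[G,G](b*a)$ are the same class. The one computation needed is $[a,b];a*b,b*a$: expanding $[a,b]=(a*b)*(a^{-1}*b^{-1})$ and operating on the right by $b*a$, two uses of associativity cancel $b^{-1}*b$ and then $a^{-1}*a$, leaving $a*b$. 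As $[a,b]\in[G,G]$, this places $a*b$ and $b*a$ in one right congruence class, so the classes coincide and the quotient is abelian.

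For 2) I would assume $G/N$ abelian, so $N(a*b)=N(b*a)$ for all $a,b$; hence some $n\in N$ satisfies $n;a*b,b*a$, and solving for $n$ gives $n=(a*b)*(a^{-1}*b^{-1})=[a,b]$. Thus $N$ is a subgroup containing every commutator of $G$, and since any subgroup containing all commutators contains $[G,G]$ (as noted when the commutant was introduced), $[G,G]\subseteq N$ follows at once.

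For 3) I would first apply the commutant proposition to the group $N$ itself to obtain that $[N,N]$ is a subgroup; the verification that $[N,N]^2,[N,N]^{-1}\subseteq[N,N]$ is the generic argument already given for $[G,G]$. The remaining point is invariance in $G$, for which I would check $x[N,N]x^{-1}\subseteq[N,N]$ using the characterization of invariant subgroups. The crucial input, taken from the commutant proof, is the identity presenting $x*[a,b]*x^{-1}$ as $[x*a*x^{-1},x*b*x^{-1}]$, together with the term-by-term conjugation of a product $x*c*x^{-1};\prod x*C_i*x^{-1},e$. For $a,b\in N$, normality of $N$ yields $x*a*x^{-1},x*b*x^{-1}\in N$ via $xNx^{-1}=N$, so each conjugated commutator lands back in $[N,N]$, and conjugating a product of commutators factor by factor keeps the result inside $[N,N]$.

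The main obstacle is the bookkeeping in 3): a general element of $[N,N]$ is a product of commutators rather than a single one, so the conjugation argument must be run on an arbitrary product — which is exactly why I would reuse the product-conjugation identity from the commutant proof instead of re-deriving it, and why I would separate the subgroup claim from the invariance claim. Parts 1) and 2) are then short, both resting on the single identity $[a,b];a*b,b*a$.
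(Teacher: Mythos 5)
Your proposal is correct, and for parts 1) and 2) it is essentially the paper's own argument with minor variations. For 1) the paper computes the commutator of two classes, showing $[G,G]a*[G,G]b;[G,G][a,b],[G,G]a^{-1}*[G,G]b^{-1}$ reduces to the unit class $[G,G]$, and then invokes the criterion that objects commute exactly when their commutator is the unit; you instead use the identity $[a,b];a*b,b*a$ to place $a*b$ and $b*a$ in one congruence class, so that $[G,G](a*b)$ and $[G,G](b*a)$ coincide directly. These are the same computation read in two directions. For 2) the paper argues by contraposition (a commutator $[a,b]\notin N$ yields a class $N[a,b]$ distinct from the unit, so $Na$ and $Nb$ fail to commute), while you argue directly (commutativity of $G/N$ forces some $n\in N$ with $n;a*b,b*a$, and solving gives $n=[a,b]$); again equivalent, and your direction is arguably cleaner since it avoids the slightly garbled class notation in the paper's item 2). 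The genuinely valuable part of your proposal is 3): the paper's printed proof contains no item 3) at all, its itemization stopping after 1) and 2), so your argument fills a real gap. Your route is the natural one: apply the commutant proposition to the group $N$ itself to get that $[N,N]$ is a subgroup, then prove invariance in $G$ via the characterization $x[N,N]x^{-1}\subseteq[N,N]$, using the identity $x*[a,b]*x^{-1}=[x*a*x^{-1},x*b*x^{-1}]$ together with factor-by-factor conjugation of a product of commutators; normality of $N$ in $G$ guarantees $x*a*x^{-1}$ and $x*b*x^{-1}$ lie in $N$, so each conjugated generator lands back in $[N,N]$. Your bookkeeping concern is also the right one to flag: a general element of $[N,N]$ is a product of commutators and their inverses, and since the paper's commutant proof already shows the inverse of a commutator is again a commutator, the factor-by-factor argument does cover the general element.
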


	\begin{proof}\makebox[5pt][]{}\mbox {}
	
		\begin{itemize}

\item[1)] To prove that $G/[G,G]$ is a commutative subgroup of $G$ we take $[G,G]a$ and $[G,G]b$, for some $a,b\in G$. We will show that their commutator is $[G,G]$, the unit in $G/[G,G]$.

\begin{eqnarray}\nonumber[G,G]a*[G,G]b&;&[G,G](a*b)*[G,G](a^{-1}*b^{-1}),[G,G]a^{-1}*[G,G]b^{-1}\\\nonumber[G,G]a*[G,G]b&;&[G,G][a,b],[G,G]a^{-1}*[G,G]b^{-1}\\\nonumber[G,G]a*[G,G]b&;&[G,G],[G,G]a^{-1}*[G,G]b^{-1}.\end{eqnarray}

\item[2)]Suppose $[G,G]$ is not a subset of $N$, then there is $[a,b]\in[x,y]_{x,y\in G}$ such that $[a,b]\notin N$. The class $[G,G][a,b]$ is not $N$ and therefore $[[G,G]a,[G,G]b]$ do not commute.

		\end{itemize}

	\end{proof}

\section{Transformation}

We have proven there is a functor $+:\mathbb{Z}\rightarrow\mathbb{Z}\textgoth{F}\mathbb{Z}$, for groups. The main objective of the present section is to build the tools necessary in order to generalize and clarify that situation.

		\subsection{Homomorphism.}

\begin{definition} We will say that a functor $\textgoth{h}:G_{1}\rightarrow G_{2}$ is a \textit{homomorphism} if $G_{1},G_{2}$ are groups.\end{definition} 

Condition 1) for functors means $1;1,\textgoth{h}$. Also, for the product in $G_{2}$,

\begin{eqnarray}\nonumber \textgoth{h}x^{-1}&;&\textgoth{h}x^{-1}*[\textgoth{h}x*(\textgoth{h}x)^{-1}],1
\\\nonumber \textgoth{h}x^{-1}&;&[\textgoth{h}x^{-1}*\textgoth{h}x]*(\textgoth{h}x)^{-1},1
\\\nonumber\textgoth{h}x^{-1}&;&\textgoth{h}(x*x^{-1})*(\textgoth{h}x)^{-1},1\\\nonumber\textgoth{h}x^{-1}&;&\textgoth{h}
1*(\textgoth{h}x)^{-1},1\\\nonumber\textgoth{h}x^{-1}&;&(\textgoth{h}x)^{-1}
,1.\end{eqnarray}

Here, $1$ represents the unit, for both groups. A homomorphism with monic arrow function is called \textit{monomorphism}. If the arrow function of the homomorphism is onto, we will say it is an \textit{epimorphism}.

\begin{theorem}Let $\textgoth{h}:G_{1}\rightarrow G_{2}$ be a homomorphism and $\textgoth{g}:G_{1}\rightarrow G_{2}$ an epimorphism.\makebox[5pt][]{}\mbox {}\begin{itemize}\item[1)]If $H_{1}\in\mathcal{G}_{1_{\subseteq}}$, then $\textgoth{h}H_{1}\in\mathcal{G}_{2_{\subseteq}}$. And, if $N_{1}\in\hat{\mathcal{G}}_{1_{\subseteq}}$, then $\textgoth{g}N_{1}\in\hat{\mathcal{G}}_{2\subseteq}$. \item[2)]If $H_{2}\in\mathcal{G}_{2_{\subseteq}}$ then $\textgoth{h}^{-1}H_{2}\in\mathcal{G}_{1_{\subseteq}}$. And, if $N_{2}\in\hat{\mathcal{G}}_{2_{\subseteq}}$, then $\textgoth{h}^{-1}N_{2}\in\hat{\mathcal{G}}_{1\subseteq}$.\end{itemize}\end{theorem}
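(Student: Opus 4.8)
The plan is to lean entirely on the two characterizations already proven: a subcollection $H$ is a subgroup exactly when $HH^{-1}\subseteq H$ (the three-way equivalence theorem for $\mathcal{G}_{\subseteq}$), and a subgroup $N$ is invariant exactly when $xNx^{-1}\subseteq N$ for every $x$ (the corresponding equivalence for $\hat{\mathcal{G}}_{\subseteq}$). Combined with the homomorphism identities $\textgoth{h}1=1$, $\textgoth{h}(a*b)=\textgoth{h}a*\textgoth{h}b$, and $\textgoth{h}(a^{-1})=(\textgoth{h}a)^{-1}$ (the last of which was computed just before the statement), each of the four assertions reduces to a single transport of a product across $\textgoth{h}$.

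For the image of a subgroup I would take two objects of $\textgoth{h}H_1$, say $\textgoth{h}a$ and $\textgoth{h}b$ with $a,b\in H_1$, and observe $\textgoth{h}a*(\textgoth{h}b)^{-1}=\textgoth{h}a*\textgoth{h}(b^{-1})=\textgoth{h}(a*b^{-1})$. Since $H_1$ is a subgroup, $a*b^{-1}\in H_1H_1^{-1}\subseteq H_1$, so its image lies in $\textgoth{h}H_1$; this gives $\textgoth{h}H_1(\textgoth{h}H_1)^{-1}\subseteq\textgoth{h}H_1$, hence $\textgoth{h}H_1\in\mathcal{G}_{2_{\subseteq}}$. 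The preimage of a subgroup $H_2$ is symmetric: if $\textgoth{h}a,\textgoth{h}b\in H_2$ then $\textgoth{h}(a*b^{-1})=\textgoth{h}a*(\textgoth{h}b)^{-1}\in H_2H_2^{-1}\subseteq H_2$, so $a*b^{-1}\in\textgoth{h}^{-1}H_2$; non-emptiness comes from $\textgoth{h}1=1\in H_2$, and then the same criterion yields $\textgoth{h}^{-1}H_2\in\mathcal{G}_{1_{\subseteq}}$.

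For invariance I would first note that each relevant collection is already a subgroup by the previous paragraph (an epimorphism being in particular a homomorphism), then verify the conjugation criterion. For the preimage of an invariant $N_2$, take $x\in G_1$ and $n\in\textgoth{h}^{-1}N_2$; then $\textgoth{h}(x*n*x^{-1})=\textgoth{h}x*\textgoth{h}n*(\textgoth{h}x)^{-1}\in(\textgoth{h}x)N_2(\textgoth{h}x)^{-1}\subseteq N_2$, so $x*n*x^{-1}\in\textgoth{h}^{-1}N_2$ — and this uses only that $\textgoth{h}x$ is \emph{some} object of $G_2$, so no surjectivity is required. The image case is exactly where the hypothesis must strengthen to an epimorphism: to show $\textgoth{g}N_1$ is invariant I must conjugate by an \emph{arbitrary} $y\in G_2$, and it is the onto-ness of $\textgoth{g}$ that lets me write $y=\textgoth{g}x$, after which $y*\textgoth{g}n*y^{-1}=\textgoth{g}(x*n*x^{-1})\in\textgoth{g}N_1$ since $x*n*x^{-1}\in N_1$.

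The only genuine subtlety — worth stating explicitly rather than grinding out — is this asymmetry in the hypotheses: invariance transports backward along \emph{any} homomorphism but forward only along a surjective one, precisely because conjugation in the target ranges over all of $G_2$, whereas the image $\textgoth{g}N_1$ can only be tested against conjugators of the form $\textgoth{g}x$. I would make sure the write-up flags clearly why $\textgoth{g}$ must be an epimorphism in the second half of (1) while $\textgoth{h}$ need not be in the second half of (2); there is no computational obstacle beyond the three one-line identities, so the care lies in getting this logical point on record.
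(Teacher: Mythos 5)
Your proof is correct and follows essentially the same route as the paper's: transport products across the homomorphism using $\textgoth{h}(a*b)=\textgoth{h}a*\textgoth{h}b$ and $\textgoth{h}(a^{-1})=(\textgoth{h}a)^{-1}$, verify invariance by the conjugation criterion, and invoke surjectivity of $\textgoth{g}$ exactly (and only) where conjugators in $G_2$ must be lifted to $G_1$. The one point where you improve on the paper: by checking the one-step criterion $HH^{-1}\subseteq H$ you establish closure under inverses as well as products, whereas the paper's own computations for $\textgoth{h}H_1$ and $\textgoth{h}^{-1}H_2$ only verify closure under the product, leaving inverse-closure (and non-emptiness, which you get from $\textgoth{h}1=1$) unaddressed.
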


\begin{proof}\makebox[5pt][]{}\mbox {}

	\begin{itemize}

\item[1)] We take two objects in $\textgoth{h}H_{1}$ and we wish to see if their product is, again in $\textgoth{h}H_{1}$. That is,

\begin{eqnarray}\nonumber\textgoth{h}h_{1}&;&\textgoth{h}h_{1}
*\textgoth{h}h_{2},\textgoth{h}h_{2}\\\nonumber\textgoth{h}h_{1}&;&\textgoth{h}(h_{1}*h_{2}),\textgoth{h}h_{2}\\\nonumber\textgoth{h}h_{1}&;&\textgoth{h}h,\textgoth{h}h_{2}\end{eqnarray}

for some $h\in H_{1}$. Now, we suppose $N_{1}$ is an invariant subspace; that is $xN_{1}x^{-1}\subseteq N_{1}$, for every $x\in G_{1}$. We shall prove $\textgoth{g}N_{1}$ is invariant as well. We wish to verify $\textgoth{g}x(\textgoth{g}N_{1})\textgoth{g}x^{-1}\subseteq \textgoth{g}N_{1}$, for every $x\in G_{2}$. Any object of $G_{2}$ can be represented by $\textgoth{g}x$, for some $x\in G_{1}$, because $\textgoth{g}$ is onto.Take an object in $\textgoth{g}N_{1}$, say $\textgoth{g}n_{1}$ where $n_{1}\in N_{1}$. 

\begin{eqnarray}\nonumber \textgoth{g}x*\textgoth{g}n_{1}&;&(\textgoth{g}x*\textgoth{g}n_{1})
*\textgoth{g}x^{-1},(\textgoth{g}x)^{-1}\\\nonumber\textgoth {g}x*\textgoth{g}n_{1}&;&
\textgoth{g}[(x*n_{1})*x^{-1}],
(\textgoth{g}x)^{-1}\\\nonumber\textgoth{g}x*\textgoth{g}n_{1}&;&
\textgoth{g}n_{2},(\textgoth{g}x)^{-1}\end{eqnarray}

for some $n_{2}\in N_{1}$. Therefore, $(\textgoth{g}x*\textgoth{g}n_{1})*\textgoth{g}x^{-1}\in\textgoth{g}N_{1}$.

\item[2)] We follow the same line of thought as in 1). Thus, we take $h_{1},h_{2}\in\textgoth{h}^{-1}H_{2}$ and we see that

\begin{eqnarray}\nonumber \textgoth{h}h_{1}&;&\textgoth{h}h_{1}
*\textgoth{h}h_{2},\textgoth{h}h_{2}\\\nonumber \textgoth{h}h_{1}&;& \textgoth{h}(h_{1}*h_{2}),\textgoth{h}h_{2}\end{eqnarray}

which means $h_{1}*h_{2}\in \textgoth{h}^{-1}H_{2}$, because $\textgoth{h}h_{1}*\textgoth{h}h_{2}\in H_{2}$.

We now prove $\textgoth{h}^{-1}N_{2}$ is invariant given $N_{2}$ is invariant. Let $n_{1}\in \textgoth{h}^{-1}N_{2}$, then

\begin{eqnarray}\nonumber (x*n_{1})*x^{-1}&;&\textgoth{h}[(x*n_{1})
*x^{-1}],\textgoth{h}\\\nonumber (x*n_{1})*x^{-1}&;&(\textgoth{h}x*\textgoth{h}n_{1})*\textgoth{h}x^{-1},
\textgoth{h}\\\nonumber (x*n_{1})*x^{-1}&;&
n_{2},\textgoth{h}\end{eqnarray}

for some $n_{2}\in N_{2}$. This is the same as $(x*n_{1})*x^{-1}\in\textgoth{h}^{-1}N_{2}$.

	\end{itemize}

\end{proof}

	\subsection{Isomorphism}

\begin{theorem}Let $\textgoth{h}:G_{1}\rightarrow G_{2}$ a homomorphism, and let $Nul~\textgoth{h}=\textgoth{h}^{-1}1$. Then,

\begin{itemize}\item[1)]$Nul~\textgoth{h}\in\hat{\mathcal{G}}_{1_{\subseteq}}$
\item[2)]There exists an isomorphism $\phi:G_{1}/Nul~\textgoth{h}\rightarrow Im~\textgoth{h}$ such that for every congruence, we have $(Nul~\textgoth{h})x\mapsto_{\phi}\textgoth{h}x$. This isomorphism is called the natural isomorphism for $G_{1}/Nul~\textgoth{h}$ and $Im~\textgoth{h}$.\end{itemize}\label{thGRPdecomp}\end{theorem}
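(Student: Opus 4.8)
The plan is to obtain part 1) as an immediate consequence of the preceding theorem on homomorphisms, and to build the isomorphism in part 2) directly from the defining rule $(Nul~\textgoth{h})x\mapsto_\phi\textgoth{h}x$, with the whole argument resting on a single equivalence.

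For part 1), I would observe that the subcollection $\{1\}$, consisting of the unit of $G_{2}$ alone, is an invariant subgroup: $\{1\}\{1\}^{-1}=\{1\}$ gives $\{1\}\in\hat{\mathcal{G}}_{2_{\subseteq}}$ by the subgroup criterion, and $x*1*x^{-1}=1$ gives invariance. Since $Nul~\textgoth{h}=\textgoth{h}^{-1}1=\textgoth{h}^{-1}\{1\}$, part 2) of the previous theorem (the preimage of an invariant subgroup under a homomorphism is invariant) yields $Nul~\textgoth{h}\in\hat{\mathcal{G}}_{1_{\subseteq}}$ at once. Writing $N:=Nul~\textgoth{h}$, this also guarantees that the quotient group $G_{1}/N$ is defined.

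For part 2), I would first note that $Im~\textgoth{h}=\textgoth{h}G_{1}$ is a subgroup of $G_{2}$ (the image of the subgroup $G_{1}$ under a homomorphism), so it is a legitimate target group. The heart of the proof is the equivalence
$$Nx=Ny\Longleftrightarrow\textgoth{h}x=\textgoth{h}y,$$
which I would prove by the chain: $Nx=Ny$ holds if and only if $x*y^{-1}\in N$, if and only if $\textgoth{h}(x*y^{-1})=1$, if and only if $\textgoth{h}x*(\textgoth{h}y)^{-1}=1$, if and only if $\textgoth{h}x=\textgoth{h}y$; here the third step uses that $\textgoth{h}$ preserves products together with the relation $\textgoth{h}x^{-1};(\textgoth{h}x)^{-1},1$, both already established for homomorphisms. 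The forward direction shows $\phi$ is well defined on congruence classes (equal classes have equal images), and the backward direction shows $\phi$ is monic.

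The remaining conditions are routine. The function $\phi$ is onto $Im~\textgoth{h}$ because every object of the image is $\textgoth{h}x$ for some $x$, and $\textgoth{h}x=\phi(Nx)$. It preserves the operation since the quotient operation is $Nx*Ny;N(x*y),Ny$, whence $\phi(N(x*y))=\textgoth{h}(x*y)=\textgoth{h}x*\textgoth{h}y=\phi(Nx)*\phi(Ny)$, while the unit condition follows from $\phi(N)=\textgoth{h}e=1$. Thus $\phi$ is a functor between groups whose object and arrow functions are bijective (the object function trivially, since each group has one c-object), and by the isomorphism lemma such a functor is an invertible arrow, i.e.\ an isomorphism. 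I expect the main obstacle to be bookkeeping with the $;$ notation rather than any genuine difficulty: one must check that "equal congruence classes" is correctly rendered as $x*y^{-1}\in N$, and that the quotient operation used is exactly the one defined earlier, so that the product-preservation computation is valid; everything else reduces to the single equivalence displayed above.
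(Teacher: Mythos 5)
Your proof is correct, and part 2) is essentially the paper's own argument: the same map $\phi$, the same three checks (preservation of the operation, onto, monic), with monic resting on the equivalence between $x*y^{-1}\in Nul~\textgoth{h}$ and $\textgoth{h}x=\textgoth{h}y$. Where you genuinely diverge is part 1). The paper verifies invariance of $Nul~\textgoth{h}$ from scratch, computing $\textgoth{h}(e_{1}*e_{2}^{-1});1,\textgoth h$ for the subgroup property and $\textgoth{h}[(x*e)*x^{-1}];1,\textgoth h$ for invariance. You instead observe that $\{1\}$ is an invariant subgroup of $G_{2}$ and invoke part 2) of the preceding theorem (the preimage of an invariant subgroup under a homomorphism is invariant, which indeed requires only a homomorphism, not an epimorphism) with $N_{2}=\{1\}$. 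Your route is shorter and exhibits the null space as a special case of a result already proven, at the cost only of the immediate check that $\{1\}$ is invariant; the paper's direct computation is self-contained but repeats, in this special case, exactly the argument of the general theorem. Two smaller merits of your write-up over the paper's: you make well-definedness of $\phi$ explicit (the forward direction of your equivalence), where the paper leaves it implicit in its two-sided ``related if and only if same image'' computation, and you record that $Im~\textgoth{h}$ is a subgroup of $G_{2}$ (image of $G_{1}$ under part 1) of the preceding theorem), which the statement tacitly needs for $G_{1}/Nul~\textgoth{h}\rightarrow Im~\textgoth{h}$ to be an arrow between groups.
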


\begin{proof}\makebox[5pt][]{}\mbox {}

\begin{itemize}

\item[1)]We first prove $Nul~\textgoth{h}$ is a subgroup of $G_{1}$. For $e_1,e_2\in Nul~\textgoth{h}$, we have

\begin{eqnarray}\nonumber e_{1}*e_{2}^{-1}&;&\textgoth{h}(e_{1}*e_{2}^{-1}),\textgoth{h}\\\nonumber e_{1}*e_{2}^{-1}&;&\textgoth{h}e_{1}*\textgoth{h}
e_{2}^{-1},\textgoth{h}\\\nonumber e_{1}*e_{2}^{-1}&;&1*(\textgoth{h}
e_{2})^{-1},\textgoth{h}\\\nonumber e_ {1}*e_{2}^{-1}&;&1*1,\textgoth{h}\end{eqnarray}

which means $e_{1}*e^{-1}_{2}\in Nul~\textgoth{h}$. To verify that $Nul~\textgoth{h}$ is invariant we must prove $(x*e)*x^{-1}\in Nul~\textgoth{h}$, for any $e\in Nul~\textgoth{h}$.

\begin{eqnarray}\nonumber (x*e)*x^{-1}&;&\textgoth{h}[(x*e)*x^{-1}],\textgoth{h}\\\nonumber(x*e)*x^{-1}&;&(\textgoth{h}x*\textgoth{h}e)*
\textgoth{h}x^{-1},\textgoth{h}\\\nonumber(x*e)*x^{-1}&;&(\textgoth{h}x*1)*(
\textgoth{h}x)^{-1},\textgoth{h}\\\nonumber(x*e)*x^{-1}&;&\textgoth{h}x*(
\textgoth{h}x)^{-1},\textgoth{h}\\\nonumber(x*e)*x^{-1}&;&1,\textgoth{h}\end{eqnarray}

\item[2)] To see that $\phi$ is a functor:

\begin{eqnarray}\nonumber(Nul~\textgoth{h})a*(Nul~\textgoth{h})b&;&
\phi[Nul~\textgoth{h}(a)*Nul~\textgoth{h}(b)],\phi\\\nonumber(Nul~\textgoth{h})a*(Nul~\textgoth{h})b&;&
\phi[Nul~\textgoth{h}(a*b)],\phi\\\nonumber(Nul~\textgoth{h})a*(Nul~\textgoth{h})b&;&
\textgoth{h}(a*b),\phi\\\nonumber(Nul~\textgoth{h})a*(Nul~\textgoth{h})b&;&\textgoth{h}
a*\textgoth{h}b,\phi
\\\nonumber(Nul~\textgoth{h})a*(Nul~\textgoth{h})b&;&
\phi[(Nul~\textgoth{h})a]*\phi[(Nul~\textgoth{h})b],\phi\end{eqnarray}

We now prove that $\phi$ is onto. Every object $Im~\textgoth{h}$ is of the form $\textgoth{h}x$ such that $x\in G_{1}$. So we have, for every $\textgoth{h}x\in Im~\textgoth{h}$ an object $(Nul~\textgoth{h})x\in G_{1}/Nul~\textgoth{h}$ such that $(Nul~\textgoth{h})x\mapsto_{\phi}\textgoth{h}x$.

To verify we have an isomorphism, we look at the explicit relation between two objects in the same congruence class. We have 

$$a\equiv b~~~~mod~Nul~\textgoth{h}$$

if and only if $e;a,b$ where $e;1,\textgoth{h}$. So, supposing $a,b\in G_1$ are related, we get

\begin{eqnarray}\nonumber \textgoth{h}a&;&\textgoth{h}a*\textgoth{h}b^{-1},(\textgoth{h}b)^{-1}\\\nonumber \textgoth{h}a&;&\phi[(Nul~\textgoth{h})a]*\phi[(Nul~\textgoth{h})b^{-1}],(\textgoth{h}b)^{-1}\\\nonumber \textgoth{h}a&;&\phi
[(Nul~\textgoth{h})a*(Nul~\textgoth{h})b^{-1}],(\textgoth{h}b)^{-1}\\\nonumber\textgoth{h}a&;&\phi[(Nul~\textgoth{h})(a*b^{-1})],(\textgoth{h}b)^{-1}\\\nonumber\textgoth{h}a&;&\phi[(Nul~\textgoth{h})((e*b)*b^{-1})],(\textgoth{h}b)^{-1}\\\nonumber\textgoth{h}a&;&\phi[(Nul~\textgoth{h})e],(\textgoth{h}b)^{-1}\\\nonumber\textgoth{h}a&;&\textgoth{h}e,(\textgoth{h}b)^{-1}\\\nonumber\textgoth{h}a&;&1,(\textgoth{h}b)^{-1}\end{eqnarray}

That is, $a,b$ are related if and only if $\textgoth{h}a,\textgoth{h}b$ are the same object. Therefore, if $(Nul~\textgoth{h})a$ and $(Nul~\textgoth{h})b$ are not the same class, then the image under $\phi$ are not the same object. We have thus proven the functor is monic.

\end{itemize}\end{proof}

\begin{theorem}An epimorphism $\textgoth{g}:G_{1}\rightarrow G_{2}$ such that $Nul~\textgoth{g}=\{1\}\subseteq G_{1}$, is an isomorphism.\end{theorem}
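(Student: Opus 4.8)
The plan is to reduce the statement to the first isomorphism theorem, Theorem \ref{thGRPdecomp}, which has already been established. Since $\textgoth{g}:G_1\rightarrow G_2$ is in particular a homomorphism, that theorem supplies a natural isomorphism $\phi:G_1/Nul~\textgoth{g}\rightarrow Im~\textgoth{g}$ acting by $(Nul~\textgoth{g})x\mapsto_\phi\textgoth{g}x$. Under the hypothesis $Nul~\textgoth{g}=\{1\}$, the quotient $G_1/Nul~\textgoth{g}$ is exactly the trivial quotient $G_1/1$, which we already identified with $G_1$ itself: every object $x\in G_1$ is its own congruence class mod $1$. Thus $\phi$ becomes an isomorphism $G_1\rightarrow Im~\textgoth{g}$ with the rule $x\mapsto\textgoth{g}x$, i.e.\ $\phi$ and $\textgoth{g}$ share the same arrow function.

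First I would make the identification $G_1/1=G_1$ precise, and note that $\textgoth{g}$ being an epimorphism forces $Im~\textgoth{g}=G_2$, since an onto arrow function has image all of $\mathcal A|G_2$. Combining these, the arrow function of $\textgoth{g}$ coincides with that of the isomorphism $\phi:G_1\rightarrow G_2$, and is therefore bijective. Because $G_1$ and $G_2$ are algebraic categories with a single c-object, the object function is automatically a bijection, so $\textgoth{g}$ meets the definition of isomorphism.

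Alternatively, and this is really the content hidden inside Theorem \ref{thGRPdecomp}, one can argue directly that the arrow function is monic. Suppose $\textgoth{g}x$ and $\textgoth{g}y$ are the same object of operation. Using the homomorphism identity $\textgoth{g}x^{-1}=(\textgoth{g}x)^{-1}$ already verified for functors into groups, one computes $\textgoth{g}(x*y^{-1})=\textgoth{g}x*(\textgoth{g}y)^{-1}=1$, so that $x*y^{-1}\in\textgoth{g}^{-1}1=Nul~\textgoth{g}=\{1\}$. Hence $x*y^{-1}=1$, and by uniqueness of the inverse this yields $x=y$. Together with ontoness, this shows the arrow function is bijective, and hence $\textgoth{g}$ is an isomorphism.

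The only genuine obstacle here is bookkeeping rather than mathematics: one must keep in mind that "isomorphism" for these objects means a functor whose object and arrow functions are both bijections, recall that the single-c-object structure of an algebraic category trivializes the object-function requirement, and correctly invoke the earlier facts that $G/1=G$ and that an epimorphism is precisely a homomorphism with onto arrow function. Once these identifications are in place, the conclusion is immediate from Theorem \ref{thGRPdecomp}.
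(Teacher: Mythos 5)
Your proposal is correct and takes essentially the same route as the paper's own proof: invoke Theorem \ref{thGRPdecomp}, identify $G_{1}/Nul~\textgoth{g}=G_{1}/1=G_{1}$, and use the epimorphism hypothesis to get $Im~\textgoth{g}=G_{2}$. If anything, you are slightly more careful than the paper, which only exhibits \emph{an} isomorphism $G_{1}\rightarrow G_{2}$ by composing identifications, whereas you note that this isomorphism has the same arrow function as $\textgoth{g}$ itself, and your supplementary direct monicity argument ($\textgoth{g}(x*y^{-1})=1$ forces $x*y^{-1}\in Nul~\textgoth{g}=\{1\}$) closes that small gap explicitly.
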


\begin{proof}We have an isomorphism $G_{1}/Nul~\textgoth{g}\rightarrow G_{2}$. All we need is an isomorphism $G_{1}\rightarrow G_{1}/Nul~\textgoth{g}$. As we have already seen, $G_{1}/Nul~\textgoth{g}=G_{1}/1=G_{1}$.\end{proof}

	\subsection{Automorphism}

Given a group $G$, and an object $x\in G$, we will define an \textit{internal automorphism} for $G$. It is defined by $a\mapsto (x*a)*x^{-1}$, and we will express it with $\textgoth{a}_{x}$.

\begin{theorem}Let $G$ be any group. Then,

\begin{itemize}\item[1)]The collection of internal automorphisms is a subcollection of $\{G\textgoth F_{iso}G\}$, and we denote it by $\textgoth{a}_{G}$. \item[2)] The functor $G\rightarrow\textgoth{a}_{G}$ that makes $x\mapsto\textgoth{a}_{x}$, is an epimorphism.\item[3)]$\textgoth{a}_{G}$ is an invariant subgroup of $\{G\textgoth F_{iso}G\}$.\end{itemize}
\end{theorem}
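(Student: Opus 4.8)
The plan is to reduce all three parts to two computational identities and then lean on the homomorphism theorems already proved. The first identity is that each $\textgoth{a}_x:a\mapsto(x*a)*x^{-1}$ is itself a functor $G\to G$: inserting $x^{-1}*x=1$ between factors gives $(x*(a*b))*x^{-1}=[(x*a)*x^{-1}]*[(x*b)*x^{-1}]$, which is exactly condition 3) of a functor with the single c-object fixed and $\textgoth{a}_x e=e$. The second identity is the composition law $\textgoth{a}_x\circ\textgoth{a}_y=\textgoth{a}_{x*y}$, obtained the same way from $x*((y*a)*y^{-1})*x^{-1}=((x*y)*a)*(x*y)^{-1}$, together with $\textgoth{a}_e=\textgoth{I}_G$ (since $e$ is a primitive unit). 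I would record both of these up front, as they drive everything.

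For part 1) I would use the composition law to exhibit $\textgoth{a}_{x^{-1}}$ as a two-sided inverse of $\textgoth{a}_x$: indeed $\textgoth{a}_x\circ\textgoth{a}_{x^{-1}}=\textgoth{a}_{x*x^{-1}}=\textgoth{a}_e=\textgoth{I}_G$ and symmetrically $\textgoth{a}_{x^{-1}}\circ\textgoth{a}_x=\textgoth{I}_G$. Hence each $\textgoth{a}_x$ is an invertible functor, i.e. an object of $\{G\textgoth{F}_{iso}G\}$, so $\textgoth{a}_G$ is a subcollection of $\{G\textgoth{F}_{iso}G\}$. For part 2), the composition law $\textgoth{a}_{x*y}=\textgoth{a}_x\circ\textgoth{a}_y$ says precisely that $x\mapsto\textgoth{a}_x$ preserves the operation, composition being the operation of $\{G\textgoth{F}_{iso}G\}$; conditions 1) and 2) for a functor hold trivially since the lone c-object maps to the lone c-object and $\textgoth{a}_e=\textgoth{I}_G$. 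The arrow function is onto $\textgoth{a}_G$ by the very definition of $\textgoth{a}_G$ as the collection of all $\textgoth{a}_x$, so $G\to\textgoth{a}_G$ is an epimorphism.

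For part 3), the subgroup property comes for free: viewing the same assignment as a homomorphism $\textgoth{h}:G\to\{G\textgoth{F}_{iso}G\}$ and applying the theorem that a homomorphism carries subgroups to subgroups with $H_1:=G$, the image $\textgoth{h}G=\textgoth{a}_G$ is a subgroup of $\{G\textgoth{F}_{iso}G\}$. Invariance is the one genuinely new point, and I would settle it through the single conjugation identity $\psi\circ\textgoth{a}_x\circ\psi^{-1}=\textgoth{a}_{\psi x}$ valid for every $\psi\in\{G\textgoth{F}_{iso}G\}$. Evaluating at $a$, $\psi(\textgoth{a}_x(\psi^{-1}(a)))=\psi((x*\psi^{-1}(a))*x^{-1})=(\psi x*a)*(\psi x)^{-1}=\textgoth{a}_{\psi x}(a)$, where I use that $\psi$ preserves the operation and that $\psi(x^{-1})=(\psi x)^{-1}$, together with $\psi(\psi^{-1}(a))=a$. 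Since $\textgoth{a}_{\psi x}\in\textgoth{a}_G$, this gives $\psi\,\textgoth{a}_G\,\psi^{-1}\subseteq\textgoth{a}_G$, which is condition 3) of the invariant-subgroup theorem, whence $\textgoth{a}_G\in\hat{\mathcal{G}}_{\subseteq}$ inside $\{G\textgoth{F}_{iso}G\}$.

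The main obstacle will be part 3): I must use that an element $\psi$ of $\{G\textgoth{F}_{iso}G\}$ is not merely a bijection but a genuine group automorphism, so that $\psi(u*v)=\psi u*\psi v$ (condition 3) for the functor $\psi$) and $\psi(x^{-1})=(\psi x)^{-1}$ (the inverse-preservation established earlier for homomorphisms, coming from $\psi e=e$). Once those facts about $\psi$ are in hand, the conjugation identity is a three-line computation and the rest is routine bookkeeping with the composition law; the only care needed elsewhere is keeping the direction of $\circ$ consistent so that $\textgoth{a}_x\circ\textgoth{a}_y=\textgoth{a}_{x*y}$ rather than its reverse.
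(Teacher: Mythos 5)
Your proposal is correct, and in one important respect it goes beyond what the paper itself records. For part 1) the paper verifies bijectivity of each $\textgoth{a}_x$ by hand: monicity via a cancellation computation and surjectivity by exhibiting the explicit preimage $x^{-1}*(a*x)$, followed by a separate check that $\textgoth{a}_x$ preserves the operation. You instead prove the composition law $\textgoth{a}_x\circ\textgoth{a}_y=\textgoth{a}_{x*y}$ and $\textgoth{a}_e=\textgoth{I}_G$ first, and read off invertibility from $\textgoth{a}_x\circ\textgoth{a}_{x^{-1}}=\textgoth{a}_{x^{-1}}\circ\textgoth{a}_x=\textgoth{I}_G$; this is more economical, since the composition law is exactly what part 2) needs anyway, whereas the paper's route duplicates work (its part 2) re-proves the same law). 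For part 2) the two arguments then coincide: operation preservation is the composition law, and surjectivity onto $\textgoth{a}_G$ holds by definition. The genuine difference is part 3): the paper's proof simply stops after the subgroup assertion at the end of its item 2) --- its itemized proof has no item for invariance, so normality of $\textgoth{a}_G$ inside $\{G\textgoth F_{iso}G\}$ is never established there. Your conjugation identity $\psi\circ\textgoth{a}_x\circ\psi^{-1}=\textgoth{a}_{\psi x}$, justified by the fact that any $\psi\in\{G\textgoth F_{iso}G\}$ preserves the operation (condition 3) of functors) and hence preserves inverses, combined with the paper's criterion that $xNx^{-1}\subseteq N$ for all $x$ suffices for invariance, is precisely the missing argument; your write-up therefore fills a real gap in the paper's own proof rather than merely rephrasing it.
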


\begin{proof}\makebox[5pt][]{}\mbox {}

\begin{itemize}\item[1)]We see that, for a given $x\in G$, the internal automorphism $\textgoth{a}_{x}$ is bijective. Let $\textgoth{a}_{x}a$ and $\textgoth{a}_{x}b$ be the same object in the image of $\textgoth{a}_{x}$, then:

\begin{eqnarray}\nonumber1,\textgoth{a}_{x}a&;&1,\textgoth{a}_{x}b\\\nonumber1,(x*a)*x^{-1}&;&1,(x*b)*x^{-1}\\\nonumber 1,[(x*a)*x^{-1}]*x&;&1,[(x*b)*x^{-1}]*x\\\nonumber 1,x*a&;&1
,x*b\\\nonumber 1,1&;&x*a
,x*b\\\nonumber x*b,1&;&x*a,1\\\nonumber x^{-1}*(x*b),1&;&x^{-1}*(x*a),1\\\nonumber b,1&;&a,1\\\nonumber 1,1&;&a,b\\\nonumber 1,a&;&1,b\end{eqnarray}

Now, to prove $\textgoth{a}_{x}$ is onto, we see that for any $a\in G$, there exists $x^{-1}*(a*x)\in G$ such that 

\begin{eqnarray}\nonumber (x^{-1}*a)*x&;&(x*[x^{-1}*(a*x)])*x^{-1},\textgoth{a}_{x}\\\nonumber (x^{-1}*a)*x&;&(a*x)*x^{-1},\textgoth{a}_{x}\\\nonumber (x^{-1}*a)*x&;&a,\textgoth{a}_{x}.\end{eqnarray}

We see that we have a functor because\begin{eqnarray}\nonumber \textgoth{a}_{x}a&;&[(x*a)*x^{-1}]*[(x*b)*x^{-1}],\textgoth{a}_{x}b\\\nonumber \textgoth{a}_{x}a&;&[(x*a)*x^{-1}]*[x*(b*x^{-1})],\textgoth{a}_{x}b\\\nonumber \textgoth{a}_{x}a&;&(x*a)*(b*x^{-1}),\textgoth{a}_{x}b\\\nonumber \textgoth{a}_{x}a&;&[(x*a)*b]*x^{-1},\textgoth{a}_{x}b\\\nonumber \textgoth{a}_{x}a&;&[x*(a*b)]*x^{-1},\textgoth{a}_{x}b\\\nonumber \textgoth{a}_{x}a&;&\textgoth{a}_{x}(a*b),\textgoth{a}_{x}b.\end{eqnarray}

\item[2)] We note that $\textgoth{a}_{G}$ is an algebraic category. As a result, we can define a functor $G\rightarrow\textgoth{a}_{G}$, such that the function for objects of operation is defined by $x\mapsto\textgoth{a}_{x}$. We assert that this is a functor because, for every $a\in G$:

\begin{eqnarray}\nonumber a&;&[(x*y)*a]*(x*y)^{-1},\textgoth{a}_{x*y}\\\nonumber a&;&[x*(y*a)]*(y^{-1}*x^{-1}),\textgoth{a}_{x*y}\\\nonumber a&;&([x*(y*a)]*y^{-1})*x^{-1},\textgoth{a}_{x*y}\\\nonumber a&;&(x*[(y*a)*y^{-1}])*x^{-1},\textgoth{a}_{x*y}\\\nonumber a&;&(\textgoth{a}_{x}\circ\textgoth{a}_{y})a,\textgoth{a}_{x*y}\end{eqnarray}

 The facts proven thus far, and theorem 7, imply that $\textgoth{a}_{G}$ is a subgroup of $\{G\textgoth{F}_{iso}G\}$.
\end{itemize}\end{proof}

	\subsection{Action Group}

\begin{definition}A group $G$, with operation $*$, is an \textit{action group} of $X$ if there exists a homomorphism $\bar{*}$ from G into the group of transformations of X. We will write $a\mapsto a\bar{*}$ to represent the images. The null space of $\bar*$ is called the nucleus of non-effectivity for $G$. If $\bar{*}$ is an isomorphism, then we will say $G$ is an effective action group of $X$.

An action group is called transitive if for every $x,y\in X$ there exists $a\in G$ such that $x;y,a\bar*$.

Let $f:X\rightarrow Y$ be a bijective set function and $\textgoth{f}:G\rightarrow H$ an isomorphism such that $G,H$ is an action group of $X,Y$, and the operations are $*_1,*_2$. If $f,f;\textgoth{f}a\bar{*}_2,a\bar*_1$ we say $(X,G)$ and $(Y,H)$ are similar pairs. All that is being said is $f(a\bar*_1x)$ and $(\textgoth{f}a)\bar*_2(fx)$ are the same object, for every $x\in X$.

\end{definition}

The definition of action group is one of the main reasons why defining the operation as a function $\mathcal{O}_1\rightarrow\mathcal{O}_2f\mathcal{O}_3$ is convenient; to some extent it is easier to see the deep relation between operation and group. Recall that the objects of $\mathcal{O}_1$ were called the actions of the operation; they are the objects that act on the objects of $\mathcal{O}_2$. We conclude with the observation that providing $\bar*$ is, in a way, extending the operation of the group, so as to let objects of the collection be acted upon by objects of $G$.

\subsubsection{Group, Action Group and Group of Transformations} First of all, any action group of $X$ is homomorphic to the group of transformations of $X$. Also notice that any group of transformations of $X$ is an action group of $X$. 

Any group $G$ may be seen as the action group of any one object set $\{1\}$; we define the homomorphism $\bar*$ as the trivial $x\mapsto I$, where $I:\{1\}\rightarrow \{1\}$, for all $x\in G$. We see that the images allowed for a homomorphism $\bar*$, depends on the objects of the collection $X$ because this is what defines the diversity of functions $X\rightarrow X$. For example, we have the extreme case in which a group $G$ is an action group of a set with one object, \{x\}. In this case $Nul~\bar*=G$ because all functions are the same, by definition.

\subsubsection{Similarity} We see that a similar pair is the best thing that can happen, when considering two action groups. One is entirely justified in considering these two as the same group, to far extent. The situation is as follows, in considering a similar pair. Not only are the groups isomorphic, the collections for which they are defined are also bijective. And on top of this, the function and isomorphism commute well, in the following sense. We can first apply $a\bar*_1$ and then $f$, or we can first apply $f$ and then $(\textgoth{f}a)\bar*_2$. 

\subsubsection{Transitive Action Groups} In the following we will consider the collection of left congruence classes. Such a collection of classes, with respect to $H\subseteq G$, is represented by $G/xH$.

\begin{theorem}Let $H\in\mathcal{G}_{\subseteq}$, with operation $*$. Let $\bar*:G\rightarrow(G/xH)f_{iso}(G/xH)$ be such that $a\mapsto_{\bar*}a\bar*$ and $xH\mapsto_{a\bar*}(a*x)H$, for every $a\in G$. Then,\begin{itemize}\item[1)]$G$ is a transitive action group of $G/xH$, by the homomorphism $\bar*$\item[2)]$H;H,x\bar*\Leftrightarrow x\in H$\item[3)]$N\in\hat{\mathcal{H}}_\subseteq\Rightarrow N\subseteq Nul~\bar*\subseteq H$.\end{itemize}\end{theorem}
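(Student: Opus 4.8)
The plan is to treat $\bar*$ as left multiplication of $G$ on the collection $G/xH$ of left congruence classes, and to verify in turn the four ingredients: that each $a\bar*$ is a well-defined transformation, that $\bar*$ is a homomorphism into the group of transformations of $G/xH$, that the resulting action is transitive, and then to read off statements 2) and 3) as the stabilizer and the null space of this action. The only genuinely delicate point is well-definedness on classes, so I would dispatch that first. If $xH$ and $x'H$ are the same class, then $x'^{-1}*x\in H$ by the (left) congruence-class criterion of the left-class section, and since $(a*x')^{-1}*(a*x)=x'^{-1}*a^{-1}*a*x=x'^{-1}*x\in H$, the classes $(a*x)H$ and $(a*x')H$ coincide; hence $a\bar*$ does not depend on the chosen representative.

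For statement 1) I would next establish the homomorphism property. Using associativity, $(a\bar*\circ b\bar*)(xH)=a\bar*\big((b*x)H\big)=\big(a*(b*x)\big)H=\big((a*b)*x\big)H=(a*b)\bar*(xH)$, so $(a*b)\bar*$ and $a\bar*\circ b\bar*$ are the same function; and $e\bar*(xH)=(e*x)H=xH$ shows $e\bar*=I$, the unit condition. These two facts give at once that each $a\bar*$ is invertible, with inverse $a^{-1}\bar*$ (since $a\bar*\circ a^{-1}\bar*=(a*a^{-1})\bar*=e\bar*=I$), so $\bar*$ indeed lands in the group of transformations of $G/xH$ and is a homomorphism; thus $G$ is an action group of $G/xH$. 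Transitivity is then immediate: given classes $pH$ and $qH$, the object $a:=q*p^{-1}$ satisfies $(a*p)H=(q*p^{-1}*p)H=qH$, that is $pH;qH,a\bar*$, so every class can be sent to every other.

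Statement 2) is the computation of the stabilizer of the base class $H=eH$. Since $x\bar*(H)=x\bar*(eH)=(x*e)H=xH$, the relation $H;H,x\bar*$ holds if and only if $xH=eH$, and by the class criterion this is equivalent to $e^{-1}*x=x\in H$; this yields the asserted equivalence $H;H,x\bar*\Leftrightarrow x\in H$.

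For statement 3) I would first identify $Nul~\bar*$ explicitly. Since $a\bar*=I$ means $(a*x)H=xH$ for every $x\in G$, the class criterion turns this into the condition $x^{-1}*a*x\in H$ for all $x\in G$; that is, $Nul~\bar*=\{a\in G: x^{-1}*a*x\in H \text{ for all } x\}$. Taking $x=e$ forces $a\in H$, so $Nul~\bar*\subseteq H$. For the reverse inclusion, let $N$ be an invariant subgroup of $G$ contained in $H$ (the reading of $N\in\hat{\mathcal{H}}_\subseteq$ that makes the claim correct, since mere invariance inside $H$ would not suffice); for $n\in N$ and any $x\in G$, invariance in $G$ gives $x^{-1}*n*x\in N\subseteq H$, which is exactly the membership condition for $Nul~\bar*$, so $N\subseteq Nul~\bar*$. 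By theorem \ref{thGRPdecomp}, $Nul~\bar*$ is itself an invariant subgroup of $G$, so this exhibits it as the largest invariant subgroup of $G$ lying inside $H$. The main obstacle throughout is bookkeeping rather than depth: one must translate consistently between the coset equalities and the paper's $;$-notation, and in 3) correctly locate the single use of normality of $N$ in $G$ (not merely in $H$), which is precisely what makes the inclusion $N\subseteq Nul~\bar*$ go through.
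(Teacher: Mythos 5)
Your proof is correct and follows essentially the same route as the paper's: the same verification that $(a*b)\bar*$ agrees with $a\bar*\circ b\bar*$, the same transitivity witness $b*a^{-1}$, the same stabilizer computation for 2), and the same identification $Nul~\bar*=\bigcap_{x\in G}xHx^{-1}$ driving 3). The two points where you go beyond the paper are worth keeping: the explicit well-definedness check for $a\bar*$ on classes (which the paper omits), and your reading of $N\in\hat{\mathcal{H}}_\subseteq$ as ``invariant in $G$ and contained in $H$'' --- the paper's own argument conjugates elements of $N$ by arbitrary $x\in G$, so it silently assumes exactly that hypothesis, which is the one under which the claim is true.
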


	\begin{proof}\makebox[5pt][]{}\mbox {}

		\begin{itemize}

\item[1)]We will see that $\bar*$ is a homomorphism. We are left to prove $a*b;(a\bar*)\circ(b\bar*),\bar*$. The domain and image of the functions $(a*b)\bar*$ and $(a\bar*)\circ(b\bar*)$ are the same, respectively. Let $xH\in G/xH$, for some $x\in G$,

\begin{eqnarray}\nonumber xH&;&(a*b)\bar*xH,(a*b)\bar*\\\nonumber xH&;&[(a*b)*x]H,(a*b)\bar*\\\nonumber xH&;&[a*(b*x)]H,(a*b)\bar*\\\nonumber xH&;&a\bar*(b*x)H,(a*b)\bar*\\\nonumber xH&;&a\bar*(b\bar*xH),(a*b)\bar*\end{eqnarray}

Now our aim is to give a proof that $G$ is transitive. Let $aH,bH\in G/xH$, since $G$ is a group, $b*a^{-1}\in G$ so that 

\begin{eqnarray}\nonumber aH&;&(b*a^{-1})\bar*aH,(b*a^{-1})\bar*\\\nonumber aH&;&[(b*a^{-1})*a]H,(b*a^{-1})\bar*\\\nonumber aH&;&bH,(b*a^{-1})\bar*.\end{eqnarray}

\item[2)] $x\in H$ if and only if $x*h\in H$, for any $h\in H$. This last condition is true if and only if: a) $x\bar*H=xH\subseteq H$, and b) $H\subseteq xH=x\bar*H$.

Suppose $x*h\in H$, for every $h\in H$, then $xH\subseteq H$. We can equivalently say $x\in H$, and this means that $h\in H$ is the same as $x*( x^{-1}*h)\in xH$ because $x*(x^{-1}*h)$ is the same as $(x*x^{-1})*h\in H$.

Let us now suppose that conditions a) and b) are true, then $x*h\in xH=H$, for any $h\in H$. We may now conclude $x\in H$ if and only if $x\bar*H=H$.

\item[3)] If we prove $Nul~\bar*=\bigcap_{x\in G}xHx^{-1}$, we are done proving 3), for the following reasons. From this we get $Nul~\bar*\subseteq H$, for any $x\in G$, because $H=eHe^{-1}$. Also, consider any invariant subgroup $N$, of $H$, and let $n\in N\subseteq H$. Let $x\in G$, then there exists $m\in N\subseteq H$ such that $n$ is $(x*m)*x^{-1}$. Therefore, $n\in xHx^{-1}$ and we can say $N\subseteq\bigcap_{x\in G}xHx^{-1}$.

We know $a\in Nul~\bar*$ if and only if $(a*x)H=a\bar*xH=xH$, for every $x\in G$. We have\begin{eqnarray}\nonumber H&=&eH\\\nonumber&=&(x^{-1}*x)H\\\nonumber&=&(x^{-1}*x)\bar*H
\\\nonumber&=&x^{-1}\bar*(x\bar*H)\\\nonumber&=&x^{-1}\bar*xH\\\nonumber &=&x^{-1}\bar*(a*x)H
\\\nonumber&=&[x^{-1}*(a*x)]H,\end{eqnarray} and because of the first theorem in the section for congruence classes, $x^{-1}*(a*x)\in H$. From this we have $a*x\in xH$ and after that, $a\in xHx^{-1}$. All the implications are reversible, so we also have $\bigcap_{x\in G}xHx^{-1}\subseteq Nul~\bar*$.\end{itemize}\end{proof}

Suppose $G$ is a transitive action group of $X$ and take $a\in X$. Define, $G_{a\mapsto b}$, for any $b\in X$, as the subcollection of all $x\in G$ such that $a;b,x\bar*$; transitivty of $G$ assures that $G_{a\mapsto b}\neq\emptyset$.

\begin{proposition}If $a\in X$ and $G$ is an action group of $X$, we will say that $G_{a\mapsto a}$ is the \textit{stable subgroup for a}, and represent it by $Inv(a)$, in view of $Inv(a)\in\mathcal{G}_{\subseteq}$.\end{proposition}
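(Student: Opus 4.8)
The final statement to prove is:

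\emph{Proposition.} If $a\in X$ and $G$ is an action group of $X$, we will say that $G_{a\mapsto a}$ is the \textit{stable subgroup for a}, and represent it by $Inv(a)$, in view of $Inv(a)\in\mathcal{G}_{\subseteq}$.

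The plan is to show that $G_{a\mapsto a}$, the collection of all $x\in G$ such that $a;a,x\bar{*}$ (that is, all $x$ whose associated transformation $x\bar{*}$ fixes $a$), is a subgroup of $G$. Since we have the equivalence of subgroup conditions from the earlier theorem (namely that $H\in\mathcal{G}_\subseteq$ if and only if $HH^{-1}\subseteq H$, equivalently $H^2,H^{-1}\subseteq H$), I would aim to verify $G_{a\mapsto a}\,G_{a\mapsto a}^{-1}\subseteq G_{a\mapsto a}$. The crucial tool is that $\bar{*}:G\rightarrow\{X\textgoth{F}_{iso}X\}$ is a homomorphism, so $\bar{*}$ is a functor between groups; consequently $\textbf{1};1,\bar{*}$ (the unit maps to the identity transformation), $(x*y)\bar{*}=x\bar{*}\circ y\bar{*}$, and $(x^{-1})\bar{*}=(x\bar{*})^{-1}$, the last following from the homomorphism property established in the Homomorphism subsection.

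First I would note that $G_{a\mapsto a}$ is non-empty: the unit $e$ of $G$ satisfies $e\bar{*}=I$, the identity transformation on $X$, so $a;a,e\bar{*}$ and hence $e\in G_{a\mapsto a}$. Next, for closure under the operation, take $x,y\in G_{a\mapsto a}$, meaning $a;a,x\bar{*}$ and $a;a,y\bar{*}$. Applying the homomorphism property,
\begin{eqnarray}\nonumber a&;&(x*y)\bar{*}a,(x*y)\bar{*}\\\nonumber a&;&(x\bar{*}\circ y\bar{*})a,(x*y)\bar{*}\\\nonumber a&;&x\bar{*}(y\bar{*}a),(x*y)\bar{*}\\\nonumber a&;&x\bar{*}a,(x*y)\bar{*}\\\nonumber a&;&a,(x*y)\bar{*}\end{eqnarray}
so $x*y\in G_{a\mapsto a}$. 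For inverses, take $x\in G_{a\mapsto a}$; since $x\bar{*}$ fixes $a$ and $(x^{-1})\bar{*}=(x\bar{*})^{-1}$ is the inverse transformation, applying $(x\bar{*})^{-1}$ to both sides of $a;a,x\bar{*}$ yields $a;a,(x^{-1})\bar{*}$, so $x^{-1}\in G_{a\mapsto a}$. Having both $G_{a\mapsto a}^2\subseteq G_{a\mapsto a}$ and $G_{a\mapsto a}^{-1}\subseteq G_{a\mapsto a}$, the subgroup characterization theorem gives $Inv(a)=G_{a\mapsto a}\in\mathcal{G}_{\subseteq}$.

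The main obstacle is not computational but definitional: I must be careful that the notation $a;a,x\bar{*}$ genuinely means the transformation $x\bar{*}$ sends $a$ to $a$, i.e. that $x\bar{*}(a)=a$, and that I correctly invoke $\bar{*}$ as a homomorphism (hence functor) rather than merely a set map, so that composition and inverses of transformations correspond to the group operation and inverse in $G$. Once the translation between the $a;a,x\bar{*}$ notation and the fixed-point statement $x\bar{*}a=a$ is made explicit, each verification reduces to a short application of the functorial identities $(x*y)\bar{*}=x\bar{*}\circ y\bar{*}$ and $(x^{-1})\bar{*}=(x\bar{*})^{-1}$ already proved for homomorphisms, together with the fact that transformations in $\{X\textgoth{F}_{iso}X\}$ are invertible functions.
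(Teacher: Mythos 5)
Your proof is correct and takes essentially the same approach as the paper: both invoke the earlier subgroup-criterion theorem and reduce everything to the homomorphism properties $(x*y)\bar*=x\bar*\circ y\bar*$ and $(y^{-1})\bar*=(y\bar*)^{-1}$. The only difference is packaging — you verify $H^2\subseteq H$ and $H^{-1}\subseteq H$ separately (criterion 3), while the paper checks $HH^{-1}\subseteq H$ (criterion 2) in a single computation with $x*y^{-1}$.
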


\begin{proof}All we need to prove is that $Inv(a)Inv(a)^{-1}\subseteq Inv(a)$. Let $x\in Inv(a)$ and $y^{-1}\in Inv(a)^{-1}$, for some $y\in Inv(a)$. We must verify $(x*y^{-1})\bar*$ applies $a$ to $a$. 

\begin{eqnarray}\nonumber a&;&(x*y^{-1})\bar*a,(x*y^{-1})\bar*\\\nonumber a&;&x\bar*(y^{-1}\bar*a),(x*y^{-1})\bar*\\\nonumber a&;&x\bar*[(y\bar*)^{-1}a],(x*y^{-1})\bar*\\\nonumber a&;&x\bar*a,(x*y^{-1})\bar*\\\nonumber a&;&a,(x*y^{-1})\bar*\end{eqnarray}Notice we are using the inverse function $(y\bar*)^{-1}$ in virtue that $y\bar*$ is bijective. We use the fact that $(y\bar*)^{-1}$, just as $y$, applies $a$ to $a$.\end{proof}

We get a another representation of the nucleus of non-effectivity for $G$; again, as an intersection.

$$Nul~\bar*=\bigcap_{x\in X}Inv(x).$$

Now, consider the operation $\phi:X\rightarrow Xf\textgoth{P}G$ that makes $a\mapsto_{\phi}a\phi$ and define $a\phi:X\rightarrow \textgoth{P}G$ as the function that makes $b\mapsto_{a\phi} G_{a\mapsto b}$. The notation for this operation yields $a;G_{a\mapsto b},b$. Another way of saying this is in terms of the function is $b;G_{a\mapsto b},a\phi$.

\begin{theorem}Let $G$ be a transitive action group of $X$ and $\textgoth{I}:G\rightarrow G$ be the identity functor.

	\begin{itemize}

\item[1)] $Im~a\phi=G/x~Inv(a)$ and $a\phi|^{Im~a\phi}=a\phi|^{G/x~Inv(a)}$ is bijective

\item[2)] The pairs $(X,G)$ and $(G/x~Inv(a),G)$ are similar, in virtue of $a\phi|^{G/x~Inv(a)},\textgoth I$

\item[3)]$x\in G_{a\mapsto b}\Rightarrow Inv(b)=x~Inv(a)~x^{-1}$.

	\end{itemize}

\end{theorem}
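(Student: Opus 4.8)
The plan is to reduce all three parts to a single structural observation: for each $b\in X$ the set $G_{a\mapsto b}$ is precisely a left congruence class of $Inv(a)$. First I would record two facts about the homomorphism $\bar{*}$ that get used throughout. Since $\bar{*}$ is a functor into the group of transformations of $X$, it preserves the unit, so $e\bar{*}=I$; and it preserves the operation, so $(g*h)\bar{*}=(g\bar{*})\circ(h\bar{*})$, that is $(g*h)\bar{*}\xi=g\bar{*}(h\bar{*}\xi)$. By transitivity $G_{a\mapsto b}\neq\emptyset$, and fixing any $y\in G_{a\mapsto b}$ I would show $G_{a\mapsto b}=y\,Inv(a)$: if $x\in G_{a\mapsto b}$ then $x\bar{*}a=b=y\bar{*}a$, whence $(y^{-1}*x)\bar{*}a=y^{-1}\bar{*}(x\bar{*}a)=y^{-1}\bar{*}b=a$, so $y^{-1}*x\in Inv(a)$ and $x\in y\,Inv(a)$; conversely $x=y*h$ with $h\in Inv(a)$ gives $x\bar{*}a=y\bar{*}(h\bar{*}a)=y\bar{*}a=b$.

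For part 1), this identification shows $Im~a\phi$ is the collection of all left congruence classes: every $G_{a\mapsto b}$ is such a class, and conversely a class $y\,Inv(a)$ equals $G_{a\mapsto b}$ for $b:=y\bar{*}a$. Hence $Im~a\phi=G/x~Inv(a)$. For injectivity of $a\phi|^{Im~a\phi}$, if $G_{a\mapsto b}=G_{a\mapsto b'}$ then choosing $x$ in the common nonempty class yields $b=x\bar{*}a=b'$; being onto its image by construction, $a\phi|^{Im~a\phi}$ is bijective.

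For part 3), assume $x\in G_{a\mapsto b}$, so $x\bar{*}a=b$ and hence $x^{-1}\bar{*}b=(x^{-1}*x)\bar{*}a=a$. For $h\in Inv(a)$ one computes $(x*h*x^{-1})\bar{*}b=x\bar{*}(h\bar{*}(x^{-1}\bar{*}b))=x\bar{*}(h\bar{*}a)=x\bar{*}a=b$, giving $x\,Inv(a)\,x^{-1}\subseteq Inv(b)$; the reverse inclusion is the symmetric computation conjugating by $x^{-1}$, so $Inv(b)=x\,Inv(a)\,x^{-1}$. For part 2), take $f:=a\phi|^{G/x~Inv(a)}$, bijective by part 1), and $\textgoth{f}:=\textgoth{I}$. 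Here $G$ acts on $G/x~Inv(a)$ by the transitive action $\bar{*}_2$ of the earlier theorem, namely $g\bar{*}_2(y\,Inv(a))=(g*y)\,Inv(a)$. Similarity requires $f(g\bar{*}\xi)=(\textgoth{I}g)\bar{*}_2(f\xi)$ for all $g\in G$ and $\xi\in X$. Writing $f\xi=G_{a\mapsto\xi}=y\,Inv(a)$ with $y\bar{*}a=\xi$, the right side is $(g*y)\,Inv(a)$ and the left side is $G_{a\mapsto g\bar{*}\xi}$; since $(g*y)\bar{*}a=g\bar{*}(y\bar{*}a)=g\bar{*}\xi$, the identification of part 1) gives $G_{a\mapsto g\bar{*}\xi}=(g*y)\,Inv(a)$, so the two sides coincide.

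The computations are routine once the notation is pinned down, so the real work is the bookkeeping of the $\bar{*}$-conventions: keeping straight that $\bar{*}$ is operation-preserving in the order $(g*h)\bar{*}=(g\bar{*})\circ(h\bar{*})$, and that $G_{a\mapsto b}$ produces left congruence classes $y\,Inv(a)$ rather than right ones. The only point needing external input is that the action $\bar{*}_2$ on $G/x~Inv(a)$ used in part 2) is well defined and transitive, but this is exactly the earlier theorem applied with $H=Inv(a)$, so no new argument is required there.
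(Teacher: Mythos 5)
Your proposal is correct, but there is nothing in the paper to compare it against: the paper's own proof of this theorem is an empty skeleton (a \verb|proof| environment containing an itemize with blank items, even listing a spurious fourth item), so your argument genuinely supplies what the paper omits. Your central lemma --- that for a fixed $y$ with $y\bar{*}a=b$ one has $G_{a\mapsto b}=y\,Inv(a)$, proved via $(y^{-1}*x)\bar{*}a=a$ in one direction and $(y*h)\bar{*}a=b$ in the other --- is exactly the right unifying device: part 1) follows because $Im~a\phi$ then consists precisely of the left congruence classes and injectivity comes from evaluating at $a$; part 3) is the two conjugation computations, both of which you carry out or correctly flag as symmetric; and part 2) reduces to the one-line identity $G_{a\mapsto g\bar{*}\xi}=(g*y)\,Inv(a)$, which is again the lemma. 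You are also right that the only external input for part 2) --- that $g\bar{*}_2(y\,Inv(a)):=(g*y)\,Inv(a)$ is a well-defined transitive action --- is precisely the earlier theorem of that section applied with $H=Inv(a)$, and that $Inv(a)$ qualifies as a subgroup by the proposition immediately preceding the theorem. One small point of care that you handled correctly but is worth making explicit if this is written up: the paper's left congruence classes $xH$ are defined via $a\sim x$ iff $a=x*h$ for some $h\in H$, so your identification of $G_{a\mapsto b}$ with $y\,Inv(a)$ (rather than $Inv(a)\,y$) is the one consistent with the quotient $G/x\,Inv(a)$ appearing in the statement.
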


\begin{proof}\makebox[5pt][]{}\mbox {}

	\begin{itemize}

\item[1)]

\item[2)]

\item[3)]

\item[4)]

	\end{itemize}

\end{proof}

\chapter{Linear Space}

\begin{definition}Let $\mathbb K(+,\cdot)$ be a field and $V_\mathcal G(*)$ an abelian group. Represent the underlying set of $V_\mathcal G$ with $V$ and recall that $\{V_\mathcal G\textgoth F_{iso}V_\mathcal G\}$ is the group of automorphisms for $V_\mathcal G$. Let $\mathbb K(\cdot)$ be the group under the product, for the field, and suppose there is a homomorphism $\bar{*}:\mathbb K(\cdot)\rightarrow\{V_\mathcal G\textgoth F_{iso}V_\mathcal G\}$, such that \begin{equation}a+b;(a\bar{*}u)*(b\bar{*}u),u.\label{lspace1}\end{equation} This last condition means $(a+b)\bar{*}u$ is the same as $(a\bar{*}u)*(b\bar{*}u)$ where $a,b\in\mathbb K$ and $u\in V_\mathcal G$. The abelian group $V_\mathcal G$, together with the homomorphism $\bar{*}:\mathbb K(\cdot)\rightarrow\{V_\mathcal G\textgoth F_{iso}V_\mathcal G\}$ forms a \textit{linear space}. \end{definition}In other words we have  a linear space iff we have a field that under product is homomorphic to the group of automorphisms of an abelian group, in such a manner that (\ref{lspace1}) holds. The linear space is represented by $\mathbb V$. The following establishes equivalence with the usual definition of linear space.

\begin{proposition}A linear space $\mathbb V$ exists iff there is an abelian group $V_\mathcal G(*)$ and a field $\mathbb K(+,\cdot)$ such that there is an operation $\bar*:\mathcal{A|}\mathbb{K}\rightarrow\{\mathcal{A|}V_\mathcal G~f~\mathcal{A|}V_\mathcal G\}$ defined so that (\ref{lspace1}) holds, as well as\begin{eqnarray}a&;&(a\bar*u)*(a\bar*v),u*v\\1&;&u,u\\a\cdot b&;&a\bar*(b\bar*u),u,\end{eqnarray}in terms of the notation for $\bar*$ and for $1\in\mathbb K(\cdot)$.\end{proposition}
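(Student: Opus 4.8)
The plan is to prove the two implications of the biconditional by unpacking what it means for $\bar*$ to be a homomorphism $\mathbb K(\cdot)\rightarrow\{V_\mathcal G\textgoth F_{iso}V_\mathcal G\}$ into the three explicit equational conditions listed, and conversely to repackage those conditions into the homomorphism statement. The only genuine content is the dictionary between ``homomorphism into the automorphism group'' and the three relations, since (\ref{lspace1}) appears verbatim in both formulations and simply carries over.

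For the forward direction I would suppose $\mathbb V$ is a linear space in the sense of the definition, so that $\bar*$ is a homomorphism, hence a functor of algebraic categories, from $\mathbb K(\cdot)$ into $\{V_\mathcal G\textgoth F_{iso}V_\mathcal G\}$, and (\ref{lspace1}) holds. First I would read off the relation $1;u,u$: because a homomorphism preserves the unit, $1\bar*$ is the unit of $\{V_\mathcal G\textgoth F_{iso}V_\mathcal G\}$, namely $I_V$, whence $1\bar*u=u$. Next, the relation $a\cdot b;a\bar*(b\bar*u),u$ is exactly the statement that $\bar*$ preserves composition, i.e. $(a\cdot b)\bar*=a\bar*\circ b\bar*$, evaluated at $u$. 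Finally, the relation $a;(a\bar*u)*(a\bar*v),u*v$ holds because each image $a\bar*$ is an automorphism of $V_\mathcal G$, and being in particular a homomorphism $V_\mathcal G\rightarrow V_\mathcal G$ it preserves the operation $*$; this is condition $3)$ for functors applied to the single arrow-function $a\bar*$, giving $a\bar*(u*v)=(a\bar*u)*(a\bar*v)$.

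For the converse I would start from the operation $\bar*$ satisfying (\ref{lspace1}) together with the three relations. The relation $a;(a\bar*u)*(a\bar*v),u*v$ says each $a\bar*$ preserves $*$, that is, it is an endomorphism of the abelian group $V_\mathcal G$. To upgrade each nonzero $a\bar*$ to an automorphism I would use that $a$ has an inverse $a^{-1}$ in $\mathbb K(\cdot)$ and compute, via the composition relation and $1\bar*u=u$, that $a^{-1}\bar*\circ a\bar*=(a^{-1}\cdot a)\bar*=1\bar*=I_V$ and likewise $a\bar*\circ a^{-1}\bar*=I_V$; hence $a\bar*$ is bijective with inverse $a^{-1}\bar*$, so $a\bar*\in\{V_\mathcal G\textgoth F_{iso}V_\mathcal G\}$. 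The relations $1;u,u$ and $a\cdot b;a\bar*(b\bar*u),u$ are then precisely that the restriction of $\bar*$ to $\mathbb K(\cdot)$ preserves unit and composition, so by Theorem \ref{auto 1} (which describes the automorphisms as a group) it is a homomorphism into $\{V_\mathcal G\textgoth F_{iso}V_\mathcal G\}$, and (\ref{lspace1}) is inherited unchanged; this recovers the definition of $\mathbb V$.

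The step I expect to be the main obstacle is reconciling the domains: the homomorphism lives on $\mathbb K(\cdot)$, which omits $0$, whereas the operation $\bar*$ in the equivalent formulation is declared on all of $\mathcal{A|}\mathbb K$. I would handle this by showing that $0\bar*$ is forced to be the constant map onto the identity $e$ of $V_\mathcal G$: setting $b=0$ in (\ref{lspace1}) gives $a\bar*u=(a\bar*u)*(0\bar*u)$, and cancelling in the group $V_\mathcal G$ yields $0\bar*u=e$ for every $u$. Thus passing between the two formulations requires only adjoining, respectively deleting, this forced zero map, and no information is lost; the remaining verifications are routine once the functor conditions for homomorphisms and the group structure of $\{V_\mathcal G\textgoth F_{iso}V_\mathcal G\}$ are invoked.
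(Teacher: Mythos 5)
Your proposal is correct, and it actually does more than the paper's own proof. The paper only argues the forward implication: assuming $\bar*$ is a homomorphism $\mathbb K(\cdot)\rightarrow\{V_\mathcal G\textgoth F_{iso}V_\mathcal G\}$, it reads off the three relations exactly as you do (the images are automorphisms, hence preserve $*$; the unit is preserved; composition is preserved), and it stops there — the converse half of the ``iff'' is never addressed. Your second half supplies precisely what is missing: from $a;(a\bar*u)*(a\bar*v),u*v$ you get that each $a\bar*$ is an endomorphism of $V_\mathcal G$, and the computation $a^{-1}\bar*\circ a\bar*=(a^{-1}\cdot a)\bar*=1\bar*$, together with its mirror image, upgrades each nonzero $a\bar*$ to an automorphism with inverse $a^{-1}\bar*$, after which the remaining two relations are exactly unit- and composition-preservation for the restriction of $\bar*$ to $\mathbb K(\cdot)$. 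Your handling of the domain mismatch is also a genuine addition rather than a pedantic one: the definition's homomorphism lives on $\mathbb K(\cdot)$, while the proposition's operation is declared on all of $\mathcal{A|}\mathbb K$, and your cancellation argument from (\ref{lspace1}) forcing $0\bar*u=e$ for every $u$ is what makes the passage between the two formulations lossless. In short, your route proves the equivalence that the statement actually asserts; the paper's route buys only brevity, leaving the backward direction and the status of $0$ implicit.
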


\begin{proof}Since the image of $\bar*$ consists of automorphisms, we can say $a\bar*(u*v)$ is the same as $(a\bar*u)*(a\bar*v)$ for every $a$ in the field and $u,v$ in the abelian group $V_\mathcal G$. Secondly, since $\bar*$ is a homomoprhism, we can say $1\bar*:V_\mathcal G\rightarrow V_\mathcal G$ is the identity automorphism. Finally, the fact that $\bar*$ is a homomorphism implies that $(a\cdot b)\bar*$ is the same automorphism as $a\bar*\circ b\bar*$, which is the same as saying $(a\cdot b)\bar*u$ is equal to $a\bar*(b\bar*u)$.\end{proof}

\chapter{Topological System}

Let $X$ be a set and let $\textgoth X$ be an algebraic category with the c-object defined as $\mathcal O|\textgoth X:=X$. For every subset $A\subseteq X$, we define an arrow in $\textgoth X$, also denoted $A$, and we call it the arrow subset. If there is no fear for confusion we may say it is a subset of $X$. To make sure $\textgoth X$ is a category, we must provide a composition operation for the arrows. We know union is associative, and the emptyset acts as unit. Also, the union of any two subsets of $X$, is also a subset of $X$. This means that $\textgoth X$ is an algebraic category with subsets as objects of operation, and $X$ as c-object.

Now we want to consider a subcategory of $\textgoth X$. This will be the category that consists of the arrow subset corresponding to the singletons of $X$. The category will be represented by $\{\{\textgoth X\}\}$. The arrows of this category are called \textit{points} of $X$. We will include $\emptyset$ and $X$ in the category of points. We are identifying each subset (in the strict sense) of $X$, with the arrow subset, in $\textgoth X$. We are doing this so that we can view subsets as arrows in a category.

		\section{Two Descriptions, One System}
	
		\subsection{Closure}

Let $\textgoth T$ be a subcategory of $\textgoth X$, such that $\emptyset,X$ are arrow subsetes in $\textgoth T$. We say $\textgoth T$ is the \textit{topological space}, while a \textit{topological system} is a functor $Cl_X:\textgoth T\rightarrow\textgoth T$ so that for any point $x$, and any subset $A$, in $\textgoth T$:

\begin{itemize}\item[1)]$~x~;~x,Cl_X$\item[2)]$~Cl_XA~;~Cl_XA,
Cl_X$.\end{itemize}Both 1) and 2) can be expressed as one condition. We are saying $\{\{\textgoth T\}\}$ is strongly invariant under $Cl_X$, and $Cl_X$ is a once effective functor. Of course, we use $\{\{\textgoth T\}\}$ to represent the points of $\textgoth T$. Simply put, $Im~Cl_X\cup\{\{\textgoth{T}\}\}$ is strongly invariant. The concept of stongly invariant and once effective function is presented in the subdivision Sequence for Composition. In view of the last statements, the two conditions are replaced by the condition that any arrow subset $A$, in $Im~Cl_X\cup\{\{\textgoth{T}\}\}$, satisfies $A;A,Cl_X$. A topological system is an algebraic functor that leaves $Im~Cl_X\cup\{\{\textgoth{T}\}\}$ strongly invariant.

Any $D$ in $\textgoth T$ such that $D;D,Cl_X$ is said to be \textit{closed}. The collection of all closed subsets of $X$ is represented by $\mathcal{C}$. A subset $V\subseteq X$, complement of a closed subset, is in the collection of \textit{open} sets, $\mathcal{O}$.

\begin{proposition}For any $A,B\subseteq X$, we have

\begin{itemize}\item[1)]$A\subseteq Cl_XA$\item[2)]$A\subseteq B\Rightarrow Cl_XA\subseteq Cl_XB$.\end{itemize}

\label{topth1}\end{proposition}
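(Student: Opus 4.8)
The plan is to read both statements through the single structural fact that $Cl_X$ is a functor on the algebraic category $\textgoth T$, whose operation on arrow subsets is union. Condition 3) in the definition of functor (preservation of composition) then says precisely that $Cl_X(A\cup B)$ and $Cl_X A\cup Cl_X B$ are the same arrow subset for all $A,B$ in $\textgoth T$; this preservation of union, together with the two defining conditions of a topological system, is all I will need.

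I would prove 2) first, since 1) rests on it. Recall that in this algebraic category the order is recovered from the operation exactly as in a semilattice, so that $A\subseteq B$ holds if and only if $A\cup B=B$ (the same remark made when identifying $\emptyset$ as the unit of union). Assuming $A\subseteq B$, I apply $Cl_X$ to the equality $A\cup B=B$ and use preservation of union to get
\[
Cl_X A\cup Cl_X B = Cl_X(A\cup B)=Cl_X B,
\]
which, by the same characterization of inclusion, means $Cl_X A\subseteq Cl_X B$. Thus every topological system is automatically monotone; this is really just the earlier observation that a functor between the orders induced by the operation is order preserving.

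For 1) the tempting move is to write $A=\bigcup_{x\in A}\{x\}$ and pull $Cl_X$ through the union to conclude $Cl_X A=A$; this is exactly the step to avoid, and explaining why is the crux of the argument. Functoriality delivers preservation only of \emph{binary}, hence finite, unions, not arbitrary ones, so distributing $Cl_X$ over an infinite union is illegitimate (and would trivialize closure). Instead I argue pointwise. For a point $x\in A$ we have $\{x\}\subseteq A$, so monotonicity from part 2) gives $Cl_X\{x\}\subseteq Cl_X A$; but condition 1) makes every point strongly invariant, $Cl_X\{x\}=\{x\}$, whence $\{x\}\subseteq Cl_X A$, i.e.\ $x\in Cl_X A$. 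Since $x\in A$ was arbitrary, $A\subseteq Cl_X A$.

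The main obstacle is therefore conceptual rather than computational: recognizing that the functor axiom governs only finite unions, so extensivity $A\subseteq Cl_X A$ cannot be read off by decomposing $A$, and must instead be assembled from the invariance of individual points together with the monotonicity established in 2). The idempotence condition 2) of the topological system plays no role in this proposition; it is reserved for the later properties of the closure functor.
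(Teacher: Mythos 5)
Your proof is correct and takes essentially the same approach as the paper's: both arguments rest on the functor's preservation of binary union together with the strong invariance of points, and your characterization of inclusion via $A\cup B=B$ is exactly the one the paper uses implicitly. The only difference is organizational—you prove monotonicity first and derive $A\subseteq Cl_XA$ from it, whereas the paper runs the identical union computation directly on $A=A\cup\{x\}$; unrolling your part 1) recovers the paper's chain of equalities verbatim.
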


\begin{proof}A point $x$, of the space is \textit{adherent} to $A\subseteq X$ if $x\in Cl_XA$. We say $Cl_XA$ is the adherence of $A$.

\item[1)]Let $x\in A$, then\begin{eqnarray}\nonumber A&;&Cl_X(A\cup \{x\}),Cl_X\\\nonumber A&;&Cl_XA\cup Cl_X\{x\},Cl_X\\\nonumber A&;&Cl_XA\cup \{x\},Cl_X.\end{eqnarray}

This last expression is $Cl_XA=Cl_XA\cup \{x\}$, which is equivalent to $x\in Cl_XA$.

\item[2)]If $A\subseteq B$, we have\begin{eqnarray}\nonumber B&;&Cl_X(A\cup B),Cl_X\\\nonumber B&;&Cl_XA\cup Cl_XB,Cl_X\\\nonumber\Leftrightarrow Cl_XB&=&Cl_XA\cup Cl_XB\\\nonumber\Leftrightarrow Cl_XA&\subseteq&Cl_XB.\end{eqnarray}

\end{proof}

\begin{theorem}For any finite subcollections $\{D_{i}\}^{n}_{i=1}\subseteq\mathcal{C}$ or $\{V_{i}\}^{n}_{i=1}\subseteq\mathcal{O}$, we have
\begin{itemize}\item[1)]$\bigcup_{i}D_{i}\in\mathcal{C}$\item[2)]$\bigcap_{i}
V_{i}\in\mathcal{O}$.\end{itemize}\end{theorem}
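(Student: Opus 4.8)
The plan is to reduce both statements to a single structural fact: since $Cl_X$ is an algebraic functor on the category $\textgoth T$ whose operation is union, condition 3) in the definition of functor forces $Cl_X$ to preserve that operation, i.e. $Cl_X(A\cup B)=Cl_XA\cup Cl_XB$ for all arrow subsets $A,B$ in $\textgoth T$. This is precisely the identity already used in the proof of Proposition~\ref{topth1}, so no new machinery is required; the whole argument is then formal. Recall that $D$ is closed means $D;D,Cl_X$, that is $Cl_XD=D$, and that $V$ is open means $V^c$ is closed.

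First I would prove part 1) by induction on $n$. The case $n=1$ is the hypothesis $D_1\in\mathcal C$. For the inductive step, assuming $\bigcup_{i=1}^{n}D_i$ is closed, the union-preservation identity together with $Cl_XD_{n+1}=D_{n+1}$ gives
\[
Cl_X\left(\bigcup_{i=1}^{n+1}D_i\right)=Cl_X\left(\bigcup_{i=1}^{n}D_i\right)\cup Cl_XD_{n+1}=\bigcup_{i=1}^{n}D_i\cup D_{n+1}=\bigcup_{i=1}^{n+1}D_i,
\]
so $\bigcup_{i=1}^{n+1}D_i$ is closed and hence in $\mathcal C$. (Equivalently, one may apply $Cl_X$ once and distribute it across the whole finite union in a single step, since each $Cl_XD_i=D_i$.)

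Next I would obtain part 2) by passing to complements. Each $V_i$ is open, so by definition $V_i^c\in\mathcal C$, giving a finite subcollection $\{V_i^c\}_{i=1}^{n}\subseteq\mathcal C$. By part 1), $\bigcup_{i=1}^{n}V_i^c\in\mathcal C$, and De~Morgan's law yields $\bigcap_{i=1}^{n}V_i=\left(\bigcup_{i=1}^{n}V_i^c\right)^c$. This exhibits $\bigcap_i V_i$ as the complement of a closed set, hence $\bigcap_i V_i\in\mathcal O$; the involution $(A^c)^c=A$ confirms the complement returns us to the open sets.

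The only point needing care, and the likely obstacle, is the bookkeeping that justifies applying the union-preservation identity: I must confirm that $\textgoth T$ genuinely is an algebraic category with union as its operation (so that the $n$-fold union is a legitimate iterate of a binary operation and stays inside $\textgoth T$), that $Cl_X$ is a functor on it, and that the resulting union $\bigcup_i D_i$ and intersection $\bigcap_i V_i$ lie in $\textgoth T$ so that $Cl_X$ and complementation are defined on them. This is exactly where finiteness is used — not for any analytic reason, but simply to keep the union within the algebraic category, since condition 3) only delivers preservation of \emph{finite} unions. Once these membership facts are in place, both parts are immediate.
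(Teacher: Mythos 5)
Your proposal is correct and follows essentially the same route as the paper: part 1) is proved by combining the union-preservation identity $Cl_X(A\cup B)=Cl_XA\cup Cl_XB$ (functoriality of $Cl_X$ over the union operation of $\textgoth T$) with the hypothesis $Cl_XD_i=D_i$ and induction on $n$, and part 2) is obtained by passing to complements and applying De Morgan together with part 1). Your closing remark about verifying that the relevant unions and intersections remain arrow subsets of $\textgoth T$ is a reasonable point of care that the paper leaves implicit, but it does not change the argument.
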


\begin{proof}\begin{itemize}\item[1)]\begin{eqnarray}\nonumber D_{1}\cup D_2&;&Cl_X(D_1\cup D_2),Cl_X\\\nonumber D_{1}\cup D_2&;&Cl_XD_1\cup Cl_XD_2,Cl_X\\\nonumber D_{1}\cup D_2&;&D_1\cup D_2,Cl_X.\end{eqnarray}
The ressult follows for $n+1$ subsets if we suppose that it holds for $n$.\item[2)] \begin{eqnarray}\nonumber \left(\bigcap_i V_i\right)^c&;&Cl_X\bigcup_i V^c_i,Cl_X\\\nonumber \left(\bigcap_i V_i\right)^c&;&Cl_X\bigcup_i D_i,Cl_X\\\nonumber \left(\bigcap_i V_i\right)^c&;&\bigcup_i D_i,Cl_X\\\nonumber \left(\bigcap_i V_i\right)^c&;&\bigcup_i V_i^c,Cl_X\\\nonumber \left(\bigcap_i V_i\right)^c&;&\left(\bigcap_i V_i\right)^c,Cl_X.\end{eqnarray}This means $(\cap_i V_i)^c\in\mathcal{C}$, which proves $\cap_i V_i\in\mathcal{O}$.\end{itemize}\end{proof}

\begin{theorem}For any subcollections $\{D_{i}\}_{i}\subseteq\mathcal{C}$ or $\{V_{i}\}_{i}\subseteq\mathcal{O}$, we have
\begin{itemize}\item[1)]$\bigcap_{i}D_{i}\in\mathcal{C}$\item[2)]$\bigcup_{i}
V_{i}\in\mathcal{O}$.\end{itemize}\end{theorem}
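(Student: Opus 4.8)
The plan is to reduce both parts to the monotonicity of the closure functor recorded in Proposition~\ref{topth1}, rather than to the functorial preservation of union that powered the finite case. The preceding theorem exploited that $Cl_X$ preserves the operation $\cup$ of the algebraic category $\textgoth T$, but that property only governs \emph{finite} unions; for an arbitrary family the argument must instead rest on the order-preserving statement $A\subseteq B\Rightarrow Cl_XA\subseteq Cl_XB$ together with $A\subseteq Cl_XA$. Recognizing this—that the finite theorem does not specialize to the present one—is really the only conceptual point.

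First I would treat 1). For a family $\{D_i\}_i\subseteq\mathcal C$, Proposition~\ref{topth1} already gives $\bigcap_iD_i\subseteq Cl_X\bigcap_iD_i$, so only the reverse inclusion needs work. Since $\bigcap_iD_i\subseteq D_j$ for every index $j$, monotonicity yields $Cl_X\bigcap_iD_i\subseteq Cl_XD_j$, and because each $D_j$ is closed we have $Cl_XD_j=D_j$. As this holds for all $j$, taking the intersection over $j$ gives $Cl_X\bigcap_iD_i\subseteq\bigcap_jD_j$. The two inclusions combine to $\bigcap_iD_i;\bigcap_iD_i,Cl_X$, which is precisely the assertion $\bigcap_iD_i\in\mathcal C$.

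For 2), I would pass to complements. Each $V_i\in\mathcal O$ means $V_i^c\in\mathcal C$, and the laws of DeMorgan established in the set theory chapter give $\left(\bigcup_iV_i\right)^c=\bigcap_iV_i^c$. By part 1 this intersection of closed sets is closed, so $\left(\bigcup_iV_i\right)^c\in\mathcal C$, which by the definition of open set means $\bigcup_iV_i\in\mathcal O$.

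There is no serious obstacle here: the entire content is the single monotonicity step in 1), and 2) is then a mechanical complementation. The only thing demanding care is resisting the temptation to reuse the union-preservation property of $Cl_X$, which is unavailable for infinite families; once the order-theoretic characterization of $Cl_X$ is used in its place, both parts follow immediately.
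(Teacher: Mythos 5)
Your proof is correct and follows essentially the same route as the paper's: part 1) is obtained from $\bigcap_iD_i\subseteq Cl_X\bigcap_iD_i$ together with monotonicity of $Cl_X$ and closedness of each $D_i$, and part 2) is the same complementation via the laws of DeMorgan that the paper invokes by reference to the preceding theorem. Your explicit remark that the union-preservation property of $Cl_X$ cannot be used for infinite families is a sound observation, but it does not change the argument.
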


\begin{proof}We know that $\cap_iD_i\subseteq Cl_X\cap_iD_i$. Thus, all we need to prove is $Cl_X\cap_iD_i\subseteq \cap_iD_i$. From 2) in proposition \ref{topth1}, we get $Cl_X\cap_iD_i\subseteq Cl_XD$, for every $D\in\{D_i\}_i$. Then, $Cl_X\cap_iD_i\subseteq\cap_i Cl_XD_i=\cap_i D_i$.

We can prove 2) as we did in the last theorem.\end{proof}

\begin{theorem}Let $A\subseteq X$ and $\mathcal{C}_{A}$ the collection of all $D$ such that $A\subseteq D\in\mathcal{C}$. Then $A;\bigcap\mathcal{C}_{A},Cl_X$.\end{theorem}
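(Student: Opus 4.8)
The claim is that for any $A \subseteq X$, the closure $Cl_X A$ equals $\bigcap \mathcal{C}_A$, where $\mathcal{C}_A$ is the family of all closed sets $D$ containing $A$. In the notation of the excerpt, $A; \bigcap\mathcal{C}_A, Cl_X$ means precisely that $Cl_X A = \bigcap\mathcal{C}_A$. This is the standard characterization of closure as the smallest closed set containing a given set. Let me check what tools are available: proposition \ref{topth1} gives monotonicity ($A \subseteq B \Rightarrow Cl_X A \subseteq Cl_X B$) and extensivity ($A \subseteq Cl_X A$); the definition of a topological system gives idempotence, since $Cl_X$ is a once-effective functor, so $Cl_X A ; Cl_X A, Cl_X$, i.e. $Cl_X(Cl_X A) = Cl_X A$, which says $Cl_X A$ is itself closed.

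**Plan of proof.** The approach is the usual double-inclusion argument, splitting into showing $Cl_X A \subseteq \bigcap\mathcal{C}_A$ and $\bigcap\mathcal{C}_A \subseteq Cl_X A$. First I would establish that $Cl_X A$ is itself a member of $\mathcal{C}_A$: by part 1) of proposition \ref{topth1} we have $A \subseteq Cl_X A$, and by idempotence (the once-effective property built into the definition) $Cl_X A$ is closed, so $Cl_X A \in \mathcal{C}_A$. Since $\bigcap\mathcal{C}_A$ is the intersection over all members of $\mathcal{C}_A$, and $Cl_X A$ is one such member, the special case of property 1) for intersections (stated just after the intersection proposition, namely $\bigcap\mathcal{X} \subseteq B$ for every $B \in \mathcal{X}$) immediately gives $\bigcap\mathcal{C}_A \subseteq Cl_X A$. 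That disposes of one inclusion cleanly.

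**The reverse inclusion.** For $Cl_X A \subseteq \bigcap\mathcal{C}_A$, I would argue that $Cl_X A$ is contained in every $D \in \mathcal{C}_A$; then since $\bigcap\mathcal{C}_A$ is the largest set contained in all such $D$ (equivalently, $x \in \bigcap\mathcal{C}_A$ iff $x \in D$ for all $D \in \mathcal{C}_A$), the inclusion follows. So fix an arbitrary $D \in \mathcal{C}_A$, meaning $A \subseteq D$ and $D$ is closed ($Cl_X D = D$). Applying monotonicity (part 2) of proposition \ref{topth1}) to $A \subseteq D$ yields $Cl_X A \subseteq Cl_X D$, and since $D$ is closed, $Cl_X D = D$, so $Cl_X A \subseteq D$. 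As $D$ was arbitrary in $\mathcal{C}_A$, we conclude $Cl_X A \subseteq \bigcap\mathcal{C}_A$.

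**Main obstacle.** The mathematical content here is routine, so the only real care needed is bookkeeping around the degenerate case and the translation into the excerpt's operator notation. In particular, one must confirm $\mathcal{C}_A$ is nonempty so that $\bigcap\mathcal{C}_A$ is meaningful: since $X$ is a closed arrow subset of $\textgoth{T}$ (the space is assumed to contain $X$, and $X;X,Cl_X$ holds as $X$ is the top element) and $A \subseteq X$, we always have $X \in \mathcal{C}_A$. The one genuinely delicate point is justifying idempotence from the phrasing ``$Cl_X$ is a once-effective functor'': I would cite the definition directly, where condition 2) states $Cl_X A ; Cl_X A, Cl_X$, which is exactly $Cl_X(Cl_X A) = Cl_X A$ and certifies that $Cl_X A$ is closed. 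Once that identification is made, both inclusions are immediate from proposition \ref{topth1}, and the proof closes.
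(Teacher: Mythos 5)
Your proof is correct, and one half of it coincides exactly with the paper's: both you and the paper establish $\bigcap\mathcal{C}_A\subseteq Cl_XA$ by noting that $Cl_XA$ is closed (idempotence, condition 2 of the definition of topological system) and contains $A$ (part 1 of proposition \ref{topth1}), so that $Cl_XA\in\mathcal{C}_A$. For the reverse inclusion your route is genuinely different. You apply monotonicity to each member of $\mathcal{C}_A$ separately: for every closed $D\supseteq A$ you get $Cl_XA\subseteq Cl_XD=D$, and then intersect over all such $D$. The paper instead applies monotonicity a single time to the inclusion $A\subseteq\bigcap\mathcal{C}_A$, and to make that work it invokes the immediately preceding theorem that an arbitrary intersection of closed sets is closed, concluding $Cl_XA\subseteq Cl_X\bigcap\mathcal{C}_A=\bigcap\mathcal{C}_A$. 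What each buys: your argument is more self-contained, needing only proposition \ref{topth1} and the definition of closed set, and it works even if one has not yet proved that $\mathcal{C}$ is closed under arbitrary intersections; the paper's argument reuses the structure already built and, as a byproduct, exhibits $\bigcap\mathcal{C}_A$ itself as a closed set containing $A$, i.e., as the minimum of $\mathcal{C}_A$. Your remark that $X\in\mathcal{C}_A$ guarantees $\mathcal{C}_A\neq\emptyset$ is a legitimate point of care that the paper passes over in silence.
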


\begin{proof}We know $Cl_XA\in\mathcal{C}$, and since $A\subseteq Cl_XA$, we also have $Cl_XA\in\mathcal{C}_{A}$. This implies $\cap\mathcal{C}_{A}\subseteq Cl_XA$.

On the other hand, $A\subseteq D$, for every $D\in\mathcal{C}_{A}$, means $A\subseteq\cap\mathcal{C}_{A}$. From this, and the fact that $\cap\mathcal{C}_{A}\in\mathcal{C}$, we get $Cl_XA\subseteq Cl_X\cap\mathcal{C}_{A}=\cap\mathcal{C}_{A}$.\end{proof}

	\subsection{Interior}

To establish a topological system, a \textit{closure} operation must be defined for the subsets of $X$. However, it is not absolutely necessary to define the closure for all subsets. If we define the collection of closed sets, we will be able to say what the closure of any subset is; this is provided by the last theorem. Simply put, to every family of subsets of $X$, that includes $\emptyset,X\in\mathcal{C}$, there corresponds a topological sysytem. Now, in view of the fact that a topological system can be established by defining the closed sets, we see that it is perfectly acceptable to give the topological system by defining the dual objects of these. That is, a topological system is defined by defining the collection of open sets for $X$.

\section{Neighborhoods}For any point $x\in X$, the family $\mathcal{O}_{x}$ consists of all $V$ such that $x\in V\in\mathcal{O}$. Also, $\mathcal{O}^{A}$ is the family of all $V$ such that $A\supseteq V\in\mathcal{O}$.

\begin{definition}A family $\mathcal{B}\subseteq\mathcal{O}$ is called a base of the topological system $Cl_X$ if for every $\emptyset\neq V\in\mathcal{O}$ we have $V=\bigcup(\mathcal{O}^V\cap\mathcal{B})$. If $x\in U\in\mathcal{B}$, we say $U$ is a base-neighborhood of $x$. The subset $N$ is called a neighborhood of $x$ if there exists $V\in\mathcal{O}$ such that $x\in V\subseteq N$. A family $\mathcal{B}_{x}\subseteq\mathcal{O}$ is a point base of $x$ if for every neighborhood $N$ of $x$, there exists $U\in\mathcal{B}_{x}$ such that $x\in U\subseteq N$.\end{definition}

Notice that in order to speak of base-neighborhoods, we must have a fixed  base. So, anytime base-neighborhoods are discussed, we will presuppose the choice of a base. 

We wish to see that every point of $X$ is in some base-neighborhood. We can easily prove this for any non-trivial case in which their are at least two points. In this case, we have proven $X$ is open and therefore $X=\bigcup\mathcal{B}$. From this it follows that every point $x$ is in some $N\in\mathcal{B}$.

\begin{proposition}For any $x\in X$ we verify

\item[1)]Any base of $\textgoth X$ is a point base of $x$.\item[2)]The collection of all base-neighborhoods of $x$ is a point base of x.\end{proposition}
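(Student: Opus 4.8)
The plan is to isolate one observation that powers both parts: for any open set $V$ with $x\in V$, there is a base element $U\in\mathcal{B}$ satisfying $x\in U\subseteq V$. This follows at once from the defining property of a base. Since $V$ contains $x$ it is a nonempty open set, so the base property gives $V=\bigcup(\mathcal{O}^V\cap\mathcal{B})$. Because $x$ lies in this union, $x$ must belong to at least one member $U$ of $\mathcal{O}^V\cap\mathcal{B}$; by the definition of $\mathcal{O}^V$ such a $U$ is open with $U\subseteq V$, and $U\in\mathcal{B}$. Hence $x\in U\subseteq V$ with $U$ a base element, as claimed.

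For part 1), I would take an arbitrary neighborhood $N$ of $x$. By the definition of neighborhood there is an open $V$ with $x\in V\subseteq N$. Applying the observation to $V$ produces $U\in\mathcal{B}$ with $x\in U\subseteq V\subseteq N$, hence $x\in U\subseteq N$. Since $\mathcal{B}\subseteq\mathcal{O}$, the family $\mathcal{B}$ fulfills the requirement in the definition of point base, so $\mathcal{B}$ is a point base of $x$.

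For part 2), I would observe that the element $U$ produced above contains $x$, so it is a base-neighborhood of $x$ and thus lies in the collection $\mathcal{B}\cap\mathcal{O}_x$ of all base-neighborhoods of $x$. Running the same argument shows that for every neighborhood $N$ of $x$ this collection contains a member $U$ with $x\in U\subseteq N$; and since each base-neighborhood is open, the collection is a subfamily of $\mathcal{O}$. Therefore the collection of base-neighborhoods of $x$ is itself a point base of $x$, which is exactly the sharpening of part 1) obtained by discarding those base elements that miss $x$.

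The routine content is merely unwinding the definitions of neighborhood, base, and point base. The only step deserving care is the inference that $x\in\bigcup(\mathcal{O}^V\cap\mathcal{B})$ forces $x$ into one particular member of $\mathcal{O}^V\cap\mathcal{B}$, read together with the correct interpretation of $\mathcal{O}^V\cap\mathcal{B}$ as precisely the base elements contained in $V$. There is no genuine obstacle; the task is simply to state the single covering observation cleanly and then invoke it twice.
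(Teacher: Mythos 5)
Your proof is correct and follows essentially the same route as the paper: both arguments reduce a neighborhood $N$ of $x$ to an open $V$ with $x\in V\subseteq N$, then use the base identity $V=\bigcup(\mathcal{O}^V\cap\mathcal{B})$ to extract a base element $U$ with $x\in U\subseteq N$, which settles 1) and, since that $U$ is a base-neighborhood of $x$, also 2). Your write-up is merely more explicit than the paper's, which dismisses 1) as trivial and records only the covering argument.
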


\begin{proof}We will not give a proof for 1); this is a trivial observation. Let $N$ be a neighborhood of $x$, then there is an open set $V$ such that $x\in V\subseteq N$. We know $V=\bigcup\mathcal{V}$, where $\mathcal{V}$ is a subfamily of $\mathcal{B}$. Therefore, $x\in U\subseteq N$, for some subset $U\in\mathcal{V}$.\end{proof}

\begin{theorem}A subset is open if and only if it is a neighborhood of every point it contains.\end{theorem}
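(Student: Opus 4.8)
The plan is to prove both implications directly, relying on the definition of neighborhood and on the theorem just established that an arbitrary union of open sets is again open.

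For the forward direction, I would suppose $A \in \mathcal{O}$ and take any $x \in A$. The required open set is $A$ itself: since $A \in \mathcal{O}$ and $x \in A \subseteq A$, the definition of neighborhood is met with $V = A$. Hence $A$ is a neighborhood of each point it contains, and this half is immediate.

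For the converse, suppose $A$ is a neighborhood of every point it contains. For each $x \in A$ the definition furnishes an open set $V_x \in \mathcal{O}$ with $x \in V_x \subseteq A$. I would then establish the set equality $A = \bigcup_{x \in A} V_x$: the inclusion $\bigcup_{x \in A} V_x \subseteq A$ holds because every $V_x \subseteq A$, while the reverse inclusion holds because each $x \in A$ lies in its own $V_x$. Invoking the theorem that the union of an arbitrary subcollection of $\mathcal{O}$ belongs to $\mathcal{O}$, I conclude $A \in \mathcal{O}$, i.e. $A$ is open.

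The only subtlety is the degenerate case $A = \emptyset$, where the hypothesis is vacuous and $\bigcup_{x \in A} V_x$ is an empty union. Here I would note separately that $\emptyset$ is open: Proposition \ref{topth1} gives $X \subseteq Cl_X X \subseteq X$, so $Cl_X X = X$ and $X \in \mathcal{C}$, whence $\emptyset = X^c \in \mathcal{O}$. (If the union theorem is read as already covering the empty subcollection, this case requires no special treatment.) No step poses a genuine obstacle; the main points to get right are the set equality $A = \bigcup_{x \in A} V_x$ and the handling of the empty set.
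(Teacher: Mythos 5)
Your proof is correct and follows essentially the same route as the paper's: the forward direction takes $V=A$ itself, and the converse writes $A=\bigcup_{x\in A}V_x$ and invokes the theorem that arbitrary unions of open sets are open. Your explicit treatment of the degenerate case $A=\emptyset$ is a small refinement the paper omits, but it does not change the argument.
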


\begin{proof}Clearly, $V\in\mathcal{O}$ is a neighborhood of any $x\in V$ because $V\subseteq V$.

Suppose $V$ is a neighborhood of every $x\in N$. Consider, for each $x$, the open set $x\in U_{x}\subseteq V$. It is easily verified that $V=\bigcup_x U_x\in\mathcal{O}$.\end{proof}

\begin{theorem}$\mathcal{B}$ is a base of $CL_X$ if and only if $$x\in V\in \mathcal{O}~\Rightarrow~(\exists U\in\mathcal{B})(x\in U\subseteq V).$$\end{theorem}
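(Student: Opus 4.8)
The plan is to prove both implications by unwinding the definition of base, which declares $\mathcal{B}$ to be a base of $Cl_X$ exactly when every nonempty $V\in\mathcal{O}$ satisfies $V=\bigcup(\mathcal{O}^V\cap\mathcal{B})$, where $\mathcal{O}^V$ is the family of all open subsets of $V$. The whole argument is a matter of translating the set-equality characterization into a pointwise one and back.

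For the forward direction I would assume $\mathcal{B}$ is a base and take any $x\in V\in\mathcal{O}$. Since $x\in V$, the set $V$ is nonempty, so the base condition applies and gives $V=\bigcup(\mathcal{O}^V\cap\mathcal{B})$. As $x$ lies in $V$, it lies in this union, so there is some $U\in\mathcal{O}^V\cap\mathcal{B}$ with $x\in U$. Unpacking the membership $U\in\mathcal{O}^V\cap\mathcal{B}$ yields simultaneously $U\in\mathcal{B}$ and $U\subseteq V$, which is exactly $x\in U\subseteq V$.

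For the converse I would assume the neighborhood condition and verify $V=\bigcup(\mathcal{O}^V\cap\mathcal{B})$ for each nonempty open $V$. The inclusion $\bigcup(\mathcal{O}^V\cap\mathcal{B})\subseteq V$ is immediate, because every member of $\mathcal{O}^V$ is by definition a subset of $V$, so a union of such sets cannot escape $V$. For the reverse inclusion, I would take an arbitrary $x\in V$; the hypothesis produces $U\in\mathcal{B}$ with $x\in U\subseteq V$, and since $\mathcal{B}\subseteq\mathcal{O}$ and $U\subseteq V$, this $U$ belongs to $\mathcal{O}^V\cap\mathcal{B}$. Hence $x\in U\subseteq\bigcup(\mathcal{O}^V\cap\mathcal{B})$, giving $V\subseteq\bigcup(\mathcal{O}^V\cap\mathcal{B})$, and equality follows.

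I expect no genuine obstacle: the statement is a routine reformulation of the definition. The only point worth a remark is the degenerate case, since the base definition only constrains nonempty $V$; but because the condition being characterized always presupposes $x\in V$, the set $V$ is automatically nonempty, so the two formulations align with no separate empty-set discussion required.
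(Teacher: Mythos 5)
Your proposal is correct and follows essentially the same route as the paper's own (much terser) proof: both arguments simply unwind the definition $V=\bigcup(\mathcal{O}^V\cap\mathcal{B})$ into the pointwise statement that each $x\in V$ lies in some $U\in\mathcal{O}^V\cap\mathcal{B}$, and back. Your version merely spells out the two inclusions and the nonemptiness remark that the paper leaves implicit.
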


\begin{proof}$\mathcal{B}$ is a base of $\textgoth X$ if and only if $V=\bigcup(\mathcal{O}^{V}\cap\mathcal{B})$. This last holds if and only if $x\in U$, for some $U\in(\mathcal{O}^V\cap\mathcal{B})$.\end{proof}

\textbf{Closure and Neighborhoods.} Of course, once we have specified a base, we have also, unequivocally, specified a topological system. This is due to the fact that a base determines a collection $\mathcal{O}$ that consists of $\bigcup\beta$, for every $\beta\subseteq\mathcal{B}$. Let $\bar{}:\textgoth T\rightarrow\textgoth T$ be the functor that assigns $A\mapsto\bar A$, where $x\in\bar{A}$ if and only if $x\in U\in\mathcal{B}$ implies the existence of some $a\in A\cap U$. When such a functor is given, we call it a \textit{closure in terms of a base}.

\begin{theorem}The closure operation, defined in terms of a base, forms a topological system which is the same topological system determined by the open sets, generated from the base.\end{theorem}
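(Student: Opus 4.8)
The plan is to reduce the entire statement to the single set-equality $\bar A = Cl_X A$ for every subset $A$ of $X$, where $Cl_X$ denotes the topological system of which $\mathcal B$ is a base (equivalently, the system whose open family $\mathcal O$ consists of all unions $\bigcup\beta$ with $\beta\subseteq\mathcal B$). Establishing this one identity settles both assertions simultaneously: it exhibits the base-closure as an operation already known from the subsection on closure to be a topological system, so the base-closure inherits functoriality, the fixing of points $x;x,Cl_X$, and the idempotence $Cl_XA;Cl_XA,Cl_X$, while at the same time being exactly the system determined by the generated open sets.

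First I would unwind the definition of $\bar A$. By construction $x\in\bar A$ holds precisely when every base element $U$ with $x\in U$ satisfies $A\cap U\neq\emptyset$; that is, $x\in\bar A$ iff every base-neighborhood of $x$ meets $A$. The main work is to upgrade ``base-neighborhood'' to ``open neighborhood'': I claim $x\in\bar A$ iff every open $V$ with $x\in V$ meets $A$. One direction is immediate, since $\mathcal B\subseteq\mathcal O$ forces every base-neighborhood to be an open neighborhood. For the converse I would invoke the earlier characterization that $\mathcal B$ is a base iff $x\in V\in\mathcal O$ implies the existence of $U\in\mathcal B$ with $x\in U\subseteq V$: given an open $V\ni x$, choose such a $U$; if $x\in\bar A$ then $A\cap U\neq\emptyset$, and $U\subseteq V$ yields $A\cap V\neq\emptyset$.

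Next I would convert this neighborhood condition into membership in the closed sets by complementation. The point $x$ fails to lie in $\bigcap\mathcal C_A$ exactly when some closed $D\supseteq A$ omits $x$; writing $V=D^c$, this says precisely that some open $V\ni x$ is disjoint from $A$, since $A\subseteq D$ is equivalent to $A\cap V=\emptyset$. Taking contrapositives, $x\in\bigcap\mathcal C_A$ iff every open neighborhood of $x$ meets $A$, which is exactly the condition reached in the previous paragraph. Hence $\bar A=\bigcap\mathcal C_A$, and the earlier theorem asserting $A;\bigcap\mathcal C_A,Cl_X$ identifies this intersection with $Cl_XA$, completing the argument.

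The step I expect to be the genuine obstacle is the upgrade from base-neighborhoods to arbitrary open neighborhoods, as this is the only place the defining property of a base is actually used, and it must be handled carefully because $\bar A$ is phrased through an implication (``$x\in U\in\mathcal B$ implies $A\cap U\neq\emptyset$'') rather than through an existential quantifier. A secondary point demanding care is justifying that ``the topological system determined by the open sets generated from the base'' really is $Cl_X$: since $\mathcal B$ is by definition a base of $Cl_X$, the opens it generates are exactly $\mathcal O$, so the closed family $\mathcal C$ and the intersection $\bigcap\mathcal C_A$ are those of $Cl_X$, which is what legitimizes the closing appeal to the earlier theorem.
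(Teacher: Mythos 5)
Your proof is correct and follows essentially the same route as the paper's: both reduce everything to the identity $\bar A=\bigcap\mathcal C_A$, established with the same two ingredients — the base characterization ($x\in V\in\mathcal O$ yields $U\in\mathcal B$ with $x\in U\subseteq V$) and open/closed complementation — and then identify $\bigcap\mathcal C_A$ with $Cl_XA$ via the earlier theorem. The only difference is organizational: you arrange the argument as a chain of equivalences (upgrading base-neighborhoods to open neighborhoods, then passing to contrapositives), whereas the paper proves the two inclusions directly by contradiction.
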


\begin{proof}We must verify that $\bar{A}=\bigcap\mathcal{C}_A$, where the closed subsets are those corresponding to open sets given by the base. Let $x\in \bar{A}$, and take $D\in\mathcal{C}_A$. We show $x\notin D^c\in\mathcal{O}$ in order to prove $x\in\bigcap\mathcal C_A$. If it were true that $x\in D^c$, then there exists $U\in\mathcal{B}$ such that $x\in U\subseteq D^c$. But this means $a\in U$, for some $a\in A$. This is a clear contradiction beacause $A\subseteq D\subseteq U^c$.

For the opposite inclusion, suppose $x\in\bigcap\mathcal C_A$ and let $x\in U\in\mathcal{B}$. If $A\cap U=\emptyset$, then $x\notin A\subseteq U^c\in\mathcal{C}$. which implies $x\notin\bigcap\mathcal{C}_A$. Therefore, $a\in A\cap U$, for some $a$. Thus, $x\in\bar{A}$.\end{proof}

The collection of all the bases of a topological system is represented by $\textbf{B}$; in an analogous manner we denote by \textbf{O} the collection of open sets that determine a topological system. Also, let $\textbf{T}$ be the collection of all topological systems of $X$. We will say $CL:\textbf{B}\rightarrow\textbf{T}$ is the function that assigns to each base of $X$ the unique closure in terms of the base. Represent by $Cl:\textbf{B}\rightarrow\textbf{O}\rightarrow\textbf{T}$, the function that assigns to each base of $X$, the family of open sets, for which it is base, and then to that family, assigns its corresponding closure function. The theorem says both are the same.






\newpage\section*{Acknowledgements}

The present work began as a project to study several areas of mathematics using the perspective of systems; objects and relations. The initial intention on the part of the author was to use this only as a conceptual tool for personal understanding. In a series of talks with a fellow student, we came to the conclusion that this focus was similar to the concept of category, where relations take the special form of arrows. This enabled me to start writing out the present document. My thanks go out to Marco Armenta, and several other costudents whom I had enriching conversations with.

The initial thoughts came about after reading a recommendation from my professor, Victor P\'erez Abreu. After making a project on axiomatic systems, for a class I was taking with the professor, he recommended I read, An Eternal Golden Braid by Douglas R. Hofstadter. This book had profound implications in my life and philosophical views. Since I am an undergraduate student of mathematics, I decided to use this philosophy to start learning mathematics from scratch. Of course, lead by the works of others in all areas of mathematics, I have tried to make an account of some basic principle of mathematics. The professor was always giving great insight in his class and at talks in his office. His patience and dedication to all his students is admirable.

After making some progress, I came across the work of Jouni J$\ddot{a}$rvinen. This helped me to materialize the constructions of the integers and rationals. Although, I initially came across an article on information systems, this article was not ultimately of much help but his article [VIII] turned out to be of tremendous help. I went back and tried to properly define the construction using the definitions provided in [VIII]. I have also made use of the online encyclopedia, Wikipedia. 

Any error or misrepresentation of mathematical results that goes against established mathematical standards are sole responsibility of the author, whom does not wish to involve any of the above mentioned, on negative feedback, that shall certainly come. Of course, all feedback will be more than welcome by the author.

\section*{Bibliography}

\begin{itemize}
\item[I.]Mac Lane, Saunders. Categories for the Working Mathematician. New York: Springer-Verlag, 1971.\item[II.]Asperti, Andrea and Longo, Giuseppe. \textit{Categories Types and Structures}: An Introduction to Category Theory for the Working Computer Scientist. MIT Press, 1991.\item[III.]R. M. Dudley. Real Analysis and Probability. Cambridge: Cambridge University Press, 2004.\item[IV.]Mac Lane, Saunders and  Birkhoff, Garrett. Algebra. Providence, Rhode Island: AMS Chelsea Publishing, Third edition 2004.\item[V.]L. S. Pontriaguin. Grupos Continuos. Mosc\'u: Editorial Mir, 1978.\item[VI.] J. N. Sharma. Krishna's Topology. Meerut: Krishna Prakashan Media (P) Ltd, 1979. \item[VII.]Mariusz Wodzicki. Notes on Topology. December 3, 2010.
\item[VIII]J$\ddot{a}$rvinen, Jouni. Lattice Theory for Rough Sets. Turku Centre for Computer Sciences. FI-20014 University of Turku, Finland.
\item[IX]Bifunctor. V.E. Govorov (originator), Encyclopedia of Mathematics.
\item[X]A Concrete Introduction to Categories. William R. Schmitt. Department of Mathematics, George Washington University.
\item[XI]Representable Functors and the Yoneda Lemma. Brown, Gordon. Spring 2015
\item[XII]math.stackexchange.com 
\end{itemize}

\end{document}